\DeclarePairedDelimiter\abs{\lvert}{\rvert}
\DeclarePairedDelimiter\norm{\lVert}{\rVert}
\newcommand{\xbf}{\mathbf{x}}
\newcommand{\ybf}{\mathbf{y}}
\newcommand{\rbf}{r}
\newcommand{\bbf}{\mathbf{b}}
\newcommand{\nbf}{\mathbf{n}}
\newcommand{\ubf}{\mathbf{u}}
\newcommand{\Ubf}{\mathbf{U}}
\newcommand{\vbf}{\mathbf{v}}
\newcommand{\wbf}{\mathbf{w}}
\newcommand{\zbf}{\mathbf{z}}
\newcommand{\cbf}{\mathbf{c}}
\newcommand{\fbf}{\mathbf{f}}
\newcommand{\sbf}{\mathbf{s}}
\newcommand{\Fbf}{\mathbf{F}}
\newcommand{\Pbf}{\mathbf{P}}
\newcommand{\gbf}{\mathbf{g}}
\newcommand{\Abf}{\mathbf{A}}
\newcommand{\Xbf}{\mathbf{X}}
\newcommand{\Sbf}{\mathbf{S}}
\DeclareMathOperator{\prox}{\mathrm{prox}}
\newcommand{\etol}{\epsilon_{\mathrm{tol}}}
\newcommand{\R}{\mathbb{R}}
\newcommand{\NN}{\mathbb{N}}
\newcommand{\C}{\mathbb{C}}
\DeclareMathOperator*{\argmin}{arg\,min}
\newcommand{\F}{\mathcal{F}}
\newcommand{\J}{\mathcal{J}}
\newcommand{\K}{\mathcal{K}}
\newcommand{\T}{\mathcal{T}}
\newcommand{\N}{\mathcal{N}}
\newcommand{\soft}{\mathcal{S}}
\newcommand{\M}{\mathcal{M}}
\newcommand{\D}{\mathcal{D}}
\newcommand{\G}{\mathcal{G}}
\newcommand{\xt}{\mathbf{x}_{t}}
\newcommand{\xtp}{\mathbf{x}_{t+1}}
\newcommand{\xpp}{\mathbf{x}_{p+1}}
\newcommand{\ztp}{\mathbf{z}_{t+1}}
\newcommand{\utp}{\mathbf{u}_{t+1}}
\newcommand{\vtp}{\mathbf{v}_{t+1}}
\newcommand{\Ccal}{\mathcal{C}}
\newcommand{\Scal}{\mathcal{S}}
\newcommand{\Xcal}{\mathcal{X}}
\newcommand{\Ycal}{\mathcal{Y}}
\newcommand{\gi}{\gbf_i}
\newcommand{\tl}{t_{l}}
\newcommand{\tlp}{t_{l+1}}
\newcommand{\epst}{\varepsilon_{t}}
\newcommand{\epsp}{\varepsilon_{p}}
\newcommand{\epstp}{\varepsilon_{t+1}}
\newcommand{\taut}{\tau_t}
\newcommand{\gj}{\gbf_j}
\newcommand{\xhat}{\hat{\mathbf{x}}}
\newlength{\widthOfItem}
\renewenvironment{itemize}{%
	\begin{Itemize}
		\setlength{\itemsep}{0.5\baselineskip}
		\setlength{\labelwidth}{2em}
		\setlength{\listparindent}{.32in}%
		\setlength{\leftmargin}{.32in}
		\setlength{\rightmargin}{0in}
		\settowidth{\widthOfItem}{\labelitemi}
		\setlength{\labelsep}{\leftmargin-\widthOfItem}
		
		\singlespacing}{%
	\end{Itemize}}
\newtheorem{theorem}{Theorem}[chapter]%To link the theorem to each chapter uncomment the chapter option
\newtheorem{lemma}{Lemma}%[theorem]% To link each lemma to a theorem uncomment the theorem option
\begin{document}

 % % % % % % % % % % % % % % % % % % % % % % % % % % % % % % % % % % % % % %
 % Remember - You MUST get a .bst file that matches the Journal in your
 % field that you choose as your Reference example
 % NONE of these examples will satisfy the Graduate Editorial Office
 % if they don't match your Journal example!!!!
 % NOTE: If you use a numbered reference system and your references
 % are set in parentheses rather than brackets you need to select the
 % Natbib option "numbers sort and compress" in the packages.tex file
 % % % % % % % % % % % % % % % % % % % % % % % % % % % % % % % % % % % % % %

 %Note that the path separator is a forward slash NOT a back slash
 %Place YOUR .bst file in the bst folder and use that filename (without the .bst extension)
 % as your Bibliography Style file

%\bibliographystyle{bst/abbrv}
%\bibliographystyle{bst/abbrvnat}
%\bibliographystyle{bst/abbrvurl_uf}
%\bibliographystyle{bst/alphaurl_uf}
%\bibliographystyle{bst/apa-good}
%\bibliographystyle{bst/Chicago_Web}
%\bibliographystyle{bst/ecology_web}
%\bibliographystyle{bst/IEEEtran}
%\bibliographystyle{bst/mla_web}
%\bibliographystyle{bst/mla-good}
%\bibliographystyle{bst/plainnat}
%\bibliographystyle{bst/plainurl_uf}
%\bibliographystyle{bst/Science_Web}
%\bibliographystyle{bst/uf_econ}
%\bibliographystyle{bst/uffull}
%\bibliographystyle{bst/ufinit}
%\bibliographystyle{bst/unsrtnat}
%\bibliographystyle{bst/unsrturl_uf}
%\bibliographystyle{bst/plain}
%\bibliographystyle{bst/ufinit}
%\bibliographystyle{bst/plainurl_uf}
\bibliographystyle{bst/splncs04}

%-----------------------------------------------------------------------%

\maketitle % % % % Creates the Title page from the information entered in userinfo.tex
\makecopyright

%------------------------------------------%

\dedication{% Add your text for the dedication here between the center tags
\addvspace{4.25in}
\begin{center}\singlespacing
I dedicate this dissertation to my family and friends.\\
\end{center}
} % %Creates the dedication - if your dedication is more than a single line
% % % % % % % % % % % % % % % % % %you will need to reduce the vspace amount to keep the text centered verticlly
% % % % % % % % % % % % % % % % % %optional - comment or delete if you are not dedicating to anyone,

%------------------------------------------%

% Make sure to keep the text within the brackets and the output should turn out correct
\acknowledge{Thank you to my advisor, Yunmei Chen, for all her help and motivation these past five years. Thank you to my friends and family.}
 % % % %Required - There is no requirement to acknowledge a particular person
% % % % % % % % % % % % % % % % %but you must acknowledge someone (funding source, committee chair, spouse)?

%------------------------------------------%

% This file includes the file which creates the table of contents %
% This creates your table of contents, list of figures, and list of tables
% the pdfbookmark line adds the word to the bookmarks of the pdf without adding it to the TOC itself
\pdfbookmark[0]{TABLE OF CONTENTS}{tableofcontents}
\tableofcontents %
\listoftables %
\listoffigures %

% Produced list of abbreviations or symbols %
%\printindex[keylist]{KEY TO ABBREVIATIONS}{KEY TO ABBREVIATIONS}{}
%\printindex[mathlist]{KEY TO SYMBOLS}{KEY TO SYMBOLS}{%
%The list shown below gives a brief description of the major mathematical symbols defined in this work. For each
%symbol, the page number corresponds to the place where the symbol is first used.} %
 %This file creates the Table of Contents, List of Figures, and List of Objects (if any)
% % % % % % % %delete or comment the file you want to remove

%------------------------------------------%

%%This is an optional file. A list of abbreviations is NOT even suggested.
%%Best practice is to define the item the first time it is used in the document

%\include{tex/abbreviations}

%------------------------------------------%
% This line adds the word CHAPTER to the TOC just before the listing of the chapter and subsections begins
\addtocontents{toc}{\protect\addvspace{10pt}\noindent{CHAPTER}\protect\hfill\par}{}% This extra line adds the word CHAPTER to the table of contents %
\phantomsection
% Write in only the text of your abstract, all the extra heading jargon is automatically taken care of
\begin{abstract}
This dissertation is devoted to provide advanced nonconvex nonsmooth variational models of (Magnetic Resonance Image) MRI reconstruction, efficient learnable image reconstruction algorithms and parameter training algorithms that improve the accuracy and robustness of the optimization-based deep learning methods for compressed sensing MRI reconstruction and synthesis.

The first part introduces a novel optimization based deep neural network whose architecture is inspired by proximal gradient descent for solving a variational model of the pMRI reconstruction problem without knowledge of coil sensitivity maps. The regularization function consists of the nonlinear combination operator and sparse feature encoder.

The second part is a substantial extension of the preliminary work in the first part by solving the calibration-free fast pMRI reconstruction problem in a discrete-time optimal control framework. The network architecture is determined by the discrete-time dynamic system, which is induced by a designated variational model. The regularization in the variation model contains two parts: one enhances the sparsity of the reconstructed image in images domain; the other is in the k-space domain to further remove high-frequency artifacts.

The third part aims at developing a generalizable Magnetic Resonance Imaging (MRI) reconstruction method in the meta-learning framework. Specifically, we developed a deep reconstruction network induced by a learnable optimization algorithm (LOA) to solve the nonconvex nonsmooth variational model of MRI image reconstruction. We partition these network parameters into two parts: a task-invariant part for the common feature encoder component of the regularization, and a task-specific part to account for the variations in the heterogeneous training and testing data.

The last part aims to synthesize target modality of MRI by using partially scanned k-space data from source modalities instead of fully scanned data that is used in the state-of-the-art multimodal synthesis. We propose to learn three modality-specific feature extraction operators, one for each of these three modalities. Then, we design regularizers of these images by combining these learned operators and a robust sparse feature selection operator. To synthesize the target modality image using source modalities, we employ another feature-fusion operator which learns the mapping from the features that are generated from source modalities to the target modality.

\end{abstract}
 %The abstract is created using this file and userinfo.tex
% % % % % % % % % % %If you have a c-chair you must uncomment that line in userinfo.tex AND find the
% % % % % % % % % % %co-chair lines in ufthesis.cls and un-comment those as well

%-----------------------------------------------------------------------%

% This section encompasses the main body of the paper from all the content through to the biographical sketch

% Chapters to be included (more can be added by creating a new chapter#.tex %
% file and then implementing the \inlcude{chapter#.tex} command as seen below %
\chapter{INTRODUCTION} \label{introduction}
MRI is one of the most prominent non-invasive and non-ionizing medical imaging technologies to generate precise in-vivo tissue images for disease diagnosis and medical analysis. A major interest for the MRI imaging community is to accelerate data acquisition speed during MRI scanning. The typical scanning time for one sequence of MR images often requires at least 30 minutes depending on the part of the body getting scanned, which is much longer than most other imaging modalities. However, infants, elderly people and patients who have serious decease and cannot control body movement may not stay tranquil during the long time scanning. A prolonged scan process will cause patients discomfort or introduce motion artifacts into MR images and degrade diagnostic accessibility. Therefore, speeding up MRI scanning is essential and significant for improving the MR image quality.
In addition, MRI provides different contrast images from the same anatomy which enrich the anatomical information for both clinical diagnostic and research studies. Different contrast MRI images have similar anatomical structure but highlight different soft tissue and they are scanned in different sequences, so the acquisition time could be prolonged in order to obtain additional MRI sequences. 

MRI data are acquired from the frequency domain (k-space) and the acquisition time is roughly proportional to the k-space phase encoding steps. The common approach to speed up MRI scans is to reduce the number of k-space measurement by skipping phase encoding lines and only acquiring partial k-space data, which reduces the number of sampled signals during data acquisition. however, it violates Nyquist criterion \cite{nyquist1928certain} and causes aliasing artifacts because of the under-sampling.

MRI reconstruction is a process to  recover clear MR images from the undersampled partial k-space data that can be applied for diagnostic and clinical applications.  Compressed sensing (CS) \cite{donoho2006compressed} MRI reconstruction and MRI parallel imaging \cite{sodickson1997simultaneous, pruessmann1999sense,larkman2007parallel} are successful methods that solve for an inverse problem and expedite MRI scans and eliminate artifacts.  

This chapter first introduces the background of fast MRI undersampling and reconstruction; then gives an overview of classical reconstruction methods and deep-learning-based methods for both CS-MRI and Parallel imaging, and lastly provides several well-known methods that proposed during past decades.

%importance of sparsity,importance of different contrast, challenges,

\section{Optimization Models for Compressive Sensing in MRI}

Consider the MR image $ \xbf \in \C^{ \sqrt{N} \times \sqrt{N}} $ is a two-dimensional (2D) array, where there are $N$ pixel numbers in the image $\xbf$. It is common and convenient to write $\xbf$ as a column vector with dimension $N$ in the numerical algorithms, and $ \xbf_i$ is the $i$-th intensity value of image $\xbf$ for $i \in \{1, \cdots, N \}$. 
The under-sampled k-space measurement are related to the image by the following formula \cite{haldar2010compressed}:
\begin{equation}\label{intro_MRI}
    \ybf = \Pbf \F  \xbf + \nbf,
\end{equation}
where $\ybf \in \C^p$ is the measurements in k-space with total of $p$ sampled data points, $\xbf \in \C^{N\times 1}$ is to be reconstructed MR image with $N$ pixels, $ \F \in \C^{N \times N}$ is the 2D discrete Fourier transform (DFT) matrix, and $ \Pbf \in \R^{p \times N}$ $(p< N)$ is the binary matrix representing the sampling trajectory in k-space. $\mathbf{n}$ represents the data acquisition noise in k-space. The goal of MRI reconstruction is to solve for $\xbf$ from equation \eqref{intro_MRI} given the partial k-space data $\ybf$. 

Consider that $\ybf$ does not contain noises so $\nbf = 0$ and we can get $ \Pbf \F \xbf = \ybf$, where the operation $\Pbf \F \in \C^{p \times N}$ with $p << N$ is under-determined system which results in infinitely many complex solutions.
The straightforward solution is the zero-filled reconstruction method formulated as $ \xbf = \F^{-1} \Pbf^{-1} \ybf$, which is essentially fill the uncollected k-space data with zeroes to obtain a pseudo full k-space data $ \Pbf^{-1} \ybf$ and then apply inverse Fourier transform $\F^{-1}$.  However, this solution results in truncation effects which in the form of \emph{Gibbs Ringing} artifacts and blurring \cite{wood1985truncation}.

For multi-coil acquisitions in parallel MRI, SENSE \cite{pruessmann1999sense} reconstruct pMRI in image domain using the pre-calculated diagonal matrix called coil sensitivity map $ \Sbf_j \in \R^{N \times N}$ of the $j$th coil, which is either given or estimated in advance. $ \Pbf \F (\Sbf_j \cdot \xbf) \in \C^p$ is the vector of undersampled Fourier coefficients and k-space data acquisition at $j$-th receiver coil is expressed as $\ybf_j = \Pbf \F (\Sbf_j \cdot \xbf) + \nbf_j$ for $j = 1,\cdots, N_c $,  where $\cdot$ denotes element-wise multiplication, $ \nbf_j$ represents the measurement noise in k-space at the $j$-th receiver coil.

Due to the ill-posedness of the inverse problem \eqref{intro_MRI}, design a proper regularization by incorporating prior information of the image to be reconstructed is very necessary. Compressive sensing (CS) theory implies that when the signal $\xbf$ is sparse or can be sparsely represented, significantly less measurement is enough to reconstruct an image with well-preserved quality. Given the raw data measurements $\ybf$, the general CS-based model for reconstructing original single-coil MR image $\xbf$  can be written as:
\begin{equation}\label{eq:single_MRI}
    \min_{\xbf} \| \Pbf \F \xbf - \ybf\|^2 +  \mu R(\xbf),
\end{equation}
and SENSE-based model for pMRI reconstruction  can be written as:
\begin{equation}\label{eq:PFS}
    \min_{\xbf} \ \sum^{N_c}_{j=1} \frac{1}{2} \| \Pbf \Fbf (\sbf_j \cdot \xbf)- \fbf_j\|^2 + \mu R(\xbf),
\end{equation}
where the regularizer $R$ provides prior information of reconstructed image and $\mu > 0$ is a weight parameter that balance data fidelity term and regularization term. 
In past decades, total variation (TV) is widely recognized as a successful candidate of regularizer, which is formulated as $ TV(\xbf) = \sum^N_{i=1} \| D_i \xbf \|$ where $ D_i \in \R^{2\times N}$ is binary with entries $-1$ and $1$ corresponding to forward finite differences to partial derivatives along the first and second coordinates. Since TV is $l_1$ norm of the gradient $\| D_i \xbf \|$, it is a common choice to be used as sparsifying transform in CS method \cite{rudin1992nonlinear}.

\section{Learnable Optimization Algorithm for MRI reconstruction}

%1. method don't need regularization, learn proximal point directly. data-determined 2. do have a math formula of regularization. ISTA ADMM but they just imitate their algorithm. Existing problem they are not interpretable. Motivation of develop our algorithm.

Successful conventional methods for single-coil MRI reconstruction including but not limit to the following methods: 1) TV based models \cite{lustig2007sparse, lysaker2003noise, wu2010augmented, zhang2010bregmanized, valkonen2014primal};
Several popular algorithms for solving TV regularized image reconstruction problems including the Alternating
Direction Method of Multipliers (ADMM) \cite{boyd2011distributed, goldstein2009split, wu2010augmented}, the Primal-Dual Hybrid Gradient (PDHG) algorithm \cite{chambolle2011first, esser2010general, zhu2008efficient} and the iterative shrinkage-thresholding algorithm (ISTA) \cite{beck2009fast}. 2) Wavelet-based models \cite{guerquin2011fast, chen2014exploiting} which utilizes sparsity assumptions in the wavelet domain. 3) Low-rank based models \cite{lyra2012improved, dong2014compressive} often used for dynamic MRI. 4) Patch or non-local based models \cite{yang2012nonlocal, trzasko2011local, eksioglu2016decoupled} consider the sparse representation by capturing geometric self-similarity between patches. 5) Dictionary learning based models \cite{ravishankar2010mr, zhan2015fast, huang2014bayesian} that take advantage of dictionary learning to further promote sparsity of the signal, etc. 6) Parallel Imaging contains two classes: k-spcae methods that use coil-by-coil auto-calibration such as GRAPPA \cite{griswold2002generalized}, SPIRiT \cite{lustig2010spirit}  and image domain methods such as SENSE \cite{pruessmann1999sense} which solves for an optimization problem that requires perfect coil sensitivity maps knowledge.

Traditional methods employ handcrafted regularization terms, and the solution algorithm follows a theoretical justification. However, these regularization terms are excessively simplified, for example, TV regularized model tend to reconstruct images that are "piecewisely constant" where the fine structure details can be smeared due to its promotion of enforcing sparse gradient.  
In addition, it is hard to tune the associated parameter  and often require hundreds even thousands of iterations to converge to capture subtle details and satisfy clinic diagnostic quality \cite{chen2021variational}.

Deep learning based model leverages large dataset and further explore the potential improvement of reconstruction performance comparing to traditional methods and has successful applications in clinic field. However, training generic deep neural networks (DNNs) may prone to over-fitting when data is scarce. Also, the deep network structure behaves like a black box without mathematical interpretation. To improve the interpretability of the relation between the topology of the deep model and reconstruction results, a new emerging class of deep learning-based methods known as \emph{learnable optimization algorithms} (LOA) have attracted much attention e.g. \cite{lundervold2019overview, liang2020deep, sandino2020compressed, mccann2017convolutional, zhou2020review, singha2021deep, chandra2021deep, ahishakiye2021survey,liu2020deep, liang2020deep}. LOA was proposed to map existing optimization algorithms to structured networks where each phase of the networks correspond to one iteration of an optimization algorithm.

For example, proximal point network \cite{bian2020deep,bian2022optimal} which does not require specified form of regularization and iterates the following two steps:
\begin{subequations}
\begin{align}
    r_k & = \xbf_{k-1} - \alpha_k \nabla f(\xbf_{k-1}),\\
    \xbf_{k} & = \prox_{\alpha_k R} (r_k) ,
\end{align}
\end{subequations}
where $\alpha >0$ is the step size, and function $f$ is the data fidelity in \eqref{intro_MRI}. The proximity operator $\prox_{\alpha_k R} $ can be parametrized as a deep learnable denoiser which could be replaced as a denoising network \cite{zhang2017learning}. Chapters \ref{pMRI} and \ref{optimalcontrol} present more detail and some variations of proximal point network.
In ADMM-Net \cite{NIPS2016_6406}, every epoch of the deep reconstruction network architecture mimics one iteration of the ADMM algorithm, where ADMM-Net replaces the discrete gradient operator $D$ with a convolution operator which is a linear combination of a set of given filters. 

CS theory for image reconstruction problems often require a sparsifying transform $\Psi \in \R^{n \times n}$ (eg: wavelet transform) in the regularization since natural image are usually not sparse,  a single $\ell_1$ is not enough. Therefore we can obtain a different reconstruction model: 
\begin{equation}
    \min_{\xbf} f(\xbf) +  \mu R( \Psi \xbf),
\end{equation}
where we write $f$ as the data fidelity term and $\Psi \xbf$ is a sparse vector. 
ISTA-NET$^+$ \cite{zhang2018ista} consider solving $ \ubf = \Psi \xbf$ and reconstruct $\xbf = \Psi^{\top} \ubf $. The transformers $\Psi$ and $\Psi^{\top}$ are replaced by multilayer CNNs $H^{(k)}$ and $\tilde{H}^{(k)}$ respectively and then iterate the following  scheme:
\begin{subequations}
\begin{align}
    r_k & = \xbf_{k-1} - \alpha_k \nabla f(\xbf_{k-1}),\\
    \xbf_{k} & = \tilde{H}^{(k)} \prox_{\alpha_k R} ( H^{(k)} r_k)  = \tilde{H}^{(k)} S_{\theta_k} ( H^{(k)} r_k),
\end{align}
\end{subequations}
where  $S_{\theta_k}$ represents the shrinkage operator since $R(\ubf) = \mu \| \ubf\|_1$ and $\theta_k = \alpha_k \mu $. The network $H^{(k)}$ and $\tilde{H}^{(k)}$ are parametrized in a symmetric structure but their parameters are learned separately. The network architecture of iterative algorithms that proposed in Chapters \ref{pMRI} and \ref{optimalcontrol} are inspired from ISTA-NET$^+$.
Primal-dual networks (PD-Nets) \cite{adler2018learned,cheng2019model,heide2014flexisp,meinhardt2017learning} are deep networks where each consecutive phase mimics the corresponding iteration of the Primal-Dual Hybrid Gradient algorithm \cite{chambolle2011first}.  PD-net \cite{cheng2019model} solves CS-based MRI reconstruction problem where the primal and dual proximal operators are replaced by multilayer learnable CNN denoisers.

The above mentioned LOAs are only specious imitations of the iterative algorithms and hence lack the backbone of the variational model and any convergence guarantee. For instance, proximal operator \cite{cheng2019model, bian2020deep, zhang2018ista}, matrix transformations \cite{yang2018admm, hammernik2018learning, zhang2018ista}, non-linear operators \cite{yang2018admm, hammernik2018learning}, and denoiser/regularizer \cite{aggarwal2018modl, schlemper2017deep} etc., by CNNs to avoid difficulty for solving non-smooth non-convex problems. These methods simply embeds networks into the unrolling scheme of optimization iterations therefore their network structures is lack of convergence analysis and interpretability, also is prone to overfitting or underfitting problems.

The networks for image reconstruction proposed in Chapters \ref{meta_learning} and \ref{JointRecSyn} are trying to addressed this issue by conducting a convergence guaranteed learnable optimization algorithm (LOA) where the network structure exactly follows the algorithm. Since the minimization problem in \eqref{eq:single_MRI} is nonsmooth and non convex, we first smooth the $\ell_{2,1}$ norm in the regularizer, then design algorithmic unrolling methods with provable convergence. In this dissertation, we consider LOA as the forward image reconstruction optimization process, and we also analyzed backward parameter optimization problem in the training process for updating the learnable parameters. Minimizing the loss function to get iterative training algorithm is a principal problem in machine learning, and bilevel optimization algorithms for parameter training are major considerations in this dissertation.

%For instance, gradient decent algorithm based CNN \cite{hammernik2018learning}, proximal gradient inspired network \cite{cheng2019model, bian2020deep, zhang2018ista}, ADMM inspired \cite{yang2018admm}, Primal dual algorithm inspired \cite{adler2018learned, cheng2019model, heide2014flexisp,meinhardt2017learning}.

\section{Bilevel Optimization Algorithm for Parameter Training}

Bilevel optimization can be formulated in the following form:
\begin{subequations}
\begin{align}
    & \min_{u} \phi(v^*(u), u) \label{introupper}\\
    \text{subject to } &  v^*(u) = \argmin_v \varphi(v,u) , \label{introlower}
\end{align}    
\end{subequations}
where \eqref{introupper} is the \emph{upper/outer-level} optimization problem and the constrained problem \eqref{introlower} is called \emph{lower/inner-level} optimization problem. The solution of lower-level is an argument of upper-level.

In conventional supervised machine learning, the training data is given as a set of pairs of inputs and outputs  $ \D = \{ (\ybf_m, \xbf^*_m )\}_{m=1}^{\M}$, and we want to train the prediction model $ \phi_{\theta}$ which is parameterized by $ \theta \in \Theta$ and maps inputs $ \ybf_m \in \Ycal $ to their corresponding outputs $\xbf^*_m \in \Xcal $. The parameters $\theta$ is learned by minimizing a predefined loss function $ L : \Xcal \times \Xcal \to\R $, which measures the discrepancy between the output $ F_{\theta}(\ybf_m)$ and the reference $\xbf^*_m$, here we denote
\begin{equation}
    \ell( \theta; \D) := \sum_{m = 1}^{\mathcal{M}}L( F_{\theta}(\ybf_m), \xbf^*_m ).
\end{equation}
The goal of the conventional supervised learning is to find optimal parameters $  \hat{\theta}$ such that 
\begin{equation}
     \hat{\theta} = \argmin_{\theta} \ell( \theta; \D).
\end{equation}

In recent machine learning researches, improving the generalization ability of the machine learning model attract extensive attentions. Some supervised learning models including Deep bilevel learning \cite{jenni2018deep} and optimization-based Meta-learning \cite{finn2017model,rajeswaran2019meta,li2017meta,nichol2018reptile,rusu2018meta} are successful strategies that dedicated to model generalization and prevent the estimated model from overfitting. 

In this thesis, we more focused on meta-learning.
In the supervised meta-learning, \emph{meta-knowledge} $ \omega$ is introduced to provide guidance of ``how to learn'' and improve the generalizability to learn new task by learning an algorithm for general tasks. Pre-specified assumptions such as learning-rate, initial parameters and choices of optimizer can vary in meta-learning \cite{huisman2021survey}.  The goal is to find the optimal meta-knowledge  $ \omega$ over a distribution of tasks $ p(\T)$, and $ \omega$ also known as \emph{cross-task} knowledge \cite{hospedales2021meta}. The general setting can be separated as meta-training, meta-validation and meta-testing,  each of these stages associates with disjoint sets of tasks. We denote $ \T_i = ( \D^{tr}_{\tau_i}, \D^{val}_{\tau_i},  \D^{test}_{\tau_i}) , \T_i  \sim p(\T)$, and  meta-training stage uses dataset $ \{ ( \D^{tr}_{\tau_i},  \D^{val}_{\tau_i}) \}^P_{i=1}$ with $P$ tasks where each task has training set and validation set. The meta-testing stage or evaluation stage uses dataset with $Q $ tasks $ \{ ( \D^{tr}_{\tau_i},  \D^{test}_{\tau_i}) \}^Q_{i=1}$, at which point the model is allowed to update parameters for individual task on training set and then the model performance can be evaluated on the testing set after task-specific parameter updating.
The meta-training process can be cast as a bilevel optimization problem:
\begin{subequations}\label{meta-bilevel}
\begin{align}
& \hat{\omega} =   \argmin_{\omega} \sum^{P}_{i=1} \ell_{\tau_i} ( \theta_i(\omega), \omega; \D^{val}_{\tau_i}) \label{outer}\\
 & s.t. \ \ \theta_i(\omega) = \argmin_{\theta} \ell_{\tau_i}(\theta, \omega; \D^{tr}_{\tau_i}), \label{inner}
\end{align}
\end{subequations}
where $\ell_{\tau_i}$ is the task-aware loss function associated with task $\tau_i$. \eqref{inner} is inner/lower/base level optimization that solves for specific parameter for each individual task from its corresponding training set with the task-specific parameter $\omega$ fixed. \eqref{outer} is outer/upper/meta level optimization that learns 
hyper-parameter $\omega$ by minimizing the validation loss.

In recent years, meta-learning methods have demonstrated promising results in various fields with different techniques \cite{hospedales2021meta}. 
Meta-learning techniques can be categorized into three groups  \cite{yao2020automated, lee2018gradient, huisman2021survey}: metric-based methods \cite{koch2015siamese, vinyals2016matching, snell2017prototypical}, model-based methods  \cite{mishra2017simple, ravi2016optimization, qiao2018few, graves2014neural}, and optimization-based methods \cite{finn2017model, rajeswaran2019meta, li2017meta}. Optimization-based methods are often cast as a bilevel optimization problem and exhibit relatively better generalizability for wider task distributions. We mainly focus on optimization-based meta-learning in this paper.

For meta-learning applications, the lower-level problem encounters new tasks and tries to learn the associated features quickly from the training observations, the outer level accumulates task-specific meta-knowledge across previous tasks and the meta-learner provides support for the inner level so that it can quickly adapt to new tasks. e.g. 
The lower-level problem is approximated by one or a few gradient descent steps in many existing optimization-based meta learning applications, such as Model-Agnostic Meta-Learning (MAML) \cite{finn2017model}, and a large number of followup works of MAML proposed to improve  generalization using similar strategy \cite{lee2018gradient, rusu2018meta, finn2018probabilistic, grant2018recasting, nichol2018first, vuorio2019multimodal, yao2019hierarchically,yin2020metalearning}.
Some variant models \cite{rusu2018meta, vuorio2019multimodal, yao2019hierarchically} focus on multiple initial conditions for fast learning which usually relies on fixed optimizers such as SGD with momentum or its variance.
Deep bilevel learning~\cite{jenni2018deep} seeks to obtain better generalization than when trained on one task and generalize well to another task. The model is used to optimize a regularized loss function to find network parameters from the training set and identify hyperparameters so that the network performs well on the validation dataset.

Alternatively,  optimizer oriented methods \cite{andrychowicz2016learning, ravi2016optimization, li2016learning, wichrowska2017learned} focus on learning inner optimizer, the meta-knowledge $\omega$ can be used to define gradient-based optimization steps  for each base learning iteration.
Hyperparameter optimization (HO) shares the same merit with meta-learning, the major difference is HO often considers a single task that split as train data  and validation data, the inner objective is the regularized empirical loss function on train data that seeks to tune the model parameters and the outer objective seeks to tune hyperparameters \cite{franceschi2018bilevel,pedregosa2016hyperparameter,franceschi2017forward,micaelli2020non}. 

\section{Outline of The Dissertation}

The following chapters are organized as follows: \\
Chapter \ref{pMRI} introduces a novel deep neural network  architecture by mapping the robust proximal gradient scheme for fast image reconstruction in parallel MRI (pMRI) with regularization function trained from data. The proposed network does not require knowledge of sensitivity maps, it learns to adaptively combine the multi-coil images from incomplete pMRI data into a single image with homogeneous contrast, which is then passed to a nonlinear encoder to efficiently extract sparse features of the image. 

Chapter \ref{optimalcontrol} aims at developing a novel calibration-free fast parallel MRI (pMRI) reconstruction method incorporate with discrete-time optimal control framework. The reconstruction model is designed to learn a regularization that combines channels and extracts features by leveraging the information sharing among channels of multi-coil images. 
The reconstruction network is cast as a structured discrete-time optimal control system, resulting in an optimal control formulation of parameter training where the parameters of the objective function play the role of control variables. We demonstrate that the Lagrangian method for solving the control problem is equivalent to back-propagation, ensuring the local convergence of the training algorithm.

Chapter \ref{meta_learning} aims at developing a generalizable MRI reconstruction model in the meta-learning framework.
The standard benchmarks in meta-learning are challenged by learning on diverse task distributions. The proposed network learns the regularization function in a variational model and reconstructs MR images with various under-sampling ratios or patterns that may or may not be seen in the training data by leveraging a  heterogeneous dataset.

Generating multi-contrasts/modal MRI of the same anatomy enriches diagnostic information but is limited in practice due to excessive data acquisition time. Chapter \ref{JointRecSyn} proposes a novel deep-learning model for joint reconstruction and synthesis of multi-modal MRI using incomplete k-space data of several source modalities as inputs. The output of our model includes reconstructed images of the source modalities and high-quality image synthesized in the target modality. 
Our proposed model is formulated as a variational problem that leverages several learnable modality-specific feature extractors and a multimodal synthesis module. We propose a learnable optimization algorithm to solve this model, which induces a multi-phase network whose parameters can be trained using multi-modal MRI data. Moreover, a bilevel-optimization framework is employed for robust parameter training. We demonstrate the effectiveness of our approach using extensive numerical experiments.

\chapter{A Optimization based Deep Parallel MRI Reconstruction Network Without Coil Sensitivities}\label{pMRI}

\section{Introduction}
\label{sec:Introduction}

In this chapter, we propose a novel deep neural network architecture by mapping the robust proximal gradient scheme for fast image reconstruction in parallel MRI (pMRI) with regularization function trained from data. The proposed network learns to adaptively combine the multi-coil images from incomplete pMRI data into a single image with homogeneous contrast, which is then passed to a nonlinear encoder to efficiently extract sparse features of the image. Unlike most of existing deep image reconstruction networks, our network does not require knowledge of sensitivity maps, which can be difficult to estimate accurately, and have been a major bottleneck of image reconstruction in real-world pMRI applications. The experimental results demonstrate the promising performance of our method on a variety of pMRI imaging data sets.

Parallel magnetic resonance imaging (pMRI) is a state-of-the-art medical MR imaging technology which surround the scanned objects by multiple receiver coils and collect k-space (Fourier) data in parallel. To accelerate scan process, partial data acquisitions that increase the spacing between read-out lines in k-space are implemented in pMRI. However, reduction in k-space data sampling arising aliasing artifacts in images, which must be removed by image reconstruction process.
There are two major approaches to image reconstruction in pMRI: the first approach are k-space methods which interpolate the non-sampled k-space data using the sampled ones across multiple receiver coils \cite{doi:10.1002/jmri.23639}, such as the generalized auto-calibrating partially parallel acquisition (GRAPPA) \cite{griswold2002generalized}. The other approach is the class of image space methods which remove the aliasing artifacts in the image domain by solving a system of equations that relate the image to be reconstructed and partial k-spaced data through coil sensitivities, such as in SENSitivity Encoding (SENSE) \cite{pruessmann1999sense}.

In this paper, we propose a new deep learning based reconstruction method to address several critical issues of pMRI reconstruction in image space. Consider a pMRI system with $N_c$ receiver coils acquiring 2D MR images at resolution $m\times n$ (we treat a 2D image $\vbf \in \mathbb{C}^{m\times n}$ and its column vector form $\vbf \in \mathbb{C}^{mn}$ interchangeably hereafter). Let $\Pbf \in \mathbb{R}^{p\times mn}$ be the binary matrix representing the undersampling mask with $p$ sample locations in k-space, and $\sbf_i\in\mathbb{C}^{mn}$ the coil sensitivity and $\fbf_i \in \mathbb{C}^{p}$ the \emph{partial} k-space data at the $i$th receiver coil for $i=1,\dots,N_c$. Therefore $\fbf_i$ and the image $\vbf$ are related by $\fbf_i = \Pbf \Fbf (\sbf_i \cdot \vbf) + \mathbf{n}_i$ where $\cdot$ denotes pointwise multiplication of two matrices, and $\mathbf{n}_i$ is the unknown acquisition noise in k-space at each receiver coil. Then SENSE-based image space reconstruction methods can be generally formulated as an optimization problem:
\begin{equation}\label{eq:PFS_chp2}
    \min_{\vbf} \ \sum^{N_c}_{i=1} \frac{1}{2} \| \Pbf \Fbf (\sbf_i \cdot \vbf)- \fbf_i\|^2 + R(\vbf),
\end{equation}
where $\vbf\in \mathbb{C}^{m n}$ is the MR image to be reconstructed, $\Fbf \in \mathbb{C}^{mn\times mn}$ stands for the discrete Fourier transform, and $R(\vbf)$ is the regularization on the image $\vbf$.
 $\| \xbf \|^2 := \|\xbf \|_2^2 = \sum_{j=1}^n |x_j|^2$ for any complex vector $\xbf = (x_1,\dots,x_n)^{\top} \in \mathbb{C}^n$.
There are two critical issues in pMRI image reconstruction using \eqref{eq:PFS_chp2}: availability of accurate coil sensitivities $\{\sbf_i\}$ and proper image regularization $R$.
Most existing SENSE-based reconstruction methods assume coil sensitivity maps are given, which are however difficult to estimate accurately in real-world applications. 
On the other hand, the regularization $R$ is of paramount importance to the inverse problem \eqref{eq:PFS_chp2} to produce desired images from significantly undersampled data, but a large number of existing methods employ handcrafted regularization which are incapable to extract complex features from images effectively.

In this paper, we tackle the two aforementioned issues in an unified deep-learning framework dubbed as pMRI-Net.
Specifically, we consider the reconstruction of multi-coil images $\ubf = (\ubf_1,\dots,\ubf_{N_c}) \in \mathbb{C}^{mnN_c}$ for all receiver coils to avoid use of coil sensitivity maps (but can recover them as a byproduct), and design a deep residual network  which can jointly learn the adaptive combination of multi-coil images and an effective regularization from training data.

The contribution of this paper could be summarized as follows:
Our method is the first ``combine-then-regularize" approach for deep-learning based pMRI image reconstruction. The combination operator integrates multichannel images into single channel and this approach performs better than the linear combination the root of sum-of-squares (SOS) method \cite{pruessmann1999sense}.
This approach has three main advantages: (i) the combined image has homogeneous contrast across the FOV, which makes it suitable for feature-based image regularization and less affected by the intensity biases in coil images; (ii) the regularization operators are applied to this single body image in each iteration, and require much fewer network parameters to reduce overfitting and improve robustness; and (iii) our approach naturally avoids the use of sensitivity maps, which has been a thorny issue in image-based pMRI reconstruction.

\section{Related Work}
\label{sec:related}
Most existing deep-learning (DL) based methods rendering end-to-end neural networks mapping from the partial k-space data to the reconstructed images \cite{WANG2020136,7493320,doi:10.1002/mp.12600,Quan2018CompressedSM,8417964}. The common issue with this class of methods is that the DNNs require excessive amount of data to train, and the resulting networks perform similar to ``black-boxes'' which are difficult to interpret and modify.

In recent years, a class of DL based methods improve over the end-to-end training by selecting the scheme of an iterative optimization algorithm and prescribe a phase number $T$, map each iteration of the scheme to one phase of the network. These methods are often known as the learned optimization algorithms (LOAs),  \cite{Aggarwal_2019,cheng2019model,doi:10.1002/mrm.26977,8550778,NIPS2016_6406,zhang2018ista,8067520}.
For instance, ADMM-Net \cite{NIPS2016_6406},  ISTA-Net$^+$ \cite{zhang2018ista}, and cascade network \cite{8067520} are regular MRI reconstruction.
For pMRI: Variational network (VN)\cite{doi:10.1002/mrm.26977} introduced gradient descent method by applying given sensitivities $\{\sbf_i\}$. MoDL \cite{Aggarwal_2019} proposed a recursive network by unrolling the conjugate gradient algorithm using a weight sharing strategy.
 Blind-PMRI-Net \cite{10.1007/978-3-030-32251-9_80}  designed three network blocks to alternately update multi-channel images, sensitivity maps and the reconstructed MR image using an iterative algorithm based on half-quadratic splitting. The network in \cite{10.1007/978-3-030-32248-9_5} developed a Bayesian framework for joint MRI-PET reconstruction. VS-Net \cite{10.1007/978-3-030-32251-9_78} derived a variable splitting optimization method. However, existing methods still face the lack of accurate coil sensitivity maps and proper regularization in the pMRI problem.

Recently, a method called  DeepcomplexMRI \cite{WANG2020136} developed an end-to-end learning without explicitly using coil sensitivity maps to recover channel-wise images, and then combine to a single channel image in testing.

This paper proposes a novel deep neural network architecture which integrating the robust proximal gradient scheme for pMRI reconstruction without knowledge of coil sensitivity maps. Our network learns to adaptively combine the channel-wise image from the incomplete data to assist the reconstruction and learn a nonlinear mapping to efficiently extract sparse features of the image by using a set of training data on the pairs of under-sampled channel-wise k-space data and corresponding images. The roles of the multi-coil image combination operator and sparse feature encoder are clearly defined and jointly learned in each iteration. As a result, our network is more data efficient in training and the reconstruction results are more accurate.

\section{Proposed Method}
\label{sec:proposed}
\subsection{Joint-channel Image Reconstruction pMRI without Coil Sensitivities}
We propose an alternative pMRI reconstruction approach to \eqref{eq:PFS_chp2} by recovering images from individual receiver coils jointly.
Denote $\ubf_i$ the MR image at the $i$th receiver coil, i.e., $\ubf_i = \sbf_i \cdot \vbf$, where the sensitivity $\sbf_i$ and the full FOV image $\vbf$ are both unknown in practice.
Thus, the image $\ubf_i$ relates to the partial k-space data $\fbf_i$ by $\fbf_i = \Pbf \Fbf \ubf_i + \mathbf{n}_i$, and hence the data fidelity term is formulated as least squares $(1/2) \cdot \|\Pbf \Fbf \ubf_i - \fbf_i\|^2$.
We also need a suitable regularization $R$ on the images $\{\ubf_i\}$.
However, these images have vastly different contrasts due to the significant variations in the sensitivity maps at different receiver coils.
Therefore, it is more appropriate to apply regularization to the (unknown) image $\vbf$.

To address the issue of regularization, we propose to first learn a nonlinear operator $\J$ that combines $\{\ubf_i\}$ into the image $\vbf = \J (\ubf_1,\dots,\ubf_{N_c}) \in \mathbb{C}^{m\times n}$ with homogeneous contrast, and apply a regularization on $\vbf$ with a parametric form $\| \G (\vbf) \|_{2,1}$ by leveraging the robust sparse selection property of $\ell_{2,1}$-norm and shrinkage threshold operator. Here $\G(\vbf)$ represents a nonlinear sparse encoder trained from data to effectively extract complex features from the image $\vbf$.
Combined with the data fidelity term above, we propose the following pMRI image reconstruction model:
\begin{equation}\label{eq:m}
\ubf(\fbf; \Theta) = \argmin_{\ubf}\ \frac{1}{2} \sum^{N_c}_{i=1} \| \textbf{PF} \ubf_i - \fbf_i \|^2_2  +  \| \G \circ \J  (\ubf)\|_{2,1},
\end{equation}
where $\ubf = (\ubf_1,\dots,\ubf_{N_c})$ is the multi-channel image to be reconstructed from the pMRI data $\fbf=(\fbf_1,\dots,\fbf_{N_c})$, and $\Theta=(\G,\J)$ represents the parameters of the deep networks $\G$ and $\J$.
The key ingredients of \eqref{eq:m} are the nonlinear combination operator $\J$ and sparse feature encoder $\G$, which we describe in details in Section \ref{subsec:network}.
Given a training data set consisting of  $J$ pairs $\{ (\fbf^{[j]}, \hat{\ubf}^{[j]} ) \,|\, \ 1\le j\le J \}$, where $\fbf^{[j]}=(\fbf^{[j]}_1,\dots,\fbf^{[j]}_{N_c})$ and $\hat{\ubf}^{[j]}=(\hat{\ubf}^{[j]}_1,\dots,\hat{\ubf}^{[j]}_{N_c})$ are respectively the partial k-space data and the ground truth image reconstructed by full k-space data of the $j$th image data, our goal is to learn $\Theta$ (i.e., $\G$ and $\J$) from the following bi-level optimization problem:
\begin{equation}
    \label{eq:bilevel}
    \min_{\Theta} \frac{1}{J} \sum_{j=1}^J \ell(\ubf(\fbf^{[j]};\Theta), \hat{\ubf}^{[j]}),\ \mbox{s.t.}\ \ubf(\fbf^{[j]};\Theta)\ \mbox{solves \eqref{eq:m} with data $\fbf^{[j]}$},
\end{equation}
where $\ell(\ubf,\hat{\ubf})$ measures the discrepancy between the reconstruction $\ubf$ and the ground truth $\hat{\ubf}$.
To tackle the lower-level minimization problem in \eqref{eq:bilevel}, we construct a proximal gradient network with residual learning as an (approximate) solver of \eqref{eq:m}.
Details on the derivation of this network are provided in the next subsection.

\subsection{Proximal Gradient Network with Residual Learning}
\label{subsec:pg}
If the operators $\J$ and $\G$ were given, we can apply proximal gradient descent algorithm to approximate a (local) minimizer of \eqref{eq:m} by iterating
\begin{subequations}\label{eq:bui}
\begin{align}
\bbf_i^{(t)} & =  \ubf_i^{(t)} - \rho_t \Fbf^{\top}  \Pbf^{\top} (\Pbf \Fbf \ubf_i^{(t)} - \fbf_i), \label{eq:bi}  \\
\ubf_i^{(t+1)} &  = [\prox_{\rho_t\|\G\circ \J (\cdot) \|_{2,1}}(\bbf^{(t)})]_i, \quad 1\le i \le N_c \label{eq:ui} \end{align}
\end{subequations}
where $\bbf^{(t)} = (\bbf_1^{(t)},\dots,\bbf_{N_c}^{(t)})$, $[\xbf]_i = \xbf_i \in \mathbb{C}^{mn}$ for any vector $\xbf \in \mathbb{C}^{mn N_c}$, $\rho_t>0$ is the step size, and $\prox_{g}$ is the proximal operator of $g$ defined by
\begin{equation}
    \prox_{g}(\bbf) = \argmin_{\xbf} g(\xbf) + \frac{1}{2} \| \xbf - \bbf \|^2.
\end{equation}
The gradient update step \eqref{eq:bi} is straightforward to compute and fully utilizes the relation between the partial k-space data $\fbf_i$ and the image $\ubf_i$ to be reconstructed as derived from MRI physics.
The proximal update step \eqref{eq:ui}, however, presents several difficulties: the operators $\J$ and $\G$ are unknown and need to be learned from data, and the proximal operator $\prox_{\rho_t\|\G\circ \J (\cdot) \|_{2,1}}$ most likely will not have closed form and can be difficult to compute.
Assuming that we have both $\J$ and $\G$ parametrized by convolutional networks, we adopt a residual learning technique by leveraging the shrinkage operator (as the proximal operator of $\ell_{2,1}$-norm $\|\cdot \|_{2,1}$) and converting \eqref{eq:ui} into an explicit update formula.
To this end, we parametrize the proximal step \eqref{eq:ui} as an implicit residual update:
\begin{equation}\label{eq:ui_update}
    \ubf_i^{(t+1)} = \bbf_i^{(t)} + [\rbf(\ubf_1^{(t+1)}, \cdots, \ubf_{N_c}^{(t+1)})]_i,
\end{equation}
where $\rbf=\tilde{\J} \circ \tilde{\G} \circ \G \circ \J$ is the residual network as the composition of $\J$, $\G$, and their adjoint operators $\tilde{\J}$ and $\tilde{\G}$. These four operators are learned separately to increase the capacity of the network.
To reveal the role of nonlinear shrinkage selection in \eqref{eq:ui_update}, consider the original proximal update \eqref{eq:ui} where
\begin{equation}\label{eq:u_prox}
    \ubf^{(t+1)} = \argmin_{\ubf} \|\G \circ \J (\ubf) \|_{2,1} + \frac{1}{2 \rho_t} \| \ubf - \bbf^{(t)}\|^2.
\end{equation}
For certain convolutional networks $\J$ and $\G$ with rectified linear unit (ReLU) activation, $\| \ubf - \bbf^{(t)}\|^2$ can be approximated by $\alpha \| \G\circ \J(\ubf) - \G\circ \J(\bbf^{(t)})\|^2$ for some $\alpha > 0$ dependent on $\J$ and $\G$ \cite{zhang2018ista}.
Substituting this approximation into \eqref{eq:u_prox}, we obtain that
\begin{equation}\label{eq:GJu}
    \G \circ \J (\ubf^{(t+1)}) = \soft_{\alpha_t} ( \G \circ \J (\bbf^{(t)})),
\end{equation}
where $\alpha_t = \rho_t/\alpha$, $\soft_{\alpha_k}(\xbf) = \prox_{\alpha_k \|\cdot \|_{2,1}} (\xbf) = [\mathrm{sign}(x_i) \max(|x_i| - \alpha_k, 0)] \in \mathbb{R}^n$ for any vector $\xbf = (x_1,\dots,x_n)\in \mathbb{R}^n$ is the soft shrinkage operator.
Plugging \eqref{eq:GJu} into \eqref{eq:ui_update}, we obtain an explicit form of \eqref{eq:ui}, which we summarize together with \eqref{eq:bi} in the following scheme:
\begin{subequations}\label{eq:bu}
\begin{align}
\bbf_i^{(t)} & =  \ubf_i^{(t)} - \rho_t \Fbf^{\top}  \Pbf ^{\top} (\Pbf \Fbf \ubf_i^{(t)} - \fbf_i), \label{eq:b}  \\
\ubf_i^{(t)} & = \bbf_i^{(t)} + [\tilde{\J} \circ \tilde{\G} \circ \soft_{\alpha_t} ( \G \circ \J (\bbf^{(t)}))]_i, \quad 1\le i \le N_c
\label{eq:u}
\end{align}
\end{subequations}
Our proposed reconstruction network thus is composed of a prescribed $T$ phases, where the $t$th phase performs the update of \eqref{eq:bu}.
With a zero initial  $ \{ \ubf_i^{(0)} \} $ and partial k-space data $\{\fbf_i\}$ as input, the network performs the update \eqref{eq:bu} for $1\le t\le T$ and finally outputs $\ubf^{(T)}$.
This network serves as a solver of \eqref{eq:m} and uses $\ubf^{(T)}$ as an approximation of the true solution $\ubf(\fbf;\Theta)$. Hence, the constraint in \eqref{eq:bilevel} is replaced by this network for every input data $\fbf^{[j]}$.

\subsection{Network Architectures and Training}\label{subsec:network}
 \begin{figure}
\includegraphics[width=1\textwidth]{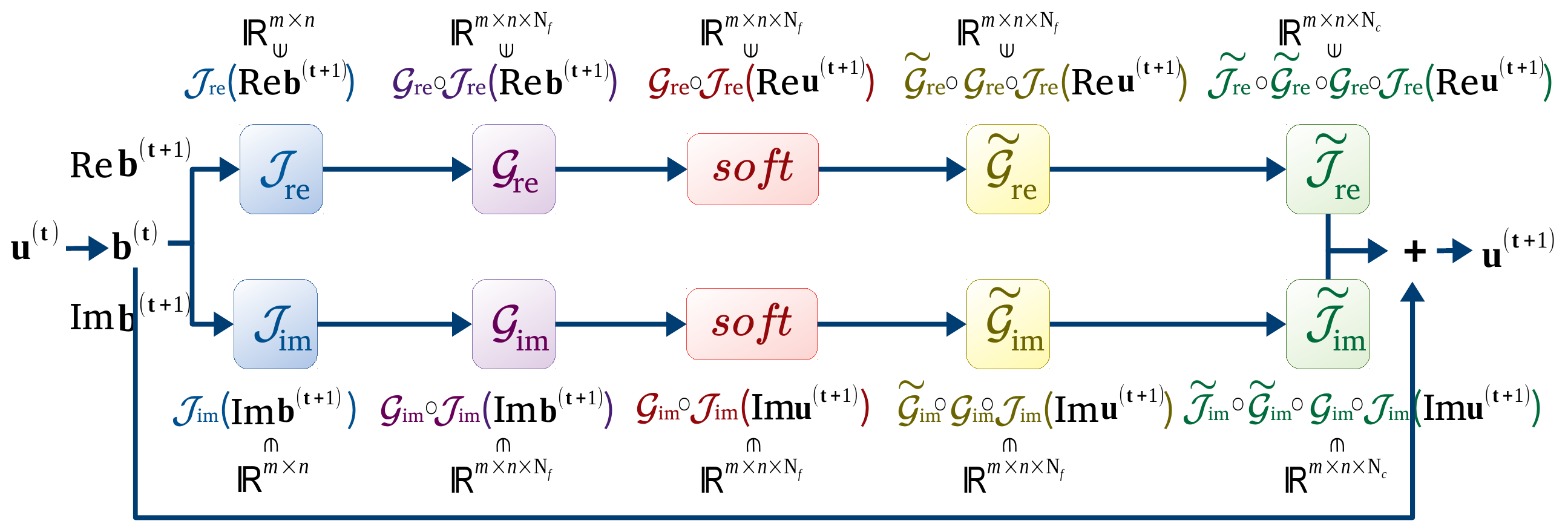}
\caption{Network structure paradigm in $t+1$th phase. $ \text{Re} \bbf^{(t+1)} , \text{Re} \ubf^{(t+1)} \in  \R^{ m \times n \times N_c}$ and $ \text{Im} \bbf^{(t+1)},  \text{Im} \ubf^{(t+1)} \in \R^{ m \times n \times N_c} $ represent for real and imaginary part of $ \bbf^{(t)} $ and $  \ubf^{(t)}$ respectively. } \label{fig1}
\end{figure}

\begin{figure}
\includegraphics[width=\textwidth]{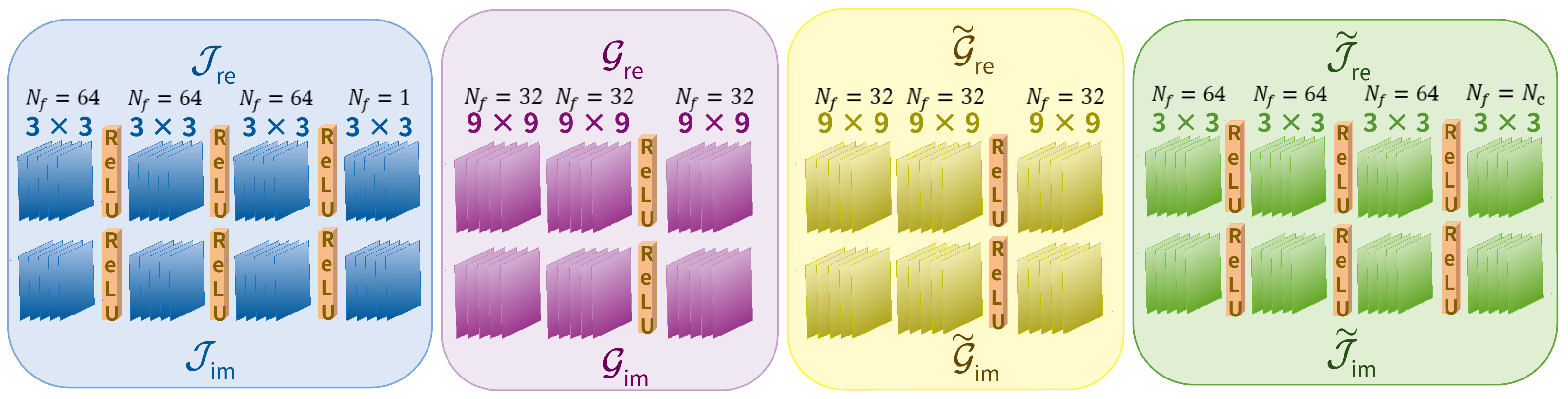}
\caption{ Structure of each convolutional operator. The weights of real and imaginary part are not shared in the network, while both real and imaginary convolutional operators have the same structure.} \label{fig2}
\end{figure}

We set $\J$ as a convolutional network with $N_l=4$ layers and each linear convolution of kernel size $3 \times 3$. The first $N_l-1$ layers have $N_f=64$ filter kernels, and $N_f=1$ in the last layer. Each layer follows an activation ReLU except for the last layer. The operator $\G$ being set as the same way except that $N_f=32$ and kernel size is $ 9\times 9$.
Operators $\tilde{ \J}$ and $\tilde{ \G }$ are designed in symmetric structures as $\J$ and $ \G$ respectively.
We treat a complex tensor as a real tensor of doubled size, and apply convolution separately.  More details on network structure are provided in Fig. \ref{fig1} and \ref{fig2}.  The output of each real and imaginary convolutional operators corresponds to different colors.

The training data $(\fbf,\hat{\ubf})$ consists of $J$ pairs $\{( \fbf_{i}^{[j]}, \hat{\ubf}_{i}  ^{[j]}) \,|\, 1 \le i \le N_c,\ 1\le j\le J \}$.   
To increase network capacity, we allow varying operators of \eqref{eq:bu} in different phases.
Hence  $\Theta = \{ \rho_t,\alpha_t,\J^{(t)},\G^{(t)},\tilde{ \G}^{(t)}, \tilde{ \J}^{(t)}\,|\, 1\le t \le T \}$ are the parameters to be trained.
Based on the analysis of loss functions \cite{7797130, hammernik2017l2, doi:10.1002/mp.13628}, the optimal parameter $ \Theta$ can be solved by minimizing the loss function:
We set the discrepancy measure $\ell$ between the reconstruction $\ubf$ and the corresponding ground truth $\hat{\ubf}$ in \eqref{eq:bilevel} as follows,
\begin{equation}
\label{eq:loss}
\begin{aligned}
\ell(\ubf,\hat{\ubf}) = \| \sbf(\ubf) - \sbf(\hat{\ubf})\|_2 + \gamma \| |\J(\ubf)| - \sbf(\hat{\ubf})\|_2
\end{aligned}
\end{equation}
where $\sbf(\ubf)= (\sum_{i=1}^{N_c} |\ubf_i|^2)^{1/2} \in\mathbb{R}^{mn}$ is the pointwise root of sum of squares across the $N_c$ channels of $\ubf$, $|\cdot|$ is the pointwise modulus, and $\gamma>0$ is a weight function. We also tried replacing the first by $(1/2)\cdot \|\ubf - \hat{\ubf}\|_2^2$, but it seems that the one given in \eqref{eq:loss} yields better results in our experiments. The second term of \eqref{eq:loss} can further improve accuracy of the magnitude of the reconstruction. The initial guess (also the input of the reconstruction network) of any given pMRI $\fbf^{[j]}$ is set to the zero-filled reconstruction $ \Fbf^{-1} \fbf^{[j]}$, and the multi-channel image $\ubf^{(T)}(\fbf^{[j]}; \Theta)$ is the output of the network \eqref{eq:bu} after $T$ phases. In addition, $  \J( {\ubf}^{(T)}(\fbf^{[j]}; \Theta))$ is the final single body image reconstructed as a by-product (complex-valued).

\section{Experimental Results}
\begin{table}[t]
\centering
\caption{Quantitative measurements for reconstruction of Coronal FSPD data. } \label{tab:fs}
\resizebox{\linewidth}{16mm}{ 
\begin{tabular}{cccc}
\toprule
Method   & PSNR                    & SSIM              & RMSE          \\\midrule
GRAPPA\cite{griswold2002generalized}   & 24.9251$\pm$0.9341    & 0.4827$\pm$0.0344  & 0.2384$\pm$0.0175 \\
SPIRiT\cite{doi:10.1002/mrm.22428}  & 28.3525$\pm$1.3314    & 0.6509$\pm$0.0300  & 0.1614$\pm$0.0203 \\
VN\cite{doi:10.1002/mrm.26977}       & 30.2588$\pm$1.1790    & 0.7141$\pm$0.0483  & 0.1358$\pm$0.0152 \\
DeepcomplexMRI\cite{WANG2020136} & 36.6268$\pm$1.9662   & 0.9094$\pm$0.0331 & 0.0653$\pm$0.0085 \\
pMRI-Net ~&~ \textbf{37.8475$\pm$1.2086} ~&~ \textbf{0.9212$\pm$0.0236} ~&~ \textbf{0.0568$\pm$0.0069}~ \\ \bottomrule
\end{tabular}}
\end{table}
\label{sec:experiment}
 \begin{table}[t]
\centering
\caption{Quantitative measurements for reconstruction of Coronal PD data. } \label{tab:pd}
\resizebox{\linewidth}{16mm}{ 
\begin{tabular}{cccc}
\toprule
Method         & PSNR                & SSIM                & RMSE           \\ \midrule
GRAPPA\cite{griswold2002generalized} & 30.4154$\pm$0.5924  & 0.7489$\pm$0.0207   & 0.0984$\pm$0.0030 \\
SPIRiT\cite{doi:10.1002/mrm.22428}   & 32.0011$\pm$0.7920  & 0.7979$\pm$0.0306   & 0.0824$\pm$0.0082 \\
VN \cite{doi:10.1002/mrm.26977}              & 37.8265$\pm$0.4000    & 0.9281$\pm$0.0114    & 0.0422$\pm$0.0036 \\
DeepcomplexMRI \cite{WANG2020136} & 41.5756$\pm$0.6271  & 0.9679$\pm$0.0031   & 0.0274$\pm$0.0018 \\
pMRI-Net   ~&~ \textbf{42.4333$\pm$0.8785} ~&~ \textbf{0.9793$\pm$0.0023} ~&~ \textbf{0.0249$\pm$0.0024}\\ \bottomrule
\end{tabular}}
\end{table}
\begin{figure*}
\centering
\includegraphics[width=0.14\linewidth, angle=180]{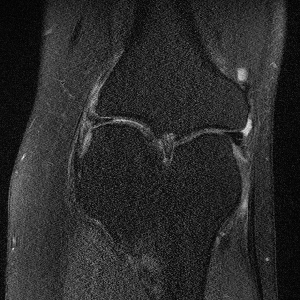}
\includegraphics[width=0.14\linewidth, angle=180]{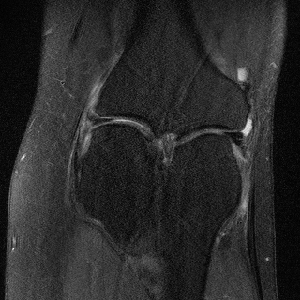}
\includegraphics[width=0.14\linewidth, angle=180]{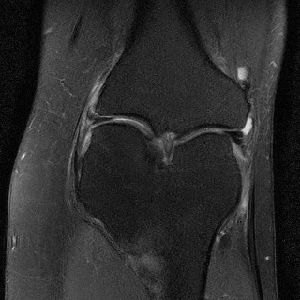}
\includegraphics[width=0.14\linewidth, angle=180]{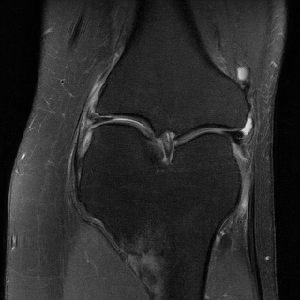}
\includegraphics[width=0.14\linewidth, angle=180]{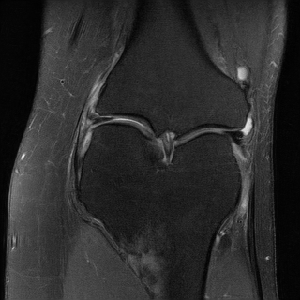}
\includegraphics[width=0.14\linewidth, angle=180]{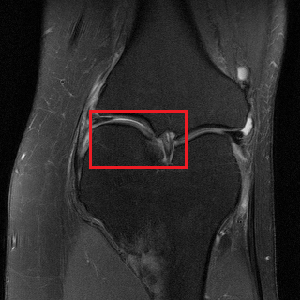}\\
\includegraphics[width=0.14\linewidth, angle=180]{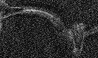}
\includegraphics[width=0.14\linewidth, angle=180]{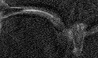}
\includegraphics[width=0.14\linewidth, angle=180]{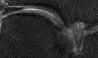}
\includegraphics[width=0.14\linewidth, angle=180]{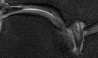}
\includegraphics[width=0.14\linewidth, angle=180]{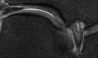}
\includegraphics[width=0.14\linewidth, angle=180]{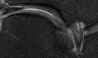}\\
\includegraphics[width=0.14\linewidth, angle=180]{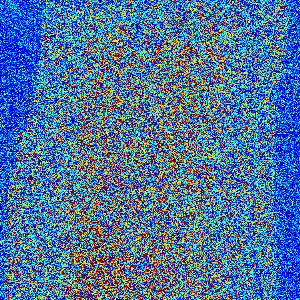}
\includegraphics[width=0.14\linewidth, angle=180]{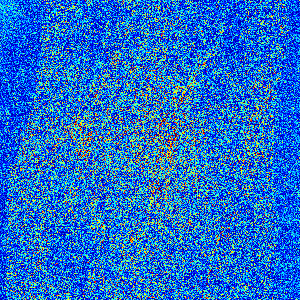}
\includegraphics[width=0.14\linewidth, angle=180]{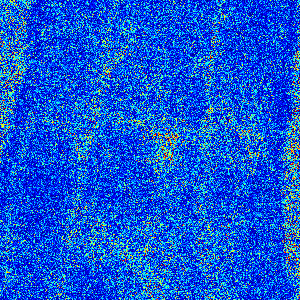}
\includegraphics[width=0.14\linewidth, angle=180]{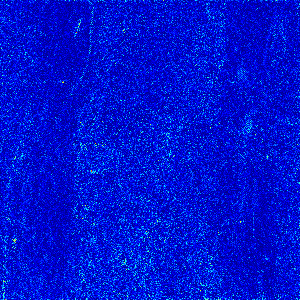}
\includegraphics[width=0.14\linewidth, angle=180]{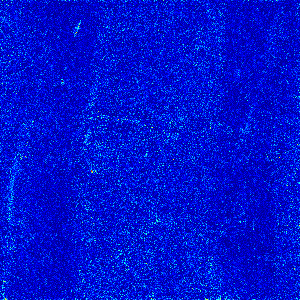}
\includegraphics[width=0.14\linewidth, angle=180]{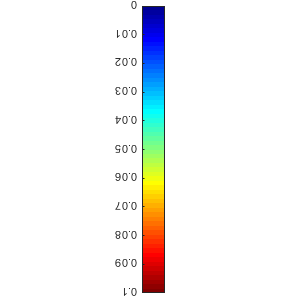}
%%detail Y position 110, X position 90, width 98, hight 58
\caption{Results on the Coronal FSPD knee image with regular Cartesian sampling (31.56\% rate).
From left to right columns: GRAPPA, SPIRiT, VN, deepcomplexMRI, pMRI-Net, and ground truth.}
\label{PDFS}
\end{figure*}

\begin{figure*}
\centering
\includegraphics[width=0.14\linewidth, angle=180]{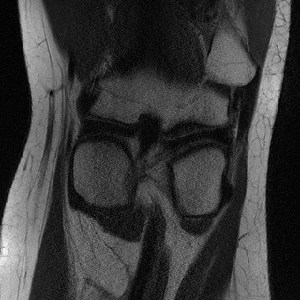}
\includegraphics[width=0.14\linewidth, angle=180]{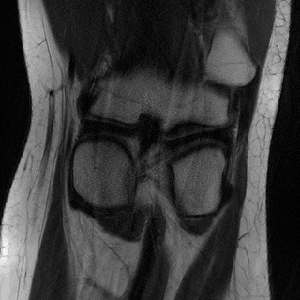}
\includegraphics[width=0.14\linewidth, angle=180]{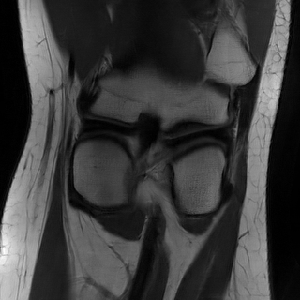}
\includegraphics[width=0.14\linewidth, angle=180]{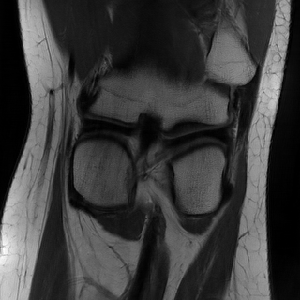}
\includegraphics[width=0.14\linewidth, angle=180]{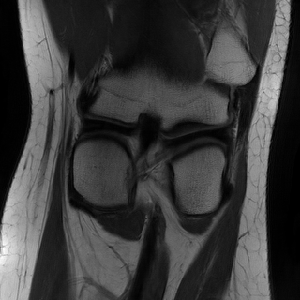}
\includegraphics[width=0.14\linewidth, angle=180]{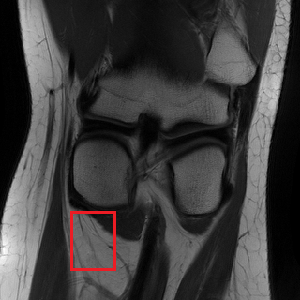}\\
\includegraphics[width=0.14\linewidth, angle=180]{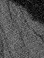}
\includegraphics[width=0.14\linewidth, angle=180]{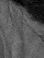}
\includegraphics[width=0.14\linewidth, angle=180]{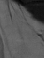}
\includegraphics[width=0.14\linewidth, angle=180]{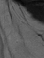}
\includegraphics[width=0.14\linewidth, angle=180]{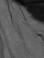}
\includegraphics[width=0.14\linewidth, angle=180]{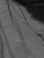}\\
\includegraphics[width=0.14\linewidth, angle=180]{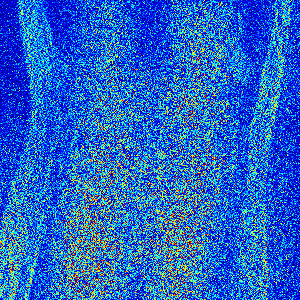}
\includegraphics[width=0.14\linewidth, angle=180]{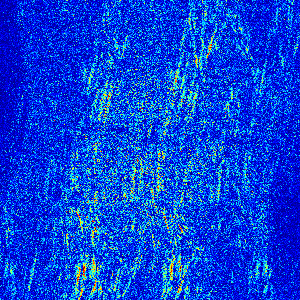}
\includegraphics[width=0.14\linewidth, angle=180]{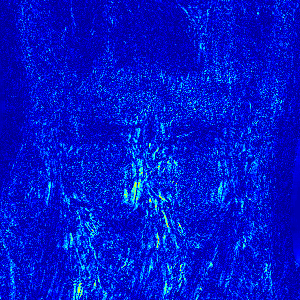}
\includegraphics[width=0.14\linewidth, angle=180]{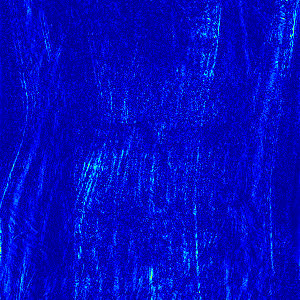}
\includegraphics[width=0.14\linewidth, angle=180]{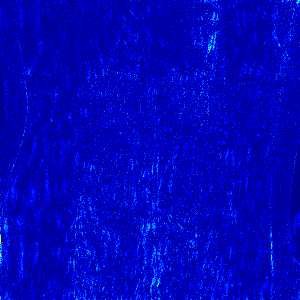}
\includegraphics[width=0.14\linewidth, angle=180]{fig_chp2/colorbar.png}
%detail Y position 71, X position 212, width 46, hight 58
\caption{Results on the Coronal PD knee image with regular Cartesian sampling (31.56\% rate). From left to right columns: GRAPPA, SPIRiT, VN, DeepcomplexMRI, pMRI-Net, and ground truth.}
\label{PD}
\end{figure*}
\begin{figure}[t]
\centering
\includegraphics[width=0.14\linewidth, angle=180]{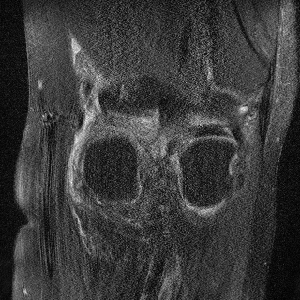}
\includegraphics[width=0.14\linewidth, angle=180]{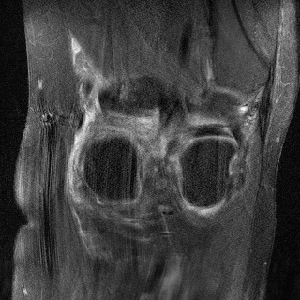}
\includegraphics[width=0.14\linewidth, angle=180]{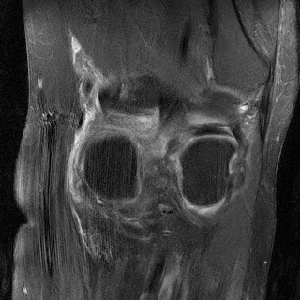}
\includegraphics[width=0.14\linewidth, angle=180]{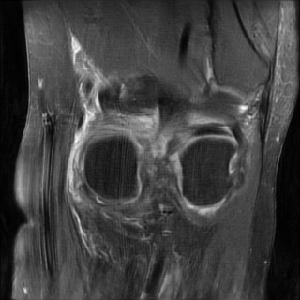}
\includegraphics[width=0.14\linewidth, angle=180]{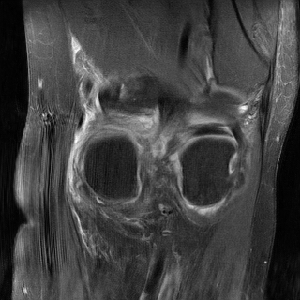}
\includegraphics[width=0.14\linewidth, angle=180]{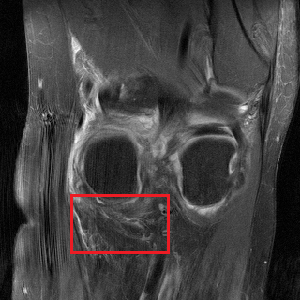}\\
\includegraphics[width=0.14\linewidth, angle=180]{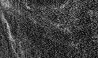}
\includegraphics[width=0.14\linewidth, angle=180]{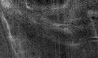}
\includegraphics[width=0.14\linewidth, angle=180]{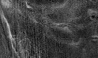}
\includegraphics[width=0.14\linewidth, angle=180]{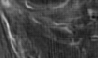}
\includegraphics[width=0.14\linewidth, angle=180]{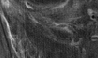}
\includegraphics[width=0.14\linewidth, angle=180]{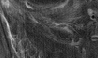}\\
\includegraphics[width=0.14\linewidth, angle=180]{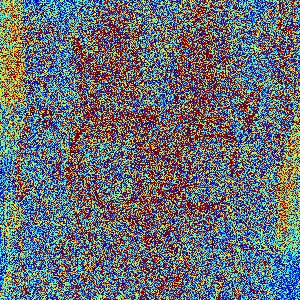}
\includegraphics[width=0.14\linewidth, angle=180]{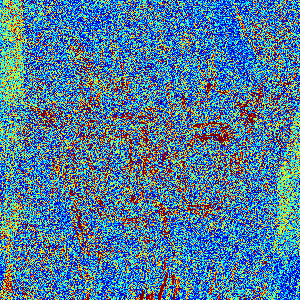}
\includegraphics[width=0.14\linewidth, angle=180]{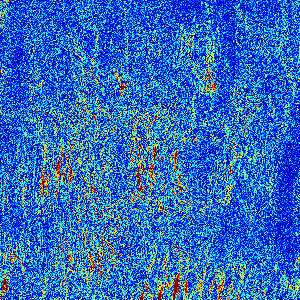}
\includegraphics[width=0.14\linewidth, angle=180]{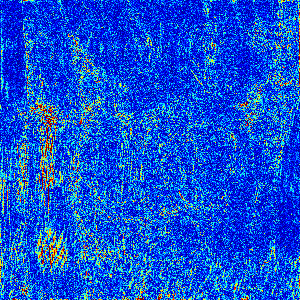}
\includegraphics[width=0.14\linewidth, angle=180]{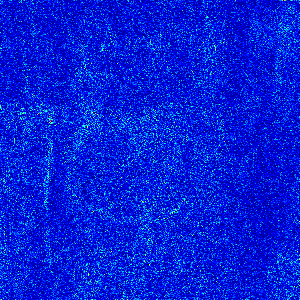}
\includegraphics[width=0.14\linewidth, angle=180]{fig_chp2/colorbar.png}
%Y 195, X 71, width 98, height 58
\caption{ Additional experiments results on the Coronal FSPD knee image with regular Cartesian sampling (31.56\% rate).
From left to right columns: GRAPPA, SPIRiT, VN, DeepcomplexMRI, proposed, and ground truth. }
\label{fig3}
\end{figure}  
\begin{figure}[t]
\centering
\includegraphics[width=0.14\linewidth, angle=180]{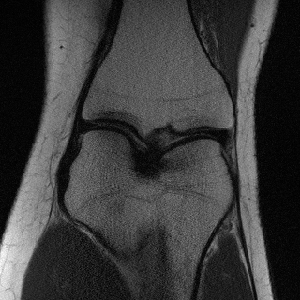}
\includegraphics[width=0.14\linewidth, angle=180]{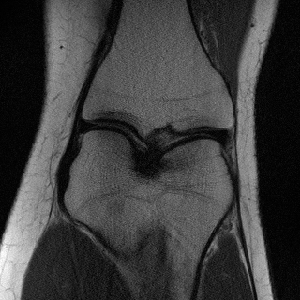}
\includegraphics[width=0.14\linewidth, angle=180]{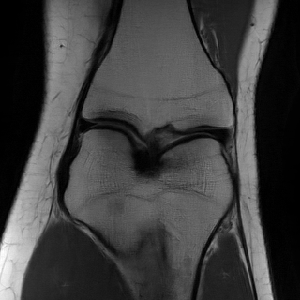}
\includegraphics[width=0.14\linewidth, angle=180]{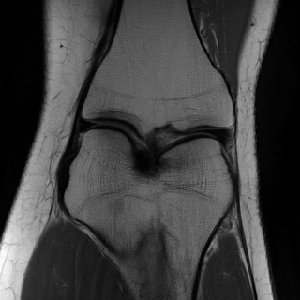}
\includegraphics[width=0.14\linewidth, angle=180]{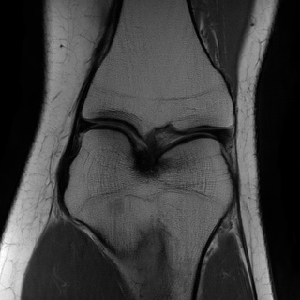}
\includegraphics[width=0.14\linewidth, angle=180]{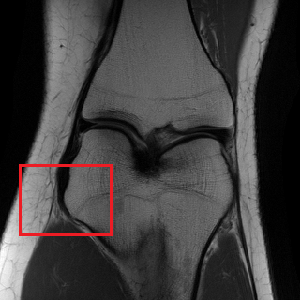}\\
\includegraphics[width=0.14\linewidth, angle=180]{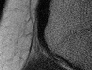}
\includegraphics[width=0.14\linewidth, angle=180]{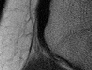}
\includegraphics[width=0.14\linewidth, angle=180]{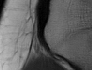}
\includegraphics[width=0.14\linewidth, angle=180]{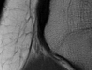}
\includegraphics[width=0.14\linewidth, angle=180]{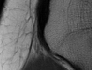}
\includegraphics[width=0.14\linewidth, angle=180]{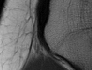}\\
\includegraphics[width=0.14\linewidth, angle=180]{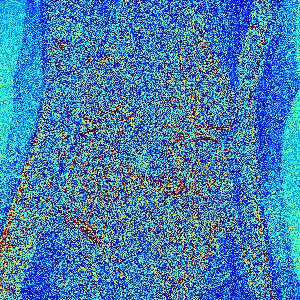}
\includegraphics[width=0.14\linewidth, angle=180]{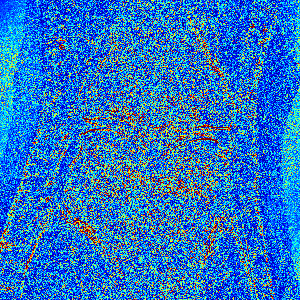}
\includegraphics[width=0.14\linewidth, angle=180]{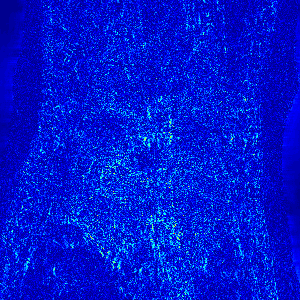}
\includegraphics[width=0.14\linewidth, angle=180]{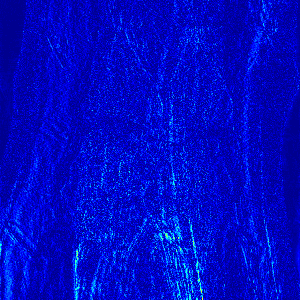}
\includegraphics[width=0.14\linewidth, angle=180]{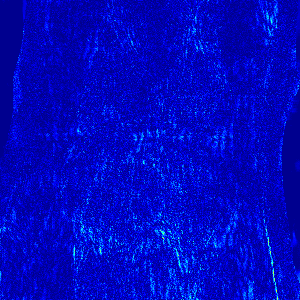}
\includegraphics[width=0.14\linewidth, angle=180]{fig_chp2/colorbar.png}
% Y 165, X 20, width 92, height 70
\caption{Additional experiments results on the Coronal PD knee image with regular Cartesian sampling (31.56\% rate). From left to right columns: GRAPPA, SPIRiT, VN, DeepcomplexMRI, Proposed and ground truth. }
\label{fig4}
\end{figure}  
\textbf{Data.} Two sequences of data named Coronal proton-density (PD) and Coronal fat-saturated proton-density (FSPD) along with the regular Cartesian sampling mask with 31.56\% sampling ratio were obtained from \url{https://github.com/VLOGroup/mri-variationalnetwork} in our experiment. Each of the two sequences data were scanned from 20 patients. The training data consists of  526 central image slices with matrix size $ 320 \times 320$ from 19 patients, and we randomly pick 15 central image slices from the one patient that not included in training data as the testing data. We normalized training data by the maximum of the absolute valued zero-filled reconstruction.
\medskip

\noindent
\textbf{Implementation.} The proposed network was implemented with $T=5$ phases. We use Xavier initialization \cite{pmlr-v9-glorot10a} to initialize network parameters and Adam optimizer for training. Experiments apply mini-batches of 2 and 3000 epochs with learning rate 0.0001 and 0.0005 for training Coronal FSPD data and PD data respectively. The initial step size $ \rho_0 =0.1$, threshold parameter $\alpha_0 = 0$ and $\gamma=10^5$ in the loss function. All the experiments were implemented in TensorFlow on a workstation with Intel Core i9-7900 CPU and Nvidia GTX-1080Ti GPU.
\medskip

\noindent
\textbf{Evaluation.}
 We evaluate traditional methods GRAPPA \cite{griswold2002generalized}, SPIRiT \cite{doi:10.1002/mrm.22428}, and deep learning methods VN \cite{doi:10.1002/mrm.26977} , DeepcomplexMRI \cite{WANG2020136} over the 15 testing  Coronal  FSPD and  PD knee images in terms of PSNR, SSIM \cite{wang2004image} and RMSE (RMSE of $ \hat{\xbf}$ to true $\xbf^*$ is defined by $\| \hat{\xbf} - \xbf^*\|/\| \xbf^* \| $).
\medskip

\noindent
\textbf{Experimental results.} The average numerical performance with standard deviations are summarized in Table  \ref{tab:fs} and \ref{tab:pd}. The comparison on reconstructed images are shown in Fig. \ref{PDFS}, \ref{fig3} and Fig. \ref{PD}, \ref{fig4} for Coronal FSPD and PD testing data respectively.  From top to bottom rows: image, zoom-in views, and pointwise absolute error to ground truth. In Fig. \ref{PDFS}, the corresponding PSNR/SSIM/RMSE for each method from left to right columns are listed below: GRAPPA: 25.6656/0.4671/0.2494, SPIRiT: 29.5550/0.6574/0.1594, VN: 31.5546/0.7387/0.1333, deepcomplexMRI: 38.6842/0.9360/0.0587,  pMRI-Net: 38.8749/0.9375/0.0574, and ground truth.  In Fig. \ref{fig3},
from left to right columns: GRAPPA: 22.2203/0.3596/0.2666, SPIRiT: 25.3434/0.5269/0.1861, VN: 29.2276/0.7591/0.1190, DeepcomplexMRI: 31.3597/0.8231/0.0931, proposed: 36.8831/0.9372/0.0493, and ground truth.
In Fig. \ref{PD}, from left to right columns: GRAPPA: 29.9155/0.7360/0.1032, SPIRiT: 33.2350/0.8461/0.0704, VN: 38.3192/0.9464/0.0393, DeepcomplexMRI: 41.2098/0.9713/0.0281, pMRI-Net: 42.9330/0.9798/0.0231, and ground truth.
In Fig. \ref{fig4}, from left to right columns: 
from left to right columns: GRAPPA: 27.3571/0.5338/0.1357, SPIRiT: 28.3076/0.5592/0.1216, VN: 38.1679/0.9259/0.0391, DeepcomplexMRI: 40.9388/0.9678/0.0284, Proposed: 41.7025/0.9697/0.0260 and ground truth.

Despite of the lack of coil sensitivities in training and testing, the proposed method still outperforms VN in reconstruction accuracy significantly while VN uses precomputed coil sensitivity maps from ESPIRiT \cite{uecker2014espirit}, which further shows that the proposed method can achieve improved accuracy without knowledge of coil sensitivities. Comparing 10 complex CNN blocks in
DeepcomplexMRI  with 5 phases in pMRI-Net, the latter requires fewer network parameters and less training time but improves reconstruction quality.

In the experiment of GRAPPA and SPIRiT, we use calibration kernel size $ 5\times 5$ with Tikhonov regularization in the calibration setted as 0.01. We implement SPIRiT with 30 iterations and set Tikhonov regularization in the reconstruction as $10^{-3}$. Default parameter settings for experiments of VN and DeepcomplexMRI were applied.  The final recovered image from VN is a full FOV single channel image, and DeepcomplexMRI produces a multi-coil image, which are combined into single channel image using adaptive multi-coil combination method \cite{Walsh2000682}. pMRI-Net reconstructs both single channel image $ \J( \ubf^{(T)}(\fbf; \Theta))$ and multi-channel image $\{\ubf_{i}^{(T)}(\fbf_{i}; \Theta)\} $.

\section{Conclusion}
\label{sec:conclusion}
We exploit a learning based multi-coil MRI reconstruction without explicit knowledge of coil sensitivity maps and the network is modeled in CS framework with proximal gradient scheme. The proposed network is designed to combine features of channel-wise images, and then extract sparse features from the coil combined image. Our experiments showed better performance of proposed ``combine-then-regularize” approach. 
\chapter{An Optimal Control Framework for Joint-channel Parallel MRI  Reconstruction without Coil Sensitivities}\label{optimalcontrol}

This work aims at developing a novel calibration-free fast parallel MRI (pMRI) reconstruction method incorporate with discrete-time optimal control framework. The reconstruction model is designed to learn a regularization that combines channels and extracts features by leveraging the information sharing among channels of multi-coil images. We propose to recover both magnitude and phase information by taking advantage of structured convolutional networks in image and Fourier spaces.
We develop a novel variational model with a learnable objective function that integrates an adaptive multi-coil image combination operator and effective image regularization in the image and Fourier spaces. We cast the reconstruction network as a structured discrete-time optimal control system, resulting in an optimal control formulation of parameter training where the parameters of the objective function play the role of control variables. We demonstrate that the Lagrangian method for solving the control problem is equivalent to back-propagation, ensuring the local convergence of the training algorithm.
We conduct a large number of numerical experiments of the proposed method with comparisons to several state-of-the-art pMRI reconstruction networks on real pMRI datasets. The numerical results demonstrate the promising performance of the proposed method evidently. 
The proposed method provides a general deep network design and training framework for efficient joint-channel pMRI reconstruction.
By learning multi-coil image combination operator and performing regularizations in both image domain and k-space domain, the proposed method achieves a highly efficient image reconstruction network for pMRI. 
		
\section{Introduction}\label{sec:introduction}
	
	Magnetic resonance imaging (MRI) is one of the most prominent medical imaging technologies with extensive clinical applications. In clinical applications, an advanced medical MRI technique known as parallel MRI (pMRI) is widely used. PMRI surrounds the scanned objects with multiple receiver coils and collects k-space (Fourier) data in parallel. PMRI can reduce the data acquisition time and has become the state-of-the-art technology in modern MRI applications. To accelerate the scan process, partial data acquisitions that increase the spacing between read-out lines in k-space are implemented in pMRI. However, this results in aliasing artifacts, and a proper image reconstruction process is necessary to recover the high-quality artifact-free images from the partial data.
	
	Two major approaches are commonly addressed to image reconstruction in pMRI: the first approach is k-space method which interpolates the missing k-space data using the sampled ones across multiple receiver coils \cite{doi:10.1002/jmri.23639}, such as the generalized auto-calibrating partially parallel acquisition (GRAPPA) \cite{griswold2002generalized} and  simultaneous acquisition of spatial harmonics (SMASH) \cite{sodickson1997simultaneous}. The other approach is the class of image space method that eliminate the aliasing artifacts in the image domain by solving a system of equations that relate the image to be reconstructed and partial k-space data through coil sensitivities, such as SENSitivity Encoding (SENSE) \cite{pruessmann1999sense}.
	Coil sensitivity maps are indispensable and required to be accurately pre-estimated in traditional SENSE-based methods.
	Traditional pMRI reconstruction methods in image space follows SENSE-based framework, which is formulated as an optimization problem that minimizes a summation of a data fidelity term and a weighted regularization term. The detailed explanation about this formulation can be refer to \cite{knoll2020deep}.

	In recent years, we have witnessed fast developments of SENSE-based pMRI  reconstruction incorporate with deep-learning based methods \cite{knoll2020deep, lu2020pfista, tavaf2021grappa,hammernik2021systematic, lv2021pic}. There are two critical issues that need to be carefully addressed.
	The first issue is on the choice of regularization including the regularization weight.
	The regularization term is of paramount importance to the severely ill-posed inverse problem of pMRI reconstruction due to the significant undersampling of k-space data. 
	In the past decades, most traditional image reconstruction methods employ handcrafted regularization terms, such as the total variation (TV). %, which are oversimplified and the reconstructed image quality is often not comparable with deep-learning based regularizers.
	In recent years, a class of unrolling methods that mimic classical optimization schemes are developed, where the regularization is realized by deep networks whose parameters are learned from data. However, the learned regularization is often cast as a black-box that is difficult to interpret, and the training can be very data demanding and prone to overfitting especially when the networks are over-parameterized \cite{lecun2015deep,279181,VanishingGradient}. 
	
	The second issue is due to the unavailability of accurate coil sensitivities $\{ \sbf_i\}$ in practice. Inaccurate coil sensitivity maps lead to severe biases that degrade the quality of reconstructed $\vbf$. 
	One way to eliminate this issue is to (regularize and) reconstruct  multi-coil images, and combine channels into a full-body image in the final step by taking some hand-crafted methods such as the root of sum of squares (RSS). Different from RSS, our method proposed a learnable multi-coil combination operator to combine channel-wise multi-coil images. However, if all coils yield low SNR or data with artifacts, RSS will arise background noise level increases since it weights artifacts equally to the dark area \cite{RSS_bias, RSS_noise}.% In addition, RSS only produces the magnitude of the MR image \blue{without phase information preserved which could be useful in certain application areas such as echo planar imaging \cite{buonocore1997ghost} and phase contrast imaging \cite{pelc1991phase}.} % or susceptibility weighted imaging \cite{haacke2004susceptibility}.
	
	In this paper, we tackle the aforementioned issues in a discrete-time optimal control framework to optimize the variational pMRI reconstruction model.
	%We impose two learnable regularizers  in model \eqref{eq:PFS_chp3} to form $R$: one is the composition of a multi-coil image combination operator and a full-body image regularization using deep neural nets, and another one incorporate k-space prior information of the target coil images to increase the accuracy of the Fourier signals. 
	We highlight several main features of our framework as follows.
	
	\begin{enumerate}[leftmargin=*]
		\item Unlike most existing methods which regularize and reconstruct multi-coil images, we employ regularization in both image and Fourier spaces to improve reconstruction quality. 
		\item Our method advocates a learned adaptive combination operator that first merges multi-coil images into a full-body image with a complete field of view (FOV), followed by an effective regularization on this image. 
		This is in sharp contrast to existing methods which only combine reconstructed multi-coil images in the final step, whereas our regularizer leverages the combination operator in each iteration which improves parameter efficiency.
		\item We employ a complex-valued neural network as the coil combination operator to recover both magnitude and phase information of pMRI images when coil sensitivity is unavailable. This combination method benefits from the coil information shared among multiple channels, which is distinct from most hand-crafted coil combination methods.  
		\item We propose a novel deep reconstruction network whose structure is determined by the discrete-time optimal control system for minimizing the objective function, which yields an optimal control formulation where the parameters of the combination and regularization operators play the role of control variables of the discrete dynamic system. The optimal value of these parameters is obtained by a Lagrangian method which can be implemented using back-propagation. 
		
		%   \item We conduct a learnable initial reconstruction for the proximal gradient inspired iterative algorithm, and it provides favorable support to the reconstruction performance comparing with some other common initialization methods. 
	\end{enumerate}
	
	We consider two clinical pMRI sequences of knee images and verified the effective performance of the proposed combination operator, different initial reconstructions, complex convolutions, and domain-hybrid network in the Ablation Studies. The proposed network recovers both magnitude and phase information of pMRI images. The effect of the aforementioned techniques demonstrate evident improvement of reconstruction quality and parameter efficiency using our method. For reproducing the experiment, our code is available at \url{https://github.com/1lol/pMRI_optimal_control}.

	This remainder of the paper is organized as follows: In section \ref{sec:related_chp3}, we provide an overview of recent developments in pMRI, cross-domain reconstructions, complex-valued CNNs, and optimal control inspired deep training models that related to our work. We present our proposed problem settings and reconstruction network architecture in detail in Section \ref{sec:proposed_chp3}. Extensive numerical experiments and analyses on a variety of clinical pMRI data are presented in Section \ref{sec:experiment_chp3}. Section \ref{sec:conclusion_chp3} concludes this paper.

	\section{Related Work}
	\label{sec:related_chp3}
	
	In recent years, we have witnessed fast developments of medical imaging incorporate with deep-learning based methods \cite{huang2020medical, huang2020magnetic, huang2020mri}. Most existing deep-learning based methods rendering end-to-end neural networks mapping from the partial k-space data to the reconstructed images \cite{WANG2020136, zhou_pmri, Quan2018CompressedSM,8417964, zhu2018}. These approaches require an excessive amount of training data, and the designed networks are cast as black boxes whose underlying mechanism can be very difficult to interpret. To mitigate this issue, a number of unrolling methods were proposed to map existing optimization algorithms to structured networks where each phase of the networks correspond to one iteration of an optimization algorithm \cite{doi:10.1002/mrm.26977, adler2018learned, Aggarwal_2019,8550778, cheng2019model,NIPS2016_6406,zhang2018ista, 8067520}.
	In what follows, we focus on the recent developments in deep-learning based image reconstruction methods for pMRI.
	
	%\textbf{Deep networks for pMRI reconstruction}:
	%  by calibrating from the fully-sampled central k-space regions of each slice, this technique recover coil sensitivity as the pointwise eigenvectors to the eigenvalue one of a single reconstruction operator.
	Some of the networks for pMRI reconstruction are using ESPIRiT:
	Variational Network (VN) \cite{doi:10.1002/mrm.26977} was introduced to unroll the gradient descent algorithm as a reconstruction network which requires precalculated sensitivities $\{\sbf_i\}$ as input.
	E2E-VarNet \cite{E2E-VN} modeled a modified VN with learned sensitivity maps in cascaded refinement modules and their result shows superior performance in the list of public fastMRI leaderboard.
	MoDL \cite{Aggarwal_2019} proposed a weight sharing strategy in a recursive network to learn the regularization parameters by unrolling the conjugate gradient method.
	PI-CNN \cite{zhou_pmri} was proposed as an end-to-end cascaded network which concatenates a CNN block and a parallel imaging data consistency block in each phase.
	The techniques introduced above are using ESPIRiT \cite{uecker2014espirit} to estimate coil sensitivity maps.
	VS-Net \cite{10.1007/978-3-030-32251-9_78} applied  coil sensitivities precomputed from BART \cite{uecker2013software} that employ the center block of fully sampled k-space, and unrolls the optimization steps from the variable splitting algorithm.
	APIR-Net \cite{10.1007/978-3-030-33843-5_4} proposed an auto-calibrated k-space completion method in hierarchical way that progressively increase the size of ACS region.
	Several methods explored different strategies to avoid using pre-calculated coil sensitivity maps for pMRI reconstruction.
	Blind-PMRI-Net \cite{10.1007/978-3-030-32251-9_80} proposed pMRI model by regularizing sensitivity maps and MR image, where their network alternatively estimates coil images, sensitivities and single-body image by three subnets. %However, the individual subnets benefit each other and need to be learned cooperatively to improve the reconstruction quality, their reconstruction quality depending on estimation of the coil sensitivities.
	De-Aliasing-Net \cite{10.1007/978-3-030-32248-9_4} proposed a de-aliasing reconstruction model with that applied split Bregman iteration algorithm without explicit coil sensitivity calculation. The de-aliasing network explored cross-correlation among channels and spatial redundancy which provoked a desirable performance.
	LINDBERG \cite{wang2017learning} explored calibration-free pMRI technique which uses adaptive sparse coding to obtain joint-sparse representation precisely by equipping a joint sparsity regularization to extract desirable cross-channel relationship. This work proposed to alternatively update sparse representation, sensitivity encoded images, and K-space data. 
	SCDAE \cite{islam2021compressed} was developed to estimate coil-wise sensitivity maps via convolutions and fully connected layers. The reconstruction employed a TV-based minimization algorithm that is solved by the Bregman iteration technique. The full FOV image is obtained by RSS for multi-channel combination.
	Adaptive-CS-Net \cite{pezzotti2020adaptive} is a leading method in 2019 fastMRI challenge \cite{zbontar2018fastmri} that unrolled modified ISTA-Net$^{+}$ \cite{zhang2018ista}. 
	The proposed calibration-free pMRI method distincts from above related works in terms of the learnable multi-coil combination operator to adaptively combine channels of the updated multi-coil images through iterations.
	
	Recently, cross-domain methods exhibits its significance in medical imaging \cite{  doi:10.1002/mrm.27201, wang2020ikwi, sriram2020grappanet, 10.1007/978-3-030-59713-9_41,   pmlr-v102-souza19a,  10.1007/978-3-030-59713-9_34,  10.1007/978-3-030-59713-9_37, souza2019hybrid}.
	AUTOMAP \cite{zhu2018} was developed as fully connected layers followed by convolutional layers to learn the transform from undersampled k-space to image domain.
	GrappaNet \cite{sriram2020grappanet} takes advantage of physical information by introducing a GRAPPA operator.
	CD-SFCRF framework \cite{CD-SFCRF} proposed a stochastically fully connected graphical model to produce MRI reconstruction by taking advantage of constraints in both k-space and spatial domains. 
	KIKI-net \cite{doi:10.1002/mrm.27201} iteratively applied k-space CNN, image domain CNN and interleaved data consistency operation for single-coil image reconstruction. 
	IKWI-net \cite{wang2020ikwi} proposed CNN blocks in image domain, k-space, wavelet domain and image domain sequentially.
	CDF-Net \cite{ 10.1007/978-3-030-59713-9_41} further shows adding communication between spatial and frequency domain gives a boost in performance. Their results indicated that domain-specific network has individual strong points and disadvantages in restoring tissue-structure. Our reconstruction model is inspired of cross-domain reconstruction, the difference is that we solve for the reconstruction model with cross-domain regularization functions through a learnable optimization algorithm instead of an end-to-end network. However, these black-box sub-networks are trained in an end-to-end manner and are parameter inefficient, which degrade the training performance. In this paper we integrate the idea of cross-domain to the network inspried by proximal gradient decent, in particular, we combine the idea of cross-domain in the residual network. There is still limited investigation that being carefully manipulated with respect to the specialty of pMRI.  
	
	%\textbf{Complex deep networks for MRI reconstruction}:
	
	Our network applies complex-valued convolutions and activation functions. MRI data are complex-valued, and the phase signals also carry important pathological information such as in quantitative susceptibility mapping \cite{cole2020analysis, sandino2020compressed}. %\blue{Complex Highly-constrained back-projection (HYPR) local reconstruction (LR) \cite{wang2009ultrashort} technique and the expectation maximization algorithm based method \cite{choi2013iterative} have been proposed to solve for complex-valued image reconstruction problem. 
	Most existing deep-learning based MRI reconstruction apply standard real-valued convolutions and nonlinear activation functions to the real and imaginary parts separately \cite{10.1007/978-3-030-61598-7_2, doi:10.1002/mrm.26977, Aggarwal_2019, NIPS2016_6406}.
	Complex valued CNNs is benefit for optimization method with properly designed regularization. On the one hand, complex convolutions mitigating the effects of overfitting comparing to real-valued CNNs \cite{quan2021image} due to reduction the number of training parameters. On the other hand, the resulting  network structure is more  interpretable.
	%Several related researches are summarized as follows:
	%
	Complex convolutions and algorithms, batch-normalization, weight initialization strategies were exploited by Trabelsi $et$ $al.$ \cite{trabelsi2017deep}.
	Scardapane $et$ $al.$ \cite{8495012} extended the general idea of kernel activation functions to design nonparametric activation function for complex-valued neural networks. 
	Cole $et$ $al.$ \cite{cole2020analysis} investigated the performance of complex-valued convolution and activation functions has better reconstruction over the model with real-valued convolution in various network architectures. 
	Virtue $et$ $al.$ \cite{8297024} developed complex cardioid as activation function with complex-valued neural network to identify tissue parameters for MRI fingerprinting.
	%Daval-Frerot $et$ $al.$ \cite{complexact} designed trainable complex activation functions to improve complex neural networks in MR fingerprinting. 
	$\C$DFNet \cite{dedmari2018complex} proposed an end-to-end learning based on U-Net with complex convolution followed by complex batch normalization and  rectified linear unit (ReLU). 
	Co-VeGAN \cite{vasudeva2020covegan} developed  complex-valued generative adversarial network with complex-valued activation function to promote the reconstruction performance, which preserves phase information. 
	DeepcomplexMRI \cite{WANG2020136} was developed to recover multi-coil images by unrolling an end-to-end complex-valued network. It consists of several blocks where each block performs complex-valued convolutions on the multi-coil images for denoising and a data consistency operation that merges the original sampled k-space data to the pseudo full k-space of the denoised images.
	
	In supervised learning, deep residual neural networks can be approximated as discretizations of a classical optimal control problem of a dynamical system, where training parameters can be viewed as control variables \cite{weinan2017proposal, lidynamical,li2019deep}. Control inspired learning algorithms introduced a new family of network training models which connect with dynamical systems. Pontryagin's maximum principle (PMP) \cite{PMP} was explored as necessary optimality conditions for the optimal control \cite{li2017maximum}, \cite{pmlr}, these works devise the discrete method of successive approximations (MSA) \cite{MSA} and its variance for solving PMP. %Their methods displayed favorable convergence rate per-iteration in terms of train/test loss, train/test accuracy and error rates.
	Neural ODE \cite{chen2018neural} models the continuous dynamics of hidden states by some certain types of neural networks such as ResNet, the forward propagation is equivalent to one step of discretatized ordinary differential equations (ODE). Inspired by Neural ODE \cite{chen2018neural}, Chen $et$ $al.$ \cite{chen2020mri} modeled ODE-based deep network for MRI reconstruction. In this paper, we model the optimization trajectory as a discrete dynamic process from the view of the method of Lagrangian Multipliers.
	
	The present work is a substantial extension of the preliminary work in \cite{10.1007/978-3-030-61598-7_2} using domain-hybrid network with a trained initialization to solve for an optimal control pMRI joint-channel reconstruction problem when coil-sensitivity is unavailable. More comprehensive empirical study is conducted in this work.

	\section{Proposed Method}
	\label{sec:proposed_chp3}
	
	\subsection{Background} \label{background}
	
	PMRI as well as general MRI reconstruction can be formulated as an inverse problem. Consider a pMRI system with $c$ receiver coils acquiring 2D MR images at resolution $m\times n$ (we treat a 2D image $\vbf \in \mathbb{C}^{m\times n}$ and its column vector form $\vbf \in \mathbb{C}^{mn}$ interchangeably hereafter). Let $\Pbf \in \mathbb{R}^{p\times mn}  (p \le mn)$ be the binary matrix representing the undersampling mask with $p$ sampled locations in k-space, $\sbf_i\in\mathbb{C}^{mn}$ the coil sensitivity, and $\fbf_i \in \mathbb{C}^{p}$ the \emph{partial} k-space data at the $i$-th receiver coil for $i=1,\dots,c$. The partial data $\fbf_i$ and the image $\vbf$ are related by $\fbf_i = \Pbf \Fbf (\sbf_i \odot \vbf) + \mathbf{n}_i$, where $\sbf_i$ is the (unknown) sensitivity map at the $i$-th coil and $\odot$ denotes entrywise product of two matrices, $\Fbf \in \mathbb{C}^{mn\times mn}$ stands for the (normalized) discrete Fourier transform that maps an image to its Fourier coefficients, and $\mathbf{n}_i$ is the unknown acquisition noise in k-space at the $i$-th receiver coil. Then the variational model for image reconstruction can be cast as an optimization problem as follows:
	\begin{equation}\label{eq:PFS_chp3}
		\min_{\vbf} \ \sum\nolimits^{c}_{i=1} \frac{1}{2} \| \Pbf \Fbf (\sbf_i \odot \vbf)- \fbf_i\|^2 + R(\vbf),
	\end{equation}
	where $\vbf\in \mathbb{C}^{m n}$ is the single full-body MR image to be reconstructed, $R(\vbf)$ is a regularization on the image $\vbf$, and $\| \wbf \|^2 := \| \wbf \|_2^2 = \sum_{j=1}^n |w_j|^2$ for any complex vector $\wbf = (w_1,\dots,w_n)^{\top} \in \mathbb{C}^n$.
	Our approach is based on uniform Cartesian k-space sampling. Table \ref{tab:notations} displays the notations and their descriptions that we used in the paper.
	%\ye{Did you explain the transpose and Hermitian? Also write a sentence with reference to Table 1 (do so for every figure/table/algorithm in a paper).}
	
	\begin{table}
		\centering
		\caption{Some notations and meanings that used throughout this paper.}
		\label{tab:notations}
		%\resizebox{\linewidth}{50mm}{ 
			\begin{tabular}{ll}
				\hline
				Expression     &  Description \\
				\hline
				$c$ & total number of receiver coils\\
				$\ubf= (\ubf_1,\dots, \ubf_{c}) $ & multi-coil MRI data\\
				$\fbf= (\fbf_1,\dots,\fbf_{c}) $ & partial k-space measurement\\
				$\sbf = (\sbf_1,\dots, \sbf_{c}) $ & coil sensitivity map\\
				$\vbf $ & full fov image that need to be reconstructed \\
				$\ubf^*$ & ground truth multi-coil MRI data\\
				$\vbf^*$ & ground truth single body MRI data\\
				$\Fbf $ & discrete Fourier transform\\
				$\Fbf^{H} $ & inverse discrete Fourier transform\\
				$\Pbf $ & undersampling trajectory\\
				$\mathbf{n}$ & measurement noise\\
				$\gbf$ & algorithm unrolling network\\
				$\gbf_0$ & initial network\\
				$\J,\G,\tilde{\G}, \tilde{\J}$ & image space convolutional operators in $\gbf$ \\
				$\K$ & k-space convolutional operators in $\gbf$ \\
				$\K_0$ & k-space convolutional operator in $\gbf_0$\\
				$h$ & data fidelity term\\
				$R$ & regularization\\
				\text{RSS} & square root of the sum of squares\\
				$ t=1,\cdots,T$ & phase number \\
				$ k=1,\cdots,K$ & number of iterations for Alg \eqref{alg:1}\\
				$\Ubf=(\ubf{(0)}, \cdots, \ubf{(T)})^{\intercal}$ & collection of predicted multi-coil images\\
				& at each phase\\
				$\Theta=(\theta(0),\cdots, \theta(T))^{\intercal}$ & collection of control variable (parameters)\\
				& at each phase\\    
				$\Lambda = (\lambda(0), \cdots, \lambda(T))^{\intercal}$ & collection of Lagrangian multipliers of \eqref{eq:bilevel_chp3}\\
				\hline
		\end{tabular}
	\end{table}
	
	\subsection{Problem Settings} \label{subsec:setting}
	%The general model is to  minimize the unconstrained optimization problem \eqref{eq:PFS_chp3} by applying a dedicated learnable regularization term.
	
	%
	We propose a unified deep neural network for calibration-free pMRI reconstruction by recovering images from individual receiver coils jointly that does not require any knowledge of coil-wise sensitivity profile. 
	Denote that $\ubf_i$ is the MR image at the $i$-th receiver coil and hence is related to the full body image $\vbf$ by $\ubf_i = \sbf_i  \odot \vbf$. 
	On the other hand, the image $\ubf_i$ corresponds to the partial k-space data $\fbf_i$ by $\fbf_i = \Pbf \Fbf \ubf_i + \mathbf{n}_i$, and hence the data fidelity term is formulated as least squares $ \frac{1}{2}\sum^{c}_{i=1} \|\Pbf \Fbf \ubf_i - \fbf_i\|^2$.
	We also need a suitable regularization $R$ on the images $\{\ubf_i\}$.
	However, these images have severely geometrically inhomogeneous contrasts due to the physical variations in the sensitivities across the image domain at different receiver coils. Therefore, it is more appropriate and effective to apply regularization to the single full-body image $\vbf$ than to individual coil images in $( \ubf_1,\dots,\ubf_c )$.

	%It is a common practice to obtain such fused image $\ubf$ by the root of RSS $(\sum_{i=1}^{c} |\ubf_i|^2)^{1/2}$ in the pMRI community, however, such simple approach removes the important phase information from images $\ubf_i$ and may not yield $\ubf$ with desired uniform contrast due to the diverse presence of coil sensitivities.
	
	To address the issue of proper regularization, we propose to learn a nonlinear operator $\J$ that combines $\{\ubf_i\}$ into the image $\vbf = \J (\ubf) \in \mathbb{C}^{m\times n}$, where $\ubf = (\ubf_1,\dots, \ubf_{c}) \in \mathbb{C}^{m \times n \times c}$ represents the channel-wise multi-coil MRI data that consists of $ \ubf_i$ for $ i=1,\cdots,c$.
	Then we apply a suitable $R$ to the image $\vbf$. %and 
	%We apply $\| \N (\ubf) \|_{2,1}$ by leveraging the robust sparse selection property of $2,1$-norm. 
	% 
	We also introduce a k-space $R_f$ on $\Fbf \ubf_i$ to take advantage of Fourier information and enhance the model performance.
	
	%Combined with the data fidelity term above, we propose the pMRI reconstruction model as follows:
	
	We denote $\fbf = (\fbf_1,\dots,\fbf_{c}) \in \mathbb{C}^{p \times c}$ as the partial k-space measurements at $c$ sensor coils. Suppose that we are given $N$ data pairs $\{(\fbf^{(j)}, \ubf^{*(j)}) \}_{j=1}^N$ for training the network, where $\ubf^{*(j)}$ is the ground truth multi-coil MR data with index $j \in \{ 1, \cdots, N\}$. Let $\Theta$ represents the parameters that need to be learned from network by minimizing the loss function $\ell$. We formulate the network training as a bilevel optimization problem, where the lower level is to update $ \ubf$ with fixed trainable parameters $\Theta$ and upper level is to update $\Theta$ that learned from the training data by minimizing loss function $\ell$.
	\begin{subequations}\label{learnable_pmri}
		\begin{align}
			& \min_{\Theta} \ \ \frac{1}{N}\sum^N_{j=1}  \ell(  \ubf^{(j)}_{\Theta}) ,\ \ \ \label{learnable_pmri_upper} \\
			& \mathrm{s.t.} \ \  \ubf_{\Theta}^{(j)} = \argmin_{\ubf^{(j)}}  \phi_{\Theta}( \ubf^{(j)}), \ \  \label{learnable_pmri_lower}  
		\end{align}
	\end{subequations}
	with $\phi_{\Theta}$ defined as below:
	\begin{equation}\label{eq:m_chp3}
		\phi(\ubf):= \frac{1}{2} \sum\limits^{c}_{i=1} \| \textbf{PF} \ubf_i - \fbf_i \|^2  +  R(\J(\ubf)) + R_f(\Fbf \ubf_i).
	\end{equation}
	The objective function $\phi$ is the variational model for pMRI reconstruction, in which $\Theta$ is the set collects all the parameters that learned from the regularizers $R \circ \J$ and $R_f \circ \Fbf$, so $\phi$ is depending on $\Theta$.
	Problem \ref{learnable_pmri} is formulated in the scenario of reconstructing the multi-coil MRI data.  The final reconstruction result $\ubf_{\Theta}$ is the forward network output that dependent on network parameters $\Theta$.   %%\  \sum\nolimits^{c}_{i=1} \frac{1}{2}  \| \Pbf \Fbf \ubf_i - \fbf_i\|^2 + R(\ubf, \Theta) The primary parameter and optimal reconstruction solution is the minimizer of the network loss function: $$ \min_{ \ubf, \Theta} \ell( \ubf(\Theta)), $$ The loss $ \ell$ measures the discrepancy between reconstructed multi-coil images $\ubf(\Theta) $ and the ground truth $ \ubf^*$.
	
	The deep learning approach for pMRI reconstruction in lower level problem \eqref{learnable_pmri_lower} can be cast and formulated as a discrete-time optimal control system. 
	We use one data sample $(\fbf, \ubf^*)$ and omit the average and data indexes for notation simplicity. The architecture of deep unrolling method follows the iterations of optimization algorithms and solve for the minimizer of the following problem: %, but the proposed framework can be easily extended to a training data set consists of many instances of such data samples and implemented accordingly for parallel computation.}
	\begin{subequations}\label{eq:bilevel_chp3}
		\begin{align}
			\min_{\Theta}\ \ \  & \ell(\ubf_{\Theta}), \label{eq:bilevelupper} \\ 
			\mathrm{s.t.}\ \ \ & \ubf{(t)} =   \gbf(\ubf{(t-1)},\theta(t)),\ t= 1,\cdots, T, \label{eq:bilevelconstraint}\\
			& \ubf{(0)} =  \gbf_0(\fbf,\theta(0)), \label{eq:init}
		\end{align}
	\end{subequations}
	where $\Theta = (\theta(0),\cdots, \theta(T))^{\intercal}$ is the collection of control variables $\theta(t)$ at all time steps (phases) respectively.  In \eqref{eq:bilevelupper},  $\ubf_{\Theta} = \ubf(T)$, which is the output image from the last $T$-th phase of the entire network so that we want to find optimal $\ubf(T)$ that close to  $\argmin_{\ubf}  \phi_{\Theta}(\ubf)$. Equations \eqref{eq:bilevelconstraint}, \eqref{eq:init} are inspired by deep unrolling algorithm for solving the lower level constraint \eqref{learnable_pmri_lower}. Given $\theta(t)$, $\ubf(t)$ is the state of updated reconstruction multi-coil data from $t$-th phase for each $t=0,\cdots, T $. $\gbf$ is a multi-phase unrolling network inspired by the proximal gradient algorithm, and the output of $\gbf(\cdot) \in \C^{m \times n \times c}$  is the updated multi-coil MRI data from each phase. The network $\gbf$ is the intermediate mapping from $\ubf{(t)}$ to $\ubf{(t+1)}$ for $t=0,\cdots, T-1 $, whose structure is explained in Section \ref{MLM} for minimizing the variational model \eqref{eq:m_chp3}. The network $\gbf_0$ with initial control parameter $\theta(0)$ maps the partial k-space measurement $ \fbf$ to an initial reconstruction $ \ubf{(0)}$ as the input of this optimal control system. 
	
	To summarize in brief, we solve for the minimizer (reconstruction result) of the lower level problem \eqref{learnable_pmri_lower} in a discrete-time optimal control framework \eqref{eq:bilevel}. The dynamic system \eqref{eq:bilevelconstraint}, \eqref{eq:init} as the constrain of  \eqref{eq:bilevelupper} is modeled as the optimal control system of the variational model \eqref{eq:m_chp3}.
	
	The loss function $\ell(\ubf{(T)})$ measures the discrepancy between the final state $\ubf{(T)}$ and the reference image $\ubf^*$ obtained using full k-space data in the training data set. We set the loss function in  \eqref{learnable_pmri_upper} and \eqref{eq:bilevelupper} for the proposed method as:
	\begin{equation}\label{eq:loss_chp3}
		\begin{aligned}
			& \ell(\ubf_{\Theta}) = \ell(\ubf{(T)}) = \sum\nolimits^{c}_{i=1} \gamma \| \ubf_i{(T)} - \ubf^*_i\| +  \\ 
			& \| |\J(\Bar{\ubf}{(T)})|- \text{RSS}(\ubf^*)\| + \eta \| \text{RSS}(\Bar{\ubf}{(T)}) - \text{RSS}(\ubf^*)\|,  
		\end{aligned}
	\end{equation}
	where $\text{RSS}(\ubf^*)= (\sum_{i=1}^{c} |\ubf^*_i|^2)^{1/2} \in\mathbb{R}^{m \times n}$ is the pointwise root of sum of squares across the $c$ channels of $\ubf^*$, $|\cdot|$ is the pointwise modulus, and $\gamma, \eta > 0$ are prescribed weight parameters. %The last two terms of \eqref{eq:loss_chp3} support  $\J(\Bar{\ubf})$, $ \text{RSS}(\bar{\ubf})$ and $ \text{RSS}(\ubf^*)$ to be close.
	%We want to search for optimal $ \Theta^*$ such that $ \ubf{(T)} =  \argmin_{ \textbf{U}, \Theta}\  \ell(\ubf{(T)})$. 
	%For each $ t$, $\ubf{(t)}$ and $\theta(t) $ are constrained by \eqref{eq:bilevelconstraint} and \eqref{eq:init}.  The third term is for the purpose of narrowing the gap between $\Bar{\ubf}$ and $\ubf^*$.
	
	The motivation of applying learnable image space regularization $R \circ \J$ and k-space regularization $R_f \circ \Fbf$ in model \eqref{eq:bilevel_chp3} is explained in the following: (i) Image domain network recovers the high spatial resolution, but may not suppress some artifacts. Frequency domain network is more suitable to remove high-frequency artifacts. (ii) Image domain and k-space information are equivalent due to the global linear transformation, but adding nonlinear activations with CNNs can feasibly improve the efficacy of network learning and boost the reconstruction performance.
	We parametrize the combination operator $\J$ by CNNs since the partial k-space data were scanned by multiple coil arrays, and introduce cross-correlation among channels of coil-images which could be compatible for CNN structure.
	
	\subsection{Design the Regularization in the Variational Model}\label{subsec:g}

	Denote  function $ h(\ubf) $ as the data fidelity term, one of the famous traditional method for solving problem $ \min_{\ubf} h(\ubf) + R(\ubf) $ is the
	\emph{proximal gradient algorithm} \cite{parikh2014proximal}:
	\begin{subequations}\label{ista_alg}
		\begin{align}
			\bbf^l &= \ubf^l - \alpha^l \nabla h(\ubf^l),\\
			\ubf^{l+1} &= \prox_{\alpha^l R} (\bbf^l),
		\end{align}
	\end{subequations}
	where $\alpha^l >0$ is the step size, the proximity operator $\prox_{\alpha R}$ defined below: 
	\begin{equation}\label{prox}
		\prox_{\alpha R}(\bbf) := \argmin_{\xbf} \left\{\frac{1}{2 \alpha} \| \xbf - \bbf \|^2 + R(\xbf) \right\}, \end{equation} 
	where $[\xbf]_i = \xbf_i \in \mathbb{C}^{m \times n}$ for any $\xbf \in \mathbb{C}^{m \times n \times c}$.
	The proposed network structure is inspired by \eqref{ista_alg} for solving \eqref{learnable_pmri_lower} which can be cast as an iterative procedure with $T$ phases in the discrete dynamic system \eqref{eq:bilevelconstraint} and \eqref{eq:init}.
	We parametrize the $t$-th phase consists of three steps:
	\begin{small}
		\begin{subequations}\label{eq:newmodel} 
			\begin{align}
				\bbf_i{(t)}  = \ubf_i{(t)} - \rho_t  \Fbf^{H}  \Pbf^{\top}  (\Pbf \Fbf \ubf_i{(t)} - \fbf_i),  & \quad i = 1,\cdots, c, \label{eq:newbi}  \\
				\bar{\ubf}_i{(t)}   = [\prox_{\rho_t R(\J(\cdot)) } (\bbf{(t)})]_i, & \quad i = 1,\cdots, c, \label{eq:newubar} \\
				{\ubf}_i{(t+1)}  = [\prox_{ \rho_t R_f(\Fbf(\cdot))}  (\bar{\ubf}{(t)})]_i, & \quad i = 1,\cdots, c \label{eq:newui},
			\end{align}
		\end{subequations}
	\end{small}
	for $t=0,\cdots, T-1 $ and $\bbf{(t)} = (\bbf_1{(t)},\dots,\bbf_{c}{(t)}) \in \mathbb{C}^{m \times n \times c}$,  $\rho_t>0$ is the step size. $F$ denotes the normalized discrete Fourier transform and $F^{H}$ the complex conjugate transpose (i.e., Hermitian transpose) of $F$, here $F^{H}$ is the inverse discrete Fourier transform. 
	
	The step \eqref{eq:newbi} computes $ \bbf{(t)} $ by applying the gradient decent algorithm to minimize the data fidelity term in \eqref{eq:m_chp3} which is straightforward to compute.
	The first proximal update step \eqref{eq:newubar} equipped with the joint regularizer $ R(\J(\cdot))$ and upgrades its input $ \bbf_i{(t)} $ to a multi-coil image $\bar{\ubf}_i{(t)}$.
	Ideally, the regularization $R\circ \J$ can be parameterized as a deep neural network whose parameters can be adaptively learned from data, however, in such case $\prox_{\rho_t R(\J(\cdot)) }$ does not have closed form and can be difficult to compute.
	As an alternative, the proximity operator $\prox_{\rho_t R(\J(\cdot)) }$ can be directly parametrized as a learnable denoiser and solve \eqref{eq:newubar} in each iteration. 
	For the similar reason,  the proximity operator $\prox_{R_f(\Fbf(\cdot))}$  in \eqref{eq:newui} can also be
	parametrized as CNNs, which further improves the accuracy of the k-space measurement.
	In both \eqref{eq:newubar} and \eqref{eq:newui}, the regularizers $R \circ \J$ and $R_f \circ \Fbf $ extract complex features through neural networks and the proximity points can be learned in denoising network via ResNet \cite{7780459} structure.
	
	We frame step \eqref{eq:newubar} incorporates joint reconstruction to update coil-images via ResNet \cite{7780459}: $\Bar{\ubf}{(t)} =  \bbf{(t)} + \M( \bbf{(t)})$,  where $ \M$ represents a multi-layer CNN by executing approximation to the proximal mapping in the image space.
	We propose to first learn a nonlinear operator $\J$ that combines $\{ \bbf_i\}$ into the image $ \zbf = \J (\bbf_1,\dots,\bbf_{c}) \in \mathbb{C}^{m\times n}$ with homogeneous contrast. Then apply a nonlinear operator $\G$ on $\zbf$ with $ \G(\zbf) \in \mathbb{C}^{m\times n\times N_f}$ to extract a $N_f$-dimensional features. The nonlinear operator $\J$ contains four convolutions with an activation function in between, each convolution obtains kernel size $ 3\times3$. The nonlinear operator $\G$ consists of three convolutions with filter size $ 9\times 9$. For the sake of improving the capacity of the network, $ \tilde{\J}$ and $ \tilde{\G}$ are employed as adjoint operators of $ \J$ and $ \G$ respectively with symmetric structure and parameters are trained separately. $\G \circ \J $ was designed in the sense of playing a role as an encoder and $\tilde{\J} \circ \tilde{\G}$ as a decoder. Therefore, image domain network can be parametrized as compositions of four CNN operators: $\M = \tilde{\J} \circ \tilde{\G} \circ \G \circ \J $ with output $ \M( \bbf{(t)})\in \mathbb{C}^{m \times n\times c}$. 
	This step \eqref{eq:newubar} outputs the multi-coil data $ \Bar{\ubf}  = (\Bar{\ubf}_1, \dots, \Bar{\ubf}_{c})  \in \mathbb{C}^{m \times n \times c}$ from image domain network and we apply the combination operator $\J$ on $\Bar{\ubf}$ to obtain a full FOV MR single-channel image $ \vbf = \J(\Bar{\ubf})  \in \mathbb{C}^{m\times n}$ that we desired for reconstruction.
	
	Step \eqref{eq:newui} leverages k-space information and further suppresses the high-frequency artifacts. The output  $\Bar{\ubf}{(t)} $ from \eqref{eq:newubar} is passed to a k-space domain network by a ResNet structure
	$\ubf{(t+1)} = \ \Bar{\ubf}{(t)} +  \Fbf^{H}  \K \big( \Fbf(\Bar{\ubf}{(t)} ) \big)$, where $ \Fbf^{H}  \K  \Fbf$ is a CNN operator to refine and further improve the accuracy of the k-space data from each coil. The CNN  $\K$  consists four convolutions, the last convolution kernel numbers meets the channel number of coil images.
	
	Therefore, \eqref{eq:newmodel} is proceed in the following scheme:
	\begin{small}
		\begin{subequations}  \label{eq:scheme}
			\begin{align}
				\bbf_i{(t)} =  \ \ubf_i{(t)} - \rho_t \Fbf^{H}  \Pbf^{\top} (\Pbf \Fbf \ubf_i{(t)} - \fbf_i),& \quad i = 1,\cdots, c, \label{eq:schemeb} \\
				\bar{\ubf}_i{(t)}  =  \ \bbf_i{(t)} + \M( \bbf_i{(t)}), & \quad i = 1,\cdots, c, \label{eq:schemeu-bar}   \\
				\ubf_i{(t+1)} = \ \Bar{\ubf}_i{(t)} +   \Fbf^{H}  \K \big( \Fbf(\Bar{\ubf}_i{(t)} ) \big), & \quad i = 1,\cdots, c, \label{eq:schemeu} 
			\end{align}
		\end{subequations}
	\end{small}
	for $ t = 0,\cdots,T-1$.
	Now we can derive the function $\gbf$ described in \eqref{eq:bilevel_chp3} by combining \eqref{eq:scheme}:
	\begin{subequations}\label{eq:g}
		\begin{align}
			\ubf_i{(t+1)}&  =  ( \N - \rho_t \Fbf^{H}  \Pbf^{\top} \Pbf \Fbf -  \N( \rho_t \Fbf^{H}  \Pbf^{\top} \Pbf \Fbf) ) \ubf_i{(t)} \notag\\
			&  + \ubf_i{(t)} + \rho_t \Fbf^{H}  \Pbf^{\top} \fbf_i-\N(\rho_t \Fbf^{H}  \Pbf^{\top} \fbf_i) \label{eq:funcg}\\
			& = \gbf(\ubf_i{(t)}), \label{eq:gu}
		\end{align}
	\end{subequations}
	where $ \N =\M + \Fbf^{H} \K \Fbf + \Fbf^{H} \K \Fbf  \M$, and the function $\gbf$ maps $\ubf_i{(t)}$ to $\ubf_i{(t+1)}$ for $t=0,\cdots, T-1 $ defined in \eqref{eq:funcg}.
	
	Our proposed reconstruction network is composed of a prescribed $T$ phases, the initial input of the network is designed as $\ubf{(0)} =  \gbf_0(\fbf, \theta(0)):= \Fbf^{H}(\fbf+ \K_0(\fbf))$ where $\K_0$ is a CNN operator applied to $\fbf$ in residual learning to interpolate the missing data and generate a pseudo full k-space. %Therefore, the function $\gbf_0$ in \eqref{eq:bilevel_chp3} is defined as $ \gbf_0(\fbf):= \Fbf^{H}(\fbf+ \K(\fbf))$.
	With the chosen initial $\{ \ubf_i{(0)}\}$ and partial k-space data $\{\fbf_i\}$ as input, the network performs the update \eqref{eq:scheme} in the $t$-th phase for $t=0,\cdots, T-1$  and finally the entire network reconstructs coil-images $\ubf{(T)}$ and $ \J (\Bar{\ubf}{(T)}) $, which is the final single-body image reconstructed as a by-product (complex-valued). 
	
	Fig. \ref{fig:all-phases} displays the flowchart of our entire network procedure. The initial reconstruction suppresses artifacts caused by undersampling.  We display the flowchart of these CNN structures of each phase in Fig. \ref{fig:t+1phase}. %
	We apply complex-valued convolutions where multiplications between complex numbers are performed and use componentwise complex-valued activation function $\C$ReLU($a+ib$) = ReLU($a$) + $i$ReLU($b$) as suggested in \cite{cole2020analysis}. 
	The Landweber update step \eqref{eq:schemeb} plays a role in increasing communication between image space and k-space. The learnable step size $\rho_t$ controls the speed and stability of the convergence. In the image space denoising step \eqref{eq:schemeu-bar}, the operator $\J$ extracts feature across all the channels so that spatial resolution is improved and tissue details are recovered in the channel-combined image. The CNN $\M$ carrying channel-integration $\J$ proceed $T$ times with shared weights in every two phases therefore the spatial features across channels are learned in an efficient way. However, the oscillatory artifacts could be misinterpreted as real features, which might be sharpened.  In the k-space denoising update step \eqref{eq:schemeu}, the k-space network $\K$ is compatible with low-frequency information, so it releases the high-frequency artifacts and recovers the structure of the image \cite{doi:10.1002/mrm.27201, wang2020ikwi}. Therefore 
	iterating the networks $\M$ and $\K$ in different domains with their individual effects,  the performance compensates and the shortcomings of both networks offset each other. We evaluate the effect of hybrid domain reconstruction in ablation studies. 
	Furthermore, the prescribed denoising networks in \eqref{eq:schemeu-bar} and \eqref{eq:schemeu} refine and update coil-images in each iteration. This iterative procedure triggers the reconstruction quality of $ \J (\Bar{\ubf}{(t)}) $ get successively enhancement. 
	
	\begin{figure}[t]
		\centering
		\includegraphics[width=1\linewidth]{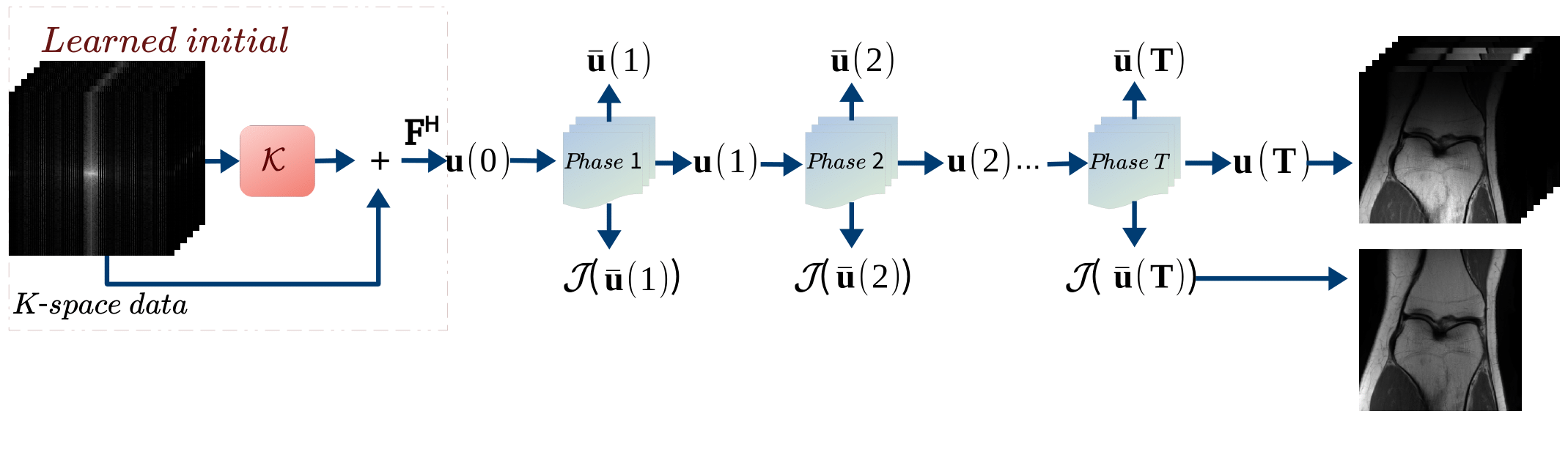}
		\caption{The proposed framework paradigm for  all  phases.}
		\label{fig:all-phases}
	\end{figure}
	\begin{figure}[t]
		\centering
		\includegraphics[width=1\linewidth]{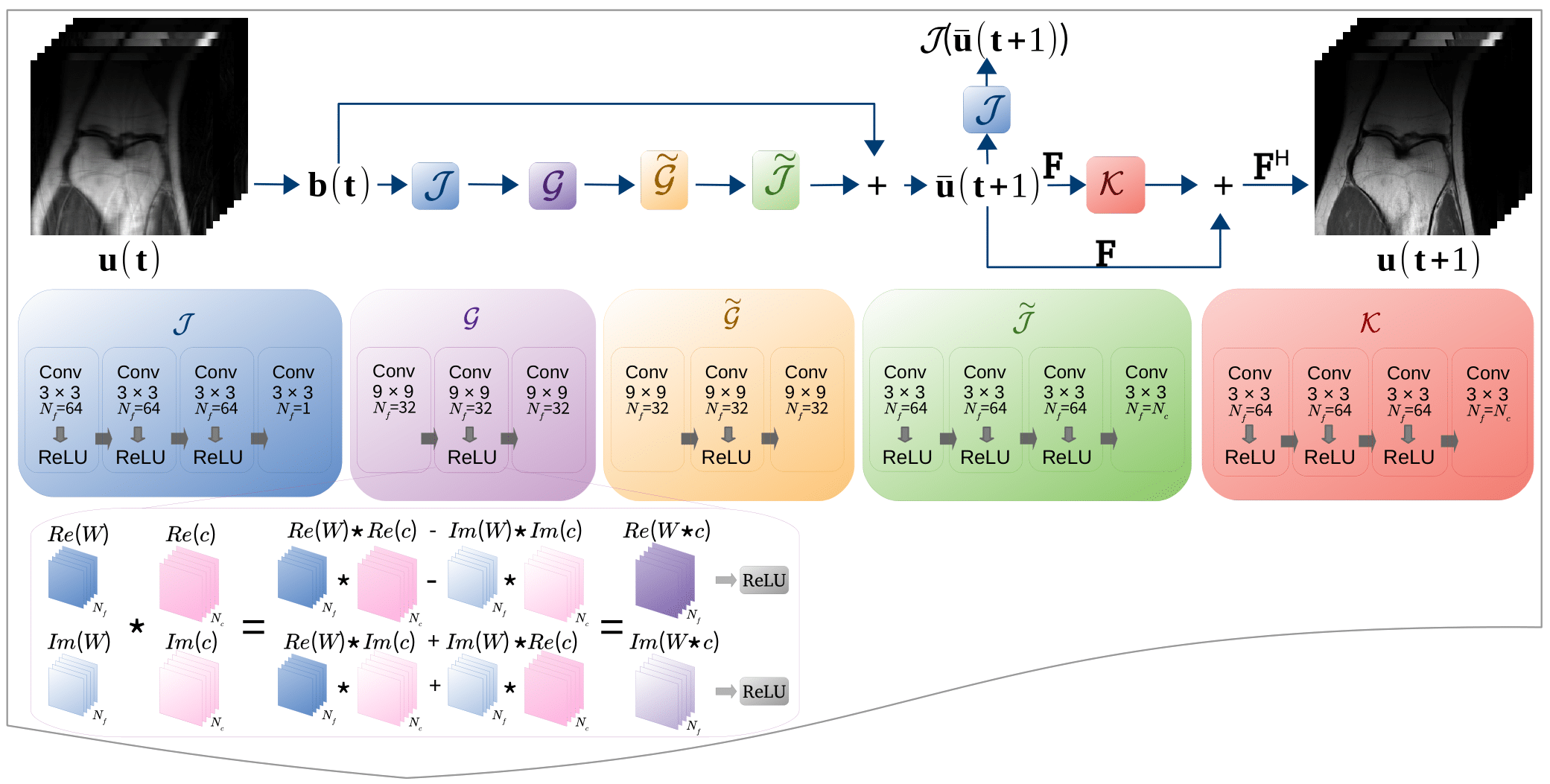}
		\caption{The proposed framework paradigm at $ t+1$-th phase.
			$W = Re(W) + i  Im(W)$ representing complex convolution filter kernels.}
		\label{fig:t+1phase}
	\end{figure}

	\subsection{Network Training from The View of The Method of Lagrangian Multipliers (MLM)}\label{MLM}
	The network parameters to be solved from \eqref{eq:bilevel_chp3} are $\Theta = \{\theta(t): t = 0,\cdots,T\}$, where $\theta(t)=\{\rho_{t},\J_{t},\G_{t},\tilde{ \G}_{t}, \tilde{ \J}_{t}, \K_{t} \}$ for $t =1, \cdots, T$ and $\theta(0)=\K_{0} $.
	%The second term of \eqref{eq:loss_chp3} is imposed if $\text{RSS}(\ubf^*)$ is expected to approximate the magnitude of the single-body image accurately.
	
	The control problem \eqref{eq:bilevel_chp3} can be solved by using MLM. The corresponding Lagrangian function is
	\begin{equation}
		\label{eq:lag}
		\begin{aligned}
			L(\textbf{U},\Theta;\Lambda) = & \ell(\ubf{(T)}) +  \langle \lambda(0) , \ubf{(0)} - \gbf_0 \big(\fbf , \theta(0) \big) \rangle \\
			&+ \sum\nolimits^{T}_{t=1} \langle \lambda(t), \ubf{(t)} - \gbf\big(\ubf{(t-1)},  \theta(t) \big) \rangle,
		\end{aligned}
	\end{equation}
	where $\textbf{U}=(\ubf{(0)}, \cdots, \ubf{(T)})^{\intercal}$ is the collection of all the states $\ubf{(t)}$,  $\Lambda = (\lambda(0), \cdots, \lambda(T))^{\intercal}$ are Lagrangian multipliers of \eqref{eq:bilevel_chp3}. 
	The algorithm proceed in the iterative scheme to update $\Theta^k$, for each training epoch  $k=0, \cdots, K$, $\Theta^{k}=(  \theta^{k}(0),\cdots, \theta^{k}(T))^{\intercal}$, $\textbf{U}^k =( \ubf^{k}{(0)}, \cdots,  \ubf^{k}{(T)})^{\intercal}$, and $\Lambda^k = (\lambda^k(0), \cdots,  \lambda^k(T))^{\intercal}$ . 
	
	If $(\textbf{U}^*, \Theta^*;\Lambda^*)$ minimizes Lagrange function \eqref{eq:lag}, by the first order optimality condition, we have
	\begin{subnumcases}{}
		\partial_{\textbf{U}}  L (\textbf{U}^*, \Theta^*; \Lambda^*) = 0 \label{dU}\\
		\partial_{\Theta} L (\textbf{U}^*, \Theta^*; \Lambda^*) = 0 \label{dTheta}\\
		\partial_{\Lambda}  L (\textbf{U}^*, \Theta^*; \Lambda^*) = 0  \label{dLambda}
	\end{subnumcases}
	\textbullet Fix $\Theta = \Theta^k$, define $(\textbf{U}^k,\Lambda^k):= \argmin_{\textbf{U},\Lambda}  L(\textbf{U},\Theta^k;\Lambda) $, then by the first order optimality condition for minimizing $L$ w.r.t $ \lambda(t)$ for $ t=0,\cdots,T$,  $ (\textbf{U}^k,\Lambda^k)$  should satisfy 
	\begin{equation}
		\begin{aligned}
			&\partial_{\lambda(0)}\langle \lambda(0), \ubf{(0)} - \gbf_0(\fbf , \theta^k(0)) \rangle \Big|_{(\ubf^k{(0)}, \lambda^k(0))} =0  \\
			&\implies \ubf^k{(0)} = \gbf_0(\fbf , \theta^k(0)) \label{eq:ut0}\\
		\end{aligned}
	\end{equation}
	\begin{equation}
		\begin{aligned}
			&\partial_{\lambda(t)}\langle \lambda(t), \ubf{(t)} - \gbf(\ubf{(t-1)},\theta^k(t) ) \rangle \Big|_{(\ubf^k{(t)}, \lambda^k(t))} =0 \\
			&\implies \ubf^k{(t)} = \gbf(\ubf^k{(t-1)}, \theta^k(t)), \ t = 1,\cdots, T. \label{eq:ut1} 
		\end{aligned}
	\end{equation}

	Then by the first order optimality condition for minimizing $L$ w.r.t  $\ubf{(t)}$, for $t=T$, we get
	\begin{equation}
		\begin{aligned}
			&\partial_{\ubf{(T)}} [\ell(\ubf{(T)}) + \langle \lambda(t), \ubf{(T)} \rangle ] \Big|_{(\ubf^k{(t)}, \lambda^k(t))} = 0\\ 
			&\implies \lambda^k(T) = - \partial_{\ubf{(T)}}  \ell(\ubf^k{(T)}), \label{eq:lambdaT} 
		\end{aligned}
	\end{equation}
	for $t = 0,\cdots,T-1$:
	\begin{small}     
		\begin{eqnarray}     
			&\partial_{\ubf{(t)}}  [\langle \lambda(t), \ubf{(t)} \rangle- \langle \lambda(t+1), \gbf(\ubf{(t)}, \theta^k{(t+1)}) \rangle] \Big|_{(\ubf^k{(t)}, \lambda^k(t))}  = 0 \notag\\ 
			&\implies \lambda^k(t) = \langle \lambda^k(t+1), \partial_{\ubf{(t)}}  \gbf(\ubf^k{(t)}, \theta^k{(t+1)}) \rangle,  \label{eq:lambdat}        
		\end{eqnarray}              
	\end{small}    
	\textbullet Fix  $(\textbf{U}^k,\Lambda^k) $ for updating $\Theta$,  we compute the gradient $\partial_{\Theta}L(\textbf{U}^k,\Theta; \Lambda^k)$:
	\begin{subequations}
		\label{eq:dtheta}
		\begin{align}
			&\partial_{\theta(t)}  L(\textbf{U}^k,\theta(t);\Lambda^k)  = \partial_{\theta(t)} [ -\langle \lambda^k(t), \gbf(\ubf^k{(t-1)}, \theta(t)) \rangle ] \notag\\
			& =  -\langle \lambda^k(t), \partial_{\theta(t)} \ \gbf(\ubf^k{(t-1)}, \theta(t)) \rangle, t = 1,\cdots, T, \label{eq:dtheta1}\\
			&\partial_{\theta(0)}  L(\textbf{U}^k,\theta(0);\Lambda^k) =  -\langle \lambda^k(0), \partial_{\theta(0)}  \gbf_0(\fbf, \theta(0)) \rangle.\label{eq:dtheta0}
		\end{align}
	\end{subequations} 
	\begin{theorem}\label{thm}
		$\partial_{\Theta}L(\textbf{U}^k,\Theta;\Lambda^k) = \partial_{\Theta} \ell(\ubf^k{(T)}(\Theta) )$.
	\end{theorem}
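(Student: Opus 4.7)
My plan is to show that the backward recursion for the Lagrange multipliers $\lambda^k(t)$ coincides, up to a sign, with the adjoint variables obtained from a chain-rule expansion of $\ell(\ubf^k{(T)}(\Theta))$. I will first view $\ubf{(T)}$ as a function of $\Theta$ through the forward recursion \eqref{eq:ut0}--\eqref{eq:ut1}, and define the auxiliary quantity $\mu(t) := \partial_{\ubf{(t)}}\, \ell(\ubf{(T)})$, where the right-hand side is the gradient of $\ell$ with respect to $\ubf{(t)}$ obtained by propagating $\ubf{(t)}$ forward through $\gbf$ to $\ubf{(T)}$. By construction $\mu(T) = \partial_{\ubf{(T)}} \ell$, and the chain rule yields the backward recursion $\mu(t) = \big(\partial_{\ubf{(t)}} \gbf(\ubf{(t)}, \theta(t+1))\big)^{\!\top} \mu(t+1)$ for $t = 0, \dots, T-1$, together with $\partial_{\theta(t)} \ell = \big(\partial_{\theta(t)} \gbf(\ubf{(t-1)}, \theta(t))\big)^{\!\top} \mu(t)$ for $t = 1,\dots, T$ and the analogous identity $\partial_{\theta(0)} \ell = \big(\partial_{\theta(0)} \gbf_0(\fbf, \theta(0))\big)^{\!\top} \mu(0)$.

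Next, I will compare $\mu(t)$ to $\lambda^k(t)$ as defined by the optimality conditions \eqref{eq:lambdaT} and \eqref{eq:lambdat}. Equation \eqref{eq:lambdaT} gives $\lambda^k(T) = -\partial_{\ubf{(T)}} \ell = -\mu(T)$, and \eqref{eq:lambdat} reads $\lambda^k(t) = \big(\partial_{\ubf{(t)}} \gbf(\ubf^k{(t)}, \theta^k(t+1))\big)^{\!\top} \lambda^k(t+1)$, which is precisely the same recursion as for $\mu(t)$ but with the opposite sign of the terminal value. A straightforward backward induction on $t$ therefore gives $\lambda^k(t) = -\mu(t)$ for every $t = 0, \dots, T$. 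Here the crucial observation is that the forward optimality conditions \eqref{eq:ut0}--\eqref{eq:ut1} ensure that the Jacobians of $\gbf$ and $\gbf_0$ are evaluated at the same states $\ubf^k{(t)}$ that appear in $\mu(t)$; this is what makes the two recursions genuinely identical rather than merely formally similar.

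Finally, I will substitute $\lambda^k(t) = -\mu(t)$ into the formulas \eqref{eq:dtheta1} and \eqref{eq:dtheta0} for $\partial_{\theta(t)} L$, which gives
\begin{equation*}
\partial_{\theta(t)} L(\textbf{U}^k, \Theta; \Lambda^k) \;=\; -\big\langle \lambda^k(t),\, \partial_{\theta(t)} \gbf \big\rangle \;=\; \big\langle \mu(t),\, \partial_{\theta(t)} \gbf \big\rangle \;=\; \partial_{\theta(t)}\, \ell\big(\ubf^k{(T)}(\Theta)\big),
\end{equation*}
for $t = 1, \dots, T$, and likewise for $t = 0$ using $\gbf_0$ in place of $\gbf$. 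Assembling these component-wise equalities across $t = 0, \dots, T$ yields the claimed identity $\partial_\Theta L(\textbf{U}^k, \Theta; \Lambda^k) = \partial_\Theta \ell(\ubf^k{(T)}(\Theta))$.

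The only part that requires real care is the sign bookkeeping and the verification that the Jacobians in the $\mu$-recursion and the $\lambda$-recursion are evaluated at the same states; once the forward conditions \eqref{eq:ut0}--\eqref{eq:ut1} are invoked this alignment is immediate, and the rest reduces to a standard adjoint/backpropagation identity. Consequently, no smoothness or convexity hypotheses beyond the differentiability of $\gbf$, $\gbf_0$, and $\ell$ are needed.
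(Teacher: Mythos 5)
Your proof is correct and follows essentially the same route as the paper: a backward induction showing that the Lagrange multipliers satisfy $\lambda^k(t) = -\partial_{\ubf{(t)}}\ell(\ubf^k{(T)})$ (your $-\mu(t)$), followed by substitution into the expressions for $\partial_{\theta(t)}L$. Introducing $\mu(t)$ explicitly and emphasizing that the Jacobians in both recursions are evaluated at the forward states $\ubf^k{(t)}$ is a slightly cleaner bookkeeping of the same argument, not a different one.
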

	\begin{proof}
		First we show the following holds for $t=0,\cdots, T$:
		\begin{equation}
			\label{eq:everyt}
			\lambda^k(t)  = - \partial_{\ubf{(t)}}\ell(\ubf^k{(T)}) 
		\end{equation}
		
		From \eqref{eq:lambdaT}, \eqref{eq:everyt} is true when $ t=T$.
		Suppose \eqref{eq:everyt} true for $ t=\tau \in \{1,\cdots ,T\} $.
		From \eqref{eq:lambdat}, we have
		\begin{subequations}
			\begin{align}
				\lambda^k(\tau-1) & = \langle \lambda^k(\tau), \partial_{\ubf{(\tau-1)}} \ \gbf(\ubf^k{(\tau-1)}, \theta^k(\tau)) \rangle  \notag\\
				& = \langle - \partial_{\ubf{(\tau)}} \ell(\ubf^k{(T)}), \partial_{\ubf{(\tau-1)}} \ubf^k{(\tau)}  \rangle  \\
				& = - \partial_{\ubf{(\tau-1)}}\ell(\ubf^k{(T)})\label{eq:lambda_T-1}
			\end{align}
		\end{subequations}
		Thus, \eqref{eq:everyt} holds for $ t= \tau-1$. By the principle of induction, \eqref{eq:everyt} is ture for all $t=0, \cdots, T$.
		
		Hence, \eqref{eq:dtheta1} reduces to
		\begin{subequations}
			\label{eq:dL}
			\begin{align}
				\partial_{\theta(t)} & L(\textbf{U}^k,\theta(t);\Lambda^k) =  -\langle \lambda^k(t), \partial_{\theta(t)} \ \gbf(\ubf^k{(t-1)}, \theta(t) ) \rangle \notag \\
				& =  \langle \partial_{\ubf{(t)} } \ell(\ubf^k{(T)})  , \partial_{\theta(t)} \ubf^k{(t)} \rangle  \\
				& = \partial_{\theta(t)} \ell ( \ubf^k{(T)} ), \ t = 1,\cdots, T. 
			\end{align}
		\end{subequations} 
		Also \eqref{eq:dtheta0} gives $\partial_{\theta(0)} L(\textbf{U}^k,\theta(0);\Lambda^k) 
		= \partial_{\theta(0)} \ell ( \ubf^k{(T)} )$.
		Therefore, we derive 
		$ \partial_{\Theta} L(\textbf{U}^k,\Theta;\Lambda^k) = \partial_{\Theta} \ell ( \ubf^k{(T)}(\Theta))$.
	\end{proof}
	This theorem further implies that:
	Since the gradients are the same,  applying SGD Algorithms or its variance such as Adam \cite{kingma2014adam} to minimize loss function $ \ell$ is equivalent to perform the same algorithm on $L$. Network training algorithm using MLM can be summarized in  Algorithm \ref{alg:1}. % i.e. This theorem shows that using  Adam  to minimize loss function has equivalent performance as using  Algorithm \ref{alg:1} to minimize $L$, which ensuring  the  local  convergence  of  the training  algorithm. 
	
	\begin{algorithm}
		\caption{Network Training by MLM}\label{alg:1}
		\textbf{Hyperparameter:} $K$ (\#Iterations)\\
		\textbf{Initialize:} Initial guess $\theta^{0}(t) \in \Theta^0, t =0,\cdots, T$\\
		\For{$k = 0$ \KwTo $K-1$}{
			Set $ \ubf^k{(0)} = \gbf_0(\fbf , \theta(0)) $\\
			\For{$t=1$ \KwTo $T$}{
				$ \ubf^k{(t)} = \gbf(\ubf^k{(t-1)}, \theta^{k}(t))$ }
			Set $ \lambda^k(T) =   - \partial_{\ubf}  \ell(\ubf^k{(T)})$ \\
			\For{$t = T-1$ \KwTo  $0$ }{$\lambda^k(t) = \langle \lambda^k{(t+1)}, \partial_{\ubf}  \gbf(\ubf^k{(t)}, \theta^{k}(t+1)) \rangle$ }
			\For{$t = 0$ \KwTo  $T$ }{
				Set $\theta^{k+1}(t) = \argmin_{\theta(t)} L (\textbf{U}^k, \theta(t);\Lambda^k) $. }
		}
		\textbf{Output:}  $ \theta^{K}(t) , t=0,\cdots,T $.
	\end{algorithm}
	
	\section{Experimental Results}
	\label{sec:experiment_chp3}
	\subsection{Data Set}
	
	The data in our experiments was acquired by a 15-channel knee coil array with two pulse sequences: a proton density weighting with (FSPD) and without (PD) fat suppression in the coronal direction from \url{https://github.com/VLOGroup/mri-variationalnetwork}. We used a regular Cartesian sampling mask with 31.56\% sampling ratio as shown in the lower-right of Fig. \ref{PD_chp3}. Each of the two sequences data includes images of 20 patients, we select 27-28 central image slices from 19 patients, which amount to 526 images each of size $ 320 \times 320$ as the training dataset, and 15 central image slices are picked from one patient that is not included in the training data set as testing dataset. 
	
	\subsection{Implementation}
	\label{Implementation}
	We evaluate classical methods GRAPPA \cite{griswold2002generalized}, SPIRiT \cite{doi:10.1002/mrm.22428}, and deep learning methods VN \cite{doi:10.1002/mrm.26977}, De-AliasingNet \cite{10.1007/978-3-030-32248-9_4}, DeepcomplexMRI \cite{WANG2020136} and Adaptive-CS-Net \cite{pezzotti2020adaptive} over the 15 testing  Coronal  FSPD and Coronal  PD knee images with regular Cartesian sampling in terms of PSNR, structural similarity (SSIM) \cite{wang2004image} and relative error RMSE.
	The following equations are computations of SSIM, PSNR and RMSE between  reconstruction $\vbf = |\J(\Bar{\ubf})| $ and ground truth single-body image $\vbf^*$:
	\begin{equation}
		SSIM = \frac{(2\mu_{\vbf} \mu_{\vbf^*} + C_1)(2\sigma_{\vbf \vbf^*} + C_2)}{(\mu_{\vbf}^2 + \mu_{\vbf^*}^2+C_1)( \sigma_{\vbf}^2 + \sigma_{\vbf^*}^2 + C_2)},
	\end{equation}
	where $\mu_{\vbf}, \mu_{\vbf^*}$ are local means of pixel intensity, $\sigma_{\vbf}, \sigma_{\vbf^*}$ denote the standard deviation and $\sigma_{\vbf \vbf^*} $ 
	is covariance between $\vbf$ and $\vbf^* $, $ C_1 = (k_1 L)^2, C_2 = (k_2 L)^2$ are two constants that avoid denominator to be zero, and $ k_1 =0.01, k_2 =0.03$. $L$ is the largest pixel value of the magnitude of coil-images.
	\begin{equation}
		PSNR = 20\log_{10} \big(  \max(\abs{\vbf^*})  \big/ \frac{1}{N}\| \vbf^* - |\J(\Bar{\ubf})|  \|^2 \big),
	\end{equation}
	where $N$ is the total number of pixels in the magnitude of ground truth.
	\begin{equation}
		RMSE = \| \vbf^* - |\J(\Bar{\ubf})|  \| / \| \vbf^* \|.
	\end{equation}
	The relative error between the multi-coil reconstruction $\ubf$ and the ground truth $\ubf^*$ is defined as
	\begin{equation}
		RMSE = \sqrt{  \sum\nolimits^{c}_{i=1} \| \ubf^*_i - \ubf_i\|^2 / \sum\nolimits^{c}_{i=1} \| \ubf^*_i  \|^2  }.
	\end{equation}
	
	All the experiments are implemented and tested in TensorFlow \cite{abadi2016tensorflow} on a Windows workstation with Intel Core i9 CPU at 3.3GHz and an Nvidia GTX-1080Ti GPU with 11GB of graphics card memory. The parameters in proposed networks are using Xavier initialization \cite{glorot2010understanding} to initialize $\theta^{0}(t) , t=0,\cdots,T $. Indeed, solving line 13 in the Algorithm \ref{alg:1} using any stochastic gradient descent algorithm such as Adam \cite{kingma2014adam} has the same performance as minimizing loss function w.r.t $\theta(t)$ using the same algorithm to update $ \theta^{k+1}(t)$, because of the equivalence of the gradients: $ \partial_{\Theta} L(\textbf{U}^k,\Theta;\Lambda^k) = \partial_{\Theta} \ell ( \ubf^k{(T)}(\Theta))$, which is proved by Theorem \ref{thm}. TensorFlow provides optimized APIs for automatic differentiation which helps to build highly performant input pipelines and keeps high GPU utilization. In order to  implement a more stable gradient calculation, 
	we replace line 13 by minimizing $\ell$ with the Adam algorithm. The network was trained with total epochs $K=700$ to update $\Theta^k$. We apply exponentially decay learning rate 0.0001, $\beta_1 = 0.9, \beta_2 = 0.999, \epsilon =10^{-8}$ and mini-batch size of 2 is used in Adam optimizer.  The initial step size is set to $\rho_0 =1$ for both real and imaginary parts and we choose  $\gamma = 10^{-3}, \eta = 10^{-4}$ in \eqref{eq:loss_chp3}. The proposed network was implemented with $T=4$ and parameters trained in the network $\M$ are shared for every two phases.

	\subsection{Comparison with Existing Methods}\label{sub:compare}
	
	The average numerical performance  with standard deviations of the proposed method and several state-of-the-art methods are summarized in Table \ref{tab:result}. The proposed method achieves the best reconstruction quality in terms of PSNR/SSIM/RMSE. Our method is parameter efficient due to the network $\M$ share parameters in every two phases so that the entire network reduces more than 1/4 of learnable parameters. We listed the trained parameter numbers and inference time in table \ref{tab:result}.  Fig. \ref{fig:phase_images} and Table \ref{tab:phase_result} indicate that reconstruction performance get improved progressively as $t$ increases.
	\begin{figure}
		\centering
		\includegraphics[width=0.24\linewidth]{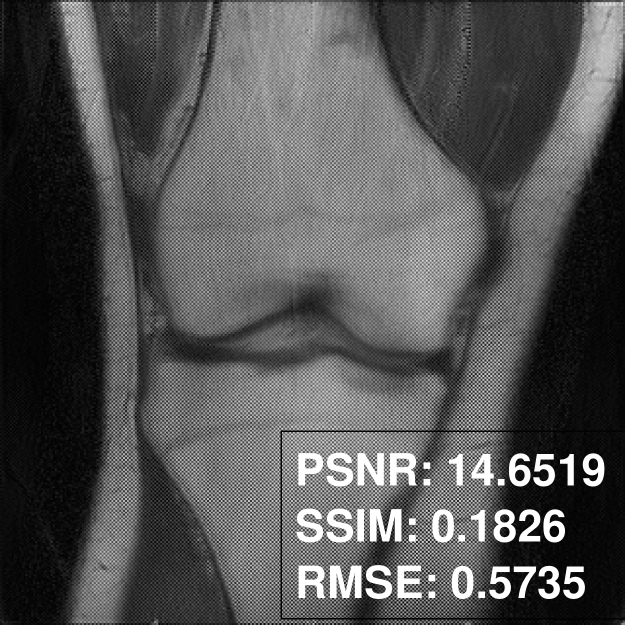}
		\includegraphics[width=0.24\linewidth]{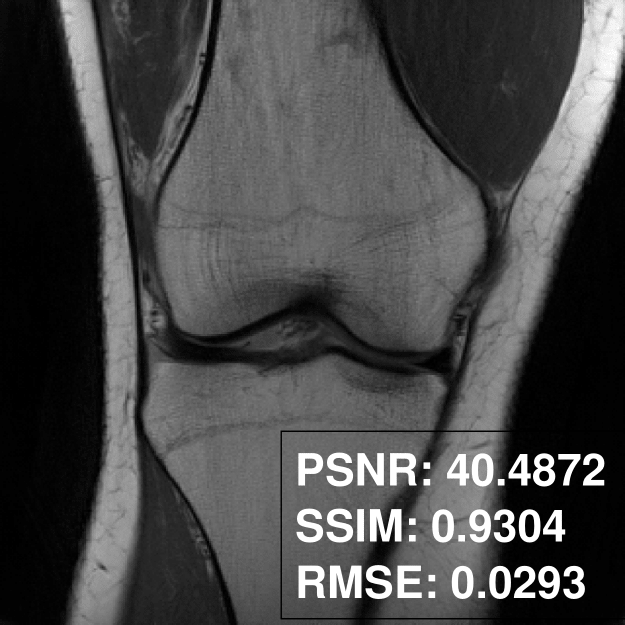}
		\includegraphics[width=0.24\linewidth]{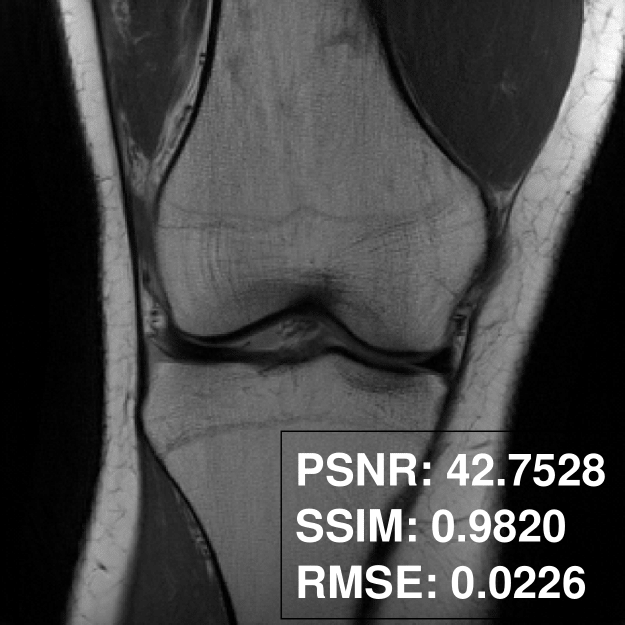}
		\includegraphics[width=0.24\linewidth]{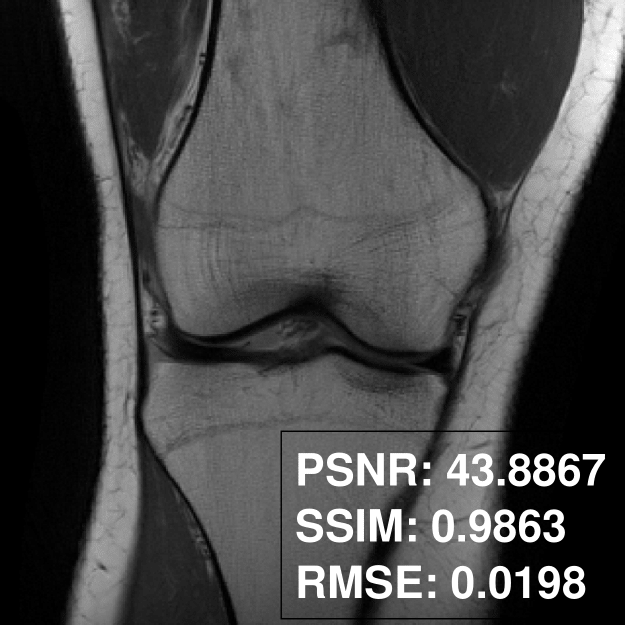}
		\caption{ Reconstructed channel-combined images $ \J( \bar{\ubf}{(t)})$ for $ t =1,2,3,4$ of proposed method.}
		\label{fig:phase_images}
	\end{figure} 
	\begin{table}
		\caption{Tested Average PSNR for FSPD data and PD data for each phase.}\label{tab:phase_result}
		\centering
	 \begin{tabular}{ccc}
				\toprule
				Phase number  &  PSNR of FSPD data  & PSNR of PD data  \\
				\midrule
				1 & 16.2007 & 15.5577\\
				2 & 33.7201 & 40.5228\\
				3 & 36.9523 & 42.6623\\
				4 & 40.7101 & 44.8120\\
				\bottomrule
		\end{tabular}
	\end{table}
	GRAPPA and SPIRiT adopted calibration kernel size $ 5\times 5$ and Tikhonov regularization in the calibration was set to be 0.01. Tikhonov regularization in the reconstruction was set as $10^{-3}$ for implementing SPIRiT, which took 30 iterations.
	In training and testing of VN and DeepcomplexMRI, the network and parameter settings are used as stated in their paper and code. We modified De-AliasingNet by increasing filter numbers to be 64 in 20 iterations to improve the performance for fair competition. In the CDF-Net, we set 128 channels as an initial layer with a depth of 4 in all the three Frequency-Informed U-Nets. We choose $ \sigma$ to be  Softplus activation function. We implemented Adaptive-CS-Net with a total of 15 reconstruction blocks including 2 of 16 filters with size $ 3\times3$, 3 of 32 filters with size $5\times5$, and 2 of 64 filters with size $5\times5$, the blocks are mixed with 2 and 3 scales, which was controlled as the largest tolerance of our GPU capacity. We input the data consistency as prior knowledge of training data to the network, neighboring slices with the center slice are used as input.  For fair competitions, CDF-Net and Adaptive-CS-Net have employed complex convolutions and activation $\C$ReLU.
	
	VN applied precalculated coil sensitivities, and output full-body image. De-AliasingNet, DeepcomplexMRI, CDF-Net, and Adaptive-CS-Net both output multi-coil images and do not require coil sensitivity maps. DeepcomplexMRI uses the adaptive coil combination method \cite{Walsh2000682}, the other networks all applied RSS.  CDF-Net and proposed network perform cross-domain reconstruction, other learning-based methods perform on image domain.
	Comparison between referenced pMRI  methods and proposed methods are shown in Fig.~\ref{PD_chp3} for PD images and FSPD images are in Fig.~\ref{FSPD}.  The top row shows reconstructed images and the referenced image, the second and third-row are corresponding zoomed-in ROIs of the red box area (draw on the rightmost ground truth image), the fourth row shows corresponding pointwise absolute error maps and the last row shows corresponding values and regular Cartesian sampling (31.56\% rate) mask.
	We observe the evidence that deep learning-based methods significantly outperform classical methods GRAPPA and SPIRiT in reconstruction accuracy. 
	In learning-based methods, the tested images from VN and De-AliasingNet are more blurry than other ones and lost sharp details in some complicated tissue, other methods display only slight differences in the detail.

	\begin{table*}
		\caption{Quantitative evaluations of the reconstructions on the Coronal FSPD \& PD data and reconstruction time for each of the referenced methods. Time (in seconds) refers to the testing time for each method.} \label{tab:result}
		\centering
		\resizebox{\linewidth}{17mm}{ 
			\begin{tabular}{lcccccccc}
				\toprule
				& & FSPD data & & & PD data & & \\
				Method   & PSNR                  & SSIM               & RMSE       & PSNR                & SSIM                & RMSE    &   Time & Parameters\\\midrule
				GRAPPA~\cite{griswold2002generalized}   & 24.9251$\pm$0.9341    & 0.4827$\pm$0.0344  & 0.2384$\pm$0.0175 & 30.4154$\pm$0.5924  & 0.7489$\pm$0.0207   & 0.0984$\pm$0.0030 &  280s  & N/A\\
				SPIRiT~\cite{doi:10.1002/mrm.22428}  & 28.3525$\pm$1.3314    & 0.6509$\pm$0.0300  & 0.1614$\pm$0.0203 & 32.0011$\pm$0.7920  & 0.7979$\pm$0.0306   & 0.0824$\pm$0.0082 &  43s & N/A\\
				VN~\cite{doi:10.1002/mrm.26977} & 30.2588$\pm$1.1790    & 0.7141$\pm$0.0483  & 0.1358$\pm$0.0152 & 37.8265$\pm$0.4000  & 0.9281$\pm$0.0114   & 0.0422$\pm$0.0036 & 0.16s & 0.13 M\\
				De-AliasingNet~\cite{10.1007/978-3-030-32248-9_4} & 36.1017$\pm$1.2981    & 0.8941$\pm$0.0269 & 0.0697$\pm$0.0119 & 41.2151$\pm$0.7872  & 0.9711$\pm$0.0033   & 0.0285$\pm$0.0015 & 0.92s & 0.34 M\\ 
				DeepcomplexMRI~\cite{WANG2020136} & 36.5706$\pm$1.0215   & 0.9008$\pm$0.0190 & 0.0654$\pm$0.0049 & 41.5756$\pm$0.6271  & 0.9679$\pm$0.0031   & 0.0274$\pm$0.0018 & 1.04s & 1.45 M\\
				CDF-Net~\cite{10.1007/978-3-030-59713-9_41} & 40.0405$\pm$1.5518 & 0.9520$\pm$0.0182 & 0.0443$\pm$0.0070 & 43.7943$\pm$1.9888 & 0.9879$\pm$0.0026 & 0.0215$\pm$0.0042 & 0.47s & 3.44 M\\
				Adaptive-CS-Net~\cite{pezzotti2020adaptive} & 40.1846$\pm$1.4780 & 0.9534$\pm$0.0175 & 0.0435$\pm$0.0065 & 44.1131$\pm$1.5596 & 0.9878$\pm$0.0026 & 0.0206$\pm$0.0027 & 1.57s & 5.59 M\\
				Proposed & \textbf{40.7101$\pm$1.5357} & \textbf{0.9619$\pm$0.0144} & \textbf{0.0408$\pm$0.0051} & \textbf{44.8120$\pm$1.3185} & \textbf{0.9886$\pm$0.0023} & \textbf{0.0189$\pm$0.0018} & 0.52s & 2.92 M\\
				\bottomrule
		\end{tabular}}
	\end{table*}
	\begin{figure*}
		\includegraphics[width=0.10\linewidth, angle=180]{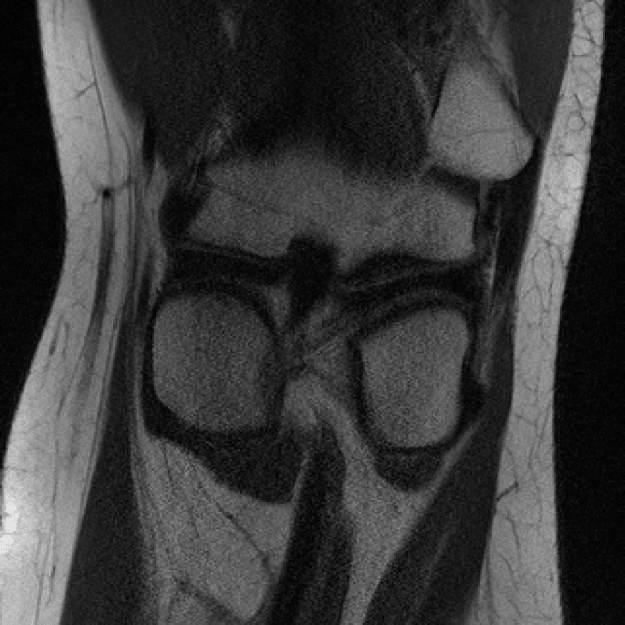}
		\includegraphics[width=0.10\linewidth, angle=180]{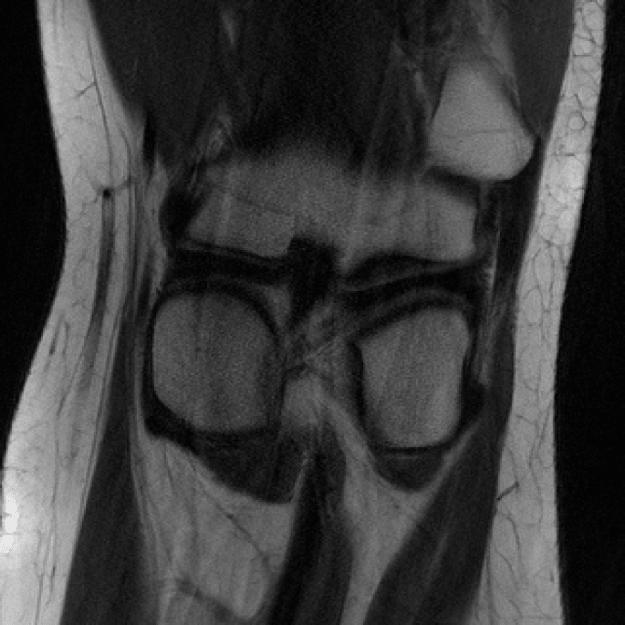}
		\includegraphics[width=0.10\linewidth, angle=180]{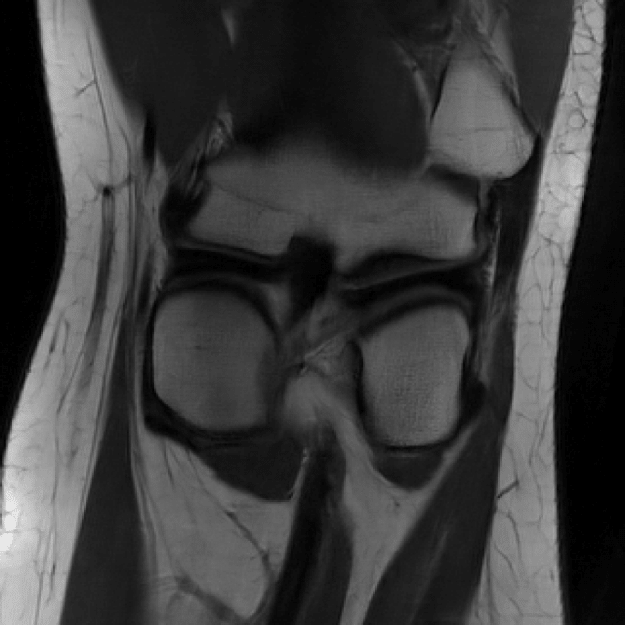}
		\includegraphics[width=0.10\linewidth, angle=180]{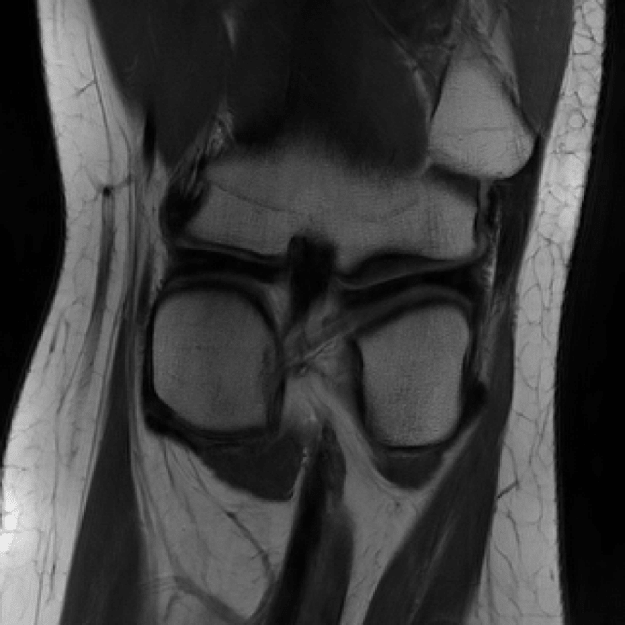}
		\includegraphics[width=0.10\linewidth, angle=180]{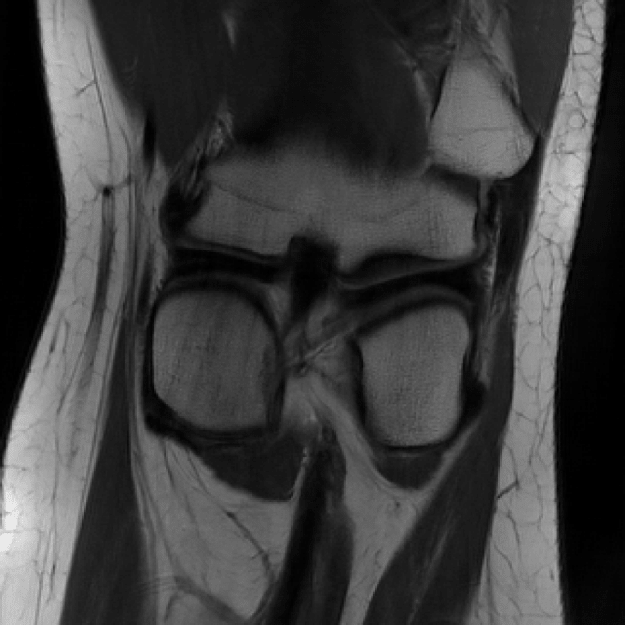}
		\includegraphics[width=0.10\linewidth, angle=180]{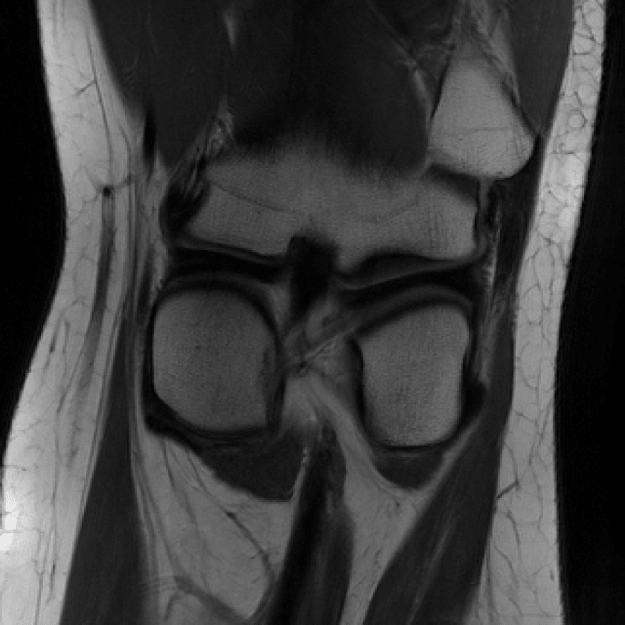}
		\includegraphics[width=0.10\linewidth, angle=180]{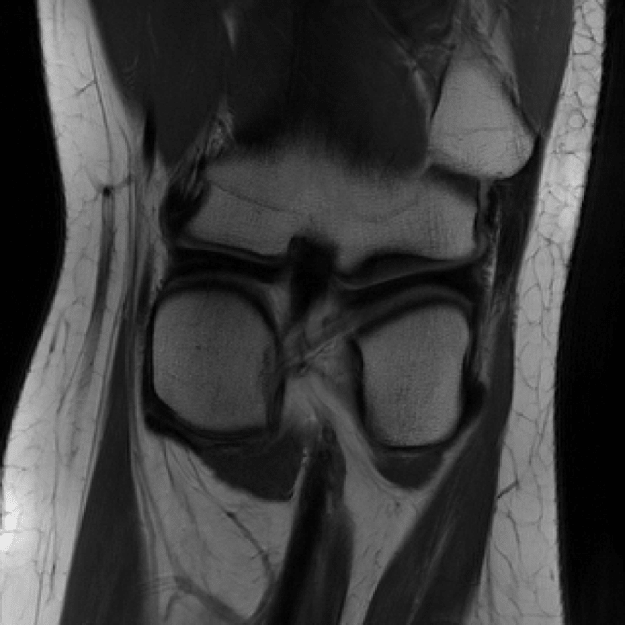}
		\includegraphics[width=0.10\linewidth, angle=180]{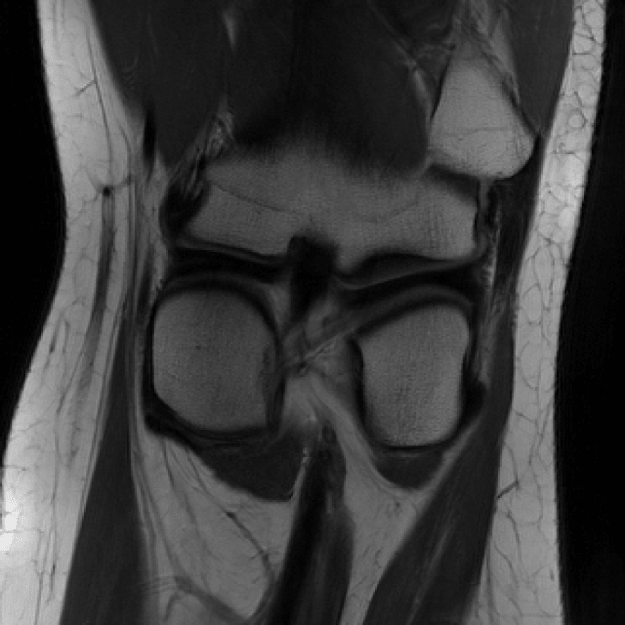}
		\includegraphics[width=0.10\linewidth, angle=180]{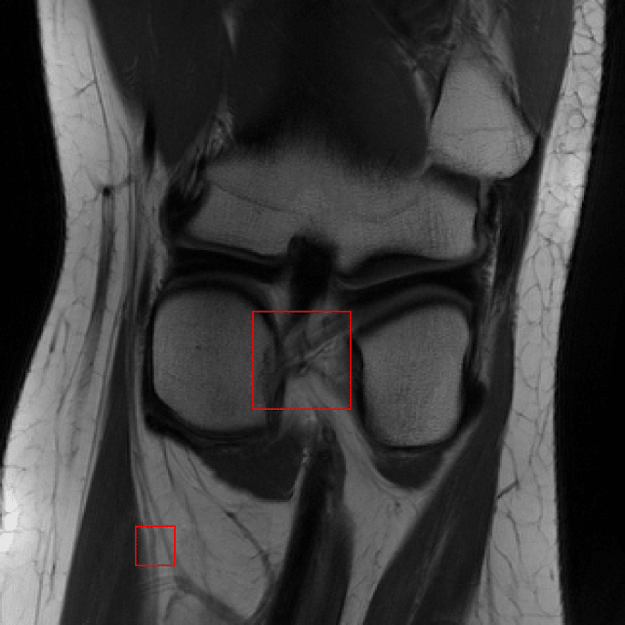}\\
		\includegraphics[width=0.10\linewidth, angle=180]{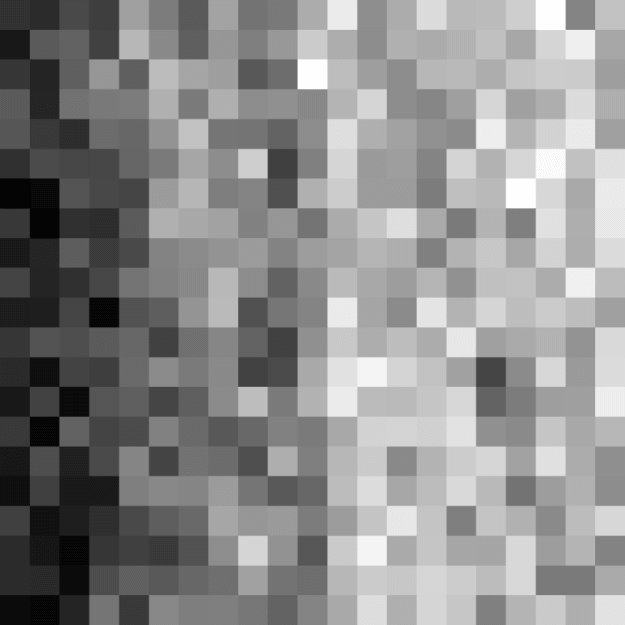}
		\includegraphics[width=0.10\linewidth, angle=180]{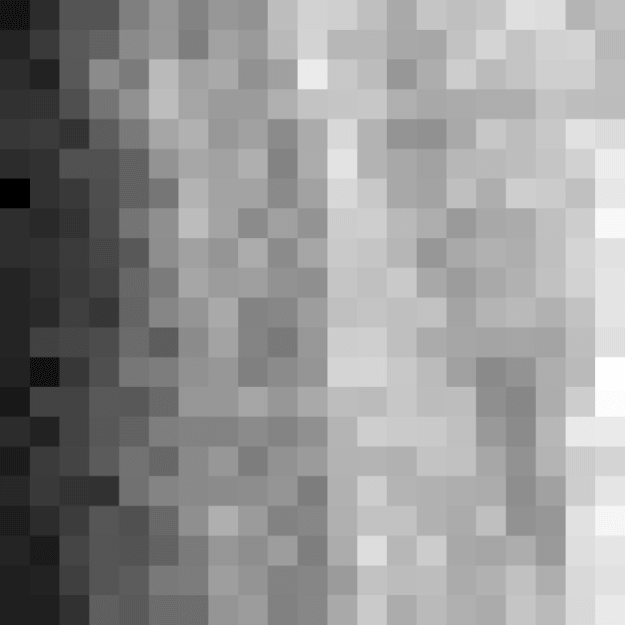}
		\includegraphics[width=0.10\linewidth, angle=180]{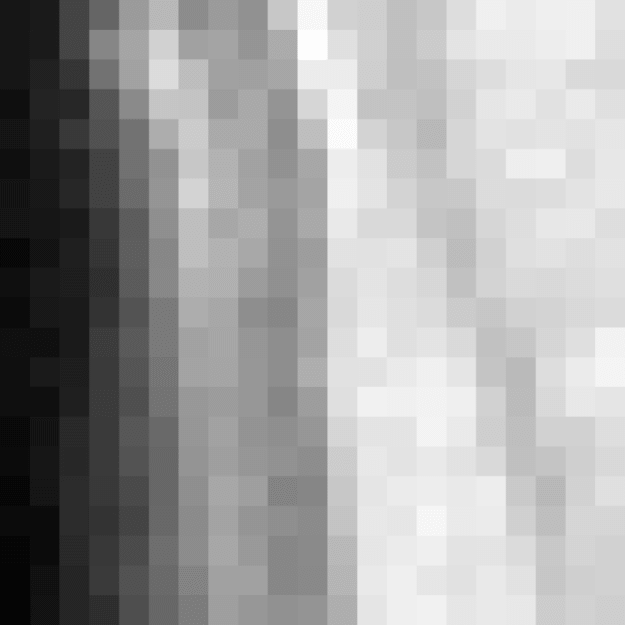}
		\includegraphics[width=0.10\linewidth, angle=180]{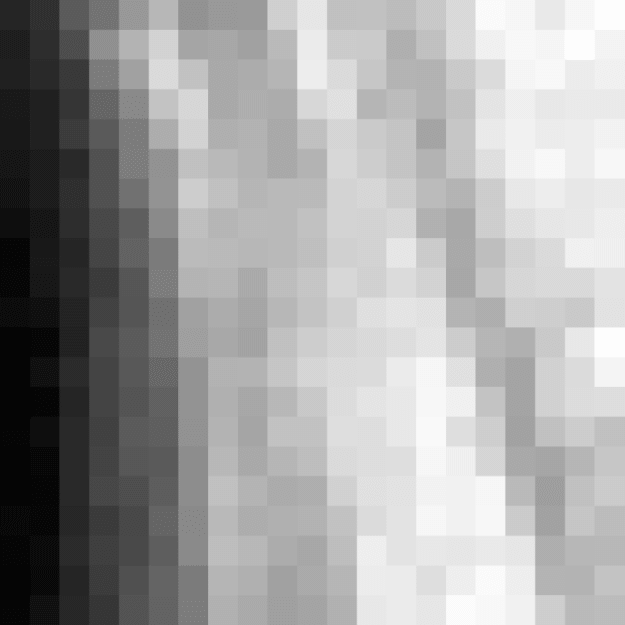}
		\includegraphics[width=0.10\linewidth, angle=180]{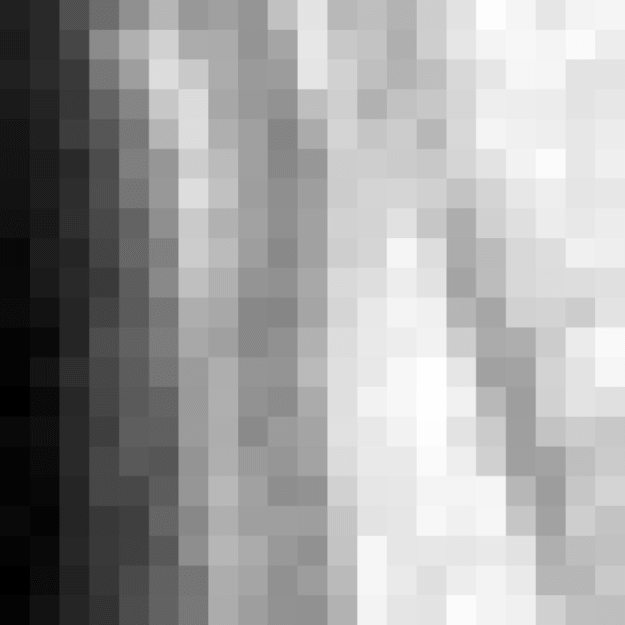}
		\includegraphics[width=0.10\linewidth, angle=180]{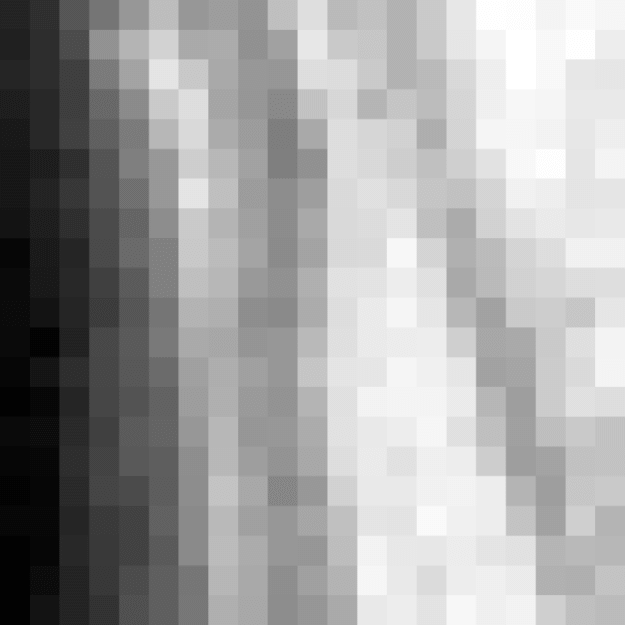}
		\includegraphics[width=0.10\linewidth, angle=180]{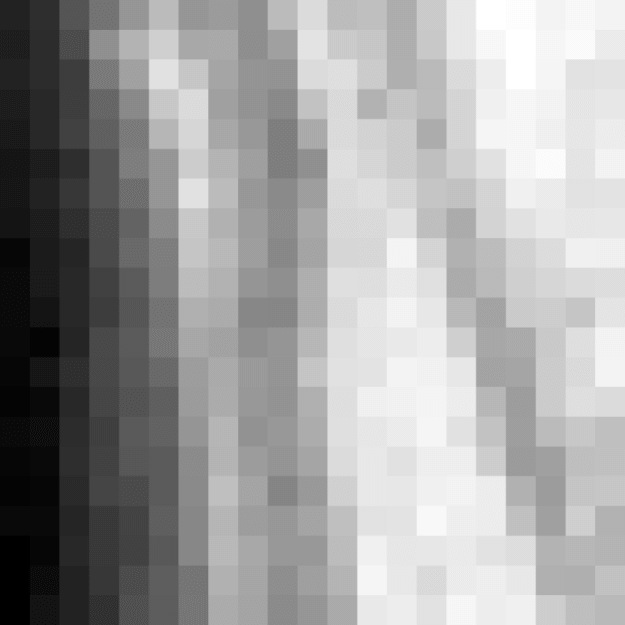}
		\includegraphics[width=0.10\linewidth, angle=180]{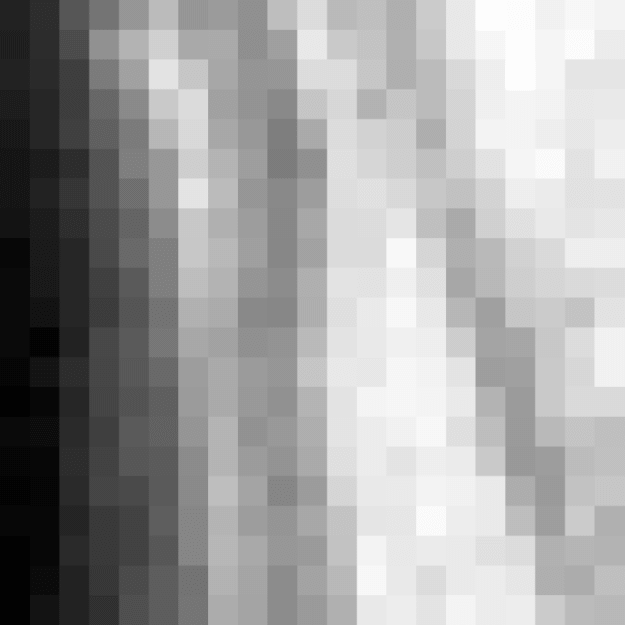}
		\includegraphics[width=0.10\linewidth, angle=180]{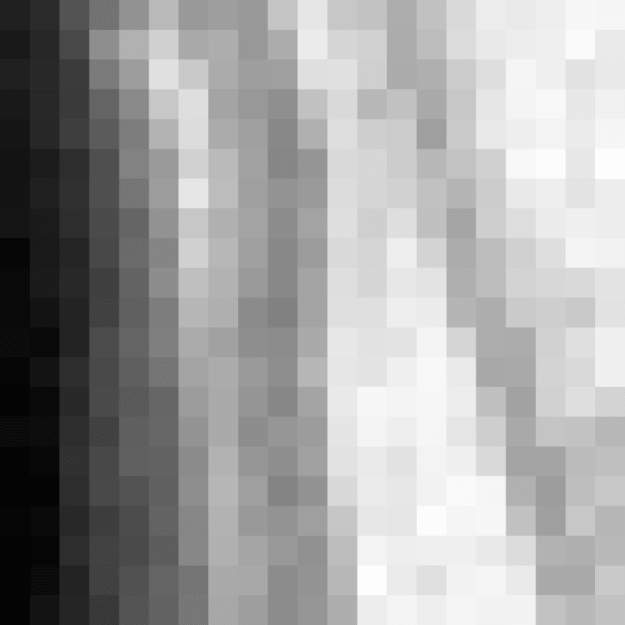}\\
		\includegraphics[width=0.10\linewidth, angle=180]{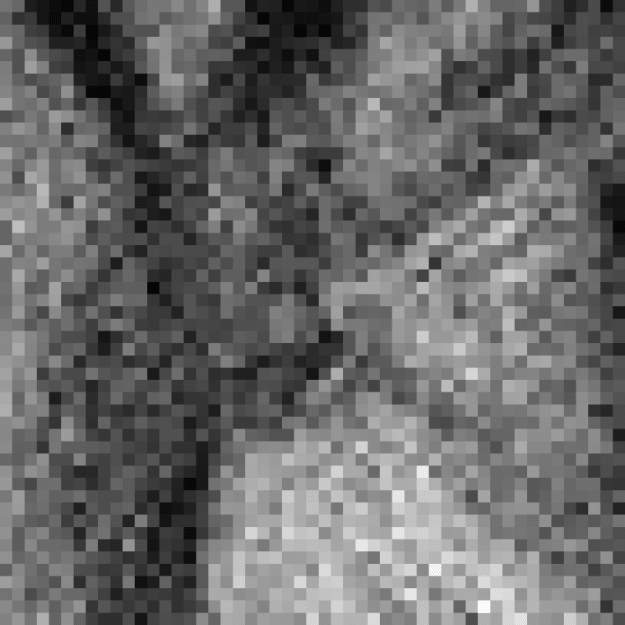}
		\includegraphics[width=0.10\linewidth, angle=180]{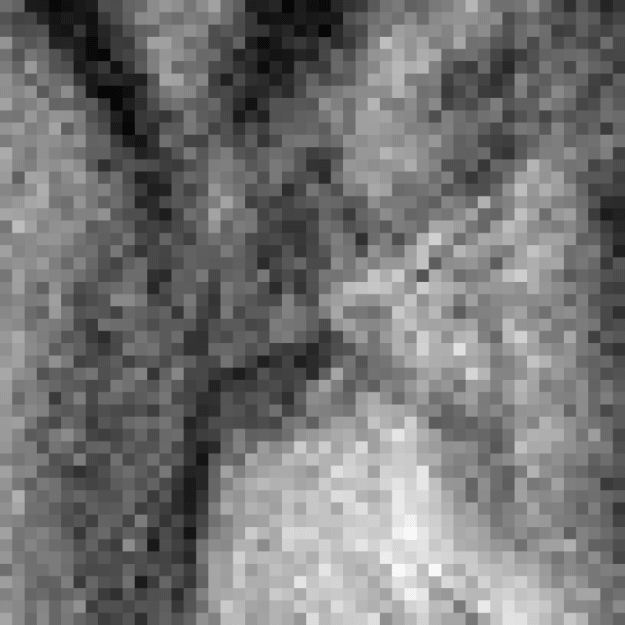}
		\includegraphics[width=0.10\linewidth, angle=180]{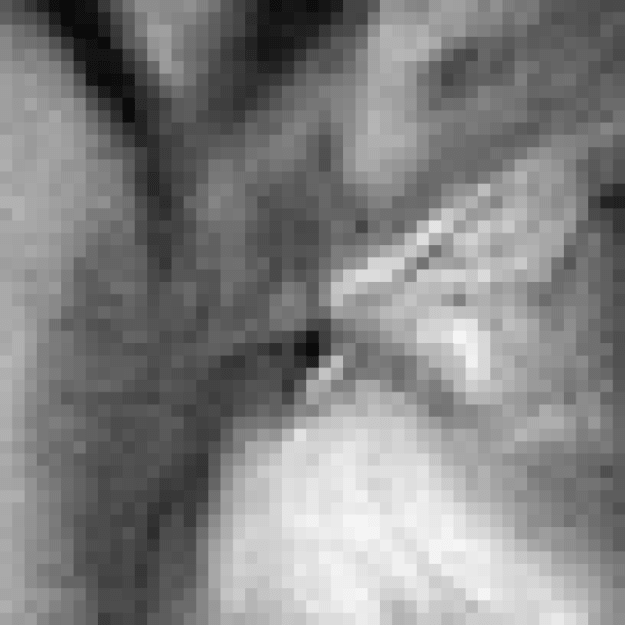}
		\includegraphics[width=0.10\linewidth, angle=180]{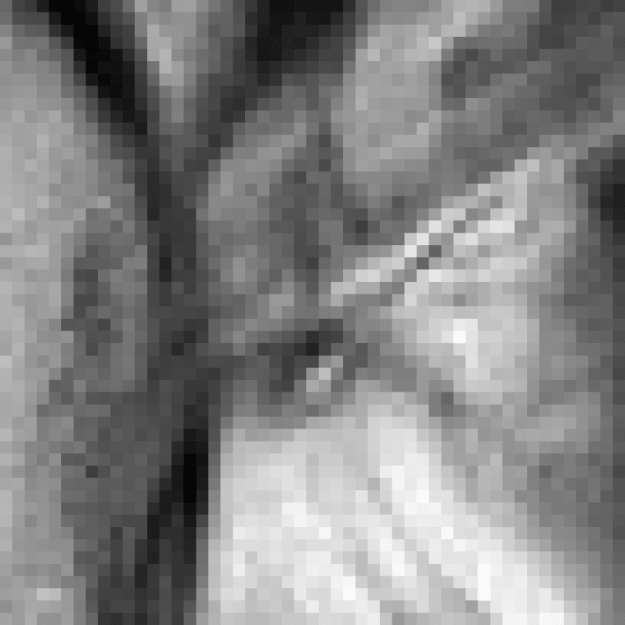}
		\includegraphics[width=0.10\linewidth, angle=180]{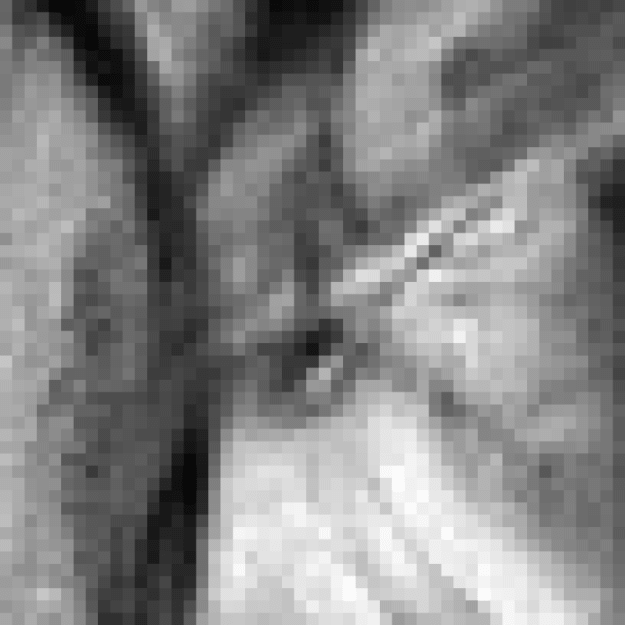}
		\includegraphics[width=0.10\linewidth, angle=180]{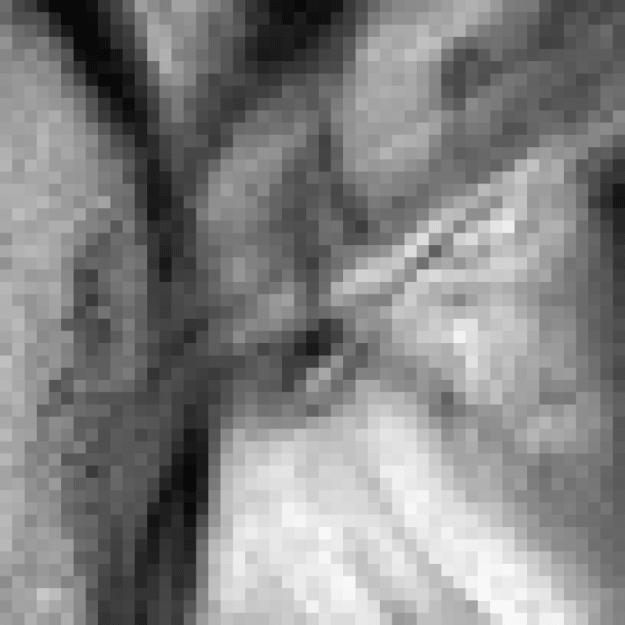}
		\includegraphics[width=0.10\linewidth, angle=180]{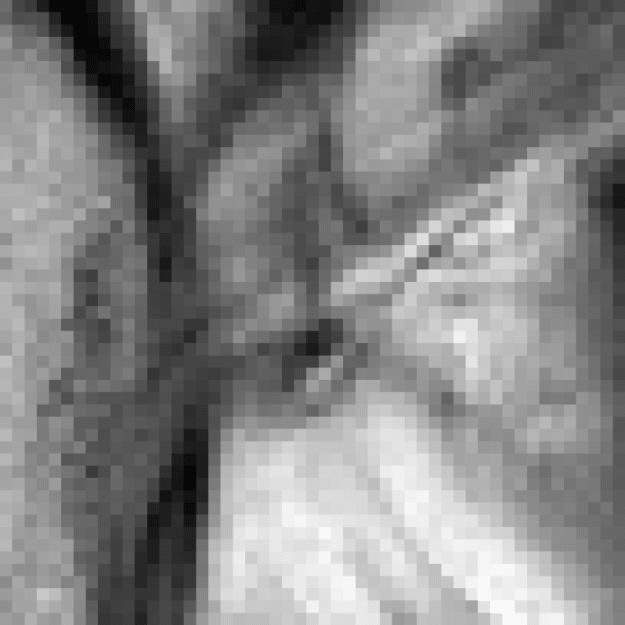}
		\includegraphics[width=0.10\linewidth, angle=180]{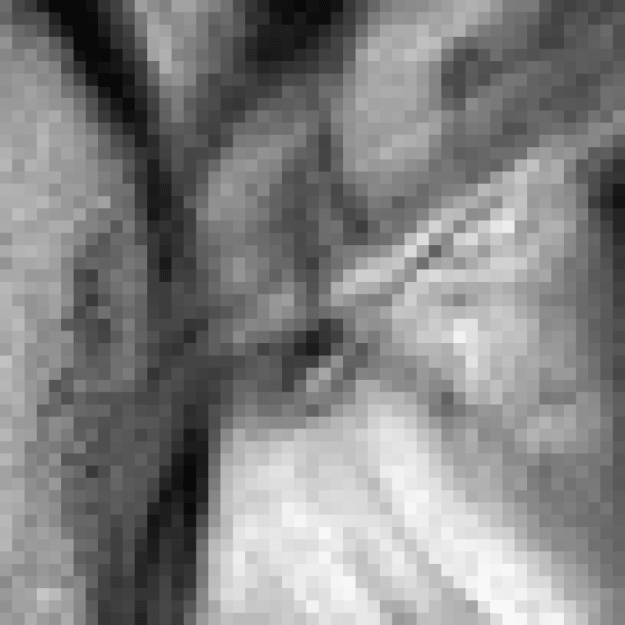}
		\includegraphics[width=0.10\linewidth, angle=180]{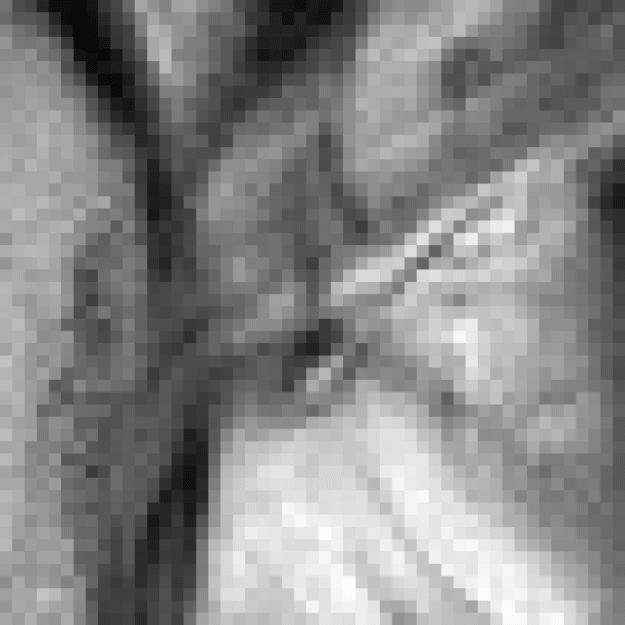}\\
		\includegraphics[width=0.10\linewidth, angle=180]{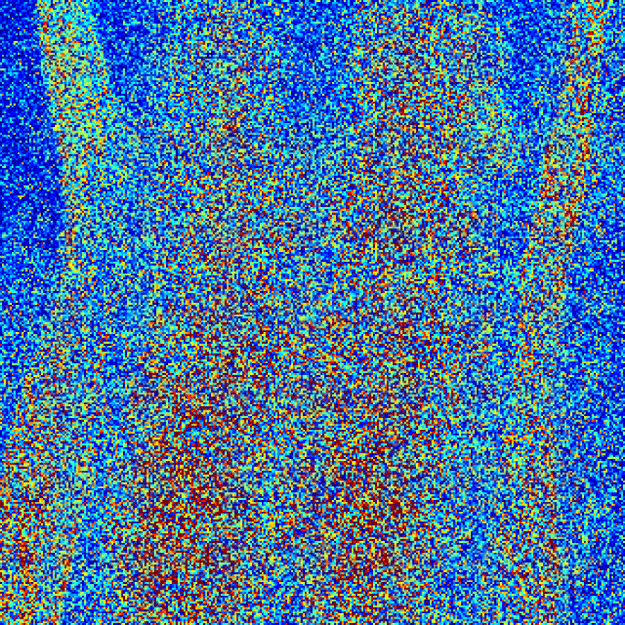}
		\includegraphics[width=0.10\linewidth, angle=180]{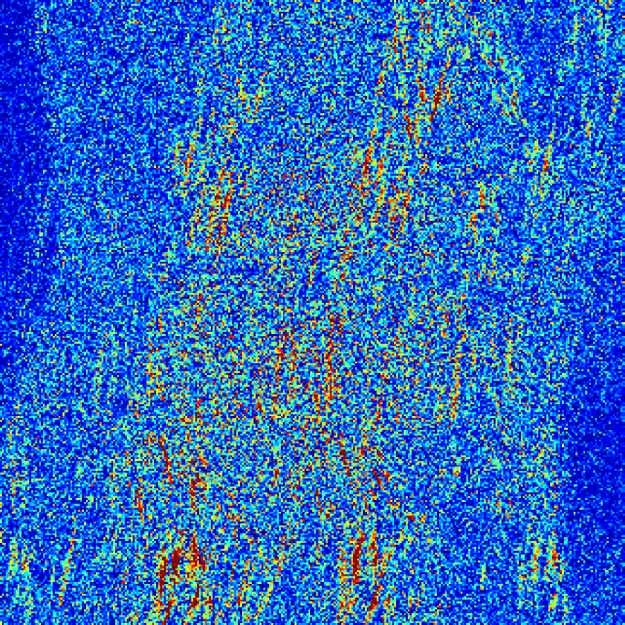}
		\includegraphics[width=0.10\linewidth, angle=180]{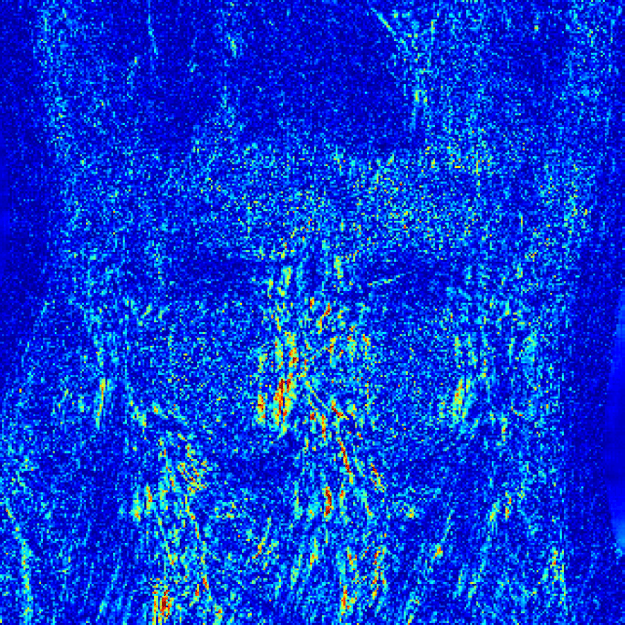}
		\includegraphics[width=0.10\linewidth, angle=180]{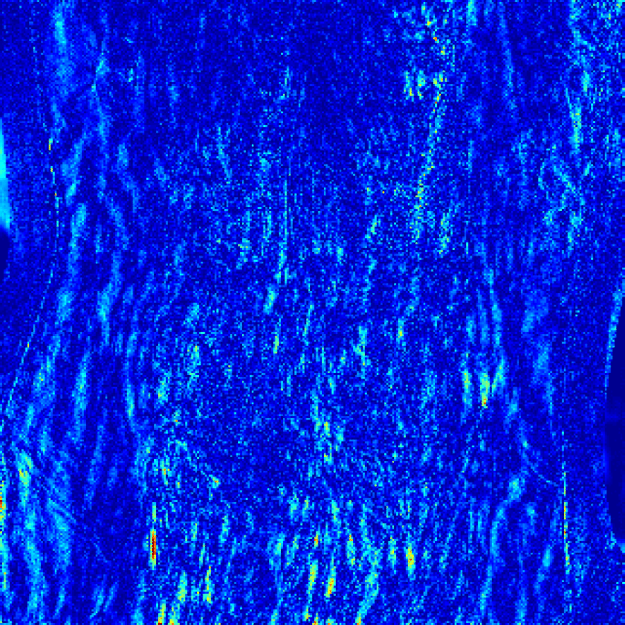}
		\includegraphics[width=0.10\linewidth, angle=180]{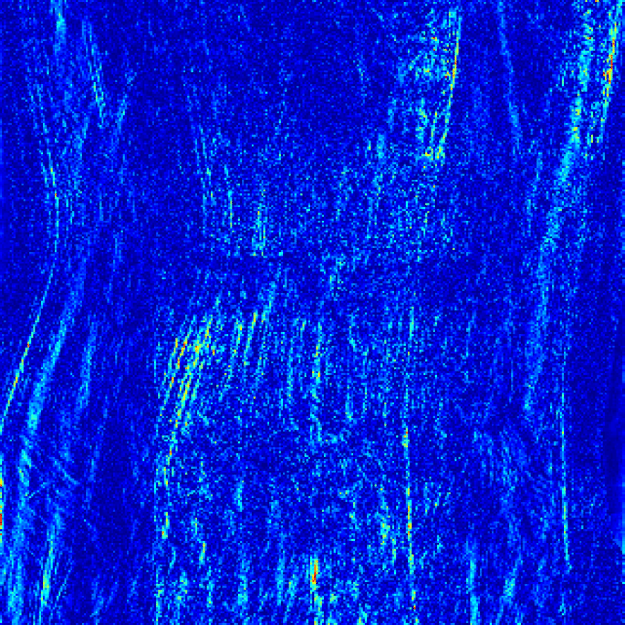}
		\includegraphics[width=0.10\linewidth, angle=180]{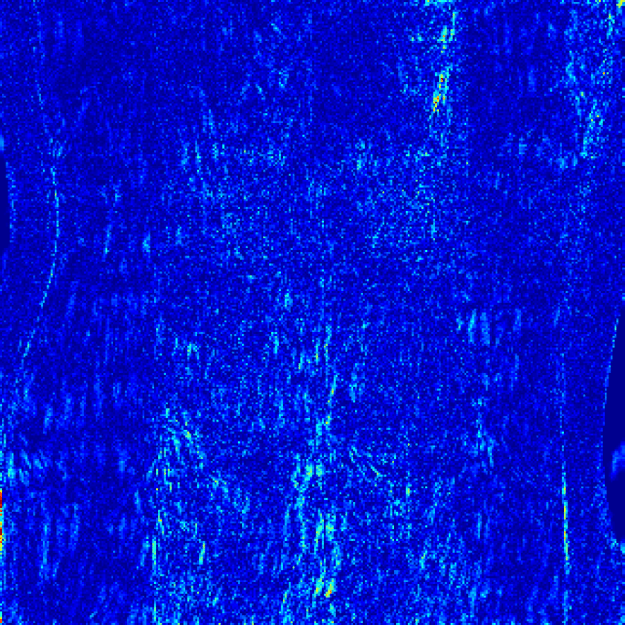}
		\includegraphics[width=0.10\linewidth, angle=180]{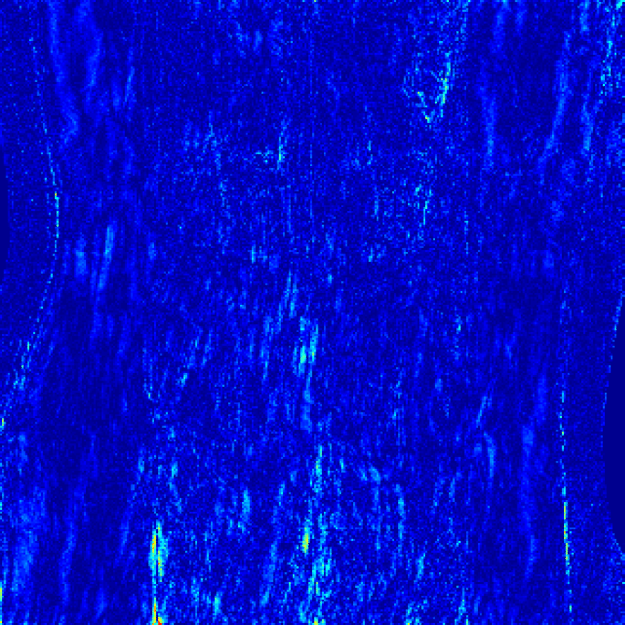}
		\includegraphics[width=0.10\linewidth, angle=180]{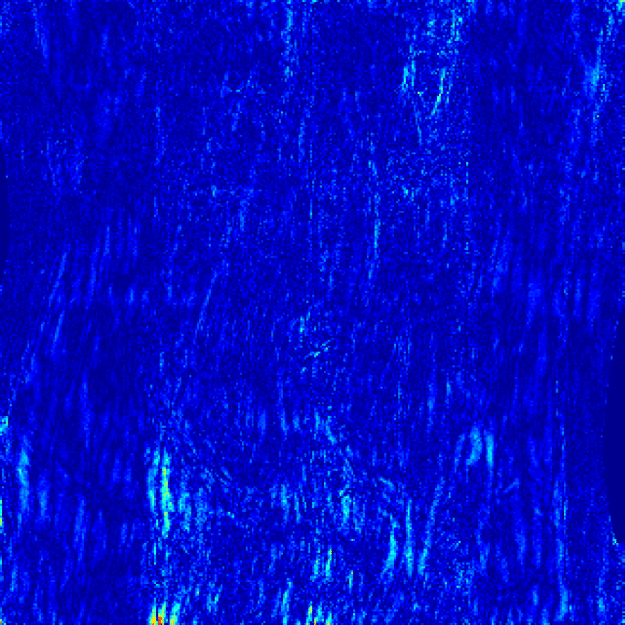}
		\includegraphics[width=0.10\linewidth, angle=180]{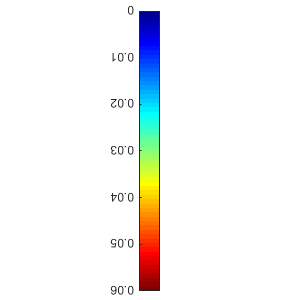}\\
		\includegraphics[width=0.10\linewidth]{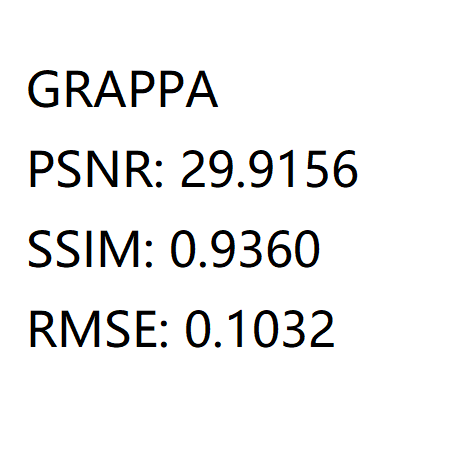}
		\includegraphics[width=0.10\linewidth]{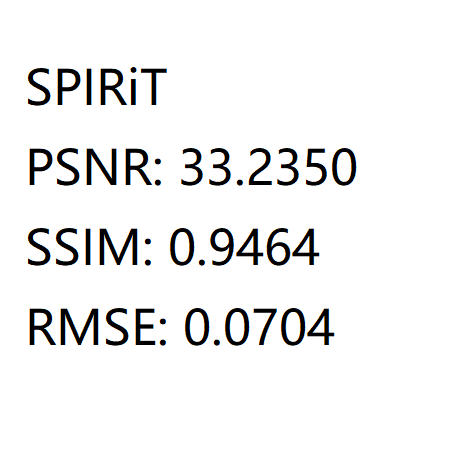}
		\includegraphics[width=0.10\linewidth]{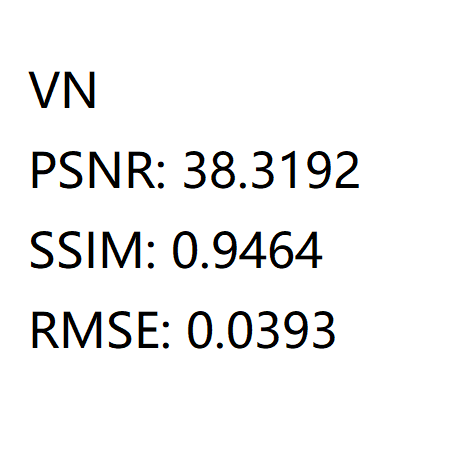}
		\includegraphics[width=0.10\linewidth]{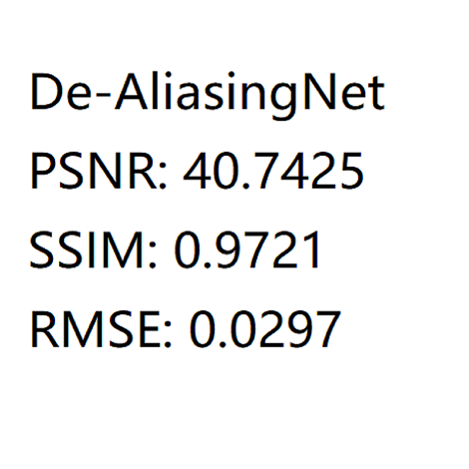}
		\includegraphics[width=0.10\linewidth]{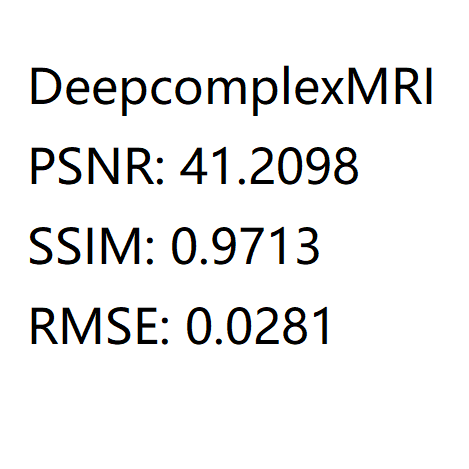}
		\includegraphics[width=0.10\linewidth]{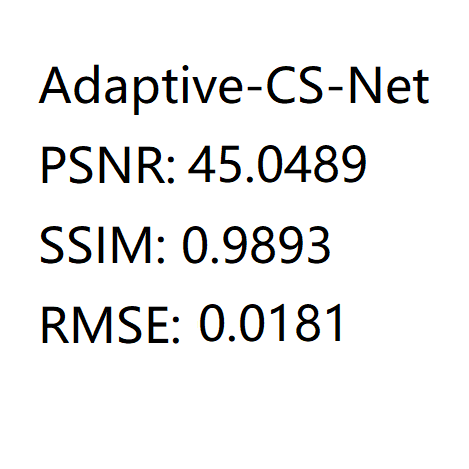}
		\includegraphics[width=0.10\linewidth]{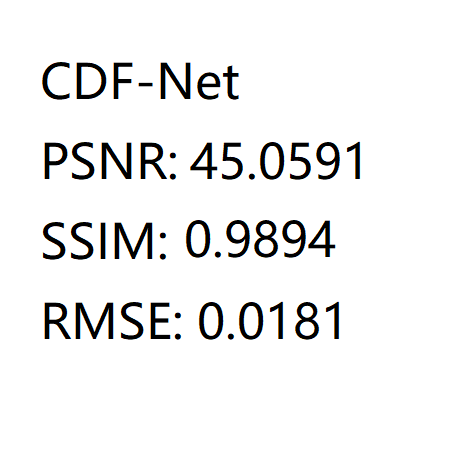}
		\includegraphics[width=0.10\linewidth]{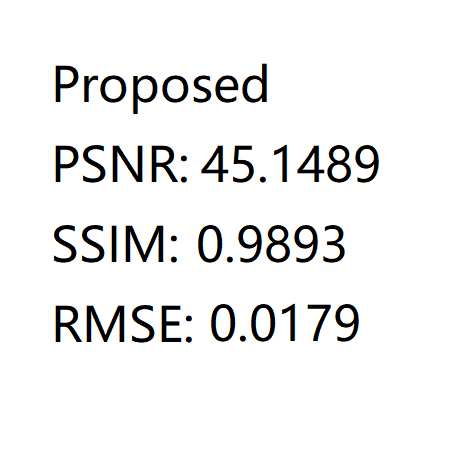}
		\includegraphics[width=0.10\linewidth]{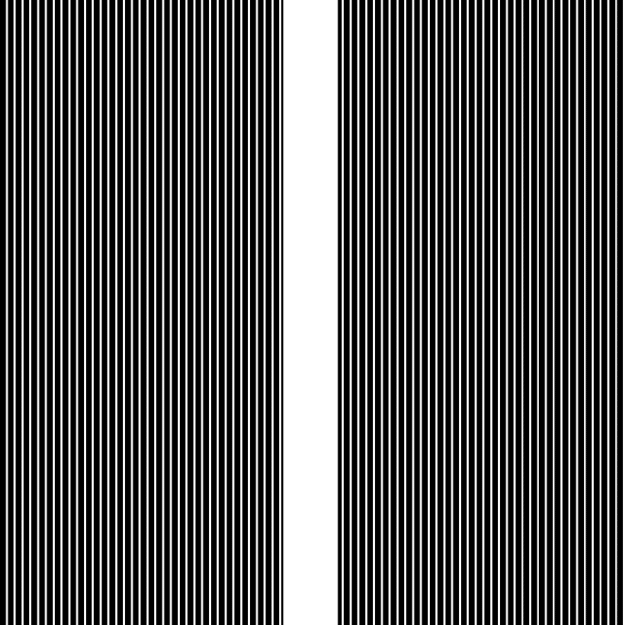}
		\caption{Qualitative comparison results of reconstruction methods on the Coronal PD knee image.} 
		\label{PD_chp3}
	\end{figure*}
	\begin{figure*}
		\centering
		\includegraphics[width=0.075\linewidth, angle=180]{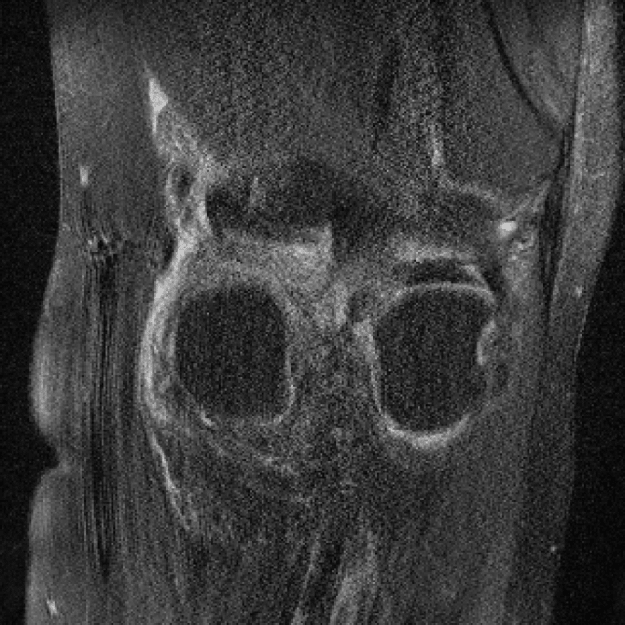}
		\includegraphics[width=0.075\linewidth, angle=180]{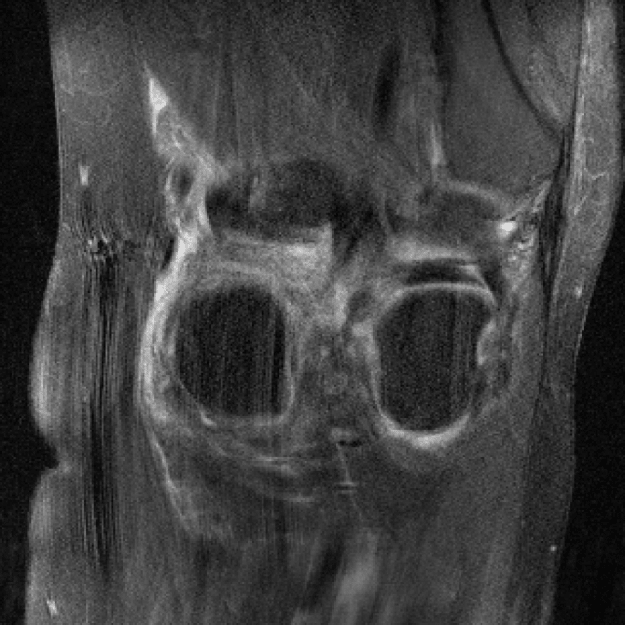}
		\includegraphics[width=0.075\linewidth, angle=180]{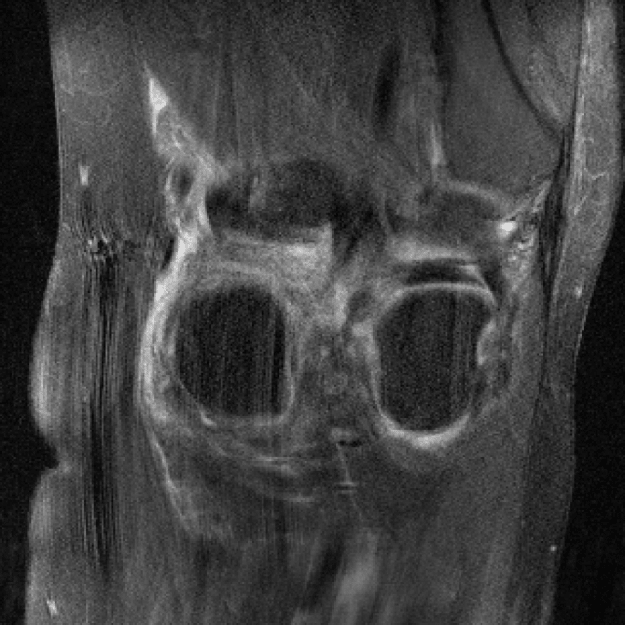}
		\includegraphics[width=0.075\linewidth, angle=180]{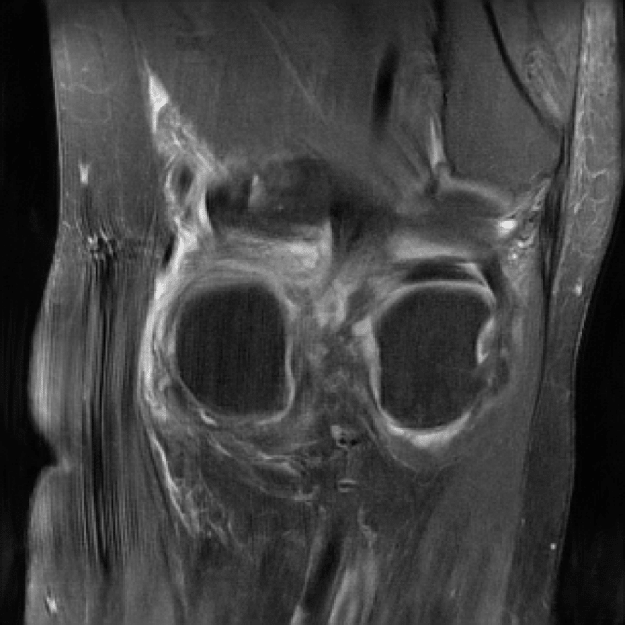}
		\includegraphics[width=0.075\linewidth, angle=180]{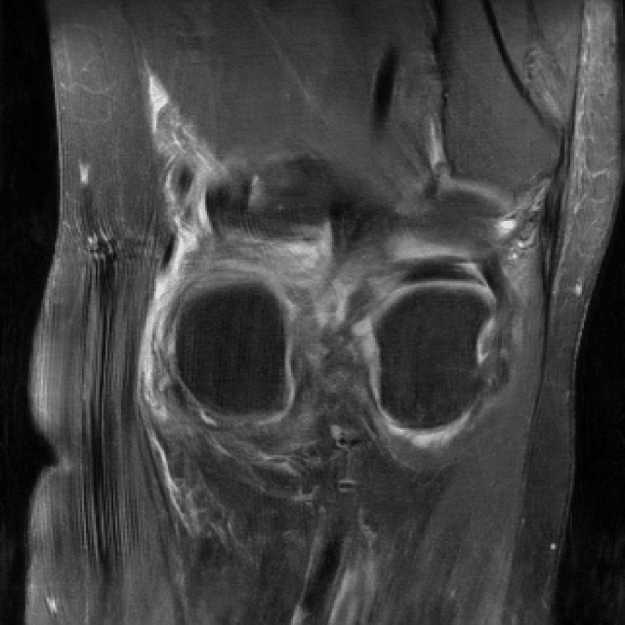}
		\includegraphics[width=0.075\linewidth, angle=180]{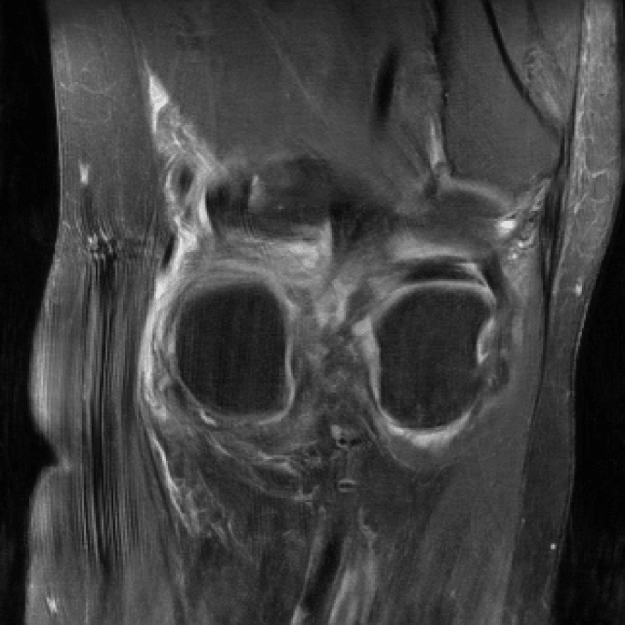}
		\includegraphics[width=0.075\linewidth, angle=180]{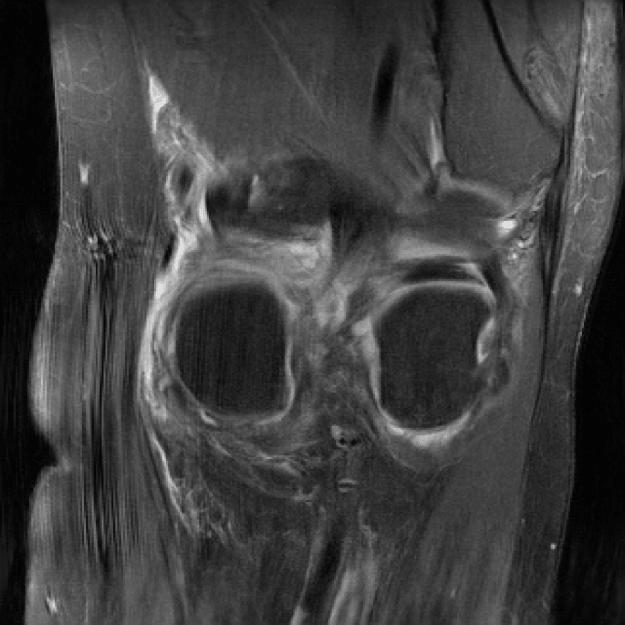}
		\includegraphics[width=0.075\linewidth, angle=180]{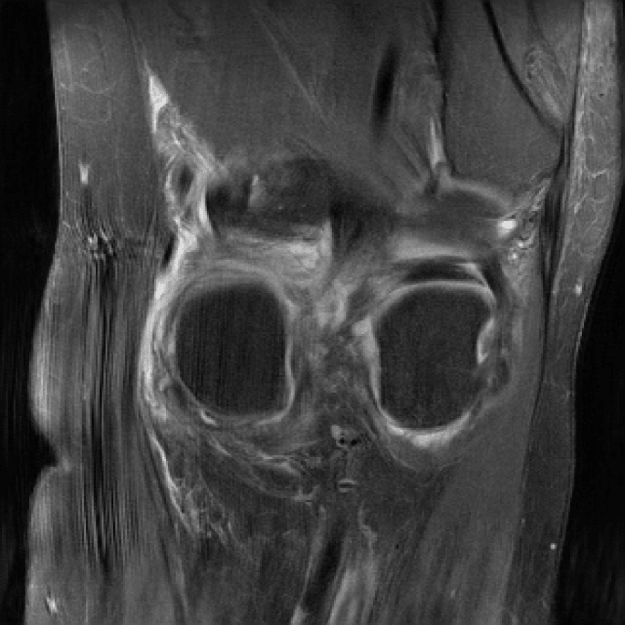}
		\includegraphics[width=0.075\linewidth, angle=180]{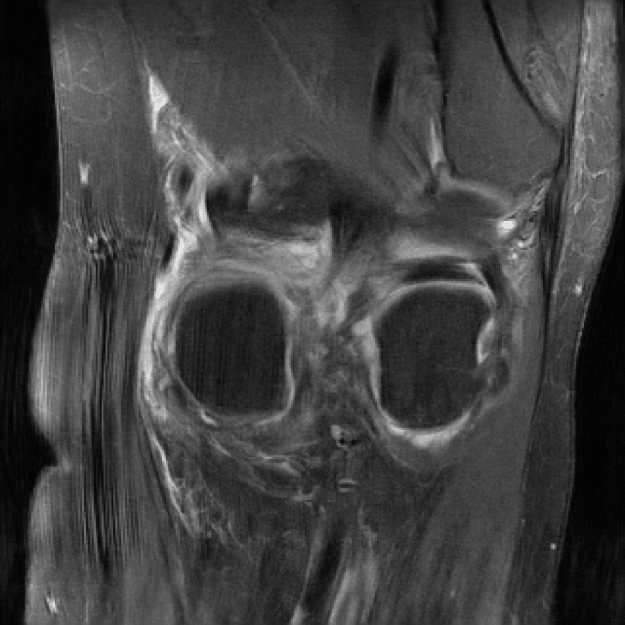}
		\includegraphics[width=0.075\linewidth, angle=180]{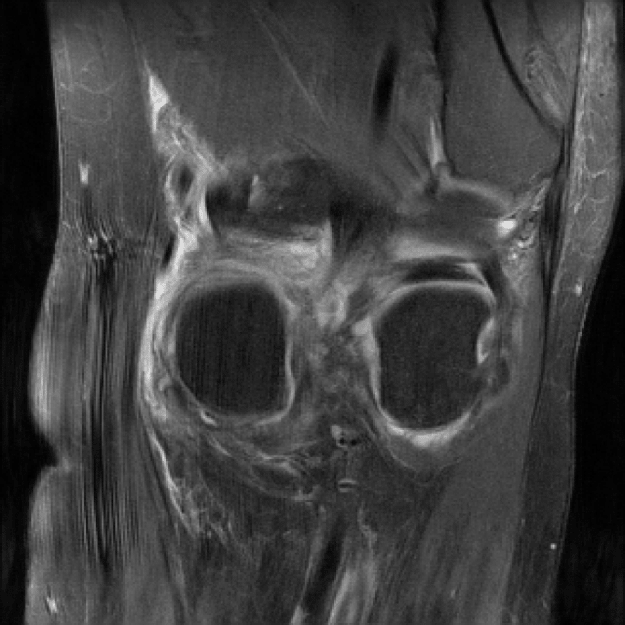}
		\includegraphics[width=0.075\linewidth, angle=180]{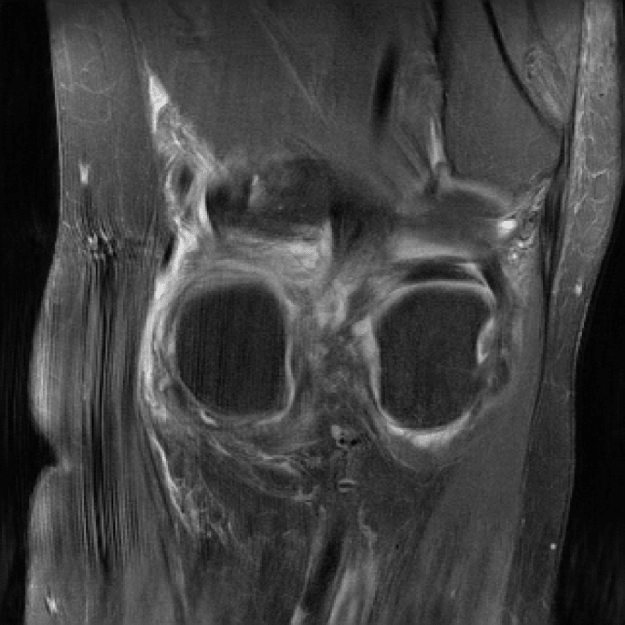}
		\includegraphics[width=0.075\linewidth, angle=180]{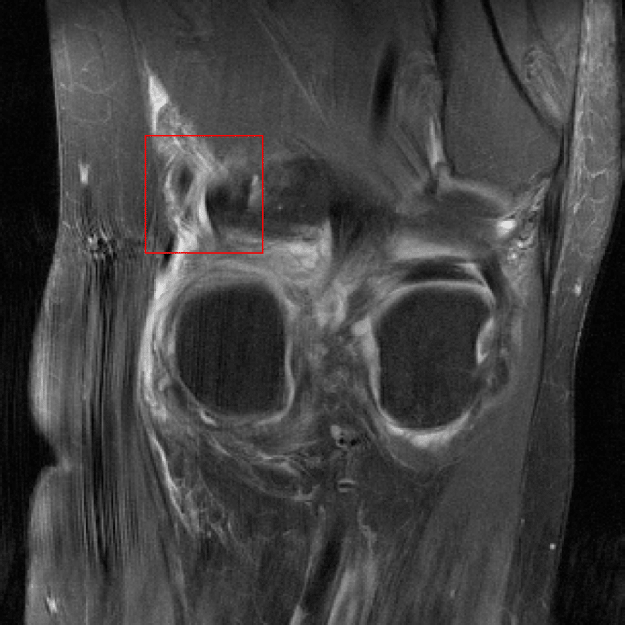}\\
		\includegraphics[width=0.075\linewidth, angle=180]{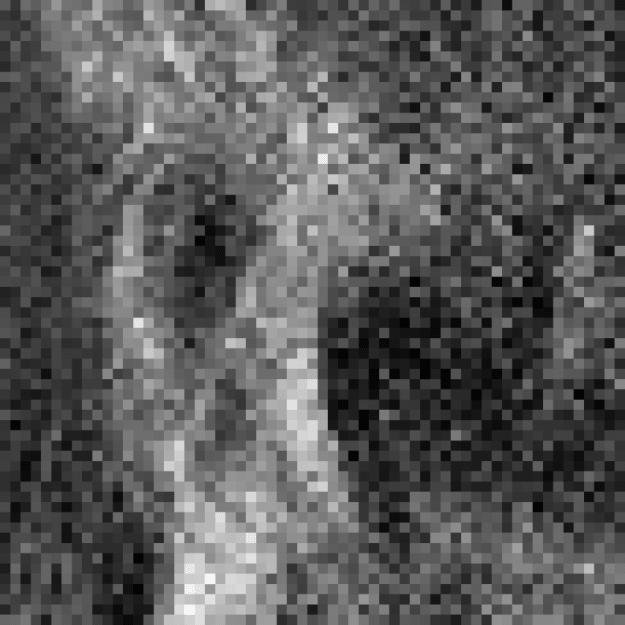}
		\includegraphics[width=0.075\linewidth, angle=180]{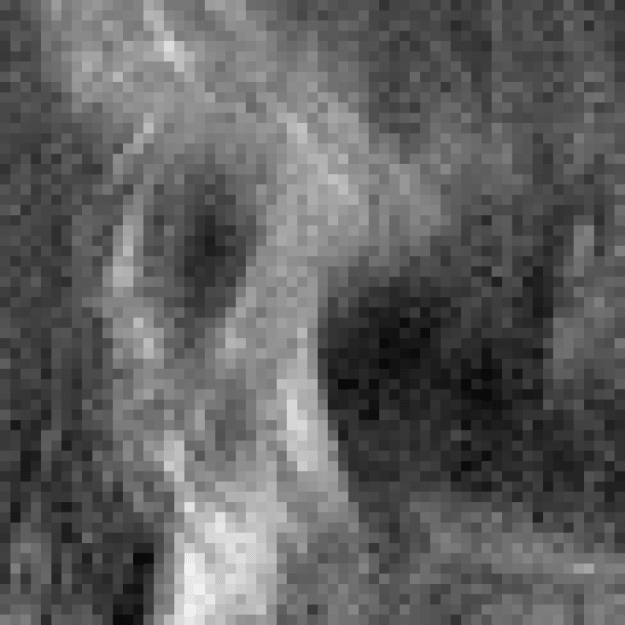}
		\includegraphics[width=0.075\linewidth, angle=180]{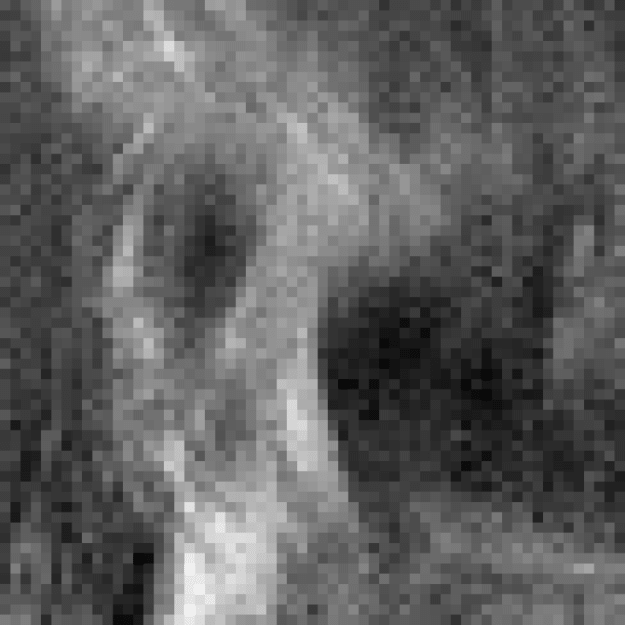}
		\includegraphics[width=0.075\linewidth, angle=180]{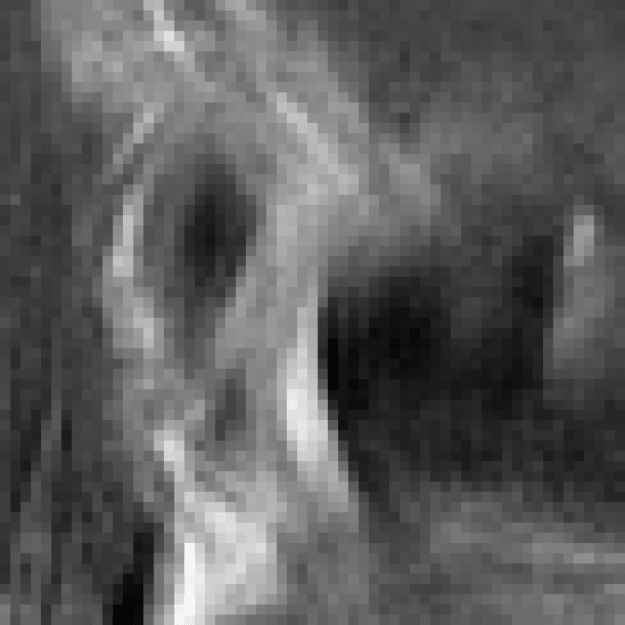}
		\includegraphics[width=0.075\linewidth, angle=180]{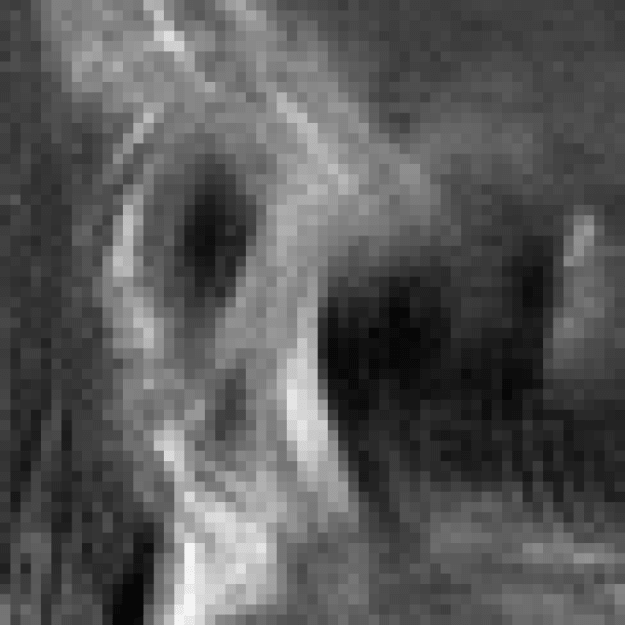}
		\includegraphics[width=0.075\linewidth, angle=180]{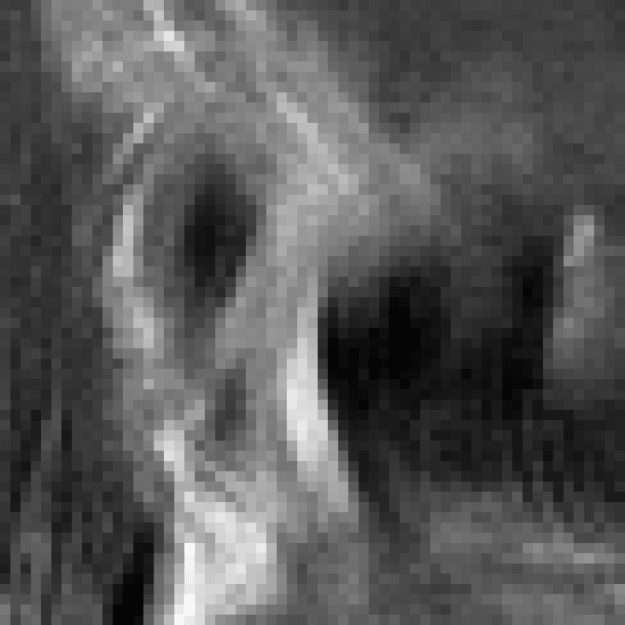}
		\includegraphics[width=0.075\linewidth, angle=180]{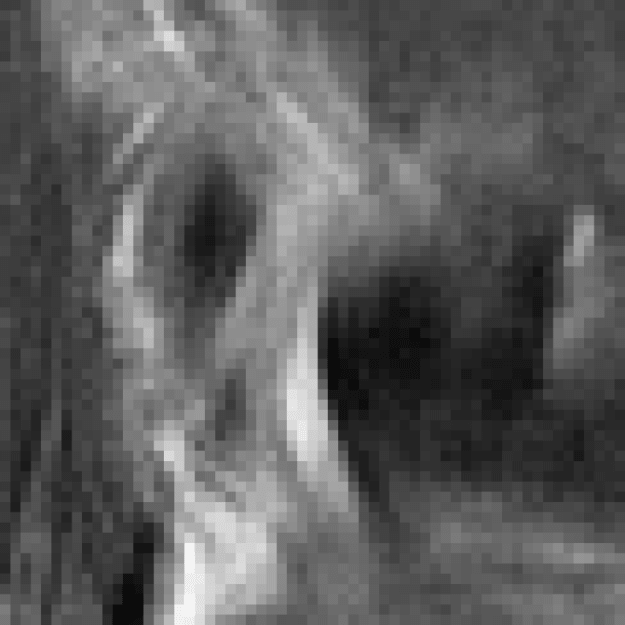}
		\includegraphics[width=0.075\linewidth, angle=180]{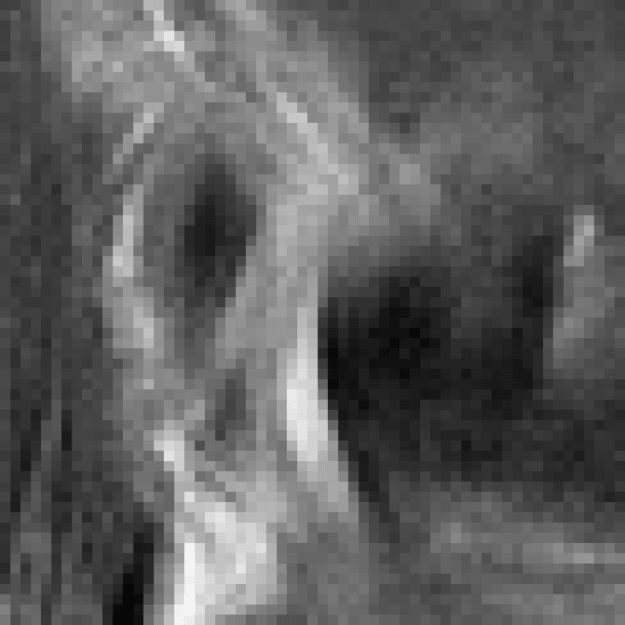}
		\includegraphics[width=0.075\linewidth, angle=180]{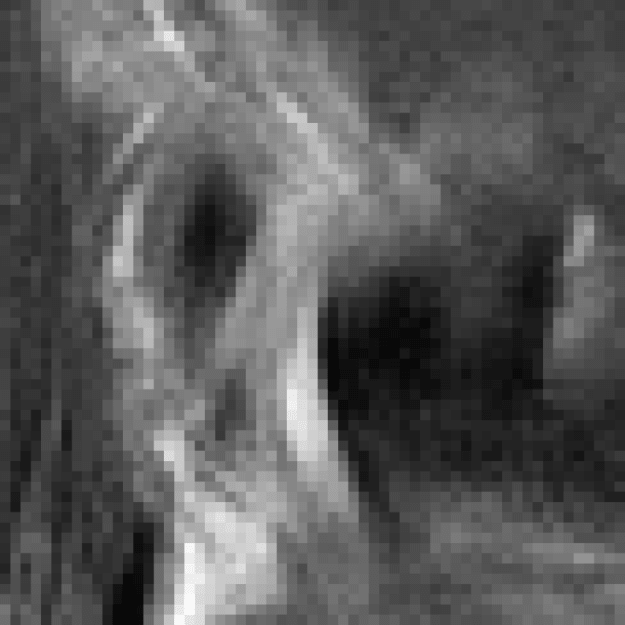}
		\includegraphics[width=0.075\linewidth, angle=180]{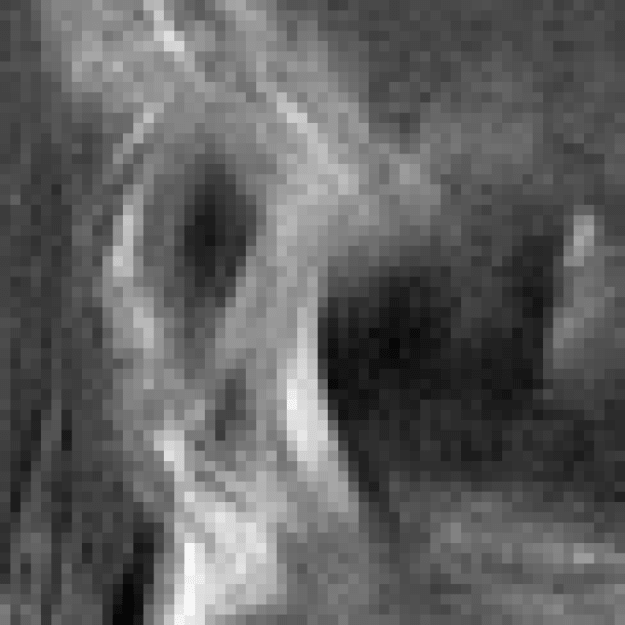}
		\includegraphics[width=0.075\linewidth, angle=180]{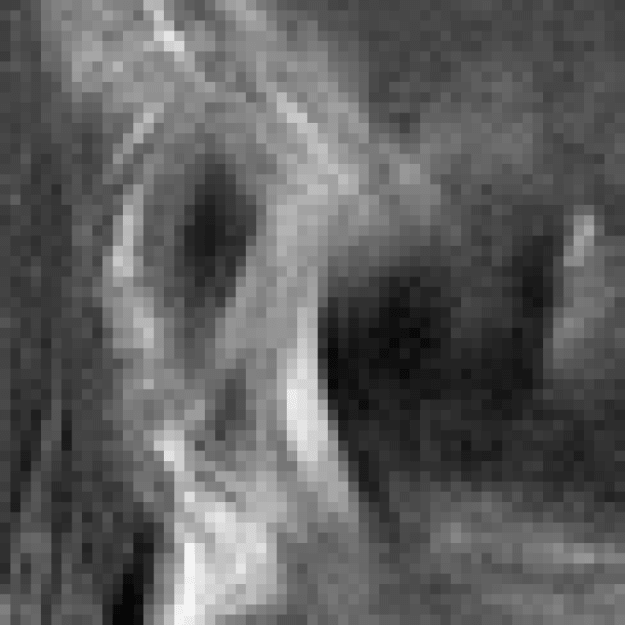}
		\includegraphics[width=0.075\linewidth, angle=180]{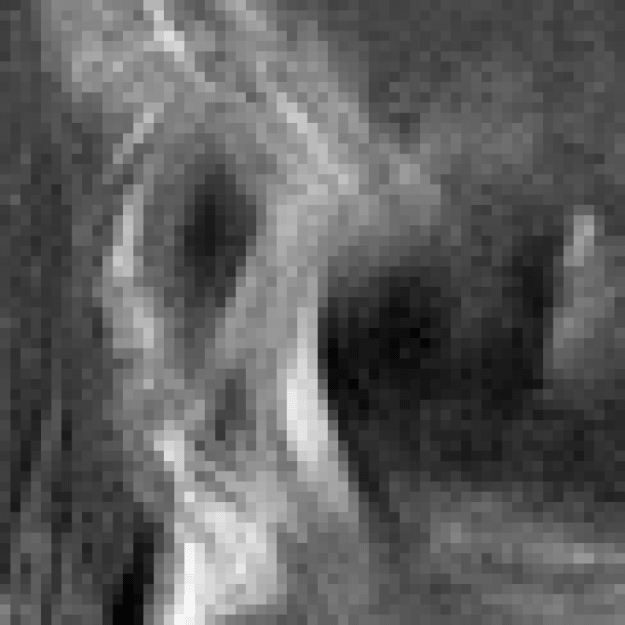}\\
		\includegraphics[width=0.075\linewidth, angle=180]{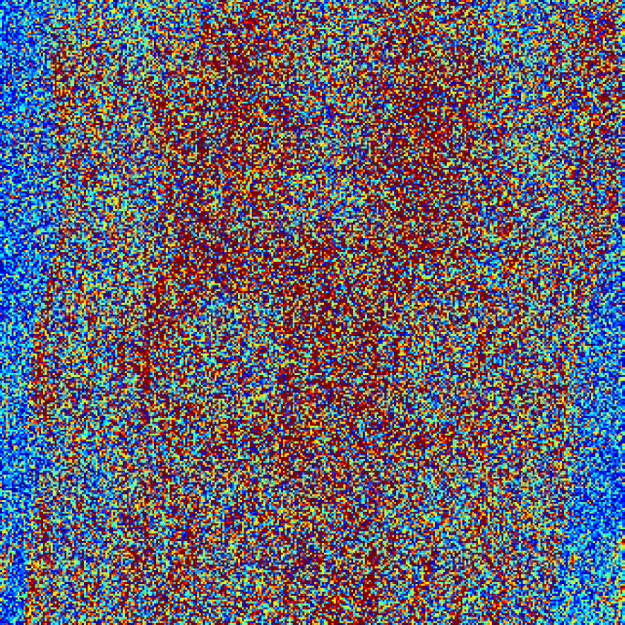}
		\includegraphics[width=0.075\linewidth, angle=180]{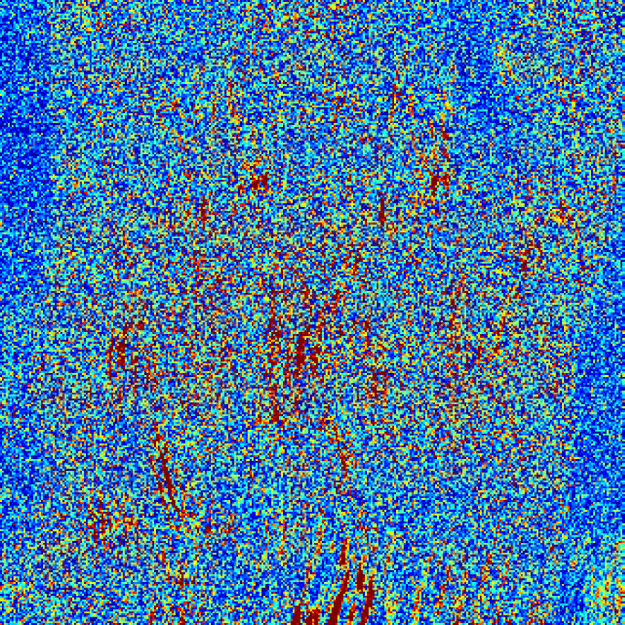}
		\includegraphics[width=0.075\linewidth, angle=180]{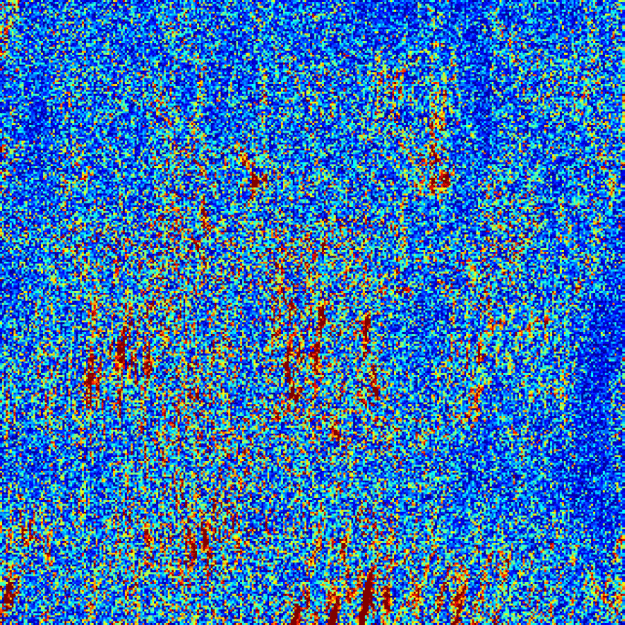}
		\includegraphics[width=0.075\linewidth, angle=180]{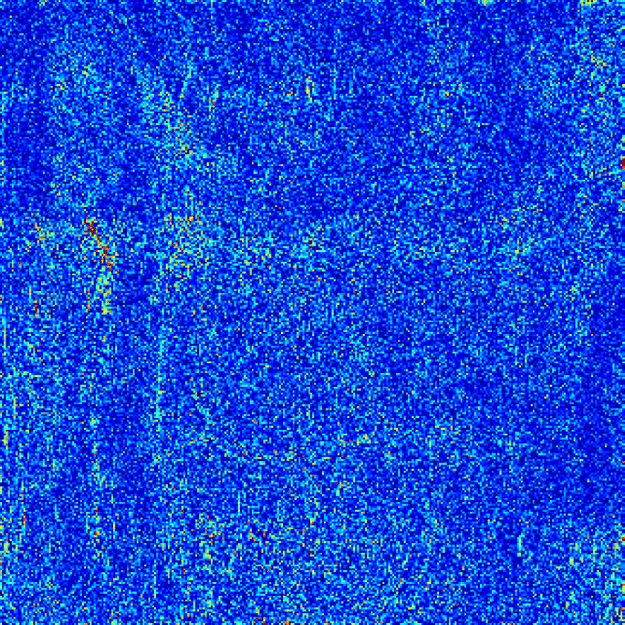}
		\includegraphics[width=0.075\linewidth, angle=180]{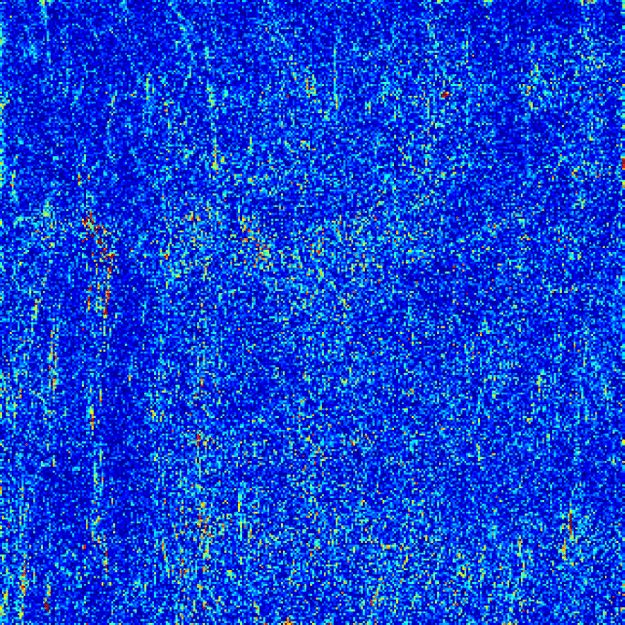}
		\includegraphics[width=0.075\linewidth, angle=180]{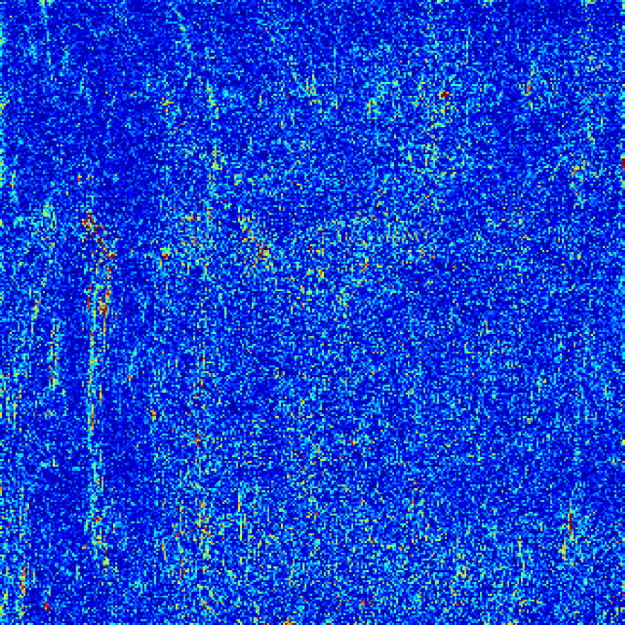}
		\includegraphics[width=0.075\linewidth, angle=180]{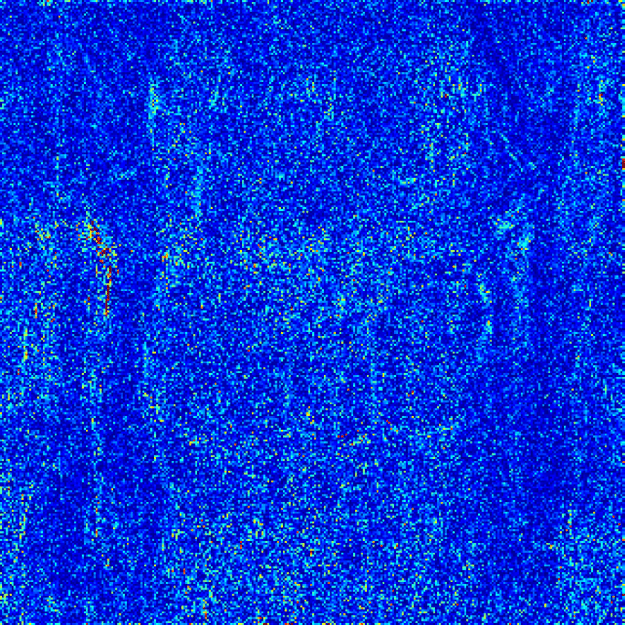}
		\includegraphics[width=0.075\linewidth, angle=180]{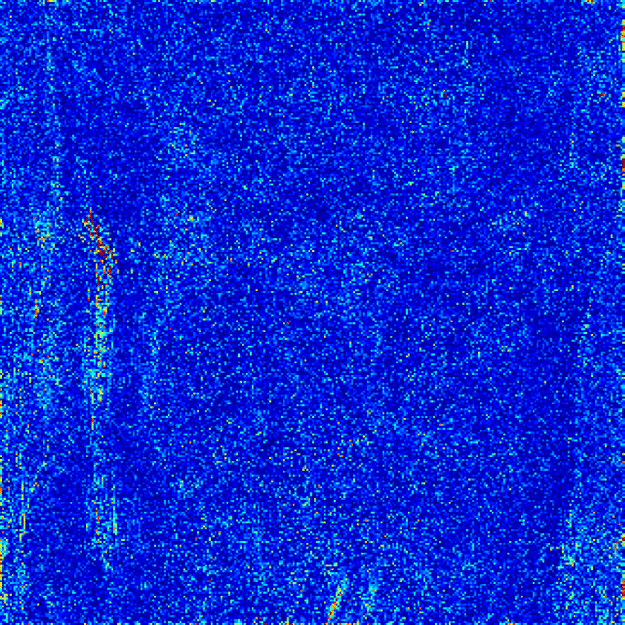}
		\includegraphics[width=0.075\linewidth, angle=180]{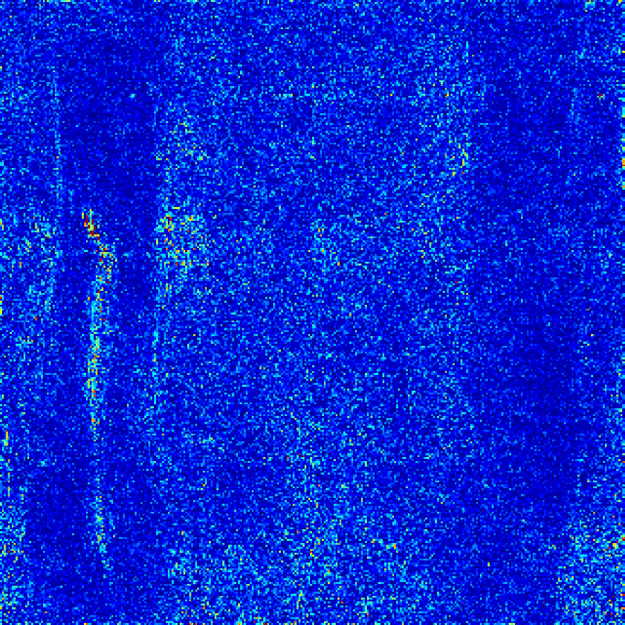}
		\includegraphics[width=0.075\linewidth, angle=180]{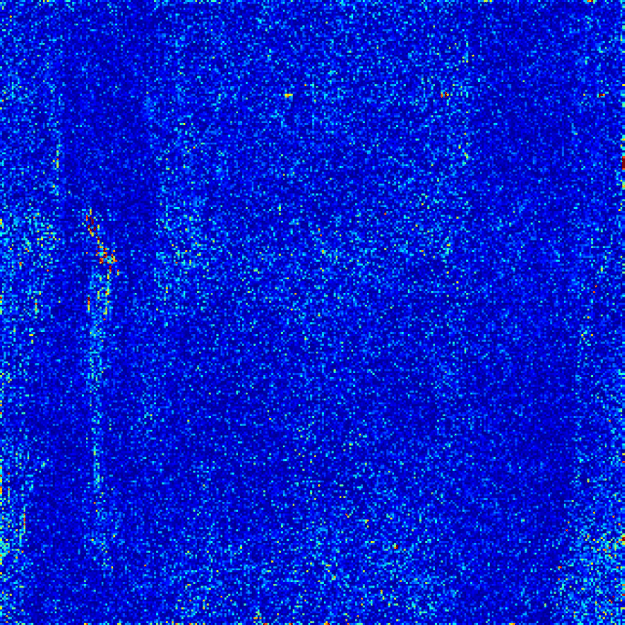}
		\includegraphics[width=0.075\linewidth, angle=180]{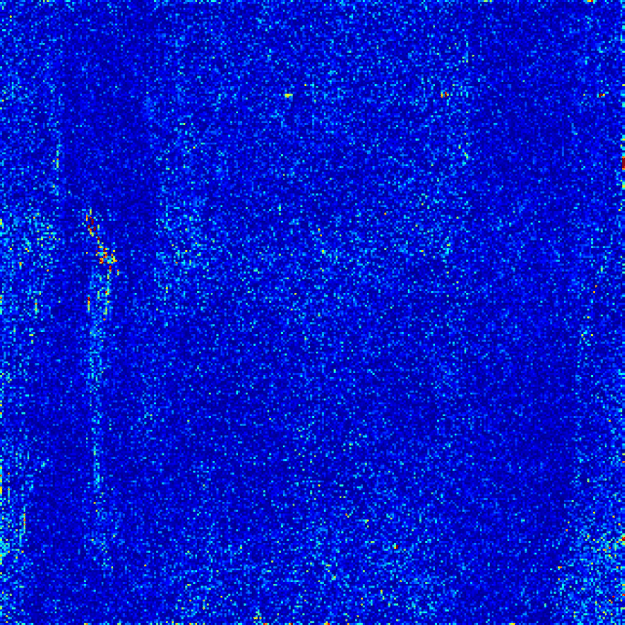}
		\includegraphics[width=0.075\linewidth, angle=180]{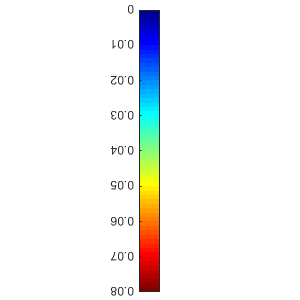}\\
		\includegraphics[width=0.075\linewidth]{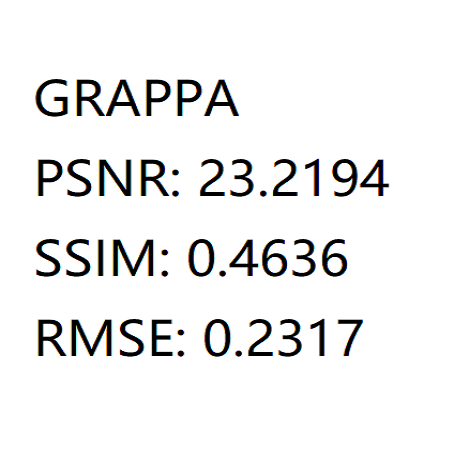}
		\includegraphics[width=0.075\linewidth]{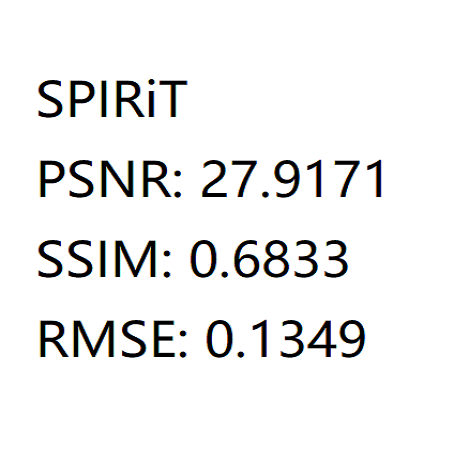}
		\includegraphics[width=0.075\linewidth]{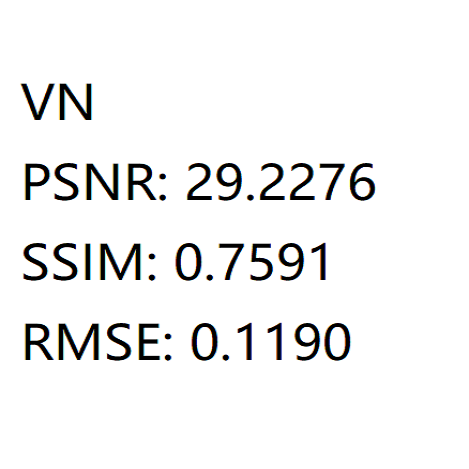}
		\includegraphics[width=0.075\linewidth]{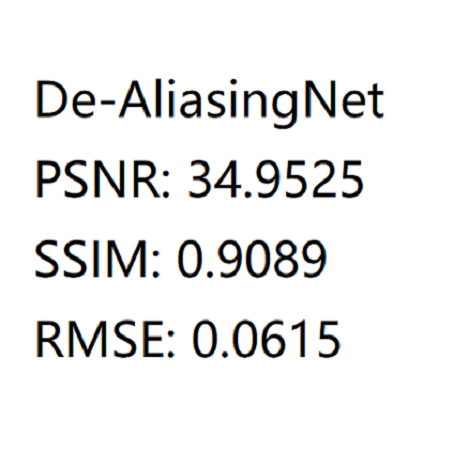}
		\includegraphics[width=0.075\linewidth]{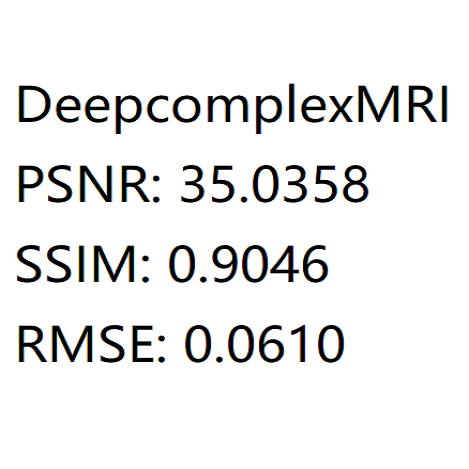}
		\includegraphics[width=0.075\linewidth]{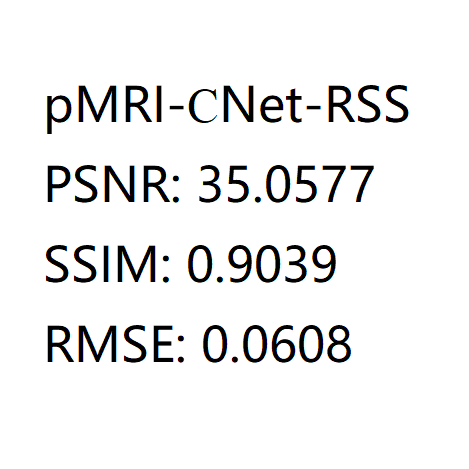}
		\includegraphics[width=0.075\linewidth]{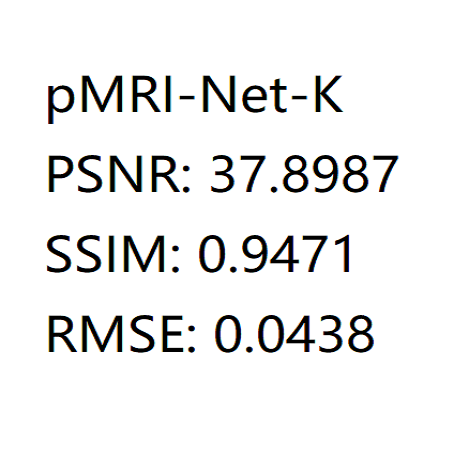}
		\includegraphics[width=0.075\linewidth]{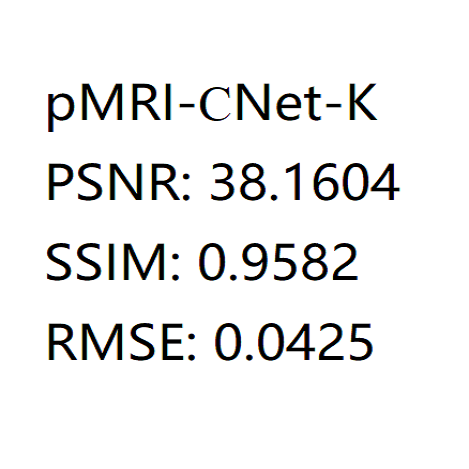}
		\includegraphics[width=0.075\linewidth]{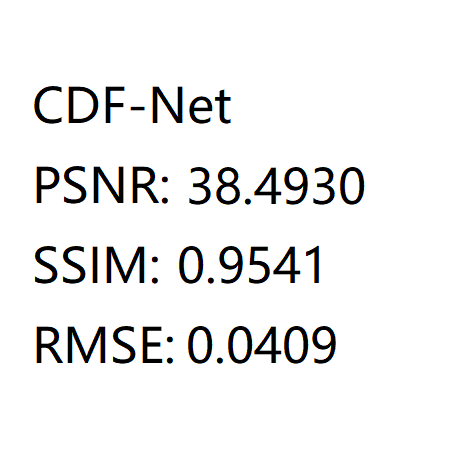}
		\includegraphics[width=0.075\linewidth]{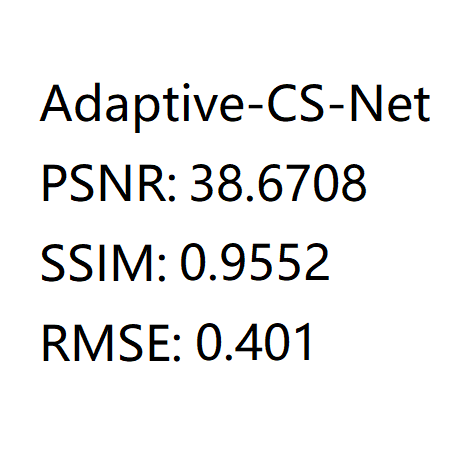}
		\includegraphics[width=0.075\linewidth]{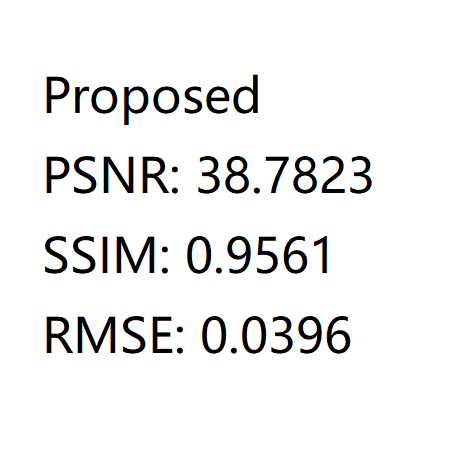}
		\includegraphics[width=0.075\linewidth]{fig_chp3/mask.png}
		\caption{Qualitative comparison results of reconstruction methods on the Coronal FSPD knee image. }
		\label{FSPD}
	\end{figure*}
	
	\begin{table}
		\caption{Mean square error between $\ubf$ and $\ubf^* $ for PD data comparing to the methods that reconstruct multi-coil images. }
		\centering
		\begin{tabular}{ccc}
			\toprule
			Method   & RMSE between $\ubf$ and $  \ubf^*$   \\
			De-AliasingNet~\cite{10.1007/978-3-030-32248-9_4} & 0.1573\\
			DeepcomplexMRI~\cite{WANG2020136} & 0.1253\\
			CDF-Net~\cite{10.1007/978-3-030-59713-9_41} & 0.1196\\
			Adaptive-CS-Net~\cite{pezzotti2020adaptive}  & 0.1096\\
			pMRI-$\C$Net-K~\eqref{eq:loss_ablation}&  0.0538\\
			Proposed~\eqref{eq:loss_chp3}   & 0.0505 \\
			\bottomrule
		\end{tabular}
		\label{tab:ui_mse}
	\end{table}
	\subsection{Ablation Studies}\label{subsec:Ablation}
	The experiments introduced in this section implemented only in the image domain, in which \eqref{eq:scheme} is replaced by:
	\begin{subequations}\label{eq:bu_ablation}
		\begin{align}
			\bbf_i{(t)} & =  \ubf_i{(t)} - \rho_t \Fbf^{H}  \Pbf^{\top} (\Pbf \Fbf \ubf_i{(t)} - \fbf_i), \label{eq:b_ablation}  \\
			\ubf_i{(t+1)} & = \bbf_i{(t)} + \tilde{\J} \circ \tilde{\G} \circ \soft_{\alpha_t} ( \G \circ \J (\bbf_i{(t)})). \label{eq:u_ablation}
		\end{align}
	\end{subequations}
	$ \soft_{\alpha_t}$ represents soft shrinkage operator, we set $ \alpha_0 = 0 $ for both real and imaginary parts. Details for this model was explained in the conference paper \cite{10.1007/978-3-030-61598-7_2}. The proximal operator is learned in residual update \eqref{eq:u_ablation} and only performs in the image domain.
	
	We conduct a series of experiments to test the effects of several important components in the network proposed. Table \ref{tab:result_our} displays the comparison of proposed ablation study. (Labels of variations are explained in Tabel \ref{tab:labels}. The experiments without being labeled loss functions are trained with \eqref{eq:loss_body_ablation}). All the experiments in Table \ref{tab:result_our} were implemented by \eqref{eq:bu_ablation} except for the last one ``proposed'' method which unrolled \eqref{eq:scheme}.
	\begin{table}
		\centering
		\caption{Labels of the variations of the proposed pMRI reconstruction network.} 
		\begin{tabular}{ll}
			\toprule
			Label     &  Meaning \\
			\midrule
			-Net & Real-valued convolution/activation \\
			-$\C$Net & Complex-valued convolution/activation \\
			-RSS & Using root of sum of squares in place of $\J$\\
			-ZF & Zero-filling as initial $\ubf{(0)}$  \\
			-SP & SPIRiT as initial $\ubf{(0)}$ \\
			-K & $\Fbf^{H}(\fbf + \K(\fbf))$ as initial $\ubf{(0)}$ \\
			\bottomrule
		\end{tabular}
		\label{tab:labels}
	\end{table}
	To specify which of the components are employed, we append the labels shown in Table \ref{tab:labels} with different variations of the ablated pMRI networks.  For instance, the network with real-valued convolution and activation function with zero-filled initialization is denoted by pMRI-Net-ZF, etc.
	\begin{table*}
		\caption{Quantitative evaluations of the reconstructions on the Coronal FSPD \& PD data using different variations of the proposed methods.}
		\begin{center}
			\resizebox{\textwidth}{18mm}{ 
				\begin{tabular}{lcccccccc}
					\toprule
					&& FSPD data & & & PD data \\
					Method   & PSNR      & SSIM      & RMSE       & PSNR     & SSIM     & RMSE     & Phases T   & Parameters \\\midrule
					pMRI-$\C$Net-RSS & 36.4887$\pm$0.9787 & 0.9002$\pm$0.0197 &  0.0661$\pm$0.0051  & 41.2897$\pm$0.8430  & 0.9281$\pm$0.0357 & 0.0285$\pm$0.0037 & 5 & 5.03 M\\
					pMRI-Net-ZF & 37.8475$\pm$1.2086 & 0.9212$\pm$0.0236 & 0.0568$\pm$0.0069 & 42.4333$\pm$0.8785 & 0.9793$\pm$0.0023 & 0.0249$\pm$0.0024 & 5 & 5.03 M \\
					pMRI-Net-SP & 38.0205$\pm$0.8125 & 0.9291$\pm$0.0183 & 0.0555$\pm$0.0057 & 42.7435$\pm$0.4856 & 0.9754$\pm$0.0047 & 0.0239$\pm$0.0019 & 5 & 5.03 M \\
					pMRI-$\C$Net-ZF &  38.1157$\pm$1.3776 &  0.9277$\pm$0.0257 & 0.0552$\pm$0.0085 & 42.7859$\pm$1.1285  & 0.9818$\pm$0.0026 & 0.0241$\pm$0.0045 & 5 & 5.03 M\\
					pMRI-$\C$Net-SP & 38.3239$\pm$1.1305 & 0.9282$\pm$0.0269  & 0.0539$\pm$0.0075 & 42.8924$\pm$0.9336 & 0.9760$\pm$0.0054 & 0.0237$\pm$0.0034 & 5 & 5.03 M\\
					pMRI-Net-K & 38.8717$\pm$1.1330  & 0.9389$\pm$0.0209 & 0.0504$\pm$0.0057 &  42.9060$\pm$0.8765 &  0.9802$\pm$0.0028  &  0.0236$\pm$0.0028 & 4 & 4.11 M\\
					pMRI-$\C$Net-K \eqref{eq:loss_ablation} & 38.9661$\pm$1.4382  & 0.9421$\pm$0.0177 & 0.0498$\pm$0.0056 &  43.2604$\pm$0.7610 & 0.9833$\pm$0.0022 & 0.0226$\pm$0.0022 & 4 & 4.11 M\\
					pMRI-$\C$Net-K & 39.3360$\pm$1.1854 & 0.9497$\pm$0.0208 &  0.0477$\pm$0.0048 &  43.5653$\pm$0.8265 & 0.9844$\pm$0.0022 & 0.0217$\pm$0.0010 & 4 & 4.11 M\\
					Proposed \eqref{eq:loss_chp3} & \textbf{40.7101$\pm$1.5357} & \textbf{0.9619$\pm$0.0144} & \textbf{0.0408$\pm$0.0051} & \textbf{44.8120$\pm$1.3185} & \textbf{0.9886$\pm$0.0023} & \textbf{0.0189$\pm$0.0018} & 4 & 2.92 M\\
					\bottomrule
			\end{tabular}}
		\end{center}
		\label{tab:result_our}
	\end{table*}
	%Proposed ($K=1$)  &  39.8226$\pm$1.3430 & 0.9488$\pm$0.0191 & 0.0454$\pm$0.0068 & 44.1633$\pm$1.0341 & 0.9877$\pm$0.0020 & 0.0203$\pm$0.0017 & 4 & 2.92 M\\
	%Proposed ($K=2$)  &  40.5482$\pm$1.2235 & 0.9613$\pm$0.0224 & 0.0435$\pm$0.0062 & 44.6822$\pm$1.2105 & 0.9882$\pm$0.0025 & 0.0193$\pm$0.0019 & 4 & 2.92 M\\
	The complete structure of the network with the three types of initialization is shown in Fig.~\ref{fig:framework}. For $ t =1, \cdots, T$, each phase follows the algorithm introduced in our previous work \cite{10.1007/978-3-030-61598-7_2}.
	\begin{figure}[t]
		\centering
		\includegraphics[width=1\linewidth]{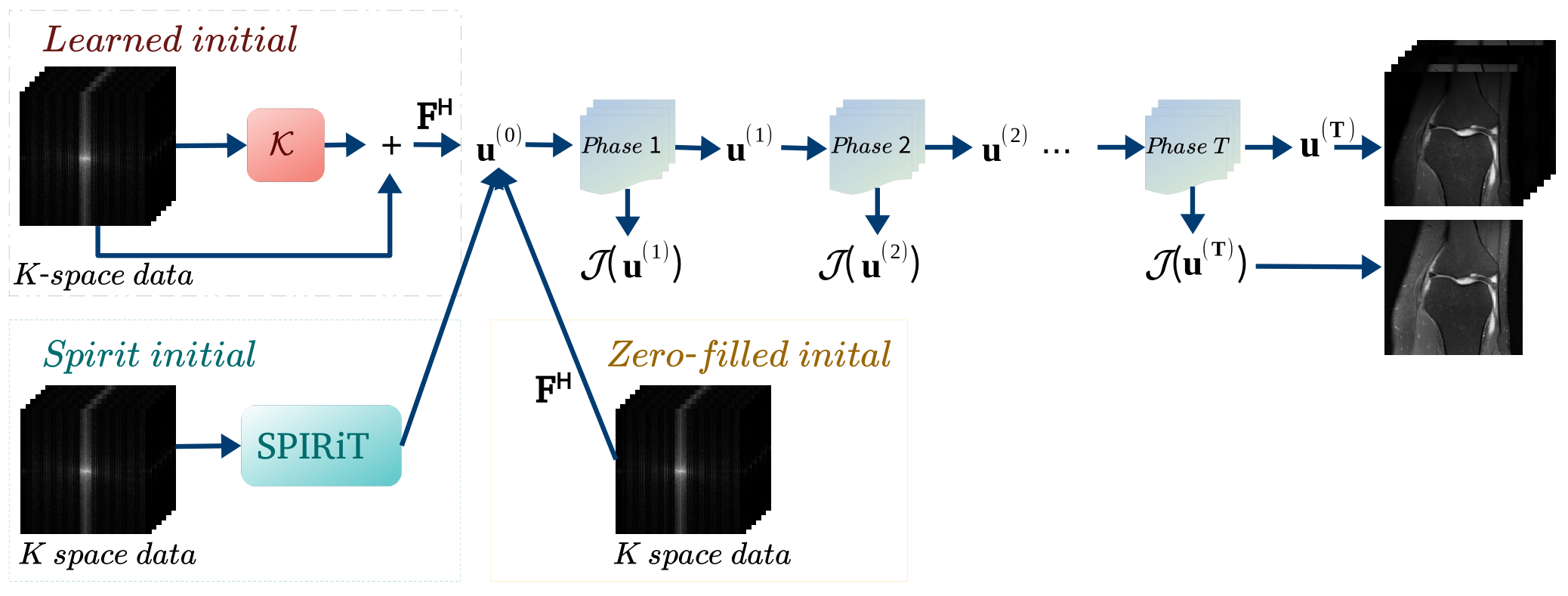}
		\caption{The framework paradigm for all phases with three different initial reconstruction methods, including zero-filled initial, SPIRiT reconstruction initial, and the learned initial.}  
		\label{fig:framework}
	\end{figure}
	
	We consider two types of training datasets in the ablation experiments and design a proper loss function for each of them in Section \ref{subsec:Ablation}.
	In the first case, the ground truth is $\ubf^*$, we set loss function:
	\begin{equation}
		\begin{aligned}
			\ell(\ubf) = & \sum\nolimits^{c}_{i=1}  \gamma \| \ubf_i - \ubf^*_i\| + \| |\J(\ubf)| - \text{RSS}(\ubf^*)\|\\
			&+ \beta  \|  \text{RSS}(\ubf_i{(0)}) - \text{RSS}(\ubf^*_i)\|, \label{eq:loss_ablation}  
		\end{aligned}
	\end{equation}
	$\ubf$ means $\ubf{(T)}$ for simplicity. If the training dataset consists ground truth single-body image $\vbf^* \in \mathbb{R}^{m \times n} = \text{RSS}(\ubf^*)$, then we test our network performance using the loss function:
	\begin{equation}
		\label{eq:loss_body_ablation}
		\ell(\ubf) = \gamma \| \text{RSS}(\ubf) - \vbf^*\| + \| |\J(\ubf)| - \vbf^*\| + \beta\| \text{RSS}( \ubf{(0)}) - \vbf^*\|.
	\end{equation}
	
	The loss functions are indicated in Tables \ref{tab:result_our} and \ref{tab:ui_mse} follow the experiments.  We set $ \gamma = 1, \beta = 10^{-3}$ in \eqref{eq:loss_ablation} and \eqref{eq:loss_body_ablation} in the implementations.
	%The proposed method uses loss function \eqref{eq:loss_chp3}, where multi-coil images $\ubf^*$ are used as ground truth. In the experiment of pMRI-$\C$Net-K, we use the loss function  \eqref{eq:loss_ablation} and \eqref{eq:loss_body_ablation}.  Both  \eqref{eq:loss_chp3} and  \eqref{eq:loss_ablation} minimize the discrepancy between the multi-coil reconstruction $\ubf$ and the ground truth $\ubf^*$. 
	
	%
	\subsubsection{Combination operator  vs. Root of Sum of Square}\label{subsec:RSS}
	In order to justify the effectiveness of the learned nonlinear combination operator $\J$, we modified pMRI-$\C$Net-ZF by substituting $ \J$ with RSS which is widely used to combine multi-coil images into a single-body image.
	The other operators $ \G,\tilde{\G}$ and $ \tilde{\J}$ remain to perform complex convolutions.
	Specifically, the output of RSS: $(\sum_{i=1}^{c} |\ubf_i{(t)}|^2)^{1/2} \in\mathbb{R}^{mn} $ is a single-channel real-valued image, which is input into both real and imaginary parts of the nonlinear operator $ \G$, so the output split into complex value. We refer  pMRI-Net-ZF/pMRI-$\C$Net-ZF as the networks with combination operator $\J$ and pMRI-$\C$Net-RSS as the network with RSS, all other settings remain the same as before.
	
	The reconstructed images and average evaluation results of these two types of networks are shown in Fig.~\ref{FSPD} and Table \ref{tab:result_our}. pMRI-Net-ZF and pMRI-$\C$Net-ZF outperform pMRI-$\C$Net-RSS with mean improvements of 1.36 dB and 1.63 dB in PSNR respectively. It indicates that the learned combination operator $\J$ gives more favorable performance compared with applying RSS.
	\subsubsection{Initialization}\label{subsec:initial}
	The choice of the input of the reconstruction network $\ubf{(0)}$, also has impacts on the final reconstruction quality. Instead of directly using the partial k-space data $\fbf$ as the input of our network, we use three different choices of input $\ubf{(0)}$: (i) Zero-filling reconstruction $\Fbf^{H} \fbf$; (ii) SPIRiT \cite{doi:10.1002/mrm.22428} reconstruction; (iii) $\Fbf^{H}(\fbf+\K(\fbf))$, one can treat $\fbf+\K(\fbf)$ as an interpolated pseudo full k-space.
	
	%In the test of pMRI-Net-SP/pMRI-$\C$Net-SP, we normalized the maximum value in the magnitude of multi-coil image intensity to be one.
	We observe that from Table \ref{tab:result_our},
	SPIRiT initial makes a slight improvement over the zero-filled initial, whereas the learned initial achieves the highest reconstruction quality compared to the other two initializations. 
	Fig.~\ref{Init} displays the three types of initials  for pMRI-Net-ZF/pMRI-$\C$Net-ZF, pMRI-Net-SP/pMRI-$\C$Net-SP, pMRI-Net-K/pMRI-$\C$Net-K, and reference image on the Coronal PD knee image. The second row shows their pointwise error maps and color bar.. We observe that both SPIRiT and the learned initial obtain higher spatial resolution over zero-filling.
	SPIRiT is a classical k-space method, this initial did a better job on reducing the aliasing artifacts and keeping edges compare to zero-filling and the learned initial, but SPIRiT introduces more noise. 
	
	Comparing to zero-filling, the learned initial preserves structure features of major tissue thanks to the k-space network $\K$. Comparing to the SPIRiT initial, the learned initial reduces resolution noises in the image space. Learning-based initial obtain a balanced performance between zero-filling and SPIRiT in the sense of avoiding the weakness of these two initials.
	
	\begin{figure}
		\centering
		\includegraphics[width=0.24\linewidth]{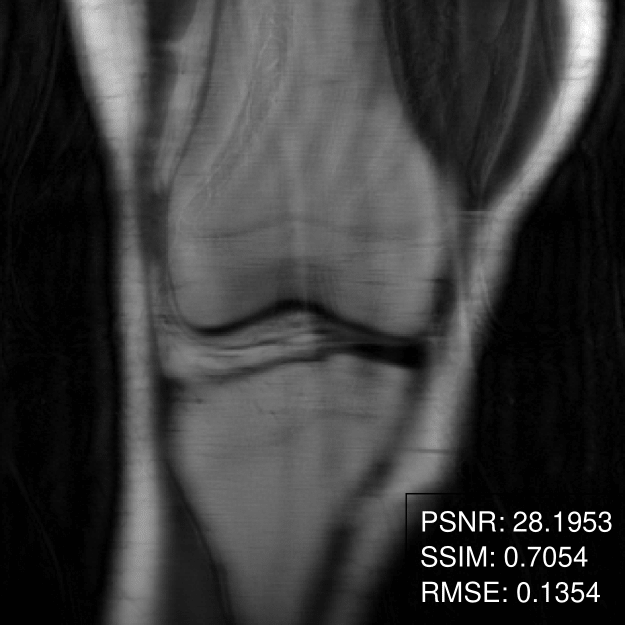}
		\includegraphics[width=0.24\linewidth]{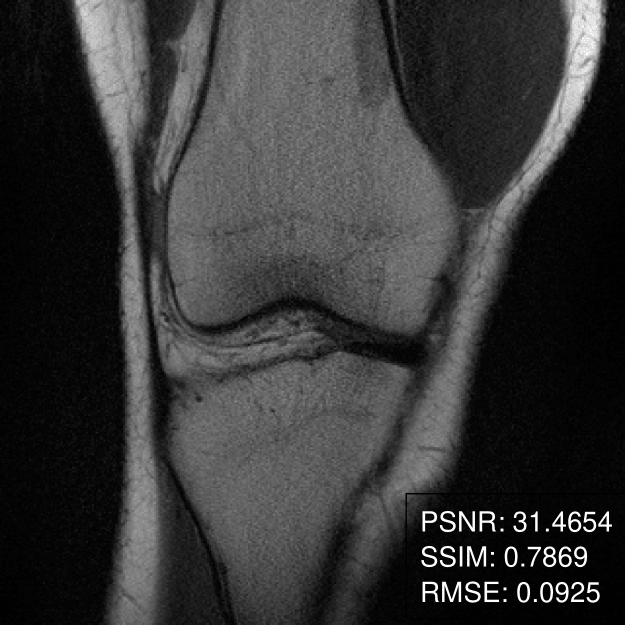}
		\includegraphics[width=0.24\linewidth]{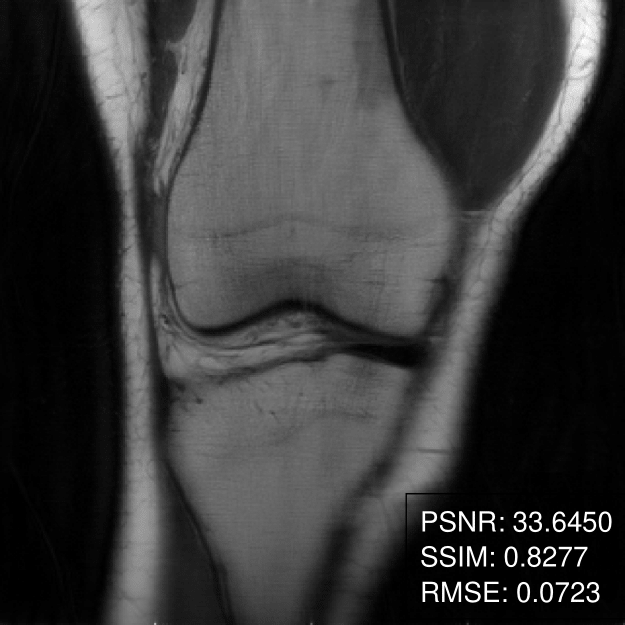}
		\includegraphics[width=0.24\linewidth]{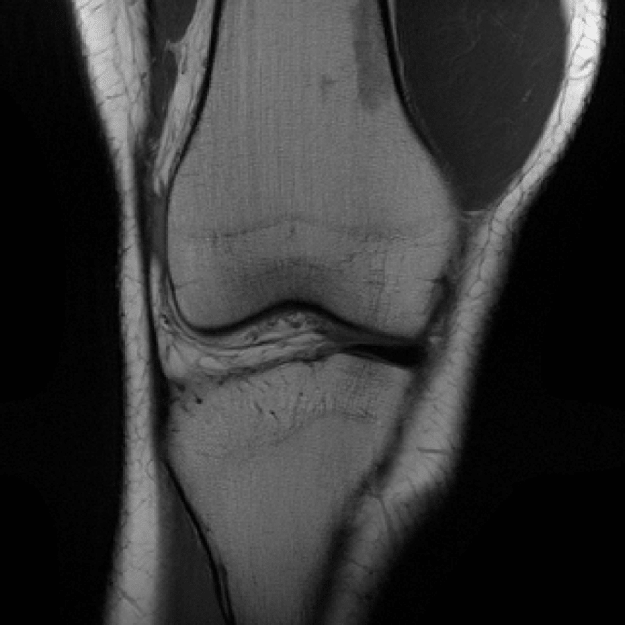}\\
		\includegraphics[width=0.24\linewidth]{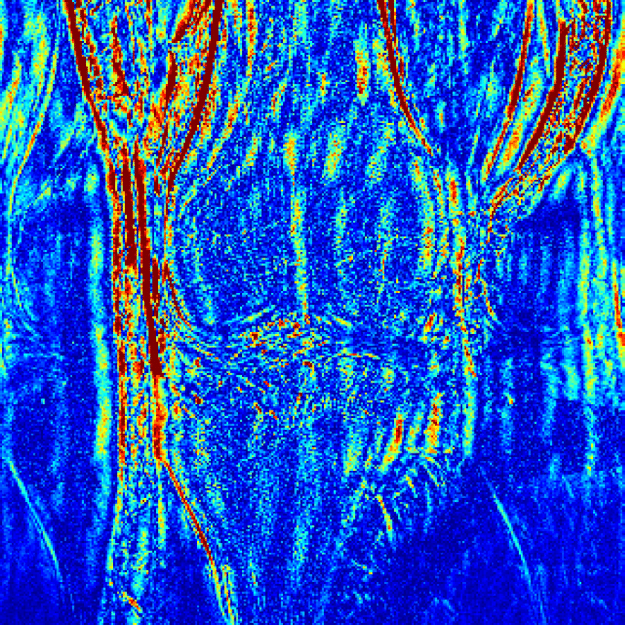}
		\includegraphics[width=0.24\linewidth]{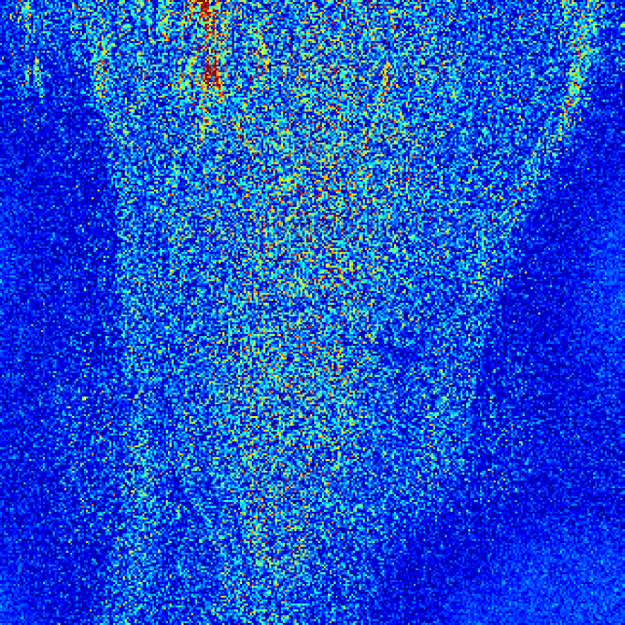}
		\includegraphics[width=0.24\linewidth]{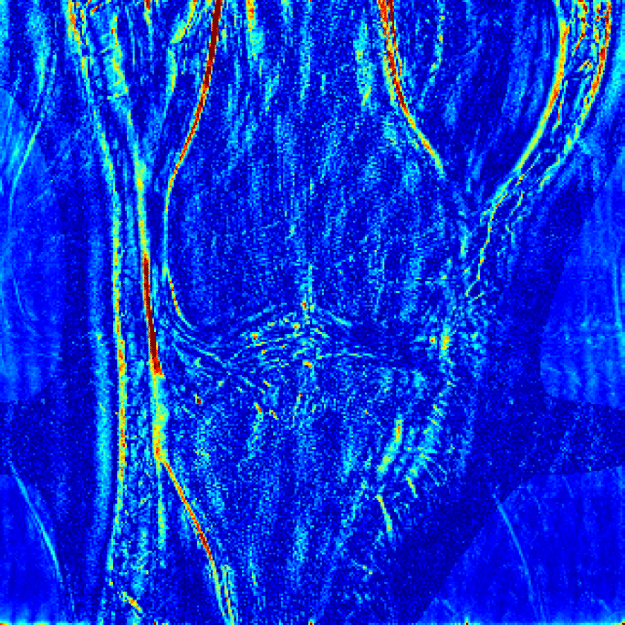}
		\includegraphics[width=0.24\linewidth]{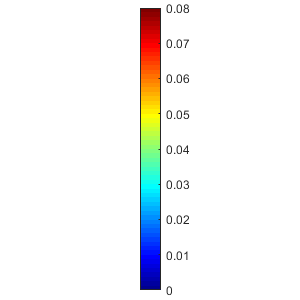}
		\caption{The first row (from left to right) shows the RSS of initial $ \Bar{\ubf}{(0)}$. }
		\label{Init}
	\end{figure}

	\subsubsection{Complex convolutions}\label{subsec:complexnetwork}
	
	%Complex-valued convolution and activation have been shown effective in recovering both magnitude and phase information in MRI reconstructions \cite{WANG2020136, cole2020analysis, vasudeva2020covegan}.
	In this experiment, we compared -Net and -$\C$Net.
	The quantitative evaluation from Table \ref{tab:result_our} indicates that the proposed complex-valued networks are outstanding in terms of PSNR/SSIM/RMSE over proposed real-valued networks. %Fig.~\ref{FSPD} shows sample reconstructions of -ZF initial and -K initial.
	Table \ref{tab:ui_mse} informs the complex convolutions extract the features in each $\ubf_i$ and obtain the lowest RMSE. The phase image for one channel of the reconstructed coil-image is displayed in Fig.~\ref{phase}. The first row shows phase information of one coil in the reconstructed coil-image for GRAPPA, SPIRiT, De-AliasingNet, DeepcomplexMRI,  CDF-Net, Adaptive-CS-Net, pMRI-$\C$Net-K with loss function \eqref{eq:loss_ablation}, the proposed method and referenced image. The second row shows the corresponding pointwise error maps and color bar, the maximum error is $30^{\circ}$.
	These results demonstrate complex-valued networks are playing important roles in updating multi-coil images and preserving phase information of each channel.
	
	\begin{figure*}
		\includegraphics[width=0.10\linewidth, angle=180]{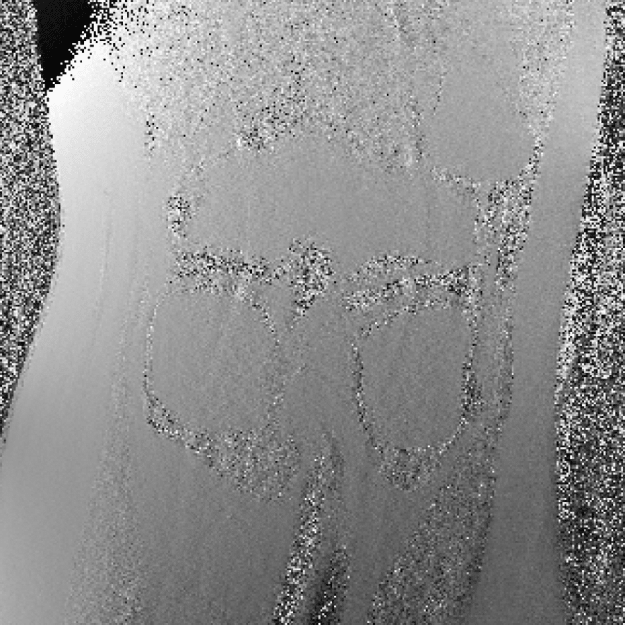}
		\includegraphics[width=0.10\linewidth, angle=180]{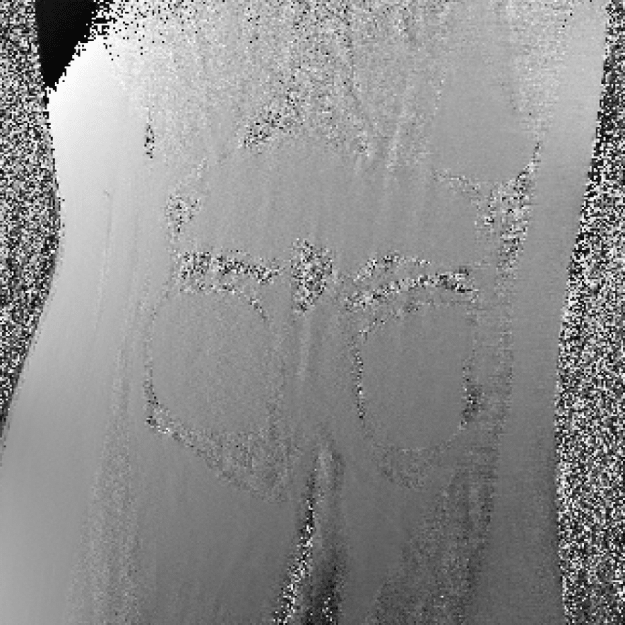}
		\includegraphics[width=0.10\linewidth, angle=180]{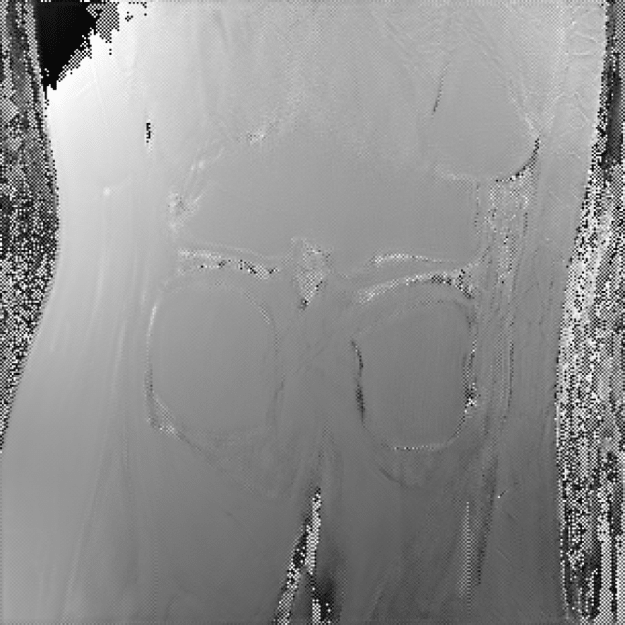}
		\includegraphics[width=0.10\linewidth, angle=180]{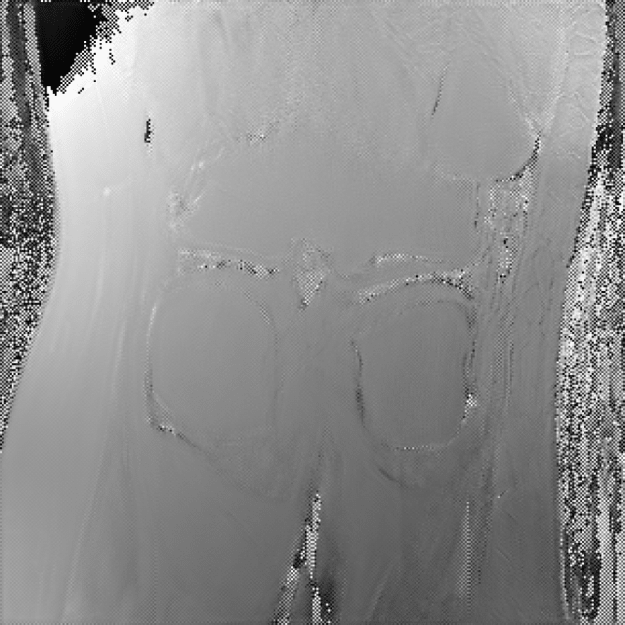}
		\includegraphics[width=0.10\linewidth, angle=180]{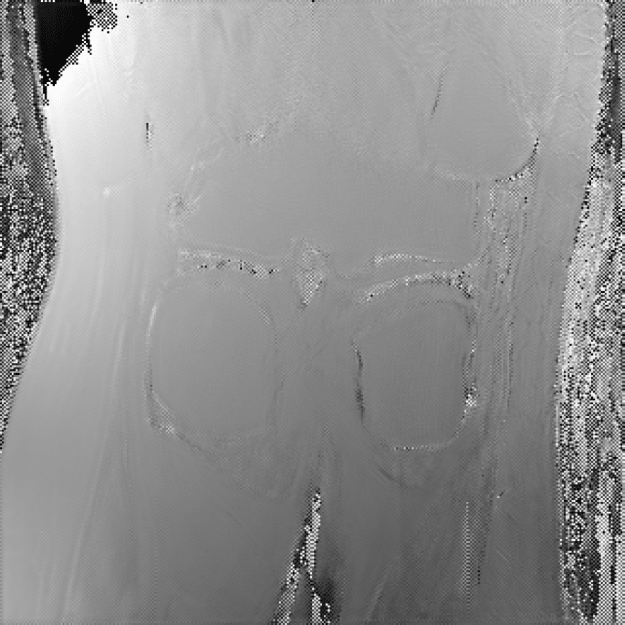}
		\includegraphics[width=0.10\linewidth, angle=180]{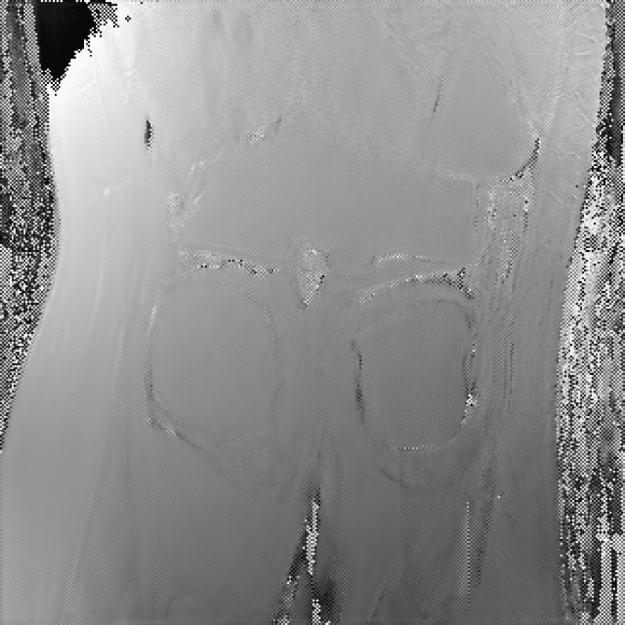}
		\includegraphics[width=0.10\linewidth, angle=180]{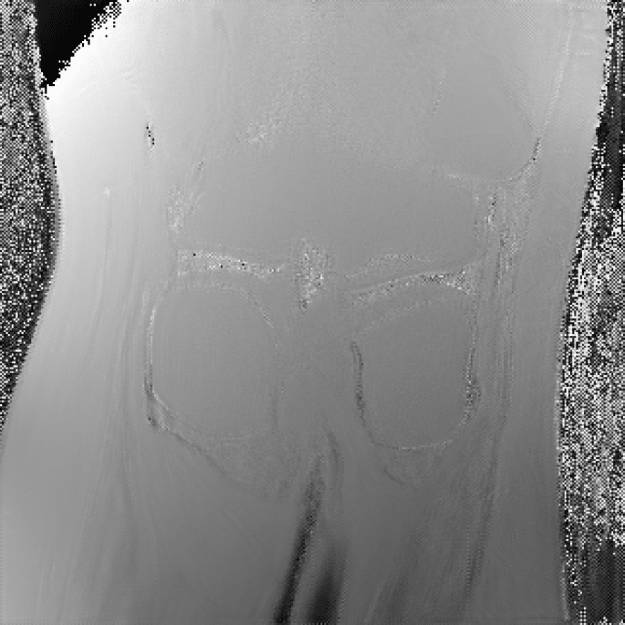}
		\includegraphics[width=0.10\linewidth, angle=180]{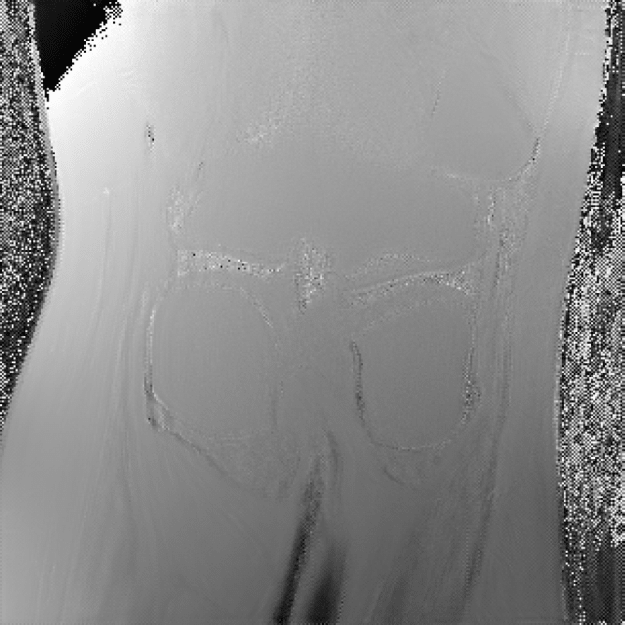}
		\includegraphics[width=0.10\linewidth, angle=180]{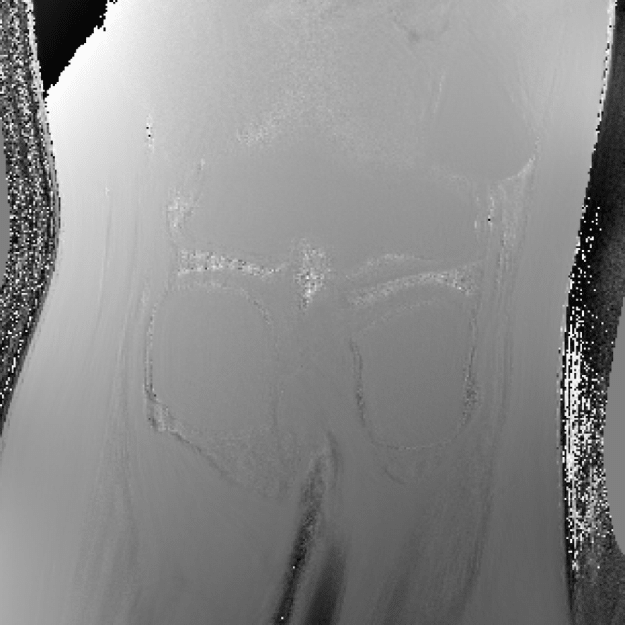}\\
		\includegraphics[width=0.10\linewidth, angle=180]{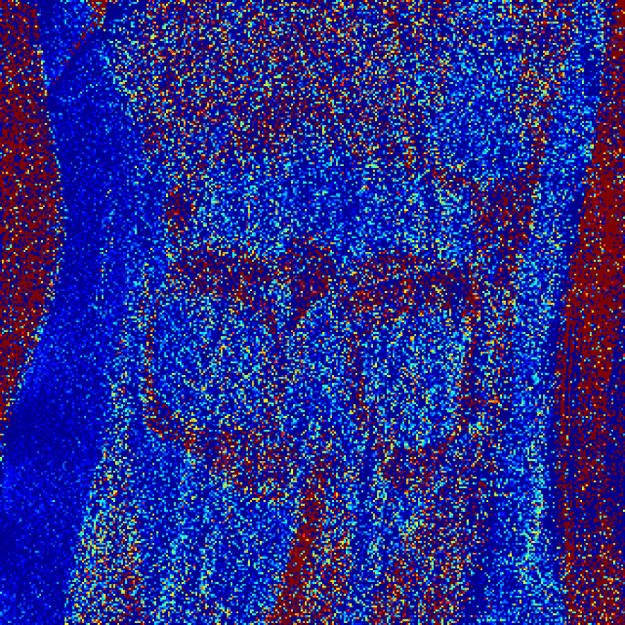}
		\includegraphics[width=0.10\linewidth, angle=180]{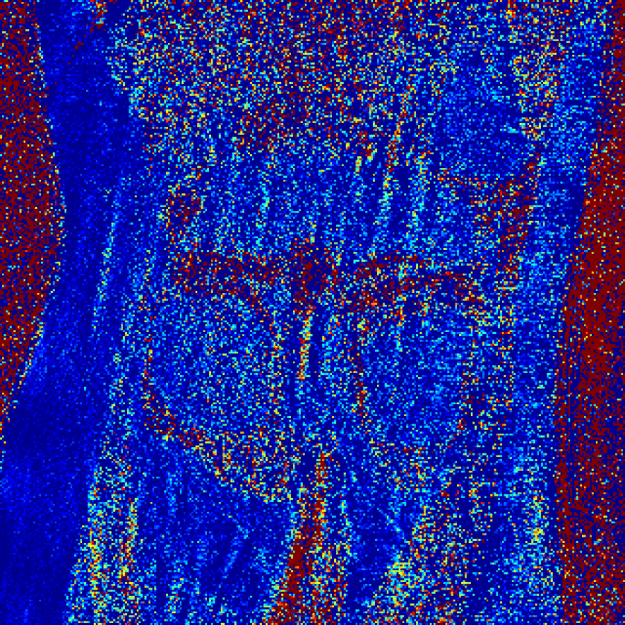}
		\includegraphics[width=0.10\linewidth, angle=180]{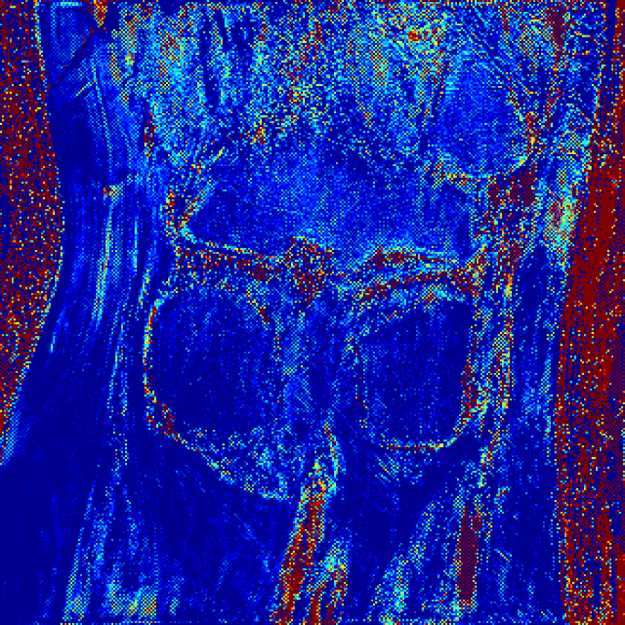}
		\includegraphics[width=0.10\linewidth, angle=180]{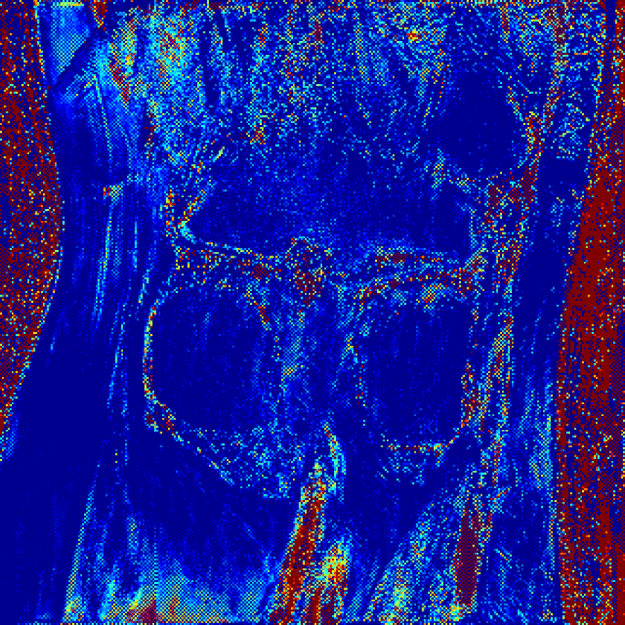}
		\includegraphics[width=0.10\linewidth, angle=180]{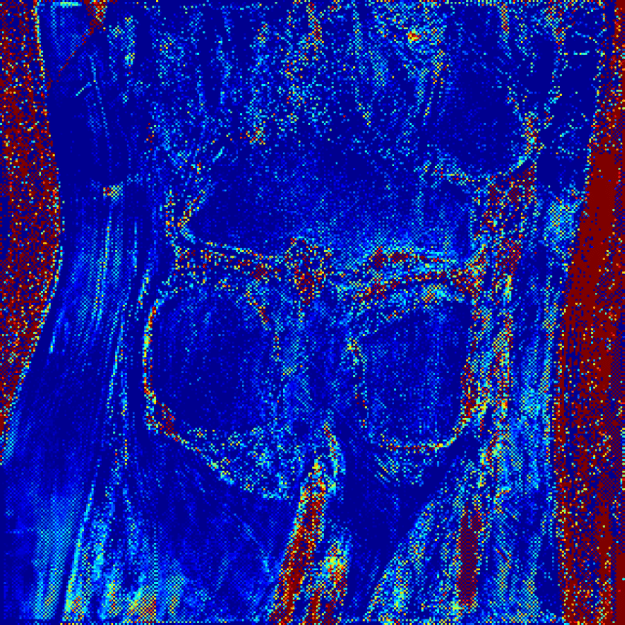}
		\includegraphics[width=0.10\linewidth, angle=180]{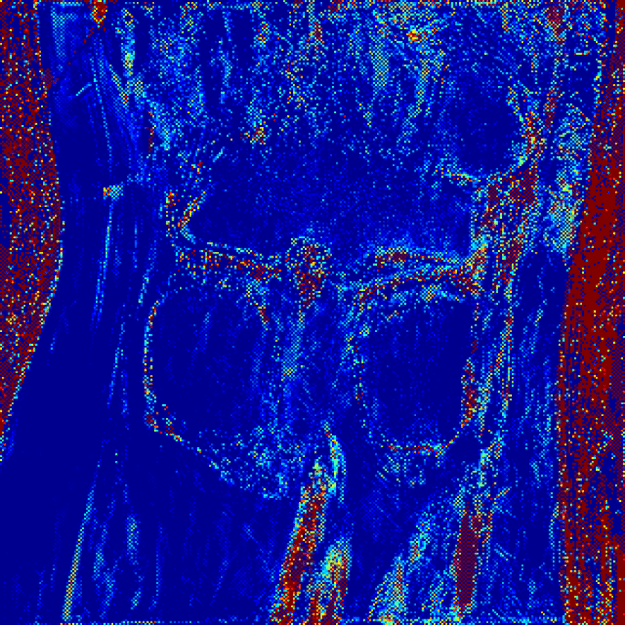}
		\includegraphics[width=0.10\linewidth, angle=180]{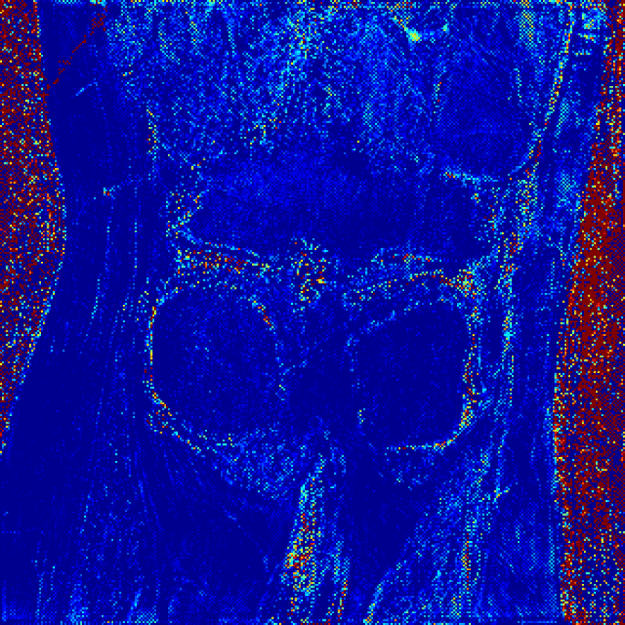}
		\includegraphics[width=0.10\linewidth, angle=180]{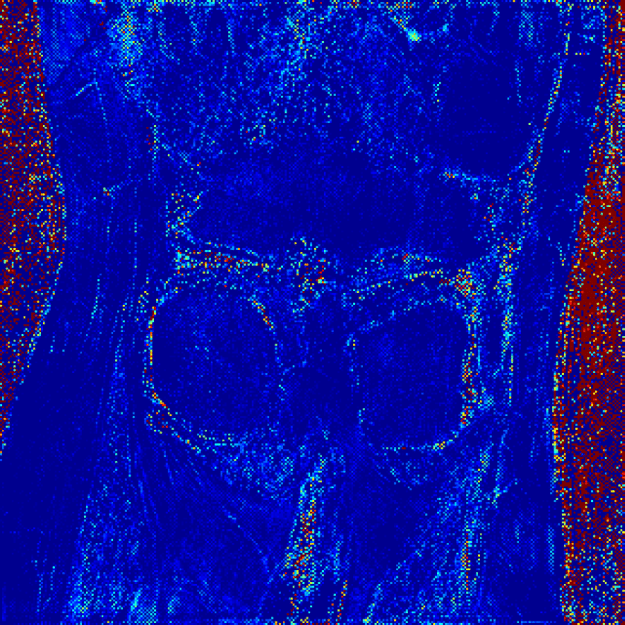}
		\includegraphics[width=0.10\linewidth, angle=180]{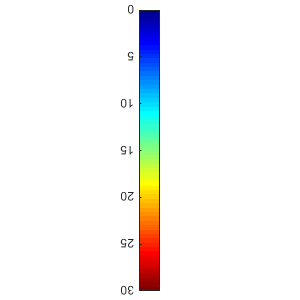}
		\caption{Phase information of one coil in the reconstructed coil-image for different methods comparing with the proposed method and referenced image. }
		\label{phase}
	\end{figure*}
	
	\subsubsection{Comparison with Proposed Image Domain Reconstruction and Domain Hybrid Reconstruction}
	\label{sec:discussion}
	The major difference between pMRI-$\C$Net-K and the proposed network is that pMRI-$\C$Net-K  only iterates \eqref{eq:u_ablation} in the image domain, while the proposed method iterates \eqref{eq:scheme} which conducts a domain hybrid reconstruction. The denoising network in \eqref{eq:u_ablation} is $ \tilde{\J} \circ \tilde{\G} \circ \soft_{\alpha_t}(  \G \circ \J)$, and in the proposed network we use $ \M + \Fbf^{H} \K \Fbf + \Fbf^{H} \K \Fbf \M$, where  $ \M = \tilde{\J} \circ \tilde{\G} \circ \G \circ \J $ is the image domain network. The soft-thresholding operator $  \soft_{\alpha_t}$ was eliminated in the proposed method since we found the results of adding the soft-thresholding does not make an obvious difference.
	
	The average reconstruction outcomes in Table \ref{tab:result_our} and Table \ref{tab:ui_mse} suggest that domain hybrid approach achieves better performance. Comparing to the pMRI-$\C$Net-K, our proposed method improved 0.598 dB in PSNR, 0.003 in SSIM, and reduced 0.014 in RMSE, which shows in Table \ref{tab:result_our} for PD dataset. 
	\section{Conclusion}
	\label{sec:conclusion_chp3}
	This paper introduces a discrete-time optimal control framework for the calibration-free pMRI reconstruction model. We apply a convolutional combination operator to combine channels of the multi-coil images and apply a parametrized regularization function to the channel-combined image to reconstruct channel-wise multi-coil images. The proposed method is inspired by the proximal gradient algorithm. The proximal point is learned by two denoising networks, which conducts in the image domain and k-space domain.
	We cast the reconstruction network as a structured discrete-time optimal control system, resulting in an optimal control formulation of parameter training, which provides an interpretable and high-performance deep architecture for pMRI reconstruction. We design network training from the view of the Method of Lagrangian Multipliers. We showed that the method of Lagrangian multipliers is equivalent to back-propagation, and we can employ SGD based algorithms to obtain a solution satisfying the necessary condition of the optimal control problem.
	The reconstruction results are of high perceived quality demonstrate the superior performance of the proposed pMRI-Net.

\chapter{An Optimization-Based Meta-Learning Model for MRI Reconstruction with Diverse Dataset}\label{meta_learning}

This work aims at developing a generalizable MRI reconstruction model in the meta-learning framework.
The standard benchmarks in meta-learning are challenged by learning on diverse task distributions. The proposed network learns the regularization function in a variational model and reconstructs MR images with various under-sampling ratios or patterns that may or may not be seen in the training data by leveraging a  heterogeneous dataset.
We propose an
unrolling network induced by learnable optimization algorithms (LOA) for solving our nonconvex nonsmooth variational model for MRI reconstruction. In this model, the learnable regularization function contains a  task-invariant common feature encoder and task-specific learner represented by a shallow network. To train the network we split the training data into two parts: training and validation, and introduce a bilevel optimization algorithm. The lower-level optimization trains task-invariant parameters for the feature encoder with fixed parameters of the task-specific learner on the training dataset, and the upper-level optimizes the parameters of the task-specific learner on the validation dataset.
The PSNR increases 1.5 dB on average compared to the network trained through conventional supervised learning on the seen CS ratios. We test the result of quick adaption on the unseen tasks after meta-training, the average PSNR arises 1.22 dB compared to the conventional learning procedure that is directly trained on the unseen CS ratios in the meanwhile saving half of the training time. The average PSNR arises 1.87 dB for unseen sampling patterns comparing to conventional learning; 
We proposed a meta-learning framework consisting of the base network architecture,  design of regularization, and bi-level optimization-based training. The network inherits the convergence property of the LOA and interpretation of the variational model. The generalization ability is improved by the designated regularization and bilevel optimization-based training algorithm.   

\section{Introduction}

Deep learning methods have demonstrated promising performance in a variety of image reconstruction problems. However, deep learning models are often trained for specific tasks and require the training samples to follow the corresponding distribution. In particular, the source-domain/training samples and target-domain/testing samples need to be drawn from the same distribution {\cite{li2018learning,quinonero2009dataset,ben2007analysis,balaji2018metareg}. 
%
%However, this assumption is too strong and may not hold in practical applications. 
%
In practice, these data sets are often collected at different sources and exhibit substantial heterogeneity, and thus the samples may follow related but different distributions in real-world applications {\cite{fan2014challenges,day2017survey}}.
%
%Hence, generalizability becomes an inevitable concern of deep learning. 
%
Therefore, the robust and efficient training of deep neural networks using such data sets is theoretically important and practically relevant in the application of deep learning-based methods.
%
%(and will arise "domain shift/heterogeneity" problem, in which)(limited on future unseen data especially when the model was trained using small datasets.) (Fitting a deep learning model to a wide range of tasks with diverse data distribution is a major challenge in deep learning. This is because deep) A number of methods have been proposed to tackle this problem, including Cross-Validation \cite{browne2000cross}, Early-Stopping \cite{caruana2001overfitting, lodwich2009evaluation}, Dropout \cite{srivastava2014dropout}, Regularization \cite{xu2010robust,jaiswal2018investigation}, Ensembling \cite{perrone1992networks, zhou2002ensembling, krogh1996learning}. \ye{limitations of these methods? It would be more natural to discuss them before introducing meta-learning.}

Meta-learning provides a unique paradigm to achieve robust and efficient neural network training \cite{munkhdalai2017meta, finn2017model,li2018learning, rusu2018meta,  yao2021improving, balaji2018metareg}.
Meta-learning is known as %Is the italics necessary? If not, please make to normal font.
 \emph{learning-to-learn} and aims to quickly learn unseen tasks from the experience of learning episodes that cover the distribution of relevant tasks. In a multiple-task scenario, given a family of tasks, meta-learning has been proven to be a useful tool for extracting task-agnostic knowledge and improving the learning performance of new tasks from that family \cite{thrun1998learning, hospedales2021meta}.
%
%Most of the modern approaches of meta-learning focus on \emph{domain adaptation}, where the target domain information is available in meta-training, but this setting is still too ideal to apply in practice. Recently the study on \emph{domain generalization (DG)} has attracted much attention. DG allows the training model to learn representations from several related source domains and gain adequate generalization ability for unseen test distributions. However, DG techniques for image reconstruction are rarely explored in solving inverse problems. Leveraging large-scale heterogeneous MRI data to overcome overfitting issue caused by small-scale clinic datasets is of great interests for more precise, predictable, and powerful health care. 
%
We leverage this feature of meta-learning for network training where the MRI training data are acquired by using different under-sampling patterns (e.g., Cartesian mask,  Radial mask, Poisson mask), under-sampling ratios,  and different settings of the scanning parameters, which result in different levels of contrast (e.g., T1-weighted, T2-weighted,  proton-density (PD), and Flair). 
These data are vastly heterogeneous and can be considered as being from various tasks. 
%
%This work aims at developing a robust and generalizable MRI reconstruction method to leverage such large-scale heterogeneous data.
%to reconstruct MR images with the available small-scale dataset from specific setting of the scan.
%(only)  (The objective of this paper focuses on domain generalization for solving inverse problems and utilizes the domain invariant parameters to make predictions on the unseen domain through a meta-learning procedure.)In this paper we propose to unroll the deep network in a learned gradient descent algorithm as the forward model, and the network parameters are trained under the guidance of meta-knowledge which is learned in an optimization algorithm inspired by bilevel learning.
%(Medical image processing often suffers from data heterogeneous problems because of the inconsistency between the distribution of the training dataset and the testing dataset. Different medical image sets could be scanned from different scanners with different scanning coefficients and inharmonious protocols.  Also, acquiring labeled medical images are more expensive and laborious than natural images since medical images require intensive workforce who are expert in the radiologist or physicists with medical experiences for diagnostics. )
%
%In this work, we propose a robust and generalizable deep learning method meta-learning-based model for solving the MRI image reconstructions problem by leveraging diverse/heterogeneous dataset.
%
Thus, our goal is to develop a robust and generalizable image reconstruction method in the meta-learning framework to leverage such large-scale heterogeneous data for MRI reconstruction.

Our approach can be outlined as follows. First, we introduce a variational model rendering a nonconvex nonsmooth optimization problem for image reconstruction. In our variational model, the regularization term is parameterized as a structured deep neural network where the network parameters can be learned during the training process. We then propose a learnable optimization algorithm (LOA) with rigorous convergence guarantees to solve this optimization problem. Then, we construct a deep reconstruction network by following this LOA exactly; namely, each phase of this LOA-induced network is exactly one iteration of the LOA. This approach is inspired by \cite{chen2020learnable}, but the LOA developed in the present work is computationally more efficient than that in \cite{chen2020learnable}: the safeguard variable in this work is updated only if necessary, which can significantly reduce computational cost while retaining the convergence guarantee. 
%
%architecture is the same as algorithm, enherit %modified to improve the model performance and becomes more efficient, which slashes the computational cost. 

Second, to improve network robustness and mitigate the overfitting issue, we explicitly partition the network parameters of the regularization into two parts: a task-invariant part and a task-specific part. The former extracts common prior information of images from different tasks in the training data and learns the task-invariant component of the regularization. The latter, on the other hand, is obtained from another network which exploits proper task-specific components (also called meta-knowledge) of the regularization for different tasks. The hyperparameters of this network are also learned during training.
Furthermore, we split the available data into two sets: the training set and the validation set. Then, we introduce a bilevel optimization model for learning network parameters. Specifically, the lower-level problem (also known as inner problem) finds the task-invariant part of the regularization with the fixed task-specific part on the training dataset, whereas the upper-level (outer) problem seeks for the optimal task-specific part of the regularization parameter using the validation dataset. 
This approach greatly increases the robustness of the learned regularization, meaning that the trained LOA-induced deep reconstruction network can generalize well to unseen tasks.}
%
%The well-trained network of seen tasks can be applied to the unseen tasks with determined $\theta$. This adaptation only needs a few iterations to update $\omega_i$ with a small number of training samples of unseen tasks.
%Different from the aspiration of most optimization-based meta-learning, which searching for a universal network parameter that can generalize easily to multiple tasks, the adaptive regularization consists of task-invariant parameters $\theta$ and task-specific parameters (also called meta-knowledge) $\omega_i$.(The parameters $\theta$ are trained under the guidance of $\omega_i$ which is learned in an optimization algorithm inspired by bilevel learning in \eqref{eq:bi-level} employing training and validation data. The bilevel training enhances generalizability and reduces the overfitting of the proposed model.)

%In the proposed work, instead of directly learning a feed-forward network, we propose to learn a regularization of variational model solved by the nonsmooth nonconvex optimization algorithm in Section \ref{network}. Then we unroll it to a multi-phase shallow deep network that applies only a few kernels and convolutions of CNNs. 
%The learned regularization term can incorporate the common underlying properties of the training tasks instead of simply "fitting" the training data. 
%The common parameters in regularization are learned from a  heterogeneous dataset using method of meta-learning.
%The proposed meta-learning model with learned regularization unrolling can achieve great generalizability even on heterogeneous data. 

As demonstrated by our numerical experiments in Section \ref{experiments}, our proposed framework yields much improved image qualities using diverse data sets of various undersampling trajectories and ratios for MRI image reconstruction. 
The reason is that effective regularization can integrate common features and prior information from a variety of training samples from diverse data sets, but they need to be properly weighed against the data fidelity term obtained in specific tasks (i.e., undersampling trajectory and ratios).  %
%
% with insufficient training data through meta-training procedure so that each individual task benefit and make compensation to each other by leveraging the related cross-task information from  heterogeneous MRI datasets among tasks. 
%
%In the final analysis, the proposed LOA-based algorithm plays a key role in learning the entire regularization term in the MRI reconstruction model.
%
Our contributions can be summarized as follows:

\begin{enumerate}[leftmargin=*]
\item An LOA inspired network architecture---our network architecture exactly follows a proposed LOA with guaranteed convergence. Thus, the network is more interpretable, parameter-efficient, and stable than existing unrolling networks.

\item Adaptive design of regularization---our adaptive regularizer consists of a task-invariant part and a task-specific part, both of which can be appropriately trained from data.
%
%\deleted{Unlike the existing meta-learning methods, the proposed approach of network training can learn adaptive regularizer from diverse data sets.}
%The task-invariant portion aims at exploiting the common features and shared information across all involved tasks, whereas the task-specific parameters only learn the regularization weight to properly balance the data fidelity and learned regularization in individual tasks. 

\item Improved network robustness and generalization ability---we improve the robustness of the network parameter training process by posing it as a bilevel optimization using training data in the lower-level and validation data in the upper-level.
This approach also improves the generalization ability of the trained network so that it can be quickly adapted to image reconstruction with new unseen sampling trajectories and produces high-quality reconstructions.
\end{enumerate}

%The proposed meta-learning method improves the reconstruction performance on diversified datasets over the conventional supervised learning.\ye{I think item 1 is no longer a contribution. Just focus on 2. We can say that we improve the training efficiency and reconstruction performance of LDA from diversified data sets using a meta-learning approach.}

The remainder of the paper is organized as follows.
\textls[-15]{In Section \ref{related_work}, we discuss related work for both optimization-based meta-learning and deep unrolled networks for MRI reconstructions. We propose our meta-learning model and the neural network in Section \ref{sec:Method} and describe the implementation details in Section \ref{sec:Implementation}. Section \ref{experiments} provides the numerical results of the proposed method. Section \ref{conclusion} concludes the paper.}

%%%%%%%%%%%%%%%%%%%%%%%%%%%%%%%%%%%%%%%%%%
\section{Related Work}\label{related_work}

In recent years, meta-learning methods have demonstrated promising results in various fields with different techniques \cite{hospedales2021meta}. 
Meta-learning techniques can be categorized into three groups  \cite{yao2020automated, lee2018gradient, huisman2021survey}: metric-based methods \cite{koch2015siamese, vinyals2016matching, snell2017prototypical}, model-based methods  \cite{mishra2017simple, ravi2016optimization, qiao2018few, graves2014neural}, and optimization-based methods \cite{finn2017model, rajeswaran2019meta, li2017meta}. Optimization-based methods are often cast as a bilevel optimization problem and exhibit relatively better generalizability for wider task distributions. We mainly focus on optimization-based meta-learning in this paper.
For more comprehensive literature reviews and developments of meta-learning, we refer the readers to the recent surveys \cite{ hospedales2021meta, huisman2021survey}. 

%\subsection{ Optimization-based {meta-learning} approaches}\label{l2l}

Optimization-based meta-learning methods have been widely used in a variety of deep learning applications \cite{finn2017model,antoniou2018train,rajeswaran2019meta,li2017meta,nichol2018first,finn2019online,grant2018recasting,finn2018probabilistic,yoon2018bayesian}. The network training problems in these meta-learning methods are often cast as the bilevel optimization of a leader variable and a follower variable. The constraint of the bilevel optimization is that the follower variable is optimal for the lower-level problem for each fixed leader variable, and the ultimate goal of bilevel optimization is to find the optimal leader variable (often, the corresponding optimal follower variable as well) that minimizes the upper-level objective function under the constraint. 
%
%For meta-learning applications, the lower-level problem encounters new tasks and tries to learn the associated features quickly from the training observations, the outer level accumulates task-specific meta-knowledge across previous tasks and the meta-learner provides support for the inner level so that it can quickly adapt to new tasks. e.g. 
%
The lower-level problem is approximated by one or a few gradient descent steps in many existing optimization-based meta learning applications, such as Model-Agnostic Meta-Learning (MAML) \cite{finn2017model}, and a large number of followup works of MAML proposed to improve  generalization using similar strategy \cite{lee2018gradient, rusu2018meta, finn2018probabilistic, grant2018recasting, nichol2018first, vuorio2019multimodal, yao2019hierarchically,yin2020metalearning}.
%Some variant models \cite{rusu2018meta, vuorio2019multimodal, yao2019hierarchically} focus on multiple initial conditions for fast learning which usually relies on fixed optimizers such as SGD with momentum or its variance.
Deep bilevel learning~\cite{jenni2018deep} seeks to obtain better generalization than when trained on one task and generalize well to another task. The model is used to optimize a regularized loss function to find network parameters from the training set and identify hyperparameters so that the network performs well on the validation dataset.

% Alternatively,  optimizer oriented methods \cite{andrychowicz2016learning, ravi2016optimization, li2016learning, wichrowska2017learned} focus on learning inner optimizer, the meta-knowledge $\omega$ can be used to define gradient-based optimization steps  for each base learning iteration.
%\ye{Check Huisman's 2021 survey paper "A survey of deep meta‐learning". It summarizes many optimization-based meta-learning approaches. None of them considers to solve the bi-level optimization rigorously though, so we can also claim that we try to solve it using Algorithm 1.}
%Hyperparameter optimization (HO) shares the same merit with meta-learning, the major difference is HO often consider a single task that split as train data  and validation data, the inner objective is the regularized empirical loss function on train data that seeks to tune the model parameters and the outer objective seeks to tune hyperparameters \cite{franceschi2018bilevel,pedregosa2016hyperparameter,franceschi2017forward,micaelli2020non}. 

%\subsection{Domain generalization and latest meta-learning benchmarks}
%The feature reuse is discovered to be the key to the fast adaption ability of most existing meta-learning algorithms \cite{raghu2019rapid}. Almost No Inner Loop (ANIL) \cite{raghu2019rapid} freezes the parameters in the network body during the inner loop that performs adaptation in MAML \cite{finn2017model} yet achieves comparable performance.
When the unseen tasks lie in inconsistent domains with the meta-training tasks, as revealed in \cite{chen19closerfewshot}, the generalization behavior of the meta-learner will be compromised.
This phenomenon partially arises from the meta-overfitting on the already seen meta-training tasks, which is identified as a memorization
problem in \cite{yin2020metalearning}.
A meta-regularizer forked with information theory is proposed in \cite{yin2020metalearning} to handle the memorization
problem by regulating the information dependency during the task adaption. 

MetaReg \cite{balaji2018metareg} decouples the entire network into the feature network and task network, where the meta-regularization term is only applied to the task network. They first update the parameters of the task network with a meta-train set to obtain the domain-aligned task network and then update the parameters of the meta-regularization term on the meta-test set to learn the cross-domain generalization. In contrast to MetaReg, Feature-Critic Networks \cite{li2019feature} exploit the meta-regularization term to pursue a domain-invariant feature extraction network. The meta-regularization is designed as a feature-critic network that takes the extracted feature as an input. The parameters of the feature extraction network are updated by minimizing the new meta-regularized loss. The auxiliary parameters in the feature-critic network are learned by maximizing the performance gain over the non-meta case.  
To effectively evaluate the performance of the meta-learner, several new benchmarks \cite{Rebuffi17,48798,yu2020meta} were developed  under more realistic settings that operate well on diverse visual domains. As mentioned in \cite{48798}, the generalization to unseen tasks within multimodal or heterogeneous datasets remains a challenge to the existing meta-learning methods. 

The aforementioned methods pursue domain generalization for the classification networks that learned a regularization function to learn cross-domain generalization. Our proposed method was developed to solve the inverse problem, and we construct an adaptive regularization that not only learns the universal parameters among tasks but also the task-aware parameters. The designated adaptive regularizer assists the generalization ability of the deep model so that the well-trained model can perform well on  heterogeneous datasets of both seen and unseen tasks.

\section{Proposed Method}\label{sec:Method}
%%%%%%%%%%%%%%%%%%%%%%%%%%%%%%%%%%%%%%%%%%

{\subsection{Preliminaries}}%Variational methods for compressive MRI reconstruction
%\subsection{Background on compressed sensing MRI (CS-MRI) reconstruction}

We first provide the background of compressed sensing MRI (CS-MRI), the image reconstruction problem, and the learned optimization algorithm to solve the image reconstruction problem. CS-MRI accelerates MRI data acquisition by under-sampling the k-space (Fourier space) measurements. The under-sampled k-space measurement are related to the image by the following formula \cite{haldar2010compressed}:
\begin{equation}\label{MRI}
    \ybf = \Pbf \F \xbf + \nbf,
\end{equation}
where $\ybf \in \C^p$ represents the measurements in k-space with a total of $p$ sampled data points, $ \xbf \in \C^{N\times 1}$ is the MR image to be reconstructed  with $N$ pixels, $ \F \in \C^{N \times N}$ is the 2D discrete Fourier transform (DFT) matrix, and $ \Pbf \in \R^{p \times N}$ $(p< N)$ is the binary matrix representing the sampling trajectory in k-space. $\mathbf{n}$ is the  acquisition noise in k-space.
%For multi-coil acquisitions in MRI, $  \Ebf = \Pbf \Fbf \Sbf_j \in \CC^{p\times N} $ where $ \Sbf_j \in \R^{N \times N}$ is a diagonal matrix called coil sensitivity map of the $j$th coil, which is either given or estimated in advance. Therefore $ \Ebf \xbf \in \CC^p$ is the vector of partial Fourier coefficients and k-space data acquisition is expressed as Equation \eqref{MRI}, where $ \nbf$ represents the measurement noise.

Solving $\xbf$ from (noisy) under-sampled data $\ybf$ according to \eqref{MRI} is an ill-posed problem. 
An effective strategy to elevate the ill-posedness issue is to incorporate prior information to the reconstruction. The variational method is one of the most effective ways to achieve this.
The general framework of this method is to minimize an objective function that consists of a data fidelity term and a regularization term as follows:
 \begin{equation}\label{csmodel-1}
   \bar{\xbf} = \argmin_{\xbf} \frac{1}{2} \| \Pbf \F \xbf - \ybf \|^2 + R(\xbf),
\end{equation}
where the first term  is data fidelity, which ensures consistency between the reconstructed $ \xbf$ and the measured data $\ybf$, and the second term $ R(\xbf)$ is the regularization term, which introduces prior knowledge to the image to be reconstructed. In traditional variational methods, $R(\xbf)$ is a
hand-crafted function such as Total Variation (TV) \cite{661180}. The advances of the optimization techniques allowed more effective
algorithms to solve the variational models with theoretical justifications. However, hand-crafted regularizers may be too simple to capture subtle details and satisfy clinic diagnostic quality.
{In recent years, we have witnessed the tremendous success of deep learning in solving a variety of inverse problems, but the interpretation and generalization of these deep-learning-based methods still remain the main concerns. As an improvement over generic black-box-type deep neural networks (DNNs), several classes of learnable optimization algorithms (LOAs) inspired neural networks, known as unrolling networks, which unfold iterative algorithms to multi-phase networks and have demonstrated promising solution accuracy and efficiency empirically
\cite{lundervold2019overview, liang2020deep, sandino2020compressed, mccann2017convolutional, zhou2020review, singha2021deep, chandra2021deep, ahishakiye2021survey}. However, many of them are only specious imitations of the iterative algorithms and hence lack the backbone of the variational model and any convergence guarantee.
%(Deep learning based model leverages large dataset and further explore the potential improvement of reconstruction performance comparing to traditional methods and has successful applications in clinic field) (However, training generic deep neural networks (DNNs) may prone to over-fitting when data is scarce as we mentioned in the beginning. Also, the deep network structure behaves like a black box without mathematical interpretation.To improve the interpretability of the relation between the topology of the deep model and reconstruction results, a new emerging class of deep learning-based methods known as \emph{learnable optimization algorithms} (LOA) have attracted much attention e.g. \cite{liu2020deep, liang2020deep}. LOA was proposed to map existing optimization algorithms to structured networks where each phase of the networks correspond to one iteration of an optimization algorithm or replace some ingredients.) %For instance, gradient decent algorithm based CNN \cite{hammernik2018learning}, proximal gradient inspired network \cite{cheng2019model, bian2020deep, zhang2018ista}, ADMM inspired \cite{yang2018admm}, Primal dual algorithm inspired \cite{adler2018learned, cheng2019model, heide2014flexisp,meinhardt2017learning}.
}
%\deleted{such as proximal operator \cite{cheng2019model, bian2020deep, zhang2018ista}, matrix transformations \cite{yang2018admm, hammernik2018learning, zhang2018ista}, non-linear operators \cite{yang2018admm, hammernik2018learning}, and denoiser/regularizer \cite{aggarwal2018modl, schlemper2017deep} etc., by CNNs to avoid difficulty for solving non-smooth non-convex problems. Different from the current LOA approaches for image reconstruction that simply imitate an iterative algorithm and  without any convergence justification or just replaces some hardly solvable components with sub-networks, our proposed LOA-induced network inherits the convergence property of the proposed Algorithm \ref{alg:lda}, where we provide convergence analysis in Appendix \ref{convergence}. The proposed network retains the interpretability of the variational model, parameter efficiency and contributes to better generalization.}\chen{Combine ours to the next section, this section is related works to our work}()

{
In light of the substantial success of deep learning and the massive amount of training data now available, we can parameterize the regularization term as a deep convolutional neural network (CNN) that learns from training samples. LOA-induced reconstruction methods have been successfully applied to CS-MRI to solve inverse problems with a learnable regularizer:
\begin{equation}\label{loa_model}
\argmin_{\xbf} \frac{1}{2} \| \Pbf \F \xbf - \ybf \|^2 + R(\xbf; \Theta) .
\end{equation}
where $R(\xbf; \Theta)$ is the regularization parameterized as a deep network with parameter $\Theta$.
Depending on the specific parametric form of $R(\xbf; \Theta)$ and the optimization scheme used for unrolling, several unrolling networks have been proposed in recent years.
%
%In what follows we introduce several  LOA-type methods that optimize CS-based MRI model. 
%
\textls[-15]{For example, the variational network (VN)~\cite{doi:10.1002/mrm.26977} was introduced to unroll the gradient descent algorithm and parametrize the regularization as a combination of linear filters and nonlinear CNNs. 
MoDL~\cite{Aggarwal_2019}~proposed a weight sharing strategy in a recursive network to learn the regularization parameters by unrolling the conjugate gradient method.}
\textls[-15]{ADMM-Net \cite{sun2016deep} mimics the celebrated alternating direction method of multipliers; the regularizer is designed to be $L_1$-norm replaced by a piecewise linear function.} %is a successful MRI reconstruction method with total-variation regularization combined with variable splitting technique. Their network architecture inspired by  Alternating Direction Method of Multipliers (ADMM) algorithm. 
\textls[-15]{ISTA-Net \cite{zhang2018ista} considers the regularizer as the $L_1$-norm of a convolutional network. The network unrolls several phases iteratively, and each phase mimics one iteration of iterative shrinkage thresholding  algorithm (ISTA)~\cite{lions1979splitting,beck2009fast}. }
However, these networks only superficially mimic the corresponding optimization schemes, but they lack direct relations to the original optimization method or variational model and do not retain any convergence guarantee. In this work, we first develop a learnable optimization algorithm (LOA) for \eqref{loa_model} with comprehensive convergence analysis and obtain an LOA-induced network by following the iterative scheme of the LOA exactly.
}

\subsection{LOA-Induced Reconstruction Network}
\label{network}

In this section, we first introduce a learned optimization algorithm (LOA) to solve \eqref{loa_model} where the regularization network parameter $\Theta$ is fixed. As $\Theta$ is fixed in \eqref{loa_model}, we temporarily omit this in the derivation of the LOA below and write  $R(\xbf; \Theta)$ as $R(\xbf)$ for notation simplicity. 
%
% Then we generate a multi-phase network induced by the proposed LOA, i.e., the network architecture follows the algorithm exactly such that one phase of the network is just one iteration of the LOA.
%(Hereafter we will simply write $\phi_{\theta,\omega_i}$ as $\phi$ without ambiguity.

\textls[-15]{In this work, to incorporate sparsity along with the learned features, we parameterize the function %Please check if the dot (.) in the following equation should be chnaged to multiple sign.
 $R (\xbf, \Theta)= \kappa \cdot r(\xbf, \Theta)$, where $\kappa>0$ is a weight parameter that needs be chosen properly depending on the specific task (e.g., noise level, undersampling ratio, etc.), and $r$ is a regularizer parameterized as a composition of neural networks and can be adapted to a broad range of imaging applications. Specifically, we parameterize $r$ as the composition of the $l_{2,1}$ norm and a learnable feature extraction operator $\gbf(\xbf)$. 
That is, we set $r$ in \eqref{model} to be}

\begin{equation}\label{eq:r_chp4}
r(\xbf) := \|\gbf(\xbf)\|_{2,1} = \sum_{j = 1}^{m} \|\gbf_{j}(\xbf)\|.
\end{equation}
%
%where the feature extraction operator $\gbf$ can be easily formulated to be smooth, but the regularization term $r(\xbf) $ defined above is still nonsmooth and possibly nonconvex due to the $l_{2,1}$ norm. 
%
Here, %MDPI: Please check if all of the paragraphs after the fomula need indentation .
{``:='' stands for ``defined as''.} $\gbf(\cdot) = (\gbf_1(\cdot),\dots, \gbf_m(\cdot))$, $\gbf_j(\cdot)=\gbf_j(\cdot;\theta)$ is parametrized as a convolutional neural network (CNN) for $j = 1,\cdots, m$, and $\theta$ is the learned and fixed network parameter in $r(\cdot;\theta)$, as mentioned above. We also consider $\kappa$ to be learned and fixed as $\theta$ for now, and we discuss how to learn both of them in the next subsection.
We use a smooth activation function in $\gbf$ as formulated in \eqref{eq:sigma}, which renders $\gbf$ a smooth but nonconvex function. Due to the nonsmooth $\|\cdot\|_{2,1}$, $r$ is therefore a nonsmooth nonconvex function.
%
% We assume \eqref{model} is a nonsmooth nonconvex problem, so the regularizer $R$ need to be smoothed and we replace $R$ as $R_{\varepsilon}$ with a smoothing parameter $\varepsilon$, our smoothing method formulated in equation \eqref{eq:l21}. 

Since the minimization problem in \eqref{model} is nonconvex and nonsmooth, we need to derive an efficient LOA to solve it. 
%
% This solver will be termed as $F_{\Theta}(\ybf)$.
%
Here, we first consider smoothing the $ l_{2,1}$ norm that for any fixed $\gbf(\xbf)$ is
\begin{equation}\label{eq:l21}
r_{\varepsilon} (\xbf) = \sum\nolimits^m_{j=1}  \sqrt{\| \gbf_{j} (\xbf) \|^2 + \varepsilon^2} -\varepsilon.
\end{equation}
We denote %Please check if the dot (.) in the following equation should be chnaged to multiple sign. Also apply to other highlights.
 $R_{\varepsilon} = \kappa \cdot r_{\varepsilon}$.
%
%In \eqref{model} the network was introduced in an implicit way, in the following text we will apply some optimization algorithm to solve for \eqref{model} which will then contribute to an explicit multi-phase network that is dubbed as algorithmic unrolling network with a fixed number of phases. 
%
The LOA derived here is inspired by the proximal gradient descent algorithm and iterates the following steps to solve the smoothed problem:

\begin{subequations}\label{prox_chp4}
    \begin{align}
        \ztp & = \xt - \alpha_t \nabla f(\xt) \label{prox_u}\\
    \xtp & = \prox_{\alpha_t R_{\epst} } (\ztp ), \label{prox_sub}
    \end{align}
\end{subequations}
where $\epst$ denotes the smoothing parameter $\varepsilon$ at the specific iteration $t$, and the proximal operator is defined as $ \prox_{\alpha g}(\bbf) := \argmin_{\xbf} (1/2)\left\| \xbf-\bbf \right\|^2 + \alpha g(\xbf)$ in \eqref{prox_sub}. A quick observation from \eqref{eq:l21} is that $R_{\varepsilon} \rightarrow R$ as $\varepsilon$ diminishes, so later we intentionally push $\epst \rightarrow 0$ at Line 16 in Algorithm \ref{alg:lda}. Then, one can readily show that $ R_{\varepsilon}(x) \le R(x) \le R_{\varepsilon}(x) + \varepsilon$ for all $x$ and $\varepsilon > 0$. From Algorithm \ref{alg:lda}, line 16  automatically reduces $\varepsilon$, and the iterates will converge to the solution of the original nonsmooth nonconvex problem \eqref{model}---this is clarified precisely in the convergence analysis in Appendix \ref{convergence}. %Please check that the meaning here is correct.
%\ye{You need a bound like $R_{\epsilon}(x) \le R(x) \le R_{\epsilon}(x) + O(\epsilon)$ for all $x$ and $\epsilon > 0$. Then just say that Alg.1 will automatically reduce $\epsilon$ and the iterates will converge to the solution of the original nonsmooth nonconvex problem \eqref{model}, a rigorous sense will be made precisely in the convergence analysis in Appendix.} 

Since $R_{\epst}$ is a complex function involving a deep neural network, its proximal operator does not have a closed form and cannot be computed easily in the subproblem in \eqref{prox_sub}.
To overcome this difficulty, we consider to approximate $R_{\epst}$ by
\begin{subequations}
\begin{align}
\hat{R}_{\epst} (\xbf) & = R_{\epst}(\ztp) + \nonumber \langle  \nabla R_{\epst}(\ztp), \xbf-\ztp \rangle + \frac{1}{2\beta_t} \norm{\xbf-\ztp}^2. \label{eq:u} 
\end{align}
\end{subequations}
Then, 
we update $ \utp  = \prox_{\alpha_t \hat{R}_{\epst}  } (\ztp )$ to replace \eqref{prox_sub}; therefore, we obtain
\begin{equation}\label{ut+1}
    \utp = \ztp -  \taut \nabla R_{\epst}(\ztp), \text{ where } \taut = \frac{\alpha_t \beta_t}{\alpha_t + \beta_t}.
\end{equation}
If condition line 5 in LOA \ref{alg:lda} satisfies, we will iterate $\xtp = \utp$. In order to guarantee the convergence of the algorithm, we introduce the standard gradient descent of $\phi_{\epst} $ (where $\phi_{\epst}  := f + R_{\epst}$) at $ \xbf$:
\begin{equation}\label{grad_dst}
    \vtp = \argmin_{\xbf} \langle \nabla f(\xt), \xbf - \xt \rangle + \langle \nabla R_{\varepsilon}(\xt) , \xbf - \xt \rangle + \frac{1}{2 \alpha_t} \| \xbf - \xt \|^2,
\end{equation}
which yields 
\begin{equation}\label{vt+1}
    \vtp =\xt - \alpha_t  \nabla \phi_{\epst}(\xt) ,
\end{equation}
to serve as a safeguard for the convergence. If condition line 9 in LOA \ref{alg:lda} fails, we will reduce $\alpha_t$ by multiplying $\rho$ in line 12 finitely many times to satisfy the condition in line 9.
Specifically, we set $\xbf_{t+1} = \ubf_{t+1}$ if $\phi_{\epst}(\ubf_{t+1}) \le \phi_{\epst}(\vbf_{t+1})$; otherwise, we set $\xbf_{t+1} = \vbf_{t+1}$. Then, we repeat this process.

Our algorithm is summarized in Algorithm \ref{alg:lda}. 
%
%In this algorithm, we have two candidates: $\utp$ which arises from the linear approximation of proximal mapping, and $\vtp$ from standard gradient descent. The architecture of the unrolling sub-network for candidate $\utp$ can separate the updating schemes of the non-learnable data-fidelity term and the learnable prior term.
%
The prior term with unknown parameters has the exact residual update itself which improves the learning and training process \cite{he2016deep}. The condition checking in Line 5 is introduced to make sure that it is in the energy descending direction. Once the condition in Line 5 fails, the process moves to $\vtp$, and the line search in Line 12  guarantees that the appropriate step size can be achieved within finite steps to make the function value decrease. From Line 3 to Line 14, we consider that it solves a problem of minimizing $\phi_{\varepsilon_t}$ with $\epst$ fixed. Line 15 is used to update the value of $\epst$ depending on a reduction criterion. The detailed analysis of this mechanism and in-depth convergence justification is shown in Appendix \ref{convergence}. The corresponding unrolling network exactly follows Algorithm \ref{alg:lda} and thus shares the same convergence property. Compared to LDA \cite{chen2020learnable}, which computes both candidates $\utp$, $\vtp$ at every iteration and then chooses the one that achieves a smaller function value, we propose the criteria above in Line 5 for updating $\xtp$, 
which potentially
saves extra computational time for calculating the candidate $\vtp$ and potentially mitigates the frequent alternations between the two candidates. Besides, the smoothing method proposed in this work is more straightforward than smoothing in dual space \cite{chen2020learnable} while still keeping provable convergence, as shown in Theorem \ref{theorem a6}.

The proposed LOA-induced network is a multi-phase network whose architecture exactly follows the proposed LOA (Algoirthm \ref{alg:lda}) in the way that each phase corresponds to one iteration of the algorithm. 
Specifically, we construct a deep network, denoted by $F_{\Theta}$, that follows Algorithm \ref{alg:lda} exactly for a user-specified number of $T$ iterations. Here, $\Theta$ denotes the set of learnable parameters in $F_{\Theta}$, which includes the regularization network parameter $\theta$, weight $\kappa$, and other algorithmic parameters of Algorithm \ref{alg:lda}.
%
%denotes the set of all learnable parameters in the model. Motivated by the variational model, for an input \replaced{measurement $\ybf$ }{partial k-space data $\ybf$ in $\D_{\tau_i}$}, we desire the network output $F_{\Theta}(\ybf)$ to be an optimizer of the following {general} minimization problem \deleted{as \eqref{model}}
%
Therefore, for any input {under-sampled k-space measurement $\ybf$}, $F_{\Theta}(\ybf)$ executes the LOA (Algorithm \ref{alg:lda}) for $T$ iterations and generates an approximate solution of the minimization problem \eqref{model}:
\begin{equation}\label{model}
F_{\Theta}(\ybf) \approx \argmin_{\xbf} \big\{ \phi_{\Theta}(\xbf, \ybf) := f(\xbf, \ybf)   + R(\xbf; \Theta) \big\}.
\end{equation}
{where we use ``$\approx$'' since $F_{\Theta}$ follows only finitely many steps of the optimization algorithm to approximate the solution. 
%The function $f$ is the data fidelity term, in standard MRI setting it usually takes the same form as in \eqref{csmodel-1}. 
}%{where $f$ is the data fidelity term that usually takes the form $f(\xbf, \ybf)  = \frac{1}{2} \| \Pbf \F \xbf - \ybf \|^2$ in CS-MRI setting, where $\F$ and $\Pbf$ represent the Fourier transform and the binary under-sampling mask for k-space trajectory respectively. }
It is worth emphasizing that this approach can be readily applied to a much broader class of image reconstruction problems as long as $f$ is (possibly nonconvex and) continuously differentiable with the Lipschitz continuous gradient.
%
% These unknown parameters are trained by the method in the next section.
%
In the next subsection, we develop a meta-learning based approach for the robust training of the network parameter $\Theta$.

\begin{algorithm}[t]
\caption{Algorithmic Unrolling Method with Provable Convergence}
\label{alg:lda}
\begin{algorithmic}[1]
\STATE \textbf{Input:} Initial $\xbf_0$, $0<\rho, \gamma<1$, and $\varepsilon_0$, $a, \sigma >0$. Max total phases $T$ or tolerance \\$\etol>0$.
\FOR{$t=0,1,2,\dots,T-1$}
\STATE $\ztp =  \xt - \alpha_{t} \nabla f(\xt)$
\STATE $\utp = \ztp - \taut \nabla R_{\epst} (\ztp)$, 
\IF{ $\| \nabla \phi_{\epst} (\xt) \| \leq a \| \utp - \xt \| \  \mbox{and}   \  \phi_{\epst}(\utp) - \phi_{\epst}(\xt) \leq - \frac{1}{a}\| \utp - \xt \|^2 $} 
\STATE set $\xtp = \utp$,
\ELSE
\STATE $\vtp = \xt - \alpha_{t}  \nabla \phi_{\epst}(\xt)$, \label{marker}
\IF{ $ \phi_{\epst}(\vtp) - \phi_{\epst}(\xt) \le - \frac{1}{a} \| \vtp - \xt\|^2$ holds}
\STATE set $\xtp = \vtp$,
\ELSE
\STATE update $\alpha_{t} \leftarrow \rho \alpha_{t}$,
then \textbf{go to}~\ref{marker},
\ENDIF
\ENDIF
\STATE \textbf{if} $\|\nabla \phi_{\epst}(\xtp)\| < \sigma \gamma {\epst}$,  set $\epstp= \gamma {\epst}$;  \textbf{otherwise}, set $\epstp={\epst}$.
\STATE \textbf{if} $\sigma {\epst} < \etol$, terminate.
\ENDFOR
\STATE \textbf{Output:} $\xbf_t$.
\end{algorithmic}
\end{algorithm}

%%%%%%%%%%%%%%%%%%%%%
\subsection{Bilevel Optimization Algorithm for Network Training}
%In the previous section, we illustrate the forward network structure, which unfolds Algorithm \ref{alg:lda}. 

In this section, we consider the parameter training problem of the LOA-induced network $F_{\Theta}$. 
Specifically, we develop a bilevel optimization algorithm to train our network parameters $ \Theta$ from diverse data sets to improve network robustness and generalization ability. 
%
%The bilevel optimization problem is formulated as \eqref{eq:bi-level}, and we propose a network training algorithm in Algorithm \ref{alg:model}.

Recall that the LOA-induced network $F_{\Theta}$ exactly follows Algorithm \ref{alg:lda}, which is designed to solve the variational model \eqref{model} containing learnable regularization $R(\xbf;\Theta)$.
As shown in Section \ref{network}, we design $R (\xbf;\Theta)= \kappa \cdot r(\xbf;\Theta)$, where $r$ is learned to capture the intrinsic property of the underlying common features across all different tasks. 
To account for the large variations in the diverse training/validation data sets, we introduce a task-specific parameter $\omega_i$ to approximate the proper $\kappa$ for the $i$th task. Specifically, for the $i$th task, the weight $\kappa$ is set to $\sigma(\omega_i) \in (0, 1)$, where $\sigma(\cdot)$ is the sigmoid function. Therefore, $\kappa = \sigma(\omega_i)$ finds the proper weight of $r$ for the $i$-th task according to its specific sampling ratio or pattern.
The parameters $\omega_i$ are to be optimized in conjunction with $\Theta$ through the hyperparameter tuning process below. 
%
%The prior term $R$ consists of two parts, $R(\xbf; \Theta) = R(\xbf; \theta, \omega_i) = \sigma(\omega_i) r(\xbf;\theta)$, where $\theta$ collects the unknown parameters in the task-invariant learner $r(\xbf;\theta)$. 

Suppose that we are given $\mathcal{M}$ data pairs $\{(\ybf_m, \xbf^*_m) \}_{m = 1} ^{\mathcal{M}}$ for the use of training and validation, where $\ybf_m$ is the observation, which is the partial k-space data in our setting, and $\xbf^*_m$ is the corresponding ground truth image. The data pairs are then sampled into $\mathcal{N}$ tasks $\{ \D_{\tau_i} \}_{i = 1} ^ {\mathcal{N}}$, where each $\D_{\tau_i}$ represents the collection of data pairs in the specific task $\tau_i$.  In each task $\tau_i$, we further divide the data into the task-specific training set $\D^{tr}_{\tau_i}$ and validation set $\D^{val}_{\tau_i}$. 
The architecture of our base network
exactly follows the LOA \mbox{(Algorithm \ref{alg:lda})} developed in the previous section with learnable parameters $\theta$ and a task-specific parameter $\omega_i$ for the $i$th task.
% whose detailed structure is illustrated in Section \ref{network}. 
%\ye{shorten the sentences below (before (13)) since we have already explained what $F$ and $\omega_i$ are just now. Here just ned to say that $\Theta$ is $(\theta, \omega_i)$ for task $i$.}
More precisely, for one data sample denoted by $(\ybf^{(i)}_{j},\xbf*^{(i)}_{j})$ in task $\tau_i$ with index $j$, we propose the algorithmic unrolling network for task $\tau_i$ as
\begin{equation}\label{our_model_chp4}
F_{\theta,\omega_i}(\ybf^{(i)}_{j}) \approx \argmin_{\xbf} f(\xbf, \ybf^{(i)}_{j}) + \sigma(\omega_i) r(\xbf;\theta),    
\end{equation}
%\ye{the next sentence is not needed since it's already explained in the previous subsection. Just follow with "We define the task-specific loss..."}
where $\theta$ denotes the learnable common parameters across different tasks with task-invariant representation, whereas $\omega_i$ is a task-specific parameter for task $\tau_i$.
%The reason to use the hyper-parameter $\omega_i$ is that the training data set may contain various sampling ratios, and hence require different weights $\sigma(\omega_i)$ on each image reconstruction task $\tau_i$. 
The weight $\sigma(\omega_i)$ represents the weight of $r$ associated with the specific task $\tau_i$. {In our proposed network, $\Theta$ is the collection of $(\theta, \omega_i)$ for task $i = 1,\cdots \N$.}
We denote $\omega$ to be the set $\{\omega_i\} _ {i = 1} ^ {\mathcal{N}}$. The detailed architecture of this network is illustrated in Section \ref{network}. We define the task-specific loss
\begin{equation}\label{loss_sum}
\ell_{\tau_i}(\theta, \omega_i  ; \D_{\tau_i}) : = \sum _ {j=1}^{ |\D_{\tau_i}|} \ell \big( F_{\theta,\omega_i}(\ybf^{(i)}_{j}), \xbf*^{(i)}_{j} \big),    
\end{equation}
where $|\D_{\tau_i}|$ represents the cardinality of $\D_{\tau_i}$ and 
\begin{equation}\label{loss}
  \ell \big( F_{\theta,\omega_i}(\ybf^{(i)}_{j}), \xbf*^{(i)}_{j} \big) := \frac{1}{2} \|F_{\theta,\omega_i}(\ybf^{(i)}_{j}) - \xbf*^{(i)}_{j}\|^2.
\end{equation}

%\ye{Need some intuition/explanation to introduce the bi-level optimization below.} 
For the sake of preventing the proposed model from overfitting the training data, we introduce a novel learning framework by formulating the network training as a bilevel optimization problem to learn $\omega$ and $\theta $ in \eqref{our_model_chp4} as 
\begin{subequations}
\label{eq:bi-level}
\begin{align}
  \min_{  \omega = \{\omega_i:i\in[N]\} }  \quad & \sum^{\mathcal{N}}_{i=1} \ell _{\tau_i}( \theta (\omega) , \omega_i  ; \D^{val}_{\tau_i}) \\
 \mbox{s.t.}\quad \quad  & \theta(\omega)  = \argmin_{\theta} \sum^{\mathcal{N}}_{i=1} \ell_{\tau_i} ( \theta , \omega_i ; \D^{tr}_{\tau_i}).
\end{align}
\end{subequations}
In \eqref{eq:bi-level}, the lower-level optimization learns the task-invariant parameters $\theta$ of the feature encoder with the fixed task-specific parameter $\omega_i$ on the training dataset, and the upper-level adjusts the task-specific parameters $\{\omega_i\}$ so that the task-invariant parameters $\theta$ can perform robustly on the validation dataset as well. 
For simplicity, we omit the summation and redefine $\mathcal{L}(\theta, \omega ; \D) := \sum^{\mathcal{N}}_{i=1}\ell _{\tau_i}(\theta, \omega ; \D)$ and then briefly rewrite \eqref{eq:bi-level} as 
\begin{equation}
  \min_{  \omega} \mathcal{L}( \theta(\omega), \omega ; \D^{val}) \ \ \ \ \ \mbox{s.t.} \ \ \theta(\omega) =   \argmin_{\theta}\mathcal{L}( \theta, \omega ; \D^{tr}).
  \label{simplified bi-level}
\end{equation}
Then, we relax \eqref{simplified bi-level} into a single-level constrained optimization where the lower-level problem is replaced with its first-order necessary condition following \cite{mehra2019penalty}
\begin{equation}
  \min_{  \omega} \mathcal{L}( \theta(\omega), \omega ; \D^{val}) \ \ \ \ \ \mbox{s.t.} \ \ \nabla_{\theta} \mathcal{L}( \theta, \omega ; \D^{tr}) = 0.
  \label{simplified bi-level-1}
\end{equation}
which can be further approximated by an unconstrained problem by a penalty term as
\begin{equation}
  \min_{ \theta, \omega} \big\{ \widetilde{\mathcal{L}}( \theta, \omega ; \D^{tr}, \D^{val}) := \mathcal{L}( \theta, \omega ; \D^{val}) + \frac{\lambda}{2} \| \nabla_{\theta} \mathcal{L}( \theta, \omega ; \D^{tr}) \|^2 \big\}.
  \label{simplified bi-level-2}
\end{equation}
%With the increasing number of training pairs, an modern machine learning model has to be trained in mini-batch setting. In Section \ref{Comprehensive meta training}, we will explain the mini-batch setup in detail.

%To tackle the increasing number of training samples, a modern machine learning model need to be trained in mini-batch setting. 
%
We adopt the stochastic gradients of the loss functions on mini-batch data sets in each iteration.
In our model, we need to include the data pairs of multiple tasks in one batch; therefore, we propose the cross-task mini-batches when training. At each training iteration, we randomly sample the training data pairs $\mathcal{B}^{tr}_{\tau_i} = \{(\ybf^{(i)}_{j}, \xbf*^{(i)}_{j}) \in \D^{tr}_{\tau_i} \}_{j = 1}^{\mathcal{J}^{tr}}$ and the validation pairs $\mathcal{B}^{val}_{\tau_i} = \{(\ybf^{(i)}_{j}, \xbf*^{(i)}_{j}) \in \D^{val}_{\tau_i} \}_{j = 1}^{\mathcal{J}^{val}}$ on each task $\tau_i$. Then, the overall training and validation mini-batches $\mathcal{B}^{tr}$ and $\mathcal{B}^{val}$ used in every training iteration are composed of the sampled data pairs from the entire set of tasks; i.e., $\mathcal{B}^{tr} = \bigcup_{i = 1}^{\mathcal{N}} \{\mathcal{B}^{tr}_{\tau_i}\}$ and $\mathcal{B}^{val} = \bigcup_{i = 1}^{\mathcal{N}} \{\mathcal{B}^{val}_{\tau_i}\}$. Thus in each iteration, we have $\mathcal{N} \cdot \mathcal{J}^{tr}$ and $\mathcal{N}  \cdot \mathcal{J}^{val}$ data pairs used for training and validation, respectively. To solve the minimization problem \eqref{simplified bi-level-1}, we utilize the stochastic mini-batch alternating direction method summarized in Algorithm \ref{penelty_method}, which is modified from \cite{mehra2019penalty}.

{As analyzed in \cite{mehra2019penalty}, this penalty-type method has linear time complexity without computing the Hessian of the low level and only requires a constant space since we only need to store the intermediate $\theta, \omega$ at each training iteration, which is suitable for solving the large-scale bilevel optimization problem. Algorithm \ref{penelty_method} relaxes the bi-level optimization problem to a single-level constrained optimization problem by using the first-order necessary condition, which is not equivalent to the original problem but is much easier and efficient to solve. {In the inner-loop (Line 5--9) of Algorithm \ref{penelty_method}, we continue minimizing the converted single-level optimization function \eqref{simplified bi-level-2} with respect to $\theta$ for $K$ steps and then $\omega$ once alternatively until the condition with tolerance $\delta$ in Line 5 fails. The basic idea behind the condition in Line 5 arises from the first-order necessary condition as we would like to push the gradient of $\widetilde{\mathcal{L}}$ toward $0$. Furthermore, at Line 11 of the outer loop (Line 2--11), we decrease the tolerance $\delta$. Combining Line 5 and 11 guarantees that each time the inner loop terminates, the gradients of $\widetilde{\mathcal{L}}$ with respect to $\theta$ and $\omega$ become increasingly close to $0$. The parameter $\delta_{tol}$ is used to control the accuracy of the entire algorithm, and the outer-loop will terminate when $\delta$ is sufficiently small (i.e., $\delta \le \delta_{tol}$).}}
In addition, $\lambda$ is the weight for the second constraint term of \eqref{simplified bi-level-2}; in the beginning, we set $\lambda$ to be small to achieve a quick starting convergence, then gradually increase its value to emphasize the constraint. 
%\ye{Also include a line for the inputs, such as the data, the choice of $\delta_{tol}$ etc. In the caption explain that it is to solve (17).}
\begin{algorithm}
\caption{Stochastic mini-batch alternating direction penalty method to solve problem \eqref{simplified bi-level-1}}\label{alg:model}
\begin{algorithmic}[1]
\STATE \textbf{Input}  $\D^{tr}_{\tau_i}$, $\D^{val}_{\tau_i}$, $\delta_{tol}>0$.
\STATE \textbf{Initialize}  $ \theta$, $ {\omega}$, $\delta$, $\lambda>0$ and $\nu_\delta \in(0, 1)$, \ $\nu_\lambda > 1$.
\WHILE{$\delta > \delta_{tol}$}
\STATE Sample cross-task training batch $\mathcal{B}^{tr} = \bigcup_{i = 1}^{\mathcal{N}} \{(\ybf^{(i)}_{j}, \xbf*^{(i)}_{j}) \in \D^{tr}_{\tau_i} \}_{j = 1 : \mathcal{J}^{tr}}$
\STATE Sample cross-task validation batch $\mathcal{B}^{val} = \bigcup_{i = 1}^{\mathcal{N}} \{(\ybf^{(i)}_{j}, \xbf*^{(i)}_{j}) \in \D^{val}_{\tau_i} \}_{j = 1 : \mathcal{J}^{val}}$
\WHILE{$\|\nabla_{\theta}\widetilde{\mathcal{L}}( \theta, \omega ; \mathcal{B}^{tr}, \mathcal{B}^{val})\|^2 + \| \nabla_{\omega}\widetilde{\mathcal{L}}( \theta, \omega ; \mathcal{B}^{tr}, \mathcal{B}^{val})\| ^2 > \delta$}
\FOR{$k=1,2,\dots,K$ (inner loop)}
%\STATE  Sample batches of tasks $ \tau_i \sim p(\tau)$
\STATE $ \theta \leftarrow \theta - \rho_{\theta}^k \nabla_{\theta}\widetilde{\mathcal{L}}( \theta, \omega ; \mathcal{B}^{tr}, \mathcal{B}^{val})$
\ENDFOR
\STATE $ \omega \leftarrow \omega - \rho_{\omega} \nabla_{\omega}\widetilde{\mathcal{L}}( \theta, \omega ; \mathcal{B}^{tr}, \mathcal{B}^{val})$
\ENDWHILE
\STATE \textbf{update} $\delta \leftarrow \nu_\delta \delta$, $\ \lambda \leftarrow \nu_\lambda \lambda$
\ENDWHILE
\STATE \textbf{output:} $\theta, {\omega}$.
\end{algorithmic}
\label{penelty_method}
\end{algorithm}

\section{Implementation}
\label{sec:Implementation}
\subsection{Feature Extraction Operator}
We set the feature extraction operator $\gbf$ to {be} a vanilla $l$-layer CNN with the component-wise nonlinear activation function $\varphi$ and no bias, as follows:%Please check if * sign need to be changed to multiple sign. 
\begin{equation}\label{eq:g_chp4}
  \gbf(x) = \wbf_l * \varphi \cdots \ \varphi ( \wbf_3 * \varphi ( \wbf _2 * \varphi ( \wbf _1 * x ))),
\end{equation}
where $\{\wbf _q \}_{q = 1}^{l}$ denote the convolution weights consisting of $d$ kernels with identical spatial kernel size, and $*$ denotes the convolution operation. 
Here, $\varphi$ is constructed to be the smoothed rectified linear unit as defined below:
\begin{equation}\label{eq:sigma}
\varphi (x) = 
\begin{cases}
0, & \mbox{if} \ x \leq -\delta, \\
\frac{1}{4\delta} x^2 + \frac{1}{2} x + \frac{\delta}{4}, & \mbox{if} \ -\delta < x < \delta, \\
x, & \mbox{if} \ x \geq \delta,
\end{cases}
\end{equation}
where the prefixed parameter $\delta$ is set to be $0.001$ in our experiment.
The default configuration of the feature extraction operator is set as follows: the feature extraction operator $\gbf$ consists of $l=3$ convolution layers and all convolutions are with $4$ kernels of a spatial size of $3 \times 3$.

\subsection{Setups}
\label{Task-specific network pre-training}
As our method introduces an algorithmic unrolling network, there exists a one-to-one correspondence between the algorithm iterations and the neural network phases (or blocks). Each phase of the forward propagation can be viewed as one algorithm iteration, which motivates us to imitate the iterating of the optimization algorithm and use a stair training strategy \cite{chen2020learnable}. At the first stage, we start training the network parameters using one phase, then after the the loss converges, we add more phases (one phase each time) then continue the training process. We repeat this procedure and stop it when the loss does not decrease any further when we add more blocks. We minimize the loss for $100$ epochs/iterations each time using the SGD-based optimizer Adam \cite{kingma2014adam} with $\beta_1 = 0.9$, $\beta_2 = 0.999$, and the initial learning rate set to $10^{-3}$ as well as a mini-batch size of $8$. The Xavier Initializer \cite{Glorot10understandingthe} is used to initialize the weights of all convolutions. The initial smoothing parameter $\varepsilon_0$ is set to be $0.001$ and then learned together with other network parameters. The input $\xbf_0$ of the unrolling network is obtained by the zero-filling strategy \cite{bernstein2001effect}.  The deep unrolling network was implemented using the Tensorflow toolbox \cite{tensorflow2015-whitepaper} in the Python programming language. 

\section{Numerical Experiments}\label{experiments}
\subsection{Dataset}
To validate the performance of the proposed method, the data we used were from Multimodal Brain Tumor Segmentation Challenge 2018 \cite{menze2014multimodal}, in which the training dataset contains four modalities (T1, T1$_{c}$, T2 and FLAIR )%Please chefck if this italic is neccessary.
 scanned from 285 patients and the validation dataset contains images from 66 patients, each with a volume size  of $240 \times 240 \times 155$.  Each modality consists of two types of gliomas: 75 volumes of low-grade gliomas (LGG) and 210 volumes of high-grade gliomas (HGG). Our implementation involved HGG MRI in two modalities---T1 and T2 images---and we chose 30 patients from each modality
in the training dataset to train our network. In the validation dataset, we randomly picked 15 patients as our validation data and 6 patients in the training dataset as testing data, which were distinct from our training set and validation set.  We cropped the 2D image size to be $160 \times 180$ in the center region and picked  $10$ adjacent slices in the center of each volume, resulting in a total of $300$ images as our training data, $150$ images as our validation data, and a total of $60$ images as testing data. The amount of data mentioned here is for a single task, but since we emploedy multi-task training, the total number of images in each dataset should be multiplied by the number of tasks. For each 2D slice, we normalized the spatial intensity by dividing the maximum pixel value.

\subsection{Experiment Settings } %\deleted{and Comparison results}
\label{Experiment settings}
All the experiments were implemented on a Windows workstation with an Intel Core i9 CPU at 3.3GHz and an Nvidia GTX-1080Ti GPU with 11 GB of graphics card memory via TensorFlow \cite{abadi2016tensorflow}. The parameters in the proposed network were initialized by using Xavier initialization \cite{glorot2010understanding}.
We trained the meta-learning network with four tasks synergistically associated with four different CS ratios---10\%, 20\%, 30\%, and 40\%---and tested the well-trained model on the testing dataset with the same masks of these four ratios. We used 300 training data for each CS ratio, amounting to a total of 1200 images in the training dataset. The results for T1 and T2 MR reconstructions are shown in Tables \ref{results_same_ratio_t1} and \ref{results_same_ratio_t2}, respectively. The associated reconstructed images are displayed in Figures \ref{figure_same_ratio_t1} and \ref{figure_same_ratio_t2}.  We also tested the well-trained meta-learning model on unseen tasks with {radial} masks for {unseen} ratios of 15\%, 25\%, and 35\% and random Cartesian masks with ratios of 10\%, 20\%, 30\%, and 40\%. The task-specific parameters for the unseen tasks were retrained for different masks with different sampling ratios individually with fixed task-invariant parameters $\theta$. In this experiments, we only needed to learn $ \omega_i$ for three {unseen} CS ratios with {radial} mask and four regular CS ratios with Cartesian masks. The experimental training proceeded with fewer data and iterations, where we used 100 MR images with 50 epochs. For example, to reconstruct MR images with a CS ratio of 15\% from the {radial} mask, we fixed the parameter $\theta$ and retrained the task-specific parameter $\omega$ on 100 raw data points with 50 epochs, then tested with renewed $\omega$ on our testing data set with raw measurements sampled from the {radial} mask with a CS {radial} of 15\%. The results associated with  {radial} masks are shown in Tables \ref{results_dif_ratio_t1} and \ref{results_dif_ratio_t2}, Figures \ref{figure_dif_ratio_t1} and \ref{figure_dif_ratio_t2}  for T1 and T2 images, respectively. The results associated with Cartesian masks are listed in 
Table \ref{results_same_ratio_t2_cts} and reconstructed images are displayed in Figure  \ref{figure_same_ratio_t2_cts}. 

We compared our proposed meta-learning method with conventional supervised learning, which was trained with one task at each time and only learned the task-invariant parameter $\theta$ without the task-specific parameter $ \omega_i$. The forward network of conventional learning  unrolled Algorithm \ref{alg:lda} with 11 phases, which was the same as meta-learning. We merged the training set and validation set, resulting in $450$ images for the training of the conventional supervised learning. The training batch size was set as 25 and we applied a total of 2000 epochs, while in meta-learning, we applied 100 epochs with a batch size of 8.  The same testing set was used in both meta-learning and conventional learning to evaluate the performance of these two methods.

We made comparisons between meta-learning and the conventional network on the seven different CS ratios (10\%, 20\%, 30\%, 40\%, 15\%, 25\%, and 35\%) in terms of two types of random under-sampling patterns: {radial sampling} mask and Cartesian {sampling} mask. The parameters for both meta-learning and conventional learning networks were trained via the Adam optimizer \cite{kingma2014adam}, and they both learned the forward unrolled task-invariant parameter $\theta$. The network training of the conventional method used the same network configuration as the meta-learning network in terms of the number of convolutions, depth and size of CNN kernels, phase numbers and parameter initializer, etc.
The major difference in the training process between these two methods is that meta-learning is performed for multi-tasks by leveraging the task-specific parameter $\omega_i$ learned from Algorithm \ref{alg:model}, and the common features among tasks are learned from the feed-forward network that unrolls \mbox{Algorithm \ref{alg:lda},} while conventional learning solves the task-specific problem by simply unrolling the forward network via Algorithm \ref{alg:lda}, where both training and testing are implemented on the same task. To investigate the generalizability of meta-learning, we tested the well-trained meta-learning model on MR images in different distributions in terms of two types of sampling masks with various trajectories. The training and testing of conventional learning were applied with the same CS ratios; that is, if the conventional method was trained with a CS ratio 10\%, then it was also tested on a dataset with a CS ratio of 10\%, etc.

Because MR images are represented as complex values, we applied complex convolutions \cite{WANG2020136} for each CNN; that is, every kernel consisted of a real part and imaginary part. Three convolutions were used in $\gbf$, where each convolution contained four filters with a spatial kernel size of $3\times3$. {In Algorithm \ref{alg:lda},} a total of 11 phases can be achieved if we set the termination condition $\etol = 1\times 10^{-3}$, and the parameters of each phase are shared except for the step sizes. For the hyperparameters in Algorithm \ref{alg:lda}, we chose an initial learnable step size $\alpha_0= 0.01, \tau_0=0.01, \varepsilon_0 = 0.001$, and we set prefixed values of $ a = 10^5, \sigma = 10^3, \rho =0.9$, and $\gamma =0.9$. The principle behind the choices of those parameters is based on the convergence of the algorithm and effectiveness of the computation. The parameter $0 < \rho < 1$ is the reduction rate of the step size during the line search used to guarantee the convergence. The parameter $0 < \gamma < 1$ at step 15 is the reduction rate for  $\varepsilon$. In Algorithm 1, from step 2 to step 14, the smoothing level $\varepsilon$ is fixed. When the gradient of the smoothed function is small enough, we reduce $\varepsilon$ by a fraction factor $\gamma$ to find an approximate accumulation point of the original nonsmooth nonconvex problem. We chose a larger $a$ in order to have more iterations $k$ for which $u_{k + 1}$ satisfies the conditions in step 5, so that there would be fewer iterations requiring the computation of  $v_{k + 1}$.  Moreover, the scheme for computing  $u_{k + 1}$ is in accordance with the residual learning architecture that has been proven effective for reducing training error.

%because we want $ \alpha_t$ and $\epsilon_t$ are in the descending direction to terminate the iterations.}  
In Algorithm \ref{alg:model}, we set $\nu_\delta =0.95 $ and the parameter $\delta$  was initialized as $\delta_0 =1 \times 10^{-3}$ and stopped at value $\delta_{tol} = 4.35 \times 10 ^ {-6}$, and a total of 100 epochs were performed. To train the conventional method, we set 2000 epochs with the same number of phases, convolutions, and kernel sizes as  used to train the meta-learning approach. The initial $\lambda$ was set as $1 \times 10^{-5} $ and $ \nu_\lambda = 1.001$. %We applied total of inner loop iterations $K=5$ for line 6-8 to update $ \theta$.

We evaluated our reconstruction results on the testing data sets using three metrics: {peak signal-to-noise ratio (PSNR) \cite{hore2010image}}, structural similarity (SSIM) \cite{wang2004image}, and normalized mean squared error {(}NMSE{) \cite{NMSE}}.
The following formulations compute the PSNR, SSIM, and NMSE between the reconstructed image $ \xbf$ and ground truth $\xbf^*$. 
{PSNR can be induced by the mean square error (MSE) where \begin{equation}
    PSNR(\xbf,\xbf^*) =  20\log_{10} \big(  \frac{\max(\abs{\xbf^*}) } { \sqrt{MSE(\xbf,\xbf^*)}} \big),
\end{equation}
where $N$ is the total number of pixels of the ground truth  and MSE is defined by $MSE(\xbf,\xbf^*) = \frac{1}{N}\| \xbf^* - \xbf \|^2$.}
%{\begin{equation}
%    PSNR = 20\log_{10} \big(  \max(\abs{\xbf^*})  \big/ \frac{1}{N}\| \xbf^* - \xbf \|^2 \big),
%\end{equation}where $N$ is the total number of pixels of ground truth.}
%
\begin{equation}
    SSIM(\xbf,\xbf^*) = \frac{(2\mu_{\xbf} \mu_{\xbf^*} + C_1)(2\sigma_{\xbf \xbf^*} + C_2)}{(\mu_{\xbf}^2 + \mu_{\xbf^*}^2+C_1)( \sigma_{\xbf}^2 + \sigma_{\xbf^*}^2 + C_2)},
\end{equation}
 where $\mu_{\xbf}, \mu_{\xbf^*}$ represent local means, $\sigma_{\xbf}, \sigma_{\xbf^*}$ denote standard deviations, $\sigma_{\xbf \xbf^*} $ represents the covariance between $\xbf$ and $\xbf^* $, $ C_1 = (k_1 L)^2, C_2 = (k_2 L)^2$ are two constants which avoid the zero denominator, and $ k_1 =0.01, k_2 =0.03$. $L$ is the largest pixel value of MR image. 
\begin{equation}
   NMSE(\xbf,\xbf^*)=  \frac{\|  \xbf-\xbf^*\|_2^2}{\|  \xbf\|_2^2},
\end{equation}
{where NMSE is used to measure the mean relative error. For detailed information of these three metrics mentioned above, please refer to \cite{hore2010image, wang2004image, NMSE}.}

\subsection{ {Experimental Results with Different CS Ratios in Radial Mask}} %{Quantitative and Qualitative Comparisons at different trajectories in radial mask}

In this section, we evaluate the performance of well-trained meta-learning and conventional learning approaches. Tables \ref{results_same_ratio_t1}, \ref{results_same_ratio_t2} and \ref{results_same_ratio_t2_cts} report the quantitative results of averaged numerical performance with standard deviations and associated descaled task-specific meta-knowledge $ \sigma(\omega_i)$. From the experiments implemented with {radial} masks, we observe that the average PSNR value of meta-learning improved by 1.54 dB in the T1 brain image for all four CS ratios compared with the conventional method, and for the T2 brain image, {the average PSNR of }meta-learning improved by 1.46 dB.  Since the general setting of meta-learning aims to take advantage of the information provided from each individual task, with each task associated with an individual sampling mask that may have complemented sampled points, the performance of the reconstruction from each task benefits from other tasks. Smaller CS ratios will inhibit the reconstruction accuracy, due to the sparse undersampled trajectory in raw measurement, while meta-learning exhibits a favorable potential ability to solve this issue even in the situation of insufficient amounts of training data. 

In general supervised learning,  training data need to be in the same or a similar distribution; heterogeneous data exhibit different structural variations of features, which hinder CNNs from extracting features efficiently. In our experiments, raw measurements sampled from different ratios of compressed sensing display different levels of incompleteness;  these undersampled measurements do not fall in the same distribution but they are related. Different sampling masks are shown at the bottom of Figures \ref{figure_same_ratio_t1} and \ref{figure_dif_ratio_t1}, and these may have complemented sampled points, in the sense that some of the points which a $40\%$ sampling ratio mask did not sample were captured by other masks. In our experiment, different sampling masks provided their own information from their sampled points, meaning that four reconstruction tasks helped each other to achieve an efficient performance. Therefore, this explains why meta-learning is still superior to conventional learning when the sampling ratio is large.

Meta-learning expands a new paradigm for supervised learning---the purpose is to quickly learn multiple tasks. Meta-learning only learns task-invariant parameters once for a common feature that can be shared with four different tasks, and each $ \sigma(\omega_i)$ provides  task-specific weighting parameters according to the principle of  ``learning to learn''. In conventional learning, {the network parameter} needs to be trained four times with four different masks since the task-invariant parameter cannot be generalized to other tasks, which is time-intensive. From Tables \ref{results_same_ratio_t1} and \ref{results_same_ratio_t2}, we observe that a small CS ratio needs a higher value of $\sigma(\omega_i) $. In fact, in our model \eqref{model}, the task-specific parameters behave as weighted constraints for task-specific regularizers, and the tables indicate that lower CS ratios require larger weights to be applied for the regularization. 

A qualitative comparison between conventional and meta-learning methods is shown in Figures \ref{figure_same_ratio_t1} and \ref{figure_same_ratio_t2}, displaying the reconstructed MR images of the same slice for T1 and T2%The previous variable are in italic, please check if this should be italic too. Remember to keep the consistancy on the format through the whole text
, respectively. We label the zoomed-in details of HGG in the red boxes. We observe  evidence that conventional learning is more blurry and loses sharp edges, especially with lower CS ratios. From the point-wise error map, we find that meta-learning has the ability to reduce noises, especially in some detailed and complicated regions, compared to conventional learning.

We also tested the performance of meta-learning with two-thirds of the training and validation data for the T1-weighted image used in the previous experiment, denoted as ``meta-learning''. For conventional learning, the network was also trained  by using two-thirds of the training samples in the previous experiment. The testing dataset remained the same as before. These results are displayed in Table \ref{results_same_ratio_t1}, where we denote the reduced data experiments as ``meta-learning$^*$'' and ``conventional$^*$''. These experiments reveal that the accuracy of test data decreases when we reduce the training data size, but it is not a surprise that meta-learnining$^*$ still outperforms conventional learning$^*$, and even conventional learning.

To verify the reconstruction performance of the proposed LOA \ref{alg:lda}, we compared the proposed conventional learning with ISTA-Net$^+$ \cite{zhang2018ista}, which is a state-of-the-art deep unfolded network for MRI reconstruction. We retrained ISTA-Net$^+$ with the same training dataset and testing dataset as conventional learning  on the T1-weighted image. For a fair comparison, we  used the same number of convolution kernels, the same dimension of kernels for each convolution during training, and the same phase numbers as conventional learning. The testing numerical results are listed in Table \ref{results_same_ratio_t1} and the MRI reconstructions are displayed in Figure \ref{figure_same_ratio_t1}. In figures \ref{results_same_ratio_t1}, \ref{figure_same_ratio_t1}, the pictures (from top to bottom) display the T1 brain image reconstruction results, zoomed-in details, point-wise errors with a color bar, and associated
 \textbf{{radial}} masks for meta-learning and conventional learning
 with four different CS ratios of 10\%, 20\%, 30\%, 40\% (from left to right).  Conventional$^*$ and meta-learning$^*$ are trained with two-thirds of the dataset used in training conventional and meta-learning approaches, respectively. We can observe that the conventional learning which unrolls Algorithm \ref{alg:lda} outperforms ISTA-Net$^+$ in any of the CS ratios. From the corresponding point-wise absolute error, the conventional learning attains a much lower error and much better reconstruction quality.

\subsection{ {Experimental Results with Different Unseen CS Ratios in Different Sampling Patterns}} %{Quantitative and Qualitative Comparisons at skewed trajectories in different sampling patterns}

In this section, we test the generalizability of the proposed model for unseen tasks. We fixed the well-trained task-invariant parameter $\theta$  and only trained $\omega_i$ for sampling ratios of 15\%, 25\%, and 35\% with {radial} masks and sampling ratios of 10\%, 20\%, 30\%, and 40\% with Cartesian masks. In this experiment, we only used 100 training data points for each CS ratio and applied a total of 50 epochs. The averaged evaluation values and standard deviations are listed in Tables \ref{results_dif_ratio_t1} and \ref{results_dif_ratio_t2} for reconstructed T1 and T2 brain images, respectively,  with {radial} masks, and Table \ref{results_same_ratio_t2_cts} shows the qualitative performance for the reconstructed T2 brain image with random Cartesian sampling masks applied. In Table \ref{results_dif_ratio_t1} and \ref{results_dif_ratio_t2},
Meta-learning was trained with CS ratios of 10\%, 20\%, 30\%, and 40\% and tested with {unseen} ratios of 15\%, 25\%, and 35\%. The conventional method was subjected to regular training and testing with the same CS ratios of 15\%, 25\%, and 35\%. 
In the T1 image reconstruction results, meta-learning showed an improvement of 1.6921 dB in PSNR for the 15\% CS ratio, 1.6608 dB for the 25\% CS ratio, and 0.5764 dB for the 35\% ratio compared to the conventional method,  showing the tendency that the level of reconstruction quality for lower CS ratios improved more than higher CS ratios. A similar trend was found for T2 reconstruction results with different sampling masks. The qualitative comparisons are illustrated in Figures \ref{figure_dif_ratio_t1}, \ref{figure_dif_ratio_t2}, and \ref{figure_same_ratio_t2_cts} for T1 and T2 images tested with {unseen} CS ratios in {radial} masks and T2 images tested with Cartesian masks with regular CS ratios, respectively.  In figures \ref{figure_dif_ratio_t1}, \ref{figure_dif_ratio_t2}, Meta-learning was trained with CS ratios of 10\%, 20\%, 30\%, and 40\% and tested with three different {unseen} CS ratios of 15\%, 25\%, and 35\% (from left to right). Conventional learning was trained and tested with the same CS ratios of 15\%, 25\%, and 35\%. The top-right image is the ground truth fully-sampled image. The top-right image is the ground truth fully-sampled image. In \ref{figure_same_ratio_t2_cts}, the pictures (from top to bottom) display the T2 brain image reconstruction results, zoomed-in details, point-wise errors with a color bar, and associated
 \textbf{Cartesian} masks for meta-learning and conventional learning
 with four different CS ratios of 10\%, 20\%, 30\%, and 40\% (from left to right). The top-right image is the ground truth fully-sampled image.
In the experiments conducted with {radial} masks,
meta-learning was superior to conventional learning, especially at a CS ratio of 15\%---one can observe that the detailed regions in red boxes maintained their edges and were closer to the true image, while the conventional method reconstructions are hazier and lost details in some complicated tissues. The point-wise error map also indicates that meta-learning has the ability to suppress noises.

Training with Cartesian masks is more difficult than {radial} masks, especially for conventional learning, where the network is not very deep since the network only applies three convolutions each with four kernels. Table \ref{results_same_ratio_t2_cts} indicates that the average performance of Meta-learning improved about 1.87 dB compared to conventional methods with T2 brain images. These results further demonstrate that meta-learning has the benefit of parameter efficiency, and the performance is much better than conventional learning even if we apply a shallow network with a small amount of training data.

The numerical experimental results discussed above show that meta-learning is capable of fast adaption to new tasks and has more robust generalizability for a broad range of tasks with heterogeneous, diverse data. Meta-learning can be considered as an efficient technique for solving difficult tasks by leveraging the features extracted from easier tasks.

\begin{table}
\caption{ Quantitative evaluations of the reconstructions of T1 brain image associated with various sampling ratios of 
 \textbf{radial} masks.}   \label{results_same_ratio_t1}
%\addtolength{\tabcolsep}{-2pt}
\resizebox{\linewidth}{46mm}{ 
\begin{tabular}{cccccc}
\toprule
\textbf{CS Ratio} & \textbf{Methods} & \textbf{PSNR}	& \textbf{SSIM}	& \textbf{NMSE} & \boldmath{$ \sigma(\omega_i)$}\\
\midrule
 & {ISTA-Net$^+$ \cite{zhang2018ista}} & 21.2633 $\pm$ 1.0317 & 0.5487 $\pm$ 0.0440 & 0.1676 $\pm$ 0.0253 & \\
   & {Conventional$^*$}	& 21.6947 $\pm$ 1.0264 & 0.5689 $\pm$ 0.0404 & 0.1595 $\pm$ 0.0240  &  \\
10\%   & Conventional		& 21.7570 $\pm$ 1.0677  & 0.5650 $\pm$ 0.0412 &  0.0259 $\pm$ 0.0082  &  \\
   & {Meta-learning$^*$}	& 22.9633 $\pm$ 1.0969 & 0.5962 $\pm$ 0.0415 & 0.0194 $\pm$ 0.0065 & 0.9339 
\\
   & Meta-learning		& 23.2672 $\pm$ 1.1229  & 0.6101 $\pm$ 0.0436 & 0.0184 $\pm$ 0.0067 & 0.9218\\
   \midrule
 & {ISTA-Net$^+$ \cite{zhang2018ista}} & 26.2734 $\pm$ 1.0115 & 0.7068 $\pm$ 0.0364 & 0.0944 $\pm$ 0.0155 & \\
   & {Conventional$^*$}	& 26.4639 $\pm$ 1.0233 & 0.7107 $\pm$ 0.0357 & 0.0924 $\pm$ 0.0154 & \\
20\%   & Conventional		& 26.6202 $\pm$ 1.1662  & 0.7121 $\pm$ 0.0397 &  0.0910 $\pm$ 0.0169 & \\ 
   & {Meta-learning$^*$}	& 27.9381 $\pm$ 1.1121 & 0.7541 $\pm$ 0.0360 & 0.0063 $\pm$ 0.0023 & 0.8150\\
   & Meta-learning		& 28.2944 $\pm$ 1.2119  & 0.7640 $\pm$ 0.0377  &  0.0058 $\pm$ 0.0022 & 0.7756\\
   \midrule
 & {ISTA-Net$^+$ \cite{zhang2018ista}} & 28.8309 $\pm$ 1.3137 & 0.7492 $\pm$ 0.0407 & 0.0708 $\pm$ 0.0142 & \\
   & {Conventional$^*$}	& 29.2923 $\pm$ 1.3194 & 0.7522 $\pm$ 0.0399 & 0.0671 $\pm$ 0.0136 & \\
30\%   & Conventional		& 29.5034 $\pm$ 1.4446  & 0.7557 $\pm$ 0.0408 &  0.0657 $\pm$ 0.0143 & \\
   & {Meta-learning$^*$}	& 30.8691 $\pm$ 1.5897 & 0.8310 $\pm$ 0.0394 & 0.0033 $\pm$ 0.0015 & 0.6359\\
   & Meta-learning		& 31.1417 $\pm$ 1.5866  & 0.8363 $\pm$ 0.0385 & 0.0031 $\pm$ 0.0014 & 0.6501\\
   \midrule
 & {ISTA-Net$^+$ \cite{zhang2018ista}} & 30.7282 $\pm$ 1.5482 & 0.8008 $\pm$ 0.0428 & 0.0572 $\pm$ 0.0127 & \\
   & {Conventional$^*$}	& 31.3761 $\pm$ 1.5892 & 0.8035 $\pm$ 0.0420 & 0.0532 $\pm$ 0.0121 & \\
40\%   & Conventional		& 31.4672 $\pm$ 1.6390  & 0.8111 $\pm$ 0.0422 &  0.0029 $\pm$ 0.0014 & \\
   & {Meta-learning$^*$}  & 32.7330 $\pm$ 1.6386 & 0.8623 $\pm$ 0.0358 & 0.0022 $\pm$ 0.0010 & 0.6639\\
   & Meta-learning	&  32.8238 $\pm$ 1.7039  & 0.8659 $\pm$ 0.0370  & 0.0022 $\pm$ 0.0010 & 0.6447\\
\bottomrule
\end{tabular}}
\end{table}

\begin{table}
\caption{Quantitative evaluations of the reconstructions of T2 brain image associated with various sampling ratios of
 \textbf{radial} masks.  \label{results_same_ratio_t2}}
%\addtolength{\tabcolsep}{-2pt}
\begin{tabular}{cccccc}
\toprule
\textbf{CS Ratio} & \textbf{Methods} & \textbf{PSNR}	& \textbf{SSIM}	& \textbf{NMSE} & \boldmath{$ \sigma(\omega_i)$}\\
\midrule
10\%
 & Conventional	& 23.0706 $\pm$ 1.2469  & 0.5963 $\pm$ 0.0349 &  0.2158  $\pm$  0.0347  &\\
   & Meta-learning	& 24.0842 $\pm$ 1.3863 & 0.6187 $\pm$ 0.0380 & 0.0112 $\pm$ 0.0117  & 0.9013
	\\
   \midrule
20\% & Conventional & 27.0437  $\pm$ 1.0613  & 0.6867 $\pm$ 0.0261 &  0.1364 $\pm$ 0.0213 & \\ 
   & Meta-learning	& 28.9118 $\pm$ 1.0717 & 0.7843 $\pm$ 0.0240	& 0.0122 $\pm$ 0.0030 & 0.8742\\
   \midrule
30\% & Conventional	 & 29.5533 $\pm$ 1.0927 & 0.7565 $\pm$ 0.0265 & 0.1023 $\pm$ 0.0166 & \\
   & Meta-learning	& 31.4096 $\pm$ 0.9814 & 0.8488 $\pm$ 0.0217 & 0.0069 $\pm$ 0.0019 & 0.8029\\
   \midrule
40\% & Conventional	& 32.0153 $\pm$ 0.9402 & 0.8139 $\pm$ 0.0238 & 0.0770 $\pm$ 0.0128 & \\
   & Meta-learning	& 33.1114 $\pm$ 1.0189 & 0.8802 $\pm$ 0.0210 & 0.0047 $\pm$ 0.0015 & 0.7151\\
\bottomrule
\end{tabular}
\end{table}

\begin{table}
\caption{Quantitative evaluations of the reconstructions of T1 brain image associated with various sampling ratios of
 \textbf{radial} masks.  \label{results_dif_ratio_t1}}
%\addtolength{\tabcolsep}{-2pt}
\begin{tabular}{cccccc}
\toprule
\textbf{CS Ratio} & \textbf{Methods} & \textbf{PSNR}	& \textbf{SSIM}	& \textbf{NMSE} & \boldmath{$ \sigma(\omega_i)$} \\
\midrule
15\%
 & Conventional		& 24.6573 $\pm$ 1.0244  & 0.6339 $\pm$ 0.0382 &  0.1136 $\pm$ 0.0186  &\\
   & Meta-learning		& 26.3494 $\pm$ 1.0102  & 0.7088 $\pm$ 0.0352 & 0.0090 $\pm$ 0.0030 & 0.9429
\\
   \midrule
25\% & Conventional		& 28.4156 $\pm$ 1.2361  & 0.7533 $\pm$ 0.0368 &  0.0741 $\pm$ 0.0141 & \\ 
   & Meta-learning		& 30.0764 $\pm$ 1.4645  & 0.8135 $\pm$ 0.0380 & 0.0040 $\pm$ 0.0017 & 0.8482\\
   \midrule
35\% & Conventional		& 31.5320 $\pm$ 1.5242  & 0.7923 $\pm$ 0.0420 &  0.0521 $\pm$ 0.0119  &\\
   & Meta-learning		& 32.1084 $\pm$ 1.6481  & 0.8553 $\pm$ 0.0379 &  0.0025 $\pm$ 0.0011 & 0.6552\\
\bottomrule
\end{tabular}
\end{table}
\vspace{-6pt}

\begin{table}
\caption{Quantitative evaluations of the reconstructions of T2 brain image associated with various sampling ratios of
 \textbf{radial} masks. \label{results_dif_ratio_t2}}
%\addtolength{\tabcolsep}{-2pt}
\begin{tabular}{cccccc}
\toprule
\textbf{CS Ratio} & \textbf{Methods} & \textbf{PSNR}	& \textbf{SSIM}	& \textbf{NMSE} & \boldmath{$ \sigma(\omega_i)$} \\
\midrule
15\%
 & Conventional		& 24.8921 $\pm$ 1.2356 & 0.6259 $\pm$ 0.0285 & 0.1749 $\pm$ 0.0280  & \\
   & Meta-learning		& 26.7031 $\pm$ 1.2553 & 0.7104 $\pm$ 0.0318 & 0.0205 $\pm$ 0.0052  & 0.9532
\\
   \midrule
25\% & Conventional		& 29.0545 $\pm$ 1.1980 & 0.7945 $\pm$ 0.0292 & 0.1083 $\pm$ 0.0173  & \\ 
   & Meta-learning		& 30.0698 $\pm$ 0.9969 & 0.8164 $\pm$ 0.0235 &  0.0093 $\pm$ 0.0022 & 0.8595\\
   \midrule
35\% & Conventional		& 31.5201 $\pm$ 1.0021 & 0.7978 $\pm$ 0.0236 & 0.0815 $\pm$ 0.0129  & \\
   & Meta-learning		& 32.0683 $\pm$ 0.9204 & 0.8615 $\pm$ 0.0209 & 0.0059 $\pm$ 0.0014  & 0.7388\\
\bottomrule
\end{tabular}
\end{table}
\vspace{-6pt}

\begin{table}
\caption{Quantitative evaluations of the reconstructions of T2 brain image associated with various sampling ratios of random
 \textbf{Cartesian} masks.  \label{results_same_ratio_t2_cts}}
%\addtolength{\tabcolsep}{-2pt}
\begin{tabular}{cccccc}
\toprule
\textbf{CS Ratio} & \textbf{Methods} & \textbf{PSNR}	& \textbf{SSIM}	& \textbf{NMSE} & \boldmath{$ \sigma(\omega_i)$}\\
\midrule
10\%
 & Conventional	& 20.8867 $\pm$ 1.2999 & 0.5082 $\pm$ 0.0475 & 0.0796 $\pm$ 0.0242  &\\
   & Meta-learning	& 22.0434 $\pm$ 1.3555 & 0.6279 $\pm$ 0.0444 & 0.0611 $\pm$ 0.0188  & 0.9361
	\\
   \midrule
20\% & Conventional & 22.7954 $\pm$ 1.2819 & 0.6057 $\pm$ 0.0412 & 0.0513 $\pm$ 0.0157 & \\ 
   & Meta-learning	& 24.7162 $\pm$ 1.3919 & 0.6971 $\pm$ 0.0380 & 0.0329 $\pm$ 0.0101 & 0.8320\\
   \midrule
30\% & Conventional	 & 24.2170 $\pm$ 1.2396 & 0.6537 $\pm$ 0.0360 & 0.0371 $\pm$ 0.0117 & \\
   & Meta-learning	&  26.4537 $\pm$ 1.3471 & 0.7353 $\pm$ 0.0340 & 0.0221 $\pm$ 0.0068 & 0.6771\\
   \midrule
40\% & Conventional	& 25.3668 $\pm$ 1.3279 & 0.6991 $\pm$ 0.0288 & 0.1657 $\pm$ 0.0265 & \\
   & Meta-learning	& 27.5367 $\pm$ 1.4107 & 0.7726 $\pm$ 0.0297 & 0.0171 $\pm$ 0.0050 & 0.6498\\
\bottomrule
\end{tabular}
\end{table}

\begin{figure}[H]
\includegraphics[width=0.11\linewidth, angle=270]{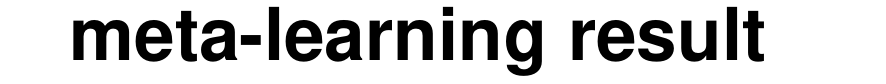}
\includegraphics[width=0.1\linewidth, angle=180]{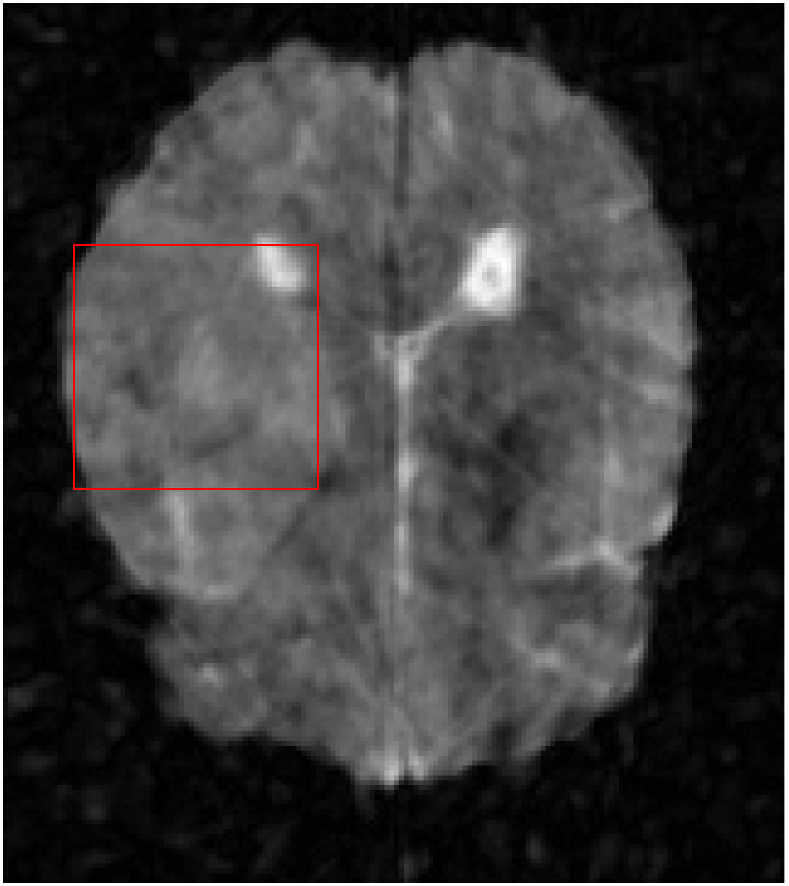}
\includegraphics[width=0.1\linewidth, angle=180]{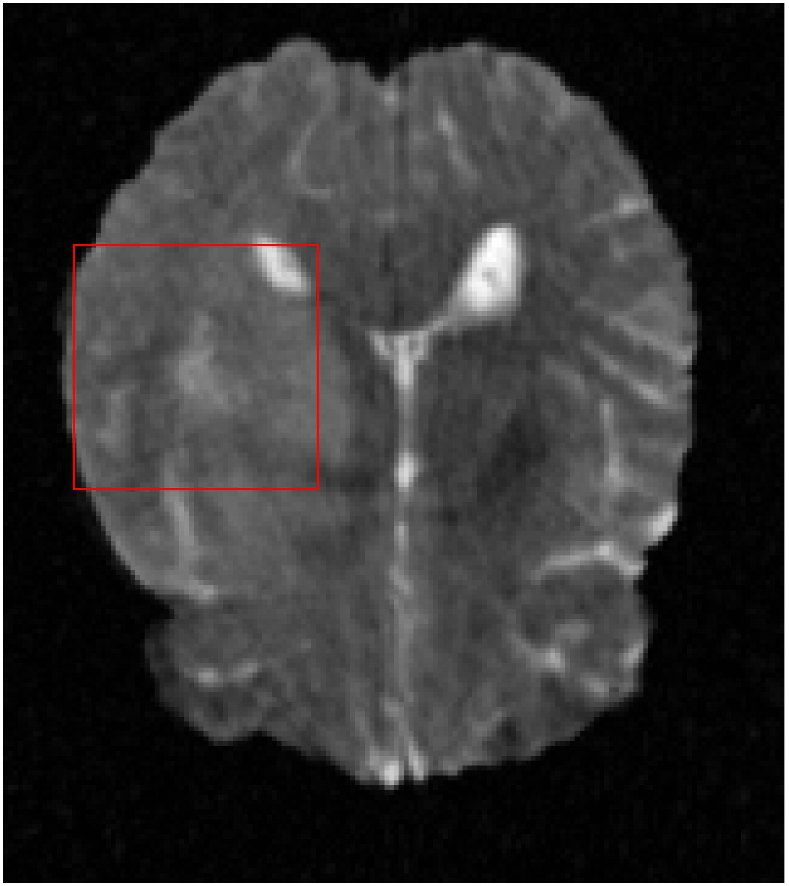}
\includegraphics[width=0.1\linewidth, angle=180]{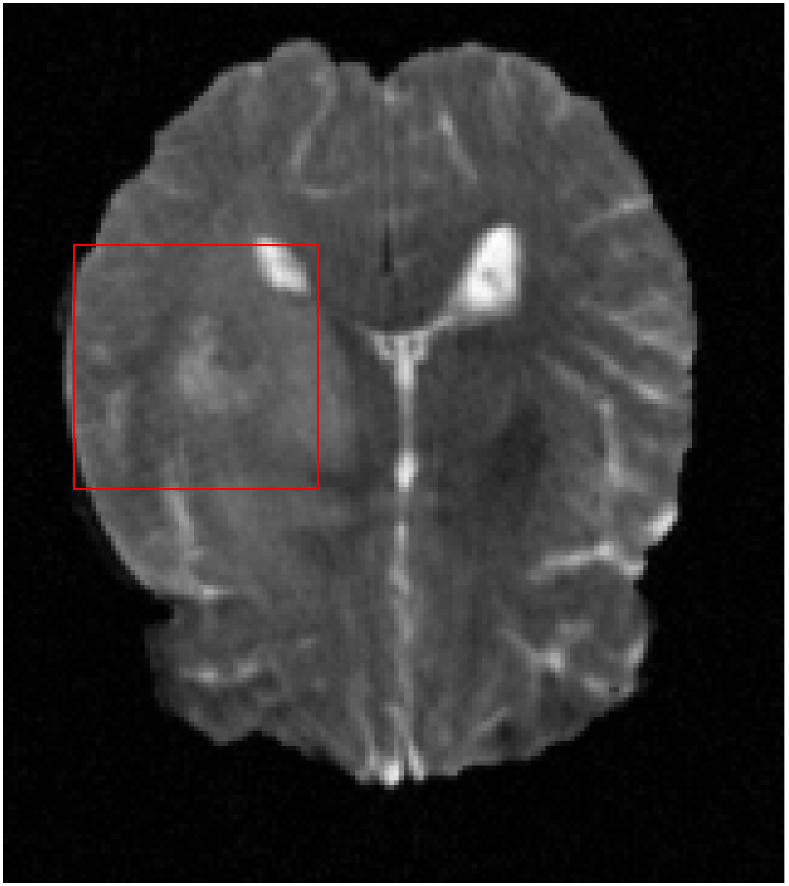}
\includegraphics[width=0.1\linewidth, angle=180]{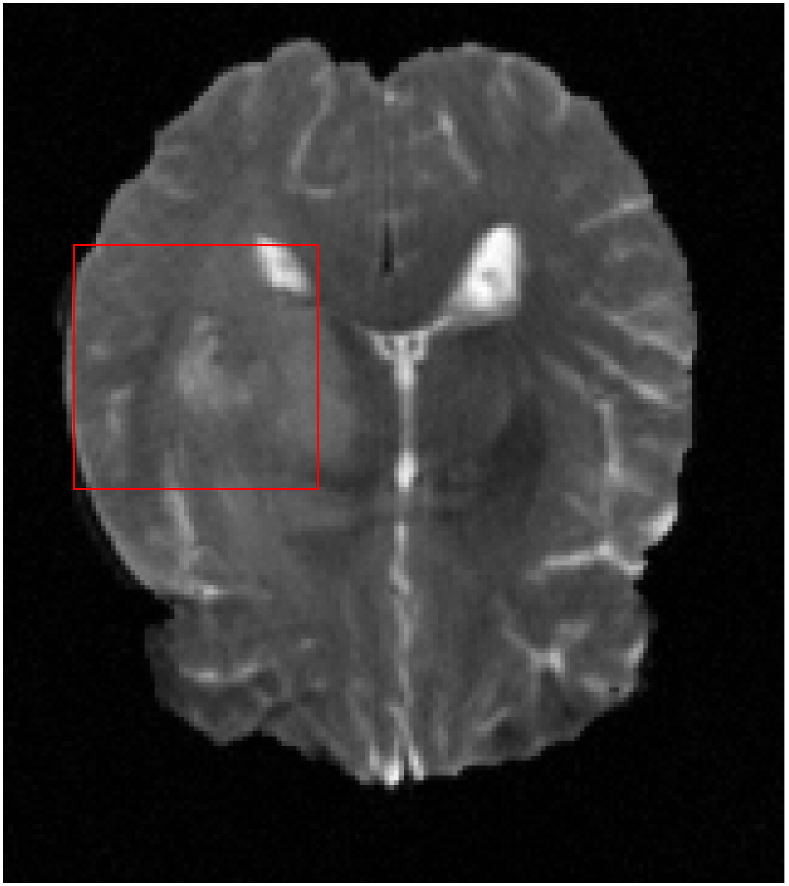}
\includegraphics[width=0.1\linewidth, angle=180]{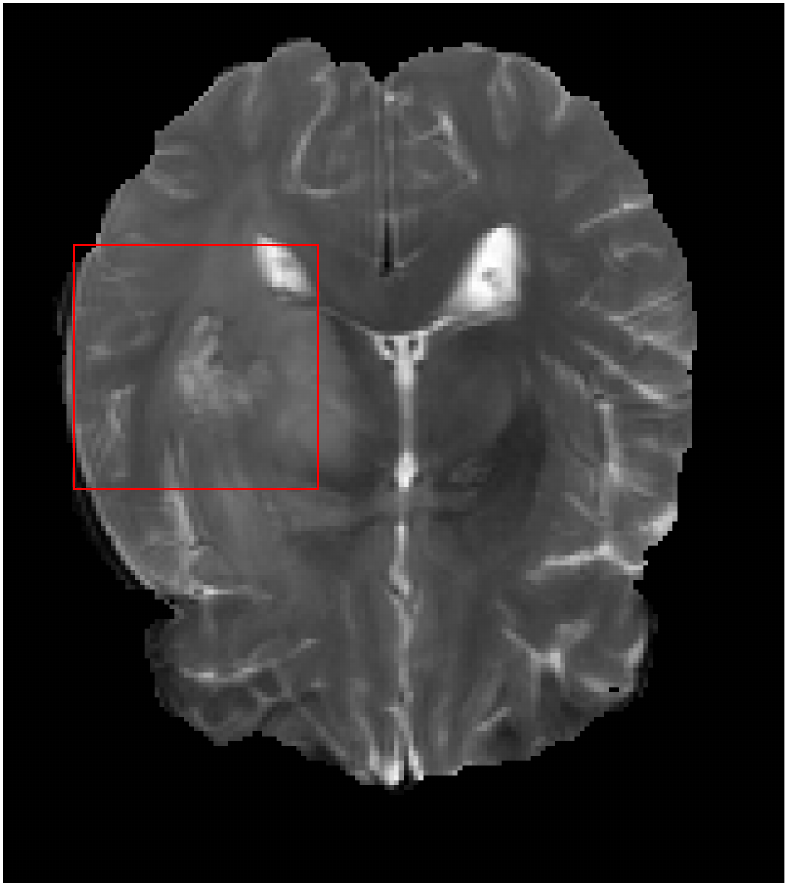}\\
\includegraphics[width=0.11\linewidth, angle=270]{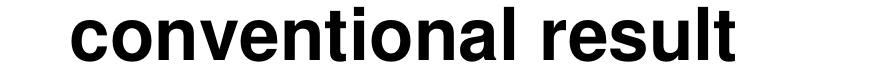}
\includegraphics[width=0.1\linewidth, angle=180]{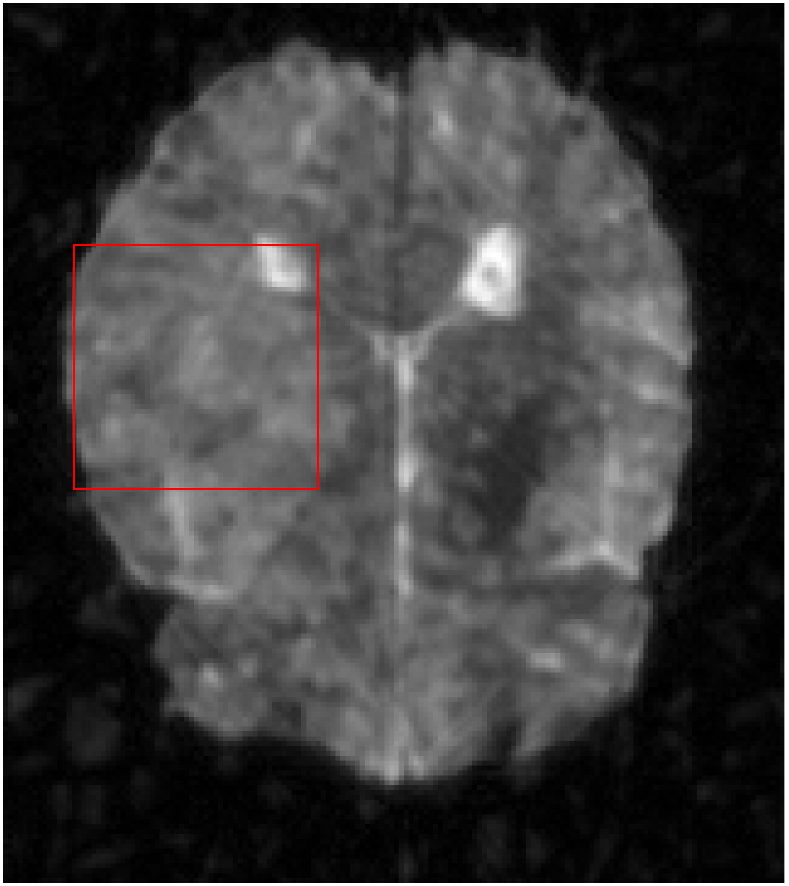}
\includegraphics[width=0.1\linewidth, angle=180]{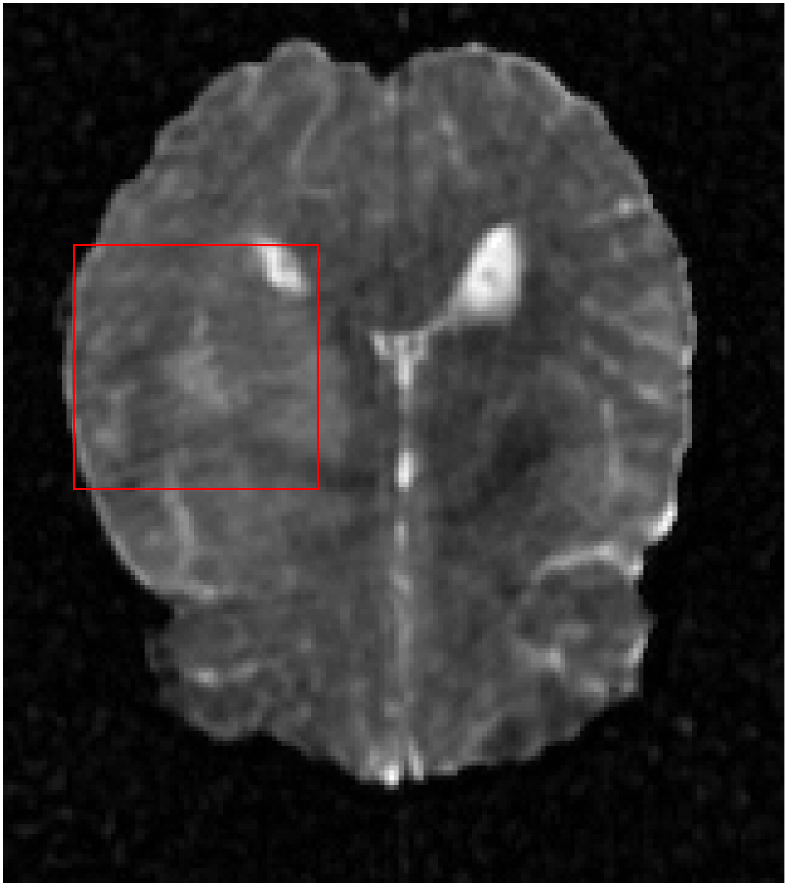}
\includegraphics[width=0.1\linewidth, angle=180]{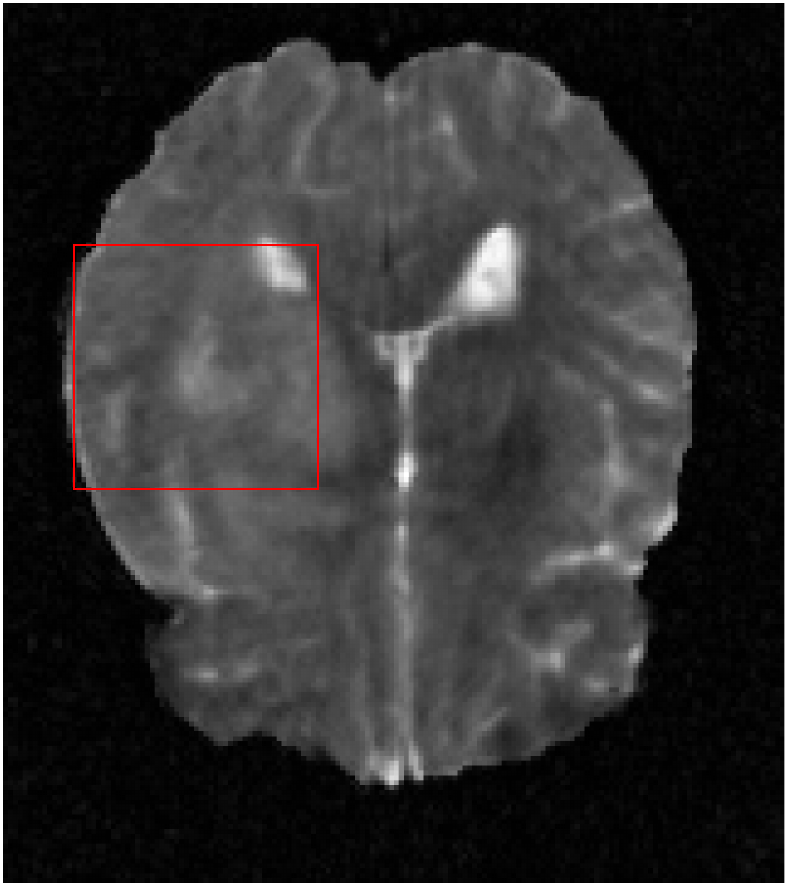}
\includegraphics[width=0.1\linewidth, angle=180]{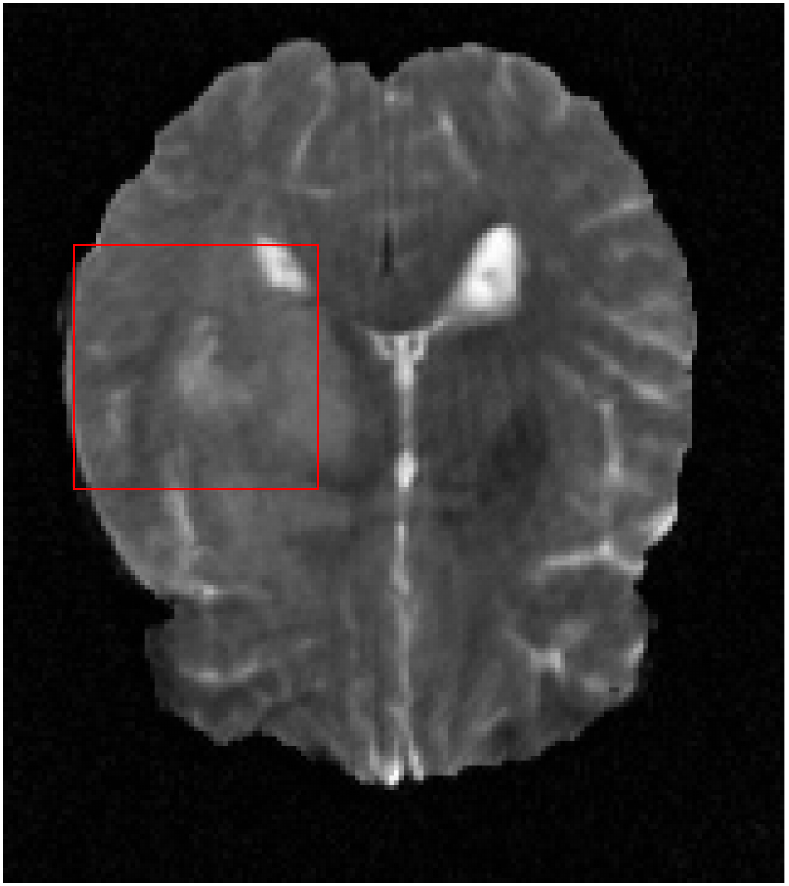}
\includegraphics[width=0.1\linewidth, angle=180]{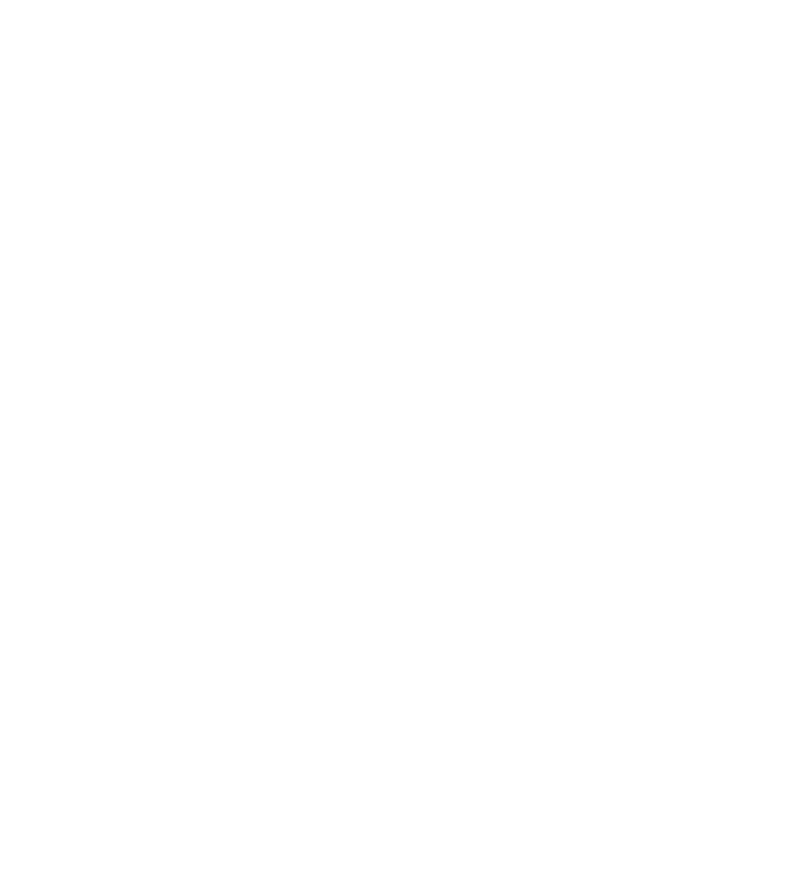}\\
\includegraphics[width=0.11\linewidth, angle=270]{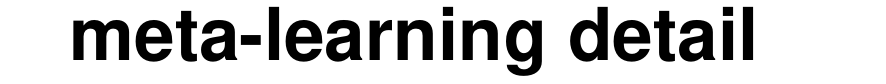}
\includegraphics[width=0.1\linewidth, angle=180]{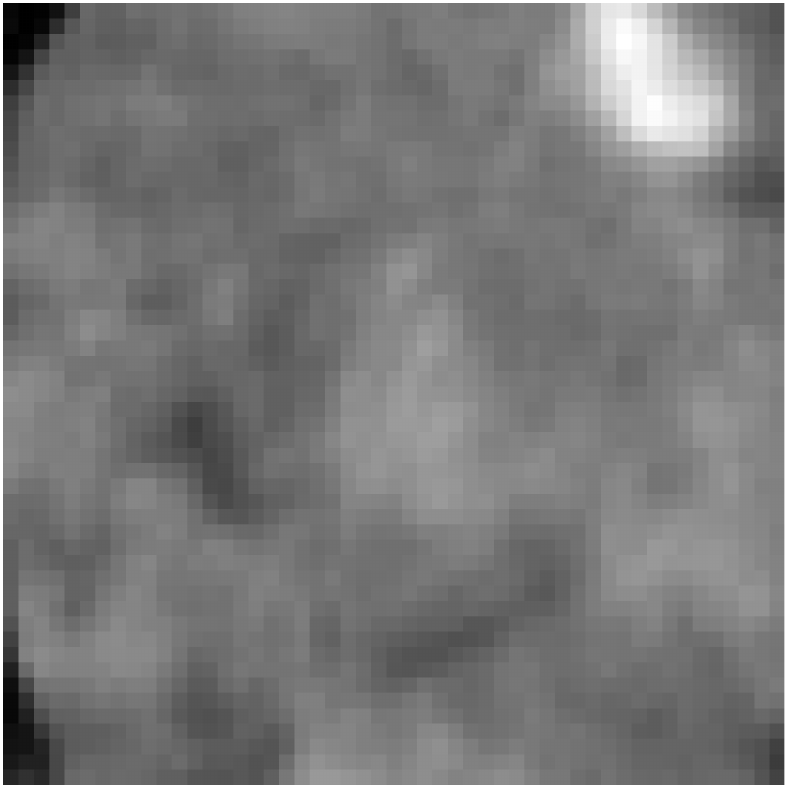}
\includegraphics[width=0.1\linewidth, angle=180]{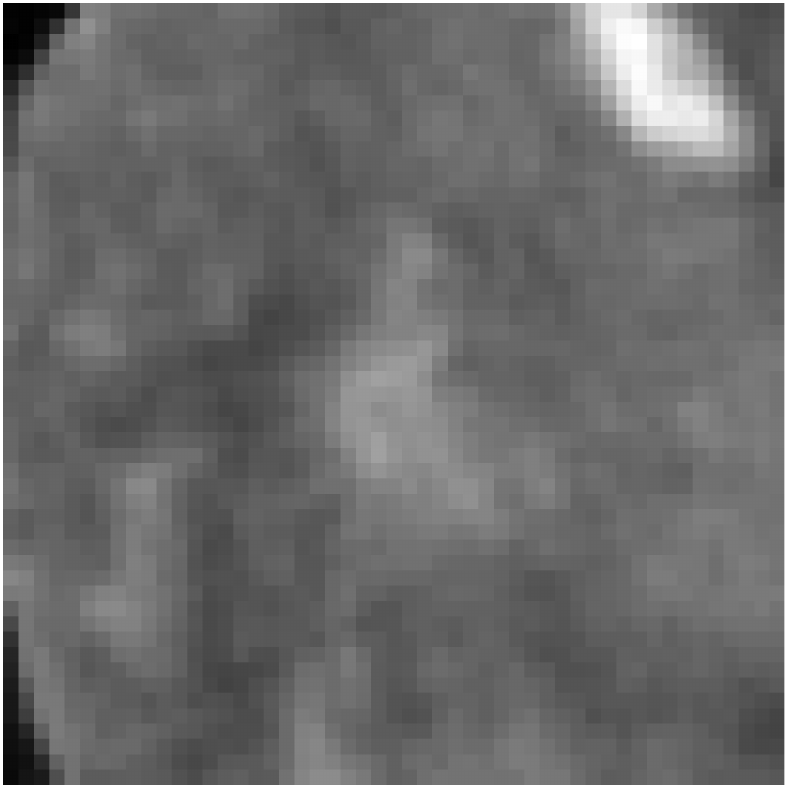}
\includegraphics[width=0.1\linewidth, angle=180]{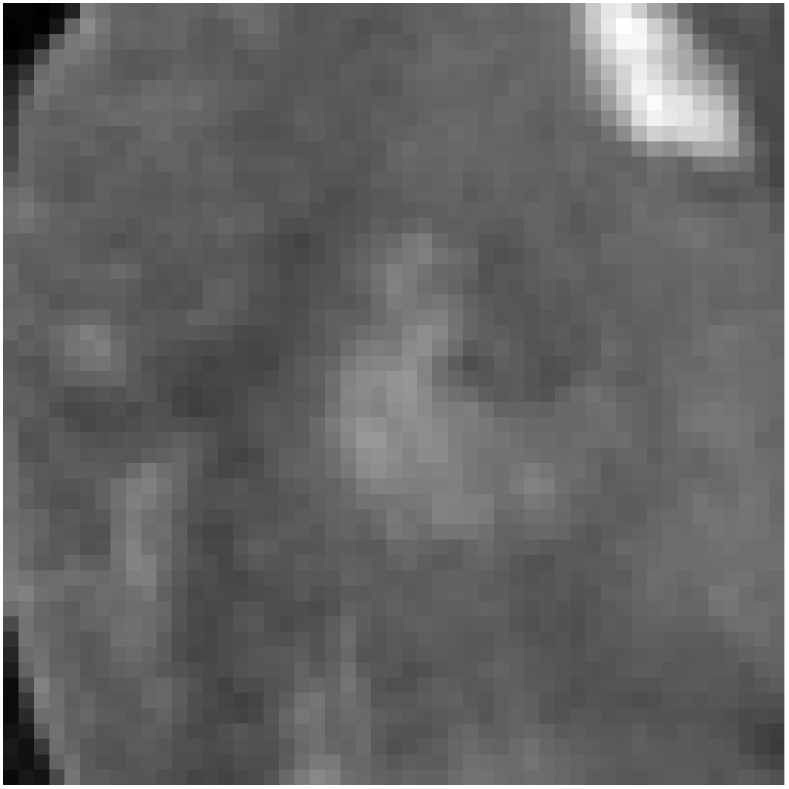}
\includegraphics[width=0.1\linewidth, angle=180]{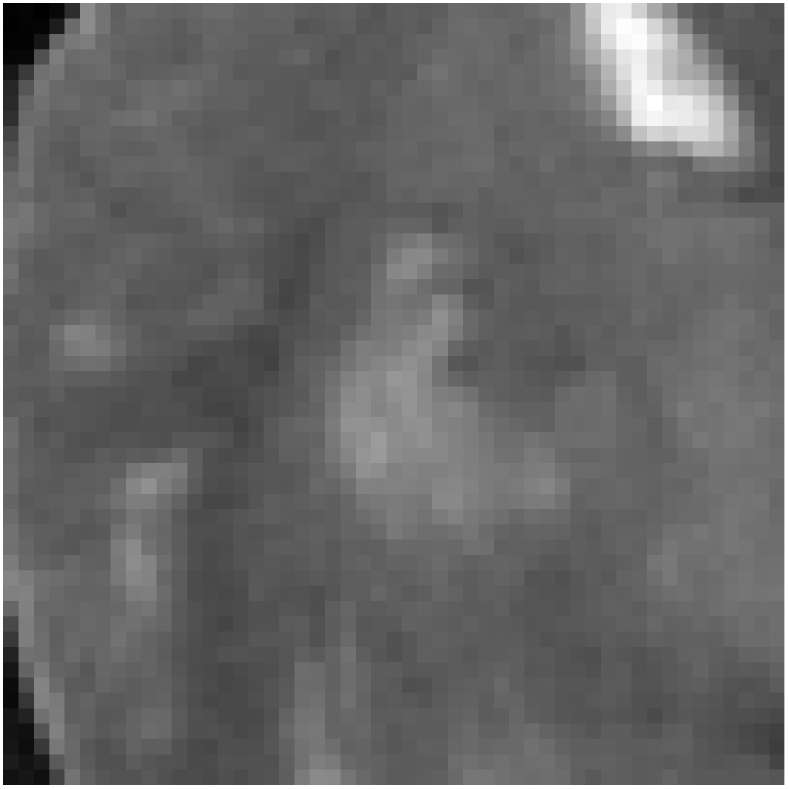}
\includegraphics[width=0.1\linewidth, angle=180]{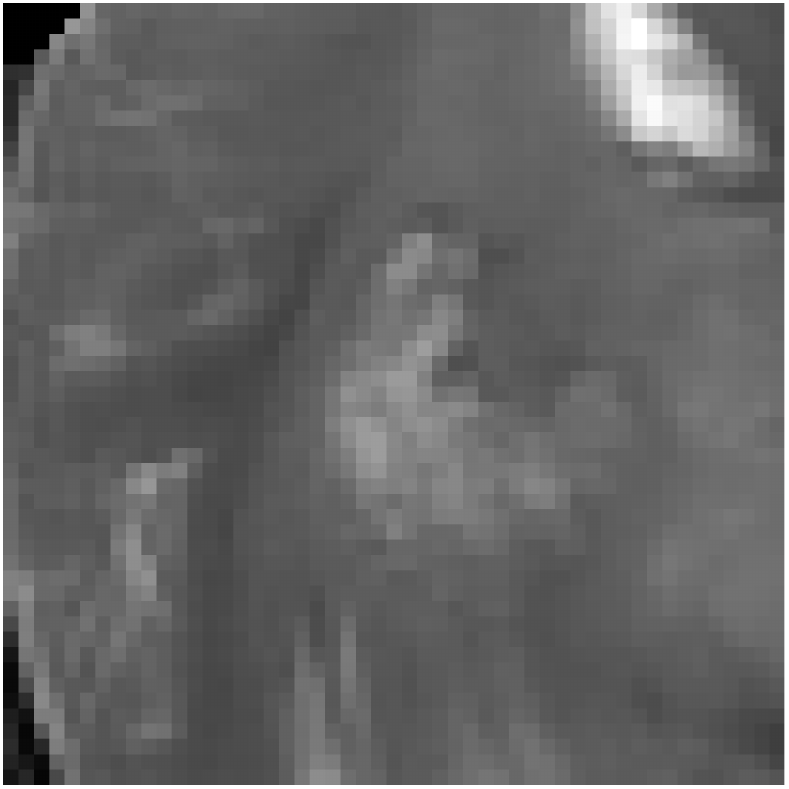}\\
\includegraphics[width=0.11\linewidth, angle=270]{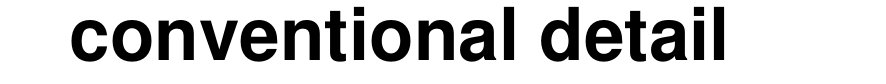}
\includegraphics[width=0.1\linewidth, angle=180]{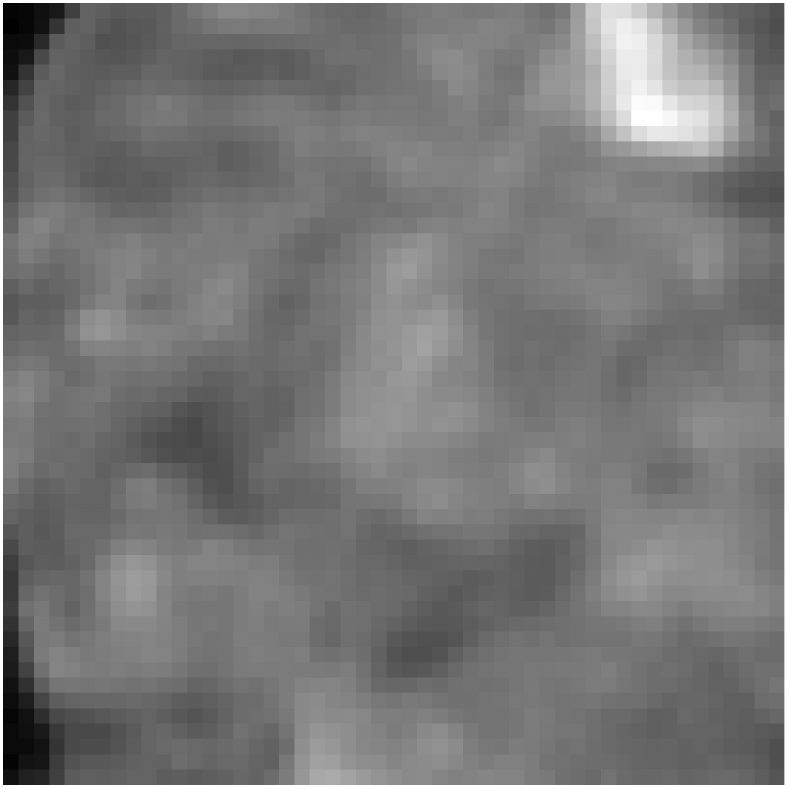}
\includegraphics[width=0.1\linewidth, angle=180]{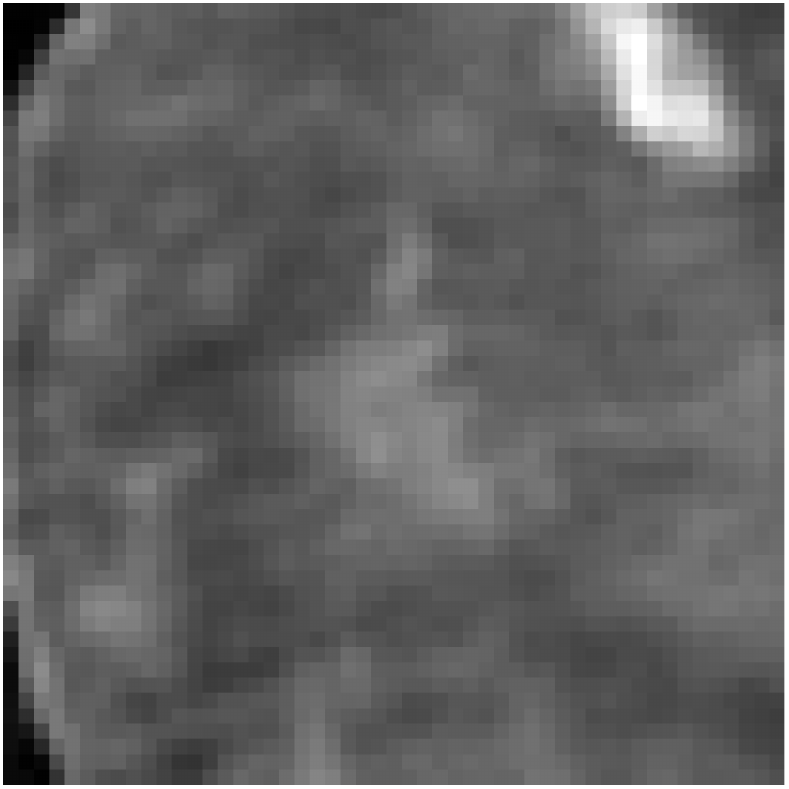}
\includegraphics[width=0.1\linewidth, angle=180]{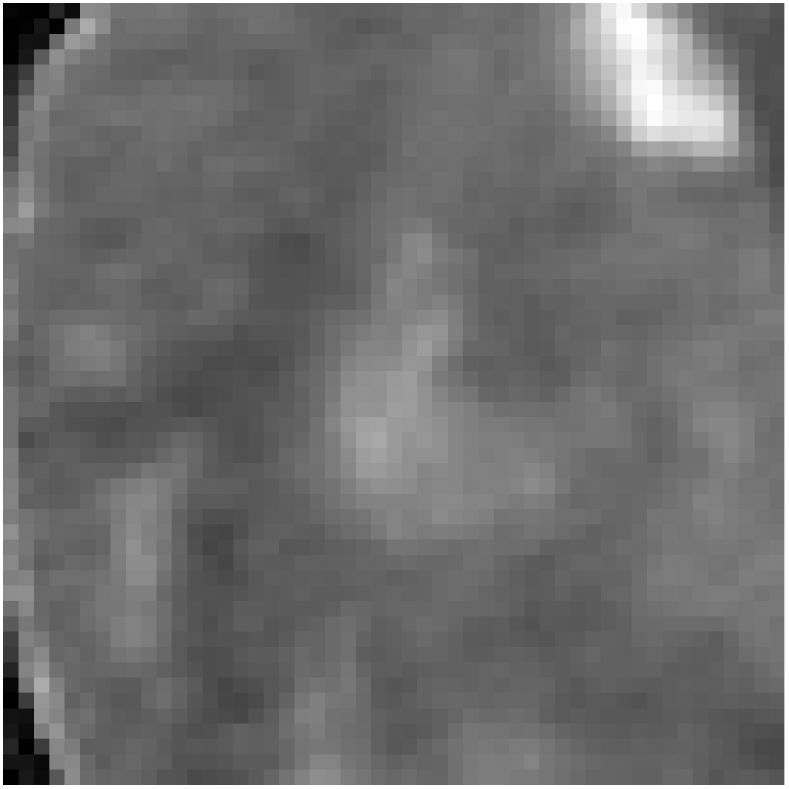}
\includegraphics[width=0.1\linewidth, angle=180]{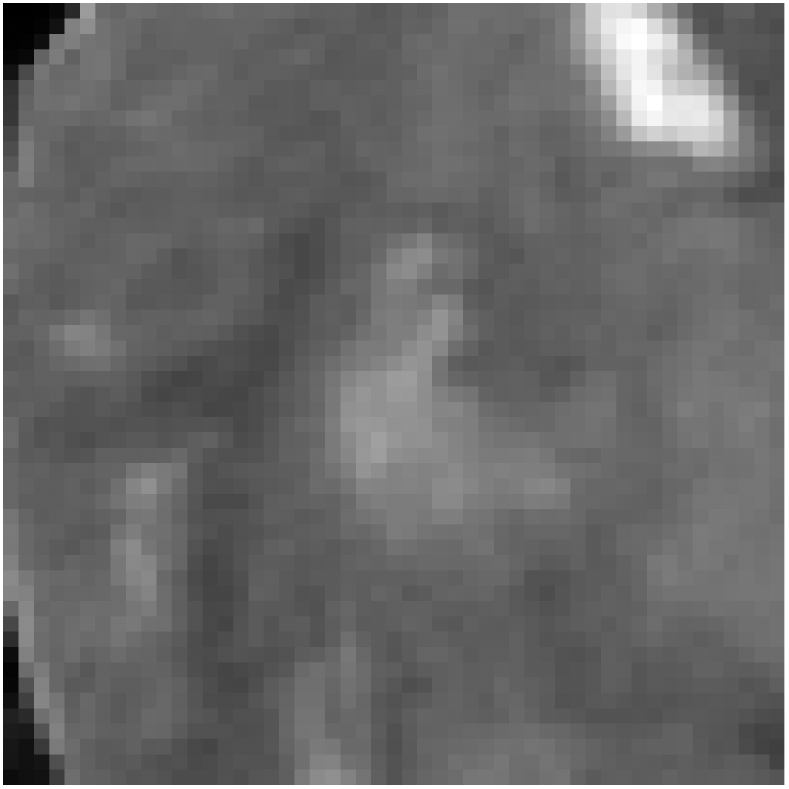}
\includegraphics[width=0.1\linewidth, angle=180]{fig_chp4/white.pdf}\\
\includegraphics[width=0.11\linewidth, angle=270]{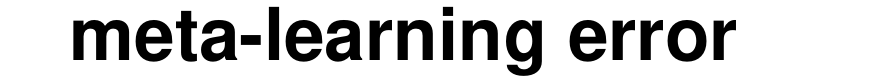}
\includegraphics[width=0.1\linewidth, angle=180]{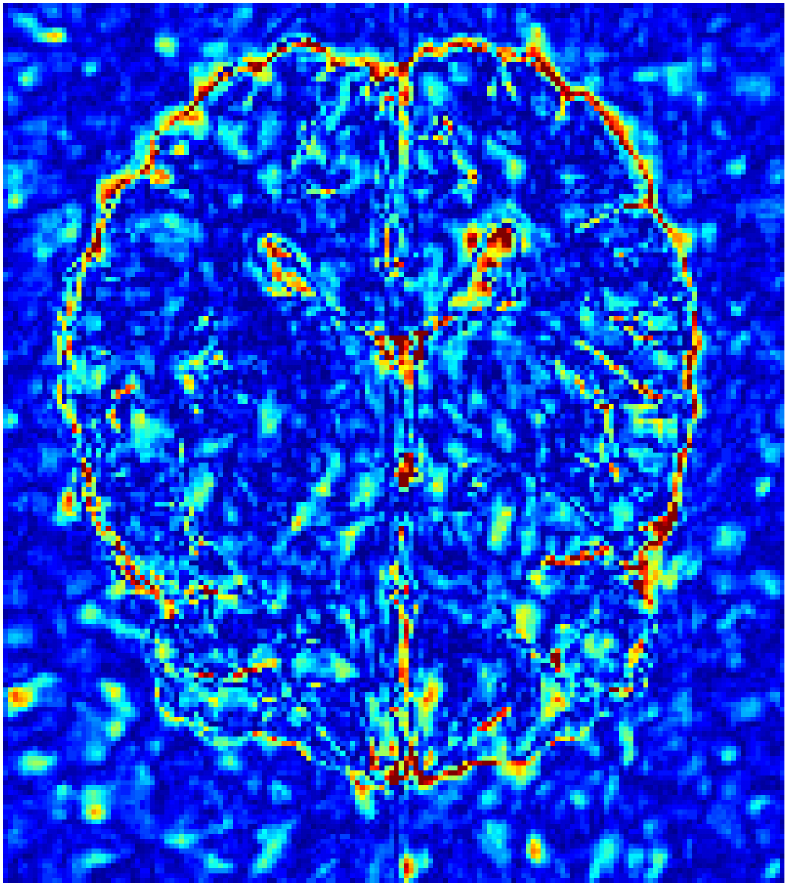}
\includegraphics[width=0.1\linewidth, angle=180]{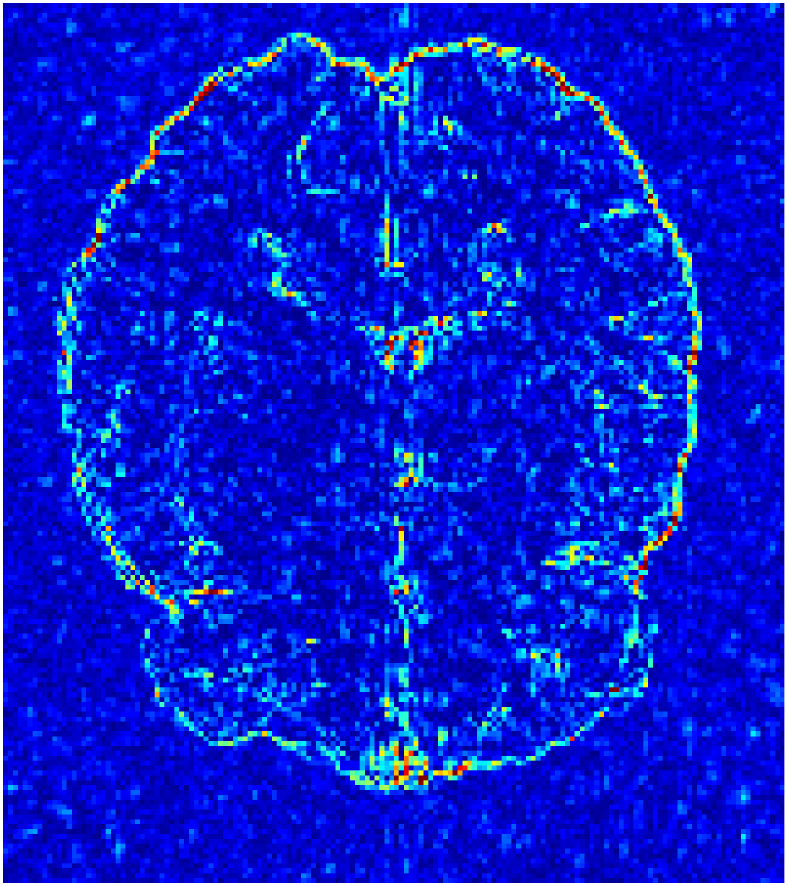}
\includegraphics[width=0.1\linewidth, angle=180]{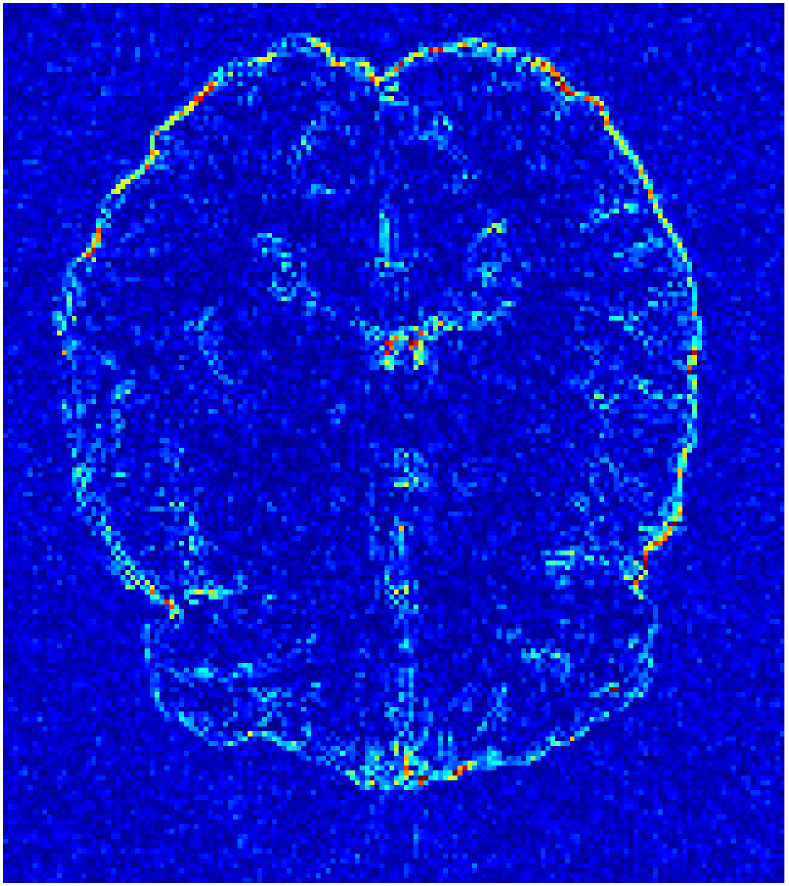}
\includegraphics[width=0.1\linewidth, angle=180]{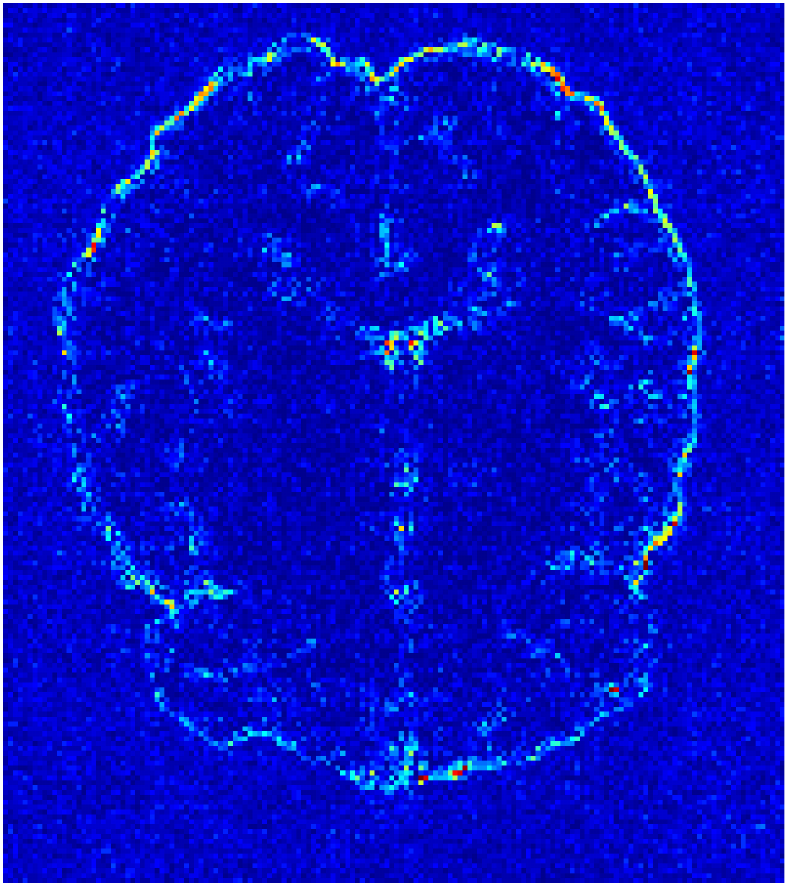}
\includegraphics[width=0.1\linewidth, angle=180]{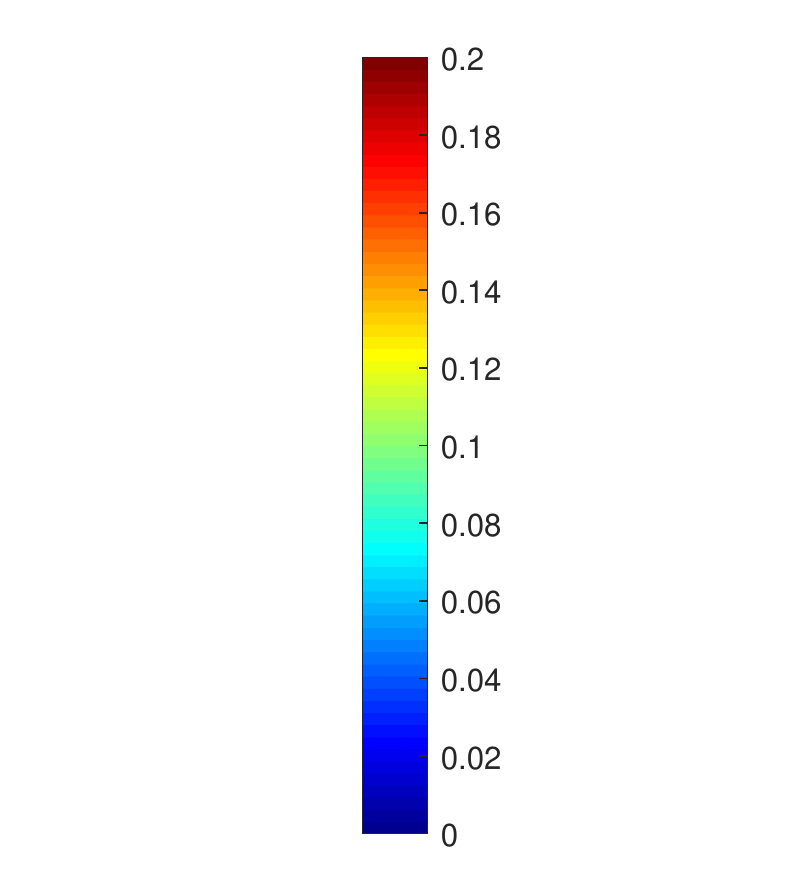}\\
\includegraphics[width=0.11\linewidth, angle=270]{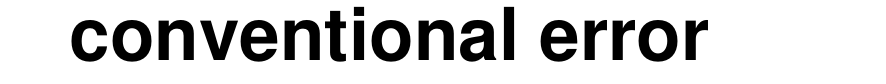}
\includegraphics[width=0.1\linewidth, angle=180]{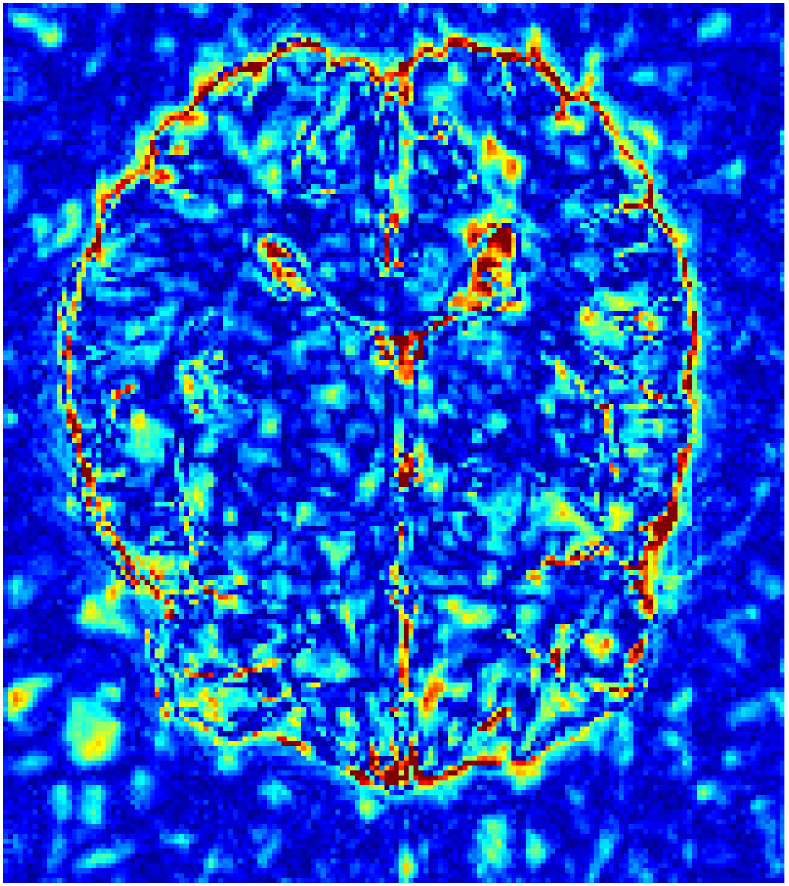}
\includegraphics[width=0.1\linewidth, angle=180]{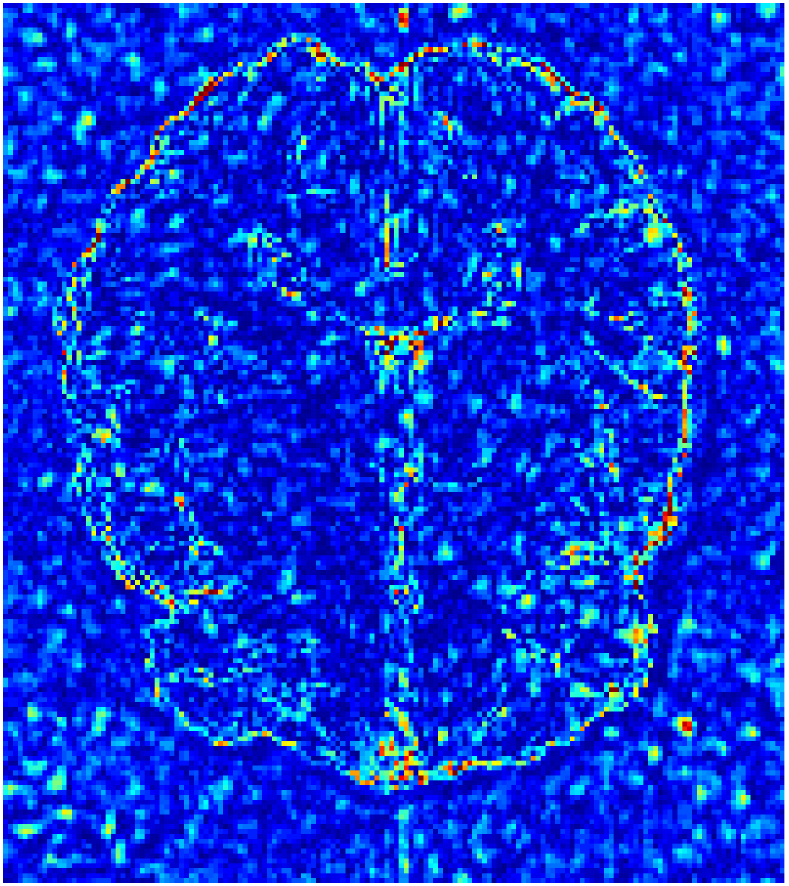}
\includegraphics[width=0.1\linewidth, angle=180]{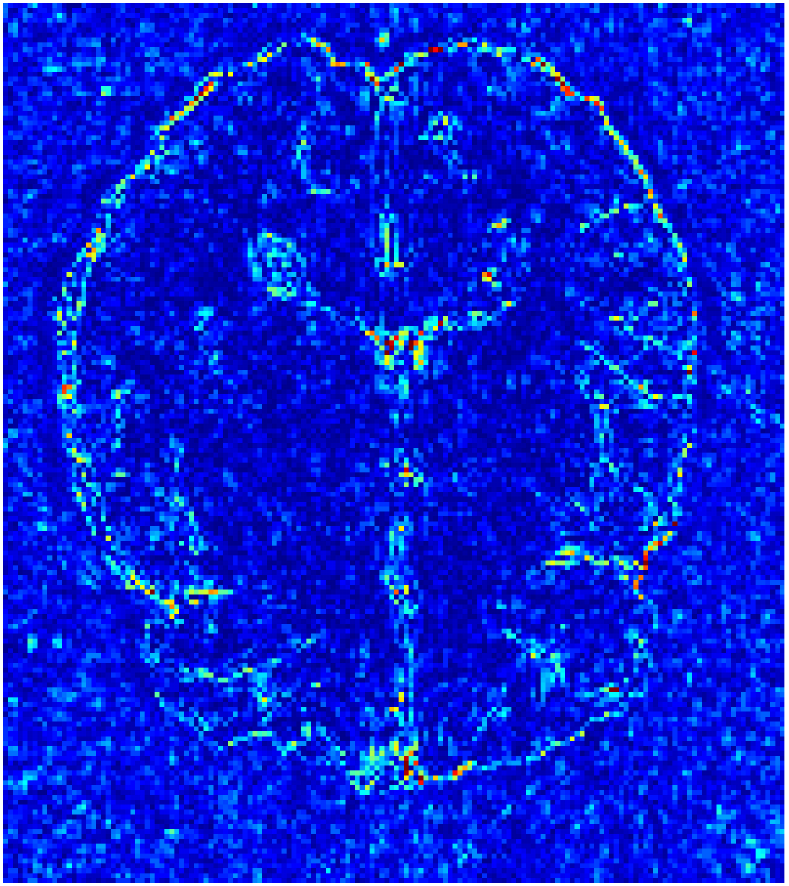}
\includegraphics[width=0.1\linewidth, angle=180]{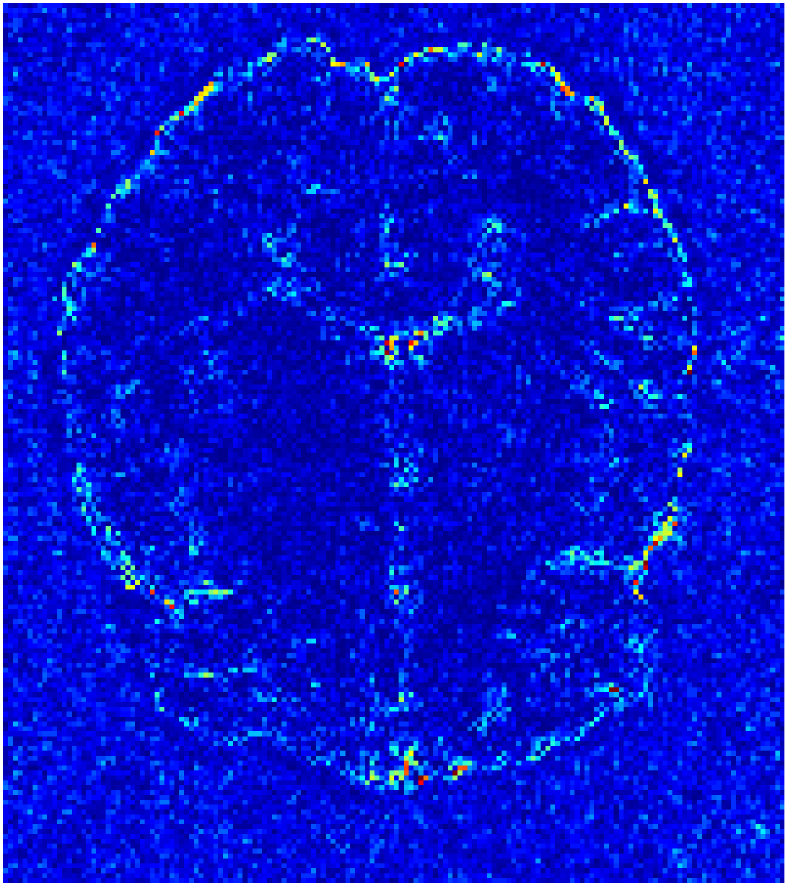}
\includegraphics[width=0.1\linewidth, angle=180]{fig_chp4/white.pdf}\\
\includegraphics[width=0.11\linewidth, angle=90]{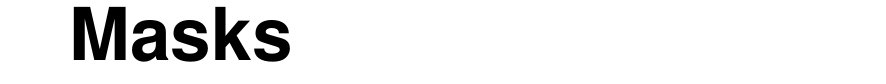}
\includegraphics[width=0.1\linewidth]{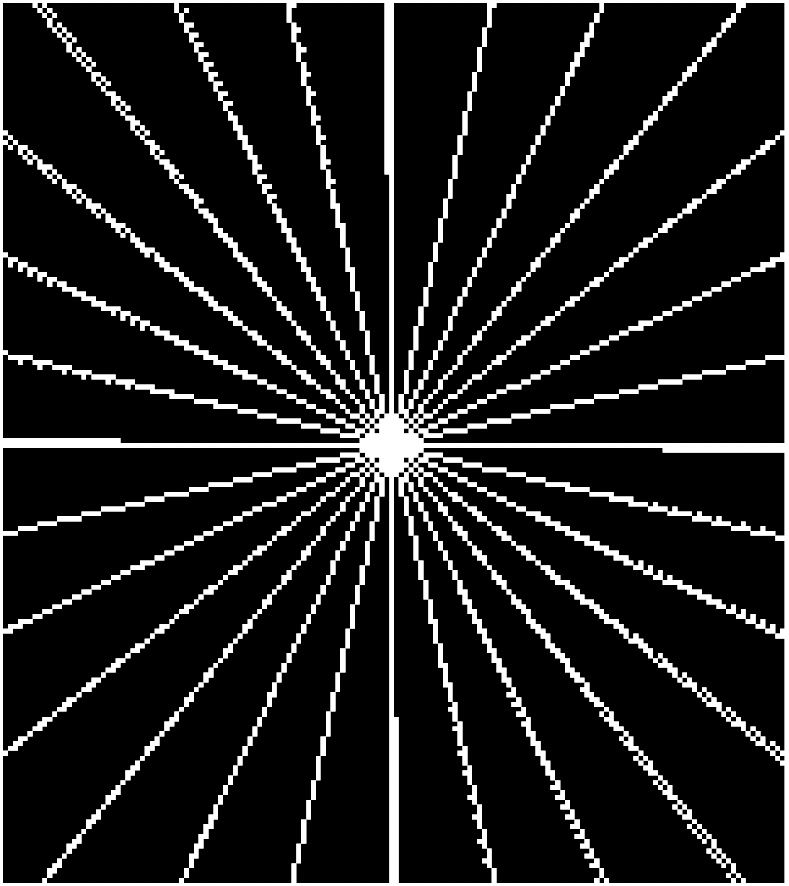}
\includegraphics[width=0.1\linewidth]{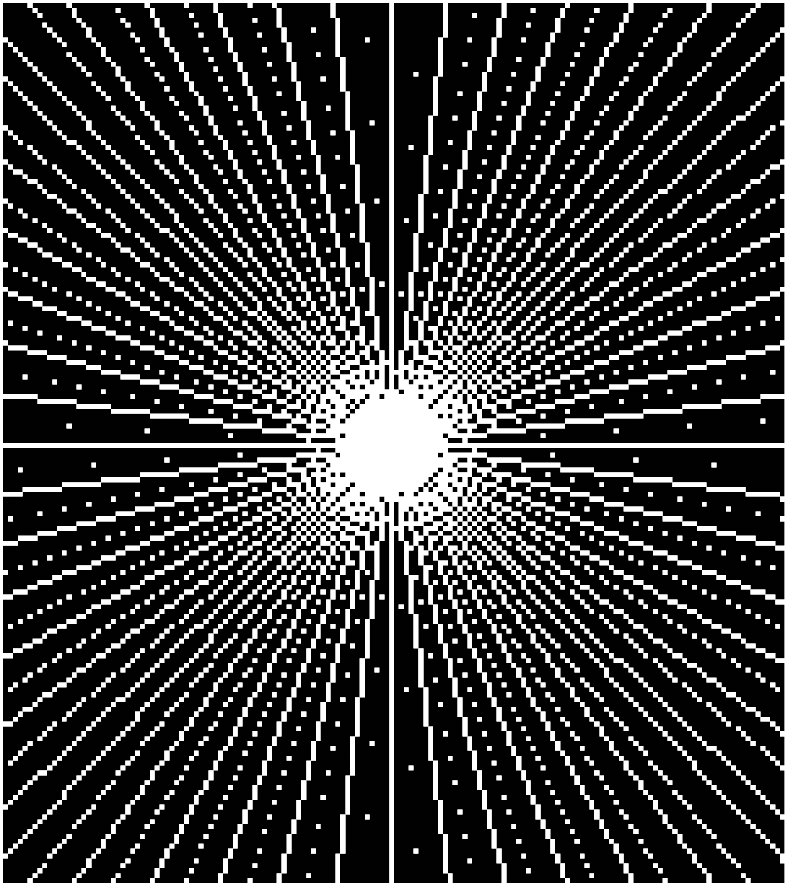}
\includegraphics[width=0.1\linewidth]{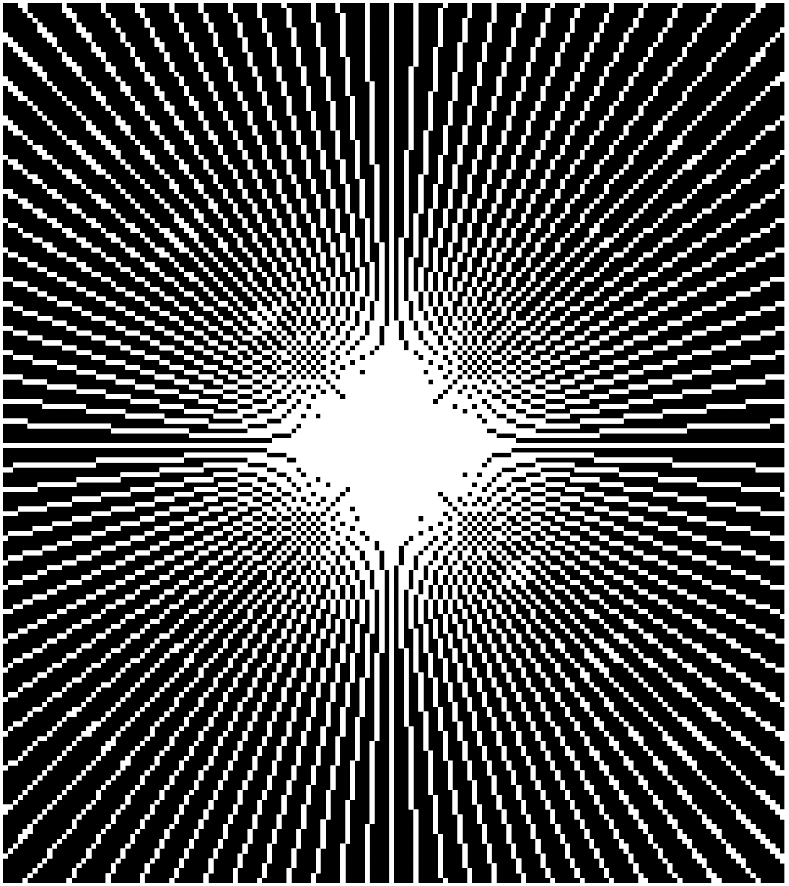}
\includegraphics[width=0.1\linewidth]{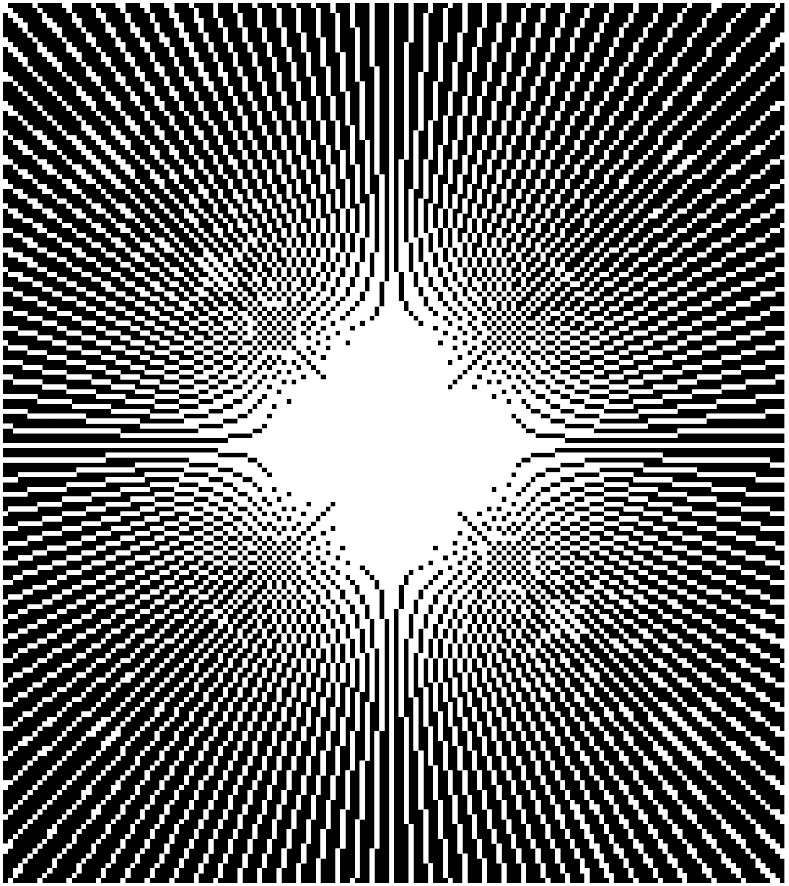}
\includegraphics[width=0.1\linewidth]{fig_chp4/white.pdf}
\caption{From top to bottom: The T2 brain image reconstruction results, zoomed-in details, point-wise errors with a color bar, and associated
 \textbf{{radial}} masks. The top-right image is the ground truth fully-sampled image. }
\label{figure_same_ratio_t2}
\end{figure}

\begin{figure}[H]
\centering
\includegraphics[width=0.1\linewidth, angle=270]{fig_chp4/meta_result.pdf}%, height=0.1\linewidth
\includegraphics[width=0.1\linewidth, angle=180]{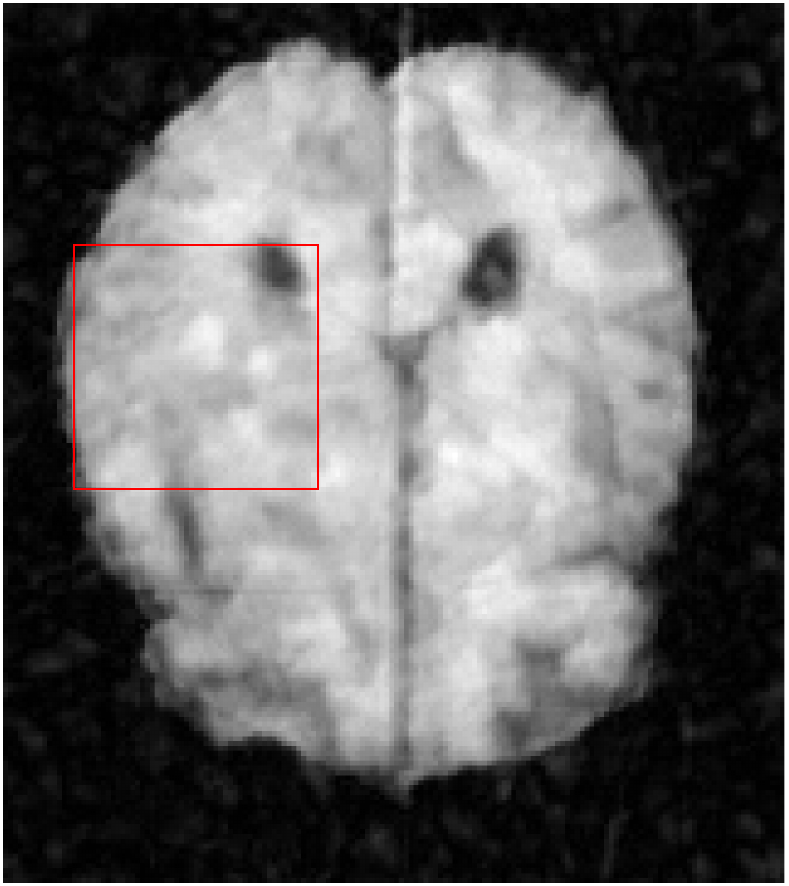}%, height=0.01\linewidth
\includegraphics[width=0.1\linewidth, angle=180]{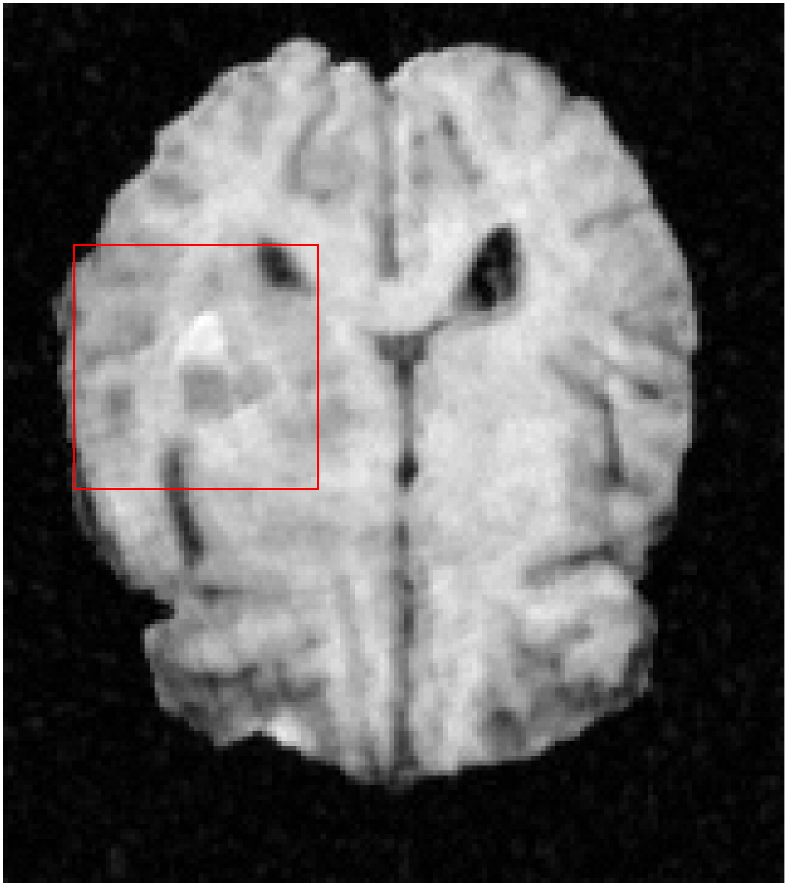}
\includegraphics[width=0.1\linewidth, angle=180]{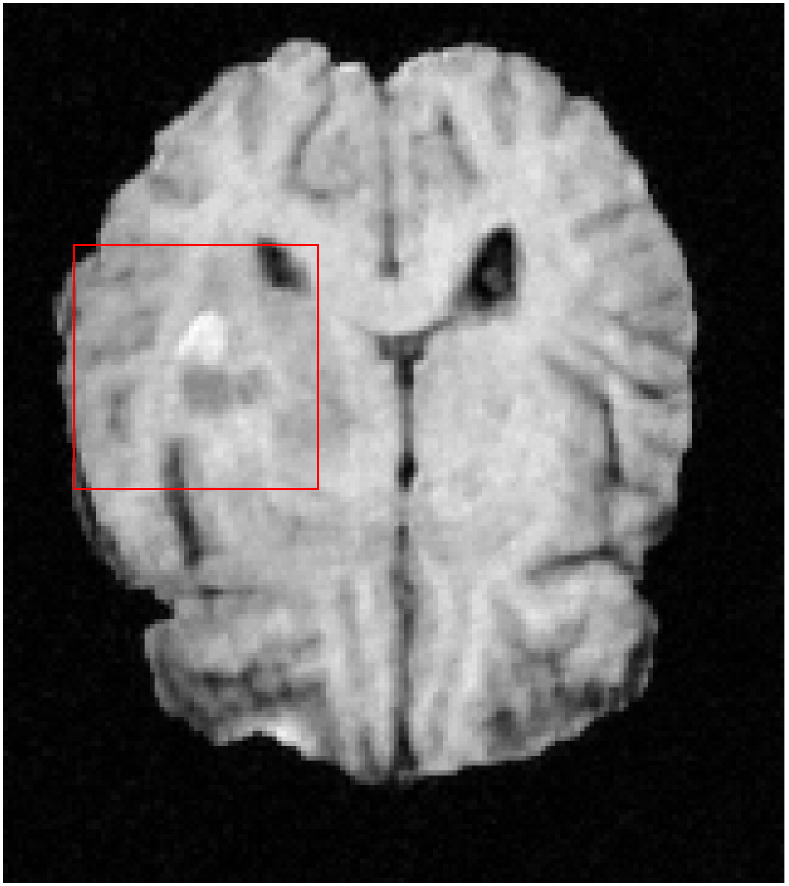}
\includegraphics[width=0.1\linewidth, angle=180]{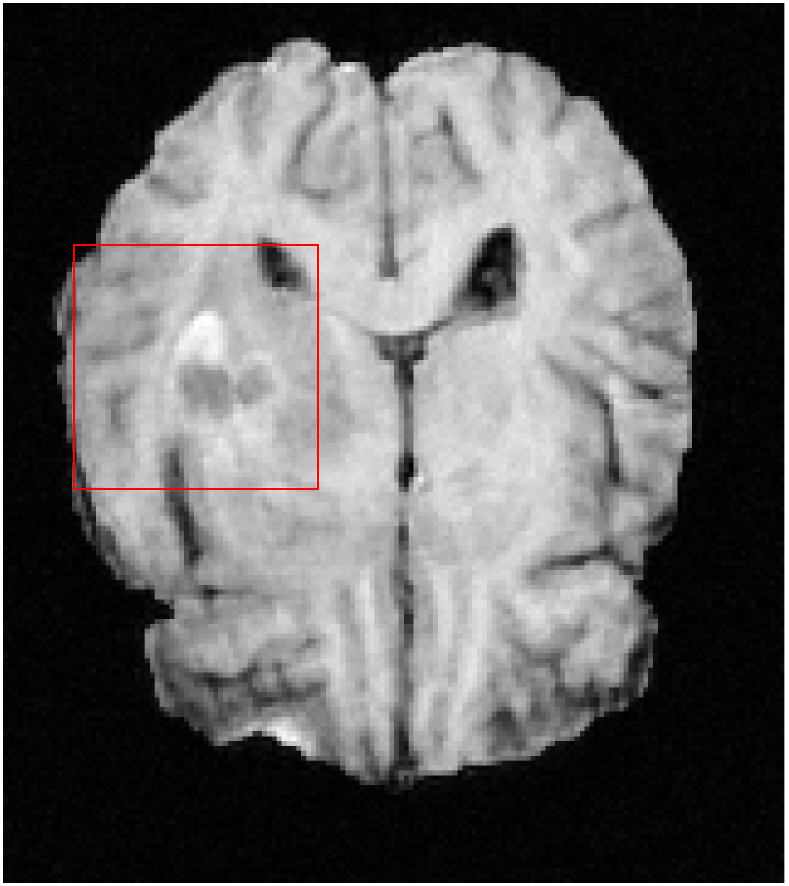}
\includegraphics[width=0.1\linewidth, angle=180]{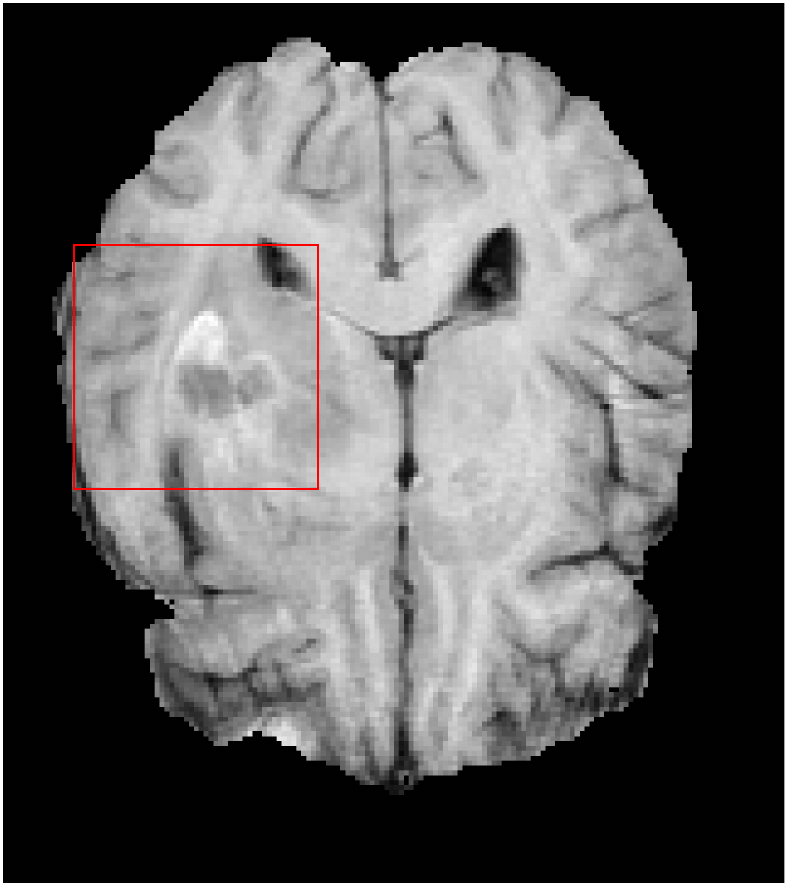}\\
\includegraphics[width=0.1\linewidth, angle=270]{fig_chp4/conventional_result.pdf}%, height=0.17\linewidth
\includegraphics[width=0.1\linewidth, angle=180]{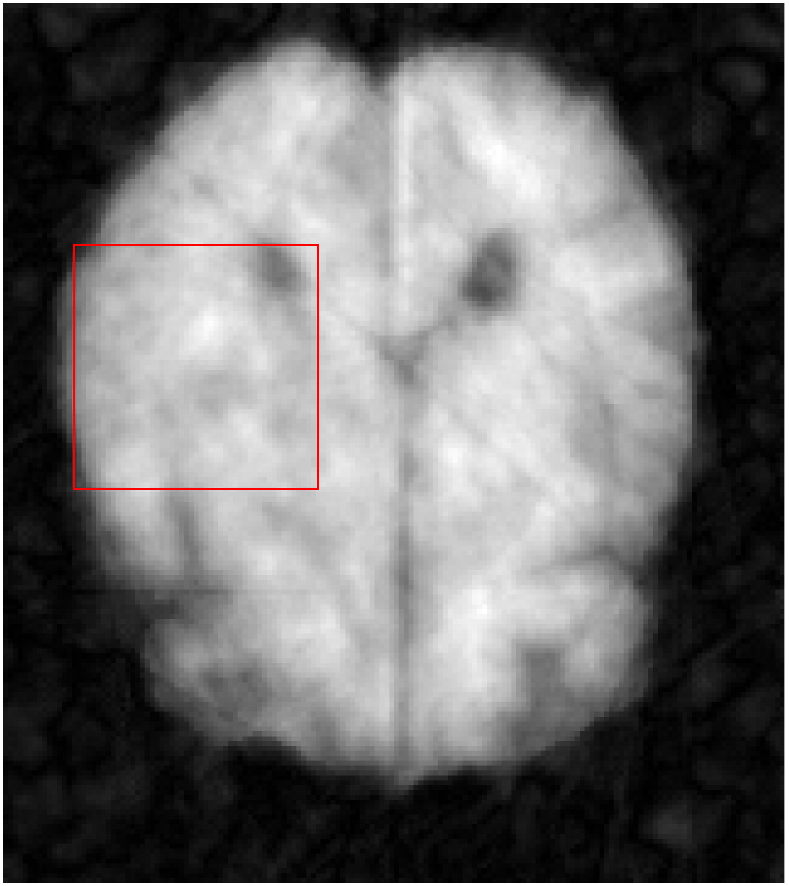}
\includegraphics[width=0.1\linewidth, angle=180]{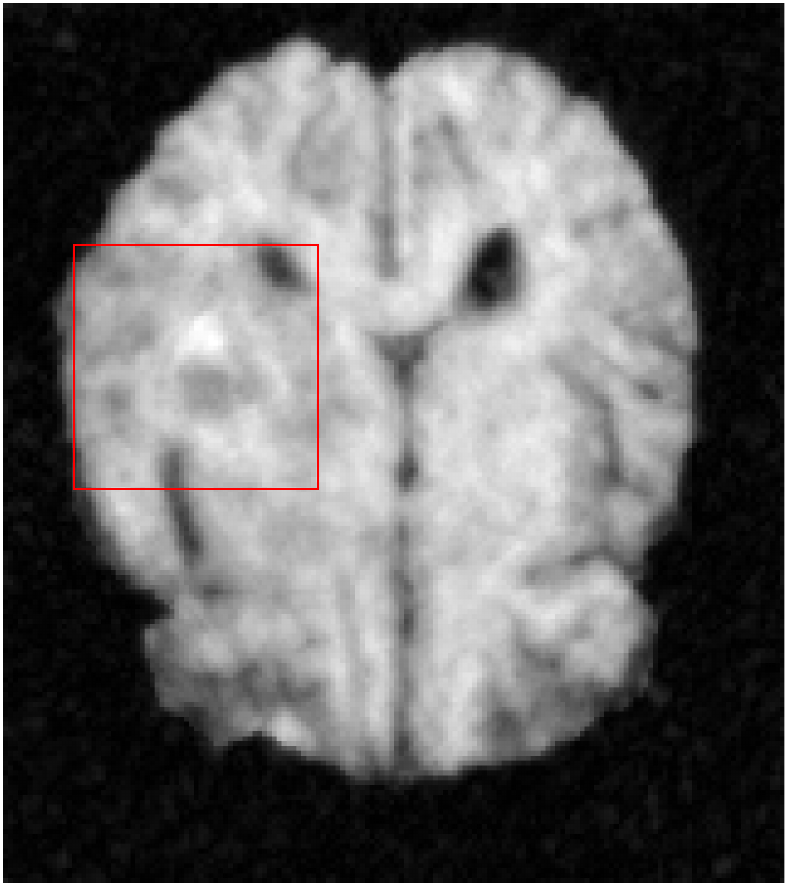}
\includegraphics[width=0.1\linewidth, angle=180]{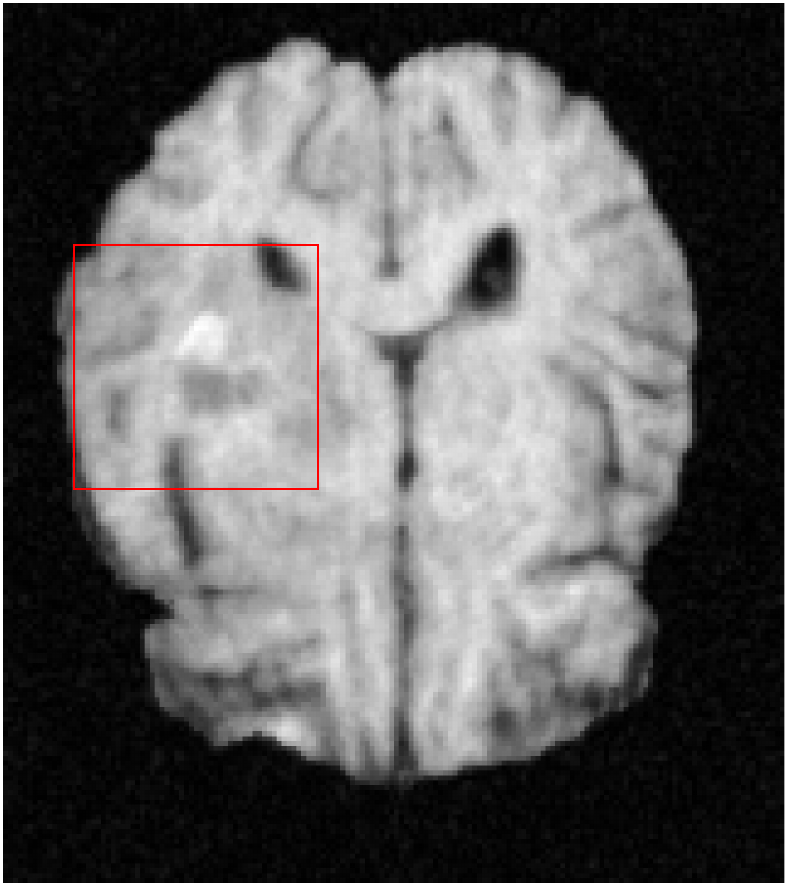}
\includegraphics[width=0.1\linewidth, angle=180]{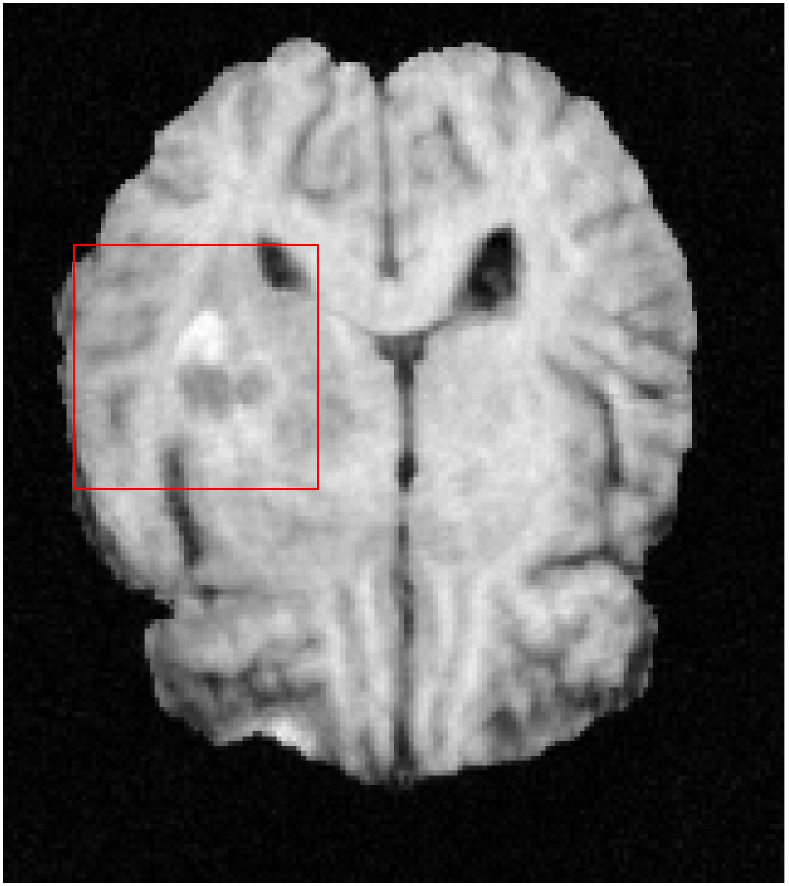}
\includegraphics[width=0.1\linewidth, angle=180]{fig_chp4/white.pdf}\\
\includegraphics[width=0.1\linewidth, angle=270]{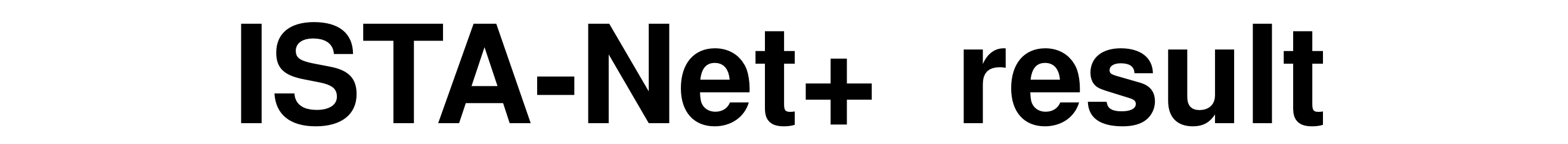}%, height=0.17\linewidth
\includegraphics[width=0.1\linewidth, angle=180]{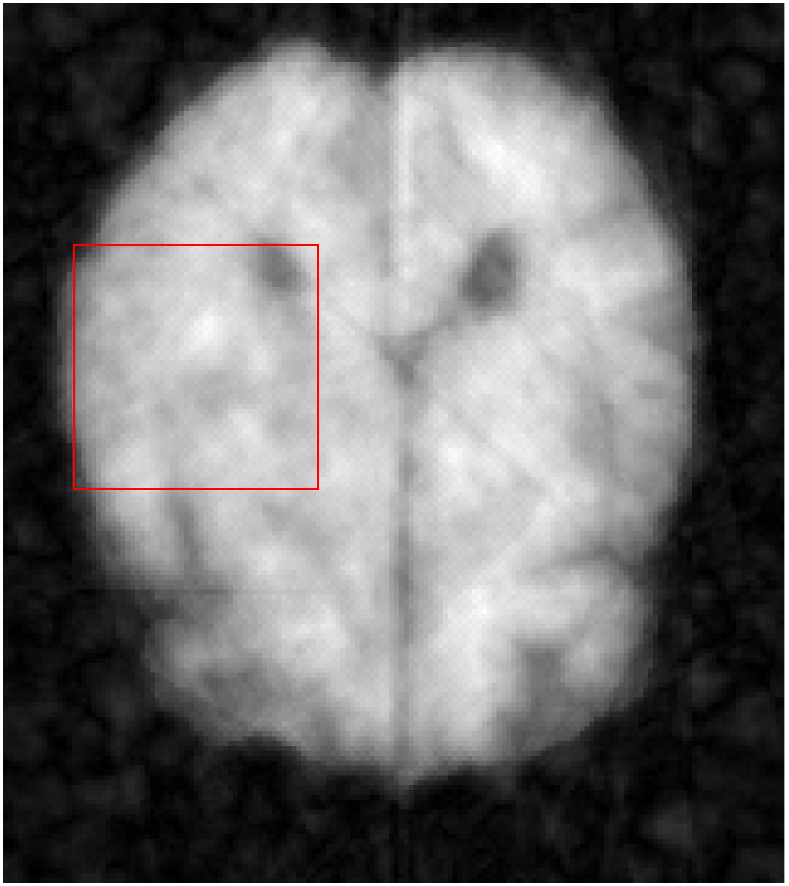}
\includegraphics[width=0.1\linewidth, angle=180]{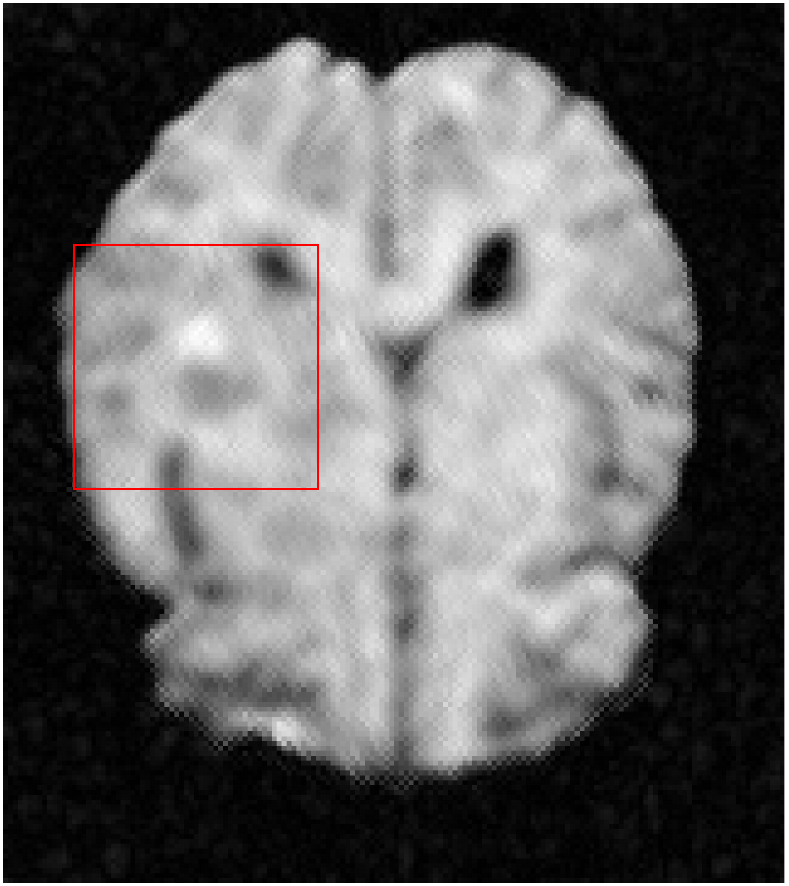}
\includegraphics[width=0.1\linewidth, angle=180]{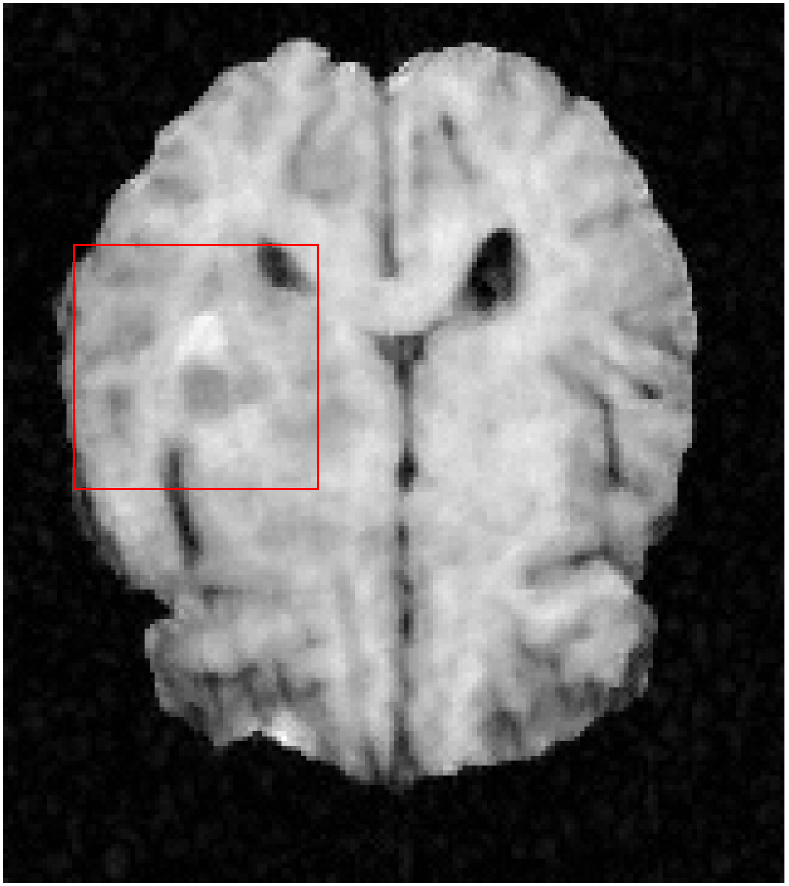}
\includegraphics[width=0.1\linewidth, angle=180]{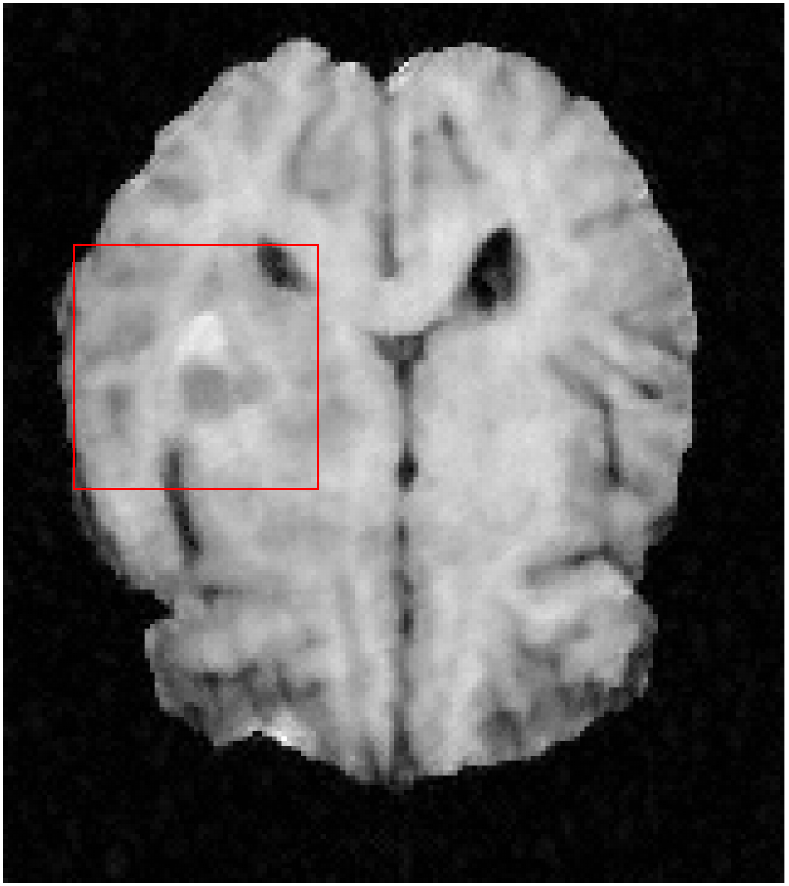}
\includegraphics[width=0.1\linewidth, angle=180]{fig_chp4/white.pdf}\\
\includegraphics[width=0.1\linewidth, angle=270]{fig_chp4/meta_detail.pdf}%, height=0.01\linewidth
\includegraphics[width=0.1\linewidth, angle=180]{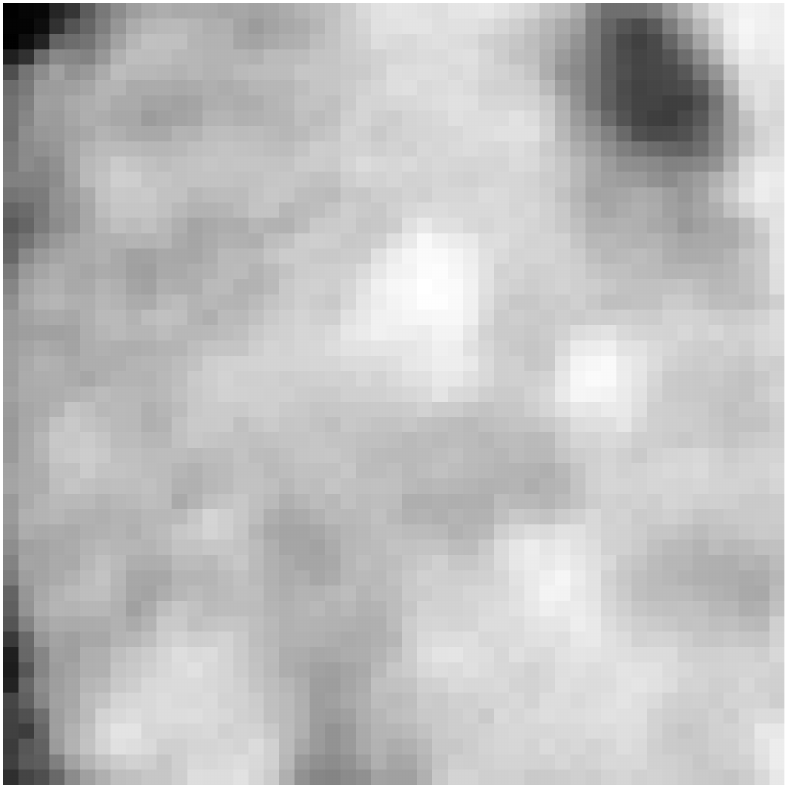}
\includegraphics[width=0.1\linewidth, angle=180]{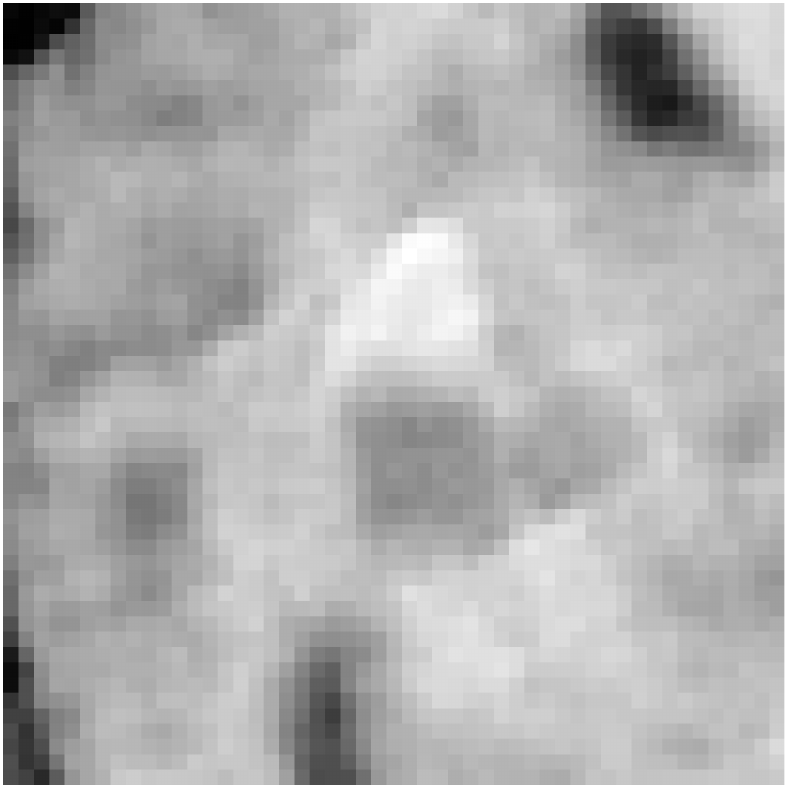}
\includegraphics[width=0.1\linewidth, angle=180]{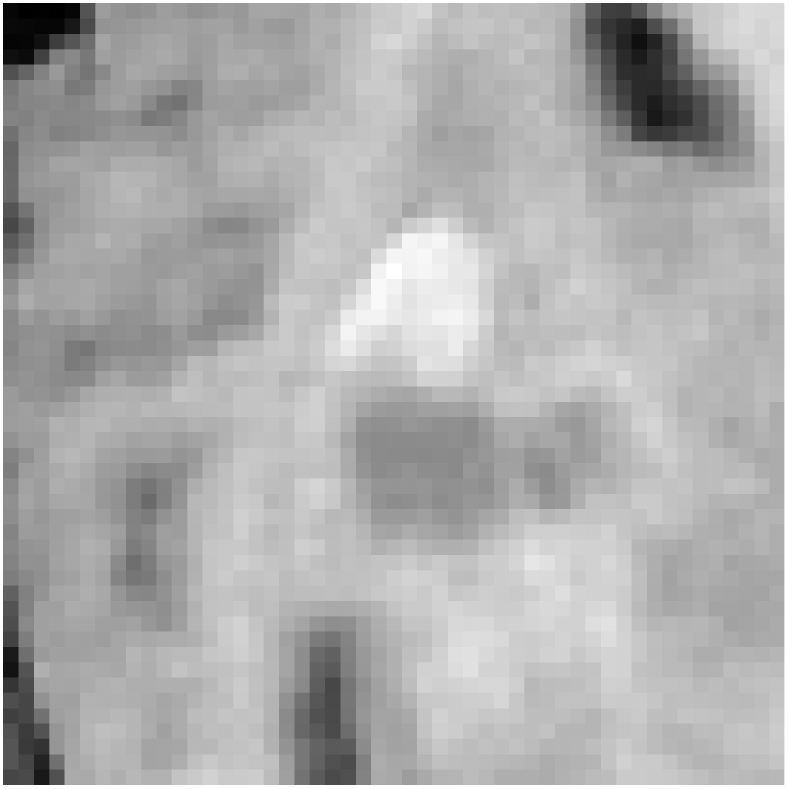}
\includegraphics[width=0.1\linewidth, angle=180]{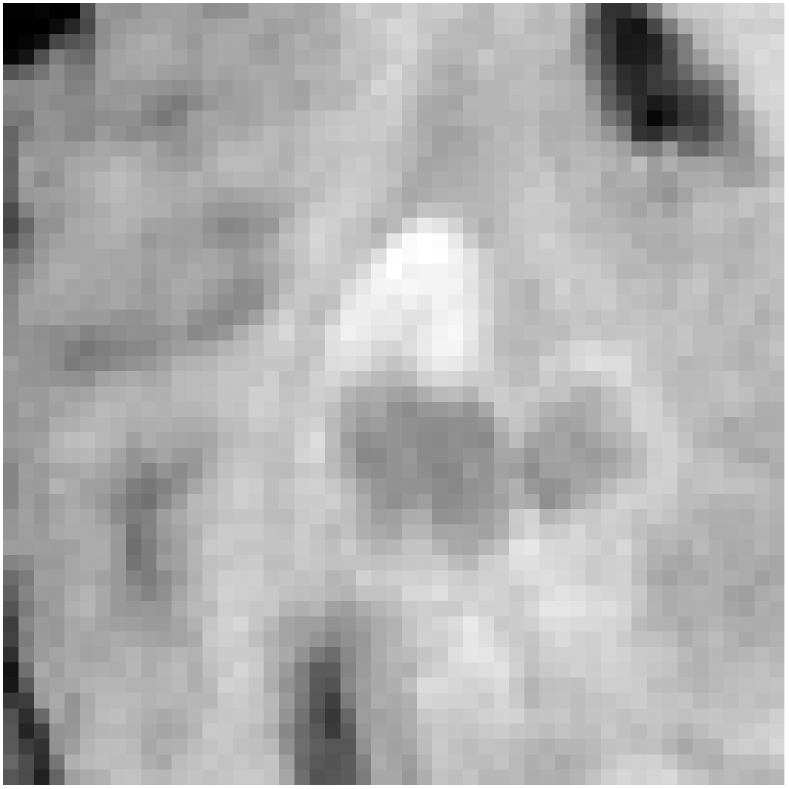}
\includegraphics[width=0.1\linewidth, angle=180]{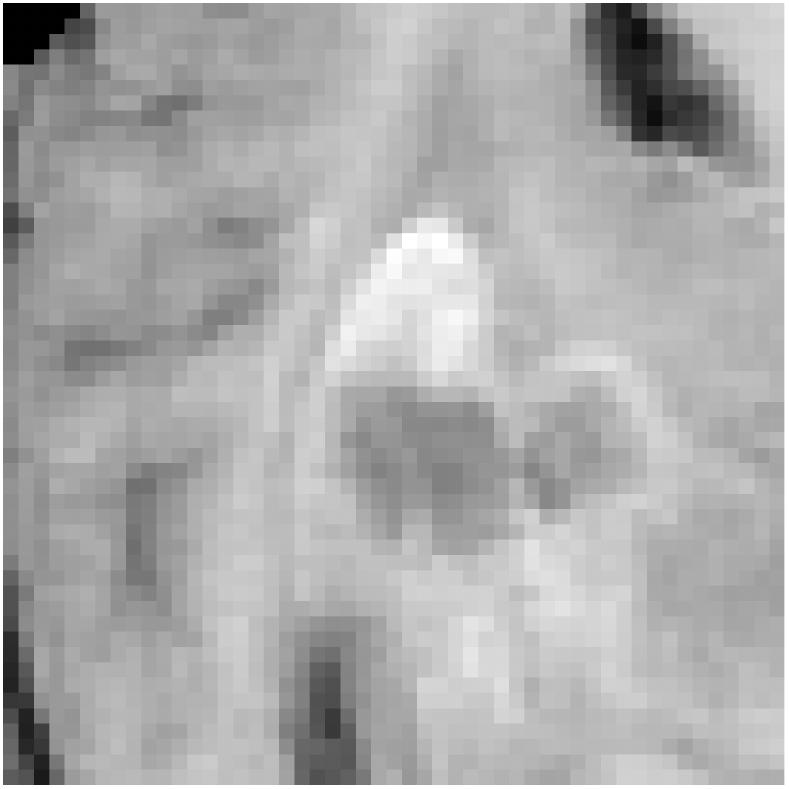}\\
\includegraphics[width=0.1\linewidth, angle=270]{fig_chp4/conventional_detail.pdf}%, height=0.17\linewidth
\includegraphics[width=0.1\linewidth, angle=180]{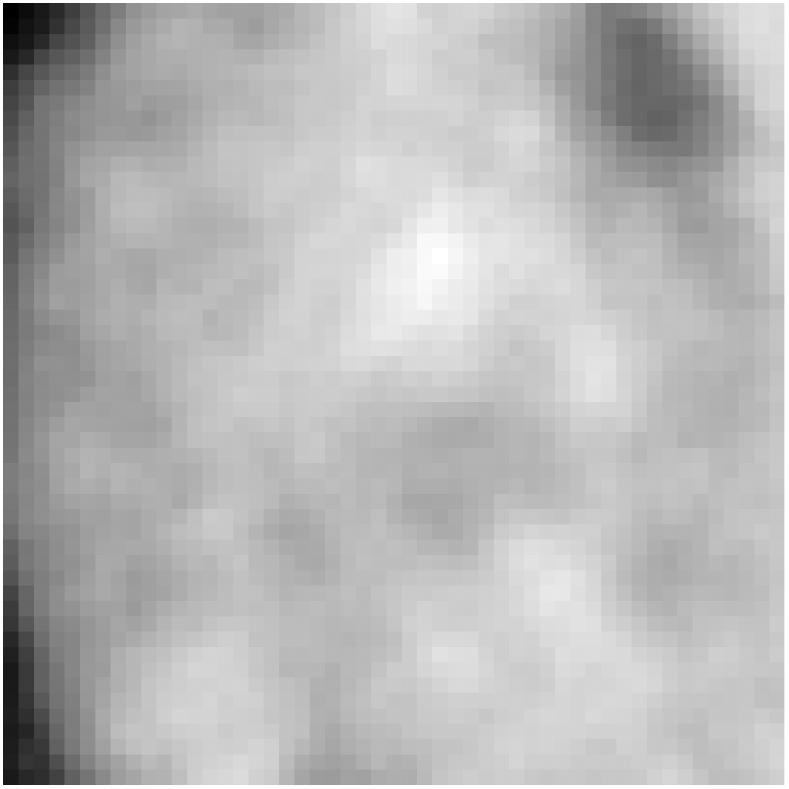}
\includegraphics[width=0.1\linewidth, angle=180]{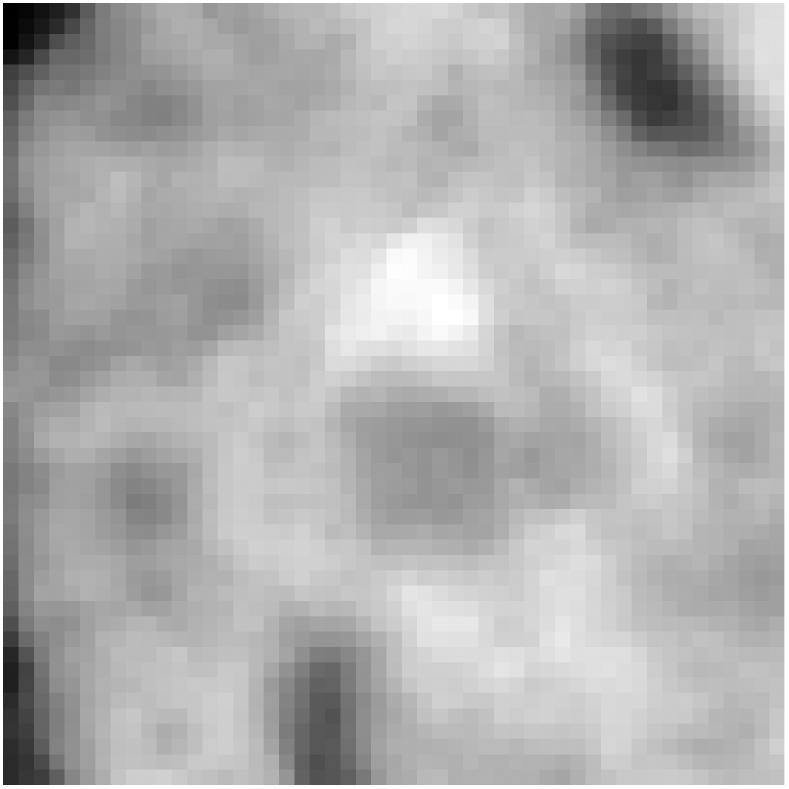}
\includegraphics[width=0.1\linewidth, angle=180]{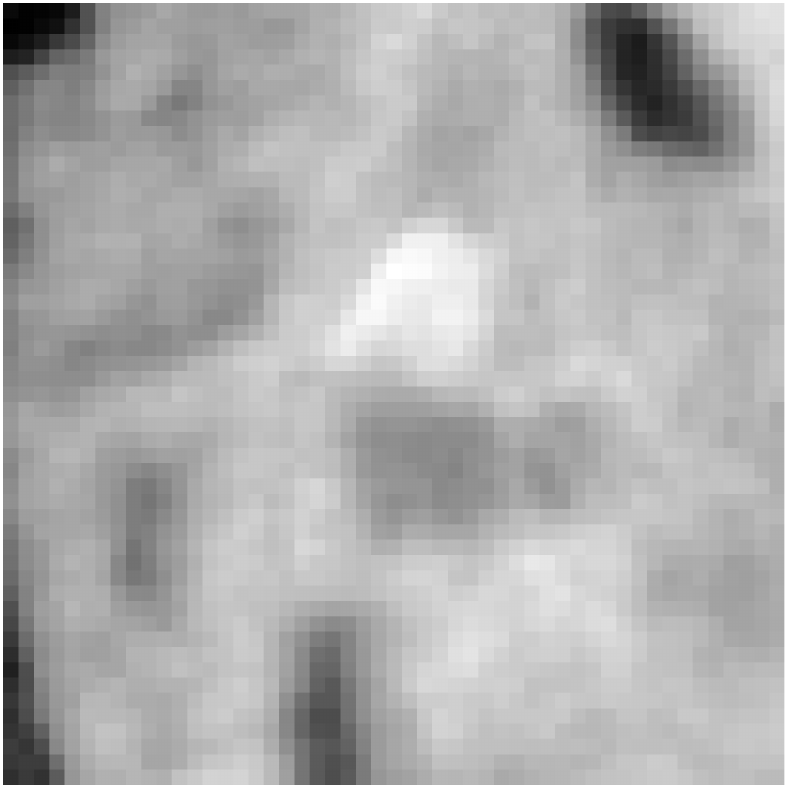}
\includegraphics[width=0.1\linewidth, angle=180]{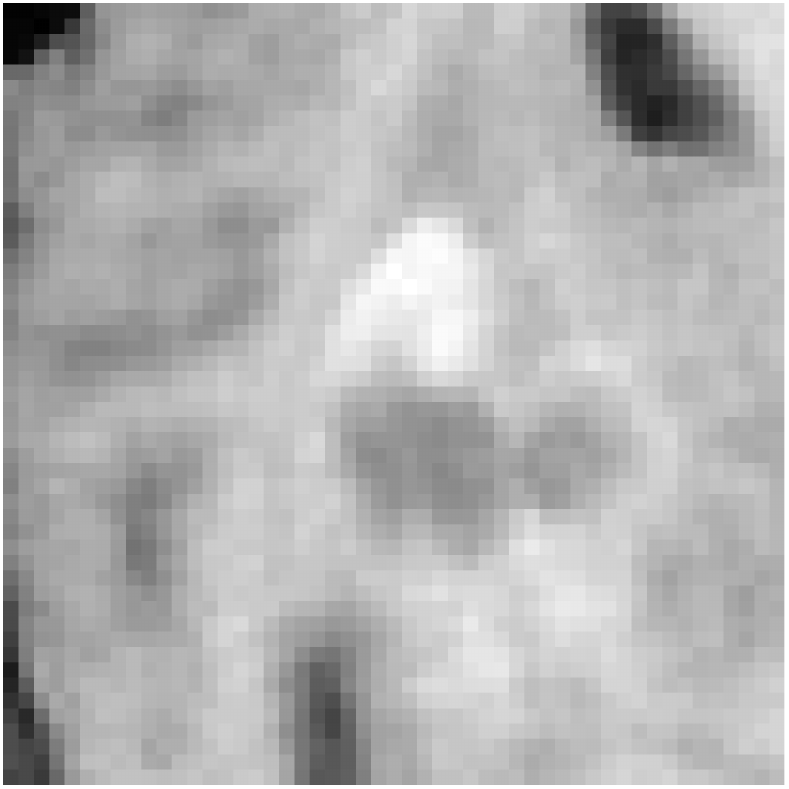}
\includegraphics[width=0.1\linewidth, angle=180]{fig_chp4/white.pdf}\\
\includegraphics[width=0.1\linewidth, angle=270]{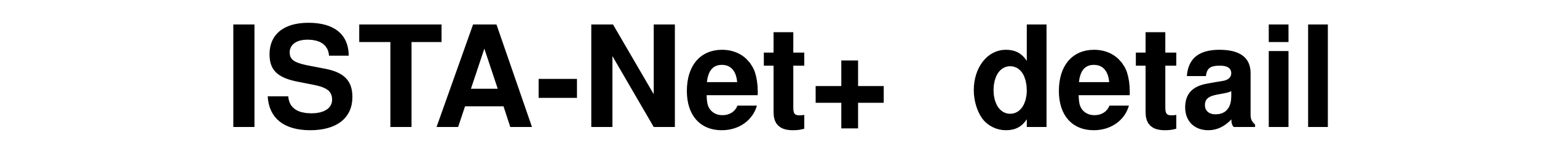}%, height=0.17\linewidth
\includegraphics[width=0.1\linewidth, angle=180]{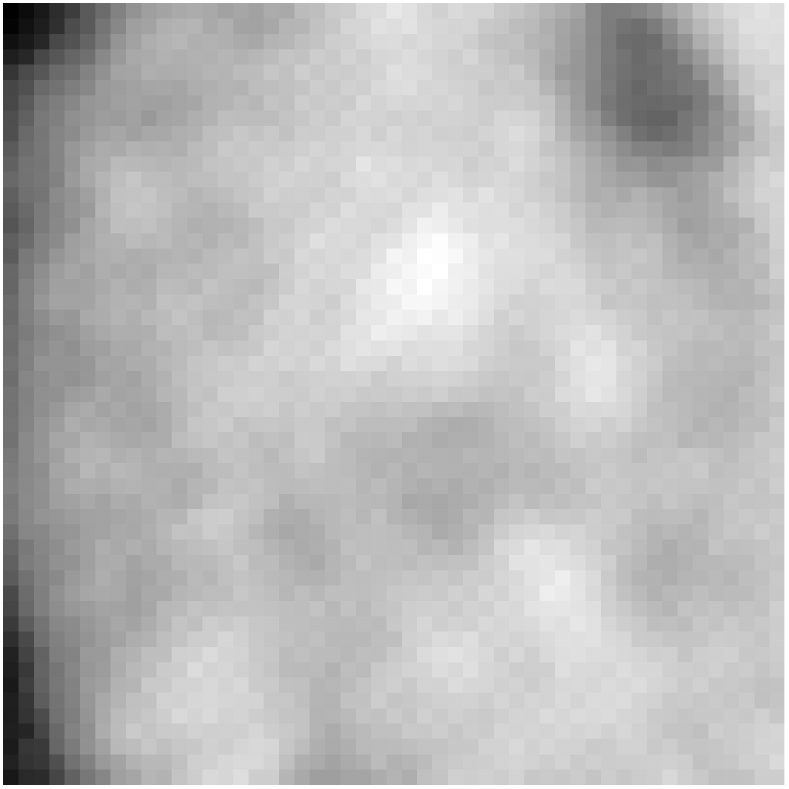}
\includegraphics[width=0.1\linewidth, angle=180]{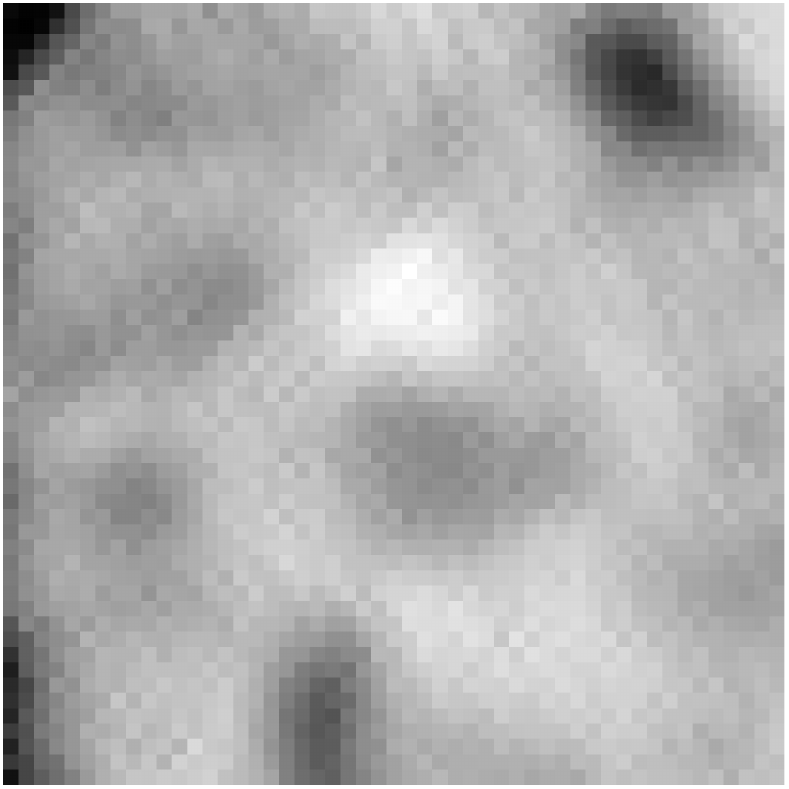}
\includegraphics[width=0.1\linewidth, angle=180]{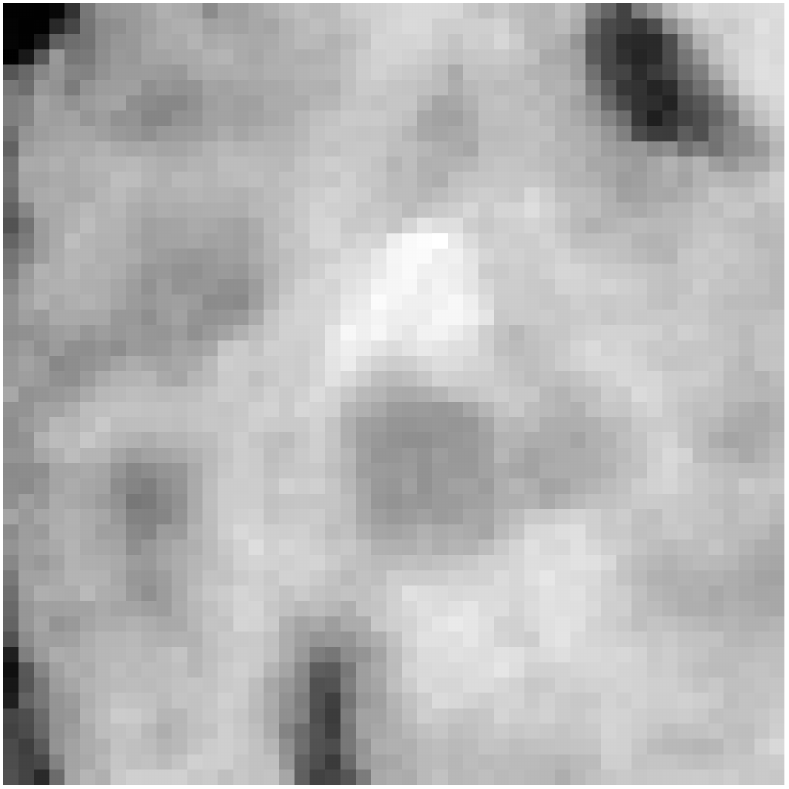}
\includegraphics[width=0.1\linewidth, angle=180]{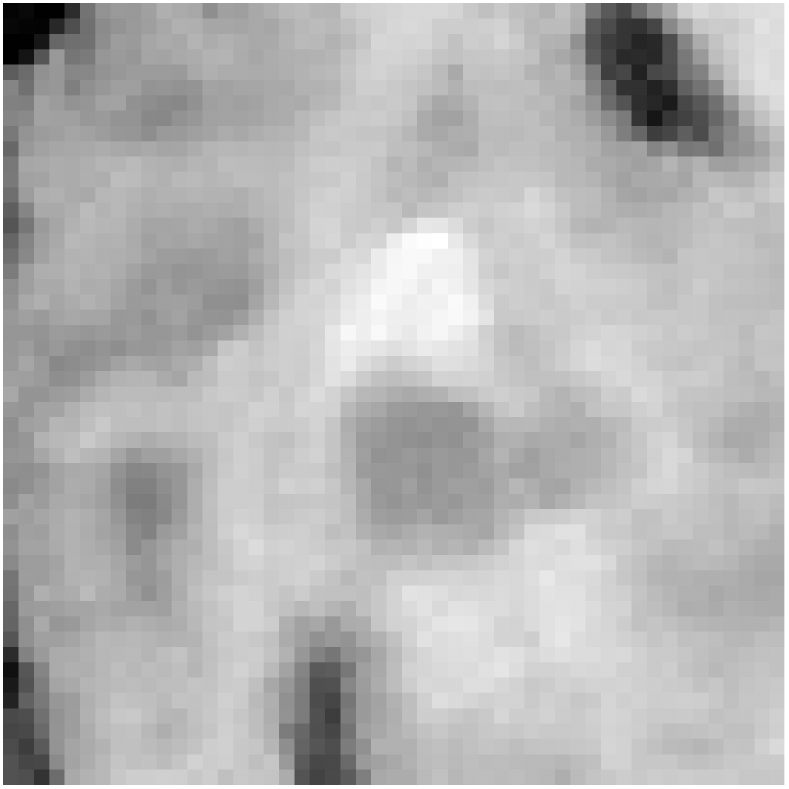}
\includegraphics[width=0.1\linewidth, angle=180]{fig_chp4/white.pdf}\\
\includegraphics[width=0.1\linewidth, angle=270]{fig_chp4/meta_error.pdf}%, height=0.16\linewidth
\includegraphics[width=0.1\linewidth, angle=180]{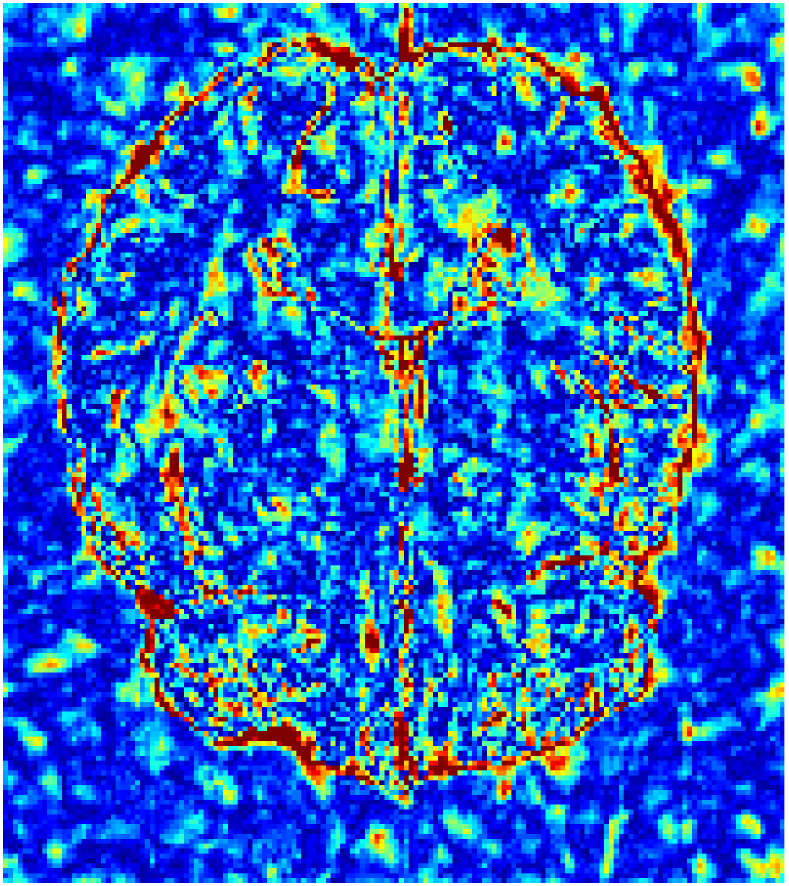}
\includegraphics[width=0.1\linewidth, angle=180]{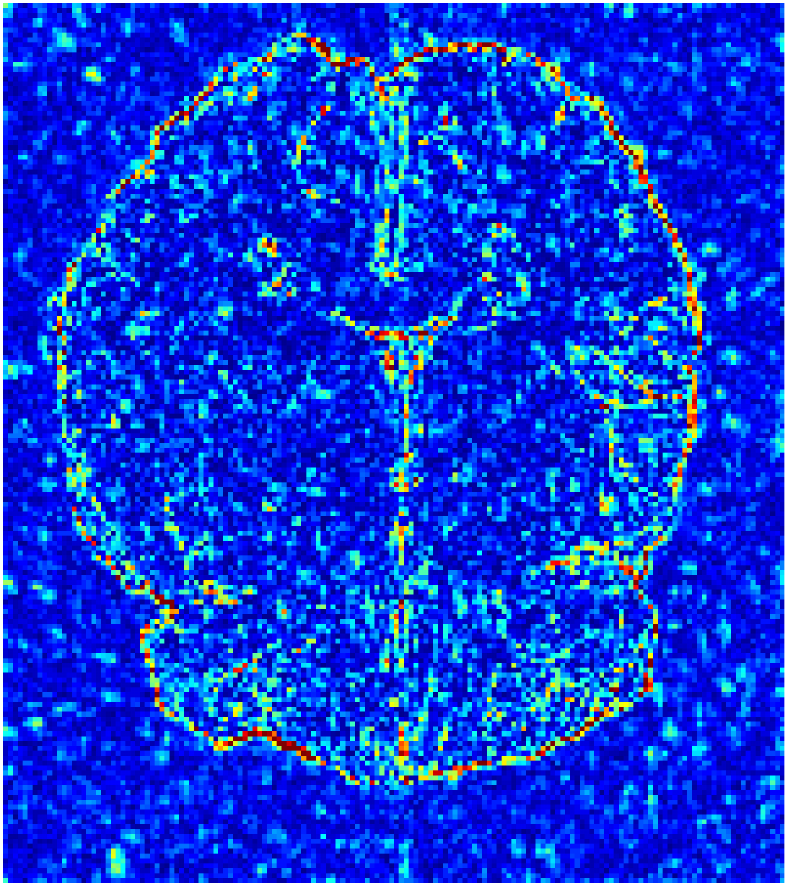}
\includegraphics[width=0.1\linewidth, angle=180]{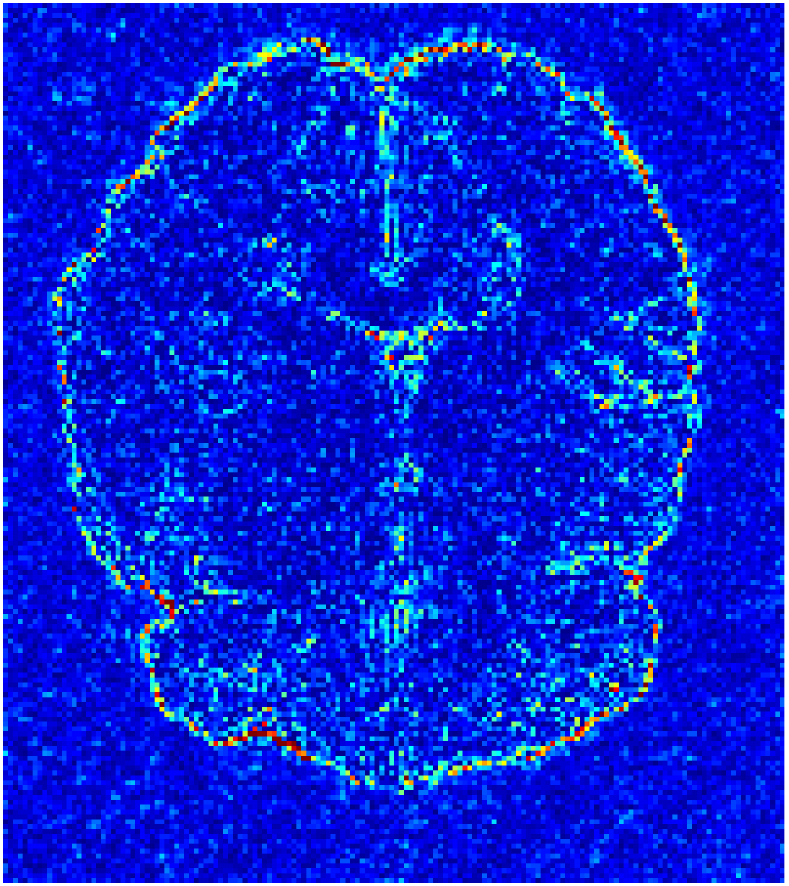}
\includegraphics[width=0.1\linewidth, angle=180]{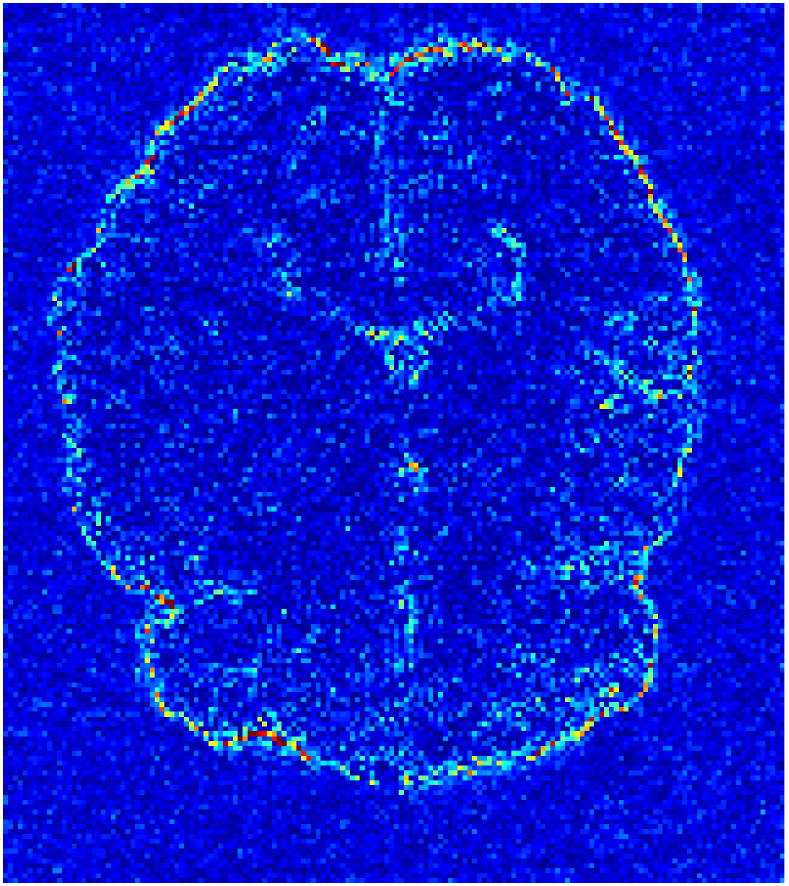}
\includegraphics[width=0.1\linewidth, angle=180]{fig_chp4/colorbar.pdf}\\
\includegraphics[width=0.1\linewidth, angle=270]{fig_chp4/conventional_error.pdf}%, height=0.17\linewidth
\includegraphics[width=0.1\linewidth, angle=180]{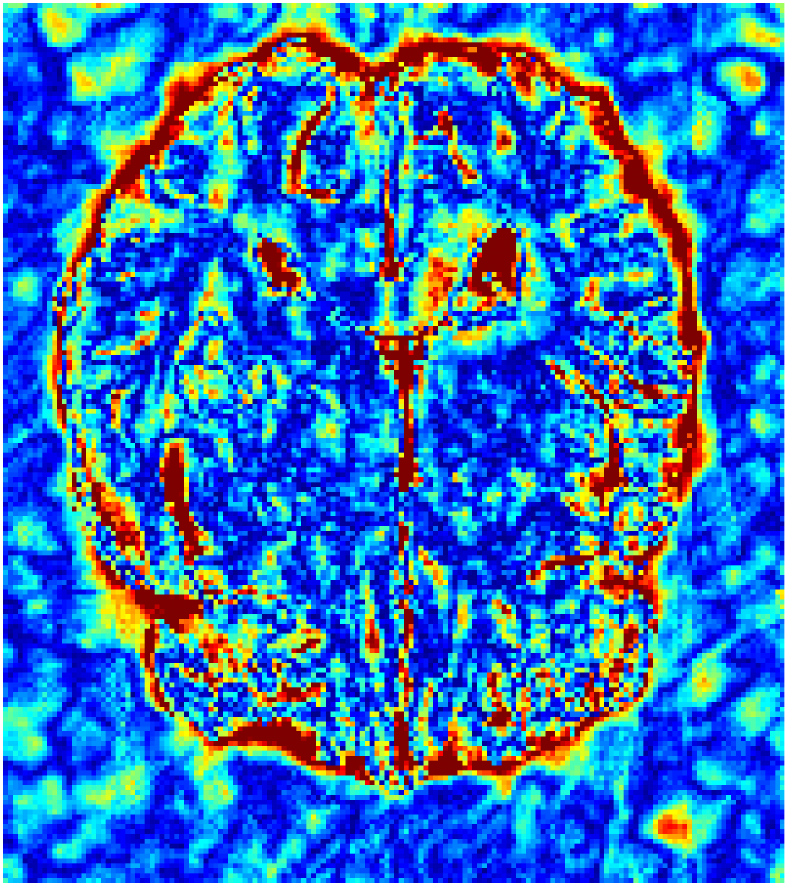}
\includegraphics[width=0.1\linewidth, angle=180]{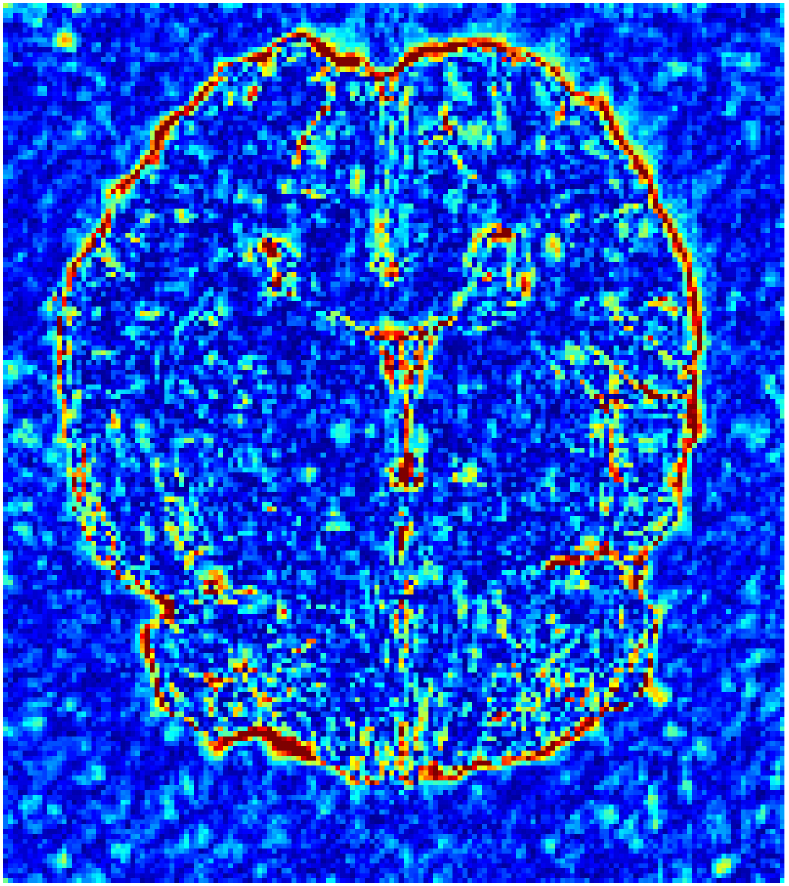}
\includegraphics[width=0.1\linewidth, angle=180]{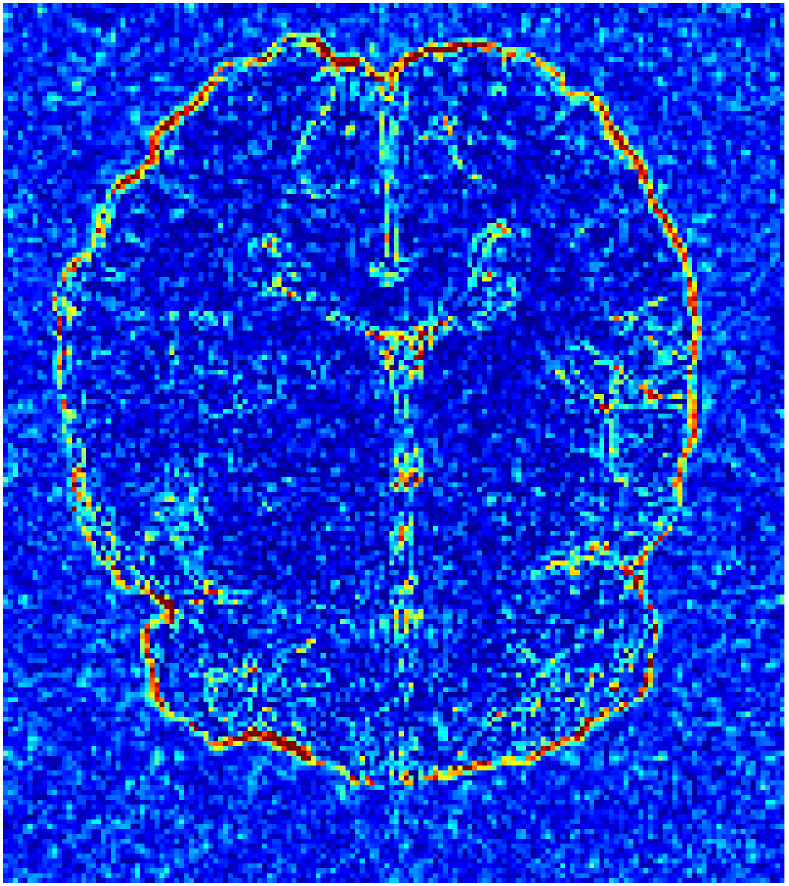}
\includegraphics[width=0.1\linewidth, angle=180]{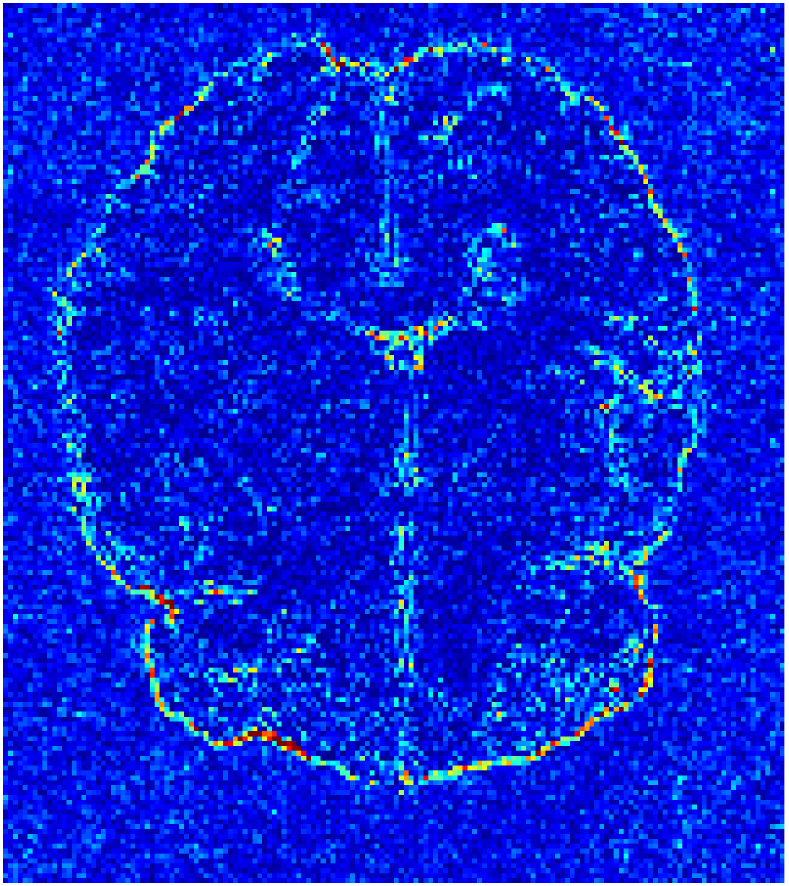}
\includegraphics[width=0.1\linewidth, angle=180]{fig_chp4/white.pdf}\\
\includegraphics[width=0.1\linewidth, angle=270]{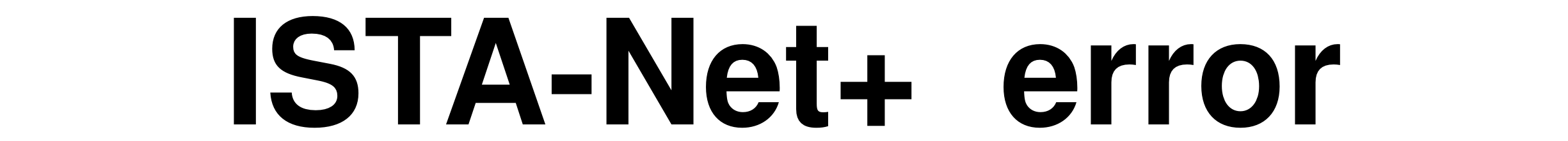}%, height=0.17\linewidth
\includegraphics[width=0.1\linewidth, angle=180]{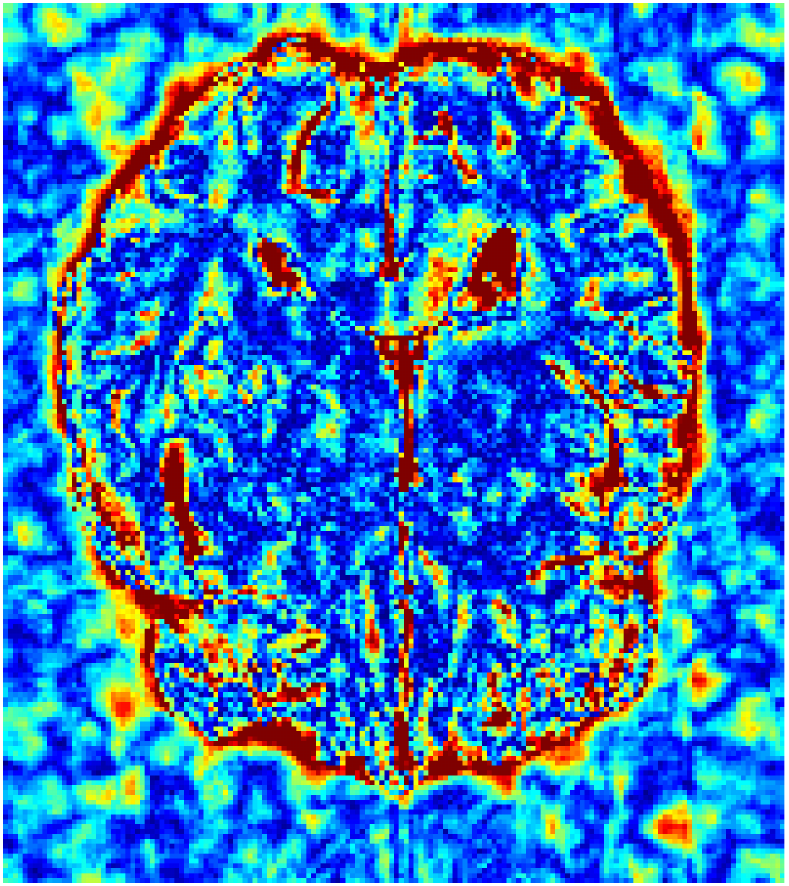}
\includegraphics[width=0.1\linewidth, angle=180]{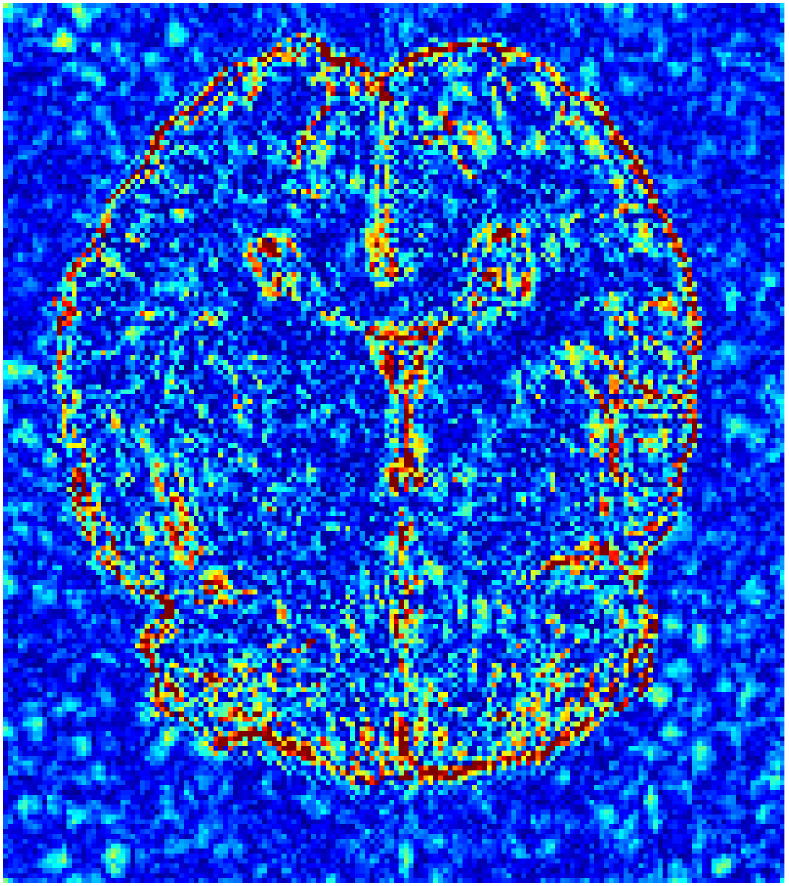}
\includegraphics[width=0.1\linewidth, angle=180]{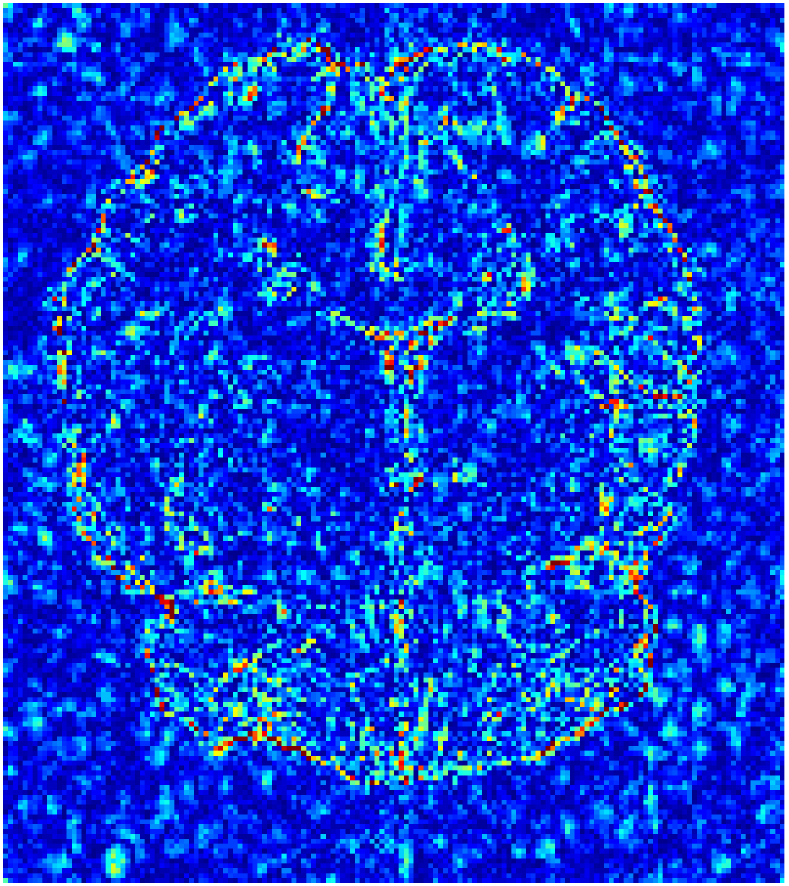}
\includegraphics[width=0.1\linewidth, angle=180]{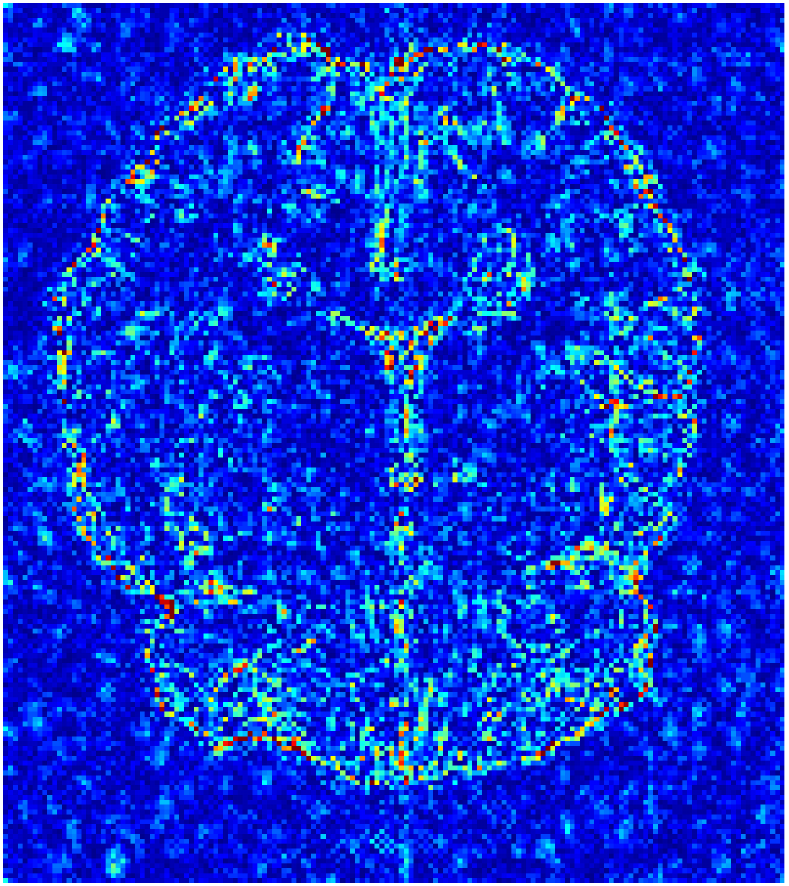}
\includegraphics[width=0.1\linewidth, angle=180]{fig_chp4/white.pdf}\\
\includegraphics[width=0.1\linewidth, angle=90]{fig_chp4/masks.pdf}
\includegraphics[width=0.1\linewidth]{fig_chp4/mask10_t1.pdf}
\includegraphics[width=0.1\linewidth]{fig_chp4/mask20_t1.pdf}
\includegraphics[width=0.1\linewidth]{fig_chp4/mask30_t1.pdf}
\includegraphics[width=0.1\linewidth]{fig_chp4/mask40_t1.pdf}
\includegraphics[width=0.1\linewidth]{fig_chp4/white.pdf}
\caption{From top to bottom: The reconstruction results, zoomed-in details, point-wise errors with a color bar, and associated \textbf{{radial}} masks for meta-learning, conventional learning, and ISTA-Net$^+$. }
\label{figure_same_ratio_t1}
\end{figure}

\begin{figure}[H]
\includegraphics[width=0.13\linewidth, angle=270]{fig_chp4/meta_result.pdf}
\includegraphics[width=0.13\linewidth, angle=180]{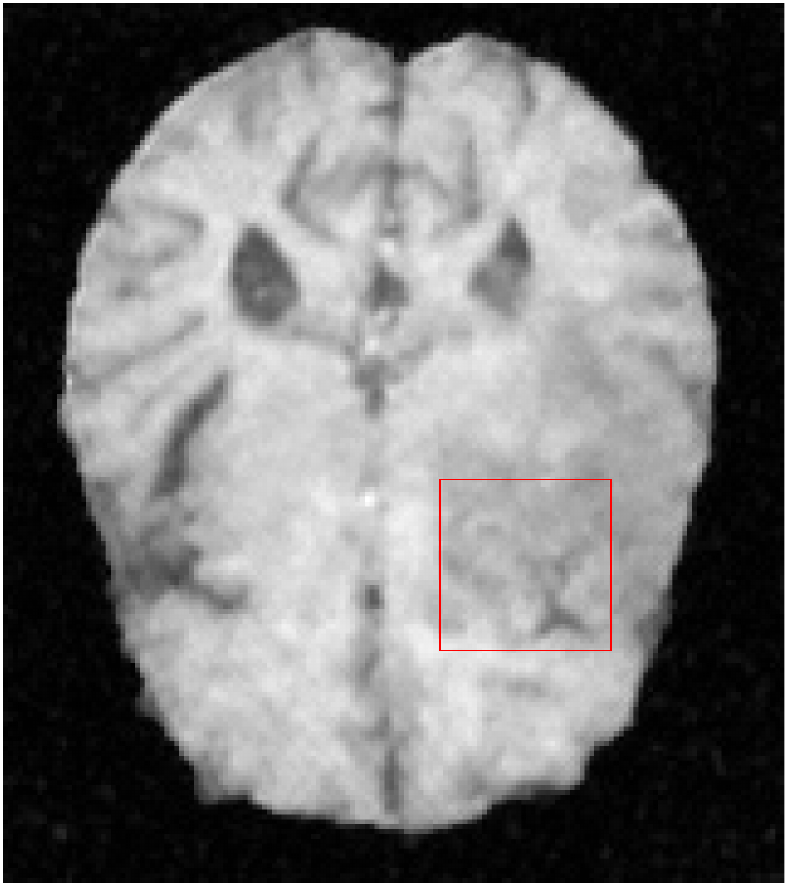}
\includegraphics[width=0.13\linewidth, angle=180]{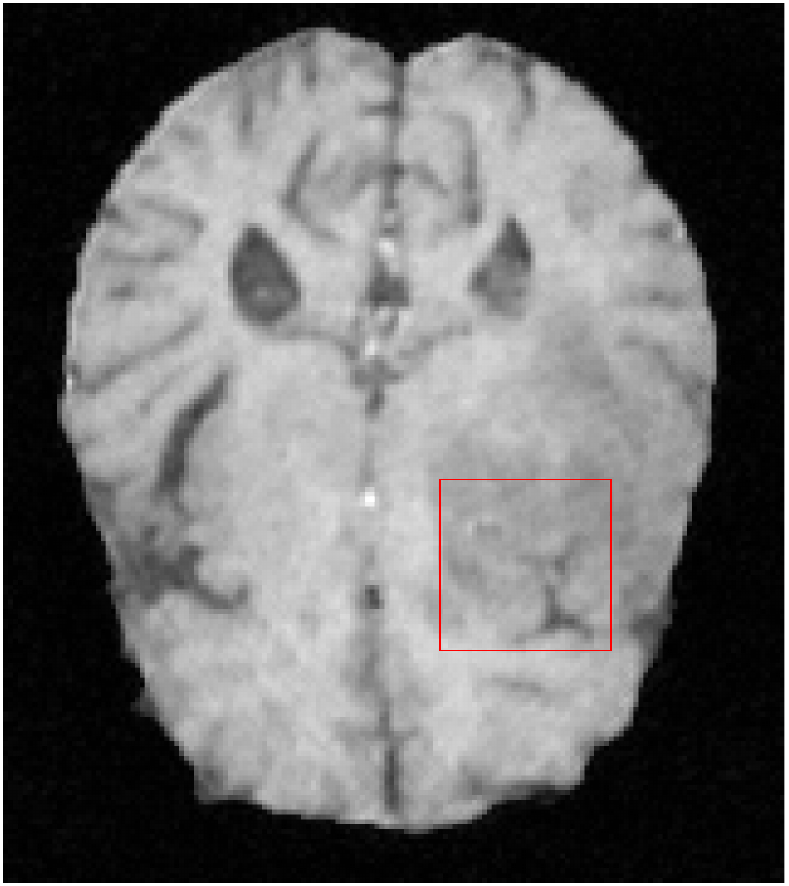}
\includegraphics[width=0.13\linewidth, angle=180]{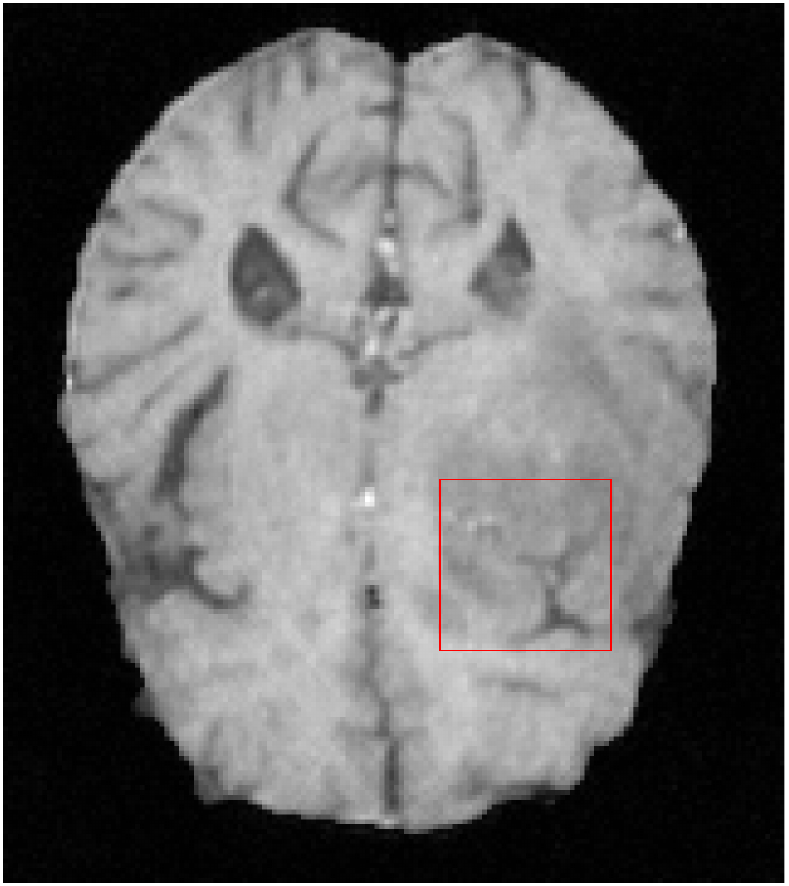}
\includegraphics[width=0.13\linewidth, angle=180]{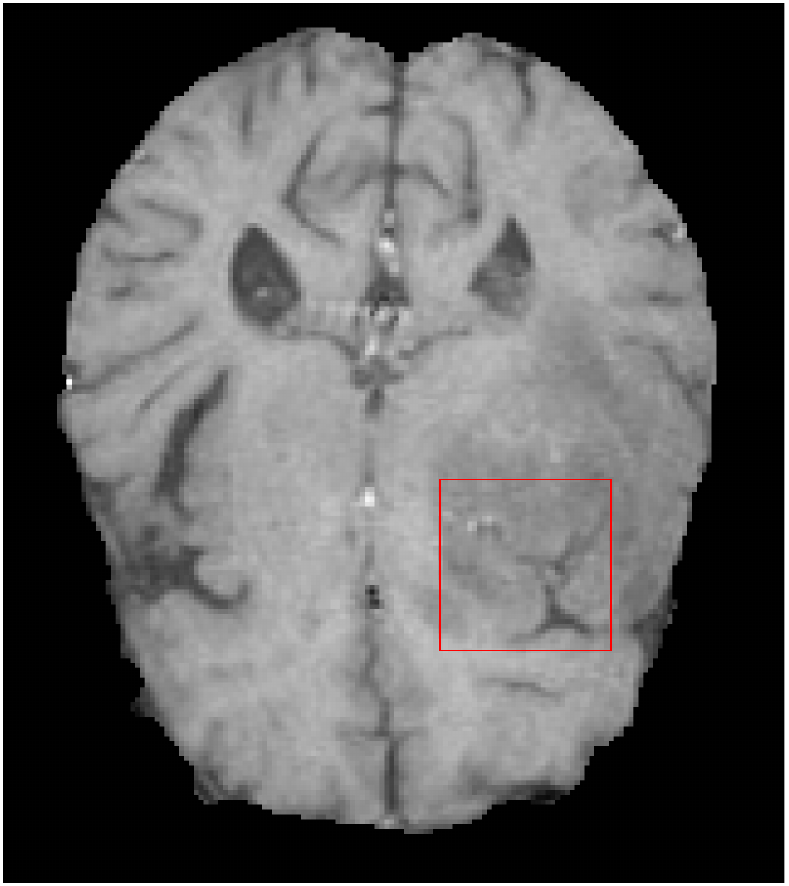}\\
\includegraphics[width=0.13\linewidth, angle=270]{fig_chp4/conventional_result.pdf}
\includegraphics[width=0.13\linewidth, angle=180]{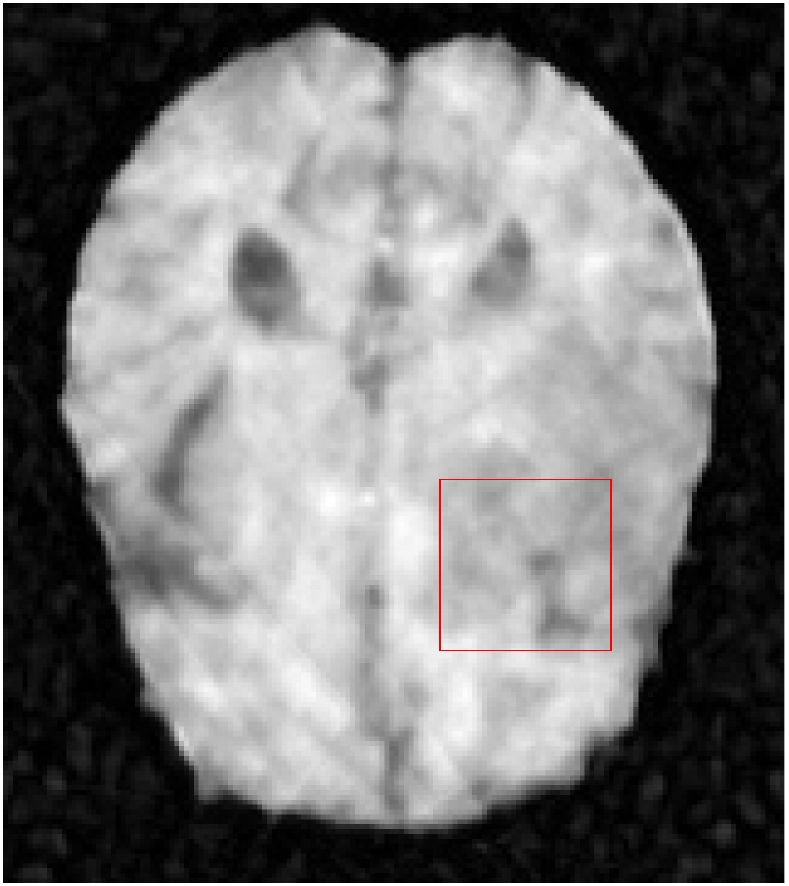}
\includegraphics[width=0.13\linewidth, angle=180]{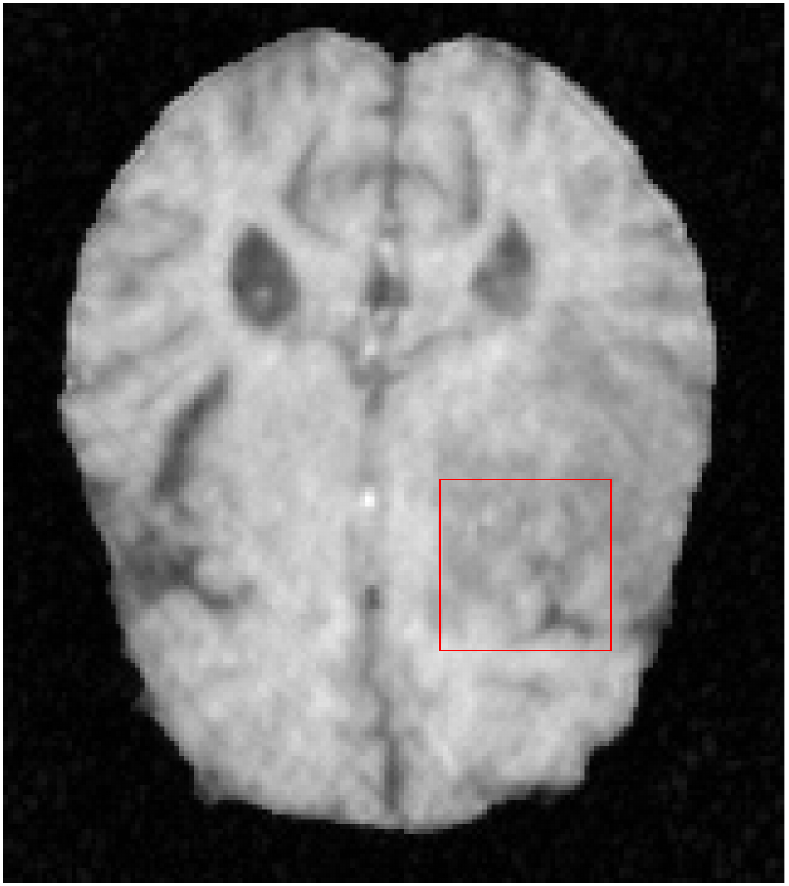}
\includegraphics[width=0.13\linewidth, angle=180]{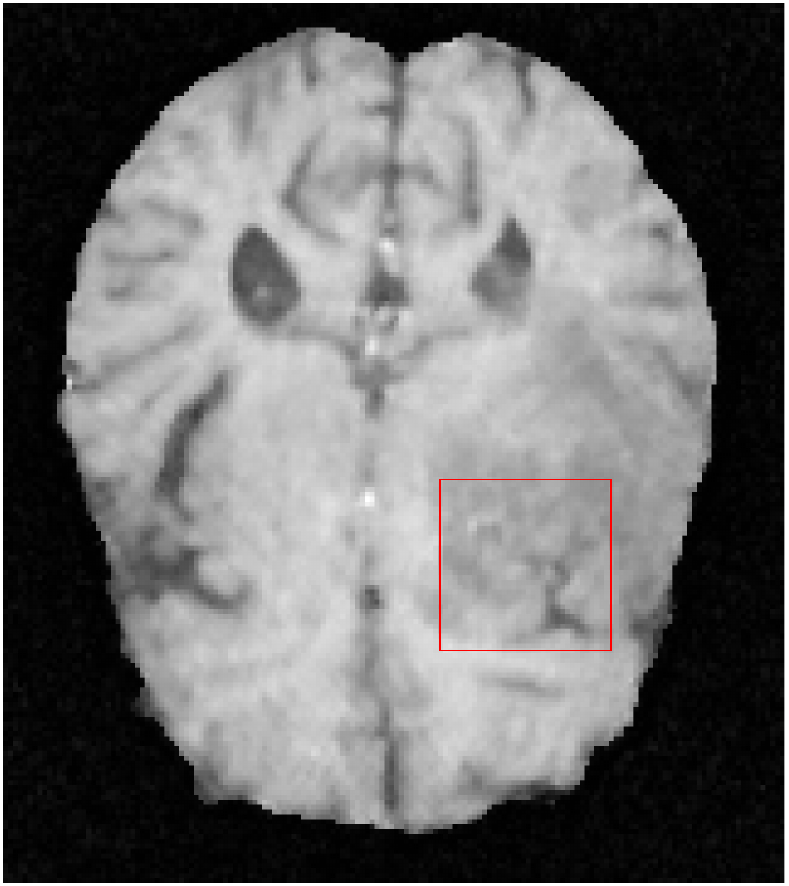}
\includegraphics[width=0.13\linewidth, angle=180]{fig_chp4/white.pdf}\\
\includegraphics[width=0.13\linewidth, angle=270]{fig_chp4/meta_detail.pdf}
\includegraphics[width=0.13\linewidth, angle=180]{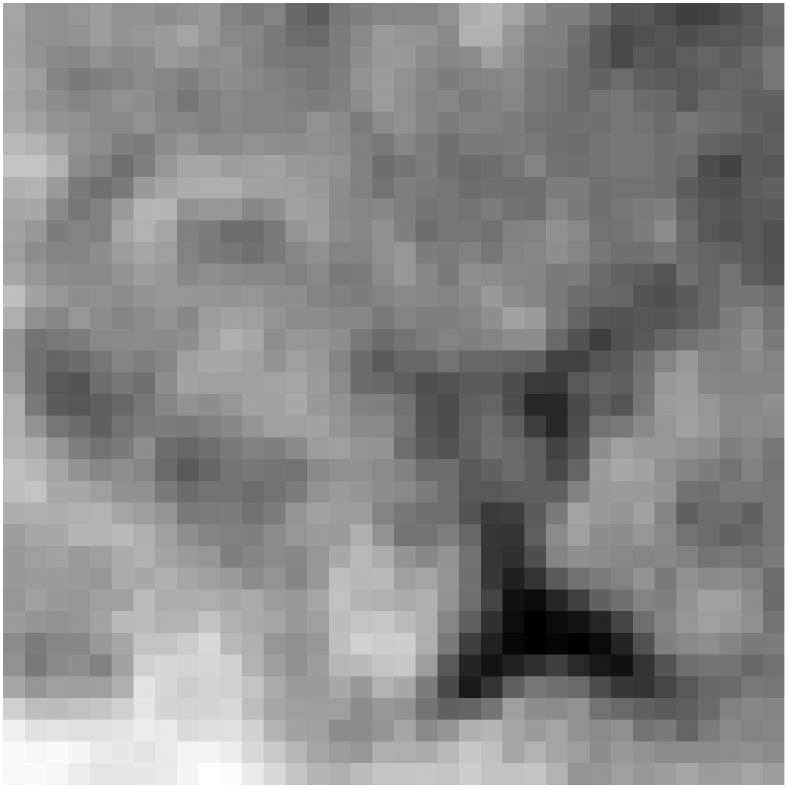}
\includegraphics[width=0.13\linewidth, angle=180]{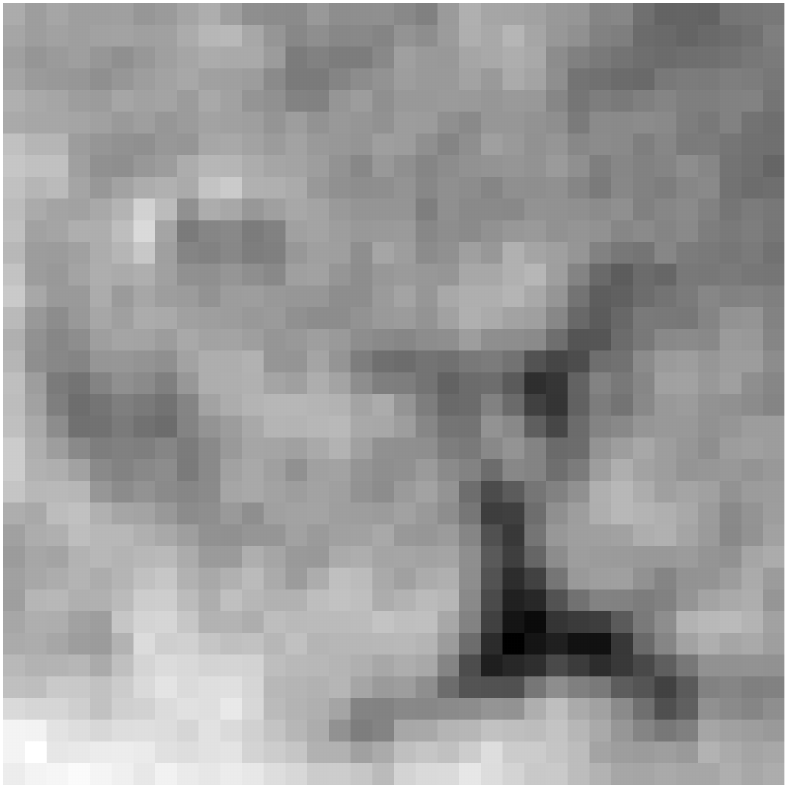}
\includegraphics[width=0.13\linewidth, angle=180]{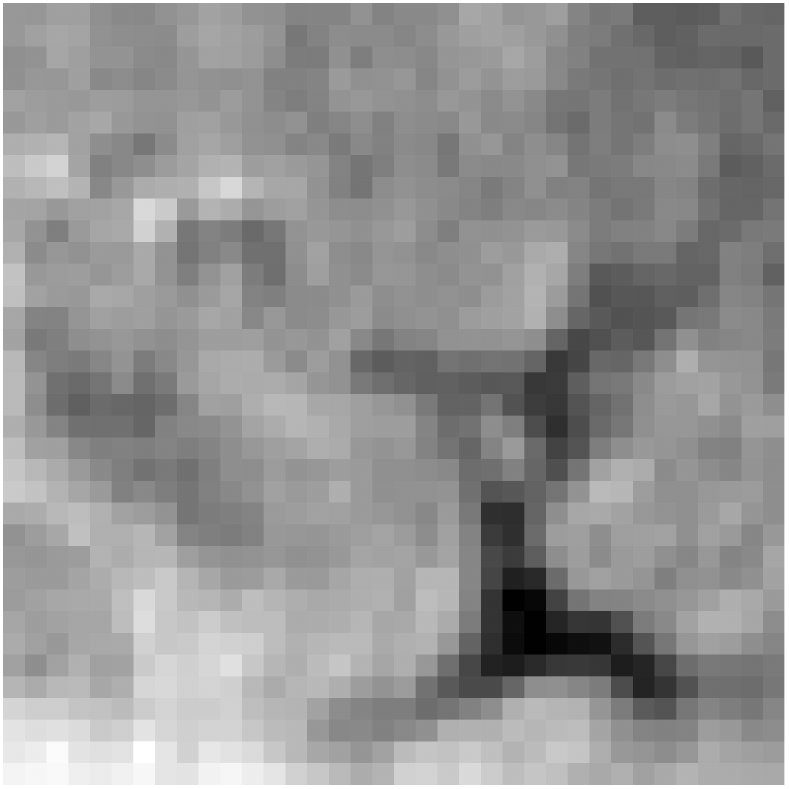}
\includegraphics[width=0.13\linewidth, angle=180]{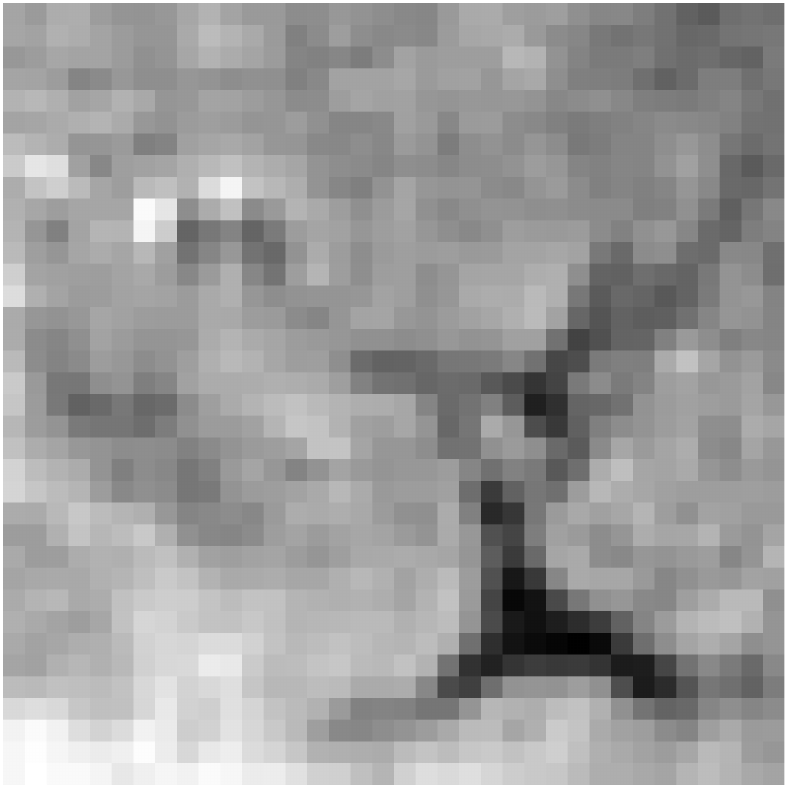}\\
\includegraphics[width=0.13\linewidth, angle=270]{fig_chp4/conventional_detail.pdf}
\includegraphics[width=0.13\linewidth, angle=180]{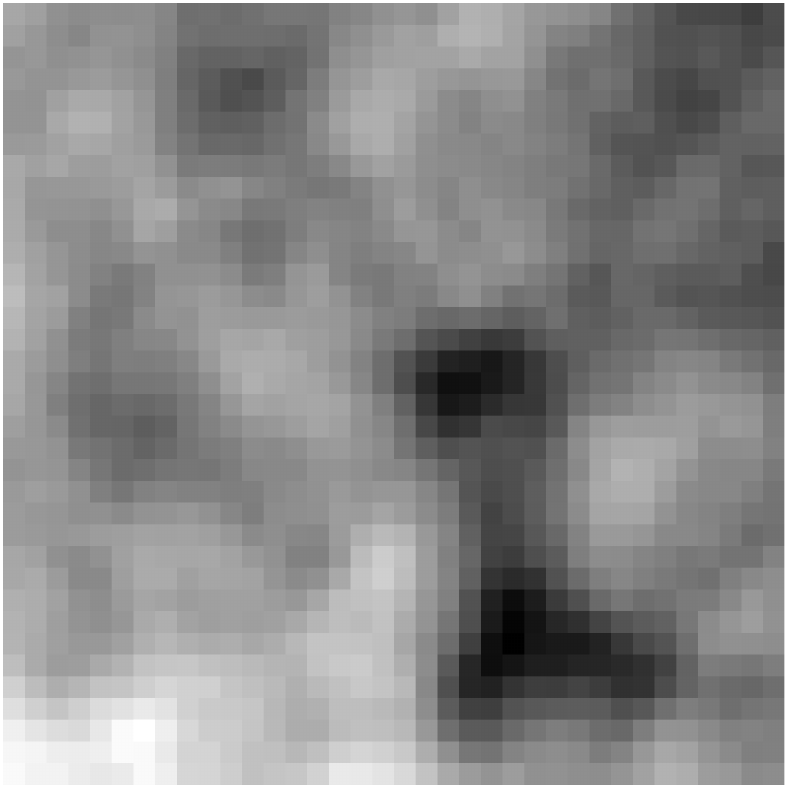}
\includegraphics[width=0.13\linewidth, angle=180]{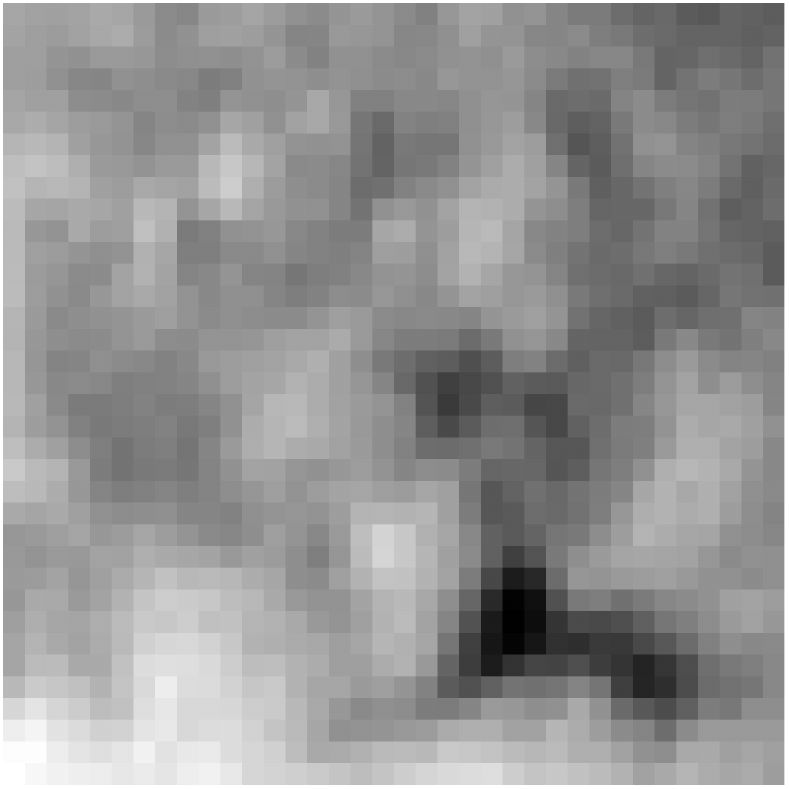}
\includegraphics[width=0.13\linewidth, angle=180]{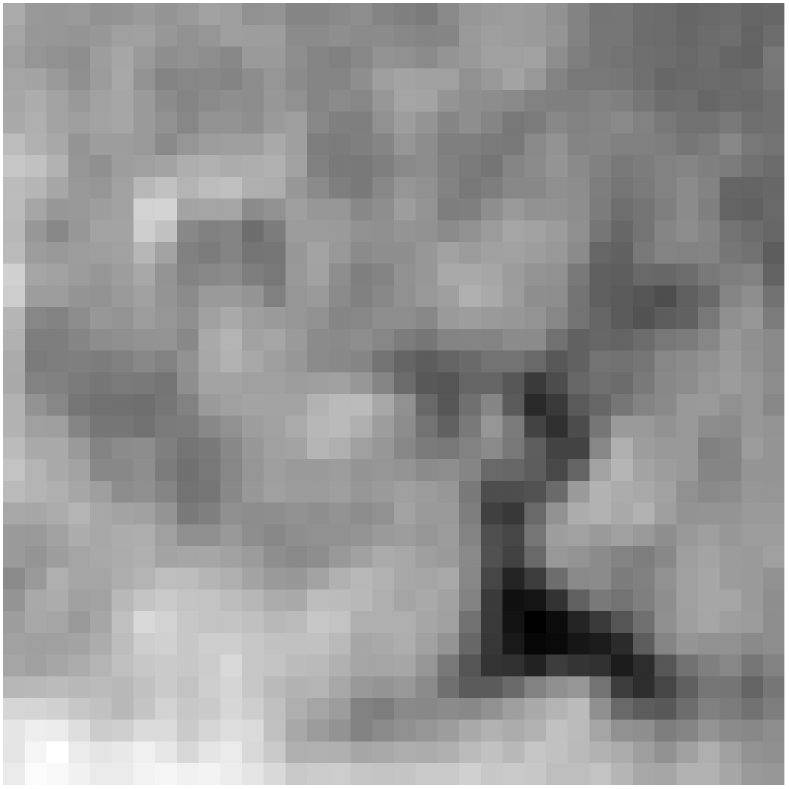}
\includegraphics[width=0.13\linewidth, angle=180]{fig_chp4/white.pdf}\\
\includegraphics[width=0.13\linewidth, angle=270]{fig_chp4/meta_error.pdf}
\includegraphics[width=0.13\linewidth, angle=180]{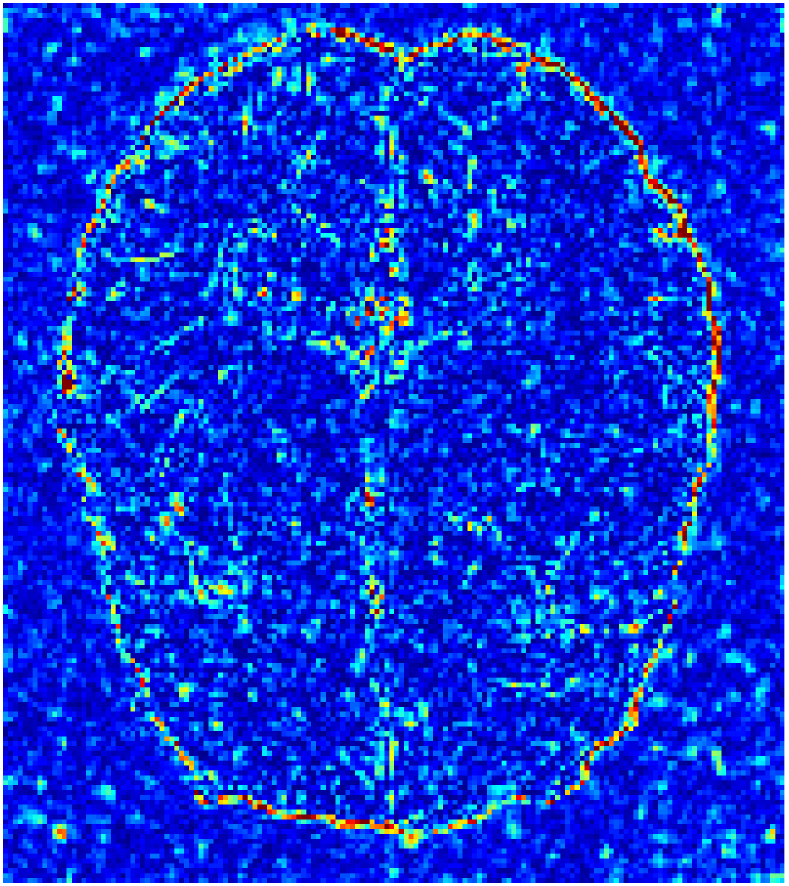}
\includegraphics[width=0.13\linewidth, angle=180]{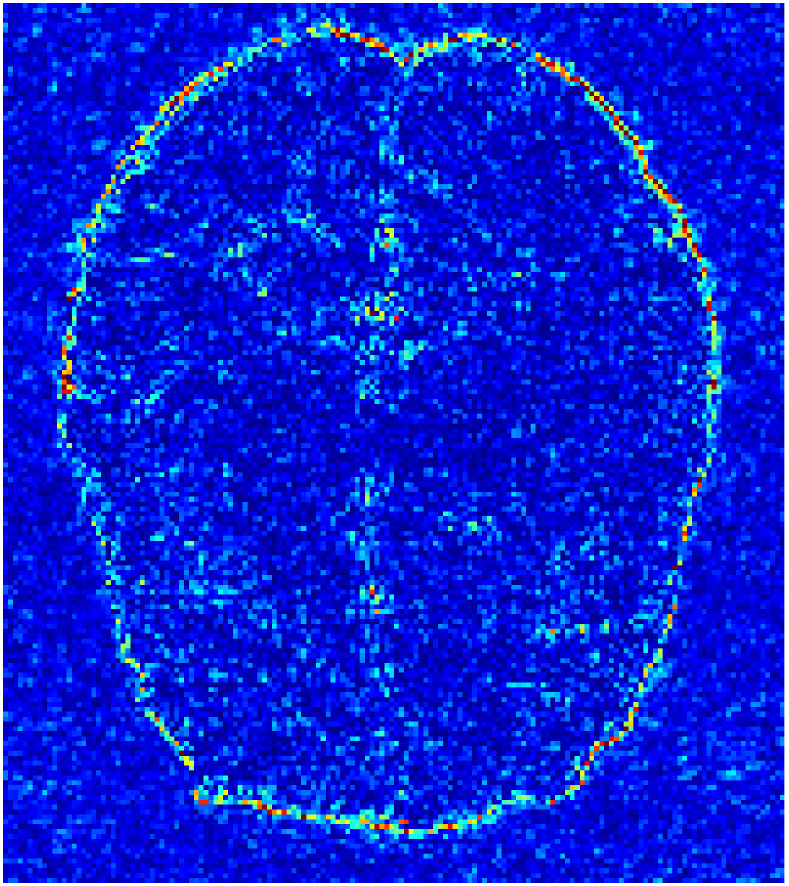}
\includegraphics[width=0.13\linewidth, angle=180]{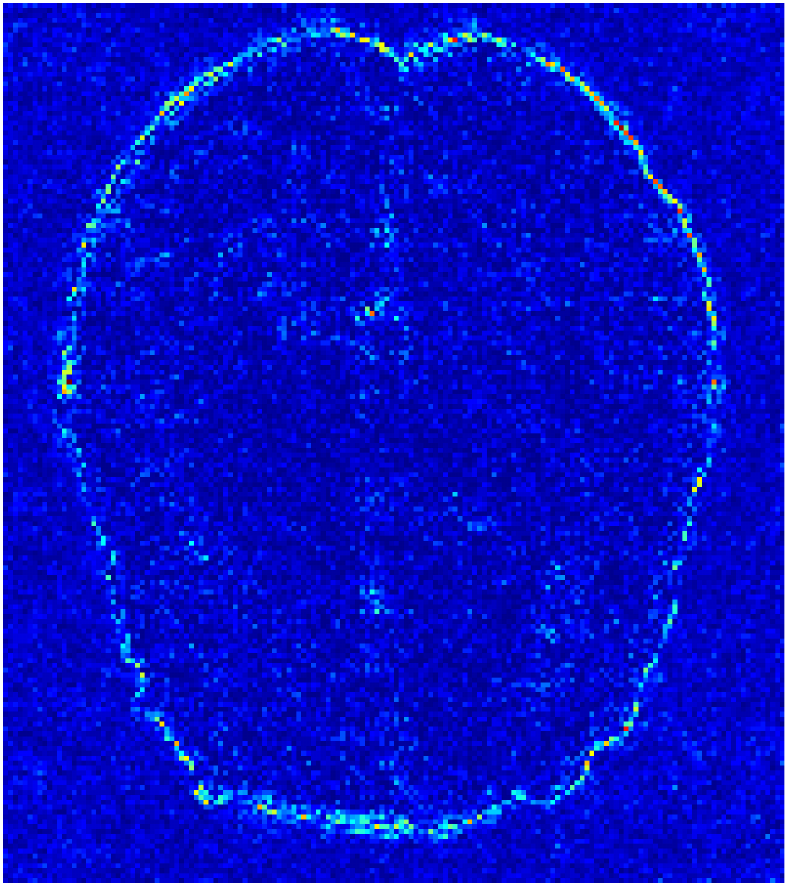}
\includegraphics[width=0.13\linewidth, angle=180]{fig_chp4/colorbar.pdf}\\
\includegraphics[width=0.13\linewidth, angle=270]{fig_chp4/conventional_error.pdf}
\includegraphics[width=0.13\linewidth, angle=180]{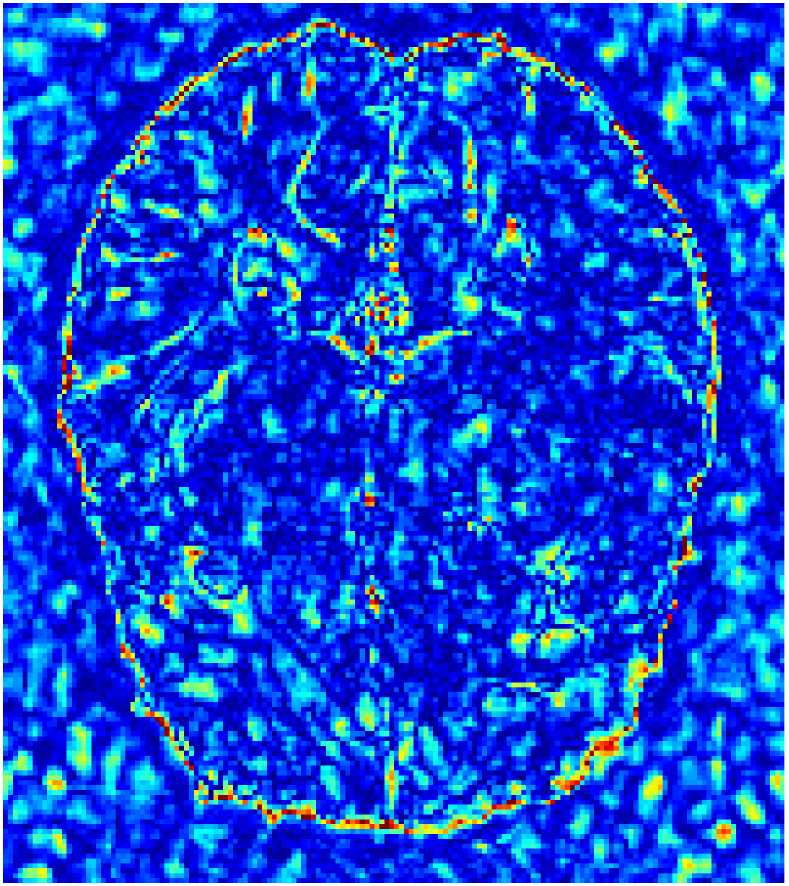}
\includegraphics[width=0.13\linewidth, angle=180]{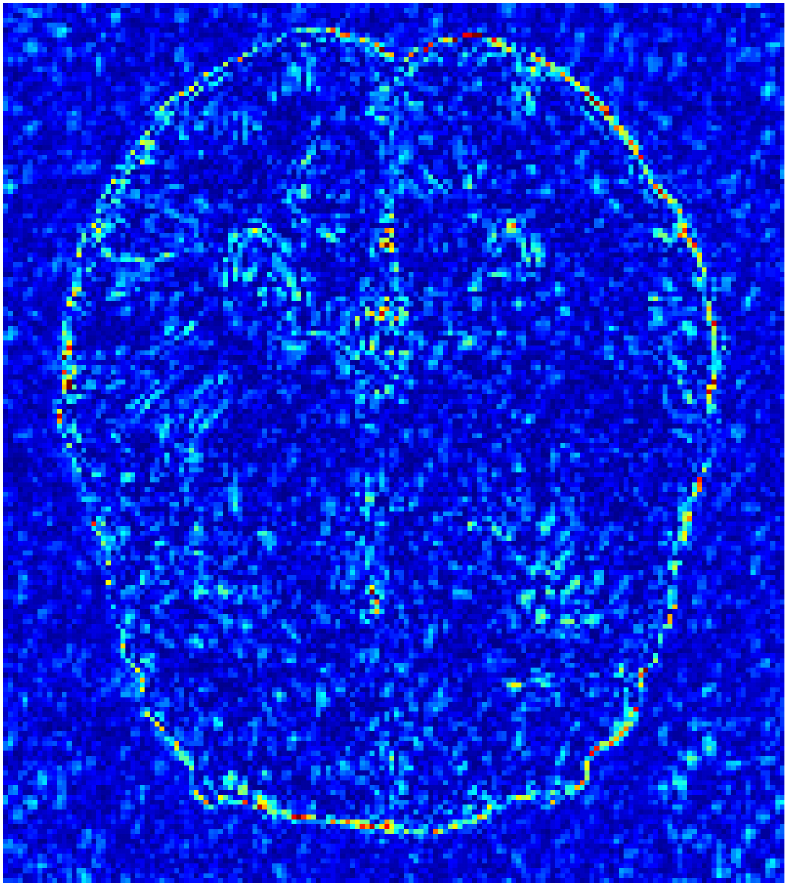}
\includegraphics[width=0.13\linewidth, angle=180]{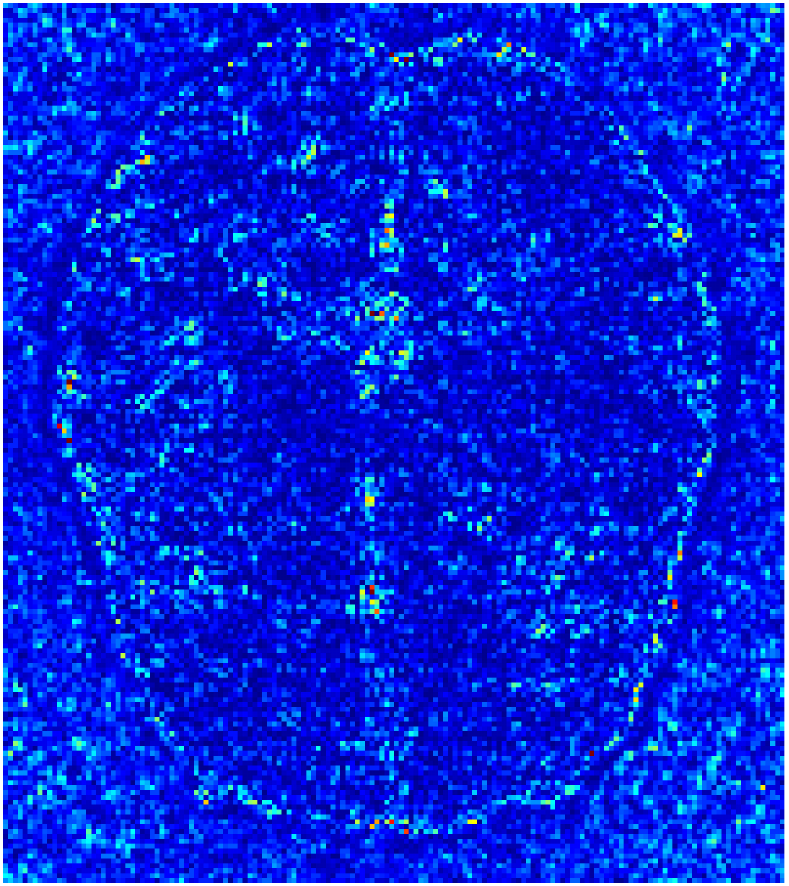}
\includegraphics[width=0.13\linewidth, angle=180]{fig_chp4/white.pdf}\\
\includegraphics[width=0.13\linewidth, angle=90]{fig_chp4/masks.pdf}
\includegraphics[width=0.13\linewidth]{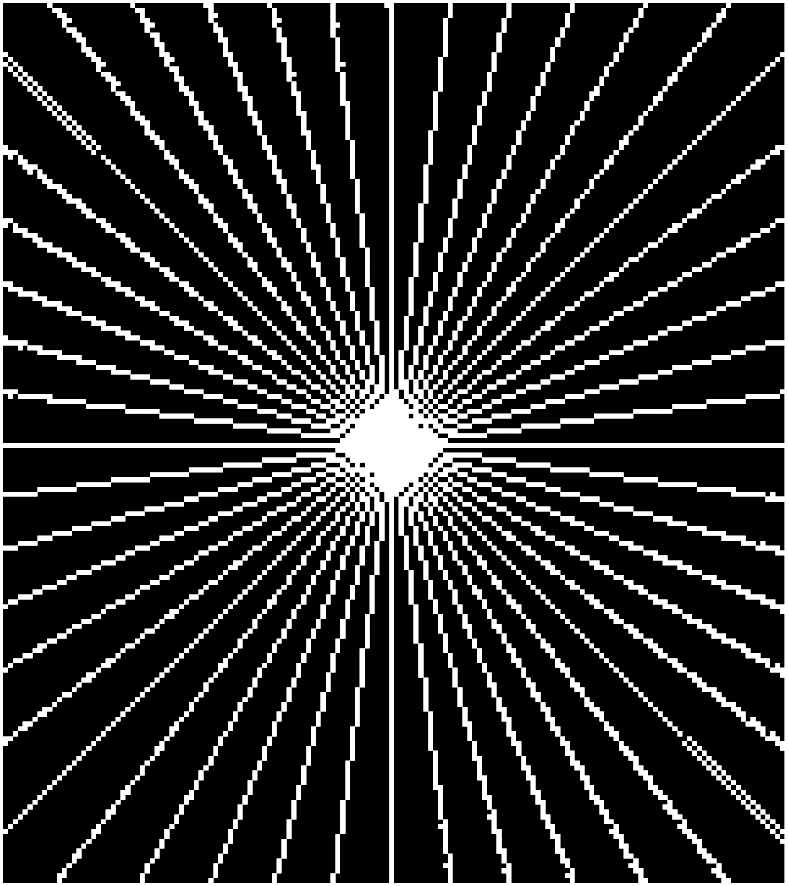}
\includegraphics[width=0.13\linewidth]{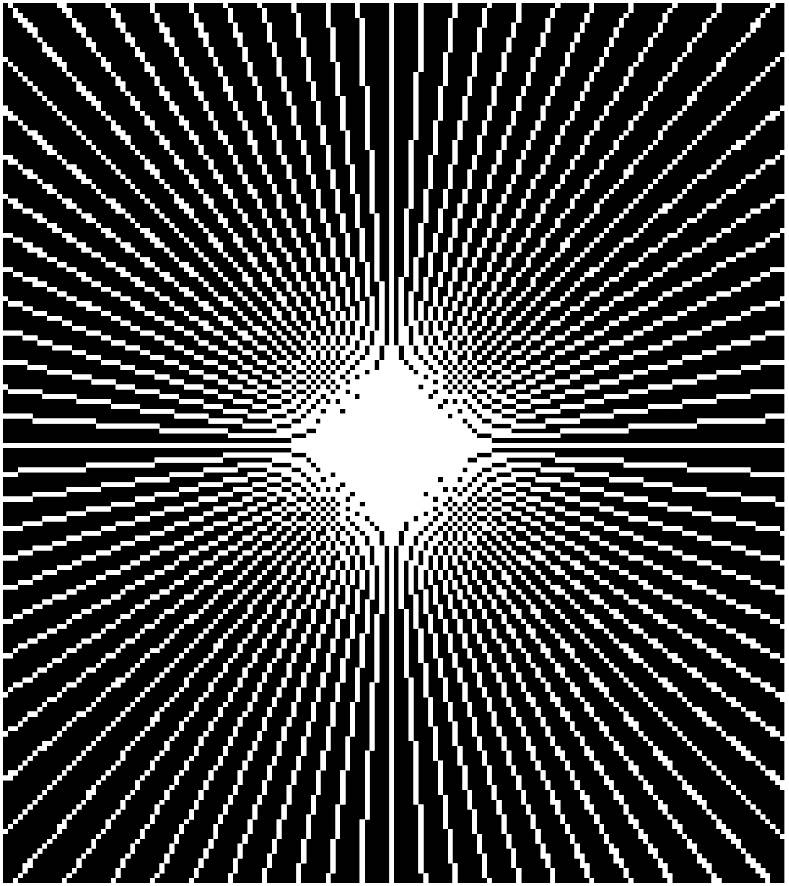}
\includegraphics[width=0.13\linewidth]{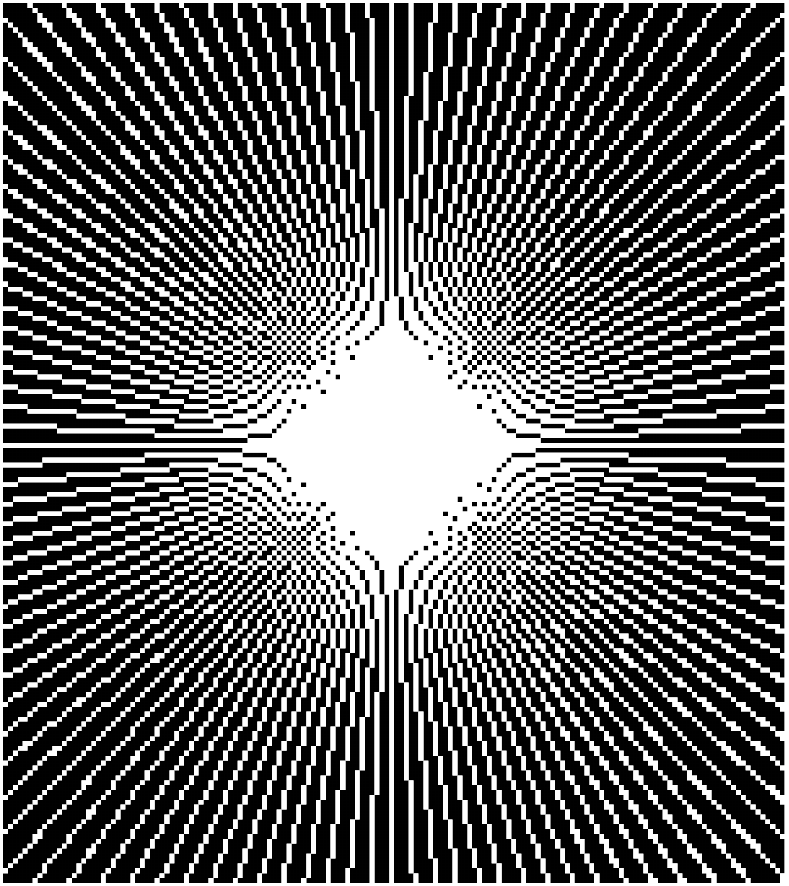}
\includegraphics[width=0.13\linewidth]{fig_chp4/white.pdf}
\caption{The pictures (from top to bottom) display the T1 brain image reconstruction results, zoomed-in details, point-wise errors with a color bar, and associated
 \textbf{{radial}} masks.}
\label{figure_dif_ratio_t1}
\end{figure}

\begin{figure}[H]
\includegraphics[width=0.13\linewidth, angle=270]{fig_chp4/meta_result.pdf}
\includegraphics[width=0.13\linewidth, angle=180]{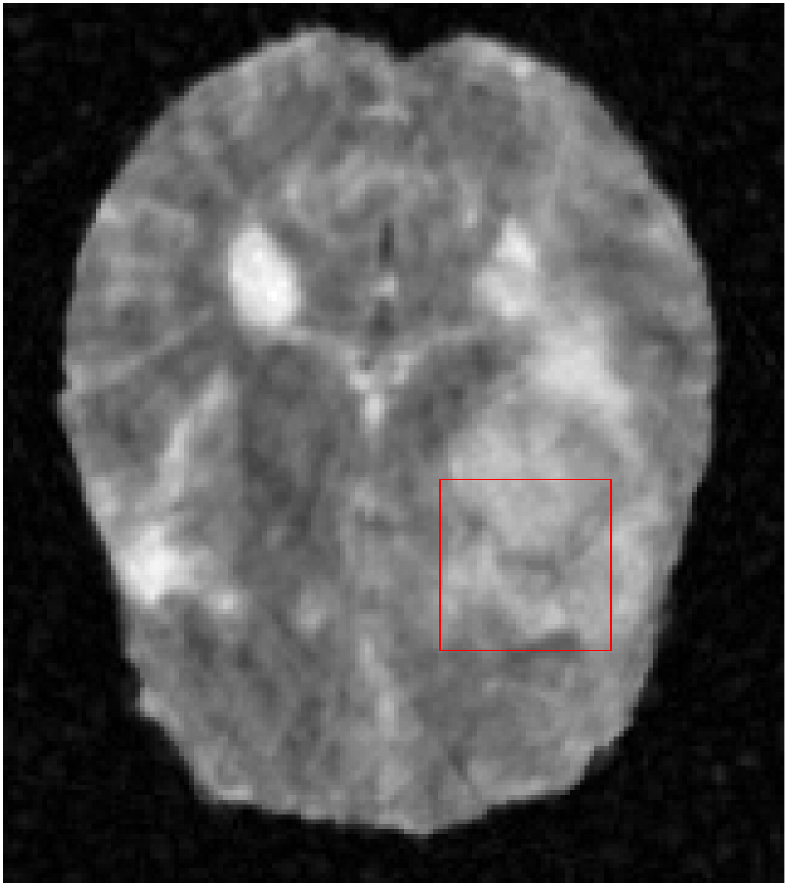}
\includegraphics[width=0.13\linewidth, angle=180]{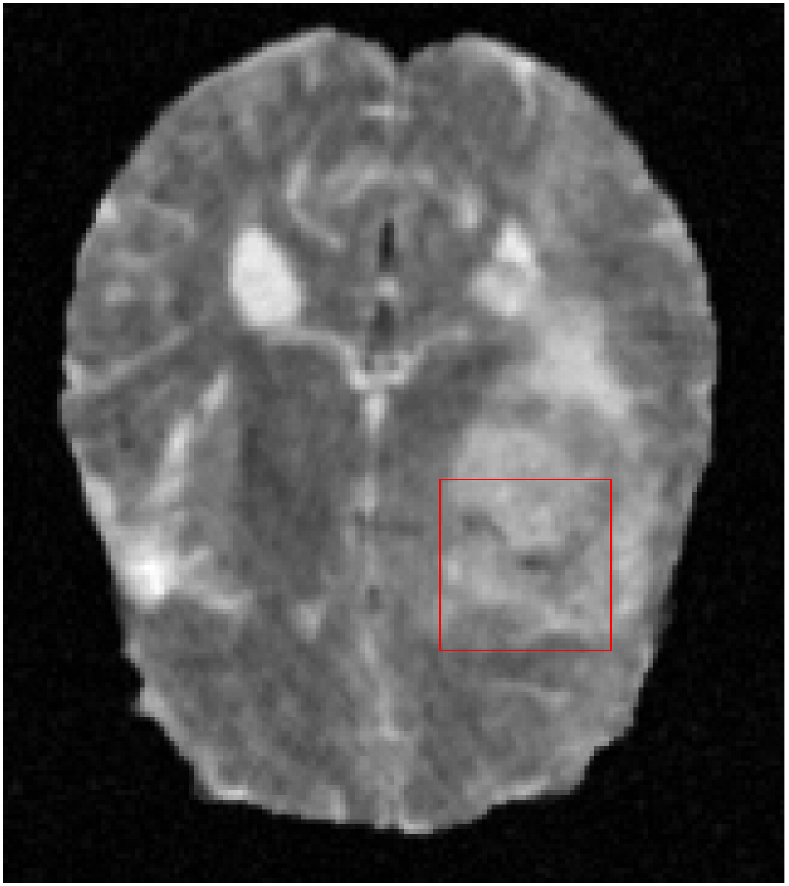}
\includegraphics[width=0.13\linewidth, angle=180]{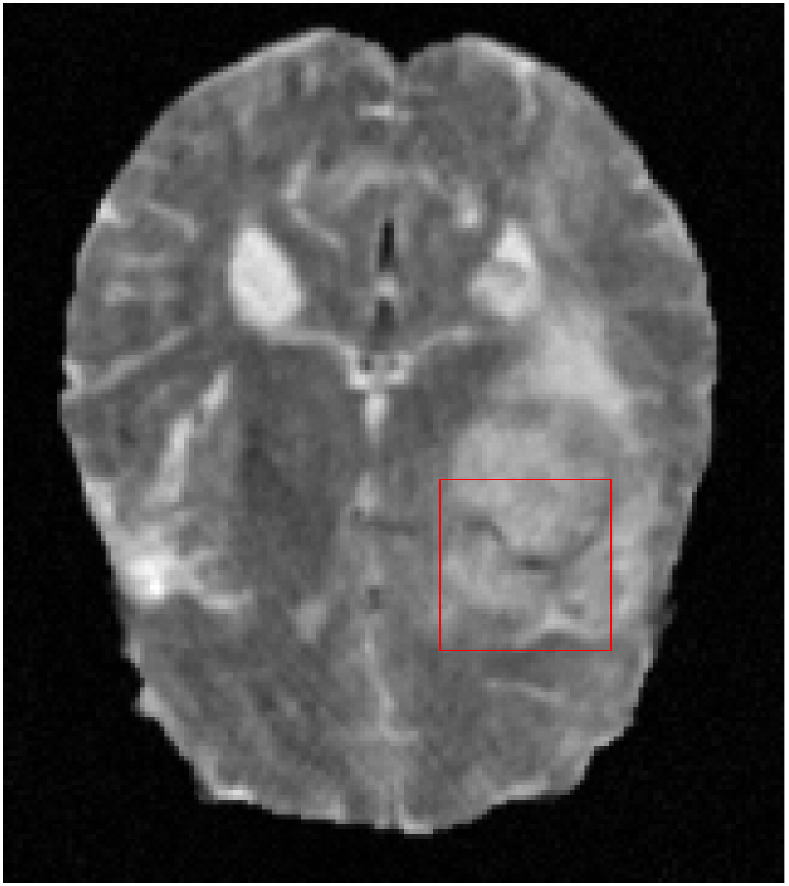}
\includegraphics[width=0.13\linewidth, angle=180]{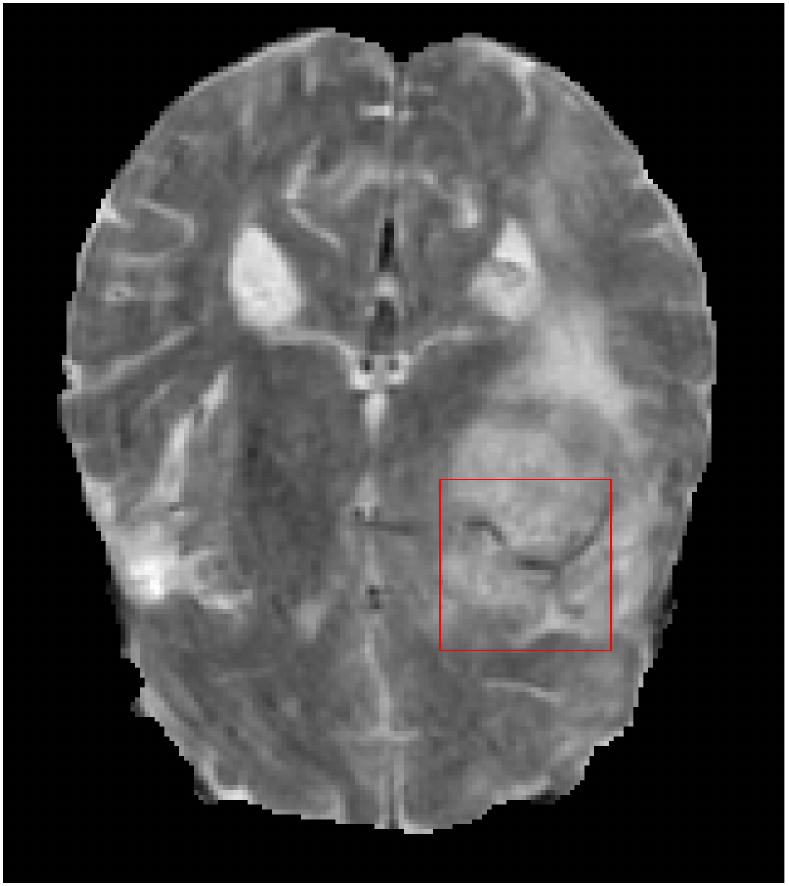}\\
\includegraphics[width=0.13\linewidth, angle=270]{fig_chp4/conventional_result.pdf}
\includegraphics[width=0.13\linewidth, angle=180]{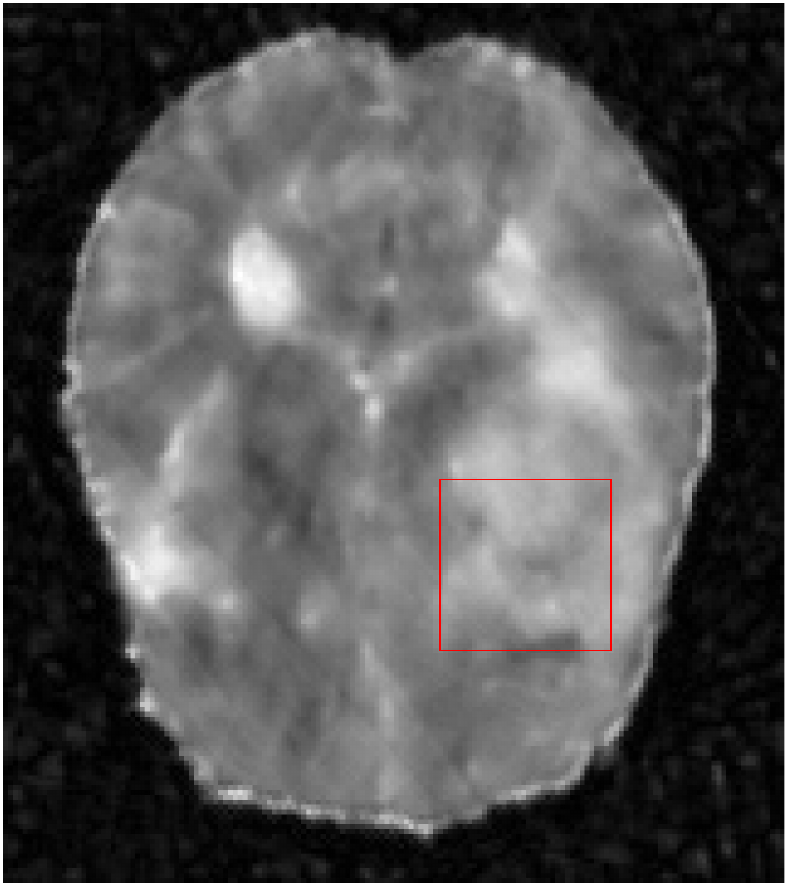}
\includegraphics[width=0.13\linewidth, angle=180]{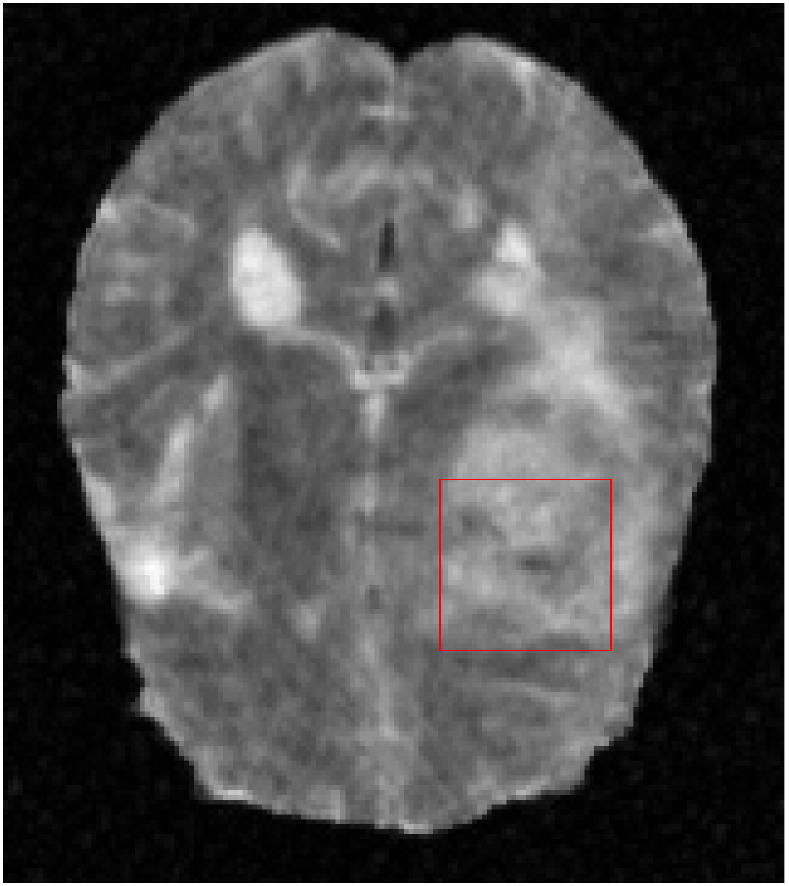}
\includegraphics[width=0.13\linewidth, angle=180]{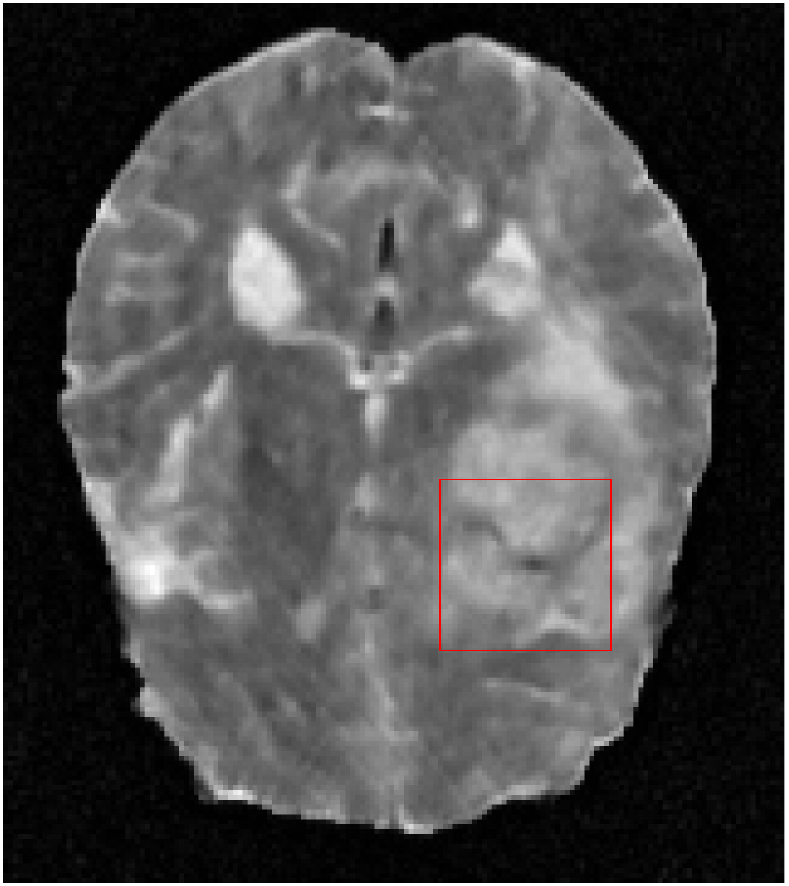}
\includegraphics[width=0.13\linewidth, angle=180]{fig_chp4/white.pdf}\\
\includegraphics[width=0.13\linewidth, angle=270]{fig_chp4/meta_detail.pdf}
\includegraphics[width=0.13\linewidth, angle=180]{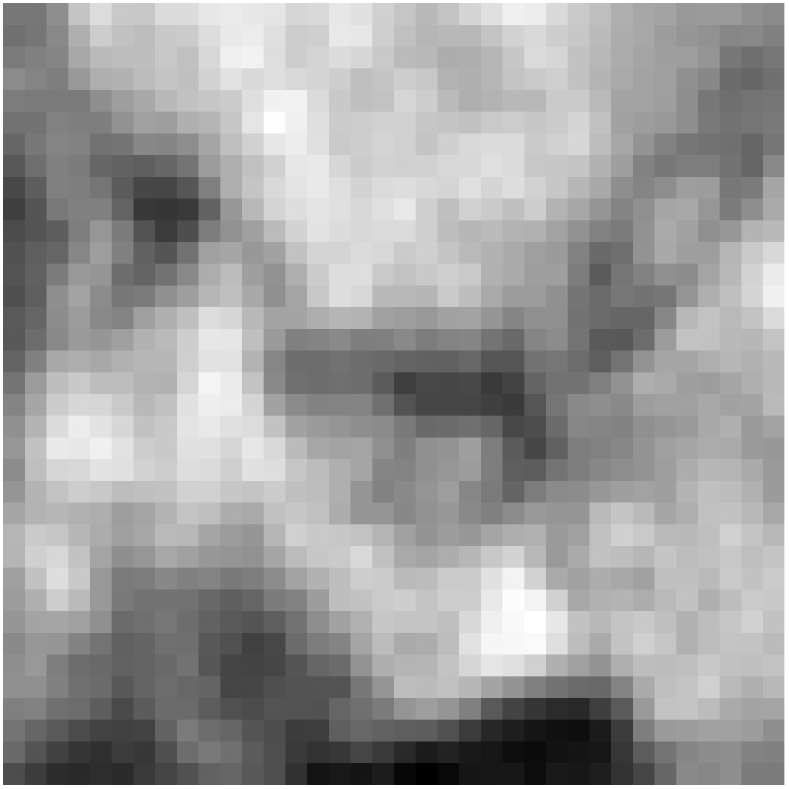}
\includegraphics[width=0.13\linewidth, angle=180]{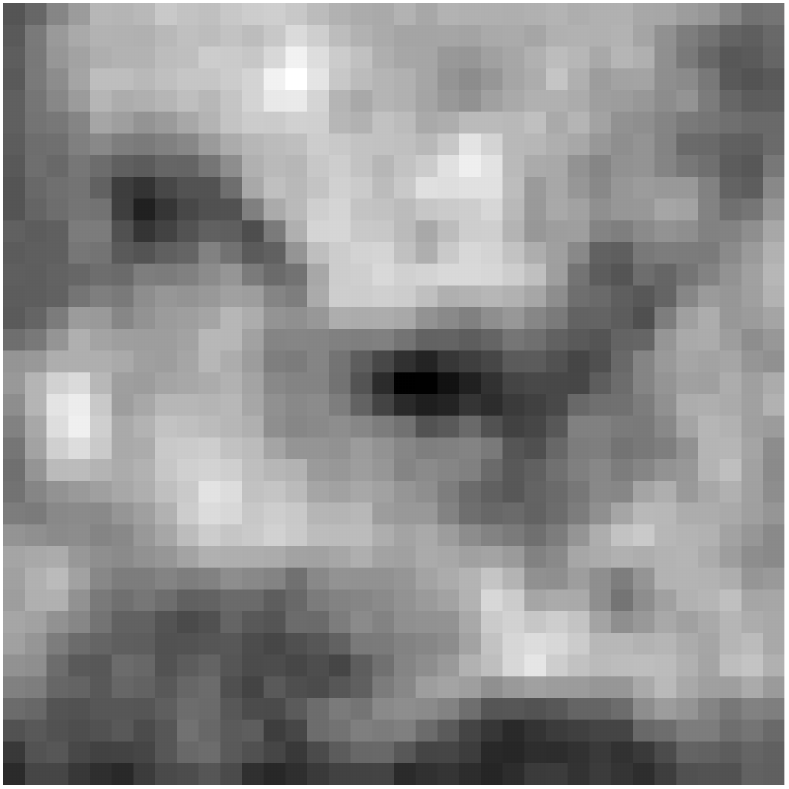}
\includegraphics[width=0.13\linewidth, angle=180]{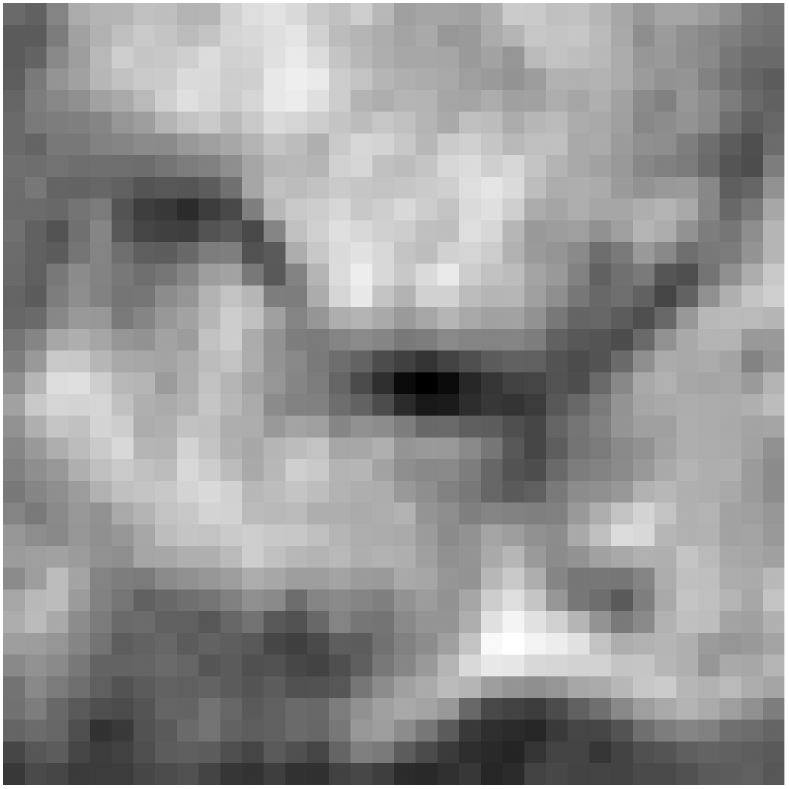}
\includegraphics[width=0.13\linewidth, angle=180]{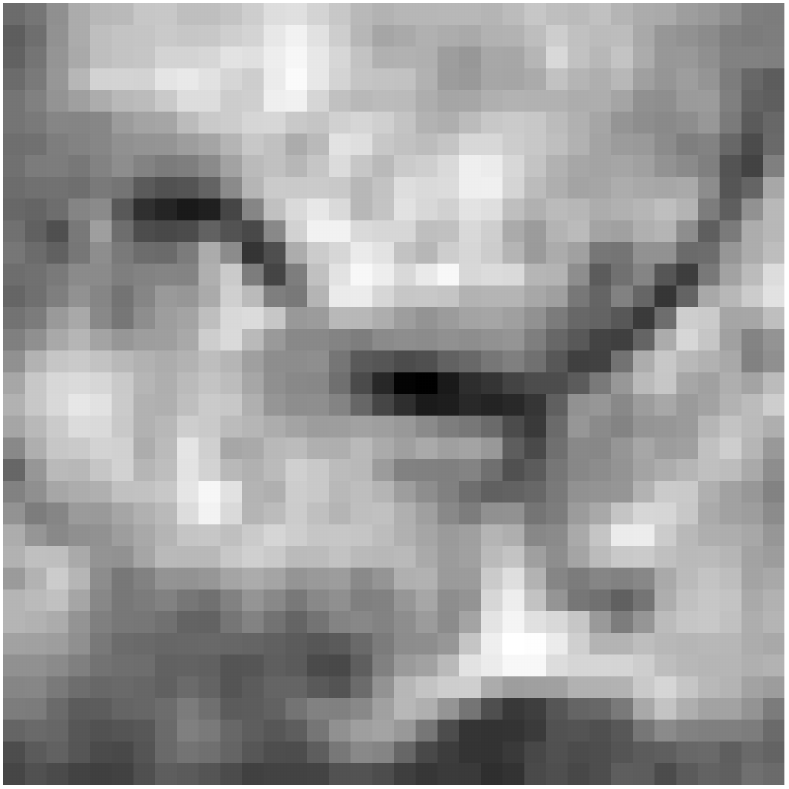}\\
\includegraphics[width=0.13\linewidth, angle=270]{fig_chp4/conventional_detail.pdf}
\includegraphics[width=0.13\linewidth, angle=180]{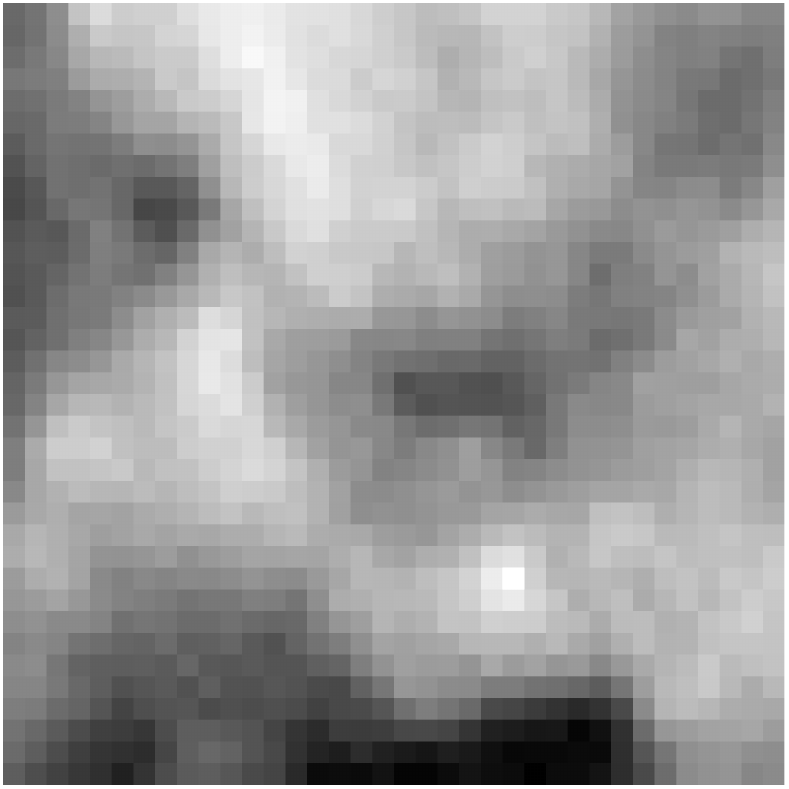}
\includegraphics[width=0.13\linewidth, angle=180]{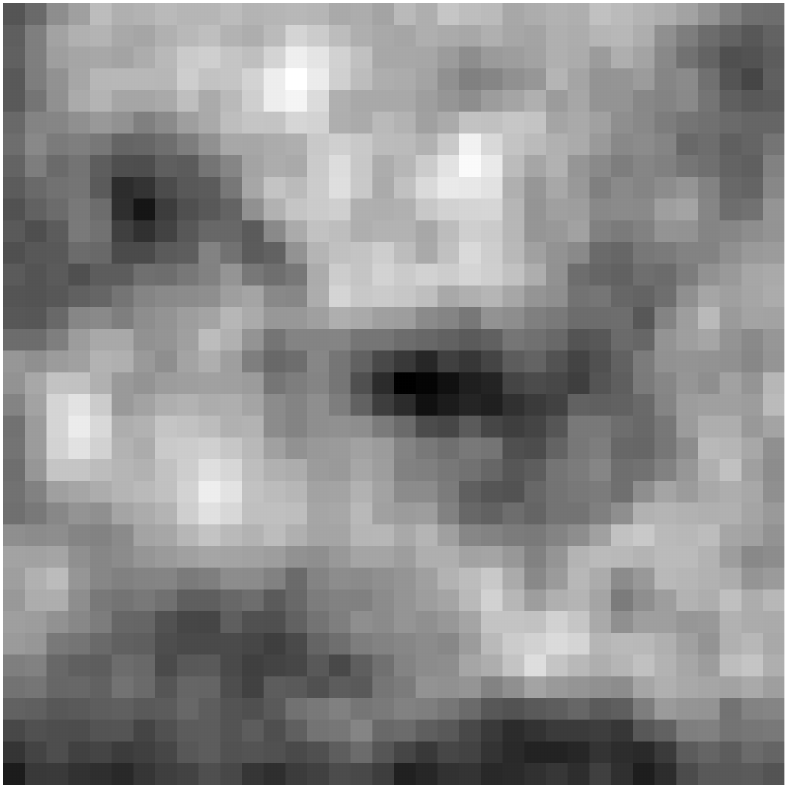}
\includegraphics[width=0.13\linewidth, angle=180]{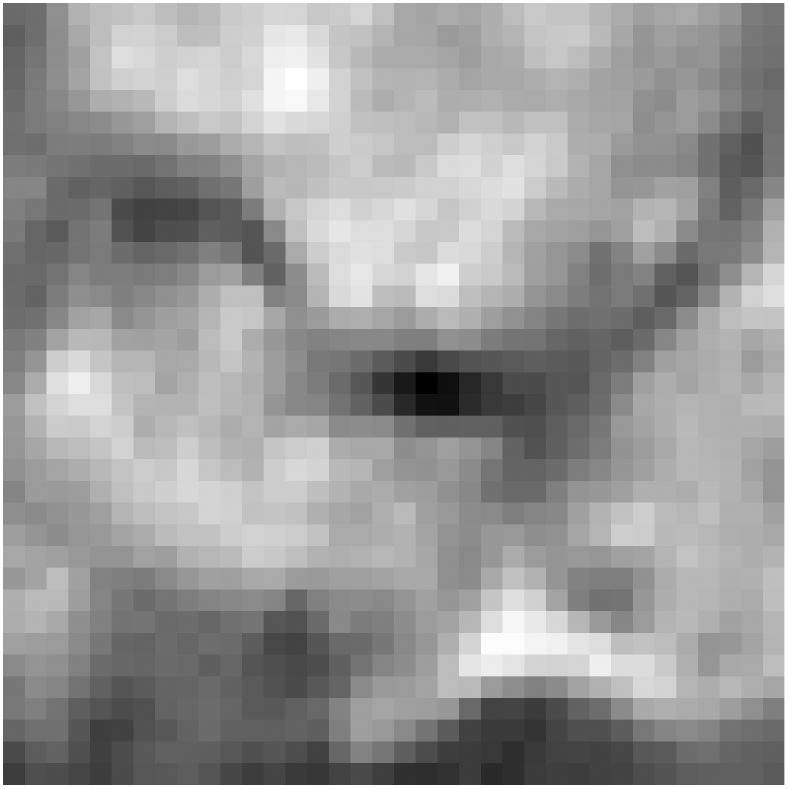}
\includegraphics[width=0.13\linewidth, angle=180]{fig_chp4/white.pdf}\\
\includegraphics[width=0.13\linewidth, angle=270]{fig_chp4/meta_error.pdf}
\includegraphics[width=0.13\linewidth, angle=180]{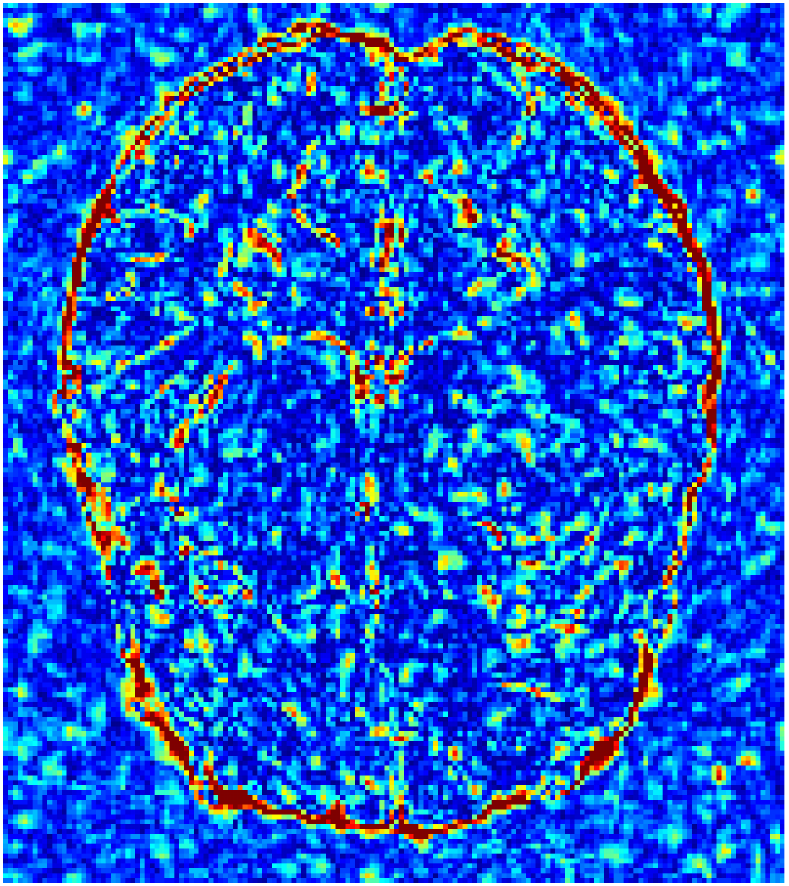}
\includegraphics[width=0.13\linewidth, angle=180]{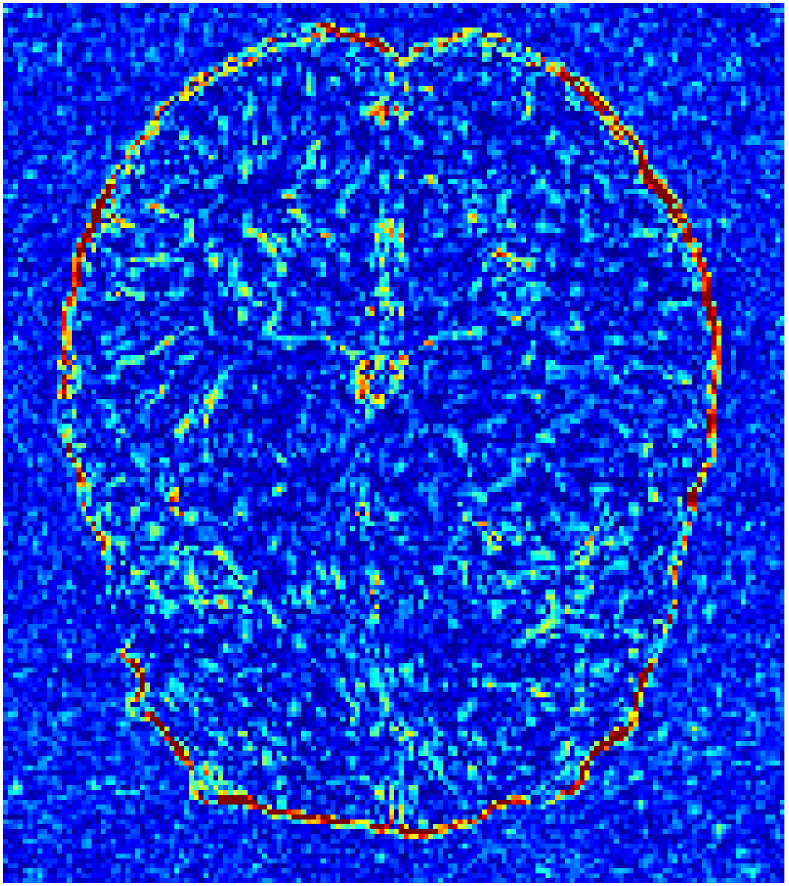}
\includegraphics[width=0.13\linewidth, angle=180]{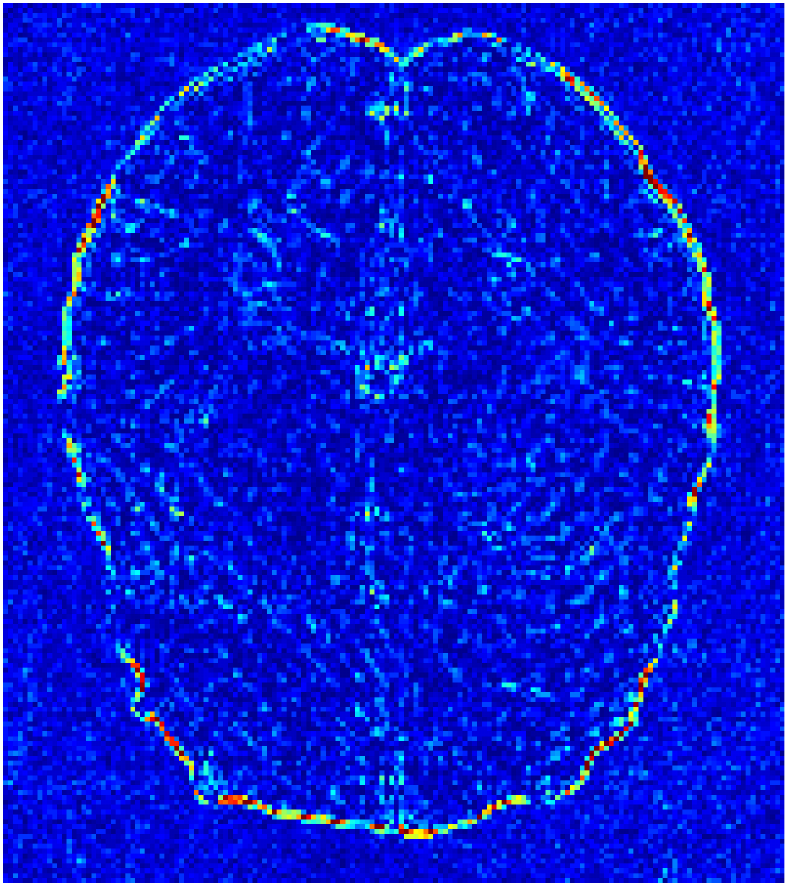}
\includegraphics[width=0.13\linewidth, angle=180]{fig_chp4/colorbar.pdf}\\
\includegraphics[width=0.13\linewidth, angle=270]{fig_chp4/conventional_error.pdf}
\includegraphics[width=0.13\linewidth, angle=180]{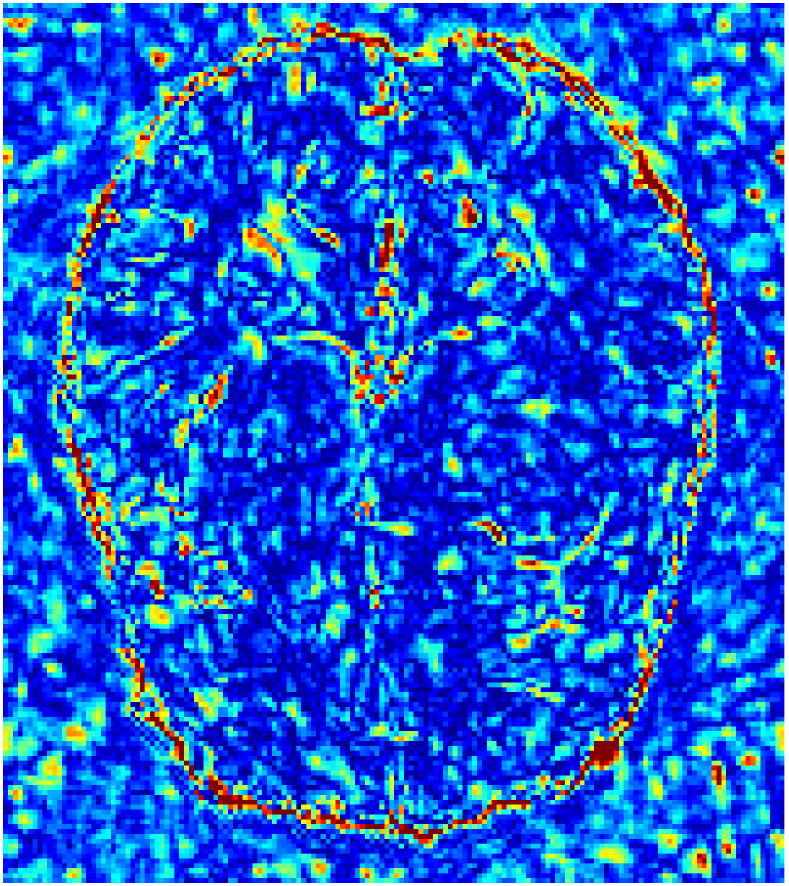}
\includegraphics[width=0.13\linewidth, angle=180]{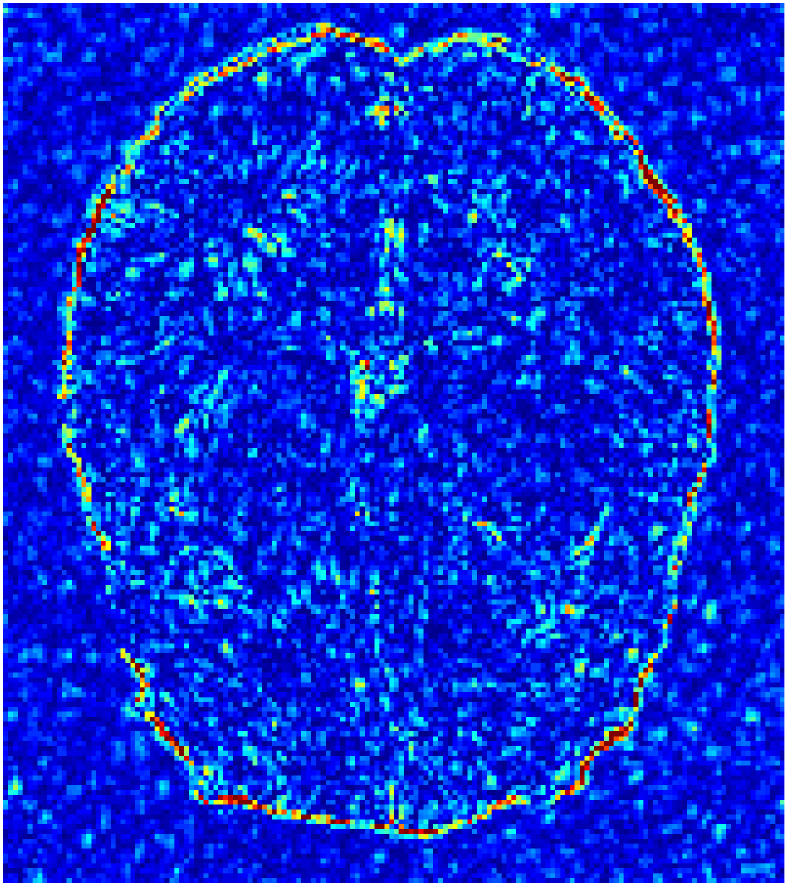}
\includegraphics[width=0.13\linewidth, angle=180]{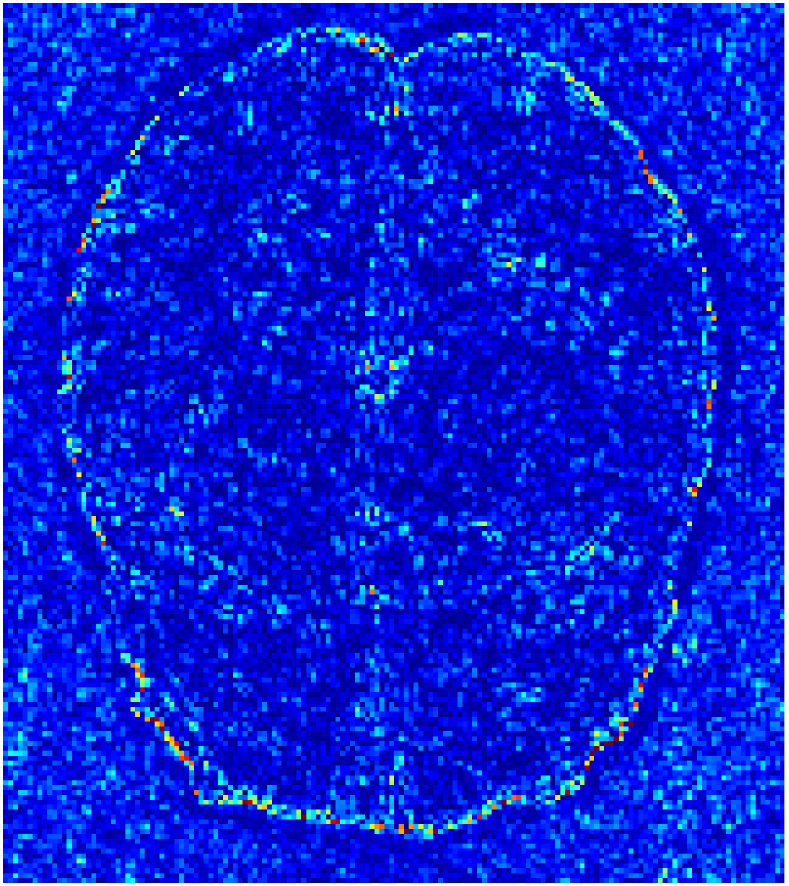}
\includegraphics[width=0.13\linewidth, angle=180]{fig_chp4/white.pdf}\\
\includegraphics[width=0.13\linewidth, angle=90]{fig_chp4/masks.pdf}
\includegraphics[width=0.13\linewidth]{fig_chp4/mask15_t1.pdf}
\includegraphics[width=0.13\linewidth]{fig_chp4/mask25_t1.pdf}
\includegraphics[width=0.13\linewidth]{fig_chp4/mask35_t1.pdf}
\includegraphics[width=0.13\linewidth]{fig_chp4/white.pdf}
\caption{The pictures (from top to bottom) display the T2 brain image reconstruction results, zoomed-in details, point-wise errors with a color bar, and associated
 \textbf{{radial}} masks.  }
\label{figure_dif_ratio_t2}
\end{figure}

\begin{figure}[H]
\includegraphics[width=0.13\linewidth, angle=270]{fig_chp4/meta_result.pdf}
\includegraphics[width=0.13\linewidth, angle=180]{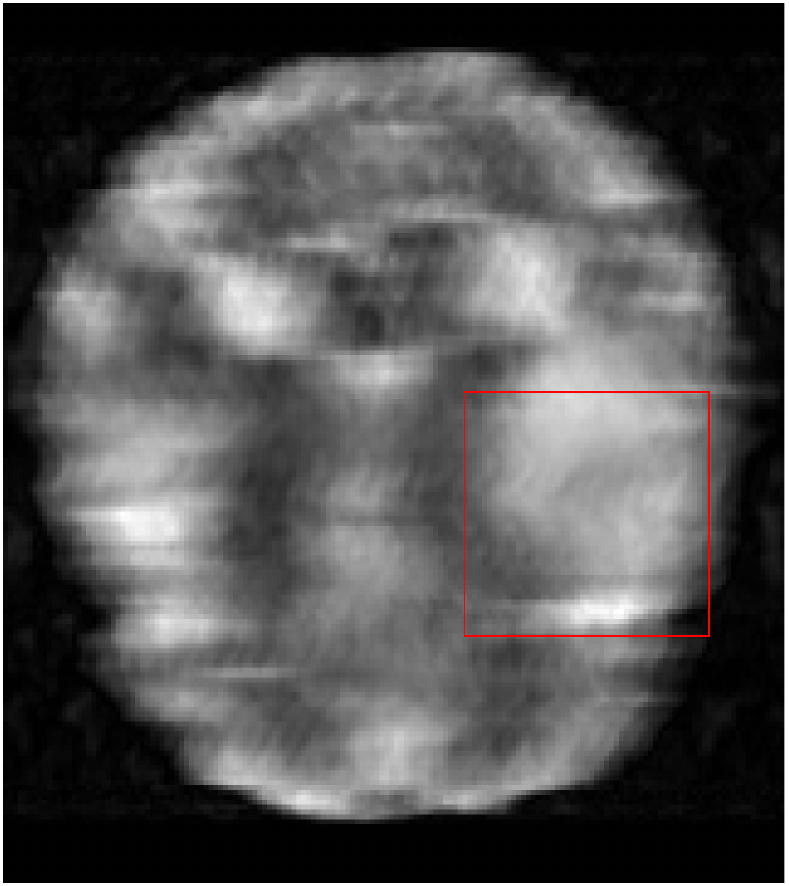}
\includegraphics[width=0.13\linewidth, angle=180]{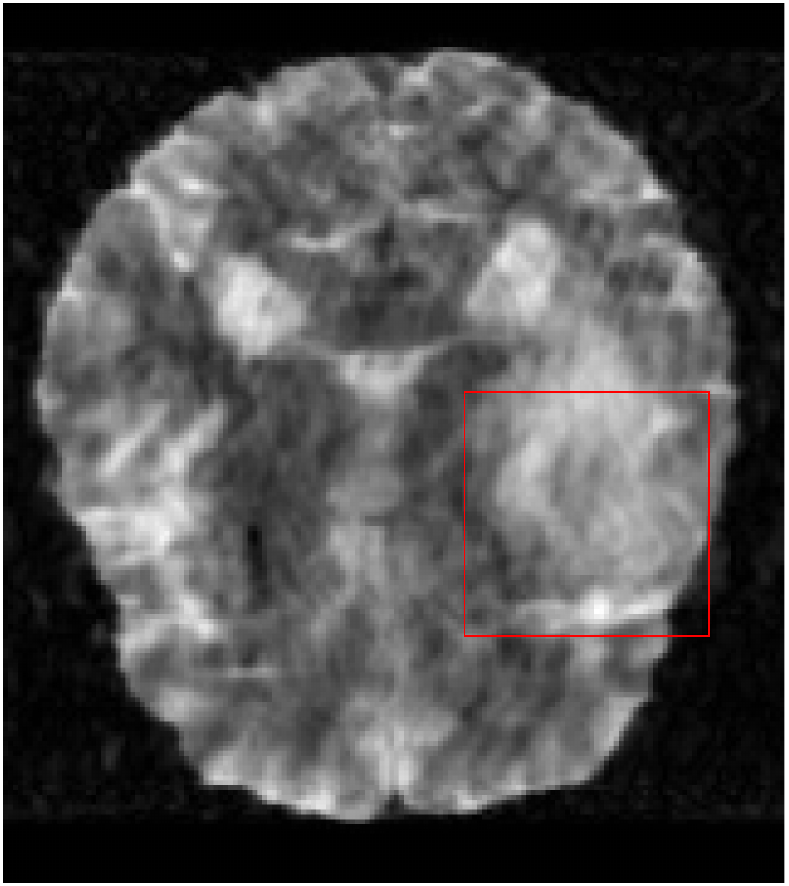}
\includegraphics[width=0.13\linewidth, angle=180]{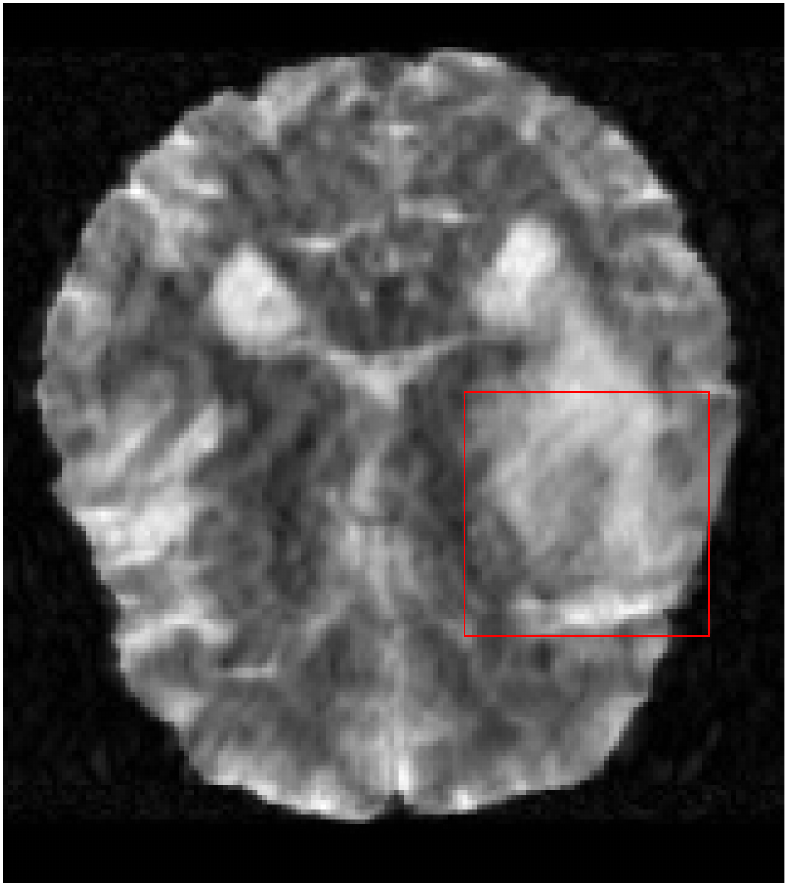}
\includegraphics[width=0.13\linewidth, angle=180]{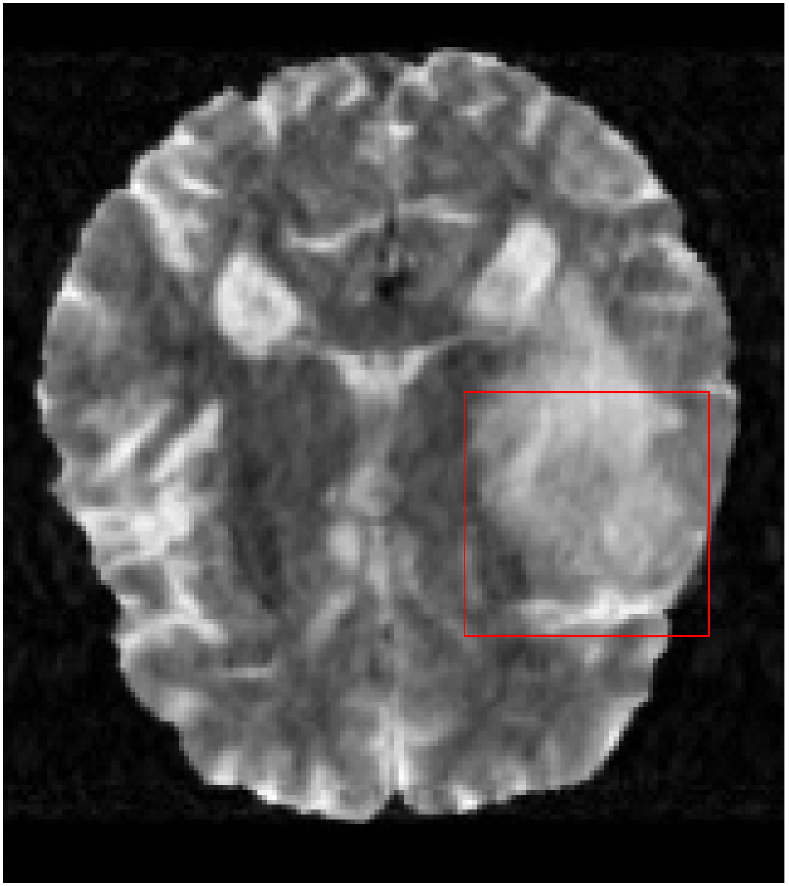}
\includegraphics[width=0.13\linewidth, angle=180]{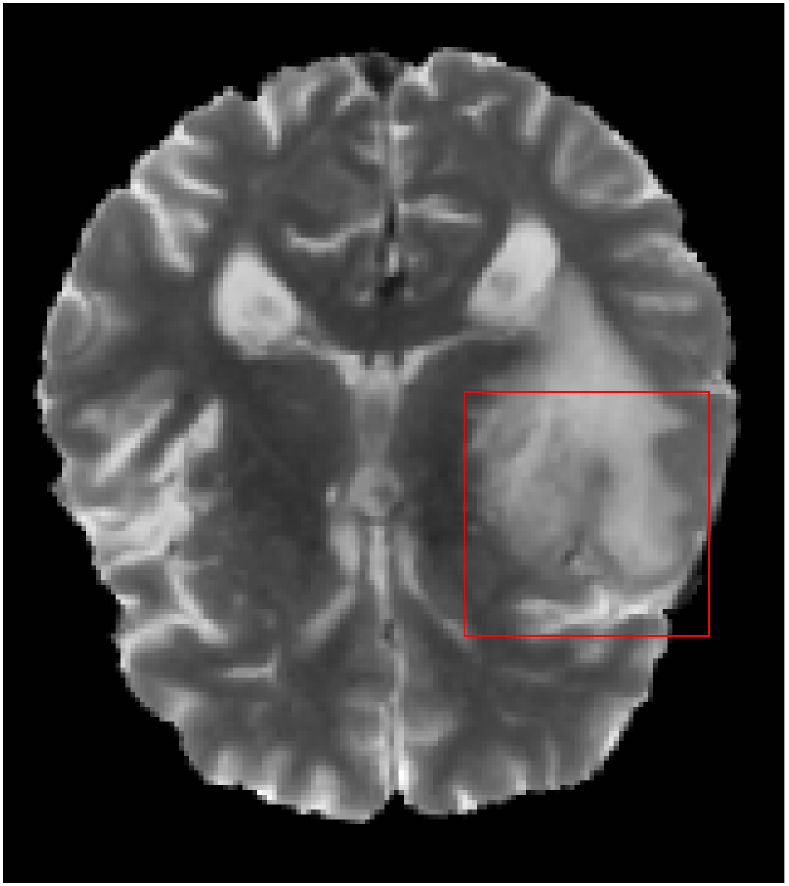}\\
\includegraphics[width=0.13\linewidth, angle=270]{fig_chp4/conventional_result.pdf}
\includegraphics[width=0.13\linewidth, angle=180]{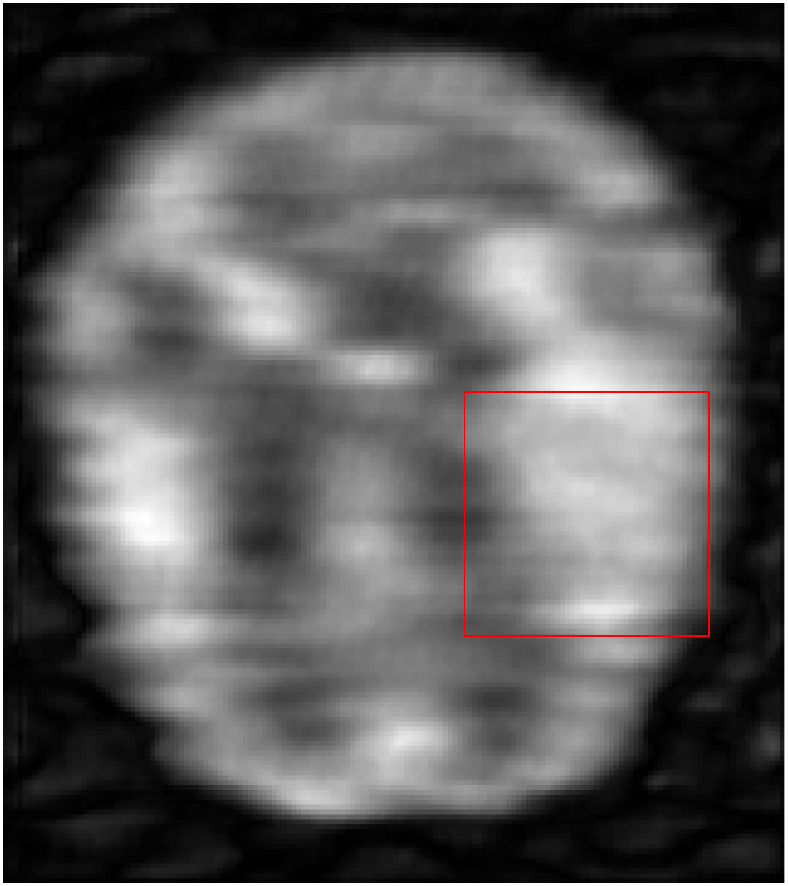}
\includegraphics[width=0.13\linewidth, angle=180]{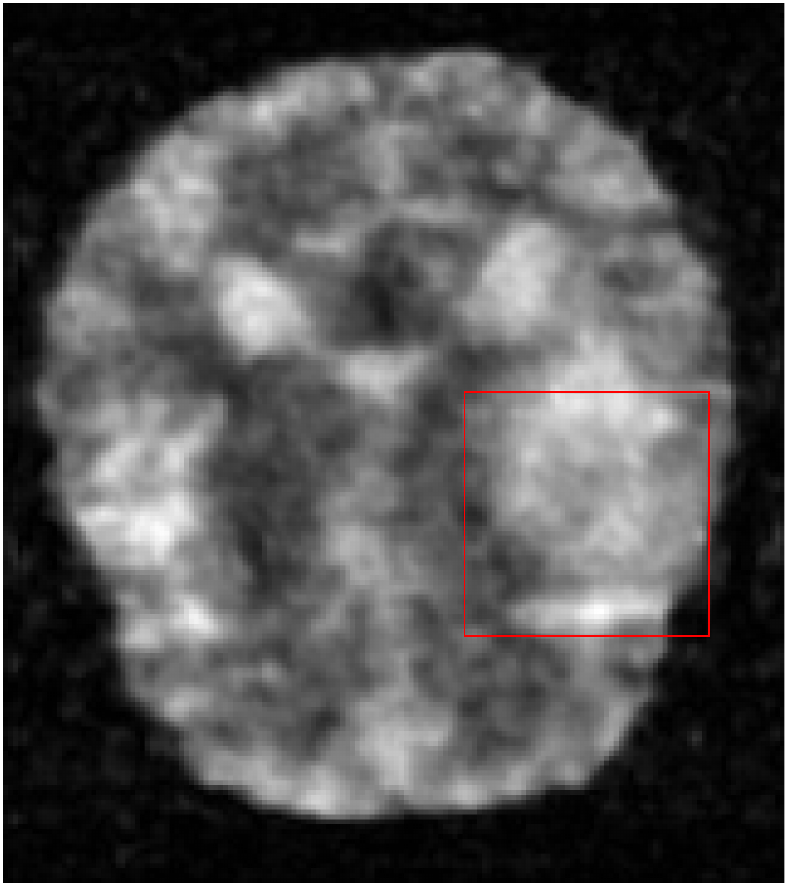}
\includegraphics[width=0.13\linewidth, angle=180]{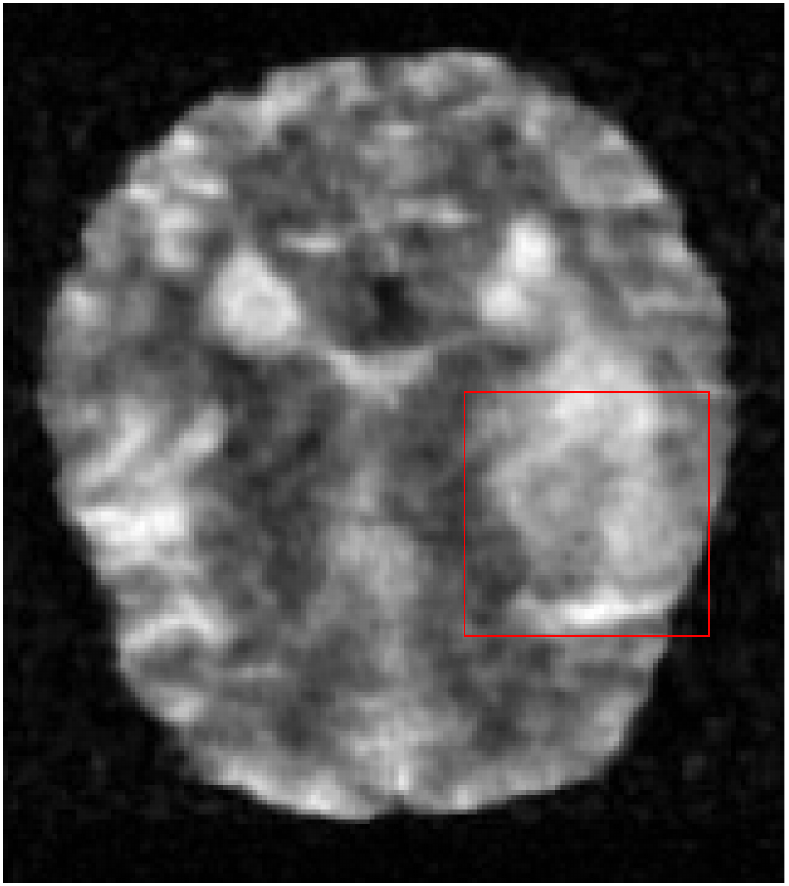}
\includegraphics[width=0.13\linewidth, angle=180]{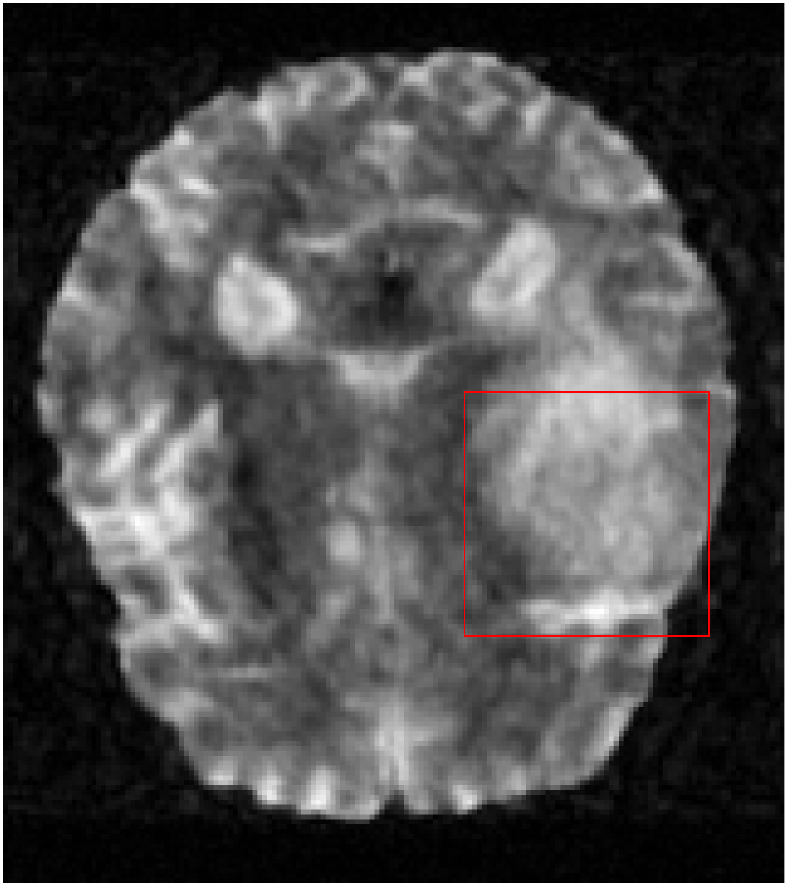}
\includegraphics[width=0.13\linewidth, angle=180]{fig_chp4/white.pdf}\\
\includegraphics[width=0.13\linewidth, angle=270]{fig_chp4/meta_detail.pdf}
\includegraphics[width=0.13\linewidth, angle=180]{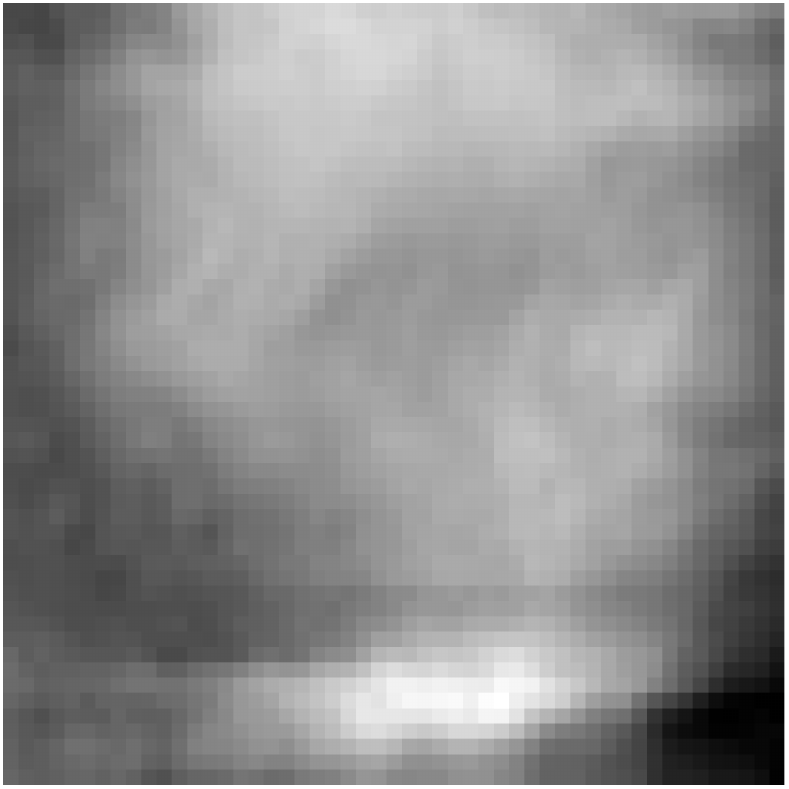}
\includegraphics[width=0.13\linewidth, angle=180]{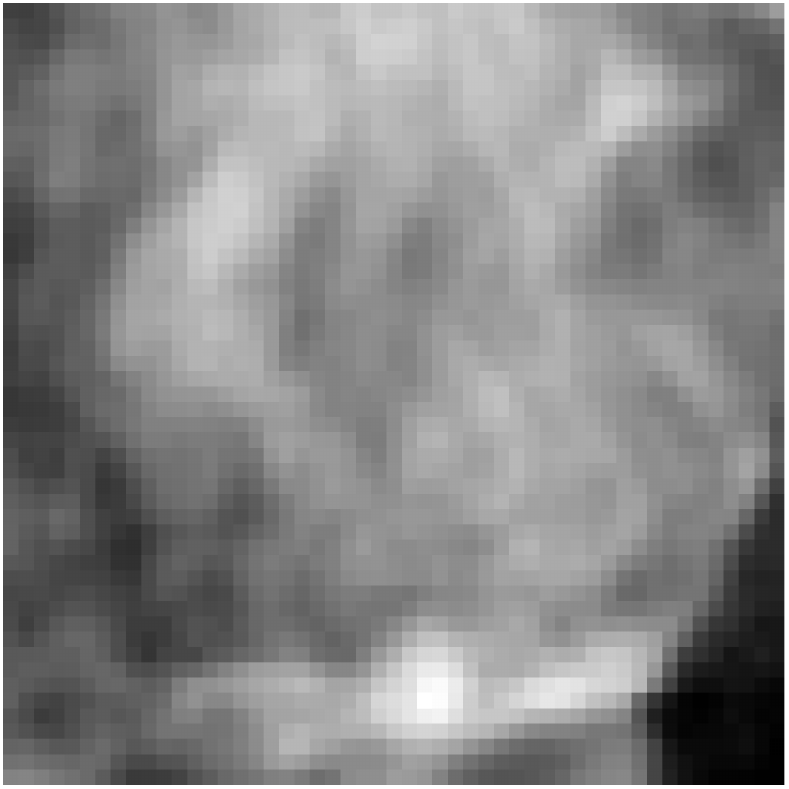}
\includegraphics[width=0.13\linewidth, angle=180]{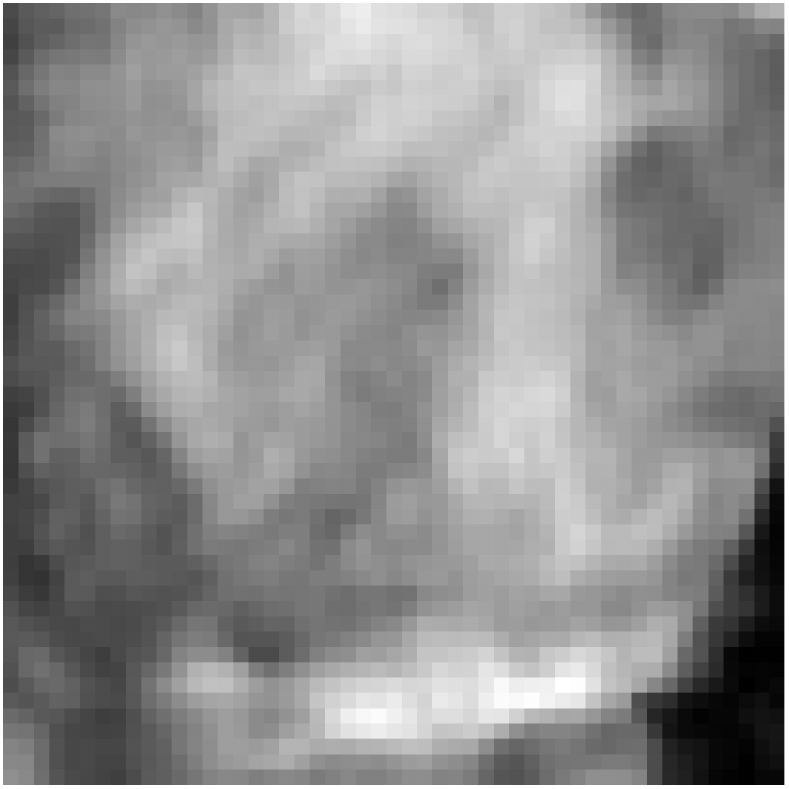}
\includegraphics[width=0.13\linewidth, angle=180]{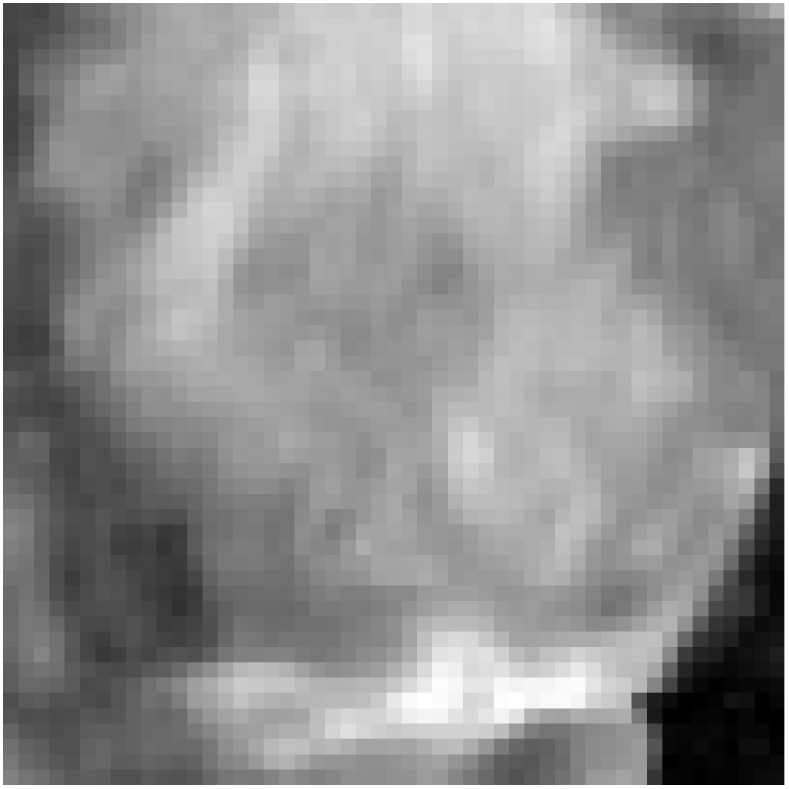}
\includegraphics[width=0.13\linewidth, angle=180]{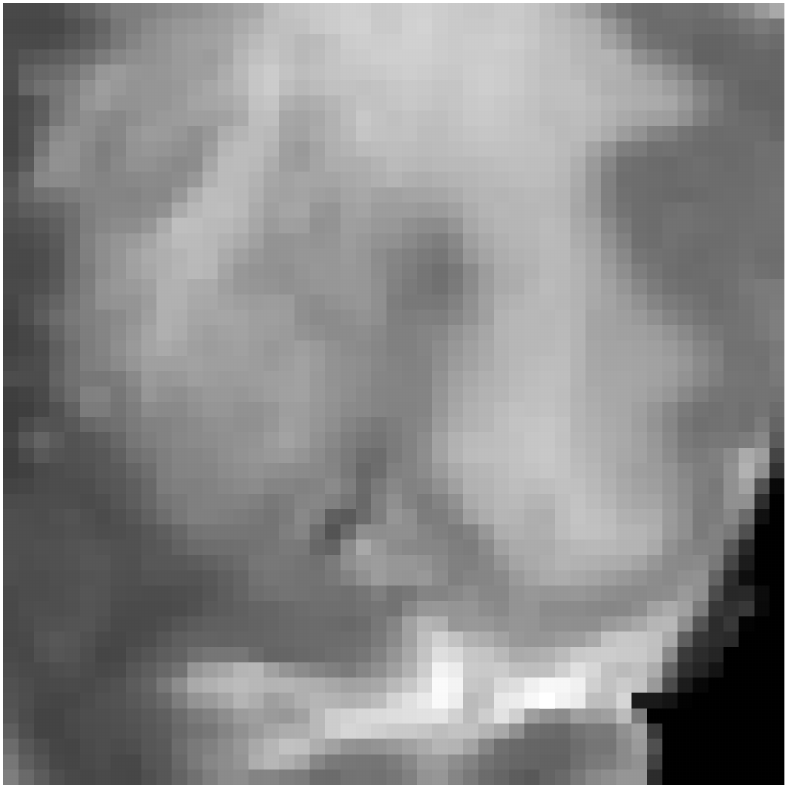}\\
\includegraphics[width=0.13\linewidth, angle=270]{fig_chp4/conventional_detail.pdf}
\includegraphics[width=0.13\linewidth, angle=180]{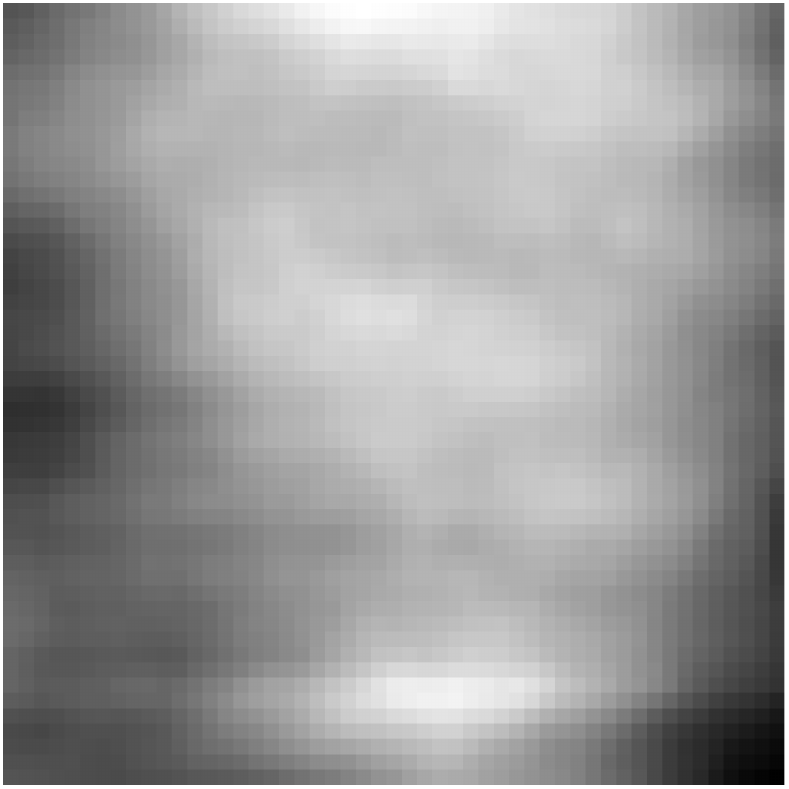}
\includegraphics[width=0.13\linewidth, angle=180]{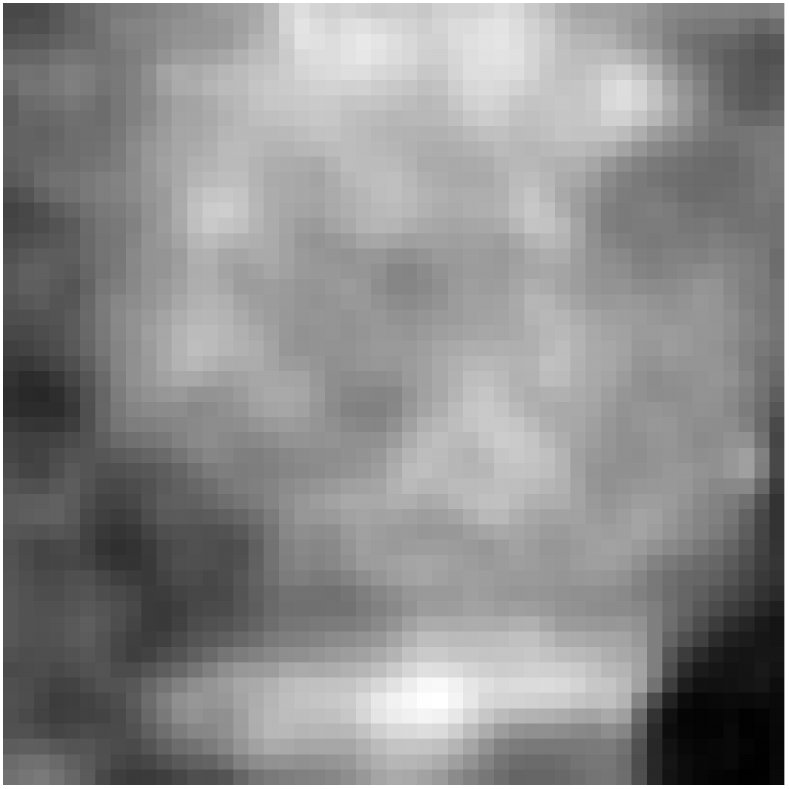}
\includegraphics[width=0.13\linewidth, angle=180]{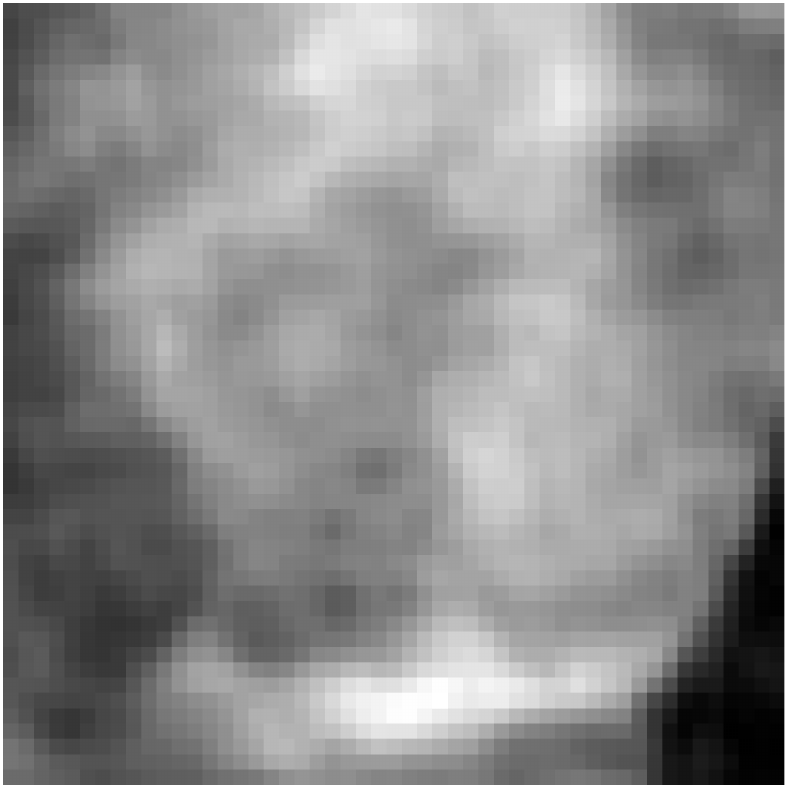}
\includegraphics[width=0.13\linewidth, angle=180]{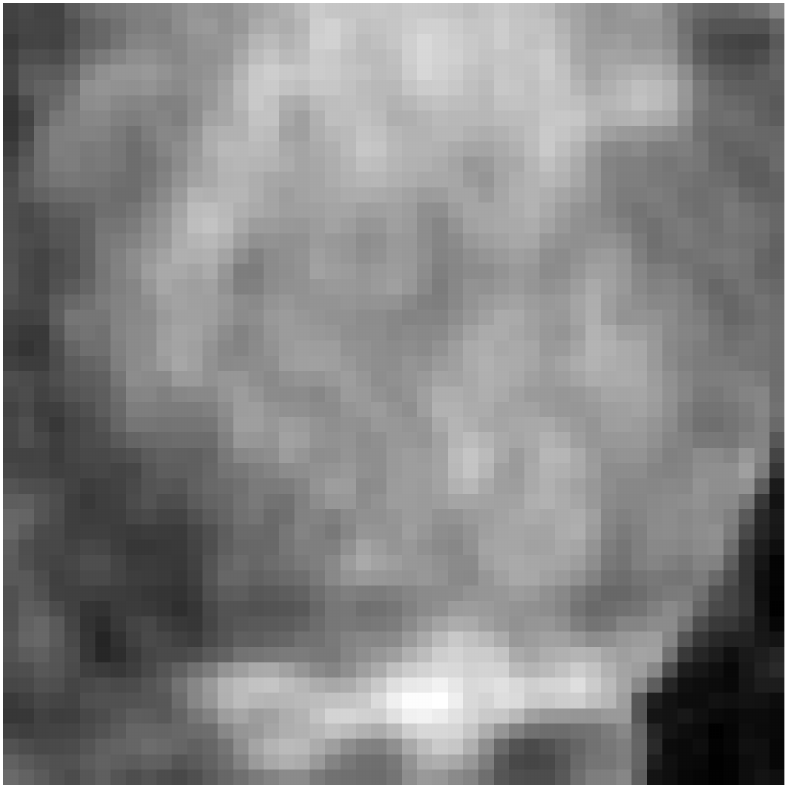}
\includegraphics[width=0.13\linewidth, angle=180]{fig_chp4/white.pdf}\\
\includegraphics[width=0.13\linewidth, angle=270]{fig_chp4/meta_error.pdf}
\includegraphics[width=0.13\linewidth, angle=180]{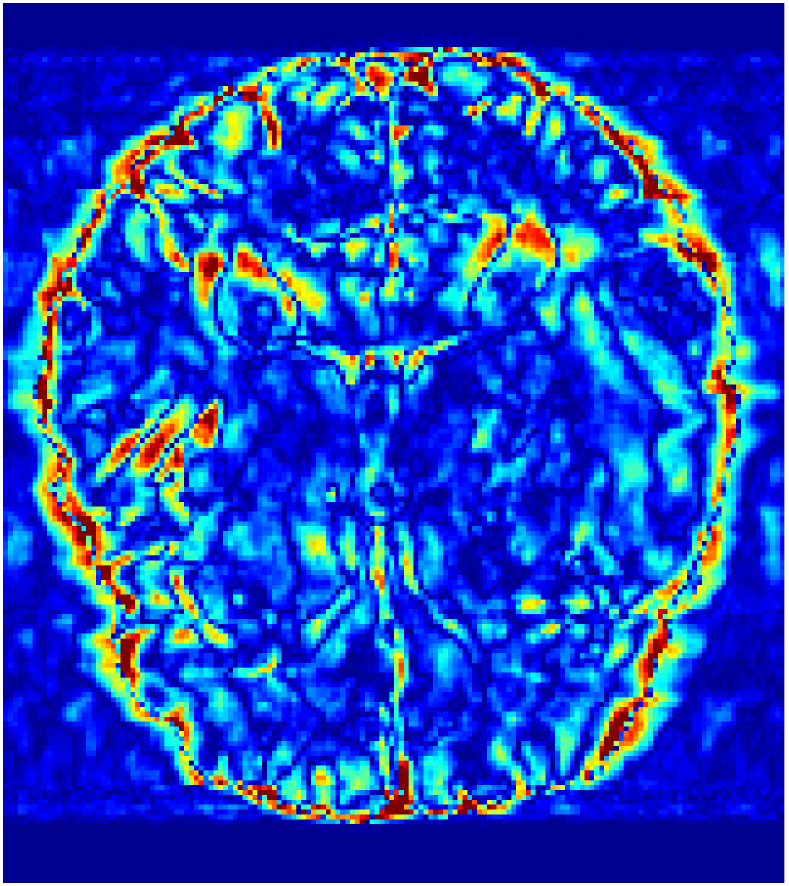}
\includegraphics[width=0.13\linewidth, angle=180]{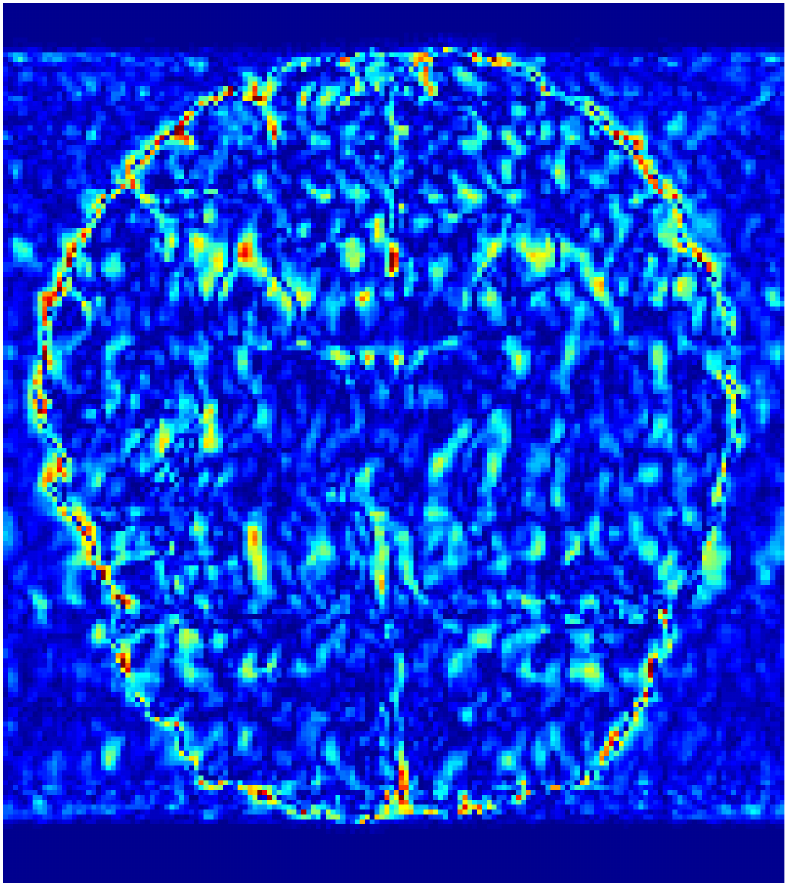}
\includegraphics[width=0.13\linewidth, angle=180]{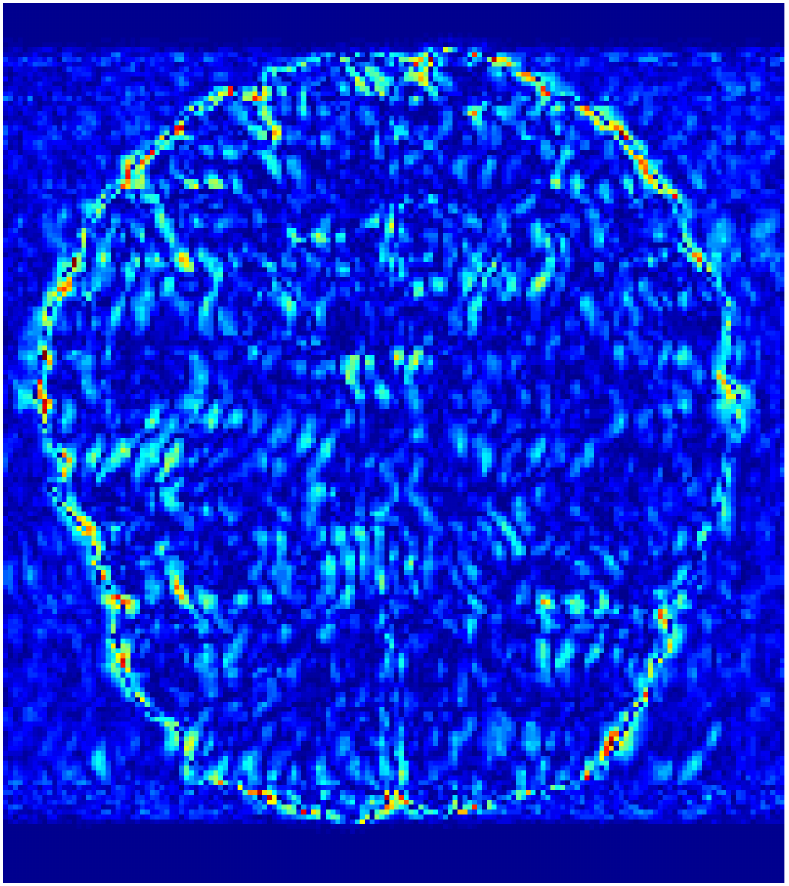}
\includegraphics[width=0.13\linewidth, angle=180]{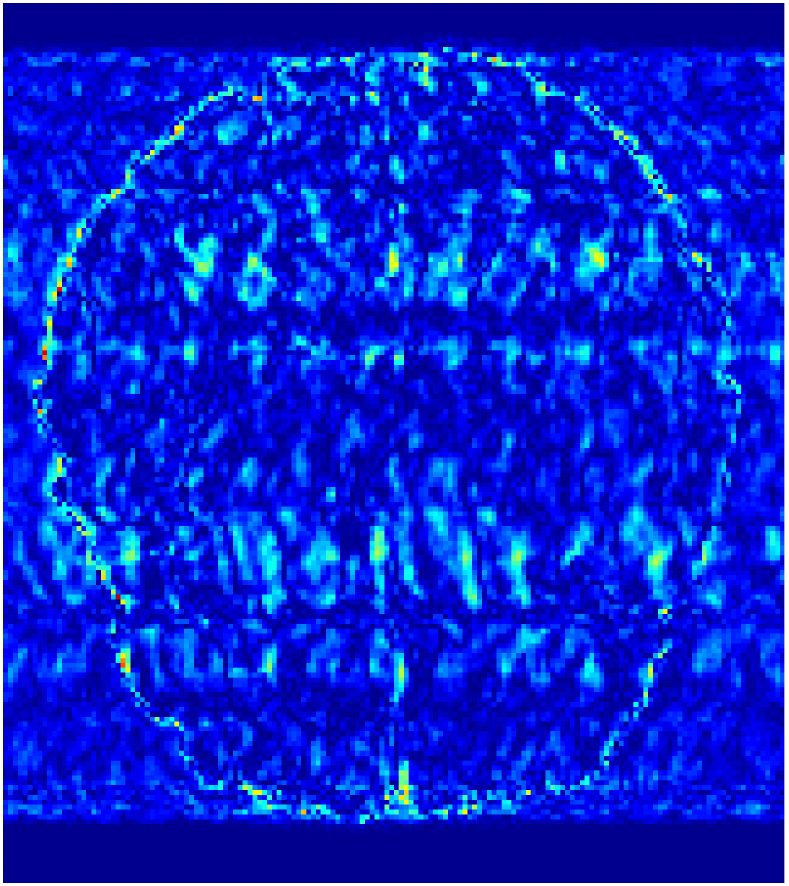}
\includegraphics[width=0.13\linewidth, angle=180]{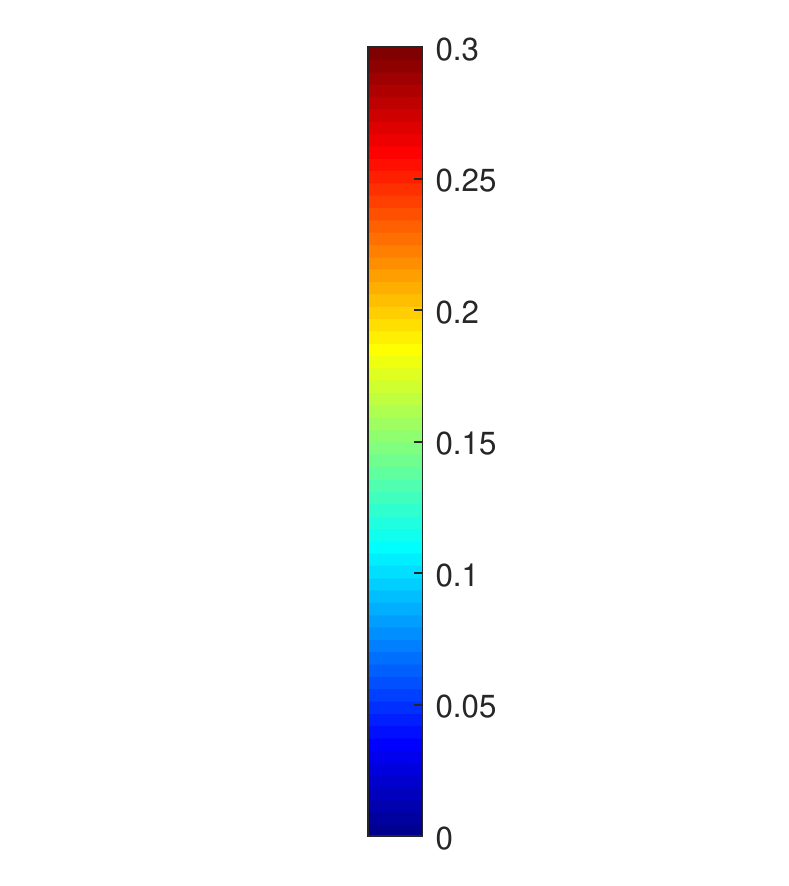}\\
\includegraphics[width=0.13\linewidth, angle=270]{fig_chp4/conventional_error.pdf}
\includegraphics[width=0.13\linewidth, angle=180]{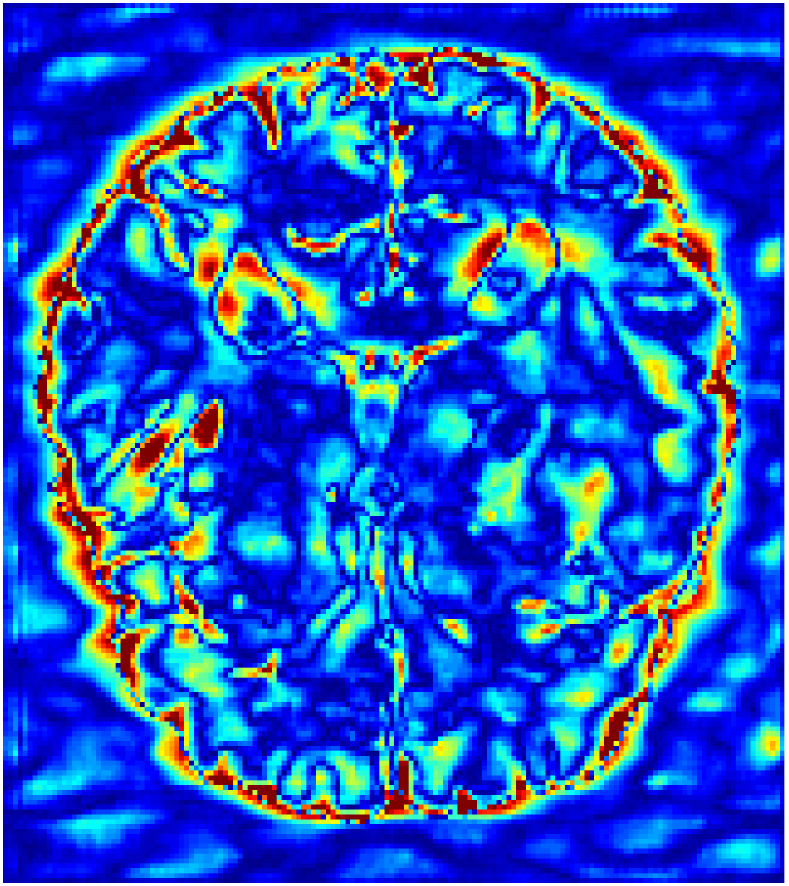}
\includegraphics[width=0.13\linewidth, angle=180]{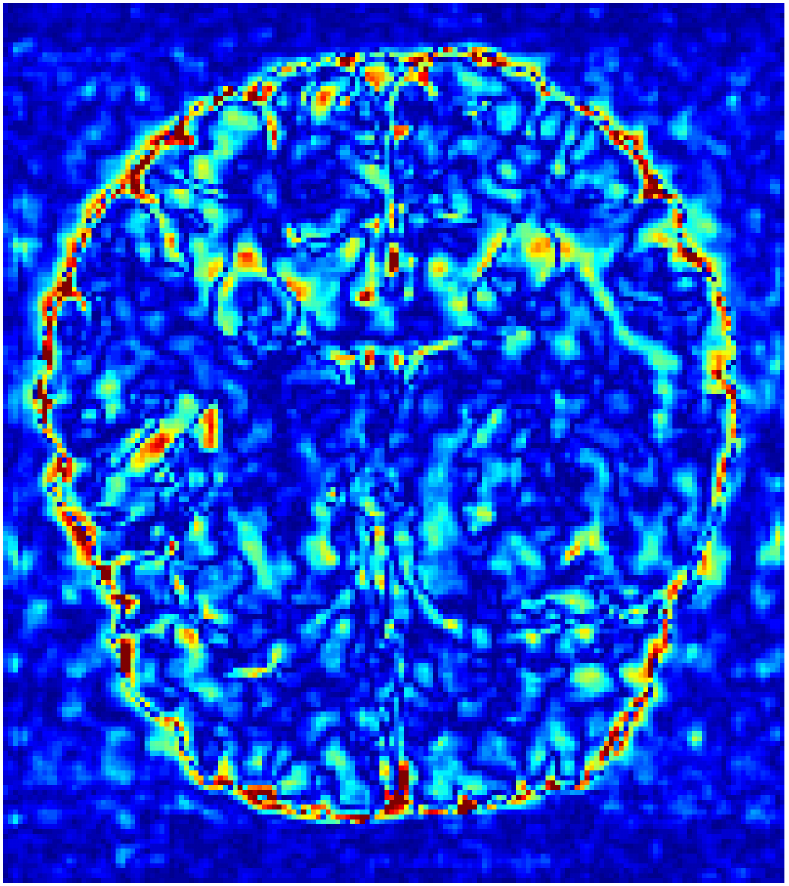}
\includegraphics[width=0.13\linewidth, angle=180]{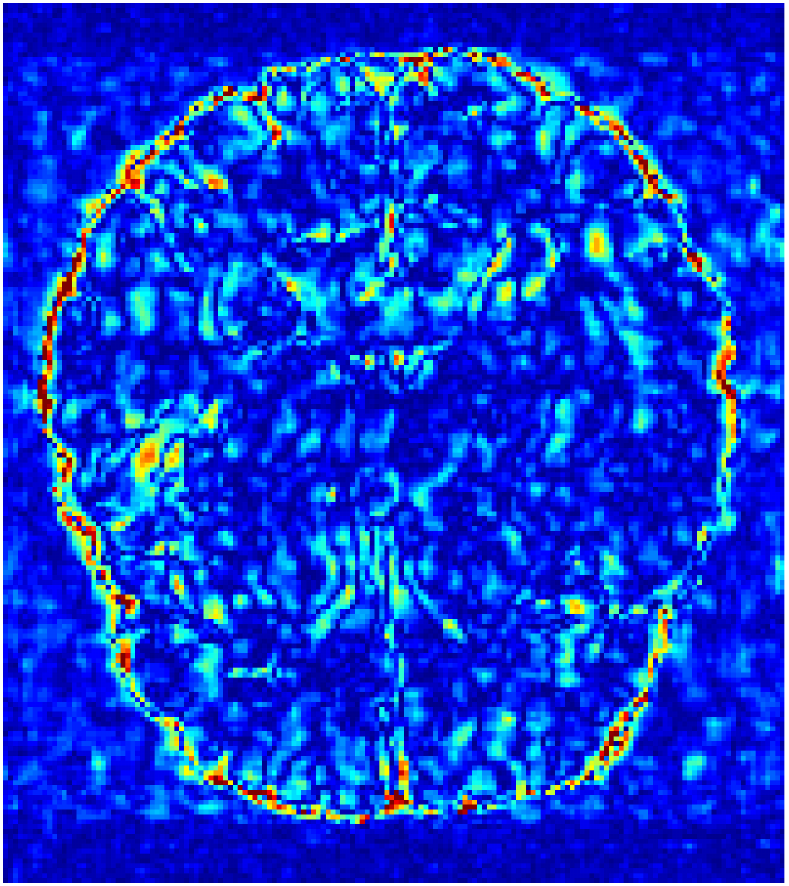}
\includegraphics[width=0.13\linewidth, angle=180]{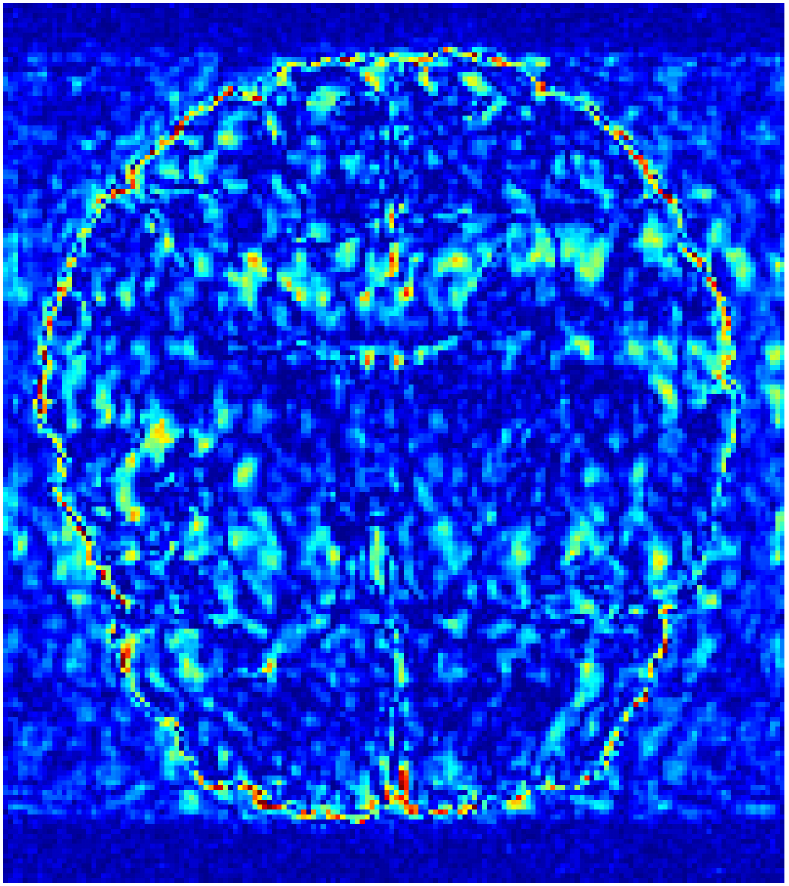}
\includegraphics[width=0.13\linewidth, angle=180]{fig_chp4/white.pdf}\\
\includegraphics[width=0.13\linewidth, angle=90]{fig_chp4/masks.pdf}
\includegraphics[width=0.13\linewidth]{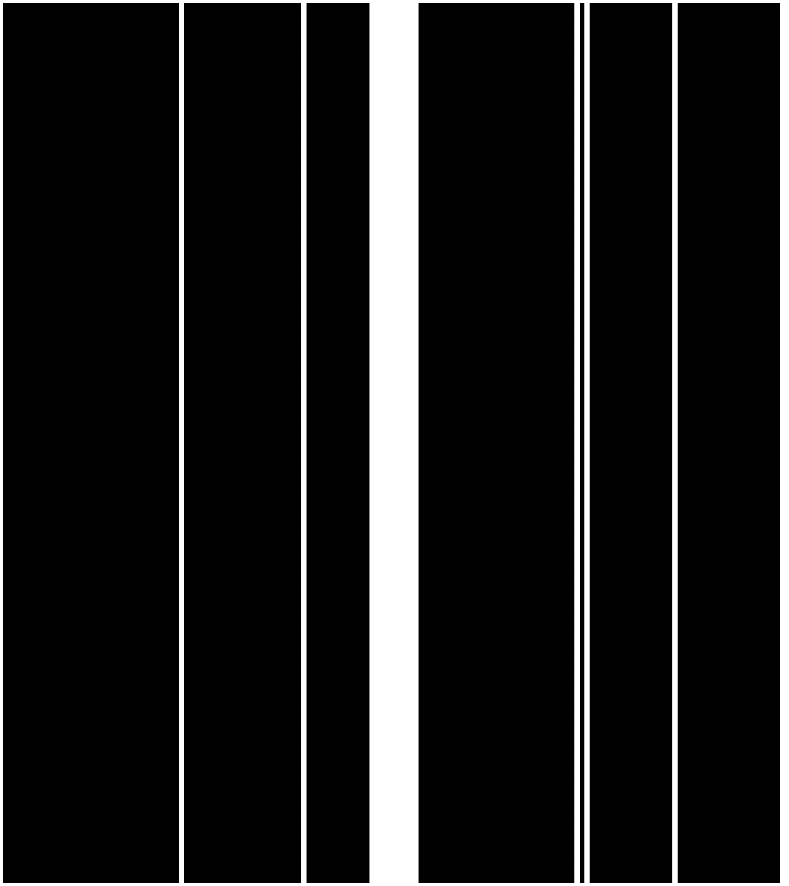}
\includegraphics[width=0.13\linewidth]{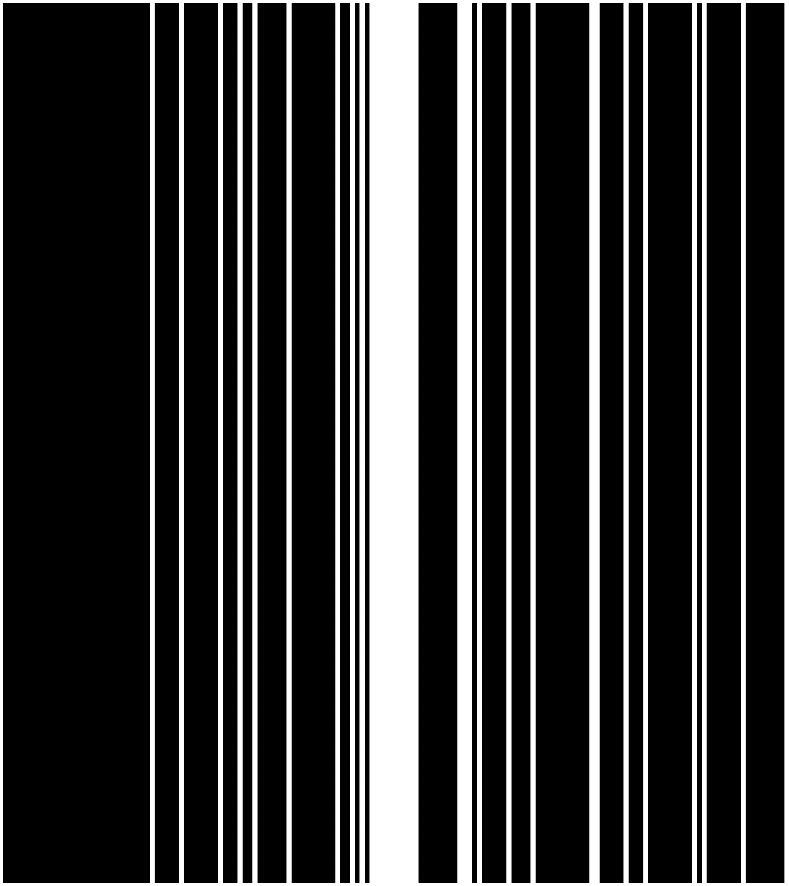}
\includegraphics[width=0.13\linewidth]{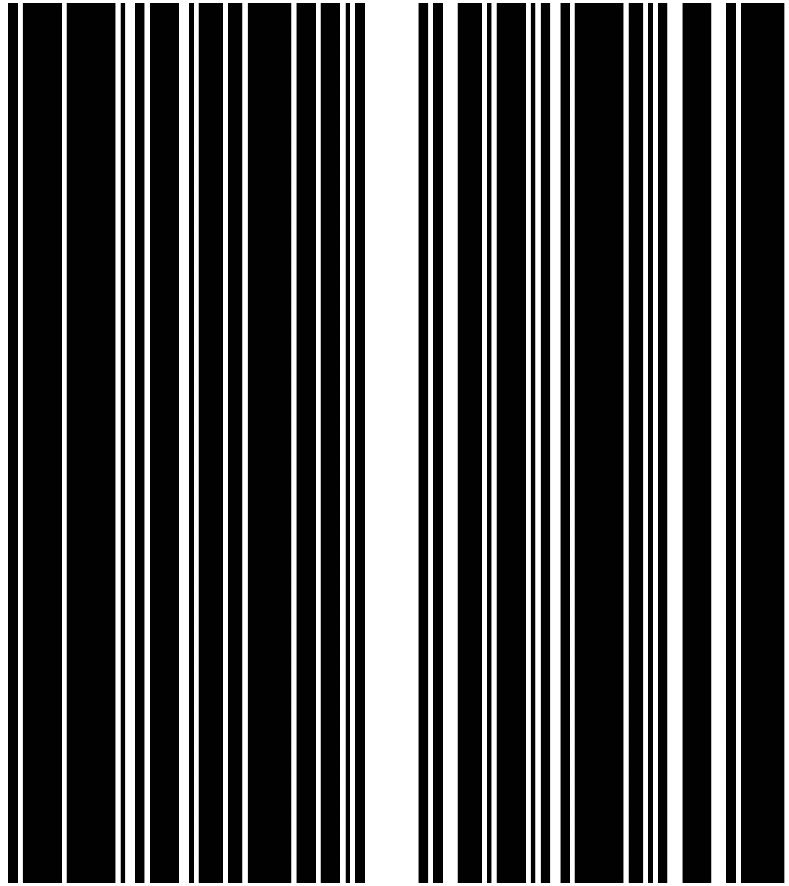}
\includegraphics[width=0.13\linewidth]{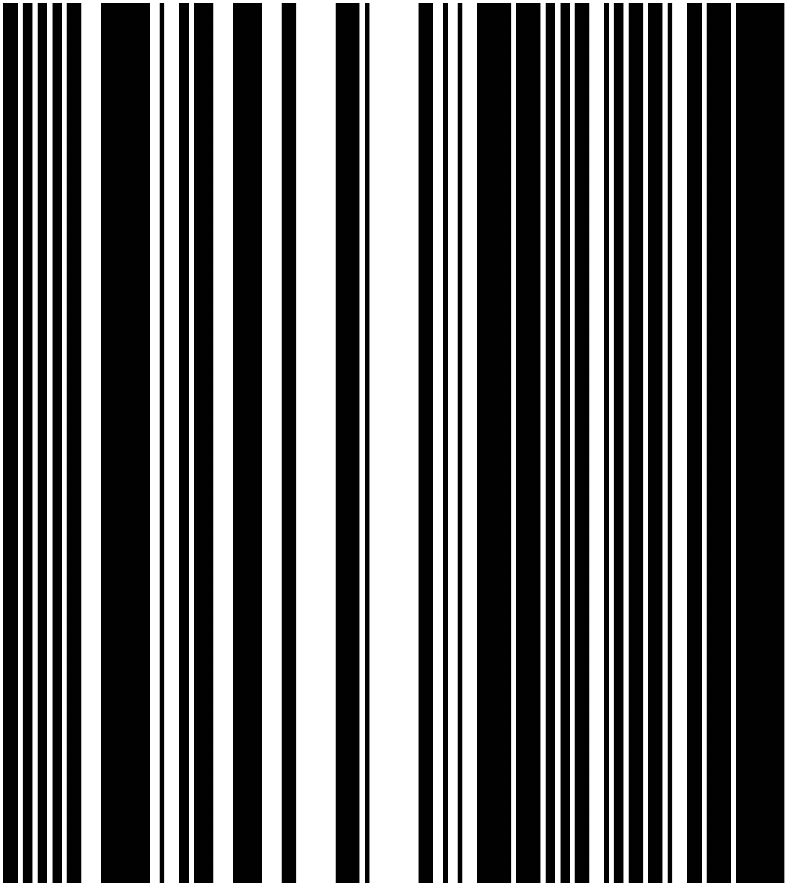}
\includegraphics[width=0.13\linewidth]{fig_chp4/white.pdf}
\caption{The pictures (from top to bottom) display the T2 brain image reconstruction results, zoomed-in details, point-wise errors with a color bar, and associated
 \textbf{Cartesian} masks. }
\label{figure_same_ratio_t2_cts}
\end{figure}

{Next, we empirically demonstrate the convergence of Algorithm \ref{alg:lda} in Figure \ref{fig:convergence}. This shows that the objective function value $\phi$ decreases and the PSNR value for testing data increases steadily as the number of phases increases, which indicates that the learned algorithm is indeed minimizing the learned function as we desired.}
\begin{figure}[H]

\includegraphics[width=0.45\linewidth]{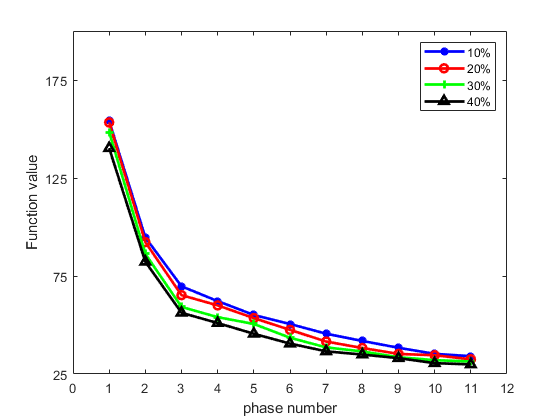}
\includegraphics[width=0.45\linewidth]{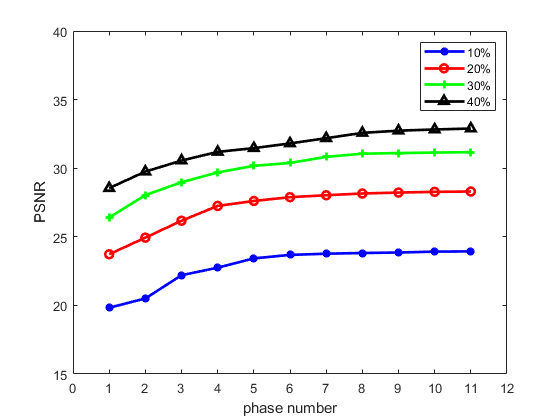}
\caption{Convergence behavior of Algorithm \ref{alg:lda} on  T1 weighted MRI image reconstruction with four different CS ratios using radial mask. \textbf{Left}: Objective function value $\phi$ versus phase number. \textbf{Right}: PSNR value versus phase number.}
\label{fig:convergence}
\end{figure}

\subsection{Future Work and Open Challenges}

Deep optimization-based meta-learning techniques have shown great generalizability, but there are several open challenges that can be discussed and can potentially be addressed in future work. 
A major issue is the memorization problem, since the base learner needs to be optimized for a large number of phases and the training algorithm contains multiple gradient steps; furthermore, the computation is very expensive in terms of time and memory costs. 
In addition to reconstructing MRI through different trajectories, another potential application for medical imaging could be multi-modality reconstruction and synthesis. Capturing images of anatomy with multi-modality acquisitions enhances the diagnostic information and could be cast as a multi-task problem that could benefit from meta-learning.

%%%%%%%%%%%%%%%%%%%%%%%%%%%%%%%%%%%%%%%%%%

\section{Conclusions}\label{conclusion}
In this paper, we put forward a novel deep model for MRI reconstructions via meta-learning. The proposed method has the ability to solve multi-tasks synergistically, and the well-trained model could generalize well to new tasks. Our baseline network is constructed by unfolding an LOA, which inherits the convergence property, improves the interpretability, and promotes the parameter efficiency of the designed network structure. The designated adaptive regularizer consists of a task-invariant learner and task-specific meta-knowledge. Network training follows a bilevel optimization algorithm that minimizes task-specific parameter $\omega$ in the upper level for the validation data and minimizes task-invariant parameters $\theta$ in the lower level for the training data with fixed $\omega$. The proposed approach is the first model designed to solve the inverse problem by applying meta-training on the adaptive regularization in the variational model. We consider the recovery of undersampled raw data across different sampling trajectories with various sampling patterns as different tasks. Extensive numerical experiments on various MRI datasets demonstrate that the proposed method generalizes well at various
sampling trajectories and is capable of fast adaption to unseen trajectories and sampling patterns. The reconstructed images achieve higher quality compared to conventional supervised learning for both seen and unseen k-space trajectory cases.

%%%%%%%%%%%%%%%%%%%%%%%%%%%%%%%%%%%%%%%%%%

\section{Convergence Analysis}
\label{convergence}
We make the following assumptions regarding $f$ and $\gbf$ throughout this work:

\begin{itemize}
\item (a1): $f$ is differentiable and (possibly) nonconvex, and $ \nabla f$ is $ L_f$-Lipschitz continuous.

\item (a2): Every component of $\gbf$ is differentiable and (possibly) nonconvex, and $\nabla \gbf$ is $ L_g$-Lipschitz continuous. 

\item (a3): $\sup_{\xbf \in \Xcal}   \| \nabla \gbf(\xbf)\| \leq M$ for some constant $ M>0$.

\item (a4): $\phi$ is coercive, and $\phi^* = \min_{\xbf \in \Xcal} \phi(\xbf) > -\infty$.
\end{itemize}

First, we state the Clark subdifferential \cite{chen2020learnable} of $r(\xbf)$ in Lemma \ref{lem:r_subdiff}, and we show that the gradient of $\rbf_\varepsilon$ is Lipschitz continuous in Lemma \ref{r_lips}.
\begin{lemma}\label{lem:r_subdiff}
Let $r(\xbf)$ be defined in \eqref{eq:r_chp4}; then, the Clarke subdifferential of $r$ at $\xbf$ is
%\begin{adjustwidth}{-4.6cm}{0cm}
\begin{equation}\label{eq:r_subdiff}
\partial \rbf(\xbf) = \{\sum_{j\in I_0}\nabla \gbf_i(\xbf)^{\top}  \wbf_j + \sum_{j \in I_1}\nabla \gj(\xbf)^{\top}\frac{\gj(\xbf)}{\|\gj(\xbf)\|} \ \bigg\vert \ \wbf_j \in \mathbb{R}^d, \ \|\Pi(\wbf_j; \Ccal(\nabla \gbf_i(\xbf)))\|\leq 1,\ \forall\, j \in I_0 \} ,  
\end{equation}
%\end{adjustwidth}
where $I_0=\{j \in [m] \ | \ \|\gj(\xbf) \|= 0 \}$, $I_1=[m] \setminus I_0$, and $\Pi(\wbf;\Ccal(\Abf))$ is the projection of $\wbf$ onto $\Ccal(\Abf)$ which stands for the column space of $\Abf$.
\end{lemma}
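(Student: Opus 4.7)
The plan is to prove Lemma~\ref{lem:r_subdiff} by successively applying the sum rule, the chain rule, and the known subdifferential of the Euclidean norm, and then re-expressing the resulting set in the projection form given in \eqref{eq:r_subdiff}.

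First, I would invoke the Clarke sum rule. Since each summand $h_j(\xbf) := \|\gbf_j(\xbf)\|$ is the composition of a convex (hence Clarke regular) function $q(\ybf) := \|\ybf\|$ with the continuously differentiable map $\gbf_j$ (by assumption (a2)), each $h_j$ is Clarke regular at every $\xbf$. The sum rule for Clarke subdifferentials holds with equality when the summands are regular, so
\begin{equation*}
\partial r(\xbf) = \sum_{j=1}^m \partial h_j(\xbf) = \sum_{j\in I_0} \partial h_j(\xbf) + \sum_{j\in I_1} \partial h_j(\xbf),
\end{equation*}
where the sums are Minkowski sums and $I_0, I_1$ partition $[m]$ as defined in the statement.

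Next, I would apply the Clarke chain rule to each $h_j = q\circ \gbf_j$. Because $\gbf_j$ is $C^1$, we have $\partial h_j(\xbf) = \nabla \gbf_j(\xbf)^\top \partial q(\gbf_j(\xbf))$. For $j\in I_1$ the norm $q$ is differentiable at $\gbf_j(\xbf)\neq 0$ with $\nabla q(\gbf_j(\xbf)) = \gbf_j(\xbf)/\|\gbf_j(\xbf)\|$, so $\partial h_j(\xbf)$ is the singleton $\{\nabla\gbf_j(\xbf)^\top \gbf_j(\xbf)/\|\gbf_j(\xbf)\|\}$, exactly matching the second sum in \eqref{eq:r_subdiff}. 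For $j\in I_0$ we use the well-known fact that $\partial q(0) = \{\wbf \in \mathbb{R}^d : \|\wbf\|\leq 1\}$ (the closed Euclidean unit ball), giving $\partial h_j(\xbf) = \{\nabla \gbf_j(\xbf)^\top \wbf_j : \|\wbf_j\|\leq 1\}$.

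The last step is the re-parametrization from the unit-ball condition $\|\wbf_j\|\le 1$ to the projection condition $\|\Pi(\wbf_j;\Ccal(\nabla\gbf_j(\xbf)))\|\le 1$. Here I would decompose any $\wbf_j \in \mathbb{R}^d$ orthogonally as $\wbf_j = \wbf_j^{\parallel} + \wbf_j^{\perp}$ with $\wbf_j^{\parallel}\in\Ccal(\nabla\gbf_j(\xbf))$ and $\wbf_j^{\perp}\in\Ccal(\nabla\gbf_j(\xbf))^{\perp}=\ker(\nabla\gbf_j(\xbf)^\top)$, so that $\nabla\gbf_j(\xbf)^\top \wbf_j = \nabla\gbf_j(\xbf)^\top \wbf_j^{\parallel}$. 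Two inclusions then give equality of the two descriptions: any $\wbf_j$ with $\|\wbf_j\|\le 1$ automatically satisfies $\|\wbf_j^{\parallel}\|\le 1$; conversely, any $\wbf_j$ with $\|\Pi(\wbf_j;\Ccal(\nabla\gbf_j(\xbf)))\|\le 1$ yields the same matrix–vector product as its projection, which lies in the unit ball of $\Ccal(\nabla \gbf_j(\xbf))\subset \mathbb{R}^d$. Thus the two formulations generate the same set of subgradients.

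The main obstacle I anticipate is the careful justification of the \emph{equality} in the sum and chain rules---Clarke subdifferential calculus only guarantees inclusions in general, and equality requires subdifferential regularity. I would address this by recording that norms are convex (and therefore regular), compositions of convex functions with $C^1$ maps are regular, and finite sums of regular functions are regular, so that equality propagates through all steps. A smaller subtlety is giving an unambiguous convention for $\nabla\gbf_j(\xbf)$ (Jacobian vs.\ its transpose) so that $\nabla\gbf_j(\xbf)^\top\wbf_j \in \mathbb{R}^n$ and $\Ccal(\nabla\gbf_j(\xbf))\subset \mathbb{R}^d$ match the statement; once this is fixed, the projection argument is purely linear-algebraic.
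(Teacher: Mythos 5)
Your proof is correct. Note that the paper itself does not prove this lemma: it is stated with a citation to the reference \cite{chen2020learnable}, so there is no in-paper argument to compare against. Your three-step route---sum rule with equality via Clarke regularity of each $h_j=\|\gbf_j(\cdot)\|$, the chain rule $\partial h_j(\xbf)=\nabla\gbf_j(\xbf)^{\top}\partial\|\cdot\|(\gbf_j(\xbf))$ with equality because the Euclidean norm is convex (hence regular) and $\gbf_j$ is strictly differentiable under (a2), and the case split $\partial\|\cdot\|(0)=\{\wbf:\|\wbf\|\le 1\}$ versus the gradient $\gbf_j(\xbf)/\|\gbf_j(\xbf)\|$ on $I_1$---is exactly the standard derivation, and your justification that equality (rather than mere inclusion) propagates is the one point that genuinely needs saying. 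The final re-parametrization is also right: with $A=\nabla\gbf_j(\xbf)\in\R^{d\times n}$ one has $\ker(A^{\top})=\Ccal(A)^{\perp}$, so $A^{\top}\wbf_j=A^{\top}\Pi(\wbf_j;\Ccal(A))$, and the two index sets $\{\|\wbf_j\|\le 1\}$ and $\{\|\Pi(\wbf_j;\Ccal(A))\|\le 1\}$ produce the same image under $A^{\top}$ by the two inclusions you give. You also implicitly (and correctly) read the statement's $\nabla\gbf_i$ as the typo it is for $\nabla\gbf_j$. The only cosmetic addition I would make is a sentence confirming that each $h_j$ is locally Lipschitz (composition of the globally Lipschitz norm with the locally Lipschitz $\gbf_j$), so that the Clarke subdifferential is well defined in the first place.
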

\begin{lemma}\label{r_lips}
The gradient of $\rbf_\varepsilon$ is Lipschitz continuous with constant $m ( L_g +\frac{2M^2}{\varepsilon})$.
\end{lemma}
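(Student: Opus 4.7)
I would begin by writing $\nabla r_\varepsilon$ in closed form. Since $r_\varepsilon(\xbf) = \sum_{j=1}^{m}\bigl(\sqrt{\|\gbf_j(\xbf)\|^2 + \varepsilon^2} - \varepsilon\bigr)$, the chain rule gives
\begin{equation*}
\nabla r_\varepsilon(\xbf) = \sum_{j=1}^{m} \nabla \gbf_j(\xbf)^\top \vbf_j(\xbf), \qquad \vbf_j(\xbf) := \frac{\gbf_j(\xbf)}{\sqrt{\|\gbf_j(\xbf)\|^2 + \varepsilon^2}}.
\end{equation*}
Because the Lipschitz bound is additive in $j$, it suffices to show that each summand $\phi_j(\xbf) := \nabla \gbf_j(\xbf)^\top \vbf_j(\xbf)$ is Lipschitz with constant $L_g + 2M^2/\varepsilon$. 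The summation over $m$ indices then yields the claimed constant $m(L_g + 2M^2/\varepsilon)$.

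For a fixed $j$, I would apply the standard ``add and subtract'' decomposition:
\begin{equation*}
\phi_j(\xbf) - \phi_j(\ybf) = \bigl(\nabla \gbf_j(\xbf) - \nabla \gbf_j(\ybf)\bigr)^\top \vbf_j(\xbf) + \nabla \gbf_j(\ybf)^\top \bigl(\vbf_j(\xbf) - \vbf_j(\ybf)\bigr).
\end{equation*}
The first piece is easy: $\|\vbf_j(\xbf)\| \le 1$ by construction, and $\nabla \gbf_j$ is $L_g$-Lipschitz by (a2), so this piece contributes $L_g \|\xbf-\ybf\|$. For the second piece, (a3) gives $\|\nabla \gbf_j(\ybf)\| \le M$, so the whole task reduces to bounding the Lipschitz constant of the normalized map $\vbf_j$.

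This last step is the main technical obstacle. I would compute the Jacobian
\begin{equation*}
\nabla \vbf_j(\xbf) = \frac{\nabla \gbf_j(\xbf)}{a_j(\xbf)} - \frac{\gbf_j(\xbf)\,\gbf_j(\xbf)^\top \nabla \gbf_j(\xbf)}{a_j(\xbf)^3}, \qquad a_j(\xbf) := \sqrt{\|\gbf_j(\xbf)\|^2 + \varepsilon^2},
\end{equation*}
and bound its operator norm termwise. Since $a_j(\xbf) \ge \varepsilon$ and $\|\gbf_j(\xbf)\|^2/a_j(\xbf)^2 \le 1$, together with (a3), each of the two terms is bounded by $M/\varepsilon$, giving $\|\nabla \vbf_j(\xbf)\| \le 2M/\varepsilon$ uniformly, and hence $\|\vbf_j(\xbf)-\vbf_j(\ybf)\| \le (2M/\varepsilon)\|\xbf-\ybf\|$ by the mean value inequality on $\Xcal$.

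Combining the two pieces gives $\|\phi_j(\xbf)-\phi_j(\ybf)\| \le (L_g + 2M^2/\varepsilon)\|\xbf-\ybf\|$, and summing over $j=1,\dots,m$ yields the stated Lipschitz constant. The only subtlety worth double-checking is the chain-rule formula at points where $\gbf_j(\xbf) = 0$, which is well defined precisely because the $\varepsilon^2$ term in the denominator keeps $a_j(\xbf) \ge \varepsilon > 0$, so no distinction of cases as in Lemma~\ref{lem:r_subdiff} is needed.
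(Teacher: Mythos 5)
Your proposal is correct and follows essentially the same route as the paper's proof: the same closed form for $\nabla r_\varepsilon$, the same add-and-subtract decomposition of each summand, the same use of $\|\vbf_j\|\le 1$ with (a2) for the first piece and of (a3) for the second, arriving at the identical constant $m(L_g + 2M^2/\varepsilon)$. The only (harmless) variation is in how the $2M/\varepsilon$ Lipschitz bound on the normalized map is obtained: you bound the operator norm of the Jacobian of $\xbf \mapsto \gbf_j(\xbf)/a_j(\xbf)$ and invoke the mean value inequality, whereas the paper first shows $\|h(\xbf_1)-h(\xbf_2)\| \le \tfrac{2}{\varepsilon}\|\gbf_j(\xbf_1)-\gbf_j(\xbf_2)\|$ by elementary inequalities and then applies the mean value theorem to $\gbf_j$ — both give the same constant.
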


\begin{proof}
From $\rbf_\varepsilon(\xbf) = \sum^m_{j=1} (\| \gbf_j(\xbf) \|^2 +\varepsilon^2)^{\frac{1}{2}} - \varepsilon $, it follows that 
\begin{equation}
    \nabla \rbf_\varepsilon (\xbf) = \sum^m_{j=1} \nabla \gbf_j(\xbf)^\top \gbf_j(\xbf) (\| \gbf_j(\xbf)\|^2 +\varepsilon^2)^{-\frac{1}{2}}.
\end{equation}
For any $\xbf_1, \xbf_2 \in \Xcal$, we first define $h(\xbf) = \gbf_j(\xbf)(\| \gbf_j(\xbf)\|^2 +\varepsilon^2)^{-\frac{1}{2}}$, so $ \norm{h(\xbf)} <1$.

\begin{subequations}\label{eq:h1-h2}
    \begin{align}
   & \left\| h(\xbf_1) - h(\xbf_2) \right\| \\
    & = \left \| \frac{\gj(\xbf_1)}{\sqrt{\| \gj(\xbf_1)\|^2 +\varepsilon^2}} - \frac{\gj(\xbf_2)}{\sqrt{\| \gj(\xbf_2)\|^2 +\varepsilon^2}} \right \|  \\  
    & = \left \| \frac{\gj(\xbf_1)}{\sqrt{\| \gj(\xbf_1)\|^2 +\varepsilon^2}} - \frac{\gj(\xbf_1)}{\sqrt{\| \gj(\xbf_2)\|^2 +\varepsilon^2}} +
    \frac{\gj(\xbf_1)}{\sqrt{\| \gj(\xbf_2)\|^2 +\varepsilon^2}}
    - \frac{\gj(\xbf_2)}{\sqrt{\| \gj(\xbf_2)\|^2 +\varepsilon^2}}  \right \| \\
    & \leq \left \| \gj(\xbf_1)  \left(\frac{\sqrt{\| \gj(\xbf_2)\|^2 +\varepsilon^2} - \sqrt{\| \gj(\xbf_1)\|^2 +\varepsilon^2} }{\sqrt{\| \gj(\xbf_1)\|^2 +\varepsilon^2} \sqrt{\| \gj(\xbf_2)\|^2 +\varepsilon^2} } \right) \right \| + \left\| \frac{ \gj(\xbf_1) - \gj(\xbf_2)  }{\sqrt{\| \gj(\xbf_2)\|^2 +\varepsilon^2}} \right\| \\
    & \leq   \left\| \frac{ \gj(\xbf_1) }{\sqrt{\| \gj(\xbf_1)\|^2 +\varepsilon^2}} \right\|  \left \| \frac{\sqrt{\| \gj(\xbf_2)\|^2 +\varepsilon^2} - \sqrt{\| \gj(\xbf_1)\|^2 +\varepsilon^2} }{ \sqrt{\| \gj(\xbf_2)\|^2 +\varepsilon^2} }  \right\| + \frac{1}{\varepsilon} \left\| \gj(\xbf_1) - \gj(\xbf_2)  \right\| \\
    & \leq  \frac{1}{\varepsilon} \left\| \sqrt{\| \gj(\xbf_2)\|^2 +\varepsilon^2} - \sqrt{\| \gj(\xbf_1)\|^2 +\varepsilon^2} \right\| + \frac{1}{\varepsilon} \left\| \gj(\xbf_1) - \gj(\xbf_2)  \right\| \label{A3e}\\
    & \leq \frac{1}{\varepsilon} \frac{\| \gj(\xbf_2)\|^2 - \| \gj(\xbf_1)\|^2}{\sqrt{\| \gj(\xbf_2)\|^2 +\varepsilon^2} + \sqrt{\| \gj(\xbf_1)\|^2 +\varepsilon^2} } +  \frac{1}{\varepsilon} \left\| \gj(\xbf_1) - \gj(\xbf_2)  \right\| \\
    & \leq \frac{1}{\varepsilon} \underbrace{\frac{\| \gj(\xbf_2)\| + \| \gj(\xbf_1)\| }{\sqrt{\| \gj(\xbf_2)\|^2 +\varepsilon^2} + \sqrt{\| \gj(\xbf_1)\|^2 +\varepsilon^2}} }_{<1} \left( \| \gj(\xbf_2)\| - \| \gj(\xbf_1)\|\right)  +  \frac{1}{\varepsilon} \left\| \gj(\xbf_1) - \gj(\xbf_2)  \right\| \\
    & \leq \frac{1}{\varepsilon}  \left\| \gj(\xbf_2) - \gj(\xbf_1)  \right\|+ \frac{1}{\varepsilon}  \left\| \gj(\xbf_1) - \gj(\xbf_2)  \right\|\\
    & = \frac{2}{\varepsilon}  \left\| \gj(\xbf_1) - \gj(\xbf_2)  \right\|.
    \end{align}
\end{subequations}
where to obtain \eqref{A3e} we used $\left\| \frac{ \gj(\xbf_1) }{\sqrt{\| \gj(\xbf_1)\|^2 +\varepsilon^2}} \right\| < 1 \text{ and }  \frac{ 1 }{\sqrt{\| \gj(\xbf_1)\|^2 +\varepsilon^2}} < \frac{1}{\varepsilon}$.

Therefore, we have
\begin{subequations}
    \begin{align}
   & \left\|\nabla \rbf_\varepsilon (\xbf_1) - \nabla  \rbf_\varepsilon (\xbf_2) \right\| \\
   &  = \sum^m_{j=1}  \left\|\nabla \gj(\xbf_1)^\top h(\xbf_1) - \nabla \gj(\xbf_2)^\top h(\xbf_2) \right\| \\
    & = \sum^m_{j=1}  \left\|\nabla \gbf_j(\xbf_1)^\top h(\xbf_1) - \nabla \gj(\xbf_2)^\top h(\xbf_1) +  \nabla \gj(\xbf_2)^\top  h(\xbf_1) - \nabla \gj(\xbf_2)^\top h(\xbf_2) \right\|  \\
    & \leq \sum^m_{j=1}  \left\| \left( \nabla \gbf_j(\xbf_1) - \nabla \gbf_j(\xbf_2) \right)^\top h(\xbf_1)  \right\| + \left\| \nabla \gbf_j(\xbf_2) \left( h(\xbf_1) - h(\xbf_2) \right) \right\| \\
& \leq \sum^m_{j=1}  \left\| \nabla \gbf_j(\xbf_1) - \nabla \gbf_j(\xbf_2) \right\|  \norm{h(\xbf_1)}  +\norm{\nabla \gbf_j(\xbf_2)} \left\| h(\xbf_1) - h(\xbf_2) \right\| \notag \\
& \leq \sum^m_{j=1} \left\| \nabla \gbf_j(\xbf_1) - \nabla \gbf_j(\xbf_2) \right\| + \norm{\nabla \gbf_j(\xbf_2)} \frac{2}{\varepsilon} \norm{\gbf_j(\xbf_1)- \gbf_j(\xbf_2)} \text{ by } \eqref{eq:h1-h2} \text{ and }\\
& \leq m (L_g \norm{\xbf_1 - \xbf_2}+ M \frac{2}{\varepsilon} \cdot M \norm{\xbf_1 - \xbf_2}),
    \end{align}
\end{subequations}
where the first term  of the last inequality is due to the $L_g$-Lipschitz continuity of $ \nabla \gj$. The second term is because of $\norm{\gbf_j(\xbf_1)- \gbf_j(\xbf_2)}  = \norm{\nabla \gbf_j(\tilde{\xbf})(\xbf_1-\xbf_2)}  $ for some $\tilde{\xbf} \in \Xcal$ due to the mean value theorem and  $ \| \nabla \gj(\tilde{\xbf}) \| \le \sup_{\xbf \in \Xcal} \| \nabla \gbf_j(\xbf) \| \leq M$. Therefore, we obtain
\begin{equation}
    \left\|\nabla \rbf_\varepsilon (\xbf_1) - \nabla  \rbf_\varepsilon (\xbf_2) \right\| \leq m(L_g + \frac{2M^2}{\varepsilon})  \norm{\xbf_1 - \xbf_2}.
\end{equation}
\end{proof}

\begin{lemma}\label{lem:inner}
Let $\varepsilon, \eta, \taut, a>0$,  $ 0<\rho<1$, and choose the initial $ \xbf_0 \in \Xcal$. Suppose the sequence $ \{ \xt \}$ is generated by  executing Lines 3--14 of Algorithm  \ref{alg:lda} with fixed $ \epst = \varepsilon$ and that $  0<\delta<\frac{L_\varepsilon}{2/a+L_\varepsilon} <1 $ exists, where $ L_{\varepsilon} = L_f  + m (L_g + \frac{2M^2}{\varepsilon})$ and $ \phi^* := \min_{\xbf \in  \Xcal} \phi(\xbf)$. Then, the following statements hold:
\begin{enumerate}
\item $ \| \nabla \phi_\varepsilon (\xt) \| \to 0$ as $t \to \infty$.

\item $ \max \{ t\in \NN \ |  \  \norm{\nabla \phi_{\varepsilon} (\xtp) } \geq \eta \} \leq \max \{ \frac{{a L^2_\varepsilon} }{ \delta^2 \eta^2}, \frac{a^3}{\eta^2}\}  \left(\phi_\varepsilon(\xbf_0) - \phi^*+\varepsilon \right) $.
\end{enumerate}
\end{lemma}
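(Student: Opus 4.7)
The plan is to establish a uniform sufficient-descent inequality that holds in both branches of Algorithm \ref{alg:lda} (Case 1: $x_{t+1}=u_{t+1}$ when the test in Line 5 succeeds; Case 2: $x_{t+1}=v_{t+1}$ after the backtracking line search), and to complement it with a gradient bound of the form $\|\nabla \phi_\varepsilon(x_t)\| \le C\|x_{t+1}-x_t\|$ for a constant $C$ depending only on $a$, $\delta$, and $L_\varepsilon$. First, note that $\nabla\phi_\varepsilon=\nabla f+\nabla R_\varepsilon$ is Lipschitz with constant $L_\varepsilon=L_f+m(L_g+2M^2/\varepsilon)$ by combining assumption (a1) with Lemma \ref{r_lips}. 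In Case 1 the sufficient-descent inequality is read directly from the second condition in Line 5. In Case 2 it is the line-search condition in Line 9; one shows the line search terminates in finitely many reductions by $\rho$ by appealing to the descent lemma, which gives $\phi_\varepsilon(v_{t+1})-\phi_\varepsilon(x_t)\le -\alpha_t(1-L_\varepsilon\alpha_t/2)\|\nabla\phi_\varepsilon(x_t)\|^2$, so the Line 9 test is automatically satisfied whenever $\alpha_t\le 2a/(2+aL_\varepsilon)$.

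Second, summing the uniform descent from $t=0$ to $T-1$ telescopes and, using $\phi_\varepsilon\le\phi\le\phi_\varepsilon+\varepsilon$ together with (a4), gives
\begin{equation*}
\sum_{t=0}^{T-1}\|x_{t+1}-x_t\|^2 \;\le\; a\bigl(\phi_\varepsilon(x_0)-\phi_\varepsilon(x_T)\bigr) \;\le\; a\bigl(\phi_\varepsilon(x_0)-\phi^*+\varepsilon\bigr).
\end{equation*}
In particular the series is summable, so $\|x_{t+1}-x_t\|\to 0$.

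Third, derive the gradient bound $\|\nabla\phi_\varepsilon(x_t)\|\le C\|x_{t+1}-x_t\|$ with $C=\max(a,L_\varepsilon/\delta)$. Case 1 gives $\|\nabla\phi_\varepsilon(x_t)\|\le a\|u_{t+1}-x_t\|$ directly from the first condition in Line 5. Case 2 yields $\|\nabla\phi_\varepsilon(x_t)\|=(1/\alpha_t)\|v_{t+1}-x_t\|$, so the task reduces to showing a lower bound $\alpha_t\ge \delta/L_\varepsilon$; this is the main obstacle. The argument is: either the current $\alpha_t$ already passes the line-search test (in which case one controls $\alpha_t$ through the chosen initialization together with the monotone non-increase of $\alpha_t$ across iterations), or the previous trial value $\alpha_t/\rho$ failed the test; in the latter case, combining the failure inequality with the descent lemma at $\alpha_t/\rho$ produces $\alpha_t/\rho>2a/(2+aL_\varepsilon)$, and the hypothesis $\delta<L_\varepsilon/(2/a+L_\varepsilon)=aL_\varepsilon/(2+aL_\varepsilon)$ is exactly what is needed to translate this into $\alpha_t\ge\delta/L_\varepsilon$. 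Reconciling the choice of $\delta$ with the line-search parameter $\rho$ is the delicate point; the inequality $\delta<aL_\varepsilon/(2+aL_\varepsilon)$ is picked precisely so that $\delta/L_\varepsilon$ sits strictly below the critical step size $2a/(2+aL_\varepsilon)$.

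Finally, both claims are assembled. Part (1) is immediate from $\|\nabla\phi_\varepsilon(x_t)\|\le C\|x_{t+1}-x_t\|\to 0$. For part (2), whenever $\|\nabla\phi_\varepsilon(x_{t+1})\|\ge\eta$, the gradient bound (applied at index $t+1$) yields $\|x_{t+2}-x_{t+1}\|^2\ge \eta^2/C^2$; summing over all such indices and invoking the telescoping bound gives
\begin{equation*}
\#\{t:\|\nabla\phi_\varepsilon(x_{t+1})\|\ge\eta\}\cdot\frac{\eta^2}{C^2}\;\le\;a\bigl(\phi_\varepsilon(x_0)-\phi^*+\varepsilon\bigr),
\end{equation*}
which, after substituting $C^2=\max(a^2,L_\varepsilon^2/\delta^2)$, produces the stated complexity bound $\max\{aL_\varepsilon^2/(\delta^2\eta^2),\,a^3/\eta^2\}(\phi_\varepsilon(x_0)-\phi^*+\varepsilon)$. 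The main obstacle throughout is Paragraph three: translating the backtracking mechanism into the clean lower bound $\alpha_t\ge\delta/L_\varepsilon$ under the precise hypothesis on $\delta$.
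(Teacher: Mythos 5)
Your proof is correct and follows essentially the same route as the paper's: the same two-case analysis of the Line 5 test versus the backtracked step, the same descent-lemma argument showing the line search accepts once $\alpha_t$ falls below the critical threshold, and the same telescoping of a per-iteration decrease, with your factorization into ``sufficient descent'' plus ``gradient bound'' multiplying out to exactly the paper's inequality $\|\nabla\phi_\varepsilon(\xbf_t)\|^2 \le \max\{aL_\varepsilon^2/\delta^2,\, a^3\}\bigl(\phi_\varepsilon(\xbf_t)-\phi_\varepsilon(\xbf_{t+1})\bigr)$. The step-size lower bound $\alpha_t \ge \delta/L_\varepsilon$ that you rightly flag as the delicate point is in fact simply asserted without proof in the paper, so your sketched backtracking argument for it is, if anything, more explicit than the original.
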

\begin{proof}

\begin{enumerate}
\item
In each iteration, we compute $\utp = \ztp - \taut\sigma(\omega_i) \nabla \rbf_{\epst} (\ztp)$. 
\begin{enumerate}[label*=\arabic*.]
\item \label{case1} In the case the condition  
\begin{equation}\label{eq:con1}
   \| \nabla \phi_{\varepsilon} (\xt) \| \leq a \| \utp - \xt \| \ \ \ \mbox{and}  \ \ \  \phi_{\varepsilon}(\utp) - \phi_{\varepsilon}(\xt) \leq - \frac{1}{a}\| \utp - \xt \|^2 
\end{equation} holds with $a>0$,
we put $ \xtp = \utp$, and we have $\phi_{\varepsilon}(\utp) \leq \phi_{\varepsilon}(\xt)$. 
\item \label{case2} Otherwise, we compute $\vtp = \xt - \alpha_{t} \nabla \phi_{\varepsilon}(\xt)$, where $ \alpha_{t}$ is found through the line search until the criteria 
\begin{equation}\label{eq:con2}
  \phi_{\varepsilon}(\vtp) - \phi_{\varepsilon}(\xt) \le - \frac{1}{a} \| \vtp - \xt\|^2  
\end{equation} holds, and then put $ \xtp = \vtp $. From Lemma \ref{r_lips}, we know that the gradient $ \nabla \rbf_{\varepsilon} (\xbf)$ is Lipschitz continuous with constant $m (L_g +\frac{2M^2}{\varepsilon})$. Furthermore, we assumed in (a1) that $ \nabla f$ is $L_f$-Lipschitz continuous. Hence, putting $ L_\varepsilon = L_f + m( L_g +\frac{2M^2}{\varepsilon})$, we find that $\nabla \phi_\varepsilon$ is $L_\varepsilon$-Lipschitz continuous, which implies
\begin{equation}\label{eq:phi}
    \phi_\varepsilon(\vtp) \leq \phi_\varepsilon(\xt) + \langle \nabla \phi_\varepsilon(\xt) , \vtp-\xt \rangle + \frac{L_\varepsilon}{2} \| \vtp-\xt \|^2.
\end{equation}
Furthermore, by the optimality condition of $$ \vtp = \argmin_{\xbf} \langle \nabla f(\xt), \xbf - \xt \rangle +  \sigma(\omega_i) \langle \nabla \rbf_{\varepsilon}(\xt) , \xbf - \xt \rangle + \frac{1}{2 \alpha_t} \| \xbf - \xt \|^2, $$ we have 
\begin{equation}\label{eq:opt}
\langle \nabla \phi_\varepsilon (\xt), \vtp - \xt \rangle + \frac{1}{2 \alpha_t} \norm{\vtp - \xt}^2 \leq 0.
\end{equation}
Combining \eqref{eq:phi} and \eqref{eq:opt} and $ \vtp = \xt -\alpha_t \nabla \phi_\varepsilon (\xt)$ in line 8 of Algorithm \ref{alg:lda} yields 
\begin{equation}
    \phi_\varepsilon(\vtp) - \phi_\varepsilon(\xt) \leq -\left(\frac{1}{2\alpha_t} - \frac{L_\varepsilon}{2} \right) \norm{\vtp - \xt}^2.  %= -\frac{\alpha_t(1 - \alpha_t L_\varepsilon)}{2} \norm{\nabla \phi_\varepsilon (\xt)}^2 \leq 0,
\end{equation}
Therefore, it is sufficient for $ \alpha_t \leq \frac{1}{ 2/a + L_\varepsilon}$ for the criteria \eqref{eq:con2}
 to be satisfied. This process only take finitely many iterations since we can find a finite $t$ such that $ \rho^t \alpha_t \leq \frac{1}{ 2/a + L_\varepsilon}$, and through the line search, we can obtain $ \phi_\varepsilon(\vtp) \leq \phi_\varepsilon (\xt)$.
\end{enumerate}
Therefore, in either case of \ref{case1} or \ref{case2} where we take $\xtp = \utp$ or $\vtp$,  we can obtain
\begin{equation}\label{eq:dec}
    \phi_\varepsilon (\xtp) \leq \phi_\varepsilon (\xt) , \text{ for all } t\geq 0.
\end{equation}
Now, from case \ref{case1}, \eqref{eq:con1} gives 
\begin{subequations}
\begin{align}
\norm{\nabla \phi_\varepsilon(\xt)}^2 \leq a^2\left\| \utp - \xt \right\|^2 \leq &  a^3 \left(\phi_\varepsilon(\xt) - \phi_\varepsilon(\utp) \right), \\
\text{therefore if $\xtp = \utp $ we get } & \norm{\nabla \phi_\varepsilon(\xt)}^2 \leq a^3 \left(\phi_\varepsilon(\xt) - \phi_\varepsilon(\xtp) \right).   \label{eq:c1}    
\end{align}
\end{subequations}
From  case \ref{case2} and $\vtp = \xt - \alpha_{t} \nabla \phi_{\varepsilon}(\xt)$, we have  
\begin{subequations}
    \begin{align}
     \phi_\varepsilon(\vtp) - \phi_\varepsilon(\xt) \leq  & - \frac{1}{a} \norm{\vtp - \xt}^2 = -\frac{1}{a} \alpha_t^2 \norm{\nabla \phi_\varepsilon(\xt)}^2\\
    \Longrightarrow 
    &  \norm{\nabla \phi_\varepsilon(\xt)}^2 \leq \frac{a}{\alpha_t^2} \big(\phi_\varepsilon(\xt) - \phi_\varepsilon(\vtp) \big),\\
     \text{ then if $\xtp =\vtp$, we have }
    &  \norm{\nabla \phi_\varepsilon(\xt)}^2 \leq \frac{a}{ \alpha_t^2} \big(\phi_\varepsilon(\xt) - \phi_\varepsilon(\xtp) \big). \label{eq:c2}
    \end{align}
\end{subequations}
Since $ \frac{\delta}{L_\varepsilon}\leq \alpha_t \leq \frac{1}{ 2/a + L_\varepsilon}$, we have
\begin{equation}\label{eq:grad_phi_bound}
     \norm{\nabla \phi_\varepsilon(\vtp)}^2  \leq \frac{{a L^2_\varepsilon} }{ \delta^2}\big(\phi_\varepsilon(\vtp) - \phi_\varepsilon(\xtp) \big).
\end{equation}

Combining \eqref{eq:c1} and \eqref{eq:grad_phi_bound} and selecting $C = \max \{ \frac{{a L^2_\varepsilon} }{ \delta^2}, a^3\} $, we obtain
\begin{equation}\label{eq:grad_phi_bound_2}
     \norm{\nabla \phi_\varepsilon(\xt)}^2  \le  C \big(\phi_\varepsilon(\xt) - \phi_\varepsilon(\xtp) \big).
\end{equation}

Summing up \eqref{eq:grad_phi_bound_2} for $t=0,\cdots, T$, we have
\begin{equation}\label{eq:sum}
    \sum^{T}_{t=0} \norm{\nabla \phi_\varepsilon(\xt)}^2  \leq C \big(\phi_\varepsilon(\xbf_0) - \phi_\varepsilon(\xbf_{T+1}) \big).
\end{equation}
Combined with the fact that $ \phi_\varepsilon(\xbf) \geq \phi(\xbf) - \varepsilon \geq \phi^* -\varepsilon$ for every $ \xbf\in \Xcal$, we have 
\begin{equation}\label{eq:sum_}
    \sum^T_{t=0} \norm{\nabla \phi_\varepsilon(\xt)}^2 \leq C  \left(\phi_\varepsilon(\xbf_0) - \phi^*+\varepsilon \right). 
\end{equation}
The right-hand side is a finite constant, and hence by letting $t\to \infty$, we know that $\norm{\nabla \phi_\varepsilon(\xt)} \to 0$, which proves the first statement.

\item Denote $ \kappa :=  \max \{ t\in \NN \ |  \  \norm{\nabla \phi_{\varepsilon} (\xtp) } \geq \eta \}$; then, we know that $ \norm{\nabla \phi_{\varepsilon} (\xtp) } \geq \eta $ for all $ t\leq \kappa-1$. Hence, we have 
\begin{equation}
    \kappa \eta^2 \leq \sum^{\kappa-1}_{t=0} \norm{ \nabla \phi_{\varepsilon} (\xtp) }^2 = \sum^{\kappa}_{t=1}  \norm{ \nabla \phi_{\varepsilon} (\xt) }^2\leq C \left(\phi_\varepsilon(\xbf_0) - \phi^*+\varepsilon \right). 
\end{equation}
which implies the second statement.
\end{enumerate}
\end{proof}

\begin{lemma}\label{lem:phi_decay}
Suppose that the sequence $\{ \xt\}$ is generated by Algorithm 1 with an initial guess $\xbf_0$. Then, for any $t\geq 0$, we have 
$ \phi_{\epstp}(\xtp) + \epstp \leq \phi_{\epst}(\xt) + \epst$.
\end{lemma}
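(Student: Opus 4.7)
The plan is to split the claim into two monotonicity facts and combine them. The first fact is a \emph{descent in $\xbf$ with $\varepsilon$ held fixed}: for every iteration, $\phi_{\epst}(\xtp) \le \phi_{\epst}(\xt)$. The second is a \emph{monotonicity in $\varepsilon$ at fixed $\xbf$}: the map $\varepsilon \mapsto \phi_{\varepsilon}(\xbf)+\varepsilon$ is non-decreasing on $[0,\infty)$. Granting these two, the rest is a short case analysis governed by Line 15 of Algorithm~\ref{alg:lda}.

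For the descent fact, I would simply invoke the per-iteration analysis already carried out in the proof of Lemma~\ref{lem:inner}, and specifically inequality \eqref{eq:dec}. That argument treats $\epst$ as fixed throughout Lines~3--14 (which is indeed what Algorithm~\ref{alg:lda} does before updating the smoothing parameter on Line~15), and shows that irrespective of whether the iterate $\xtp$ is set to $\utp$ (Line~6) or to $\vtp$ (Line~10 after the backtracking on Line~12 terminates), one always has $\phi_{\epst}(\xtp) \le \phi_{\epst}(\xt)$.

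For the $\varepsilon$-monotonicity, observe that with $R_\varepsilon = \kappa \cdot r_\varepsilon$ and the formula \eqref{eq:l21},
\begin{equation*}
\phi_\varepsilon(\xbf) + \varepsilon \;=\; f(\xbf) + \kappa \sum_{j=1}^m \sqrt{\|\gbf_j(\xbf)\|^2 + \varepsilon^2} \;+\; (1-\kappa)\varepsilon.
\end{equation*}
Differentiating in $\varepsilon$ gives $\kappa \sum_{j=1}^m \varepsilon/\sqrt{\|\gbf_j(\xbf)\|^2 + \varepsilon^2} + (1-\kappa) \ge 0$, since $\varepsilon\ge 0$ and $\kappa = \sigma(\omega_i) \in (0,1)$. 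So $\phi_\varepsilon(\xbf)+\varepsilon$ is a non-decreasing function of $\varepsilon$ at every fixed $\xbf$.

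Finally I would split along Line~15 of Algorithm~\ref{alg:lda}. If the test there fails then $\epstp = \epst$, and the inequality of the lemma collapses to the descent fact $\phi_{\epst}(\xtp) \le \phi_{\epst}(\xt)$. If the test holds then $\epstp = \gamma \epst$ with $\gamma \in (0,1)$, so $\epstp < \epst$; applying the $\varepsilon$-monotonicity at $\xbf = \xtp$ yields $\phi_{\epstp}(\xtp) + \epstp \le \phi_{\epst}(\xtp) + \epst$, and then chaining with the descent fact gives $\phi_{\epst}(\xtp) + \epst \le \phi_{\epst}(\xt) + \epst$, which is exactly the desired conclusion. There is no genuine obstacle here; the only point of care is to make sure the smoothing level is interpreted as fixed throughout Lines~3--14 of the algorithm so that \eqref{eq:dec} is applicable, and to verify the sign of $\partial_\varepsilon(\phi_\varepsilon + \varepsilon)$ using $\kappa \in (0,1]$.
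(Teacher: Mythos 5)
Your proposal is correct and follows essentially the same route as the paper's own proof: both reduce the claim to the chain $\phi_{\epstp}(\xtp)+\epstp \le \phi_{\epst}(\xtp)+\epst \le \phi_{\epst}(\xt)+\epst$, obtaining the second inequality from the fixed-$\varepsilon$ descent property \eqref{eq:dec} and the first from the monotonicity of $\varepsilon \mapsto \phi_\varepsilon(\xbf)+\varepsilon$, with the same case split on Line 15. The only (cosmetic) difference is that you verify the monotonicity by differentiating in $\varepsilon$, whereas the paper checks it termwise via $\sqrt{\|\gbf_j(\xbf)\|^2+\epstp^2}\le\sqrt{\|\gbf_j(\xbf)\|^2+\epst^2}$; note that both arguments silently depend on whether the $-\varepsilon$ in \eqref{eq:l21} is subtracted once or once per summand, which only affects the bookkeeping constant in front of $\varepsilon$.
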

\begin{proof}
To prove this statement, we can prove
\begin{equation}\label{eq:ineq}
\phi_{\epstp}(\xtp) + \epstp \leq  \phi_{\epst}(\xtp) + \epst \leq \phi_{\epst}(\xt) + \epst.
\end{equation}
The second inequality is immediately obtained from \eqref{eq:dec}. Now, we prove the first inequality. 

For any $\varepsilon>0$, denote
\begin{equation}\label{eq:r_}
\rbf_{\varepsilon, j} (\xbf) = \sqrt{\| \gbf_j (\xbf) \|^2_{2} + \varepsilon^2} -\varepsilon.
\end{equation}

Since $ \phi_\varepsilon(\xbf)=f(\xbf)+\sigma(\omega_i)\sum^m_{j=1} \rbf_{\varepsilon, j}(\xbf)$, it suffices to show that 
\begin{equation}
    \rbf_{\epstp, j}(\xtp) + \epstp \leq  \rbf_{\epst, j}(\xtp) + \epst 
\end{equation}
If $ \epstp =\epst$, then the two quantities above are identical and the first inequality holds. Now, suppose $ \epstp = \gamma\epst \le \epst$; then, 
\begin{equation}
    \rbf_{\epstp, j}(\xtp) + \epstp =  \sqrt{\| \gbf_j (\xbf) \|^2_{2} + \epstp^2} \leq   \sqrt{\| \gbf_j (\xbf) \|^2_{2} + \epst^2} = \rbf_{\epst, j}(\xtp) + \epst,
\end{equation}
which implies the first inequality of \eqref{eq:ineq}.
\end{proof}
\begin{theorem}
Suppose that $\{\xt \}$ is the sequence generated by Algorithm \ref{alg:lda} with any initial $\xbf_0$, $\etol=0$ and $T=\infty$. Let $ \{ \xbf_{t_l+1}\}$ be the subsequence that satisfies the reduction criterion  in step 15 of Algorithm \ref{alg:lda}, i.e., $  \norm{\nabla \phi_{\varepsilon_{t_l}} (\xbf_{t_l+1})} \leq \sigma  \varepsilon_{t_l} \gamma $ for $t=t_l$ and $ l=1,2,\cdots$. Then $ \{ \xbf_{t_l+1}\}$ has at least one accumulation point, and every accumulation point of $\{ \xbf_{t_l+1}\}$ is a clarke stationary point of $ \min_{\xbf} \phi(\xbf) := f(\xbf) +\sigma(\omega_i) r(\xbf)$.
    \label{theorem a6}
\end{theorem}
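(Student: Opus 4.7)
The statement has two parts: (i) existence of at least one accumulation point of $\{\xbf_{t_l+1}\}$, and (ii) showing that every such accumulation point satisfies the Clarke first-order optimality condition $0\in\partial\phi(\xbf^*)=\nabla f(\xbf^*)+\sigma(\omega_i)\partial r(\xbf^*)$, where $\partial r$ is the Clarke subdifferential characterized in Lemma~\ref{lem:r_subdiff}.

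For part (i), the plan is to use the coercivity of $\phi$ from assumption (a4) together with the monotone decrease established in Lemma~\ref{lem:phi_decay}. Since $\phi_{\varepsilon_t}(\xbf_t)+\varepsilon_t\le \phi_{\varepsilon_0}(\xbf_0)+\varepsilon_0$ and since the smoothing satisfies $\phi(\xbf)\le \phi_\varepsilon(\xbf)+C\varepsilon$ for some constant $C$ (using $\|g_j\|\le\sqrt{\|g_j\|^2+\varepsilon^2}$), the sequence $\{\phi(\xbf_t)\}$ is uniformly bounded above. Coercivity then forces $\{\xbf_t\}$, and in particular the subsequence $\{\xbf_{t_l+1}\}$, to lie in a compact set, so it has at least one accumulation point $\xbf^*$.

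For part (ii), let $\xbf_{t_{l_k}+1}\to \xbf^*$ along some subsequence. By construction, $\|\nabla\phi_{\varepsilon_{t_{l_k}}}(\xbf_{t_{l_k}+1})\|\le \sigma\gamma\varepsilon_{t_{l_k}}\to 0$ since $\varepsilon_{t_l}=\gamma^l\varepsilon_0\to 0$. Writing this gradient out,
\begin{equation*}
\nabla\phi_{\varepsilon_{t_{l_k}}}(\xbf_{t_{l_k}+1})=\nabla f(\xbf_{t_{l_k}+1})+\sigma(\omega_i)\sum_{j=1}^m \nabla\gbf_j(\xbf_{t_{l_k}+1})^\top \frac{\gbf_j(\xbf_{t_{l_k}+1})}{\sqrt{\|\gbf_j(\xbf_{t_{l_k}+1})\|^2+\varepsilon_{t_{l_k}}^2}}.
\end{equation*}
I will pass to the limit term by term, splitting indices as $j\in I_1=\{j:\gbf_j(\xbf^*)\ne 0\}$ and $j\in I_0=\{j:\gbf_j(\xbf^*)=0\}$. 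For $j\in I_1$, continuity of $\gbf_j$ and $\nabla\gbf_j$ and $\varepsilon_{t_{l_k}}\to 0$ give convergence to $\nabla\gbf_j(\xbf^*)^\top\gbf_j(\xbf^*)/\|\gbf_j(\xbf^*)\|$, exactly the second sum in \eqref{eq:r_subdiff}. For $j\in I_0$, the unit-bounded vectors $\wbf_j^{(k)}:=\gbf_j(\xbf_{t_{l_k}+1})/\sqrt{\|\gbf_j(\xbf_{t_{l_k}+1})\|^2+\varepsilon_{t_{l_k}}^2}$ satisfy $\|\wbf_j^{(k)}\|\le 1$; by a diagonal argument I extract a further subsequence along which each $\wbf_j^{(k)}\to \wbf_j^*$ with $\|\wbf_j^*\|\le 1$, which implies $\|\Pi(\wbf_j^*;\mathcal{C}(\nabla\gbf_j(\xbf^*)))\|\le 1$ since projection is nonexpansive. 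Combining the two cases and using $\nabla f(\xbf_{t_{l_k}+1})\to\nabla f(\xbf^*)$, the limit belongs to $\nabla f(\xbf^*)+\sigma(\omega_i)\partial r(\xbf^*)=\partial\phi(\xbf^*)$ by Lemma~\ref{lem:r_subdiff}, and equals $0$, proving Clarke stationarity.

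The main obstacle is handling the indices $j\in I_0$: the smoothed ratio $\gbf_j/\sqrt{\|\gbf_j\|^2+\varepsilon^2}$ is not continuous at the nondifferentiable locus of $\|\cdot\|$, so I cannot simply pass to the limit; I must use compactness (boundedness by $1$) and argue that the limit lies in the correct piece of the Clarke subdifferential. A secondary subtlety is ensuring that the reduction criterion in step~15 is triggered infinitely often (so that $l\to\infty$ and $\varepsilon_{t_l}\to 0$); this follows from Lemma~\ref{lem:inner}(1) applied on each segment of constant $\varepsilon$, since $\|\nabla\phi_\varepsilon(\xbf_t)\|\to 0$ eventually falls below $\sigma\gamma\varepsilon$. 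With these two points addressed, the remainder is a routine passage to the limit.
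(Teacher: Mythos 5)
Your proof is correct and follows essentially the same route as the paper's: boundedness of the iterates via the monotone decrease of $\phi_{\varepsilon_t}(\xbf_t)+\varepsilon_t$ together with coercivity, followed by passing to the limit in $\nabla\phi_{\varepsilon_p}(\xbf_{p+1})\to 0$ and matching the limit against the Clarke subdifferential of Lemma~\ref{lem:r_subdiff}. Your handling of the indices $j\in I_0$ --- extracting a convergent subsequence of the bounded ratios $\wbf_j^{(k)}$ and invoking nonexpansiveness of the projection to get $\|\Pi(\wbf_j^*;\Ccal(\nabla \gbf_j(\xbf^*)))\|\le 1$ --- is actually more careful than the paper's, which drops those terms as if $\gbf_j$ vanished exactly along the sequence and then remarks that $\mathbf{0}$ is an admissible subdifferential element.
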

\begin{proof}
By Lemma \ref{lem:phi_decay} and $ \phi(\xbf) \leq \phi_\varepsilon(\xbf) +\varepsilon$ for all $\varepsilon>0$ and $ \xbf\in \Xcal$, we know that
\begin{equation}
    \phi(\xt) \leq \phi_{\epst}(\xt) +\epst\leq \cdots \leq \phi_{\varepsilon_0}(\xbf_0) +\varepsilon_0 <\infty.
\end{equation}
Since $\phi$ is coercive, we know that $\{ \xt\}$ is bounded, and the selected subsequence $ \{ \xbf_{t_l+1} \}$ is also bounded and has at least one accumulation point.

Note that $ \norm{\nabla \phi_{\varepsilon_{t_l}} (\xbf_{t_l+1})} \leq \sigma  \varepsilon_{t_l} \gamma = \sigma \varepsilon_{0} \gamma^{l+1} \to 0$ as $l\to \infty$.  Let $ \{ \xpp\} $ be any convergent subsequence of $\{ \xbf_{t_l+1} \}$ and denote $\epsp$ as the corresponding $\epst$ used in \mbox{Algorithm~\ref{alg:lda}} that generates $\xpp$. Then, there exists $ \xbf^* \in \Xcal$ such that $ \xpp \to  \xbf^*$ as $ \varepsilon_p \to 0,$ and $ \nabla \phi_{\epsp}(\xpp) \to 0$ as $ p\to \infty$.

\textls[-15]{Note that the Clarke subdifferential of $\phi$ at $\xbf^*$ is given by $\partial \phi(\xbf^*) =  \partial f(\xbf^*) + \sigma(\omega_i) \partial \rbf(\xbf^*) $:}

\begin{multline} \label{eq:d_phi_xhat}
\partial \phi(\xhat) = \{\nabla f(\xhat) + \sigma(\omega_i) \sum_{j \in I_0} \nabla \gj(\xhat)^{\top} \wbf_j + \sigma(\omega_i) \sum_{ j \in I_1} \nabla \gj(\xhat)^{\top} \frac{\gj(\xhat)}{\| \gj(\xhat) \|} \ \bigg\vert \\
 \ \norm{\Pi(\wbf_j; \Ccal(\nabla \gj(\xhat)))}  \le 1,\ \forall\, j\in I_0\},
\end{multline}
where $I_0 = \{j\in[m]\ \vert \ \|\gi(\xhat)\| = 0 \}$ and $I_1 = [m] \setminus I_0$.

If $j\in I_0$, we have $ \norm{\gj(\xbf)} =0 \iff \gj(\xbf) =0$: then,  
\begin{subequations}
    \begin{align}
    \partial \rbf_{\varepsilon}(\xbf) & = \sum_{j\in I_0} \nabla \gbf_j(\xbf)^{\top} \frac{\gbf_j(\xbf) }{\Big(\norm{\gbf_j(\xbf)}^2+\varepsilon^2 \Big)^{\frac{1}{2}}}  +  \sum_{j\in I_1} \nabla \gbf_j(\xbf)^{\top} \frac{\gbf_j(\xbf) }{\Big(\norm{\gbf_j(\xbf)}^2+\varepsilon^2 \Big)^{\frac{1}{2}}}  \\
    & = \mathbf{0} + \sum_{j\in I_1} \nabla \gbf_j(\xbf)^{\top} \frac{\gbf_j(\xbf) }{\Big(\norm{\gbf_j(\xbf)}^2+\varepsilon^2 \Big)^{\frac{1}{2}}} 
    \end{align}
\end{subequations}

Therefore, we obtain
\begin{equation}\label{eq:d_phi_epsj}
\nabla \phi_{\epsp}(\xpp)  =   \nabla f(\xpp) +  \sigma(\omega_i) \sum_{j\in I_1} \nabla \gbf_j(\xbf)^{\top} \frac{\gbf_j(\xbf) }{\Big(\norm{\gbf_j(\xbf)}^2+\varepsilon_p^2 \Big)^{\frac{1}{2}}} . 
\end{equation}
Comparing \eqref{eq:d_phi_xhat} and \eqref{eq:d_phi_epsj}, we can see that the first term on the right-hand side of \eqref{eq:d_phi_epsj} converges to that of \eqref{eq:d_phi_xhat}, due to the fact that $\xpp \to \xhat$ and the continuity of $\nabla f$. Together with the continuity of $\gi$  and
$\nabla \gi$, the last term of \eqref{eq:d_phi_xhat} converges to the last term of \eqref{eq:d_phi_epsj} as $\varepsilon_p \rightarrow 0$ and $\norm{\gbf_j(\xbf)} > 0$. Furthermore, apparently $\mathbf{0}$ is a special case of the second term in \eqref{eq:d_phi_epsj}.
Hence, we know that
\[
\mathrm{dist}( \nabla \phi_{\epsp}(\xpp), \partial \phi(\xhat)) \to 0,
\]
as $p \to \infty$. Since $\nabla \phi_{\epsp}(\xpp) \to 0$ and $\partial \phi(\xhat)$ is closed, we conclude that $0 \in \partial \phi(\xhat)$.
\end{proof}
\chapter{An Optimization-Based Model for Joint Multimodal MRI Reconstruction and Synthesis}\label{JointRecSyn}

Generating multi-contrasts/modal MRI of the same anatomy enriches diagnostic information but is limited in practice due to excessive data acquisition time. In this paper, we propose a novel deep-learning model for joint reconstruction and synthesis of multi-modal MRI using incomplete k-space data of several source modalities as inputs. The output of our model includes reconstructed images of the source modalities and high-quality image synthesized in the target modality.
Our proposed model is formulated as a variational problem that leverages several learnable modality-specific feature extractors and a multimodal synthesis module. We propose a learnable optimization algorithm to solve this model, which induces a multi-phase network whose parameters can be trained using multi-modal MRI data. Moreover, a bilevel-optimization framework is employed for robust parameter training. We demonstrate the effectiveness of our approach using extensive numerical experiments.

\section{Introduction}
Magnetic resonance imaging (MRI) is a prominent leading medical imaging technology, which provides diverse image contrasts under the same anatomy and enriches the diagnostic information. Multimodal MR images can provide more diagnostic information for clinical application and research studies comparing to single modality \cite{HiNet,iglesias2013synthesizing,cordier2016extended,van2015does,huo2018synseg}. Multiple different contrasts images are generated by varying the acquisition parameters: repetition time (TR) and echo time (TE), they have similar anatomical structure but highlight different soft tissue. For example,
acquiring T1-weighted images uses short TR and TE times, high fat content appear bright and compartments filled with  celebrospinal fluid (CSF) appear dark and they provide more anatomical information. T1 brain images distinguish the gray and white matter tissue. T2-weighted images require longer TR and TE times. In general, T2-weighted images appear to be a reversal of T1-weighted images in contrast and they often provide more pathological information for delineation of edema. T2 images distinguish fluid from cortical tissue. Fluid Attenuated Inverseion Recovery (FLAIR) need very long TR and TE times, their contrast appear similar as T2-weighted images with supressed CSF so that the free water becomes dark and minimizes contrast between gray matter and white matter. 
 A major limitation of MRI is the relatively long data acquisition time during scanning,  which will arise patients discomfort or introduce motion artifacts and degrade diagnostic accessibility. One of the predominate method to reduce the scanning time is to reconstruct undersampled k-space acquisitions, another method is to synthesize missing contrast MR image from fully-sampled acquisitions \cite{yang2020model,dar2020prior}. In this paper, we refer different contrast of MR images as different modalities. We propose to jointly reconstruct undersampled k-space MR data from multiple available  modalities (i.e, source modalities) and synthesis the missing modality (i.e, target modality) image  from the source modalities.

Compressed sensing MRI (CS-MRI) reconstruction is a predominant approach for accelerating MR acquisitions, which solves an inverse problem formulated in a variational model. Traditional CS-MRI incorporate the hand-crafted regularization term (eg. Total Variation) to introduce prior information to the image to be reconstructed  \cite{block2007undersampled,huang2014fast,lustig2007sparse,eksioglu2016decoupled,qu2014magnetic,yang2010fast}. In recent decades, deep learning based model leverages large dataset and further explore the potential improvement of reconstruction performance comparing to traditional methods and has successful applications in clinic field. Most of the deep learning based reconstruction methods employ end-to-end deep networks such as GAN-based methods: DAGAN \cite{yang2017dagan}, RefineGAN \cite{quan2018compressed}; Cascade network methods: E2E-VN \cite{sriram2020end}, Cascade-Net \cite{schlemper2017deep}; The methods incorporate deep residual learning \cite{lee2018deep,lee2017deep,dai2019compressed}, etc. To overcome the weakness of the black-box model in end-to-end networks, several methods inspired by learnable optimization
algorithms (LOAs) developed, which possess of a more interpretable network architecture. LOA-based reconstruction methods unroll the iterative optimization algorithm into a multi-phase network where the regularization parameters and image transformations are learned effectively by optimization algorithm to improve network performance \cite{monga2021algorithm,liang2020deep,jimaging7110231,hammernik2018learning,sun2016deep,zhang2018ista,zhang2020deep,hosseini2020dense}. For example: ADMM-Net \cite{sun2016deep} is proposed by unrolling the Alternating Direction Method of Multipliers (ADMM) algorithm. ISTA-Net$^+$ \cite{zhang2018ista} embedded deep neural networks into the iterative
shrinkage-thresholding algorithm (ISTA) for solving the CS-MRI reconstruction.  PD-Net \cite{cheng2019model} unfolds primal dual hybrid gradient algorithm where the proximal operators are parametrized as two network blocks that learned from training data.

MR image synthesis has recently been gaining popularity using various deep learning frameworks, which can be roughly categorized into unimodal synthesis \cite{dar2019image,sohail2019unpaired,welander2018generative,yang2018mri,yu20183d} and multimodal synthesis \cite{chartsias2017multimodal,dar2020prior,sharma2019missing,zhou2020hi,yurt2021mustgan}. Unimodal synthesis is an one-to-one approach that aims to estimate the image of a target modality from the corresponding single source modality. A common deep learning based strategy is adversarial method such as pGAN \cite{dar2019image}, which minimizes adversarial loss function to capture reliable high-frequency texture information. For paired data translation, patch-based \cite{jog2013magnetic,torrado2016fast} and atlas-based  methods \cite{miller1993mathematical,roy2013magnetic,burgos2014attenuation} achieved great success. For instance, \cite{roy2016patch} proposed a patch matching method that first register multiple atlases with T1 and T2 to the target T1 and then combine atlas T2 patches to synthesis T2 based on atlas patchs of target T1. \cite{jog2015mr} estimate the subject pulse sequence parameters and synthesize an additional atlas image to learn the transformation of intensity between additional atlas image and target atlas image by learning random forest regression on image patches, and finally apply the regression on given subject image to obtain the target modality synthetic image.   For unpaired data translation, image-to-image translation in recent years attracts attention in multimodal medical imaging,  which including GAN-based models \cite{chen2019one,welander2018generative} where the generator is designated as deterministic mapping and flow-based methods \cite{bui2020flow} where the generator is an invertible mapping that allows the cycle consistant translation by mapping data from source domain to target domain and guarantee the data point return to the source domain.  Multimodal synthesis is many-to-one or many-to-many type of approach. Most of Multimodal synthesis uses GAN-based network. A main stream of the aforementioned synthesis methods are end-to-end networks that use encoder-decoder architectures, specifically contained in the generator network of adversarial learning. For example, MM-GAN \cite{sharma2019missing}  concatenates all the available modalities channel-wisely with a zero image as missing modality and imputes the missing input incorporating curriculum learning for GAN. Multimodal MR (MM) \cite{liu2020multimodal}, MMGradAdv \cite{chartsias2017multimodal} and Hi-Net \cite{zhou2020hi} exploit the correlations among multimodal data and using robust feature fusion method to form a unified latent representation. 

In order to synthesis target modality by using partially scanned k-space data from source modalities in stead of fully scanned data that used in the state-of-the-art multimodal synthesis.  In this paper, we propose a LOA-based jointly MRI reconstruction and synthesis deep neural network. The inputs of the network are multiple partially sampled source modalities and the outputs consist of the reconstruction target modalities and the synthesized target modality. Our contributions are summarised as follows:\\ (1) We propose a novel LOA for joint multimodal MRI reconstruction and synthesis with theoretical analysis guarantee;\\
(2) The network parameters are trained by a bilevel optimization algorithm to mitigate the overfitting problem;\\
(3) Extensive experimental results demonstrate the effectiveness and superiority of the proposed method.

\section{Proposed Method}
\subsection{Model}
In this section, we provide the details of the proposed algorithm and the corresponding network for joint MRI reconstruction and synthesis. Given the partial k-space data $\{ \fbf_1, \fbf_2 \}$ of the source modalities (eg: T1 and T2), our mission is to reconstruct the corresponding images $\{\xbf_1, \xbf_2\}$ as well as synthesizing the $\xbf_3$ (eg: FLAIR) without providing any k-space information. Our model jointly reconstructs and synthesizes the modalities by solving the following optimization problem
%%
%Our model takes the down-sampled k-space data $\{ f_i \} _{i = 1}^{N}$ as input, and reconstructs the corresponding images $\{ x_i \} _{i = 1}^{N}$ as well as synthesizing the modalities $\{ x_j \} _{j = N+1}^{M}$ without any k-space data. We perform the joint reconstruction and synthesis by minimizing the following function

\begin{subequations}
\label{our_model}
\begin{align}
\min_{\xbf_1, \xbf_2, \xbf_3}\Psi_{\Theta, \gamma}(\xbf_1, \xbf_2, \xbf_3) & :=  \sum_{i = 1}^{2} \frac{1}{2} \| P_i F \xbf_i  - \fbf_i\|_2^2 + \sum_{i = 1}^{3}  \|h_{w_i} (\xbf_i) \|_{2,1} +   \\
& \frac{\gamma}{2} \|g_{\theta} ([h_{w_1}(\xbf_1),h_{ w_2} (\xbf_2)]) - \xbf_3\|_2^2,
\end{align}
\end{subequations}
where the first term is the data fidelity of source modalities that ensures consistency between the reconstructed images $\{\xbf_1, \xbf_2\}$ and the sensed partial k-space data $\{ \fbf_1, \fbf_2 \}$. And $h_{w_i}$ represents the modality-specific feature extraction operator which maps the input $\xbf_i \in \C^n$ to a high-dimensional feature tensor $h_{w_i} (\xbf_i) \in \C^{m \times d}$, where $m$ is the spatial dimension and $d$ is the channel number of the feature map. The second term is the prior term of all modalities $\{\xbf_1, \xbf_2, \xbf_3\}$ to enhance sparsity under learned transforms which defined as
\begin{equation}\label{eq:r_chp5}
\|h_{w_i}(\xbf_i)\|_{2,1} = \sum_{j = 1}^{m} \|h_{w_i, j}(\xbf_i)\|,
\end{equation}
where $h_{w_i}(*) = (h_{w_i,1}(*),\dots, h_{w_i,m}(*))$, and each $h_{w_i,j}(*)$ is parametrized as a convolutional neural network (CNN) for $j = 1,\cdots, m$, and $w_i$ is the associated learnable network parameter. Here each $h_{w_i, j}\in \mathbb{R}^d$ can be viewed as a feature vector at spatial position $j$.

In the third term of \eqref{our_model}, $[\cdot,\cdot]$ represents the concatenation of the arguments and $g_{\theta} : \C^{m \times 2d} \rightarrow \C^{n}$ is the multi-modal synthesis module which takes the concatenated features of $ \xbf_1 $ and $\xbf_2$ as input and synthesizes the missing modality $\xbf_3$. To achieve this, we add the third term which tries to minimize the discrepancy between the synthesized result $g_{\theta} ([h_{w_1}(\xbf_1),h_{ w_2} (\xbf_2)])$ and the target modality $\xbf_3$. 

Moreover, $\Theta$ in \eqref{our_model} collects the parameters in the convolution layers of the function $\Psi$, i.e. $\Theta = \{w_1, w_2, w_3, \theta\} $. The coefficient $\lambda$ is parameterized to be a hyper-parameter which is tuned by minimizing the reconstruction loss on validation set. The weight $\gamma$ is a hyper-parameter plays a critical role in balancing the reconstruction part (first two terms in \eqref{our_model}) and the image synthesis part (last term in \eqref{our_model}) of the model \eqref{our_model}, and hence has significant impact to the final image reconstruction and synthesis quality. To address this important issue, we propose to use a bi-level hyper-parameter tuning framework to learn $\gamma$ by minimizing the reconstruction loss on both validation and training data sets. Details of this hyper-parameter training will be provided in Section \ref{section: bilevel}.
In the following subsections, we will present the detailed structure of the mentioned modules.

\subsubsection{Modality-specific feature extraction operator}

We design the modality-specific feature extraction operator $h_{w_i}$ to be a vanilla $l$-layer CNN with nonlinear activation function $\sigma$ but no bias, as follows: 
\begin{equation}\label{eq:h}
  h_{w_i}(\xbf) = \cbf_{w_i, l} * \sigma \cdots \ \sigma ( \cbf_{w_i, 3} * \sigma ( \cbf _{w_i, 2} * \sigma ( \cbf _{w_i, 1} * \xbf ))),
\end{equation}
where $\{\cbf _{w_i, q} \}_{q = 1}^{l}$ denote the convolution weights consisting of $d$ kernels with identical spatial kernel size ($\kappa \times \kappa$), and $*$ denotes the convolution operation. 
Here, we use the smoothed rectified linear unit \cite{chen2021learnable} as activation function $\sigma$.
Besides the smooth $\sigma$, each convolution operation of $h_{w_i}$ in \eqref{eq:h} is linear operator, which enable $h_{w_i}$ to be differentiable, and $\nabla h_{w_i}$ can be easily obtained by Chain Rule where each $\cbf_{w_i , q}^{\top}$ can be implemented as transposed convolutional operation \cite{dumoulin2016guide}.

\subsubsection{Multi-modal synthesis module}
The multi-modal synthesis module $g_{\theta} : \C^{m \times 2d} \rightarrow \C^{n}$ is the multi-modal synthesis module which takes the concatenated features $[h_{w_1}(\xbf_1),h_{ w_2} (\xbf_2)]$ of $\{\xbf_1, \xbf_2\}$ as input and synthesizes the missing modality $\xbf_3$. Similar to $h_{w_i}$, $g_{\theta}$ is also parameterized as a stack of convolution operators separated by $\sigma$:
\begin{equation}\label{eq:g_chp5}
  g_{\theta}(\xbf) = \cbf_{\theta, l} * \sigma \cdots \ \sigma ( \cbf_{\theta, 3} * \sigma ( \cbf _{\theta, 2} * \sigma ( \cbf _{\theta, 1} * \xbf ))).
\end{equation}
\begin{figure}[htb]
\centering 
\includegraphics[width=1.0\textwidth]{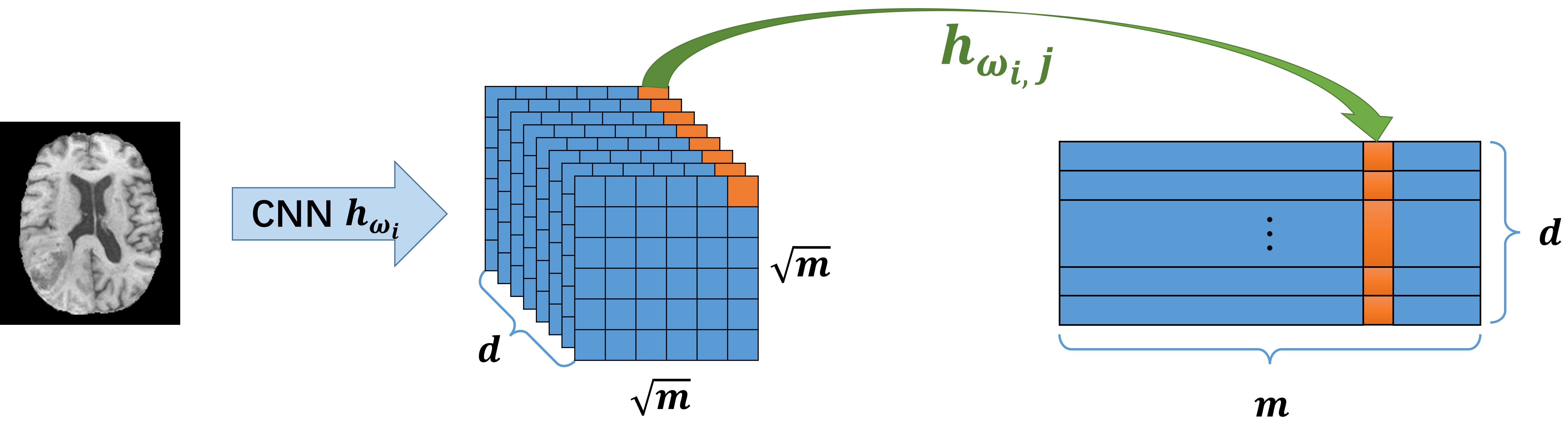}
\caption{The feature tensor $h_{w_i}$ and the feature vector $h_{w_i, j}$ at spatial position $j$.}
\vspace{10 pt}
\includegraphics[width=1.0\textwidth]{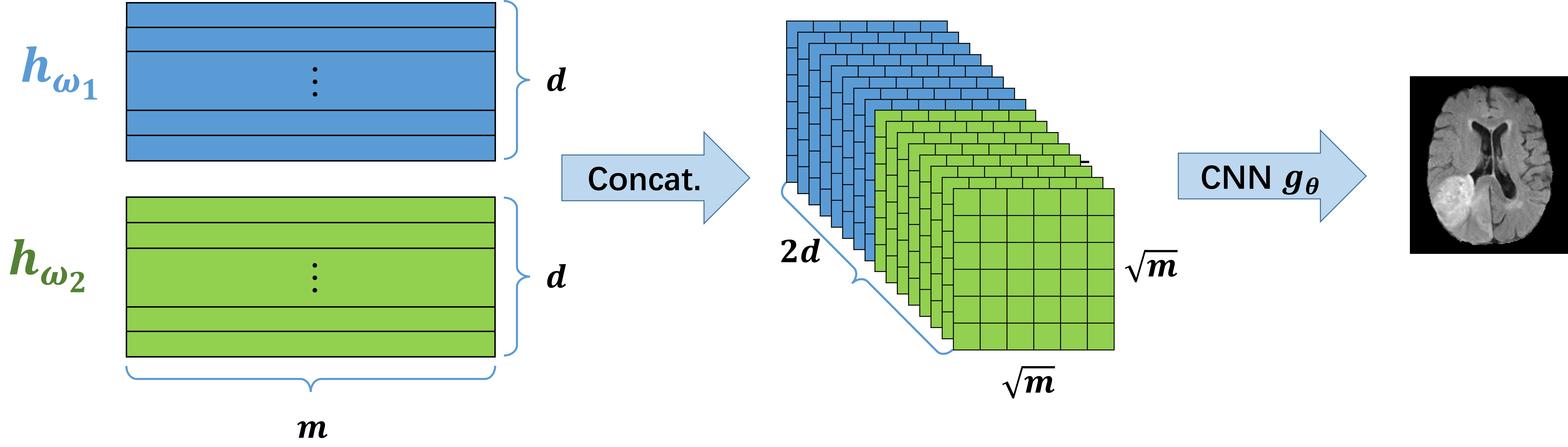}
\caption{The synthesis mapping $g_{\theta}$ maps the concatenated features $[h_{w_1}, h_{w_2}]$ to the image $\xbf_3$.}
\label{fig:operators}
\end{figure}

Figure \ref{fig:operators} indicates the illustration of the operator $h_{w_i}$ and $g_{\theta}$.

\subsection{Efficient Learnable Optimization Algorithm}\label{loa induced net}

In this section, we present a novel and efficient learnable optimization algorithm (LOA) for solving the nonconvex nonsmooth minimization problem \eqref{our_model}. Then we design a DNN whose architecture exactly follows this algorithm, and the parameters of the DNN can be learned from data. In this way, the DNN inherits all the convergence properties of the LOA.

\begin{figure}[htb]
\centering 
\includegraphics[width=1.0\textwidth]{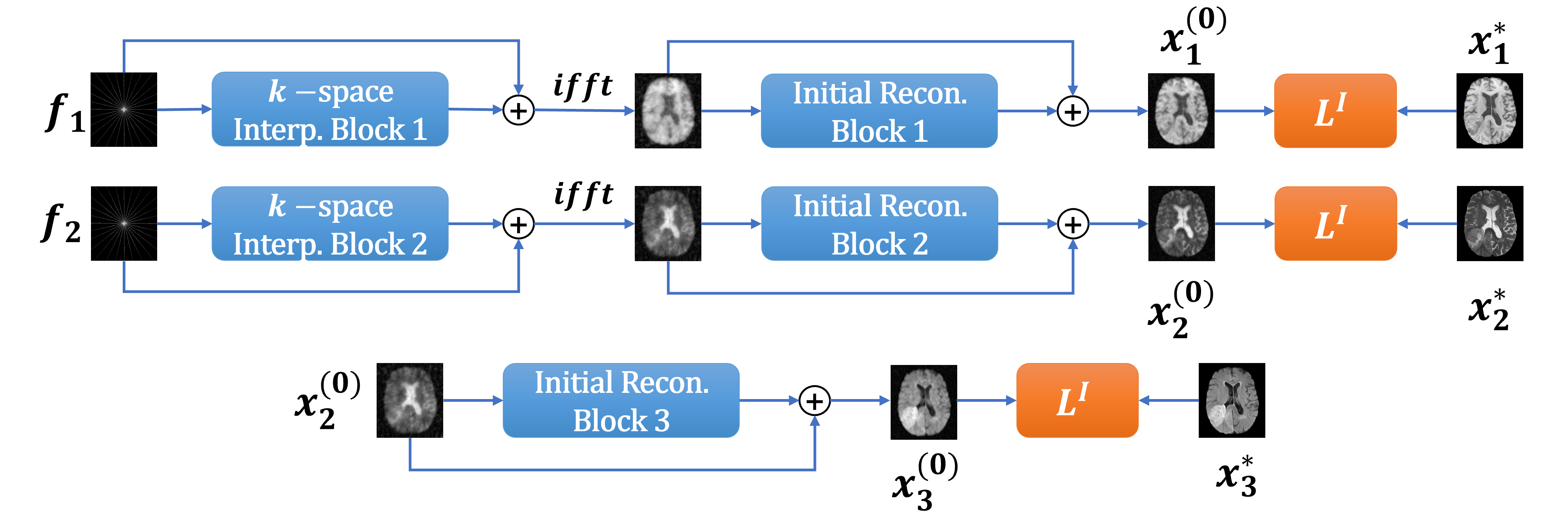}
\includegraphics[width=1.0\textwidth]{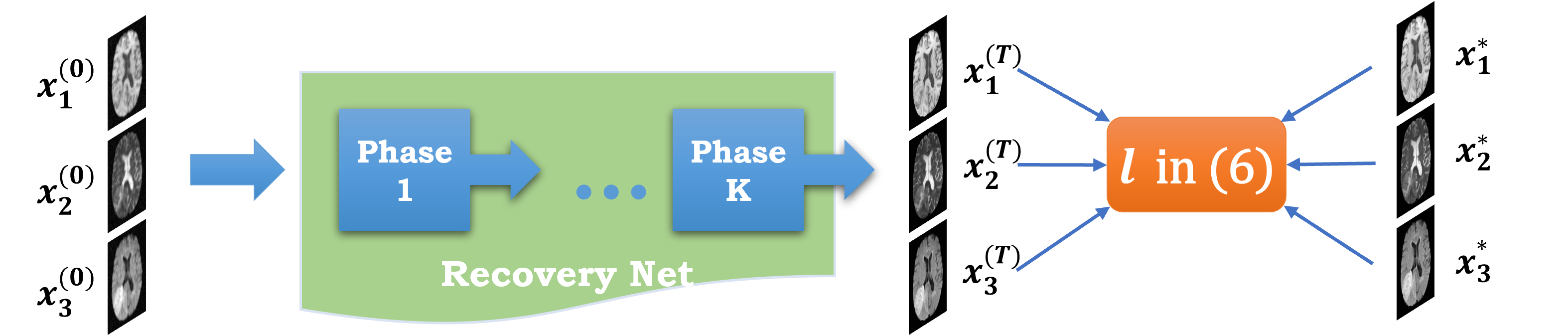}
\caption{Architecture of the proposed network. We denote $L^I$ as identity loss function that defined in equation \eqref{eq:loss_identity}. The loss function (denote as $\ell$) in the bottom orange box is defined in \eqref{loss_chp5}. }
\label{fig:flowchart}
\end{figure}

As the $l_{2, 1}$-norm in \eqref{eq:r_chp5} is nonsmooth, in this paper we consider a smoothed version of $ l_{2,1}$-norm, dubbed $ l_{\varepsilon_{2,1}}$-norm, which is defined as
$
\|h_{w_i}(\xbf_i)\|_{\varepsilon_{2,1}} = \sum\nolimits^m_{j=1}  \sqrt{\| h_{w_i, j}(\xbf_i) \|^2 + \varepsilon^2} -\varepsilon.
$
%Here we take the smoothing parameter $\varepsilon$ as $10 ^ {-6}$.
%
With this modification, the new function $\Psi_{\varepsilon}$ is achieved to be smooth, which provides us an alternative way to use gradient descent algorithm to solve the minimization problem \eqref{our_model}. As $\varepsilon$ tends to 0, the surrogates approach the original nonsmooth regularizers in \eqref{our_model}.
A key feature of our proposed algorithm is to incorporate a smoothing level reduction mechanism that automatically decreases $\varepsilon$ during the iterations, as we will explain below. 

Nevertheless, for every fixed $\epsilon>0$, we obtain a smooth surrogate function $\Psi_{\Theta,\gamma}^{\varepsilon}$, for which we can apply a gradient descent step to update our approximation to the solution of \eqref{our_model}. To see this, let $\Xbf = \{\xbf_1, \xbf_2, \xbf_3 \}$ for notation simplicity, then we solve the $\min_{\Xbf}\Psi_{\Theta, \gamma}(\Xbf)$ with initial $\Xbf^{(0)}$ by Algorithm \ref{alg:lda_chp5}. The initial $\Xbf^{(0)}$ is obtained from a pre-trained initial network, which is illustrated in detail in Section \ref{Initialization}.  At Line 3-8 of Algorithm \ref{alg:lda_chp5}, we compute a gradient descent update with step size obtained by line search while the smooth parameter $\epst > 0$ is fixed. In Line 9, the algorithm updates the smoothing parameter $\epst$ based on a reduction criterion. The reduction of $\epst$ ensures that the specified subsequence (the subsequence who met the $\epst$ reduction criterion) must have an accumulation point that is a Clarke stationary point \cite{chen2021learnable} of the optimization problem \eqref{our_model}, as given in Theorem \ref{theorem}. %The proof of this theorem is very similar to those given in \cite{chen2021learnable,jimaging7110231}, and hence is omitted due to space limitation.

\begin{theorem}
Suppose that $\{\xt \}$ is the sequence generated by Algorithm \ref{alg:lda_chp5} with any initial $\Xbf^{(0)}$, $\etol=0$ and $T=\infty$. Let $ \{ \Xbf^{(t_l+1)}\}$ be the subsequence that satisfies the reduction criterion  in step 9 of Algorithm \ref{alg:lda_chp5}. Then $ \{ \Xbf^{(t_l+1)}\}$ has at least one accumulation point, and every accumulation point of $\{ \Xbf^{(t_l+1)}\}$ is a clarke stationary point of $ \min_{\Xbf} \Psi(\Xbf)$.
    \label{theorem}
\end{theorem}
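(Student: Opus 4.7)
The plan is to adapt the convergence argument previously established for Algorithm 4.1 in Chapter 4 (Theorem A.6) to the joint reconstruction and synthesis setting. The objective $\Psi_{\Theta,\gamma}$ decomposes as a sum of three smooth terms (the two data fidelity terms and the synthesis discrepancy term) plus three nonsmooth regularizers $\|h_{w_i}(\xbf_i)\|_{2,1}$. Since each $h_{w_i}$ is a composition of convolutions and the smoothed ReLU, the smoothness and boundedness assumptions (a1)--(a3) used in Chapter 4 transfer to each variable block. Moreover, the quadratic terms $\frac12\|PF\xbf_i-\fbf_i\|^2$ and $\frac{\gamma}{2}\|g_\theta([h_{w_1}(\xbf_1),h_{w_2}(\xbf_2)])-\xbf_3\|^2$ make $\Psi_{\Theta,\gamma}$ coercive in $(\xbf_1,\xbf_2,\xbf_3)$ (the first forces control of $\xbf_1,\xbf_2$ in the range of $PF$, and for the complementary null space the $\ell_{2,1}$ feature penalty and the synthesis term combined with coercivity of the quadratic in $\xbf_3$ provide control), which supplies the analog of assumption (a4).

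First I would establish a Lipschitz bound for $\nabla\Psi^\varepsilon_{\Theta,\gamma}$, analogous to Lemma A.2, so that the inner loop (Lines 3--8 of Algorithm 5.1) produces a monotonically decreasing sequence $\{\Psi^{\varepsilon_t}(\Xbf^{(t)})\}$ via backtracking line search, and so that the standard descent lemma gives $\|\nabla\Psi^{\varepsilon_t}(\Xbf^{(t)})\|\to 0$ for each fixed $\varepsilon_t$. Next, the analog of Lemma A.3 shows $\Psi^{\varepsilon_{t+1}}(\Xbf^{(t+1)})+\varepsilon_{t+1}\le \Psi^{\varepsilon_t}(\Xbf^{(t)})+\varepsilon_t$, so the combined quantity is bounded above by $\Psi^{\varepsilon_0}(\Xbf^{(0)})+\varepsilon_0$, and coercivity yields boundedness of $\{\Xbf^{(t)}\}$. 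Consequently the subsequence $\{\Xbf^{(t_l+1)}\}$ selected by the reduction criterion in Line 9 is bounded, hence admits an accumulation point $\Xbf^*$.

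Now let $\{\Xbf^{(p+1)}\}$ denote any convergent sub-subsequence with limit $\Xbf^*$ and corresponding smoothing parameters $\varepsilon_p\to 0$. The reduction criterion forces $\|\nabla\Psi^{\varepsilon_p}_{\Theta,\gamma}(\Xbf^{(p+1)})\|\le \sigma\gamma\varepsilon_p\to 0$. I would then compute the Clarke subdifferential $\partial\Psi_{\Theta,\gamma}(\Xbf^*)$ using Lemma A.1, which characterizes $\partial\|h_{w_i}(\cdot)\|_{2,1}$ by splitting the index set into $I_0=\{j:\|h_{w_i,j}(\Xbf^*)\|=0\}$ and $I_1=[m]\setminus I_0$. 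For the smoothed gradient, as in equation (A.25), on $I_0$ the contribution vanishes while on $I_1$ it converges to $\nabla h_{w_i,j}(\Xbf^*)^\top h_{w_i,j}(\Xbf^*)/\|h_{w_i,j}(\Xbf^*)\|$ by continuity, and the $I_0$ piece is subsumed by the freedom $\|\Pi(\wbf_j;\mathcal{C}(\nabla h_{w_i,j}(\Xbf^*)))\|\le 1$ in the Clarke characterization. The gradients of the quadratic data fidelity and synthesis terms are continuous in all variables, so they pass to the limit directly. Therefore $\mathrm{dist}(\nabla\Psi^{\varepsilon_p}_{\Theta,\gamma}(\Xbf^{(p+1)}),\partial\Psi_{\Theta,\gamma}(\Xbf^*))\to 0$, and closedness of $\partial\Psi_{\Theta,\gamma}(\Xbf^*)$ together with $\nabla\Psi^{\varepsilon_p}_{\Theta,\gamma}(\Xbf^{(p+1)})\to 0$ yields $0\in\partial\Psi_{\Theta,\gamma}(\Xbf^*)$.

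The main obstacle I anticipate is verifying coercivity: unlike the Chapter 4 setting where a single $\xbf$ is regularized by $\|g(\xbf)\|_{2,1}$ and is directly constrained by a data fidelity in $\xbf$, here the missing modality $\xbf_3$ has no k-space fidelity term and is coupled to $\xbf_1,\xbf_2$ only through the synthesis loss $\frac{\gamma}{2}\|g_\theta([h_{w_1}(\xbf_1),h_{w_2}(\xbf_2)])-\xbf_3\|^2$. Showing coercivity therefore requires a careful argument using the fact that, for fixed $\xbf_1,\xbf_2$, the synthesis residual is quadratic and coercive in $\xbf_3$, and that the combined data fidelity plus $\ell_{2,1}$ regularizer on $\xbf_1,\xbf_2$ (under mild nondegeneracy of $h_{w_i}$) provides coercivity in the source modalities; the synthesis term then pins down $\xbf_3$. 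If this geometric argument fails, a standard remedy is to add an arbitrarily small Tikhonov term $\frac{\mu}{2}\|\xbf_3\|^2$ to the model, which preserves all other parts of the proof and guarantees coercivity without affecting the statement of the theorem.
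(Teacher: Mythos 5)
Your proposal follows essentially the same route as the paper's proof: a Lipschitz bound on $\nabla\Psi^{\varepsilon}_{\Theta,\gamma}$, monotone decrease of the $\varepsilon$-augmented objective across smoothing reductions, coercivity to get boundedness and an accumulation point, and the vanishing of $\nabla\Psi^{\varepsilon_p}_{\Theta,\gamma}$ along the reduction subsequence combined with the $I_0/I_1$ Clarke-subdifferential limit from the Chapter 4 analysis. The only divergence is that you attempt to \emph{derive} coercivity from the model structure (and rightly flag that $\xbf_3$ is pinned only through the synthesis term), whereas the paper simply \emph{assumes} coercivity of $\Psi_{\Theta,\gamma}$ as the analogue of assumption (a4); your extra caution, and the Tikhonov fallback, are sound and if anything more careful than the paper.
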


We assume that $\Psi_{\Theta, \gamma}$ is coercive and $\Psi_{\Theta, \gamma}^* = \min_{\Xbf} \Psi_{\Theta, \gamma}(\Xbf) > -\infty$. For any set $\Scal \subset \mathbb{R}^n$, we denote $\mathrm{dist}(\ybf, \Scal) := \inf\{ \|\ybf - \xbf\| \ \vert \ \xbf \in \Scal \}$.
\begin{lemma}\label{r_lips_chp5}
The gradient of $\Psi_{\Theta, \gamma}^{\varepsilon}(\Xbf)$ is Lipschitz continuous.
\end{lemma}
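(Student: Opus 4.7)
The plan is to split $\Psi^{\varepsilon}_{\Theta,\gamma}$ into three groups of terms---the data fidelity terms, the three smoothed $\ell_{\varepsilon_{2,1}}$ regularizers, and the quadratic synthesis term---and establish Lipschitz continuity of the gradient of each group separately, then combine them by the triangle inequality (the sum of finitely many Lipschitz maps is Lipschitz with constant equal to the sum of the individual constants).

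First, for each data fidelity term $\tfrac{1}{2}\|P_i F \xbf_i - \fbf_i\|^2$, the gradient $F^{H} P_i^{\top}(P_i F \xbf_i - \fbf_i)$ is linear in $\xbf_i$, hence Lipschitz with constant $\|F^{H}P_i^{\top}P_iF\|\le 1$, and it depends only on $\xbf_i$ so the contribution to the full gradient in $\Xbf$ is trivially Lipschitz. Second, for each smoothed regularizer $\|h_{w_i}(\xbf_i)\|_{\varepsilon_{2,1}}$, I would reuse verbatim the argument already carried out in the appendix of Chapter~\ref{meta_learning} (Lemma~\ref{r_lips}): under the standing assumptions that $h_{w_i}$ is continuously differentiable with Lipschitz Jacobian (constant $L_h$) and bounded Jacobian norm $\sup_{\xbf}\|\nabla h_{w_i}(\xbf)\|\le M_h$, the gradient of $\xbf_i\mapsto \sum_j \sqrt{\|h_{w_i,j}(\xbf_i)\|^2+\varepsilon^2}-\varepsilon$ is Lipschitz continuous with constant $m\bigl(L_h + 2M_h^2/\varepsilon\bigr)$. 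These assumptions are satisfied in our setting because each $h_{w_i}$ is the composition of finitely many linear convolutions and the smoothed activation $\sigma$ defined in \eqref{eq:sigma}, which is $C^1$ with a bounded, Lipschitz derivative; on any bounded set containing the iterates this yields both bounds uniformly.

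Third, for the synthesis term $\tfrac{\gamma}{2}\|g_\theta([h_{w_1}(\xbf_1),h_{w_2}(\xbf_2)])-\xbf_3\|^2$, I would differentiate via the chain rule, producing
\[
\nabla_{\xbf_i}\bigl(\tfrac{\gamma}{2}\|\cdot\|^2\bigr)=\gamma\,\nabla h_{w_i}(\xbf_i)^{\top}\nabla g_\theta([h_{w_1},h_{w_2}])^{\top}\bigl(g_\theta([h_{w_1},h_{w_2}])-\xbf_3\bigr)
\]
for $i=1,2$, and $-\gamma\bigl(g_\theta([h_{w_1},h_{w_2}])-\xbf_3\bigr)$ in the $\xbf_3$ coordinate. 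Since $g_\theta$ shares the same structural properties as $h_{w_i}$ (a stack of linear convolutions separated by the smoothed $\sigma$), it is also $C^1$ with Lipschitz Jacobian and bounded Jacobian on bounded sets, and likewise $h_{w_i}$ itself is Lipschitz. Thus each factor in the product displayed above is Lipschitz and bounded on bounded sets, and the product of Lipschitz-and-bounded factors is Lipschitz (by the standard ``add and subtract'' trick used in Lemma~\ref{r_lips}), giving a Lipschitz constant that is a polynomial in $\gamma$, $M_h$, $M_g$, $L_h$, $L_g$ and linear in the diameter of the set on which the iterates live.

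The main obstacle is precisely this last step: unlike the data fidelity and the smoothed regularizer, the synthesis term couples $\xbf_1$ and $\xbf_2$ nonlinearly through the composition $g_\theta\circ[h_{w_1},h_{w_2}]$, so to get a \emph{global} Lipschitz constant one needs the iterates (or the domain of interest) to be bounded---this is consistent with the coercivity assumption on $\Psi_{\Theta,\gamma}$ stated immediately before the lemma, which guarantees that the sublevel set containing the iterates is bounded. I would therefore state the Lipschitz constant on this sublevel set and then invoke the descent property of Algorithm~\ref{alg:lda_chp5} (line search with monotone decrease of $\Psi^{\varepsilon}$) to conclude that the iterates remain in this set, so the Lipschitz estimate applies throughout the run of the algorithm and can be used in the convergence analysis of Theorem~\ref{theorem}.
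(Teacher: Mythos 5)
Your decomposition is the same one the paper uses: data fidelity, the three smoothed $\ell_{2,1}$ regularizers (handled by recycling the argument of Lemma~\ref{r_lips} from Chapter~\ref{meta_learning}), and the synthesis term, combined by the triangle inequality. Where you differ is in the level of care given to the last term: the paper dismisses the first and last groups with ``it is easy to verify,'' whereas you correctly observe that the synthesis term is the one place where global Lipschitz continuity of the gradient actually fails. Indeed, $\nabla_{\xbf_1}$ of that term has the form $\gamma\,\nabla h_{w_1}(\xbf_1)^{\top}\nabla g_\theta(\cdot)^{\top}\bigl(g_\theta(\cdot)-\xbf_3\bigr)$, and since the residual factor grows linearly in $\|\xbf_3\|$ while the Jacobian factors vary with $\xbf_1,\xbf_2$, the difference of gradients at two points can be made arbitrarily large by sending $\|\xbf_3\|\to\infty$; so the Lipschitz constant can only be taken on a bounded set. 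Your remedy --- restrict to the sublevel set $\{\Xbf:\Psi_{\Theta,\gamma}(\Xbf)\le \Psi^{\varepsilon_0}_{\Theta,\gamma}(\Xbf^{(0)})+m\varepsilon_0\}$, which is bounded by coercivity and invariant under the monotone line-search descent of Algorithm~\ref{alg:lda_chp5} --- is exactly what is needed for the lemma to serve its purpose in the proof of Theorem~\ref{theorem}, and it makes explicit a restriction that the paper's own proof leaves implicit. In short: same route, but your version closes a small gap in the stated generality of the lemma.
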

\begin{proof}
The $\Psi_{\Theta, \gamma}^{\varepsilon}(\Xbf)$ is the smoothing surrogate of the $\Psi_{\Theta, \gamma}(\Xbf)$ in (1) with $\|h_{w_i}(\xbf_i)\|_{2,1}$ replaced by $\|h_{w_i}(\xbf_i)\|_{\varepsilon_{2,1}}$ in the second sum. As both $h_{w_i}$ and $g_{\theta}$ are parameterized as vanilla CNNs with the smoothed activation, it is easy to verify the first and last term of $\Psi_{\Theta, \gamma}^{\varepsilon}(\Xbf)$ are Lipschitz continuous. The second sum is Lipschitz continuous proved by the Lemma A2 in \cite{jimaging7110231}.
\end{proof}
%\begin{lemma}\label{lem:bound}
%$\|h_{w_i}(\xbf_i)\|_{\varepsilon_{2,1}}  \leq\|h_{w_i}(\xbf_i)\|_{2,1} 
%\le  \|h_{w_i}(\xbf_i)\|_{\varepsilon_{2,1}}  + m\varepsilon.
%$
%\end{lemma}
%\begin{proof} $\|h_{w_i}(\xbf_i)\|_{\varepsilon_{2,1}} = \sum\nolimits^m_{j=1}  \big(\sqrt{\| h_{w_i, j}(\xbf_i) \|^2 + \varepsilon^2}   - \varepsilon \big) = \sum\nolimits^m_{j=1}  \frac{\|h_{w_i, j}(\xbf_i) \|^2}{\sqrt{\| h_{w_i, j}(\xbf_i) \|^2 + \varepsilon^2}  + \varepsilon }
%
%\le \sum\nolimits^m_{j=1}  \sqrt{\| h_{w_i, j}(\xbf_i) \|^2} = \|h_{w_i}(\xbf_i)\|_{2,1}
%\le \sum\nolimits^m_{j=1}  \sqrt{\| h_{w_i, j}(\xbf_i) \|^2 + \varepsilon^2} = \|h_{w_i}(\xbf_i)\|_{\varepsilon_{2,1}}  + m\varepsilon.
%$
%
%\end{proof}
\begin{lemma}\label{lem:inner_chp5}
Suppose the sequence $ \{ \xt \}$ is generated by  executing Lines 3 of Algorithm 1 with fixed $ \epst = \varepsilon$ then
\begin{enumerate}
\item $ \| \nabla \Psi_{\Theta, \gamma}^{\varepsilon}(\xt) \| \to 0$ as $t \to \infty$.
\item Finitely many iterations to meet the condition in Step 4 for reducing $\varepsilon$.
\end{enumerate}
\end{lemma}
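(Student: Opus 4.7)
The proof will closely parallel the argument for Lemma A3 in Chapter~\ref{meta_learning}, since Algorithm~\ref{alg:lda_chp5} is essentially gradient descent with line search on the smoothed objective $\Psi_{\Theta,\gamma}^{\varepsilon}$, whose gradient is Lipschitz continuous by Lemma~\ref{r_lips_chp5}. The plan is to first establish a sufficient-descent inequality per iteration, sum it to a telescoping bound, and then deduce both claims.

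First, I would fix $\varepsilon = \varepsilon_t$ throughout Lines~3--8 and denote $L_\varepsilon$ the Lipschitz constant of $\nabla \Psi_{\Theta,\gamma}^{\varepsilon}$ from Lemma~\ref{r_lips_chp5}. The gradient descent step generates a candidate $\Xbf^{(t+1)} = \Xbf^{(t)} - \alpha_t \nabla \Psi_{\Theta,\gamma}^{\varepsilon}(\Xbf^{(t)})$, and the line search in Algorithm~\ref{alg:lda_chp5} reduces $\alpha_t$ by a factor $\rho \in (0,1)$ until a descent condition (analogous to the one in Algorithm~\ref{alg:lda}) of the form
\begin{equation*}
\Psi_{\Theta,\gamma}^{\varepsilon}(\Xbf^{(t+1)}) - \Psi_{\Theta,\gamma}^{\varepsilon}(\Xbf^{(t)}) \le -\tfrac{1}{a}\|\Xbf^{(t+1)}-\Xbf^{(t)}\|^2
\end{equation*}
holds. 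By the standard descent lemma applied to $\Psi_{\Theta,\gamma}^{\varepsilon}$, any step size satisfying $\alpha_t \le 1/(2/a + L_\varepsilon)$ guarantees this inequality, so the line search terminates in finitely many trials and a uniform lower bound $\alpha_t \ge \underline{\alpha} := \rho/(2/a + L_\varepsilon)$ holds. Substituting $\Xbf^{(t+1)}-\Xbf^{(t)} = -\alpha_t\nabla \Psi_{\Theta,\gamma}^{\varepsilon}(\Xbf^{(t)})$ then yields the sufficient-descent bound
\begin{equation*}
\Psi_{\Theta,\gamma}^{\varepsilon}(\Xbf^{(t)}) - \Psi_{\Theta,\gamma}^{\varepsilon}(\Xbf^{(t+1)}) \ge \tfrac{\underline{\alpha}^2}{a}\,\|\nabla \Psi_{\Theta,\gamma}^{\varepsilon}(\Xbf^{(t)})\|^2 .
\end{equation*}

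For part~(1), I would telescope this inequality from $0$ to $T-1$. Because $\Psi_{\Theta,\gamma}$ is coercive and bounded below by $\Psi_{\Theta,\gamma}^*$, the smoothed version satisfies $\Psi_{\Theta,\gamma}^{\varepsilon}(\Xbf) \ge \Psi_{\Theta,\gamma}(\Xbf) - \varepsilon \ge \Psi_{\Theta,\gamma}^* - \varepsilon$ for all $\Xbf$, which gives
\begin{equation*}
\sum_{t=0}^{T-1} \|\nabla \Psi_{\Theta,\gamma}^{\varepsilon}(\Xbf^{(t)})\|^2 \le \tfrac{a}{\underline{\alpha}^2}\bigl(\Psi_{\Theta,\gamma}^{\varepsilon}(\Xbf^{(0)}) - \Psi_{\Theta,\gamma}^* + \varepsilon\bigr).
\end{equation*}
Letting $T\to \infty$, the right-hand side is finite, hence the series converges and $\|\nabla \Psi_{\Theta,\gamma}^{\varepsilon}(\Xbf^{(t)})\| \to 0$.

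Part~(2) follows immediately. The reduction condition in Step~9 (the analogue of the criterion $\|\nabla \Psi^{\varepsilon_t}(\Xbf^{(t+1)})\| \le \sigma\gamma\varepsilon_t$ used earlier in the dissertation) requires the gradient norm to drop below a fixed positive threshold determined by $\varepsilon$. Since part~(1) shows $\|\nabla \Psi_{\Theta,\gamma}^{\varepsilon}(\Xbf^{(t)})\|\to 0$, there exists a finite $t$ at which the threshold is met; in fact, using the summability bound above, one can extract an explicit bound on the number of inner iterations, of the order $\mathcal{O}(1/(\sigma\gamma\varepsilon)^2)$, in the spirit of Lemma~A3 in Chapter~\ref{meta_learning}. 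The main obstacle I anticipate is keeping the bookkeeping of $L_\varepsilon$ and the descent constant clean, because $L_\varepsilon$ blows up like $1/\varepsilon$ (through the $l_{\varepsilon_{2,1}}$ smoothing); fortunately $\varepsilon$ is fixed within the inner loop, so this only affects constants, not the qualitative conclusion.
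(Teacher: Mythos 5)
Your proposal is correct and follows exactly the argument the paper intends: the dissertation's own "proof" of this lemma simply defers to the references (i.e., to the Chapter~\ref{meta_learning} analysis), and your writeup is precisely that Lemma~\ref{lem:inner}-style argument specialized to Algorithm~\ref{alg:lda_chp5}, which only has the $\vtp$ branch — descent lemma plus the optimality of the exact gradient step gives the sufficient-decrease condition once $\alpha_t \le 1/(2/a+L_\varepsilon)$, the backtracking therefore terminates with a uniform lower bound on $\alpha_t$, and telescoping with the lower bound $\Psi^{\varepsilon}_{\Theta,\gamma} \ge \Psi^*_{\Theta,\gamma} - 3m\varepsilon$ yields summability of the squared gradient norms, hence both claims. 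The only nitpick is the constant in that lower bound (the smoothing gap is $m\varepsilon$ per $\ell_{2,1}$ term rather than $\varepsilon$), which affects nothing.
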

\begin{lemma}\label{lem:phi_decay_chp5}
Suppose the sequence $\{ \xt\}$ is generated by Algorithm 1 with initial $\Xbf^{(0)}$, then we have 
$ \Psi_{\Theta, \gamma}^{\epstp}(\xtp) + m \epstp \leq \Psi_{\Theta, \gamma}^{\epst}(\xt) + m \epst$ for any $t\geq 0$.
\end{lemma}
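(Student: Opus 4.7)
Following the template of Lemma A5 in Chapter 4, I would establish the desired monotonicity by splitting it into two independent inequalities:
\begin{equation}
\Psi_{\Theta,\gamma}^{\epstp}(\xtp) + m\epstp \;\le\; \Psi_{\Theta,\gamma}^{\epst}(\xtp) + m\epst \;\le\; \Psi_{\Theta,\gamma}^{\epst}(\xt) + m\epst.
\end{equation}
The first inequality reflects the fact that shrinking the smoothing parameter (at Line 9 of Algorithm 1) leaves the regularizer, modulo the constant correction, nonincreasing; the second inequality reflects the descent property of the inner update (Lines 3--8), during which $\epst$ stays fixed.

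\textbf{Step 1: The smoothing-parameter inequality.} Only the regularizer $\sum_{i=1}^{3}\|h_{w_i}(\xbf_i)\|_{\varepsilon_{2,1}}$ in $\Psi_{\Theta,\gamma}^{\varepsilon}$ depends on $\varepsilon$; the data-fidelity and synthesis terms are $\varepsilon$-free. Writing one summand as $r_{\varepsilon,i,j}(\xbf_i) := \sqrt{\|h_{w_i,j}(\xbf_i)\|^2+\varepsilon^2}-\varepsilon$, the ``$+\varepsilon$'' correction cancels the explicit offset, and for any fixed feature vector $h_{w_i,j}(\xtp)$ the map $\varepsilon\mapsto \sqrt{\|h_{w_i,j}(\xtp)\|^2+\varepsilon^2}$ is nondecreasing in $\varepsilon\ge 0$. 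Because Line 9 ensures $\epstp\le\epst$, we get $r_{\epstp,i,j}(\xtp)+\epstp\le r_{\epst,i,j}(\xtp)+\epst$ pointwise. Summing over $j=1,\dots,m$ (and collecting the offset as a single constant $m$, absorbing the modality count into the generic constant used in the statement) yields the first inequality. If $\epstp=\epst$ (no reduction triggered), both sides coincide and the inequality is trivial.

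\textbf{Step 2: The inner-iteration descent inequality.} With $\epst$ held fixed, Lemma \ref{r_lips_chp5} shows that $\nabla\Psi_{\Theta,\gamma}^{\epst}$ is Lipschitz continuous, so the line-search-equipped gradient step used in Lines 3--8 of Algorithm \ref{alg:lda_chp5} (of the same Armijo flavor as in Chapter 4) produces $\xtp$ with $\Psi_{\Theta,\gamma}^{\epst}(\xtp)\le \Psi_{\Theta,\gamma}^{\epst}(\xt)$; adding $m\epst$ to both sides gives the second inequality. Chaining the two inequalities finishes the proof.

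\textbf{Anticipated obstacle.} The routine arithmetic and monotonicity arguments are straightforward; the only subtlety is accounting bookkeeping. Because there are three modalities and $m$ spatial positions per modality, the natural offset is actually $3m\varepsilon$, and I would need to clarify whether the constant ``$m$'' in the statement is a shorthand for $3m$ or whether one should rename $m$ to denote the total number of feature positions across modalities. Either way the inequality is structurally the same; I would make the conventions explicit at the start of the proof to avoid ambiguity, and the rest follows by direct substitution as above.
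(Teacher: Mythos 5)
Your proof is correct and follows essentially the same route as the paper, which simply defers to the two-inequality decomposition used for the analogous lemma in the Chapter 4 appendix: descent of $\Psi_{\Theta,\gamma}^{\epst}$ at fixed $\epst$ from the line-search condition, plus monotonicity of $\varepsilon\mapsto\sqrt{\|\cdot\|^2+\varepsilon^2}$ applied summand-by-summand when $\epstp\le\epst$. Your observation that with three modalities the natural offset is $3m\varepsilon$ (so that ``$m$'' in the statement should be read as the total number of feature positions) is a fair bookkeeping point that the paper glosses over, but it does not affect the structure of the argument.
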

\begin{proof}
First, we can easily verify that $\|h_{w_i}(\xbf_i)\|_{\varepsilon_{2,1}}  \leq \|h_{w_i}(\xbf_i)\|_{2,1} 
\le  \|h_{w_i}(\xbf_i)\|_{\varepsilon_{2,1}}  + m\varepsilon.
$  The proof of Lemma \ref{lem:inner_chp5} and \ref{lem:phi_decay_chp5} can be found in \cite{chen2021learnable,jimaging7110231}. %Lemma \ref{lem:bound} can be verified by simple calculation.
\end{proof}
\subsection{Convergence Analysis}
The proof of \textbf{Theorem 1} is outlined below.
%\begin{theorem}
%Suppose that $\{\xt \}$ is the sequence generated by Algorithm \ref{alg:lda_chp5} with any initial $\Xbf^{(0)}$, $\etol=0$ and $T=\infty$. Let $ \{ \Xbf^{(t_l+1)}\}$ be the subsequence that satisfies the reduction criterion  in step \ref{reduction_cre} of Algorithm \ref{alg:lda_chp5}. Then $ \{ \Xbf^{(t_l+1)}\}$ has at least one accumulation point, and every accumulation point of $\{ \Xbf^{(t_l+1)}\}$ is a Clarke stationary point of $ \min_{\Xbf} \Psi_{\Theta, \gamma}(\Xbf)$.
%    \label{theorem} of Lemma \ref{lem:bound}
%\end{theorem}
\begin{proof}
From the inequality $\|h_{w_i}(\xbf_i)\|_{2,1} 
\le  \|h_{w_i}(\xbf_i)\|_{\varepsilon_{2,1}}  + m\varepsilon
$, we have $ \Psi_{\Theta, \gamma}(\Xbf) \leq \Psi_{\Theta, \gamma}^{\varepsilon}(\Xbf) + m \varepsilon$ for any $\varepsilon>0$. Together with Lemma \ref{lem:phi_decay_chp5}, we get
$
    \Psi_{\Theta, \gamma}(\xt) \leq \Psi_{\Theta, \gamma}^{\epst}(\xt) +m\epst\leq \cdots \leq \Psi_{\Theta, \gamma}^{\varepsilon_0}(\Xbf^{(0)}) +m\varepsilon_0 <\infty.
$
As we assumed that $\Psi_{\Theta, \gamma}$ is coercive, we can get that $\{ \xt\}$ is bounded. Accordingly the selected subsequence $ \{ \Xbf^{(t_l+1)} \}$ is also bounded and has at least one accumulation point.
Lemma \ref{lem:inner_chp5} indicates finite many iterations from $t_{l - 1} + 1$ to $t_l + 1$. As $ \Xbf^{(t_l+1)}$ satisfies the reduction criterion  in step 4 of Algorithm \ref{alg:lda_chp5}, we have $ \norm{\nabla \Psi_{\Theta, \gamma}^{\varepsilon_{t_l}} (\Xbf^{(t_l+1)})} \leq \sigma  \varepsilon_{t_l} \eta= \sigma \varepsilon_{0} \eta^{l+1} \to 0$ as $l\to \infty$.  %Let $ \{ \xpp\} $ be any convergent subsequence of $\{ \xbf_{t_l+1} \}$ and denote $\epsp$ as the corresponding $\epst$ used in \mbox{Algorithm~\ref{alg:lda_chp5}} that generates $\xpp$.
Then, there exists at least one convergent subsequence of $\Xbf^{(t_l+1)}$, dubbed $\{\Xbf^{(k + 1)}\}$, that satisfies $ \Xbf^{(k + 1)} \to \hat{\Xbf}$ as $ \varepsilon_k \to 0,$ and $ \nabla \Psi_{\Theta, \gamma}^{\varepsilon_{k}}(\Xbf^{(k + 1)}) \to 0$ as $ k\to \infty$, where $\varepsilon_{k}$ is the corresponding $\varepsilon_{t_l}$ associated with $\Xbf^{(k + 1)}$. 

As mentioned, we denote $\Xbf = \{\xbf_1, \xbf_2, \xbf_3 \}$ for notation simplicity. 
The Clark subdifferential of each $\xbf_i$ is almost identical to \cite{jimaging7110231} but only added a smooth term in (1), so the analysis for each $\xbf_i$ will be the same as \cite{jimaging7110231}.
It has been proved in \cite{jimaging7110231} that $
\mathrm{dist}( \nabla \Psi_{\Theta, \gamma}^{\varepsilon_{k}}(\xbf^{(k + 1)}_i), \partial^C \Psi_{\Theta, \gamma}(\hat{\xbf}_i)) \to 0,
$ as $k \to \infty$, where $\partial^C$ denotes the Clark subdifferential. As this holds for each $\xbf_i$, then we get $
\mathrm{dist}( \nabla \Psi_{\Theta, \gamma}^{\varepsilon_{k}}(\Xbf^{(k + 1)}), \partial^C \Psi_{\Theta, \gamma}(\hat{\Xbf})) \to 0,
$ as $k \to \infty$. Since we already proved $ \nabla \Psi_{\Theta, \gamma}^{\varepsilon_{k}}(\Xbf^{(k + 1)}) \to 0$ and $\partial^C \Psi_{\Theta, \gamma}(\hat{\Xbf})$ is closed, we conclude that $0 \in \partial^C \Psi_{\Theta, \gamma}(\hat{\Xbf})$.
\end{proof}

\begin{algorithm}[htb]
\caption{Algorithmic Unrolling Method with Provable Convergence}
\label{alg:lda_chp5}
\begin{algorithmic}[1]
\STATE \textbf{Input:} Initial $\Xbf^0$, $0<\rho, \eta<1$, and $\varepsilon_0$, $a, \sigma >0$. Max total phases $T$ or tolerance \\$\etol>0$.
\FOR{$t=0,1,2,\dots,T-1$}
\STATE $\vtp = \xt - \alpha_{t}  \nabla \Psi_{\epst}(\xt)$, \label{marker_chp5}
\IF{ $ \Psi_{\epst}(\vtp) - \Psi_{\epst}(\xt) \le - \frac{1}{a} \| \vtp - \xt\|^2$ holds}
\STATE set $\xtp = \vtp$,
\ELSE
\STATE update $\alpha_{t} \leftarrow \rho \alpha_{t}$,
then \textbf{go to}~\ref{marker_chp5},
\ENDIF
\STATE \textbf{if} $\|\nabla \Psi_{\epst}(\xtp)\| < \sigma \eta {\epst}$,  set $\epstp= \eta {\epst}$;  \textbf{otherwise}, set $\epstp={\epst}$.
\STATE \textbf{if} $\sigma {\epst} < \etol$, terminate.
\ENDFOR
\STATE \textbf{Output:} $\Xbf^{(T)}$.
\end{algorithmic}
\end{algorithm}

The induce gradient descent network exactly follows Algorithm \ref{alg:lda_chp5}, where each phase of the network corresponds to an iteration of the algorithm. We take $\Xbf^{(T)}$ as the output of the multi-phase network, i.e. $\Xbf^{(T)} = \phi_{\hat{\Theta}}(\Xbf^{(0)}, \fbf_1, \fbf_2) $, where $\phi$ represent the algorithm-induced network and its parameters are collected by $\hat{\Theta} = \{\Theta, \lambda, \alpha_0, \alpha_1, ... \alpha_{T-1} \}$.

\subsection{Bilevel Optimization Algorithm for Network Training}
\label{section: bilevel}
%Given the initialization $\xbf_0$ represents the initialization of all modalities and the the partial k-space data $\{ \fbf_1, \fbf_2 \}$, we 
Suppose that we randomly sample $\mathcal{M}_{tr}$ data pairs $\{\D^{tr}_{i} \}_{i = 1} ^{\mathcal{M}_{tr}}$ for training and $\mathcal{M}_{val}$ data pairs $\{\D^{val}_{i} \}_{i = 1} ^{\mathcal{M}_{val}}$ for validation, where each $\D^{tr}_{i}(\mbox{or} \ \D^{val}_{i}) $ is composed of $ \{\fbf_1^{(i)}, \fbf_2^{(i)}, \Xbf^{*(i)} \}$ and $\fbf_1^{(i)}, \fbf_2^{(i)}$ denote the given partial k-space data for source modalities, and $\Xbf^{*(i)} = \{\xbf_1^{*(i)}, \xbf_2^{*(i)}, \xbf_3^{*(i)} \}$ denotes the corresponding reference images for $\xbf_1^{(i)}, \xbf_2^{(i)}, \xbf_3^{(i)}$.

For the sake of selecting a optimal coefficient for the constraint term of $\xbf_3$ in \eqref{our_model}, we introduce a novel learning framework by formulating the network training as a bilevel optimization problem to learn $\Theta $ and $\lambda$ in \eqref{our_model} as 
\begin{subequations}
\label{eq:bi-level_chp5}
\begin{align}
  \min_{ \lambda}  \quad & \sum^{\mathcal{M}_{val}}_{i=1} \ell( \Theta (\lambda) , \lambda ; \D^{val}_{i}) \\
 \mbox{s.t.}\quad \quad  & \Theta(\lambda)  = \argmin_{\Theta} \sum^{\mathcal{M}_{tr}}_{i=1} \ell ( \Theta , \lambda; \D^{tr}_{i}),
\end{align}
\end{subequations}

where
\begin{subequations}\label{loss_chp5}
\begin{align}
  \ell ( \Theta , \lambda; \D_{i}) & := \sum_{j = 1}^{3}\frac{1}{2} \|\xbf_j^T({\Theta , \lambda}; \D_{i})  - \xbf_j^{*(i)} \|^2_2 + (1 - SSIM(\xbf_j^T({\Theta , \lambda}; \D_{i}), \xbf_j^{*(i)} ) ) \\
  & + \frac{\mu}{2} \|g_{\theta} ([h_{w_1}(\xbf_1^{*(i)}),h_{ w_2} (\xbf_2^{*(i)})]) - \xbf_3^{*(i)}\|_2^2,
\end{align}
\end{subequations}
and the $\xbf_j^{(\hat{T})}(\cdot)$ denotes the output of the $\hat{T}$-phase network for the $j$th modality.
The first term of \eqref{loss_chp5} is to minimize the difference between the output of the network and the ground truth. The second term is the structural similarity index \cite{wang2004image}. The third term is to enforce the $g_{\theta}$ to synthesize $\xbf_3$.
In \eqref{eq:bi-level_chp5}, the lower-level optimization learns the network parameters $\Theta$ of the convolution layers with the fixed coefficient $\lambda$ on the training dataset, and the upper-level adjusts the coefficient $\lambda$ on the validation set so that the coefficient $\lambda$ can generalize to the validation set as well. 
Before we introduce the bi-level training algorithm, we simplify and redefine $\mathcal{L}(\Theta , \lambda ; \D) := \sum^{\mathcal{M}}_{i=1}\ell (\Theta , \lambda  ; \D_i)$ and then reformat \eqref{eq:bi-level_chp5} to be
\begin{equation}
  \min_{ \lambda} \mathcal{L}( \Theta(\lambda), \lambda ; \D^{val}) \ \ \ \ \ \mbox{s.t.} \ \ \Theta(\lambda) =   \argmin_{\Theta}\mathcal{L}( \Theta, \lambda ; \D^{tr}).
  \label{simplified bi-level_chp5}
\end{equation}

As the above bi-level optimization problem \eqref{simplified bi-level_chp5} is hard to solve, here we first consider to relax it into a single-level constrained optimization problem following \cite{mehra2019penalty}. More specifically, we use the  first-order necessary condition of the lower-level part as a constraint term, as shown below
\begin{equation}
   \min_{\lambda} \mathcal{L}( \Theta(\lambda), \lambda ; \D^{val})  \ \ \ \ \ \mbox{s.t.} \ \ \nabla_{\Theta} \mathcal{L}( \Theta, \lambda ; \D^{tr}) = 0.
  \label{simplified bi-level-1_chp5}
\end{equation}

The constraint optimization problem above can be further relaxed by replacing the constraint by a penalty term, then we obtain the following compositional optimization problem
\begin{equation}
  \min_{ \Theta, \lambda} \big\{ \widetilde{\mathcal{L}}( \Theta, \lambda ; \D^{tr}, \D^{val}) := \mathcal{L}( \Theta, \lambda ; \D^{val}) + \frac{\lambda}{2} \| \nabla_{\Theta} \mathcal{L}( \Theta, \lambda; \D^{tr}) \|^2 \big\}.
  \label{simplified bi-level-2_chp5}
\end{equation}

Due to the large volume of the training and test datasets, it is not possible to solve \eqref{simplified bi-level-2_chp5} in full-batch. Here we train the parameters using the mini-batch stochastic alternating direction penalty method summarized in Algorithm \ref{alg:model_chp5}.

\begin{algorithm}
\caption{Mini-batch alternating direction penalty algorithm}\label{alg:model_chp5}
\begin{algorithmic}[1]
\STATE \textbf{Input}  $\D^{tr}$, $\D^{val}$, $\delta_{tol}>0$.
\STATE \textbf{Initialize}  $ \Theta$, $ {\lambda}$, $\delta$, $\gamma>0$ and $\nu_\delta \in(0, 1)$, \ $\nu_\gamma > 1$.
\WHILE{$\delta > \delta_{tol}$}
\STATE Sample training and validation batch $\mathcal{B}^{tr} \subset \D^{tr}, \mathcal{B}^{val} \subset \D^{val}$.
\WHILE{$\|\nabla_{\Theta}\widetilde{\mathcal{L}}( \Theta, \lambda ; \mathcal{B}^{tr}, \mathcal{B}^{val})\|^2 + \| \nabla_{\lambda}\widetilde{\mathcal{L}}( \Theta, \lambda ; \mathcal{B}^{tr}, \mathcal{B}^{val})\| ^2 > \delta$}
\FOR{$k=1,2,\dots,K$ (inner loop)}
%\STATE  Sample batches of tasks $ \tau_i \sim p(\tau)$
\STATE $ \Theta \leftarrow \Theta - \rho_{\Theta}^k \nabla_{\Theta}\widetilde{\mathcal{L}}( \Theta, \lambda ; \mathcal{B}^{tr}, \mathcal{B}^{val})$
\ENDFOR
\STATE $ \lambda \leftarrow \lambda - \rho_{\lambda} \nabla_{\lambda}\widetilde{\mathcal{L}}( \Theta, \lambda ; \mathcal{B}^{tr}, \mathcal{B}^{val})$
\ENDWHILE
\STATE \textbf{update} $\delta \leftarrow \nu_\delta \delta$, $\ \gamma \leftarrow \nu_\gamma \gamma$
\ENDWHILE
\STATE \textbf{output:} $\Theta, \lambda$
\end{algorithmic}
\label{penelty_method_chp5}
\end{algorithm}

\section{Experiments}
\subsection{Initialization}
\label{Initialization}
The initial reconstruction $\{\xbf_1^{(0)}, \xbf_2^{(0)}, \xbf_3^{(0)} \}$ is obtained through the initialization network shown in Fig. \ref{fig:flowchart} which consists of three sub-nets for $\xbf_1^{(0)}, \xbf_2^{(0)}, \xbf_3^{(0)}$, and then fed into the multi-phase LOA-induced recovery net illustrated in Section \ref{loa induced net}. Each block of the initialization nets is designed in residual structure.
\subsubsection{Initialization-Net}
The structures of the initialization modules for $\xbf_1^{(0)}$ and $\xbf_2^{(0)}$ are identical, which consist of a k-space interpolation block followed by a inverse Fourier transform operator then a residual block in image domain. The k-space interpolation block is to interpolate the missing components of the k-space data, which is composed of 4 convolution layers separated by RELU. Then we pass the interpolated k-space data into the inverse Fourier transform operator to obtain a initial image. Next we obtain the refined initial image after a 4-convolution residual block as the input for the multi-phase recovery net.

As the k-space data for $\xbf_3^{(0)}$ is missing, here we take the initial image $\xbf_1^{(0)}$ or $\xbf_2^{(0)}$  as input to initialize the $\xbf_3^{(0)}$. We obtain the initial $\xbf_3^{(0)}$ image through a residual initialization net with 4 convolution layers and ReLU in between.

\subsubsection{Initialization-Nets Training}
To train the parameters in the initialization networks above, we minimize the difference between the outputs of the initialization networks and the ground truth images with the loss $L^I$ defined below

\begin{equation}\label{eq:loss_identity}
  L^I(\xbf_j^{(0)}, \xbf_j^{*}) = \|\xbf_j^{(0)} - \xbf_j^{*} \|_1, \ \ \ j = 1, 2, 3.
\end{equation}

We first train the initialization nets for $\xbf_1^{(0)}$ and $\xbf_2^{(0)}$ first. Then we take the output of  initialization net for avalible modalities $\xbf_1^{(0)}$ or $\xbf_2^{(0)}$ as the input to train the initialization net for $\xbf_3^{(0)}$. After we finish training all three initialization nets, we will keep them fixed when training the multi-phase recovery net.

\subsection{Experiment Setup}
The dataset we used is from BRATS 2018 \cite{menze2014multimodal} challenge, which scanned from 285 patients with high and low grade glioma cases in four modalities: T1, T2, Flair and T1ce. Each patient data was scanned with volume size $ 240 \times 240 \times 155$. Our experiments were implemented on high-grade gliomas (HGG) dataset. We randomly picked center 10 slices from 6 patients as testing dataset with cropped dimention size $ 160 \times 180$, which results in total of 60 testing images, and rest of HGG dataset are split into training data (1020 images) and validation data (1020 images). We compared with three state-of-the-art multimodal MR synthesis methods: Multimodal MR (MM) \cite{chartsias2017multimodal}, MM-GAN \cite{sharma2019missing},  MMGradAdv \cite{liu2020multimodal} and Hi-Net \cite{zhou2020hi}. We have implemented their methods with the hyper-parameters that indicated from their papers and trained their methods using our own training dataset and tested on our test dataset. The input images of MM, MMGradAdv and Hi-Net are ground truth images of source modalities, while our inputs are the partial k-sapce data that scanned with an radio mask with sampling ratio $40 \%$. We normalized the intensity values of all the dataset into $[0,1]$. 

\subsection{Hyper-parameter Selection}
All our experiments are implemented on Nvidia GTX-1080Ti GPU on Windows workstation and the trainable parameters are initialized with Xavier initialization \cite{glorot2010understanding} and trained with ADAM optimizer \cite{kingma2014adam} with initial learning rate $0.001$. We put $\lambda = 1$ in \eqref{our_model} and $\mu = 0.1$ in loss $\ell$. In our experiment, we use all complex convolution operators \cite{WANG2020136} where we set 4 convolutions with kernel size $3 \times 3 \times 64$ for $h_{w_i}$ and 6 convolutions with kernel size $3 \times 3 \times 128$ for $g_{\theta}$. For Algorithm \ref{alg:lda_chp5}, considering both algorithm convergence and the computational efficiency, we take the parameters as follows after trials: $\alpha_0= 0.01, \eta_0=0.01, \varepsilon_0 = 0.001, a = 10^5, \sigma = 10^3, \rho =0.9$, and $\gamma =0.9$. We also set the termination tolerance $\etol = 1\times 10^{-3}$. With this tolerance and the termination condition defined in Line 10, the algorithm will stop after 11 phases. We will provide the publicly shared code depending on acceptance.

 For Algorithm \ref{alg:lda_chp5}, considering both algorithm convergence and the computational efficiency, we take the parameters as follows after trials: $\alpha_0= 0.01, \eta_0=0.01, \varepsilon_0 = 0.001, a = 10^5, \sigma = 10^3, \rho =0.9$. We also set the termination tolerance $\etol = 1\times 10^{-3}$, together with the termination condition defined in Line 5, the algorithm will stop after 11 phases. 

Similarly, in Algorithm \ref{alg:model_chp5}, we select the parameters as follows: $\nu_\delta =0.95 $, $\delta =1 \times 10^{-3}$, $\lambda =10^{-4}$, $\nu_\lambda = 1.001$ and $ \rho_\gamma = 0.9$.  We decide the batch size to be $2$ considering the GPU memory and data size. We set $\delta_{tol} = 4.35 \times 10 ^ {-6}$ which makes the algorithm stop at around 1000 epochs. 

\subsection{Performance Evaluation}
We compared proposed method against multimodal MRI synthesis methods: MM \cite{chartsias2017multimodal}, MM-GAN \cite{sharma2019missing}, MMGradAdv \cite{liu2020multimodal} and Hi-Net \cite{zhou2020hi}, and take four synthesis directions including: T1 $+$ T2 $\to$ FLAIR,  T1 $+$ FLAIR  $\to$ T2,  T2 $+$ FLAIR  $\to$ T1 and T1 $+$ T2 $\to$ T1CE.  Table \ref{quant_results} reports quantitative results of the proposed method and the state-of-the-art methods. Inputs of the proposed model are partial k-space data of source modalities. $\fbf$ represents partial k-space data of corresponding modality. It is clear that our proposed method network outperforms MM and GAN-based methods (MM-GAN, MMGradAdv, Hi-Net) in terms of all the performance metrics. The average PSNR of our method improves about 0.67 dB comparing to the baseline Hi-Net, SSIM improves about 0.01,  and NMSE reduces about 0.01.   We input partial k-space data of source modalities to simultaneously reconstruct source modality images and synthesis target modality image, our model first learn a good initial input of the Algorithm \ref{alg:lda_chp5} and then get updated reconstruction images and synthesis images by iterating the Algorithm, which has theoretical convergence guarantee and thus achieves better performance. 

We also conduct the pure-synthesis experiment (T1 $+$ T2 $\to$ FLAIR) by inputting fully-scanned source data and the model \eqref{our_model} minimizes w.r.t. $\xbf_3$ only and excludes the data-fidelity terms. This result is in Table \ref{quant_results} where the PSNR value is about 1.16 dB higher than baseline method Hi-Net. 
 
Part of the reconstruction results are listed in Table \ref{t1t2}, which shows the comparison between purely reconstruct source modalities (only minimize $\xbf_1$ and $\xbf_2$ in model \eqref{our_model}) and jointly reconstruct source modalities $\xbf_1$, $\xbf_2$ and synthesis target modality $\xbf_3$. Table \ref{t1t2} shows the joint reconstruction and synthesis improves PSNR by 0.46 dB comparing to purely reconstructing T1 and T2 without synthesizing FLAIR.
%\textcolor{blue}{The reason is due to the third term in \eqref{our_model} has controlled by a weighted parameter $\gamma$, which is trained by bilevel optimization Algorithm \ref{alg:model_chp5} to promote the performance of all the three modalities.
%Also $g_\theta$ and $h_{w_3}$ generate more features and useful information of $\xbf_3$ which benefits the reconstruction of $\xbf_1$ and $\xbf_2$. This demonstrates the superiority of the proposed method in both MRI reconstruction and synthesis and these two tasks are mutually beneficial.}
%\textcolor{red}{Synthesis using common features and learn the mapping use data. x1 and x2 the joint feature maps can be refined by the information from x3 if g theta is good. }
We think this is because that the synthesis operator $g_{\theta}$ also leverages data $\xbf_3$ to assist shaping the feature maps of $\xbf_1$ and $\xbf_2$, which improves the reconstruction quality of the latter images.

Our results indicated that the performance of simply do reconstructions on source modalities without synthesis is worse than joint-reconstruction of source modalities and synthesis target modality using our model \eqref{our_model}. This indicates the effectiveness of the proposed method in both MRI reconstruction and synthesis.

Figure \ref{fig:synthesis} displays the synthetic MRI results on different source and target modality images. The proposed synthetic images preserves more details and sharper edges of the tissue boundary (indicated by enlarged red windows and green arrows) comparing to other referenced methods.

\begin{table}[ht]
\centering
\caption{Qualitative comparison of synthesis results between the state-of-the-art Multimodal synthesis methods and proposed method.  }\label{quant_results}
\resizebox{\linewidth}{48mm}{ 
\begin{tabular}{lllll}
\hline
& Methods &  PSNR & SSIM & NMSE\\
\hline
& MM \cite{chartsias2017multimodal}  & 22.8905 $\pm$ 1.4794 &  0.6671 $\pm$ 0.0586 &  0.0693 $\pm$ 0.0494 \\
& MM-GAN \cite{sharma2019missing}  & 23.3469 $\pm$ 1.0276 &  0.7084 $\pm$ 0.0370 &  0.0620 $\pm$ 0.0426 \\
T1 $+$ T2 $\to$ FLAIR & MMGradAdv \cite{liu2020multimodal}  & 24.0275 $\pm$ 1.3959 & 0.7586 $\pm$ 0.0326 & 0.0583 $\pm$ 0.0380 \\
& Hi-Net \cite{zhou2020hi} &   25.0299 $\pm$ 1.3845 & 0.8499 $\pm$ 0.0300 & 0.0254 $\pm$ 0.0097\\
& Proposed &  26.1851 $\pm$ 1.3357 & 0.8677 $\pm$ 0.0307 & 0.0205 $\pm$ 0.0087\\
\hline
$\fbf_{T1} + \fbf_{T2} \to$ FLAIR & Proposed & 25.7355 $\pm$ 1.2475 & 0.8597 $\pm$ 0.0315 & 0.0215 $\pm$ 0.0085\\
\hline
& MM \cite{chartsias2017multimodal}  & 23.8916 $\pm$ 1.6094 & 0.6895  $\pm$ 0.0511 & 0.0494 $\pm$ 0.0185 \\
& MM-GAN \cite{sharma2019missing}  & 24.1474 $\pm$ 0.8991 & 0.7217 $\pm$ 0.0432 & 0.0431 $\pm$ 0.0114 \\
 T1 $+$ FLAIR  $\to$ T2 & MMGradAdv \cite{liu2020multimodal} & 25.0563 $\pm$ 1.4895 &  0.7597 $\pm$ 0.0486 & 0.0406 $\pm$  0.0165\\ 
 & Hi-Net \cite{zhou2020hi} & 25.9490 $\pm$ 1.4958 & 0.8552 $\pm$ 0.0410 & 0.0229  $\pm$  0.0070\\
 \hline
$\fbf_{T1} + \fbf_{FLAIR} \to$ T2 & Proposed & 26.6585 $\pm$ 1.6178 & 0.8610 $\pm$ 0.0438 & 0.0207 $\pm$ 0.0072\\
\hline
& MM \cite{chartsias2017multimodal} &  23.5347 $\pm$ 2.1764 & 0.7825 $\pm$ 0.0470 &  0.0301 $\pm$ 0.0149 \\
& MM-GAN \cite{sharma2019missing}  & 23.6321 $\pm$ 2.3085  & 0.7908 $\pm$ 0.0421 & 0.0293  $\pm$ 0.0119\\
T2 $+$ FLAIR  $\to$ T1 & MMGradAdv \cite{liu2020multimodal} & 24.7342 $\pm$ 2.2255 & 0.8065 $\pm$ 0.0423 &  0.0252 $\pm$ 0.0118 \\ 
& Hi-Net \cite{zhou2020hi} &  25.6405 $\pm$ 1.5887 & 0.8729 $\pm$ 0.0349 &  0.0130 $\pm$ 0.0097 \\
\hline
$\fbf_{T2} + \fbf_{FLAIR} \to$ T1 & Proposed &  26.3063 $\pm$ 1.7955 & 0.9085 $\pm$ 0.0311 & 0.0112 $\pm$ 0.0113 \\
\hline
& MM \cite{chartsias2017multimodal} &  23.3678 $\pm$ 1.5612 & 0.7272 $\pm$ 0.0574 &  0.0312 $\pm$  0.0138 \\
& MM-GAN \cite{sharma2019missing}  & 23.6795 $\pm$ 0.9729 & 0.7577 $\pm$ 0.0637 & 0.0302 $\pm$ 0.0133\\
T1 $+$ T2 $\to$ T1CE & MMGradAdv \cite{liu2020multimodal} &  24.2311 $\pm$ 1.8961 & 0.7887 $\pm$ 0.0519  & 0.0273 $\pm$ 0.0136   \\ 
& Hi-Net \cite{zhou2020hi} &  25.2149 $\pm$ 1.2048 & 0.8650 $\pm$ 0.0328  &  0.0180 $\pm$ 0.0134   \\
\hline
$\fbf_{T1} + \fbf_{T2} \to$ T1CE & Proposed &  25.9103 $\pm$ 1.2077 & 0.8726 $\pm$ 0.0340  &  0.0167 $\pm$ 0.0133   \\
\hline
\end{tabular}}
\end{table}

\begin{table}\centering
\caption{Quantitative comparison of reconstruction results between the proposed model with joint-reconstruction and synthesis and reconstruction of T1 and T2 only. }\label{t1t2}
\begin{tabular}{|l|l|l|l|l|}
\hline
modality & FLAIR involved? & PSNR & SSIM & NMSE\\
\hline
T1 & No &  37.0040 $\pm$ 0.7411 & 0.9605 $\pm$ 0.0047 & 0.0008 $\pm$ 0.0002 \\
 & Yes & 37.4967 $\pm$ 0.8361 &  0.9628 $\pm$ 0.0074 & 0.0007 $\pm$ 0.0002 \\
T2 & No &  37.2401 $\pm$ 1.2252 & 0.9678 $\pm$ 0.0028 & 0.0027 $\pm$ 0.0010 \\
 & Yes & 37.6675 $\pm$ 1.3427 & 0.9663 $\pm$ 0.0043 & 0.0024 $\pm$ 0.0009 \\ 
\hline
\end{tabular}
\end{table}

\begin{figure}[ht]
\centering
\includegraphics[width=0.03\linewidth]{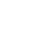}
\includegraphics[width=0.15\linewidth]{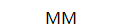}
\includegraphics[width=0.15\linewidth]{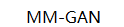}
\includegraphics[width=0.15\linewidth]{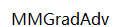}
\includegraphics[width=0.15\linewidth]{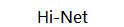}
\includegraphics[width=0.15\linewidth]{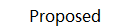}
\includegraphics[width=0.15\linewidth]{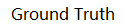}\\
\includegraphics[width=0.03\linewidth]{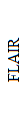}
\includegraphics[width=0.15\linewidth]{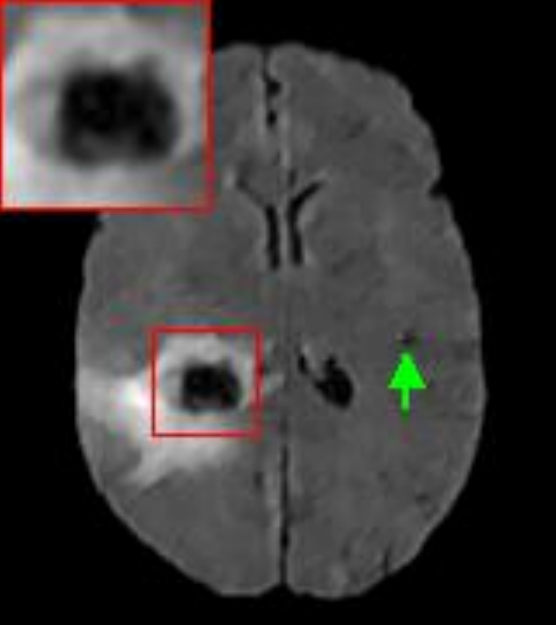}
\includegraphics[width=0.15\linewidth]{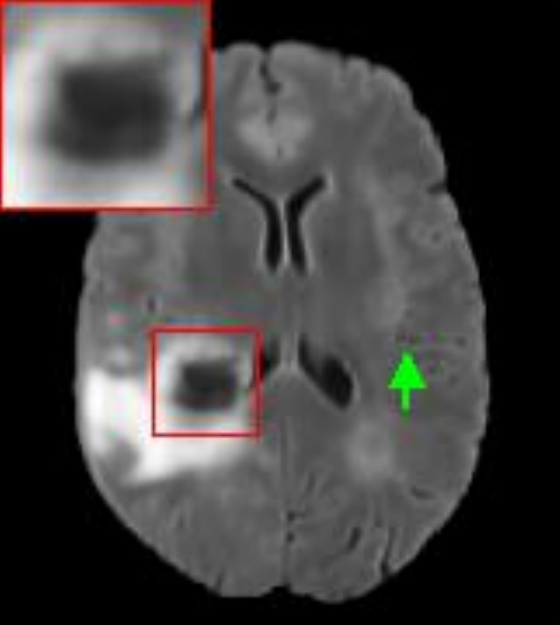}
\includegraphics[width=0.15\linewidth]{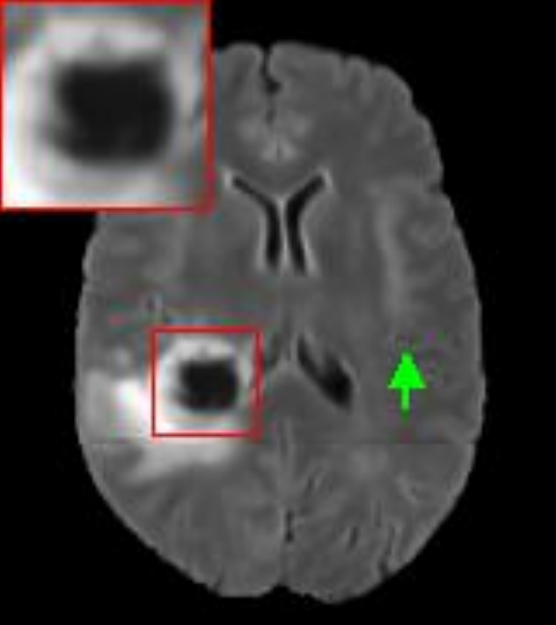}
\includegraphics[width=0.15\linewidth]{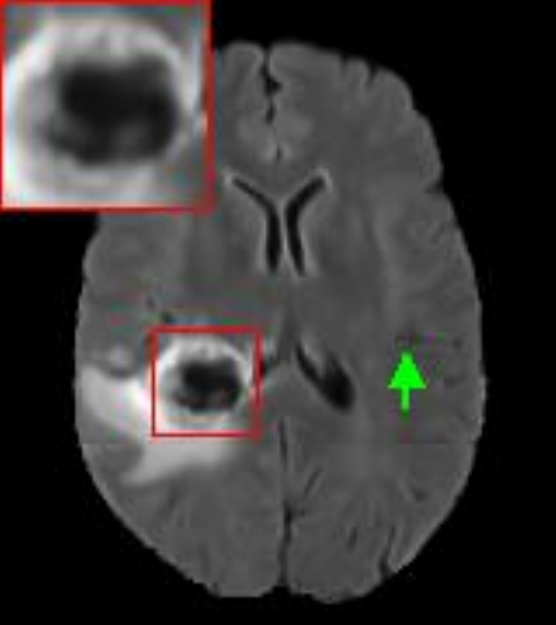}
\includegraphics[width=0.15\linewidth]{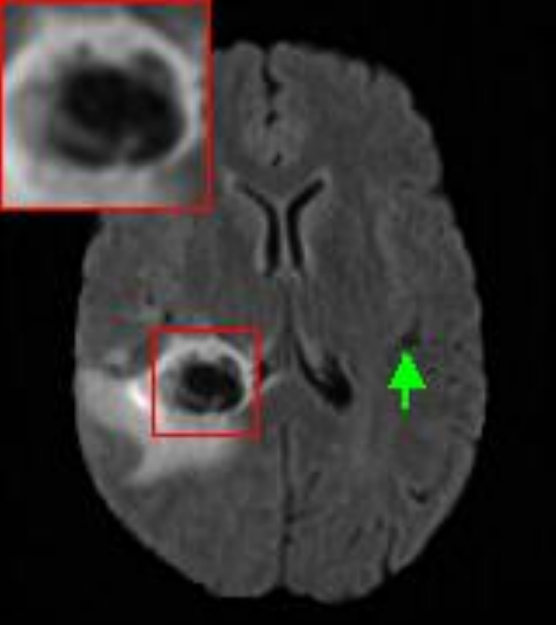}
\includegraphics[width=0.15\linewidth]{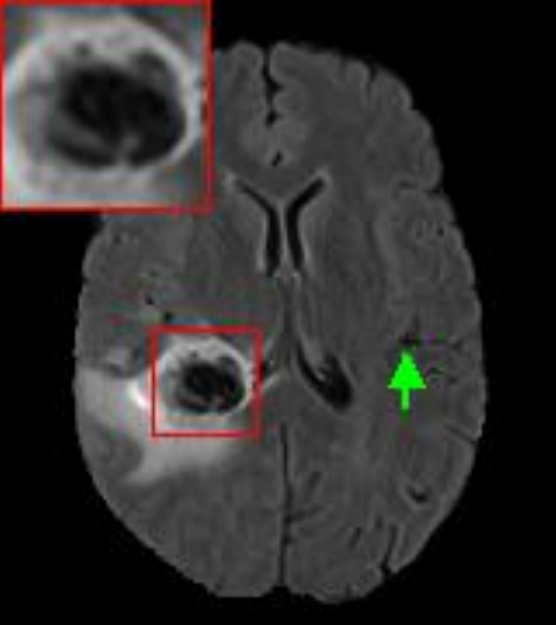}\\
\includegraphics[width=0.03\linewidth]{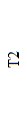}
\includegraphics[width=0.15\linewidth]{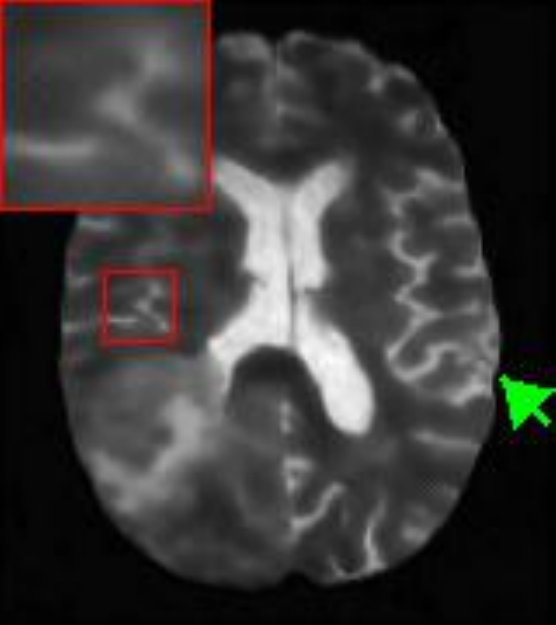}
\includegraphics[width=0.15\linewidth]{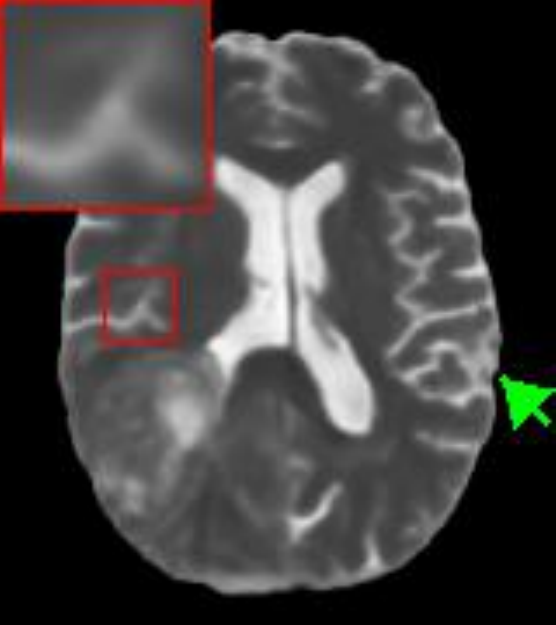}
\includegraphics[width=0.15\linewidth]{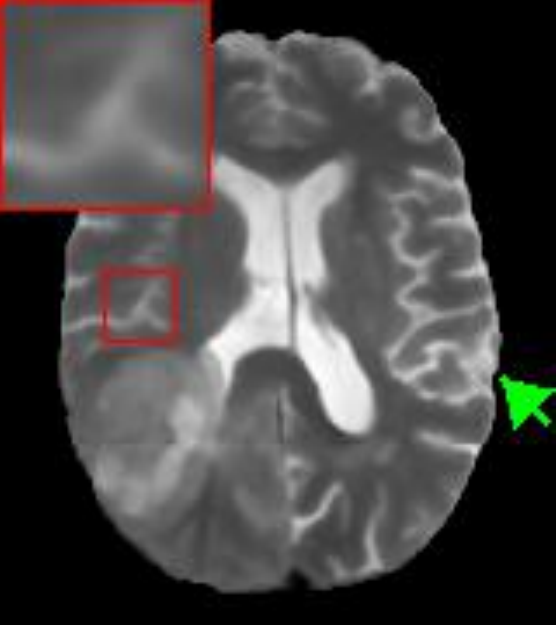}
\includegraphics[width=0.15\linewidth]{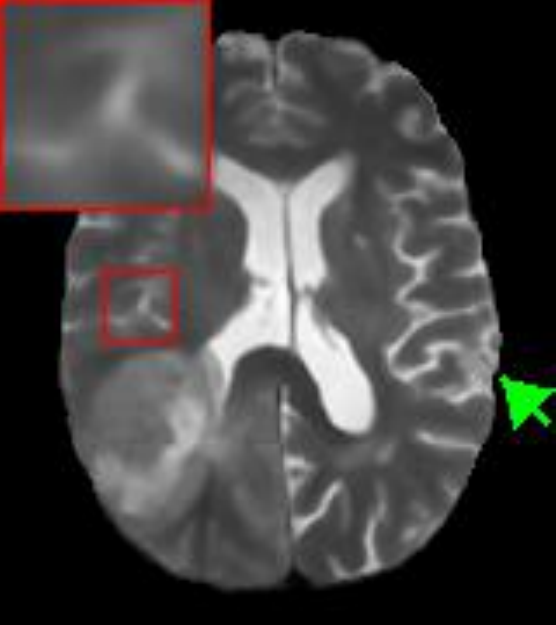}
\includegraphics[width=0.15\linewidth]{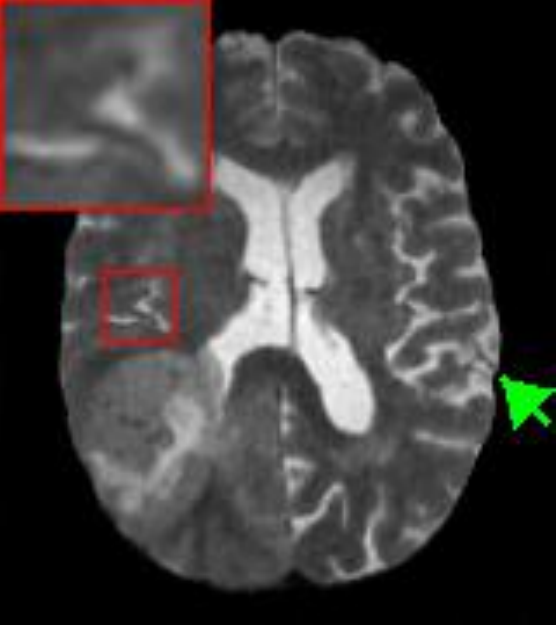}
\includegraphics[width=0.15\linewidth]{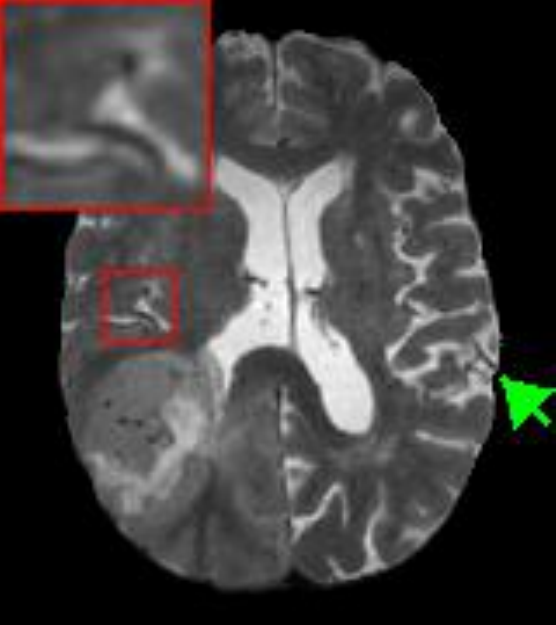}\\
\includegraphics[width=0.03\linewidth]{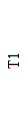}
\includegraphics[width=0.15\linewidth]{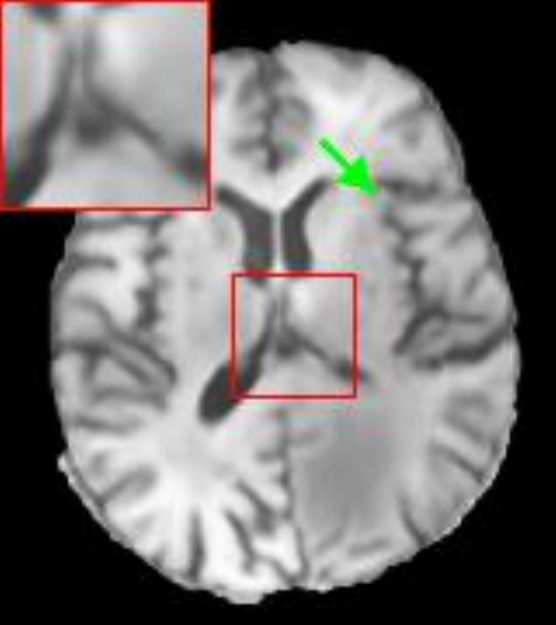}
\includegraphics[width=0.15\linewidth]{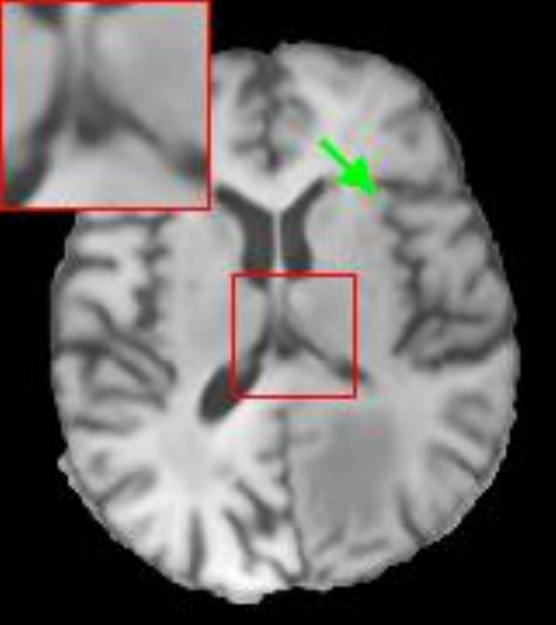}
\includegraphics[width=0.15\linewidth]{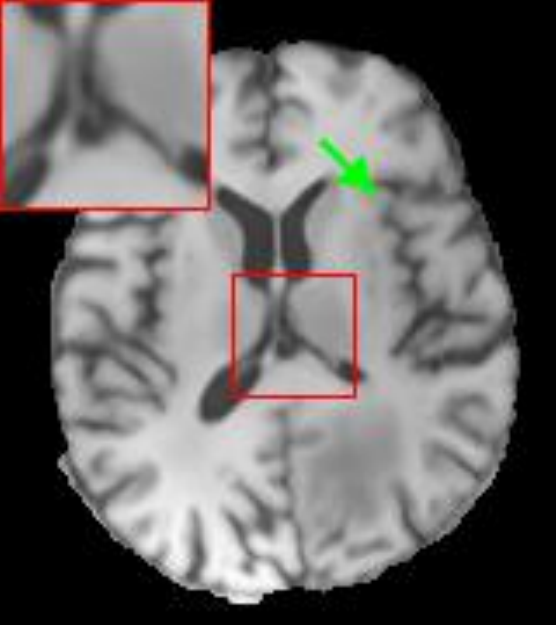}
\includegraphics[width=0.15\linewidth]{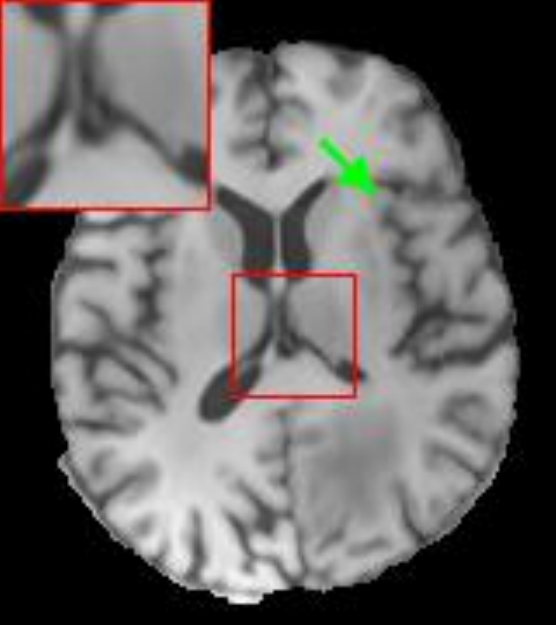}
\includegraphics[width=0.15\linewidth]{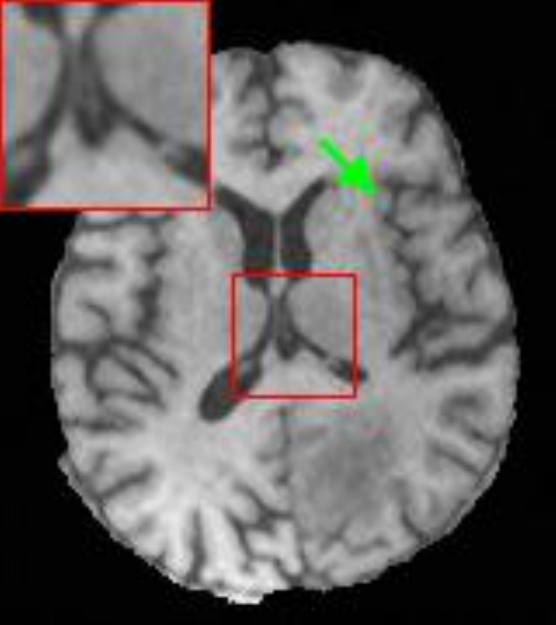}
\includegraphics[width=0.15\linewidth]{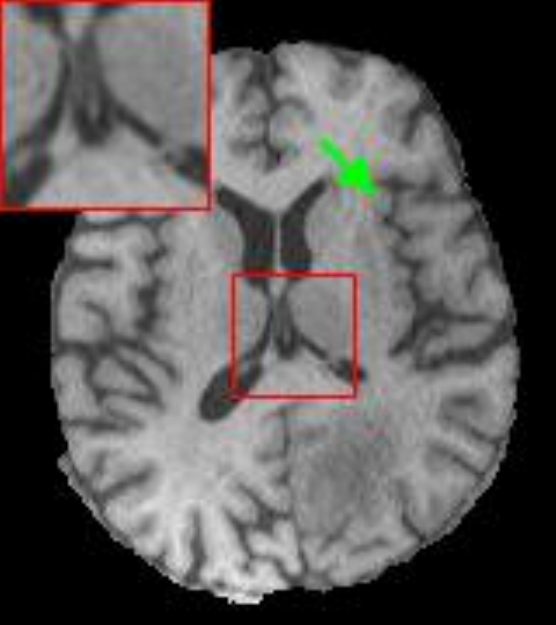}\\
\includegraphics[width=0.03\linewidth]{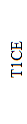}
\includegraphics[width=0.15\linewidth]{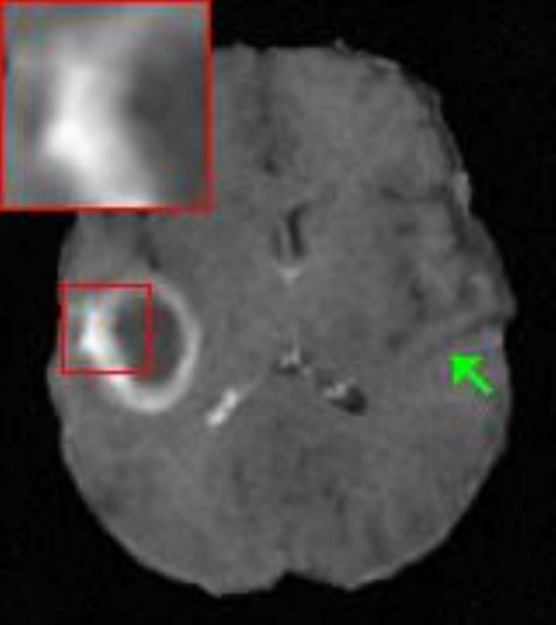}
\includegraphics[width=0.15\linewidth]{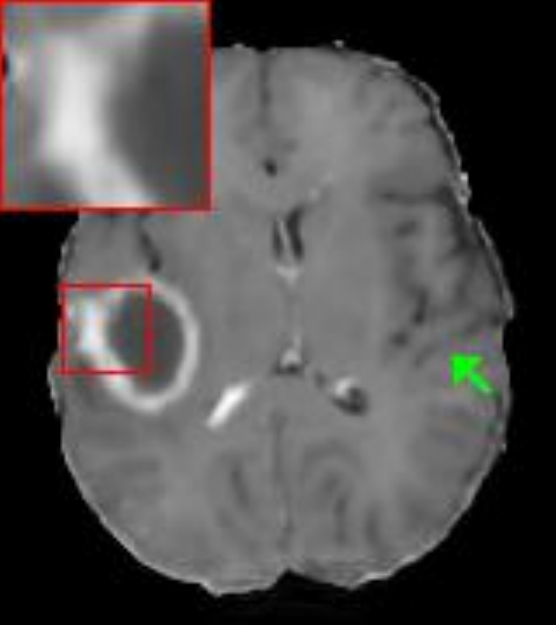}
\includegraphics[width=0.15\linewidth]{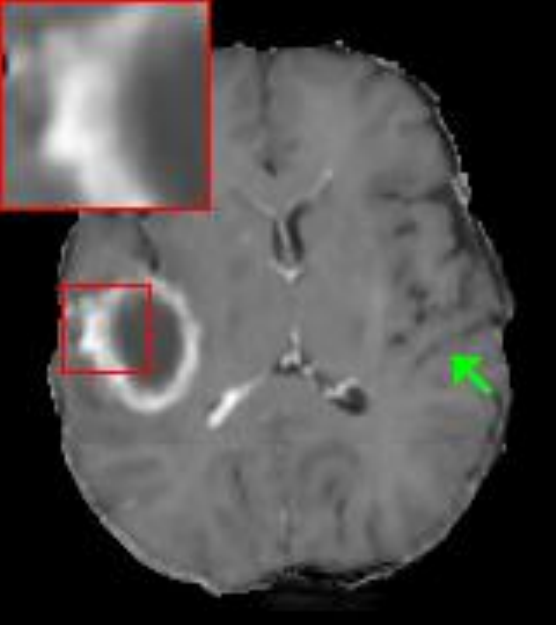}
\includegraphics[width=0.15\linewidth]{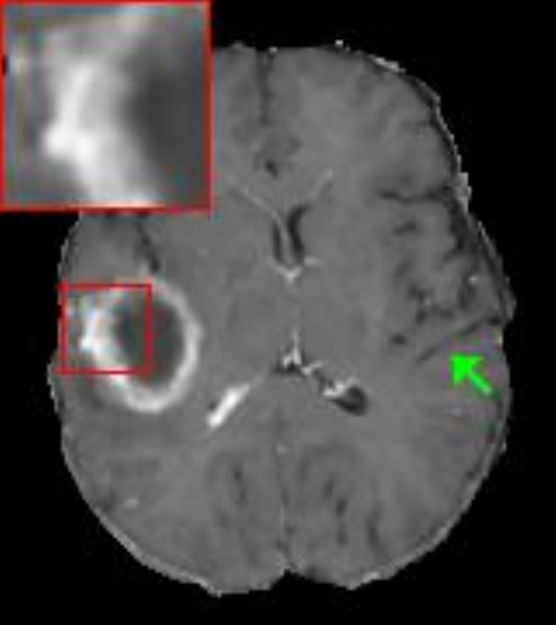}
\includegraphics[width=0.15\linewidth]{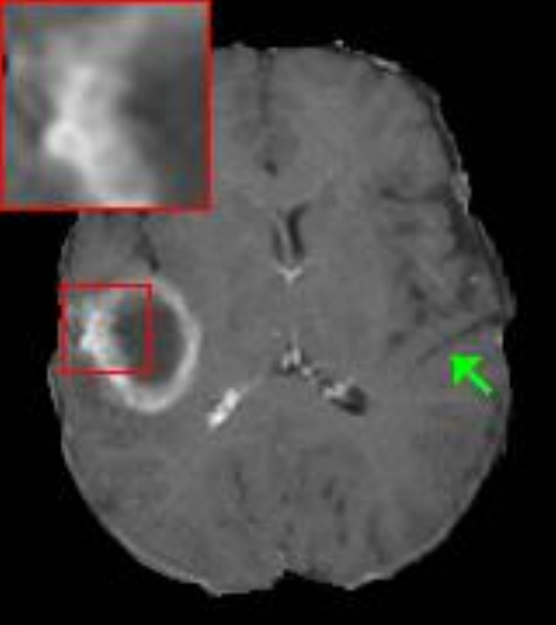}
\includegraphics[width=0.15\linewidth]{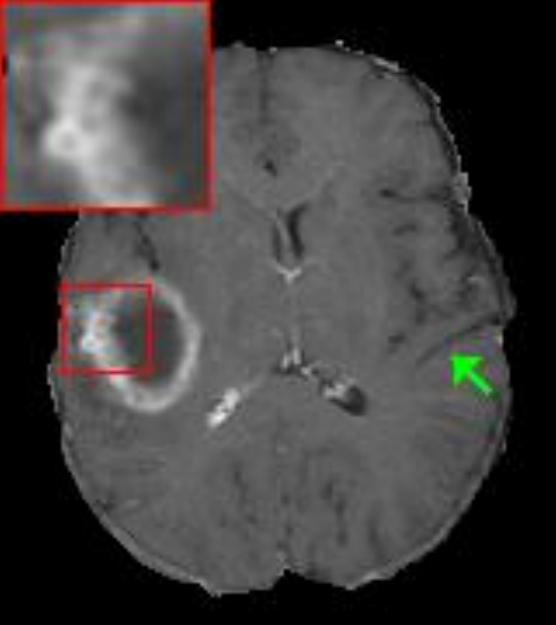}\\
\caption{Qualitative comparison between the state-of-the-art multimodal synthesis methods and proposed method. From first row to last row: T1 $+$ T2 $\to$ FLAIR, T1 $+$ FLAIR $\to$ T2, T2 $+$ FLAIR $\to$ T1 and T1 $+$ T2 $\to$ T1CE.  }\label{fig:synthesis}
\end{figure} 
\begin{figure}[ht]
\flushleft
\includegraphics[width=0.03\linewidth]{fig_chp5/smallwhite.png}
\includegraphics[width=0.16\linewidth]{fig_chp5/MM.png}
\includegraphics[width=0.16\linewidth]{fig_chp5/MM-GAN.png}
\includegraphics[width=0.16\linewidth]{fig_chp5/MMGradAdv.png}
\includegraphics[width=0.16\linewidth]{fig_chp5/Hi-Net.png}
\includegraphics[width=0.16\linewidth]{fig_chp5/Proposed.png}\\
\includegraphics[width=0.03\linewidth]{fig_chp5/FLAIR.png}
\includegraphics[width=0.16\linewidth]{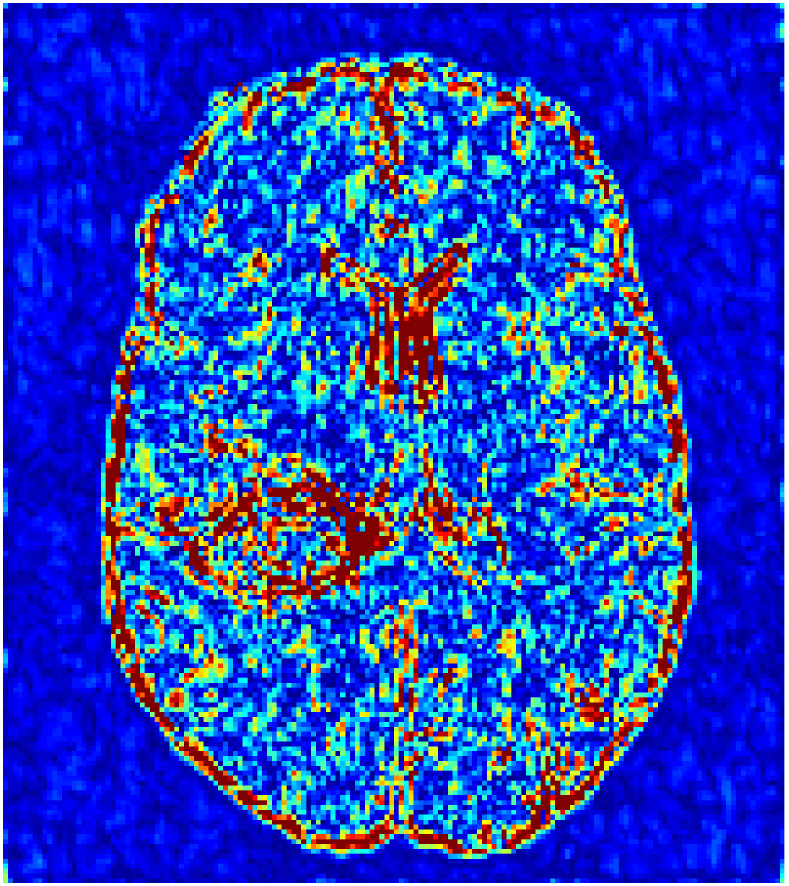}
\includegraphics[width=0.16\linewidth]{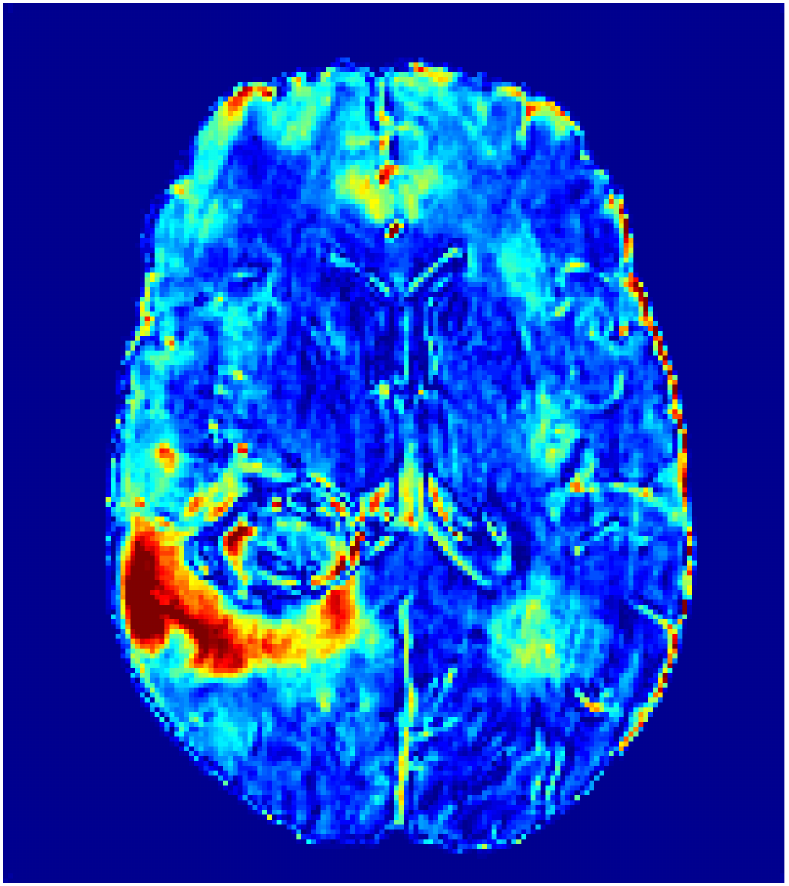}
\includegraphics[width=0.16\linewidth]{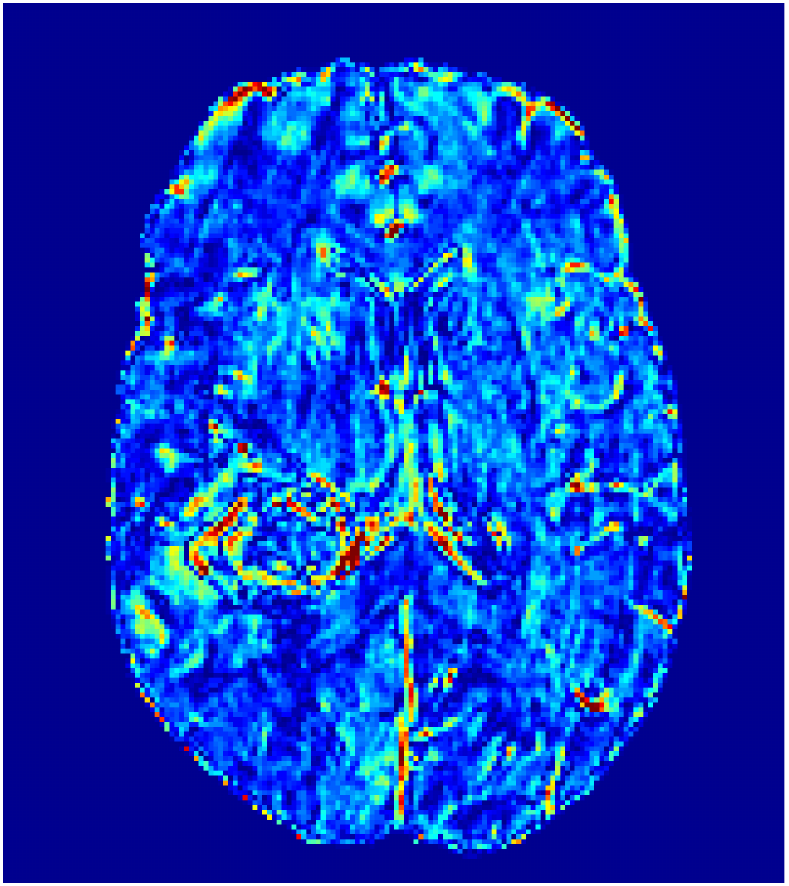}
\includegraphics[width=0.16\linewidth]{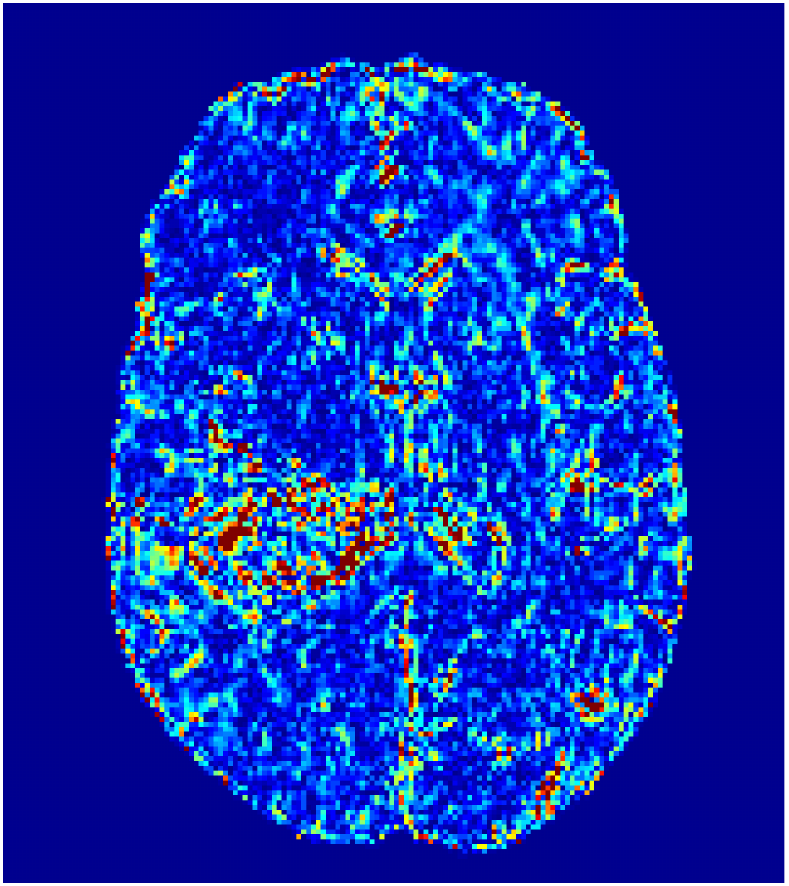}
\includegraphics[width=0.16\linewidth]{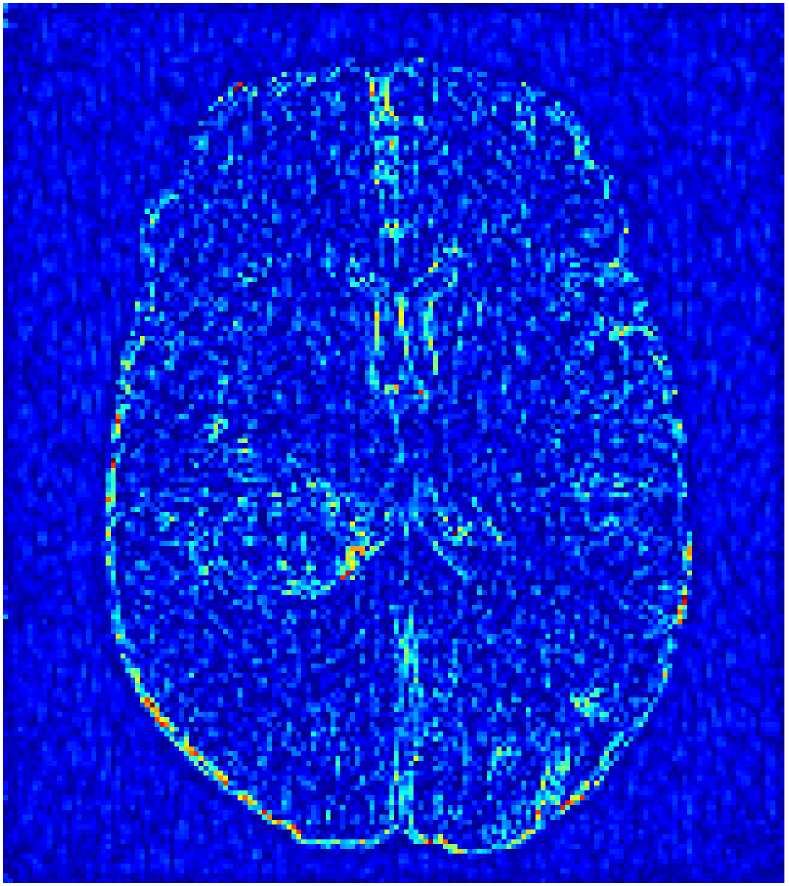}
\includegraphics[width=0.065\linewidth]{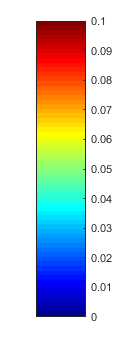}\\
\includegraphics[width=0.03\linewidth]{fig_chp5/T2.png}
\includegraphics[width=0.16\linewidth]{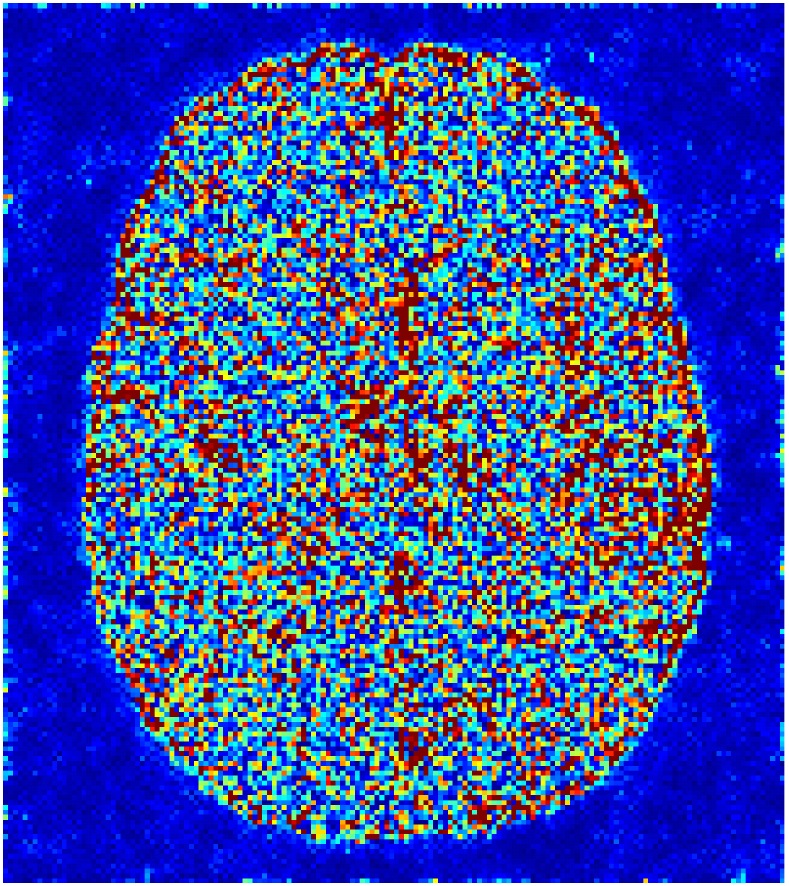}
\includegraphics[width=0.16\linewidth]{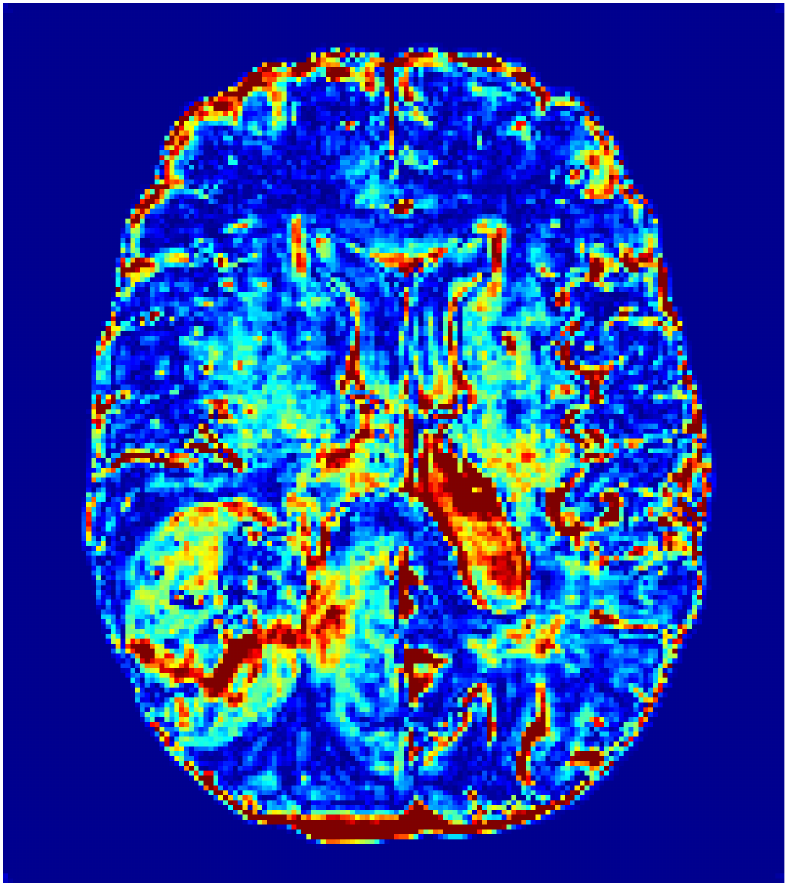}
\includegraphics[width=0.16\linewidth]{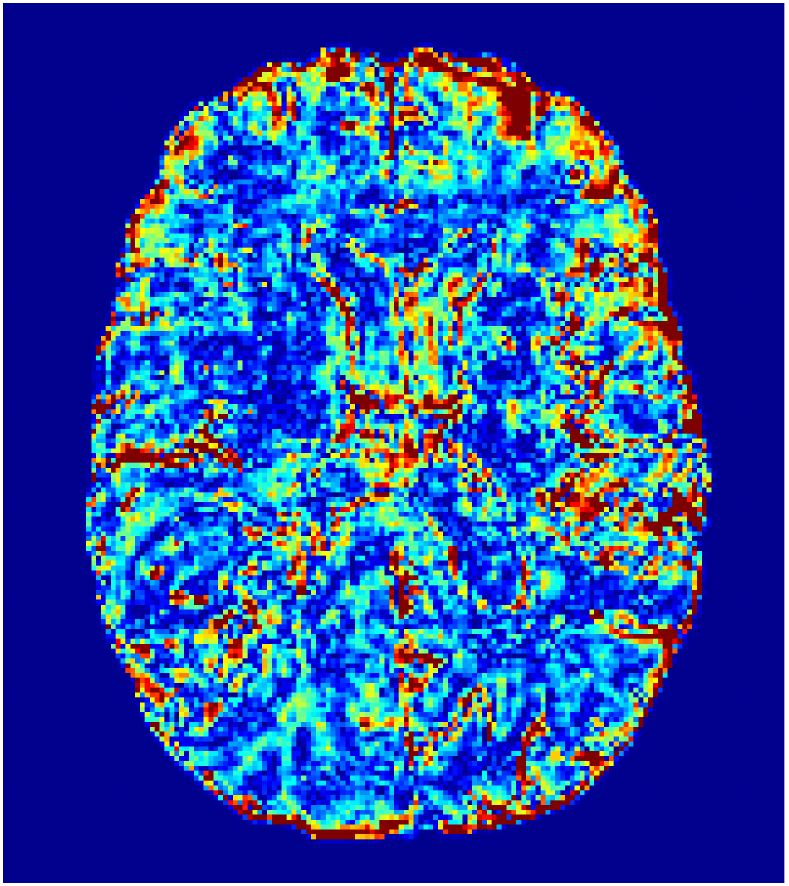}
\includegraphics[width=0.16\linewidth]{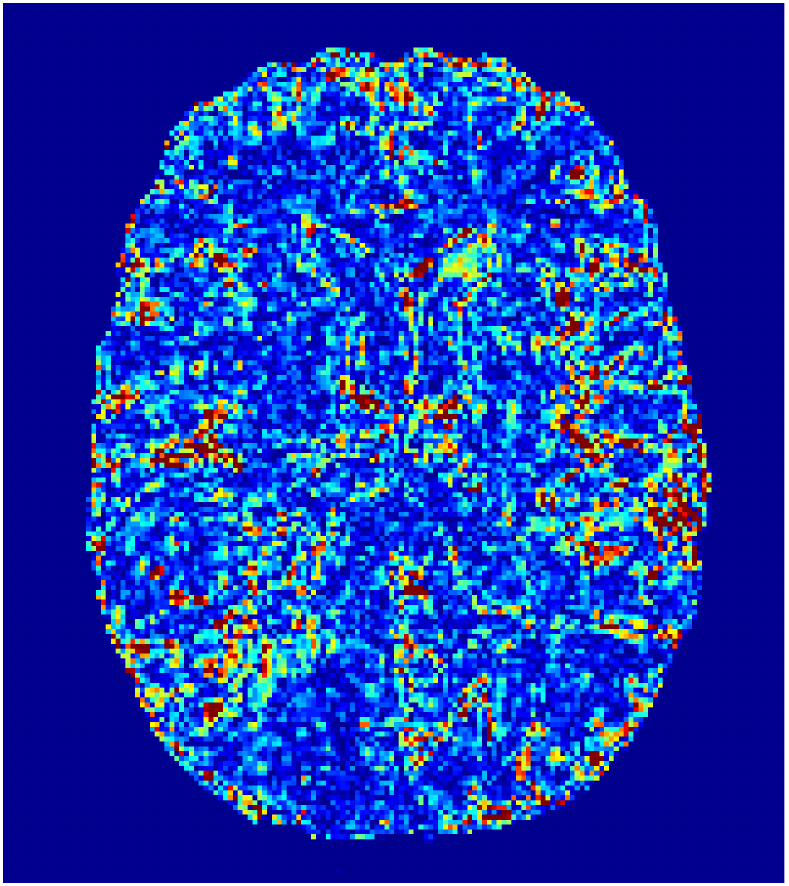}
\includegraphics[width=0.16\linewidth]{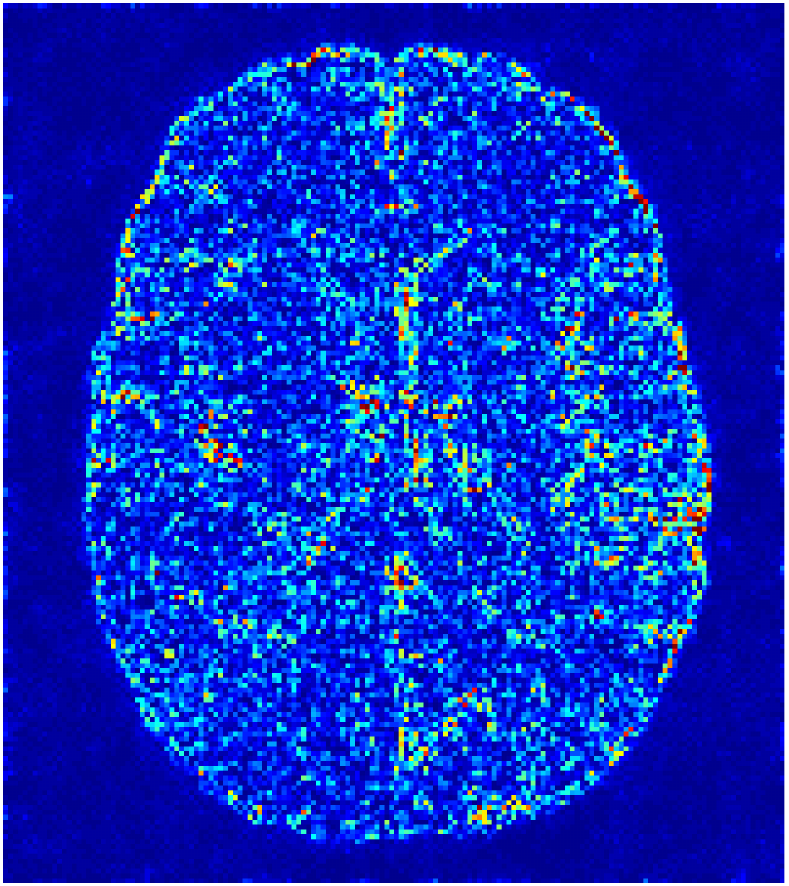}\\
\includegraphics[width=0.03\linewidth]{fig_chp5/T1.png}
\includegraphics[width=0.16\linewidth]{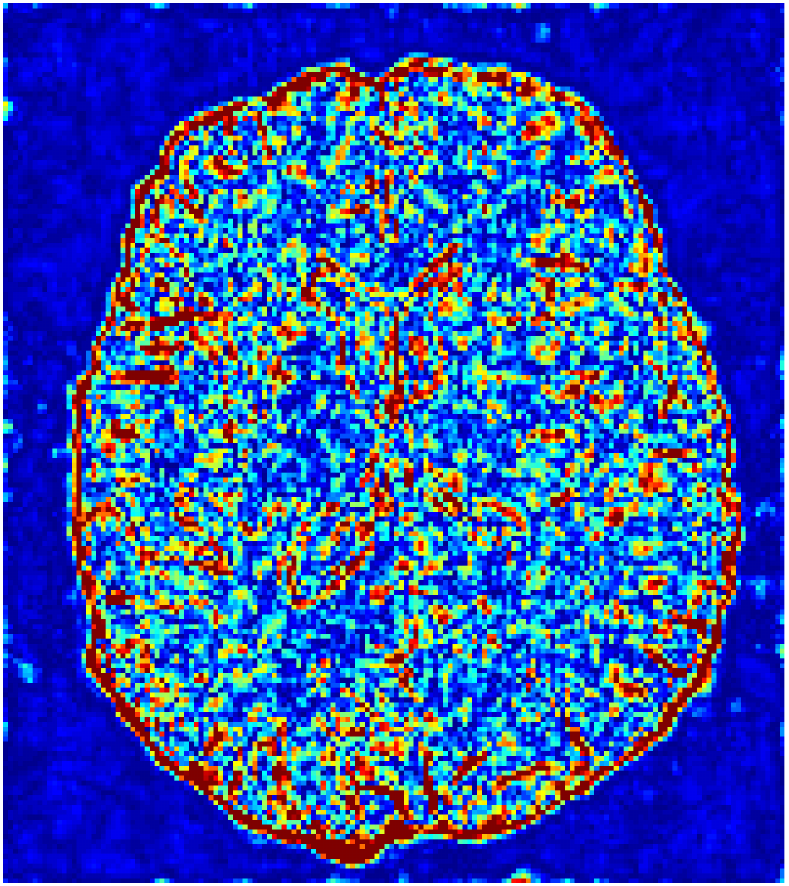}
\includegraphics[width=0.16\linewidth]{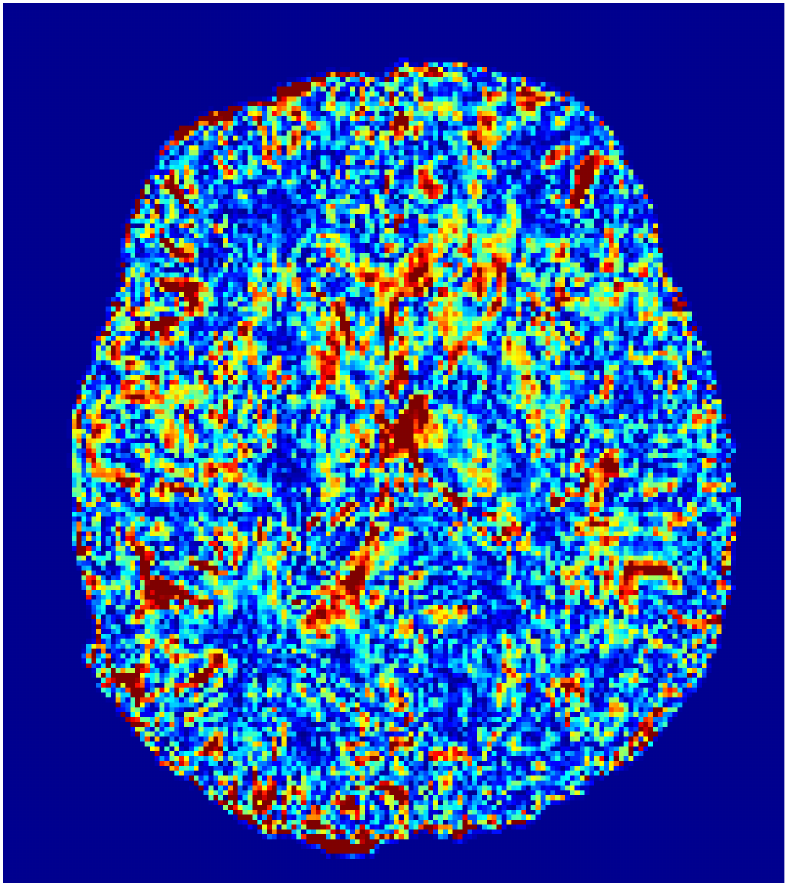}
\includegraphics[width=0.16\linewidth]{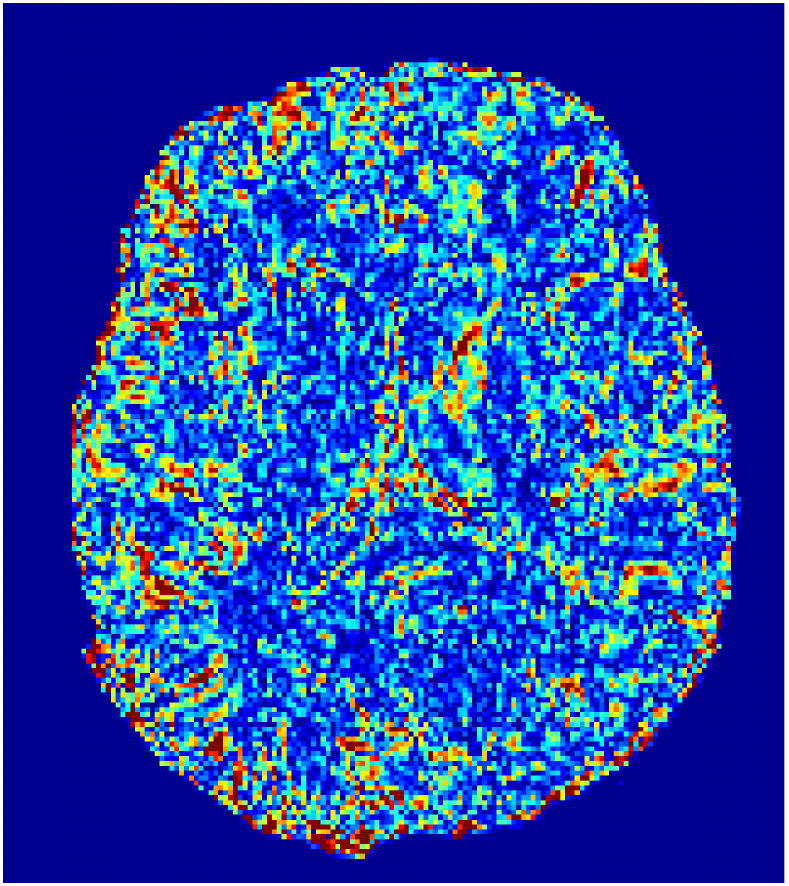}
\includegraphics[width=0.16\linewidth]{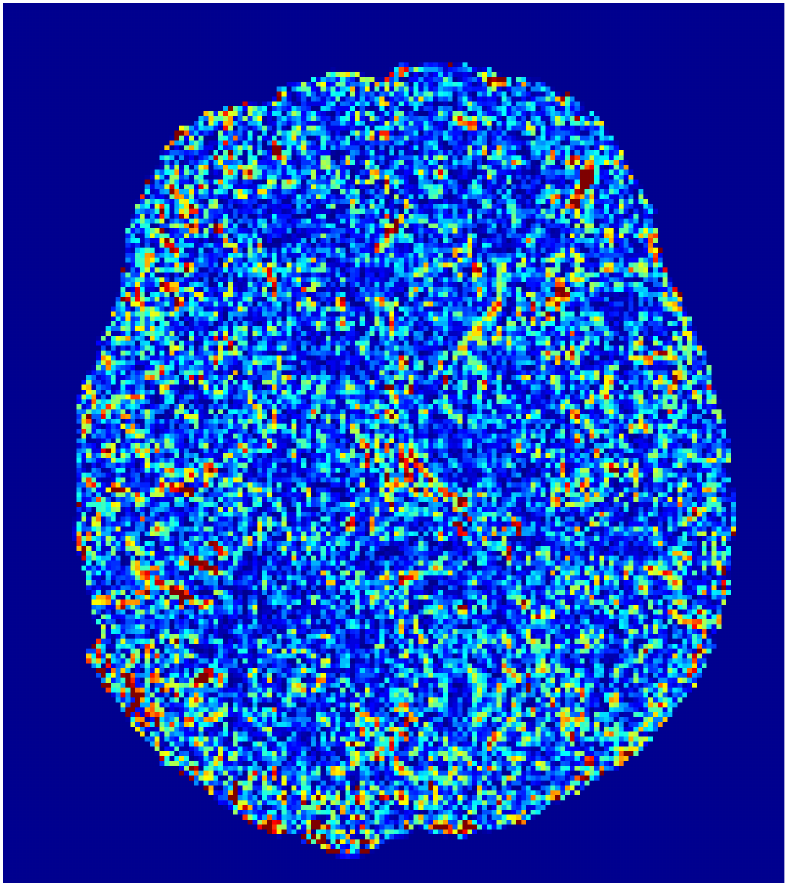}
\includegraphics[width=0.16\linewidth]{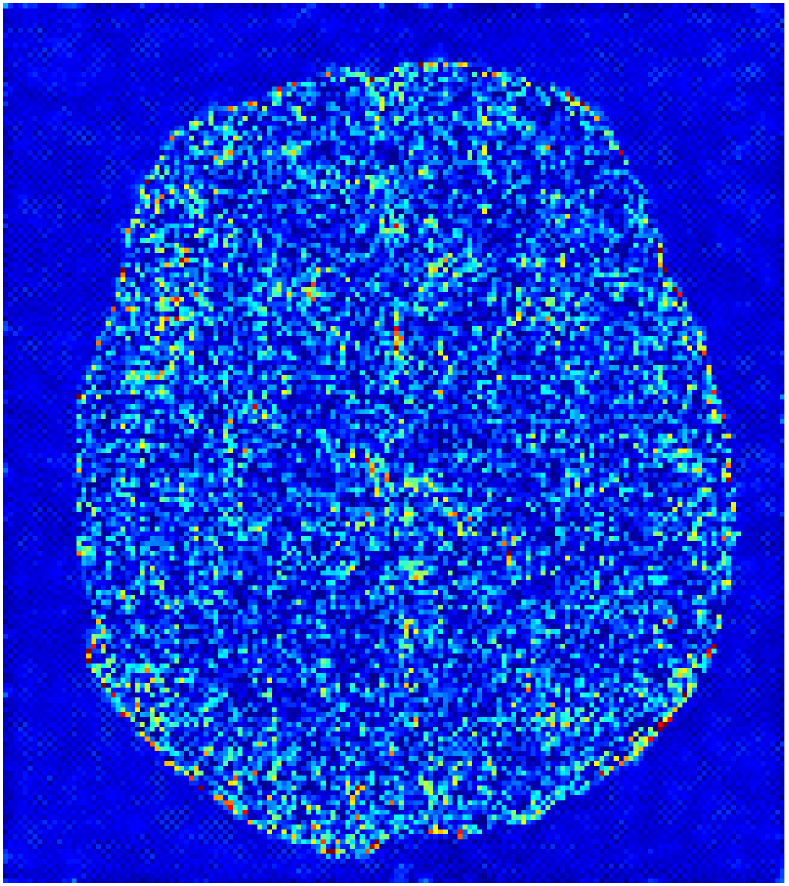}\\
\includegraphics[width=0.03\linewidth]{fig_chp5/T1CE.png}
\includegraphics[width=0.16\linewidth]{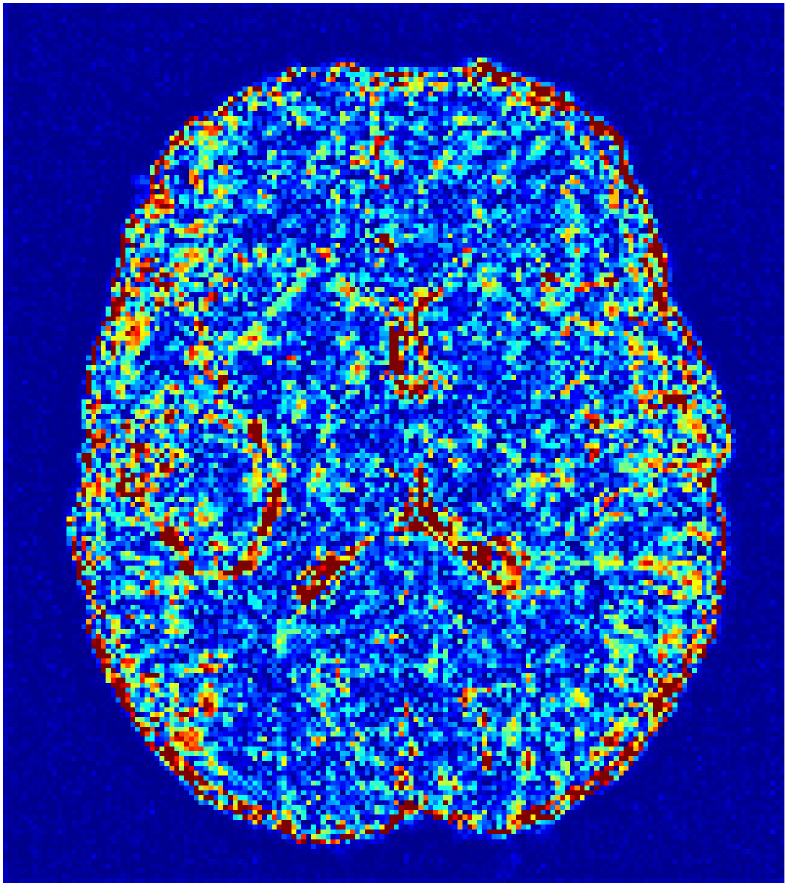}
\includegraphics[width=0.16\linewidth]{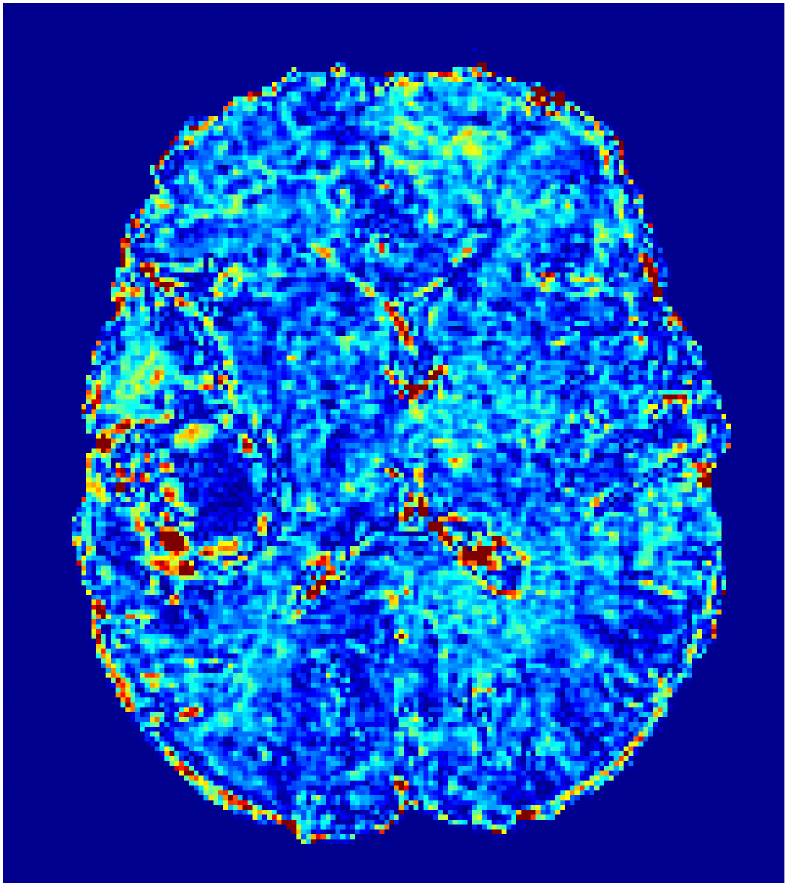}
\includegraphics[width=0.16\linewidth]{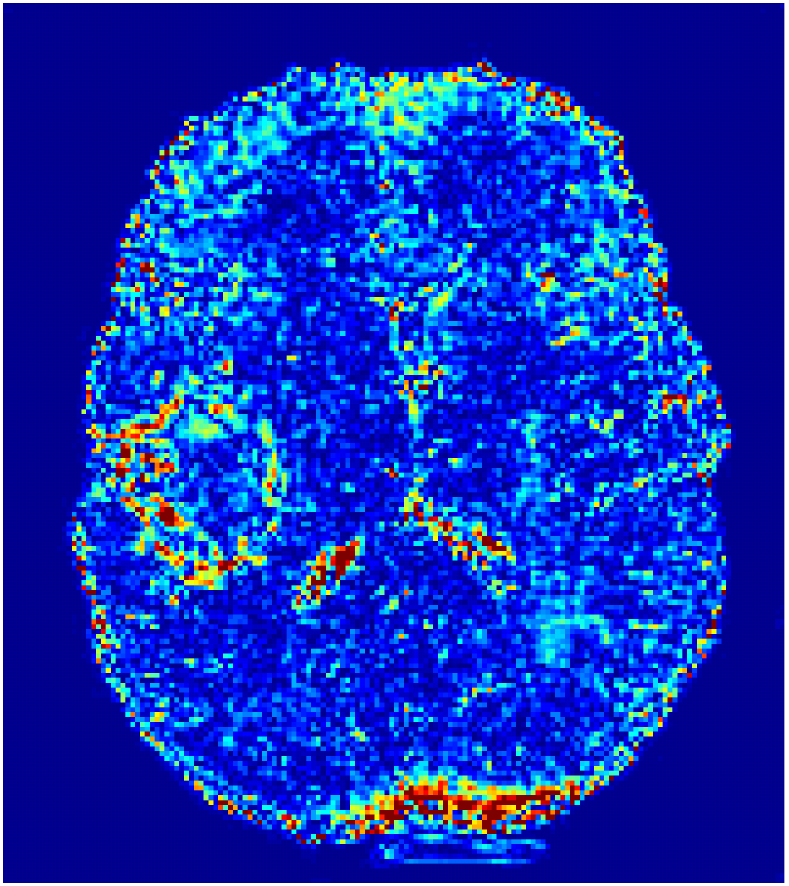}
\includegraphics[width=0.16\linewidth]{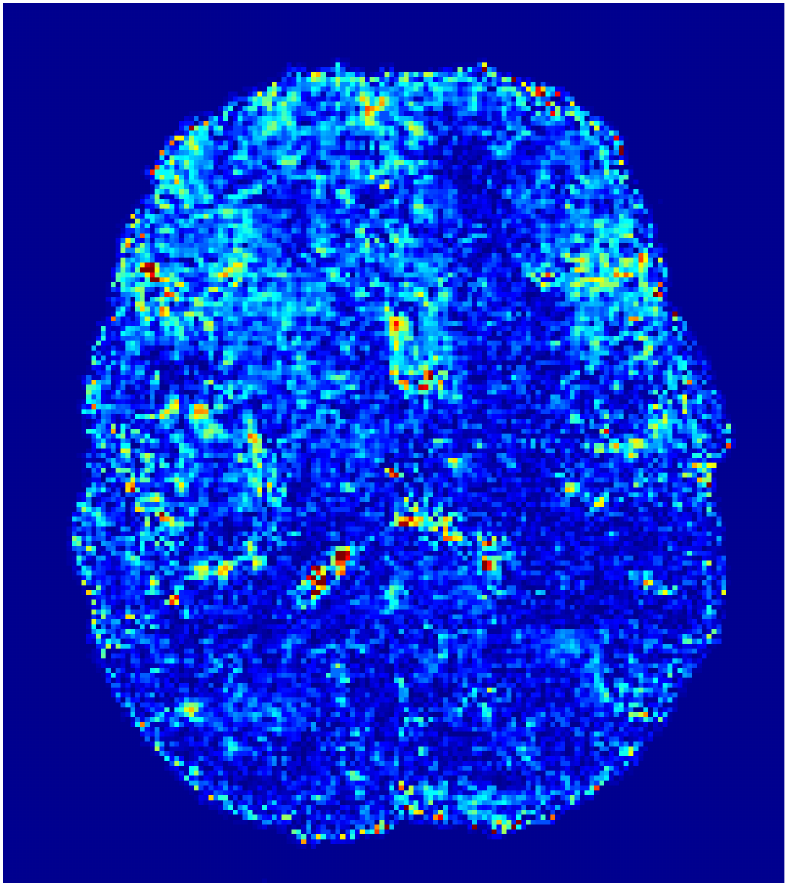}
\includegraphics[width=0.16\linewidth]{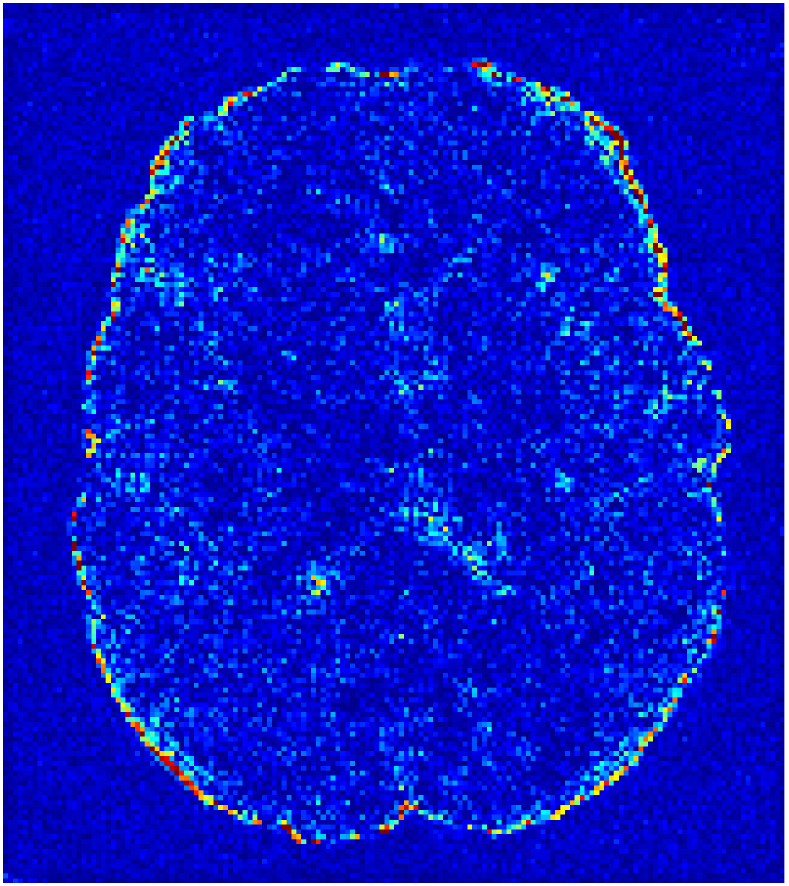}\\
\caption{Pointwise error maps between synthetic image and the its corresponding ground truth. From first row to last row: T1 $+$ T2 $\to$ FLAIR, T1 $+$ FLAIR $\to$ T2, T2 $+$ FLAIR $\to$ T1 and T1 $+$ T2 $\to$ T1CE.  }\label{fig:synthesis_error}
\end{figure}

\section{Conclusion}
We proposed a novel joint multimodal MRI reconstruction and synthesis model. The proposed model simultaneously reconstructs the source modality images from the partially scanned k-space MR data and synthesizes the target modality image without any k-space information by iterating a learnable optimization algorithm with convergence guaranteed.
Moreover, we incorporate a bilevel-optimization training algorithm with the use of both training and validation sets to further improve the performance. Extensive experiments on brain data with several different modalities validate the magnificent performance of the proposed model.
%\include{tex/chapter6}  %These chapters are not included in the template
%\include{tex/chapter7}

%-----------------------------------------------------------------------%

% Use the appropriate file depending upon the number of appendices you have

% The Editorial Office Requirements for the Table of Contents cause a significant problem
%in Latex if there is only one Appendix. The Appendix is no longer labeled "A" in the TOC
%but has the word "APPENDIX" placed in front of the title of the Appendix. This can be done
%without issue IF nothing needs to be numbered by LaTeX in the Appendix. Unfortunately, most of the time
%something needs to be numbered in that single Appendix. For this reason we have included the IFTHENELSE switch
%found in this document and at the beginning of AppendixA. We assume that if you have any appendices, that you have more than one. However, you DO only have one appendix DO NOT USE THIS FILE!!!!!!!!!!!!!!!!!!!!!!!
%
% OneSingleAppendix.tex has all the settings needed to adjust for a single appendix
% you will have a major problem with your TOC if you use this file with a single appendix!!!!!

%
% Comment (or delete) all of the \input{AppendixB} commands except those you are using.
%Then open the AppendixA.tex file and continue there.

%you can add/substract individual appendices through by using the /include{appendix'X'}
% and creating/deleting the appropriate files
\appendix %
%\clearpage%

\addtocontents{toc}{\protect\addvspace{10pt}\protect\noindent \protect APPENDIX\par}

%\input{tex/appendixA} 
%\input{tex/appendixB} 
%\input{tex/appendixC} 
%\input{tex/appendixC} %
%\input{tex/appendixD} %
%\input{tex/appendixE} % %These files aren't included in the template
%\input{tex/appendixF} %
%\input{appendix/appendixC} 
%\input{appendix/appendixD}
%\input{appendix/appendixE}
 %Use this file if you have two or more appendices

% \include{tex/OneSingleAppendix} %Use this file if you have one and only one appendix

%------------------------------------------%

% Make List of References (BibTeX implemented using the Natbib package)
% un-comment your preferred bibliography style and replace the
% bibliography file "sample" with the name of your .bib file
% REMEMBER!!! If you want un-numbered references comment the Natbib package with
% The numbered options in the packages.tex file and un-comment the package with the authoryear option
% See the included pdfs of the various styles to see the differences.
% The citation style differences are from the \citet{key} and \citep{key} commands
% More options are available; see the Natbib documentation for details

%\interlinepenalty=10000

\bibliography {bib/dissertation_refs}
% You can have more than one library of references - put the .bib file
% in the bib folder and call it here
%------------------------------------------%

% Bio Sketch is required and should be in third person, past tense%
% Just type your bio in between the brackets
\biography{%
Wanyu Bian graduated from the University of Missouri-Columbia with a degree in mathematics
and statistics in 2017. She received her Ph.D. from the Department of Mathematics at the University of Florida in 2022. }

%------------------------------------------%

\end{document}